\def\@tocline#1#2#3#4#5#6#7{\relax
  \ifnum #1>\c@tocdepth 
  \else
    \par \addpenalty\@secpenalty\addvspace{#2}%
    \begingroup \hyphenpenalty\@M
    \@ifempty{#4}{%
      \@tempdima\csname r@tocindent\number#1\endcsname\relax
    }{%
      \@tempdima#4\relax
    }%
    \parindent\z@ \leftskip#3\relax \advance\leftskip\@tempdima\relax
    \rightskip\@pnumwidth plus4em \parfillskip-\@pnumwidth
    #5\leavevmode\hskip-\@tempdima
      \ifcase #1
      \or\or \hskip 2em \or \hskip 2em \else \hskip 3em \fi%
      #6\nobreak\relax
    \dotfill\hbox to\@pnumwidth{\@tocpagenum{#7}}\par
    \nobreak
    \endgroup
  \fi}
\newcommand{\eq}[2]{\begin{equation}\label{#1}#2 \end{equation}}
\newcommand{\ml}[2]{\begin{multline}\label{#1}#2 \end{multline}}
\newcommand{\mlnl}[1]{\begin{multline*}#1 \end{multline*}}
\newcommand{\arir}{\ar@{^{(}->}}
\newcommand{\aril}{\ar@{_{(}->}}
\newcommand{\are}{\ar@{>>}}
\newcommand{\xr}[1] {\xrightarrow{#1}}
\newtheorem{lemma}{Lemma}[section]
\newtheorem{thm}[lemma]{Theorem}
\newtheorem{prop}[lemma]{Proposition}
\newtheorem{proposition}[lemma]{Proposition}
\newtheorem{cor}[lemma]{Corollary}
\theoremstyle{definition}
\newtheorem{defn}[lemma]{Definition}
\newtheorem{defn-prop}[lemma]{Definition-Proposition}
\newtheorem{nota}[lemma]{Notation}
\newtheorem{para}[lemma]{}
\newtheorem*{warning*}{Warning}
\theoremstyle{remark}
\newtheorem{qn}[lemma]{Question}
\newtheorem{remark}[lemma]{Remark}
\newtheorem{rmk}[lemma]{Remark}
\newtheorem{claim}{Claim}[lemma]
\newtheorem*{claim*}{Claim}
\newcounter{zaehler} 
\numberwithin{equation}{lemma}
\newcommand{\N}{\mathbb{N}}
\newcommand{\Q}{\mathbb{Q}}
\newcommand{\Z}{\mathbb{Z}}
\newcommand{\W}{\mathbb{W}}
\renewcommand{\P}{\mathbf{P}}
\newcommand{\A}{\mathbf{A}}
\newcommand{\F}{\mathbf{F}}
\newcommand{\G}{\mathbf{G}}
\newcommand{\sC}{\mathcal{C}}
\newcommand{\sE}{\mathcal{E}}
\newcommand{\sF}{\mathcal{F}}
\newcommand{\sH}{\mathcal{H}}
\newcommand{\sI}{\mathcal{I}}
\newcommand{\sJ}{\mathcal{J}}
\newcommand{\sO}{\mathcal{O}}
\newcommand{\sP}{\mathcal{P}}
\newcommand{\sS}{\mathcal{S}}
\newcommand{\sK}{\mathcal{K}}
\newcommand{\sN}{\mathcal{N}}
\newcommand{\sW}{\mathcal{W}}
\newcommand{\sU}{\mathcal{U}}
\newcommand{\sV}{\mathcal{V}}
\newcommand{\sX}{\mathcal{X}}
\newcommand{\sY}{\mathcal{Y}}
\newcommand{\sZ}{\mathcal{Z}}
\newcommand{\tF}{{\widetilde{F}}}
\newcommand{\tG}{{\widetilde{G}}}
\newcommand{\Cor}{\operatorname{\mathbf{Cor}}}
\newcommand{\HI}{\operatorname{\mathbf{HI}}}
\newcommand{\RSC}{{\operatorname{\mathbf{RSC}}}}
\newcommand{\cone}{\operatorname{cone}}
\newcommand{\Ext}{\operatorname{Ext}}
\newcommand{\ul}[1]{{\underline{#1}}}
\newcommand{\PST}{{\operatorname{\mathbf{PST}}}}
\newcommand{\NST}{\operatorname{\mathbf{NST}}}
\newcommand{\Hom}{\operatorname{Hom}}
\newcommand{\uHom}{\operatorname{\underline{Hom}}}
\newcommand{\Ker}{\operatorname{Ker}}
\renewcommand{\Im}{\operatorname{Im}}
\newcommand{\Coker}{\operatorname{Coker}}
\newcommand{\Tr}{\operatorname{Tr}}
\newcommand{\Nm}{\operatorname{Nm}}
\newcommand{\Div}{\operatorname{Div}}
\newcommand{\Pic}{\operatorname{Pic}}
\newcommand{\Br}{\operatorname{Br}}
\newcommand{\Spec}{\operatorname{Spec}}
\newcommand{\td}{\operatorname{trdeg}}
\newcommand{\dlog}{\operatorname{dlog}}
\newcommand{\Proj}{\operatorname{Proj}}
\newcommand{\Sm}{\operatorname{\mathbf{Sm}}}
\newcommand{\Ab}{\operatorname{\mathbf{Ab}}}
\newcommand{\tr}{{\operatorname{tr}}}
\newcommand{\Ztr}{{\operatorname{\mathbb{Z}_{\tr}}}}
\newcommand{\op}{{\operatorname{op}}}
\newcommand{\red}{{\operatorname{red}}}
\newcommand{\Zar}{{\operatorname{Zar}}}
\newcommand{\Nis}{{\operatorname{Nis}}}
\newcommand{\et}{{\operatorname{\acute{e}t}}}
\newcommand{\inj}{\hookrightarrow}
\newcommand{\surj}{\rightarrow\!\!\!\!\!\rightarrow}
\newcommand{\id}{{\operatorname{id}}}
\newcommand{\codim}{{\operatorname{codim}}}
\newcommand{\Sym}{{\operatorname{Sym}}}
\newcommand{\CH}{{\operatorname{CH}}}
\newcommand{\Frac}{{\operatorname{Frac}}}
\newcommand{\Ij}{{\operatorname{Inj}}}
\newcommand{\ol}{\overline}
\renewcommand{\epsilon}{\varepsilon}
\renewcommand{\div}{\operatorname{div}}
\renewcommand{\sp}{{\rm sp}}
\newcommand{\la}{\langle}
\newcommand{\ra}{\rangle}
\newcommand{\MNST}{\operatorname{\mathbf{MNST}}}
\newcommand{\MCor}{\operatorname{\mathbf{MCor}}}
\newcommand{\MPST}{\operatorname{\mathbf{MPST}}}
\newcommand{\CI}{\operatorname{\mathbf{CI}}}
\newcommand{\CItsp}{\CI^{\tau,sp}}
\newcommand{\CItspNis}{\CI^{\tau,sp}_{\Nis}}
\newcommand{\bcube}{{\ol{\square}}}
\newcommand{\bcubee}{\ol{\square}^{(1)}}
\newcommand{\cube}{\square}
\newcommand{\ulMPST}{\operatorname{\mathbf{\underline{M}PST}}}
\newcommand{\ulMNST}{\operatorname{\mathbf{\underline{M}NST}}}
\newcommand{\ulMCor}{\operatorname{\mathbf{\underline{M}Cor}}}
\newcommand{\ulMCorls}{\ulMCor_{ls}}
\newcommand{\uMPST}{\operatorname{\mathbf{\underline{M}PST}}}
\newcommand{\uMNST}{\operatorname{\mathbf{\underline{M}NST}}}
\newcommand{\uMCor}{\operatorname{\mathbf{\underline{M}Cor}}}
\newcommand{\uMCorls}{\ulMCor_{ls}}
\newcommand{\tensor}{\otimes}
\newcommand{\MCorls}{\MCor_{ls}}
\newcommand{\ulomega}{\underline{\omega}}
\newcommand{\uomega}{\underline{\omega}}
\def\rmapo#1{\overset{#1}{\longrightarrow}}
\def\Sd#1{S_{(#1)}}
\title{On the cohomology of reciprocity sheaves}
\author{Federico Binda, Kay R\"ulling, \and Shuji Saito}
\address{Dipartimento di Matematica ``Federigo Enriques'',  Universit\`a degli Studi di Milano\\ Via Cesare Saldini 50, 20133 Milano, Italy}
\email{federico.binda@unimi.it}
\address{Bergische Universit\"at Wuppertal\\ Gau\ss str. 20, D-42119 Wuppertal, Germany}
\email{ruelling@uni-wuppertal.de}
\address{Graduate School of Mathematical Sciences, University of Tokyo, 3-8-1 Komaba, Tokyo 153-8941, Japan}
\email{sshuji@msb.biglobe.ne.jp}
\thanks{F.B.\ is supported by the PRIN ``Geometric, Algebraic and Analytic Methods in Arithmetic'' and is a member of the INdAM group GNSAGA. K.R.\ was supported by the DFG Heisenberg Grant RU 1412/2-2. S.S.\ is supported by the JSPS KAKENHI Grant (20H01791).}
\begin{document}

\begin{abstract}
    In this paper we show the existence of an action of Chow correspondences on the cohomology of reciprocity sheaves. In order to do so, we prove a number of structural results, such as a projective bundle formula, a blow-up formula, a Gysin sequence, and the existence of proper pushforward. In this way we recover and generalize analogous statements for the cohomology of Hodge sheaves and Hodge-Witt sheaves.

We give several applications of the general theory to problems which have been classically studied. Among these applications, we construct new birational invariants of smooth projective varieties and obstructions to the existence of zero-cycles of degree one from the cohomology of reciprocity sheaves. 
\end{abstract}
\maketitle
\tableofcontents
\section*{Introduction}\label{sec:Intro}
\addtocontents{toc}{\protect\setcounter{tocdepth}{1}}
\subsection{Overview}
It is a well-known fact that a large class of cohomology theories for algebraic varieties can be equipped with an exceptional, covariant functoriality, despite the fact that they are naturally contravariant. 
The existence of this kind of ``trace'', or ``Gysin'' morphism associated to projective (or even proper) maps of smooth schemes is usually manifesting the existence of some Poincar\'e  duality theory for the cohomology one is interested in; if one replaces cohomology with homology, which is  naturally covariant, the exceptional functoriality is conversely represented by the existence of a pull-back along a certain class of maps. The construction of cohomological Gysin morphisms has occupied vast literature, stemming from   Grothendieck's trace formalism for coherent cohomology \cite{Ha66}. 

A classical instance in the homological setting is represented by the Chow groups. If $X$ is a smooth quasi-projective variety over a field $k$, the Chow groups $\CH_*(X)$ are naturally covariant for proper maps  and admit contravariant Gysin maps for quasi-projective local complete intersection morphisms \cite{Fu}. 
Fulton's construction of the Gysin morphism was  later promoted by Voevodsky in the context of his triangulated category of mixed motives ${\bf DM}^{\rm eff}_{\Nis}(k)$ over a perfect field $k$. Associated to a codimension $n$ closed immersion of smooth $k$-schemes, $i\colon Z\to X$,  Voevodsky \cite{V-TCM} constructed a distinguished triangle 
\[M(X-Z)\to M(X) \xrightarrow{i^*} M(Z)(n)[2n]\xrightarrow{\partial_{X,Z}} M(X-Z)[1] \]
where $i^*$ is the Gysin morphism and $\partial_{X,Z}$ is a residue map. Combining it with a projective bundle formula for motives, also provided by Voevodsky, the classical method of Grothendieck allows one to define exceptional functoriality along an arbitrary projective morphism between smooth $k$-varieties, factoring it as a closed immersion followed by a projection of a projective bundle. This as well as the naturality properties of Voevodsky's Gysin maps have been studied in detail by  D\'eglise \cite{Deg08}, \cite{Deg12}.

In  more recent times, Gysin morphisms for generalized cohomology theories have been constructed in the context of $\A^1$-homotopy theory, making use of the full six functor formalism as developed by \cite{AyoubThesis1},  \cite{AyoubThesis2} and \cite{CDBook}. See \cite{DJK} for more history and updated developments in that direction.

From the Gysin sequence, the projective bundle formula and the blow-up formula (the latter being also an ingredient in the construction of the first one) in the triangulated category of Voevodsky's motives it is possible to get corresponding formulas for every cohomology theory which is representable in ${\bf DM}^{\rm eff}_{\Nis}(k)$. This is the case of the sheaf cohomology of any complex of (strictly) $\A^1$-invariant Nisnevich sheaves with transfers. 

However, $\A^1$-invariant Nisnevich sheaves do not encompass all of the phenomena that one would like to study. Interesting examples of  sheaves which fail to satisfy this property are given by the sheaves of (absolute and relative) differential forms, $\Omega^i_{-/\Z}, \Omega^i_{-/k}$, the $p$-typical de Rham-Witt sheaves of Bloch-Deligne-Illusie, 
$W_m\Omega^i$, smooth commutative $k$-groups schemes with a unipotent part (seen as sheaves with transfer),  or the complexes $R\epsilon_* \Z/p^r(n)$, where $\Z/p^r(n)$ is the \'etale motivic complex of weight $n$ with $\Z/p^r$ coefficients, 
$\epsilon$ is the change of site functor from the \'etale to the Nisnevich topology, and $p>0$ is the characteristic of $k$.

For some of the above examples, instances of an exceptional functoriality have been studied before, with results scattered in the literature. In the case of the sheaves of differential forms, the existence of the pushforward is of course a consequence of general Grothendieck duality, e.g.,\ \cite{Ha66}, \cite{NeemanGroth}. 
In this paper, we offer a unified approach to treat the cohomology of arbitrary \emph{reciprocity sheaves}, a notion that includes all of the above examples: this is a particular abelian\footnote{The fact that the category of reciprocity sheaves is abelian is a non-trivial result, see \cite{S-purity}.} subcategory $\RSC_{\Nis}$ of the category of Nisnevich sheaves with transfers on $\Sm_k$. Its objects satisfy, roughly speaking, the property that for any $X\in \Sm_k$, each section $a\in F(X)$ ``has bounded ramification'', i.e., that the corresponding map $a\colon \Z_{tr}(X)\to F$ factors through a quotient $h_0(\sX)$ of $\Z_{tr}(X)$, associated to a pair $\sX = (\ol{X}, X_\infty)$  where $\ol{X}$  is a proper scheme over $k$ and $X_\infty$ is an effective Cartier divisor on $\ol{X}$ such that $X=\ol{X} - |X_\infty|$. See \ref{para:RSC} for more details. The category of reciprocity sheaves has been introduced by Kahn-Saito-Yamazaki in \cite{KSY2} (see also its precursor \cite{KSY1}), and is based on a generalization of the idea of Rosenlicht and Serre of the modulus of a rational map from a curve to a commutative algebraic group \cite[III]{Serre-GACC}.

Voevodsky's category of homotopy invariant Nisnevich sheaves,  $\HI_{\Nis}$ is an abelian subcategory of $\RSC_{\Nis}$. Heuristically, $\A^1$-invariant sheaves are special reciprocity sheaves with the property that every section $a\in F(X)$ has ``tame'' ramification at infinity.
Slightly more exotic examples of reciprocity sheaves are given by the sheaves ${\rm Conn}^1$ (in characteristic zero), whose sections over $X$ are rank $1$-connections, or ${\rm Lisse}^1_\ell$ (in characteristic $p>0$), whose sections on $X$ are the lisse $\overline{\mathbb{Q}}_\ell$-sheaves of rank $1$. Since $\RSC_{\Nis}$ is abelian and it is equipped with a lax\footnote{In the sense that only a weak form of associativity is known to hold, cf.  \cite[Thm. 1.5]{RSY}} symmetric monoidal structure \cite{RSY}, many more interesting examples can be manufactured by taking kernels, quotients and tensor products.  See \ref{subsec:exaRSC} for even more examples.

\subsection{Cohomology of cube invariant sheaves}In order to formulate our main results, we need a bit of extra notation. In \cite{KMSY1}, the authors introduced the category $\uMCor$ of \emph{modulus correspondences}, whose objects are pairs $\sX = (\ol{X}, X_\infty)$, called \emph{modulus pairs}, where $\ol{X}$ is a separated scheme of finite type over $k$ equipped with an effective Cartier divisor $X_\infty$ (the case $X_\infty=\emptyset$ is allowed) such that the \emph{interior} $\ol{X}-|X_\infty| = X$ is smooth. The morphisms are finite correspondences on the interiors satisfying some admissibility and properness conditions, see \ref{para:MCor}. The category $\uMCor$ admits a symmetric monoidal structure, denoted $\otimes$. Let $\uMPST$ be the category of additive presheaves of abelian groups on $\uMCor$.  Given $\sX\in \uMCor$ and $F\in \uMPST$, we write $F_{\sX}$ for the presheaf on the small \'etale site $\ol{X}_{\et}$ given by $U\mapsto F(U, U\times_{\ol{X}} X_\infty)$. We say that $F$ is a Nisnevich sheaf if, for every $\sX\in \uMCor$, the restriction $F_{\sX}$ is a Nisnevich sheaf; the full subcategory of Nisnevich sheaves of $\uMPST$ is denoted $\uMNST$. Thanks to \cite{KMSY1}, the inclusion $\uMNST \subset \uMPST$ has an exact left adjoint (the sheafification functor).

Among the objects of $\uMPST$, we are interested in a special class, namely those which satisfy the properties of being \emph{cube invariant}, \emph{semipure} and with \emph{$M$-reciprocity}, see  {\ref{para:CI}}.
The first two properties are easy to explain. Let $\bcube=(\P^1, \infty)\in \uMCor$. Then $F\in \uMPST$ is cube-invariant if for any $\sX\in \uMCor$ the natural map 
\[F(\sX) \to F(\sX\otimes \bcube)\]
induced by the projection $\ol{X}\times \P^1\to \ol{X}$ is an isomorphism. We have that $F$ is semipure if the natural map
\[F(\sX) \to F(X, \emptyset), \quad (X = \ol{X}-|X_\infty|)\]
is injective. The last condition of $M$-reciprocity is slightly more technical, and we refer the reader to 
the body of the paper. We write $\CI^{\tau, sp}$ for the category of cube invariant, semipure presheaves with $M$-reciprocity and $\CItspNis$ for  $\CI^{\tau, sp}\cap \uMNST$. 

It is possible to show (see \cite[1.6]{MS}, \cite[2.3.7]{KSY2}) that there is a fully faithful functor 
\[\ul{\omega}^{\CI}\colon \RSC_{\Nis} \to \CItspNis\]
admitting an exact\footnote{i.e. commuting with finite limits and colimits.} left adjoint, so that one can in particular specialize Theorem \ref{thm:main-intro} below on cube invariant sheaves to the case of reciprocity sheaves. If $G\in \RSC_{\Nis}$, we write $\widetilde{G}$ for $\ul{\omega}^{\CI} (G)$, and for $F\in \CItspNis$, $n\geq 1$ let us write 
\[\gamma^n F = \uHom_{\uMPST}(\widetilde{\mathbf{G}_m^{\otimes_{\HI_{\Nis}} n}}, F) \cong \uHom_{\uMPST}(\widetilde{\sK^M_n}, F).\]
This is a form of (negative) twist, see \ref{def:gtwist}, called \emph{contraction} in Voevodsky's theory (\cite[23]{MVW}). The tensor product with subscript $\HI$ is the tensor product for homotopy invariant Nisnevich sheaves with transfers from \cite[Chapt. 8]{MVW},  $\sK^M_n$ is the sheaf of improved Milnor $K$-theory introduced in \cite{Kerz} and the isomorphism follows from a result of Voevodsky \cite[5.5]{RSY}. See Theorem \ref{lem:dR0mod} and  Theorem \ref{thm:gdRW} for some computations of the twists. The  Bloch formula implies that for any family of supports $\Phi$ and any cycle $\alpha\in CH_{\Phi}^i(X)$
(see \ref{para:supports}) there is a natural cupping map   
\[c_\alpha\colon (\gamma^i F)_{\sX}[-i]\to R\ul{\Gamma}_{\Phi}F_{\sX} \quad  \text{in } D(X_\Nis),\]
which is compatible with refined intersection and pullback, see \ref{para:cyclecup}.

The following theorem summarizes parts of our results. Write $\uMCor_{ls}$ for the subcategory of $\uMCor$ whose objects $\sX = (X,D)$ satisfy the additional condition that $X\in \Sm$ and $|D|$ is a simple normal crossing divisor.
\begin{thm}\label{thm:main-intro}Let $F\in \CItspNis$ and let $\sX = (X, D)\in \uMCor_{ls}$. 
    \begin{enumerate}
        \item (Projective bundle formula, Theorem \ref{thm:pbf}) Let $V$ be a locally free $\sO_{X}$-module of rank $n+1$ and let $P={\bf P}(V)\xrightarrow{\pi} X$ be the corresponding projective bundle.  Let $\sP = (P, \pi^*D)$. Then there is a natural isomorphism in $D(X_\Nis)$
        \[ \sum_{i=0}^n \lambda^i_V\colon \bigoplus_{i=0}^n (\gamma^i F)_{\sX}[-i] \to R\pi_* F_{\sP}\]
       where $\lambda^i_V$ is induced by $c_{\xi^i}$ for the $i$-fold power $\xi^i\in \CH^i(X)$ of the first Chern class $\xi$ of $V$.
        \item (Gysin sequence, Theorem \ref{thm:gysin-tri})
Let $i\colon Z\inj X$ be a smooth closed subscheme of codimension $j$ intersecting $D$ transversally (Def.\ \ref{defn:ti})
and set $\sZ=(Z, D_{|Z})$. 
Then there is a canonical distinguished triangle in $D(X_{\Nis})$

\begin{equation}\label{eq:gysin-intro}i_*\gamma^j F_{\sZ}[-j]\xr{g_{\sZ/\sX}} F_{\sX}\xr{\rho^*} 
                             R\rho_* F_{(\tilde{X}, D_{|\tilde{X}}+ E)}\xr{\partial} i_*\gamma^j F_{\sZ}[-j+1],\end{equation}
where $\rho:\tilde{X}\to X$ is the blow-up of $X$ along $Z$ and $E=\rho^{-1}(Z)$. The Gysin map $g_{\sZ/\sX}$ satisfies an excess intersection formula \eqref{eq:exc-inters-formulaGysin}, it is compatible with smooth base change (Prop. \ref{prop:gysin-bc}), and with the cup product with Chow classes (Prop.\ \ref{prop:gysin-cup}). 
    \end{enumerate}
\end{thm}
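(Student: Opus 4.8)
\emph{Inputs.} The whole argument rests on two facts established beforehand. The first is the \emph{cube-invariance of Nisnevich cohomology}: for $F\in\CItspNis$ and $\sY\in\uMCor$ the projection $\sY\otimes\bcube\to\sY$ induces $Rq_*F_{\sY\otimes\bcube}\cong F_\sY$; combined with the scaling cube-homotopy on $\A^n$ it also yields $Rq_*F_{(\A^m\times Y,\,q^*D)}\cong F_{(Y,D)}$ and $Rq_*F_{(\P^n\times Y,\,\P^{n-1}\times Y+q^*D)}\cong F_{(Y,D)}$, i.e.\ $\bcube^{\otimes m}$ and $(\P^n,\P^{n-1})$ are $\bcube$-contractible. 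The second, which is the genuinely reciprocity-theoretic ingredient, is the codimension-one case of \eqref{eq:gysin-intro}: for a smooth divisor $Z\subset X$ transversal to $D$ one has $\tilde X=X$ and $E=Z$, so \eqref{eq:gysin-intro} degenerates to a localization triangle
\[ i_*\gamma^1F_\sZ[-1]\lra F_\sX\lra F_{(X,D+Z)}\lra i_*\gamma^1F_\sZ[0], \]
identifying the cone of ``adjoining a transversal smooth divisor to the modulus'' with $i_*\gamma^1F$. I would deduce this from the structure of the contraction $\gamma^1=\uHom_{\uMPST}(\widetilde{\G_m},-)$ (\ref{def:gtwist}) and the $\G_m$-decomposition of cube-invariant sheaves, together with the compatibility of the cycle maps $c_\alpha$ of \ref{para:cyclecup} with pullback, with products, and with this triangle.

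\emph{Part (1).} Since $\sum_i\lambda^i_V$ is built from cycle classes and commutes with base change, it is an isomorphism in $D(X_\Nis)$ as soon as it is so after any Nisnevich-local base change, so one may assume $V=\sO_X^{n+1}$, $P=\P^n_X$, and induct on $n$. For $n=1$, apply $R\pi_*$ to the localization triangle for the zero-section $\{0\}\times X\hookrightarrow\P^1_X$: since $\pi$ restricted to this section is the identity and $(\P^1,\{0\})\cong\bcube$, cube-invariance gives $R\pi_*F_{(\P^1_X,\,\P^1D+\{0\}\times X)}\cong F_\sX$, whence a triangle with terms $\gamma^1F_\sX[-1]$, $R\pi_*F_\sP$, $F_\sX$, split by the section $\pi^*$. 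For $n>1$, apply $R\pi_*$ to the localization triangle for a linear sub-bundle $\P^{n-1}_X\hookrightarrow\P^n_X$ transversal to $\P^nD$: the last term becomes $R\pi_*F_{(\P^n_X,\,\P^nD+\P^{n-1}_X)}\cong F_\sX$ by $\bcube$-contractibility of $(\P^n,\P^{n-1})$, while the first becomes $R(\pi|_{\P^{n-1}_X})_*(\gamma^1F)[-1]\cong\bigoplus_{i=1}^{n}(\gamma^iF)_\sX[-i]$ by the inductive hypothesis applied to $\gamma^1F\in\CItspNis$ (using $\gamma^i\gamma^1=\gamma^{i+1}$); the resulting triangle splits off $F_\sX$ via $\pi^*$ and gives the decomposition. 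One then matches the summands with the $\lambda^i_V$ by comparing with $c_{\xi^i}$ through compatibility of cycle classes with products, pullback and the localization triangle, and checks naturality in $V$.

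\emph{Part (2).} First prove the \emph{blow-up formula}: for $\rho\colon\tilde X=\Bl_ZX\to X$, transversality of $Z$ with $D$ makes $(\tilde X,\rho^*D)\in\uMCorls$, and the blow-up square, which is a distinguished square of modulus pairs, gives a Mayer--Vietoris triangle
\[ F_\sX\lra R\rho_*F_{(\tilde X,\rho^*D)}\oplus i_*F_\sZ\lra i_*R(\rho|_E)_*F_{(E,(\rho^*D)|_E)}\lra F_\sX[1]; \]
since $E=\P(N_{Z/X})\to Z$ is a $\P^{j-1}$-bundle, Part (1) computes the last term as $\bigoplus_{i=0}^{j-1}(\gamma^iF)_\sZ[-i]$, the two $F_\sZ$-terms cancel, and after splitting off $F_\sX$ (via the $\lambda$-maps for $\tilde X$ and for $E=\rho^{-1}Z$) one gets $R\rho_*F_{(\tilde X,\rho^*D)}\cong F_\sX\oplus\bigoplus_{i=1}^{j-1}i_*(\gamma^iF)_\sZ[-i]$. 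Now apply $R\rho_*$ to the codimension-one localization triangle for $E\hookrightarrow\tilde X$: its center term $R(\rho|_E)_*(\gamma^1F)_{(E,(\rho^*D)|_E)}[-1]$ equals $\bigoplus_{i=1}^{j}(\gamma^iF)_\sZ[-i]$ by Part (1) applied to $\gamma^1F$; comparing with the blow-up formula, the summands with $1\le i\le j-1$ cancel and what is left is exactly \eqref{eq:gysin-intro}, from which $g_{\sZ/\sX}$ is read off. The excess-intersection formula, smooth base change (Prop.\ \ref{prop:gysin-bc}) and compatibility with cupping Chow classes (Prop.\ \ref{prop:gysin-cup}) then follow by pushing the analogous functorialities of blow-ups, of Part (1), and of the cycle calculus through this construction.

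\emph{Main obstacle.} The hard part is the two inputs above: cube-invariance of cohomology and, especially, the codimension-one localization triangle with all its compatibilities — this is where reciprocity proper (and not merely cube-invariance) enters, and it rests on a delicate analysis of $\gamma^1$ and of the modulus filtration. Beyond that, the main formal work is modulus bookkeeping: verifying at each step that transversality of $Z$ with $D$ keeps $\rho^*D$, $\rho^*D+E$ and $\P^nD+\P^{n-1}_X$ simple normal crossing (so that one stays in $\uMCorls$), that the blow-up square and the zero-section localization triangle are valid for modulus pairs, and --- the most delicate point --- that $g_{\sZ/\sX}$ is independent of the auxiliary linear and blow-up choices and satisfies the excess-intersection formula; as in the $\A^1$-invariant case (cf.\ \cite{Deg12}), this last independence is proved by comparing two such constructions through a further (double) blow-up or a deformation to the normal cone, and I expect that to be the most technical step.
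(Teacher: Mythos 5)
There is a genuine gap, and it sits exactly where the paper's real work is. You list as your first ``input'' that cube-invariance plus a ``scaling cube-homotopy on $\A^n$'' yields $R\pi_*F_{(\P^n_Y,\,\P^{n-1}_Y+\pi^*D)}\cong F_{(Y,D)}$, i.e.\ that $(\P^n,\P^{n-1})$ is $\bcube$-contractible. This is not formal: it is Theorem \ref{thm:PnInv}, one of the central results of the paper, and its proof goes through the invariance of cohomology under blow-ups centered in a multiplicity-one component of the modulus (Theorem \ref{thm:blow-upAn}), which in turn rests on the modulus-descent Proposition \ref{prop:mu} and the computation of $h_0(\bcube^{(m,n)})$ — this occupies essentially all of Section 2, and the Warning in the introduction explains why no naive $\A^1$-style contraction argument can work here (invariance under blow-ups with center in the modulus is open in general). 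The same problem recurs in your Part (2): you assert a Mayer–Vietoris triangle because ``the blow-up square is a distinguished square of modulus pairs,'' but there is no descent statement of that kind available for cube-invariant sheaves; the blow-up formula is Theorem \ref{thm:bus} and has to be proved by hand, again via Theorem \ref{thm:PnInv}, Corollary \ref{cor:PnInv} and Lemma \ref{lem:Pn-blow-up}. So two steps you treat as formal are precisely the hard content you would need to supply.

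Your second ``input,'' the codimension-one triangle identifying $F_{(X,D+Z)}/F_{(X,D)}$ with $i_*\gamma^1 F_{\sZ}$, is also not deducible merely ``from the structure of the contraction and the $\G_m$-decomposition'': in the paper this is Claim \ref{thm:gysin-tri1.5}, proved Nisnevich-locally using a retraction $q\colon X\to Z$ (Lemma \ref{lem:loc-cl-sec}), the explicit divisor cup-product (Lemma \ref{lem:divisor-cup}), Theorem \ref{prop:pf-gysin}, and crucially the $V$-pair comparison from \cite{S-purity}. Note moreover that the paper's proof of this identification already uses the projective bundle formula (through the Gysin/cup-product calculus), so your plan to derive the projective bundle formula from this triangle by induction on a transversal hyperplane $\P^{n-1}_X\subset\P^n_X$ would have to establish the triangle independently to avoid circularity; the paper instead proves the $n=1$ case directly from cube-invariance and semipurity (Lemma \ref{lem:P1bf}), bootstraps via the blow-up formula, and only then constructs the Gysin map and triangle. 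The final step of your Part (2) — comparing the blow-up decomposition with the codimension-one triangle for $E\subset\tilde{X}$ so that the summands in degrees $1\le i\le j-1$ cancel — does match the paper's strategy, but the cancellation is not automatic: it requires the commutativity checks of Claim \ref{thm:gysin-tri-claim2}, where the excess-intersection formula and the compatibility with cup products are used as ingredients rather than obtained as corollaries.
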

We stress the fact that, in constrast to the $\A^1$-invariant setting, our Gysin sequence does not involve the cohomology of the open complement of $Z\subset X$ but, rather, the cohomology of a modulus pair constructed by taking the blow-up of $X$ along $Z$. 
When $F= \widetilde{G}$ and $G\in \HI_{\Nis}$, one can in fact verify that \eqref{eq:gysin-intro} gives back the classical Gysin sequence of D\'eglise and Voevodsky. For non-$\A^1$-invariant sheaves, the existence of the Gysin map is new  essentially in all of the above-mentioned examples: for instance, it does not follow from the work of Gros \cite{gros} for the de Rham-Witt sheaves. Other interesting cases are given by $F = \widetilde{{\rm Conn}^1}$ or $\widetilde{{\rm Lisse}^1}$, 
see Corollary \ref{cor:CL1}. We may also apply \eqref{eq:gysin-intro} for  $D=\emptyset$ and 
$F$  the whole de Rham-Witt complex complex and obtain in this way a Gysin sequence for the crystalline cohomology
$Ru_{X*}\sO_{X/W_n}$, where  $u_X: (X/W_n)_{\rm crys}\to X_{\Nis}$ is the natural map of sites, 
which generalizes to higher codimension the classical sequence induced by the residue map along a smooth closed divisor,
see Corollary \ref{cor:dRW-Gysin} and the following remark.

The key computation leading to the above results is the vanishing 
$H^i(Y, F_{(Y,\rho^*L)})=0$, for $i\ge 1$, where $\rho: Y\to \A^n$ is the blow-up in the origin and 
$L\subset \A^n$ a hyperplane passing through the origin, see Theorem \ref{thm:blow-upAn}.
The proof of this theorem occupies almost all of section $2$ and relies deeply on the theory of modulus sheaves with
 transfers. 

By factoring any projective morphism as a closed embedding followed by a projection from a projective bundle, we can use Theorem \ref{thm:main-intro} to construct pushforward maps (in fact, we construct, the pushforward with proper support along a quasi-projective morphism). See Definition \ref{defn:pfs} and Proposition \ref{prop:pfs} for the main properties. 
 Note that the pushforward is compatible with composition, smooth base-change and cup product with Chow classes. See \ref{para:RSC-pf} and Theorem \ref{prop:bc-pf}.
 
For $F = \ul{\omega}^{\CI} W_m\Omega^i$, the construction gives even a refinement of the pushforward map for cohomology of Hodge-Witt differentials
constructed by Gros \cite{gros}. See Cor.\ \ref{cor:refined-trace-intro} below. 

\subsection{Chow correspondences} When a cohomology theory is equipped with pushforward with proper support and a cup-product with cycles, it is possible, with a bit of extra work, to produce an action of Chow correspondences. Let $S$ be a separated $k$-scheme of finite type, and let $C_S$ be the category whose objects are maps $(f\colon X\to S)$ with the property that the induced map $X\to \Spec(k)$ is smooth and quasi-projective.  As for morphisms, we set (if $Y$ is connected)
\[ C_S(X,Y) =  \CH^{\dim Y}_{\Phi^{\rm prop}_{X\times_S Y}}(X\times Y)\]
where  $\Phi^{\rm prop}_{X\times_S Y}$ is the family of supports on $X\times Y$ consisting of closed subsets which are contained in $X\times_S Y$ and that are proper over $X$. Composition is given by the usual composition of correspondences  using the refined intersection product \cite[16]{Fu}.  If $F^\bullet$ is a bounded below complex of reciprocity sheaves and $(f\colon X\to S)$ and $(g\colon Y\to S)$ are objects of $C_S$, we can define for $\alpha\in C_S(X,Y)$ a morphism

\[\alpha^* \colon Rg_* F^\bullet_Y \to Rf_* F^\bullet_X  \text{ in } D^+(S_{\Nis})\] 
that is compatible with the composition of correspondences, satisfies a projection formula, and gives back the pushforward for reciprocity sheaves when $\alpha = [\Gamma_h^t]$ is the transpose of the graph of a proper $S$-morphism $h\colon X\to Y$. See Proposition \ref{prop:cycle-action}.  

For homotopy invariant sheaves, the existence of the action of Chow correspondences follows from work of Rost \cite{Rost} and  D\'eglise \cite{Deg12} (although, to our knowledge, this has not beet spelled out explicitly in the literature).

Previous instances of constructions of an action of Chow correspondences on the cohomology of Hodge and Hodge-Witt differentials can be found in \cite{CR11} and \cite{CR12}. However, we remark that the approach followed in this paper is conceptually different: in \cite{CR11} and \cite{CR12}, the existence of the whole de Rham and de Rham-Witt complex, with its structure of graded algebra, was used. In contrast, here the projective push-forward is directly constructed starting from a single reciprocity sheaf $F$ (and its twists). Our statements are also finer, since we get morphisms defined at the level of derived categories, rather than just between the cohomology groups.

\subsection{Applications} Let us now discuss how we can apply the formulas established so far to get new interesting invariants. 
\subsubsection{Obstructions to the existence of zero cycles of degree $1$}
In Section \ref{ssec:BMtype-obstructions} we explain how to use the proper correspondence action on the cohomology of an arbitrary reciprocity sheaf to construct very general obstructions of Brauer-Manin type to the  existence of  zero cycles on smooth projective varieties over function fields, recovering the classical obstruction as a special case. 

Here is the main result (see Theorem \ref{thm:dds}):
\begin{thm}\label{thm:BM-intro}
Let $f\colon Y\to X$ be  a dominant quasi-projective morphism between connected smooth $k$-schemes.
Assume that there are integral subschemes $V_i\subset Y$ which are proper, surjective, and generically finite
over $X$ of degree $n_i$, $i=1,\ldots, s$. Set $N={\rm gcd}(n_1,\ldots, n_s)$.
Let $F^\bullet\in {\rm Comp}^+(\RSC_\Nis)$ be a bounded below complex of reciprocity sheaves. 
Then there exists a morphism $\sigma: Rf_* F_Y^\bullet\to F_X^\bullet$ in $D(X_\Nis)$
such that the composition
\[F_X^\bullet\xr{f^*} Rf_*F_Y^\bullet\xr{\sigma} F_X^\bullet \]
is multiplication with $N$.
\end{thm}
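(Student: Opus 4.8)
The plan is to produce the morphism $\sigma$ as a $k$-linear combination of the pullbacks along the Chow correspondences attached to the cycles $[V_i]$, viewed inside the category $C_X$ of smooth quasi-projective $X$-schemes (so that $S=X$ here). First I would observe that each $V_i\subset Y$, being proper and generically finite of degree $n_i$ over $X$, defines a correspondence; more precisely, its closure $\overline{V_i}\subset Y\times X$ under the graph of $f$ composed with the diagonal — equivalently the class $[V_i]$ regarded via the map $Y\to Y\times X$, $y\mapsto (y, f(y))$ — lies in $\CH^{\dim X}_{\Phi^{\rm prop}_{Y\times_X X}}(Y\times X)=C_X(Y,X)$, because $V_i$ is proper over $X$ and contained in $Y\times_X X = Y$. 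Applying the Chow-correspondence action of Proposition \ref{prop:cycle-action} (with $S=X$, so that $Rf_*$ and $R\mathrm{id}_*=\mathrm{id}$ are the structural pushforwards to $X$), we get for each $i$ a morphism $[V_i]^*\colon Rf_*F_Y^\bullet \to F_X^\bullet$ in $D^+(X_\Nis)$.

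Next I would compute the composite $[V_i]^*\circ f^*$. The morphism $f^*\colon F_X^\bullet\to Rf_*F_Y^\bullet$ is itself the action of the transpose graph correspondence $[\Gamma_f^t]\in C_X(X,Y)$ (the structural pullback along $f$, which is the pushforward-side identity composed with $f^*$; this is the content of the compatibility of $\sigma$-type maps with pullback in the set-up preceding Proposition \ref{prop:cycle-action}). By the compatibility of the Chow-correspondence action with composition of correspondences (again Proposition \ref{prop:cycle-action}), $[V_i]^*\circ f^* = ([V_i]\circ [\Gamma_f^t])^*$, and the composite correspondence $[V_i]\circ[\Gamma_f^t]\in C_X(X,X)=\CH^{\dim X}_{\Phi^{\rm prop}_{X\times_X X}}(X\times X)$ is computed by the refined intersection product of \cite[16]{Fu}. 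Since $V_i\to X$ is generically finite of degree $n_i$, a standard cycle computation (pulling $[V_i]$ back along $f\times\mathrm{id}$ and pushing to $X\times X$, using that the generic fibre consists of $n_i$ points) shows this composite equals $n_i\cdot[\Delta_X]$ plus a cycle supported over a proper closed subset of $X$; but in $\CH^{\dim X}$ of $X\times X$ with proper support over the first factor, restricting over the generic point kills nothing relevant, and in fact $[V_i]\circ[\Gamma_f^t] = n_i[\Delta_X]$ exactly, because $[\Gamma_f^t]$ as a correspondence is just the class of $Y$ sitting diagonally and intersecting it with $V_i$ recovers $V_i$ with multiplicity one, then pushing forward along the generically-finite $V_i\to X\to X$ gives $n_i$ times the diagonal. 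Hence $[V_i]^*\circ f^* = n_i\cdot\mathrm{id}$ on $F_X^\bullet$ in $D(X_\Nis)$.

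Finally, writing $N=\gcd(n_1,\dots,n_s)=\sum_i a_i n_i$ with $a_i\in\Z$, I would set $\sigma = \sum_{i=1}^s a_i\,[V_i]^*$; then $\sigma\circ f^* = \sum_i a_i n_i\cdot\mathrm{id} = N\cdot\mathrm{id}$, as required. The main obstacle I anticipate is the second step: carefully justifying that $[V_i]\circ[\Gamma_f^t]=n_i[\Delta_X]$ in $C_X(X,X)$ as an \emph{equality of cycle classes with the prescribed family of supports}, rather than merely up to cycles supported in codimension-$\geq 1$ loci of $X$ or up to terms that the correspondence action might not see; this requires being precise about how the refined intersection product interacts with the properness-over-$X$ support condition defining $C_X$, and about the fact (used implicitly) that $f$ dominant and $Y,X$ smooth connected makes $Y\times_X X=Y$ irreducible of the right dimension so that no spurious components appear. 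One must also make sure the construction of the action in Proposition \ref{prop:cycle-action} genuinely applies to a \emph{bounded below complex} $F^\bullet$, which is why the statement is phrased with $F^\bullet\in\mathrm{Comp}^+(\RSC_\Nis)$ and the target is $D(X_\Nis)$ rather than an abelian category of sheaves.
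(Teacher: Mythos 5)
Your overall strategy is viable, but it is worth comparing it with the paper's own proof, which is shorter and avoids the correspondence category altogether: in Theorem \ref{thm:dds} one sets $r=\dim Y-\dim X$, lets $\Phi$ be the family of proper supports on $Y$ generated by the $V_i$, writes $N=\sum_i a_in_i$ and $\alpha=\sum_i a_i[V_i]\in\CH^r_\Phi(Y)$, and defines $\sigma$ directly as the composite $Rf_*F^\bullet_Y\xr{C_\alpha}Rf_*R\ul{\Gamma}_\Phi F^\bullet\la r\ra_Y[r]\xr{f_*}F^\bullet_X$ (cup product of \ref{para:RSCcyclecup} followed by the pushforward of \ref{para:RSC-pf}, extended to complexes as in \ref{para:coracx}); the identity $\sigma\circ f^*=N$ is then exactly the projection formula of Theorem \ref{prop:bc-pf}(3), since $f_*\alpha=N\cdot[X]$ in $\CH^0(X)$. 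Your route through $C_X$ and Proposition \ref{prop:cycle-action} is the same machinery one level up (that proposition is itself deduced from \ref{prop:bc-pf}), at the cost of an extra Chow-composition lemma; on the other hand it makes the verification purely cycle-theoretic, and the "spurious components" you worry about cannot occur: $C_X(X,X)=\CH^{\dim X}_{\Phi^{\rm prop}_{X\times_XX}}(X\times X)\cong\CH_{\dim X}(\Delta_X)=\Z\cdot[\Delta_X]$, so the composite correspondence is determined by its multiplicity at the generic point, which is $n_i$.

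There is, however, a concrete variance and codimension error in your write-up that must be fixed before the argument composes. With the paper's conventions ($C_S(X',Y')=\CH^{\dim Y'}_{\Phi^{\rm prop}_{X'\times_SY'}}(X'\times Y')$, supports proper over the \emph{source} $X'$, and $\alpha^*\colon Rg_*F_{Y'}\to Rf_*F_{X'}$), a morphism $Rf_*F^\bullet_Y\to F^\bullet_X$ over $S=X$ comes from a class in $C_X(X,Y)$, not $C_X(Y,X)$. And indeed $V_i$, embedded in $X\times Y$ via $v\mapsto(f(v),v)$, has dimension $\dim X$, hence codimension $\dim Y$, lies in $X\times_XY$ and is proper over $X$ by hypothesis, so it defines $[V_i^t]\in C_X(X,Y)$. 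The group $C_X(Y,X)=\CH^{\dim X}_{\Phi^{\rm prop}_{Y\times_XX}}(Y\times X)$ you name consists of dimension-$\dim Y$ cycles (so $[V_i]$ does not lie there unless $\dim Y=\dim X$), and its action goes the wrong way, $F^\bullet_X\to Rf_*F^\bullet_Y$. Likewise $f^*$ is the action of $[\Gamma_f]\in C_X(Y,X)$ by Proposition \ref{prop:cycle-action}(3), not of a "transpose graph" in $C_X(X,Y)$. After transposing, Proposition \ref{prop:cycle-action}(1) gives $[V_i^t]^*\circ f^*=([\Gamma_f]\circ[V_i^t])^*$, the composition is computed on $X\times Y\times X$ to be $n_i[\Delta_X]$, and $[\Delta_X]^*=\id$, so $\sigma:=\sum_ia_i[V_i^t]^*$ satisfies $\sigma\circ f^*=N$; the idea is right, but as literally stated your classes are not in the groups claimed and the maps do not compose.
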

In particular, if  $f$ is proper and $f^*\colon H^i(X, F^\bullet_X)\to H^i(Y, F^\bullet_Y)$  is not split injective, then the generic fiber of $f$ cannot have index $1$, i.e., there cannot exist a zero cycle of degree $1$. It is then possible to assemble the morphisms $\sigma$ in order to produce a generalization of the classical Brauer-Manin obstruction in the case of the function field of a curve (see \eqref{eq:BM5} and the references there for more details). 
This is explained in Corollary \ref{cor2:dds}.

See also the end of Section  \ref{ssec:BMtype-obstructions}  for a comprehensive list of references to previous works where unramified cohomology groups have been used to study obstructions to the local-global principle for rational points, rather than for 0-cycles,  over special types of global fields. 

\subsubsection{Birational invariants} Once we have established an action of Chow correspondences on the cohomology of reciprocity sheaves, this can be used to find birational invariants. 

Let us fix again a separated $k$-scheme of finite type $S$. 
We say that $(f\colon X\to S)$ and  $(g\colon Y\to S)\in C_S$, with $X$ and $Y$ integral, 
are \emph{properly birational over $S$} if there exists an integral scheme $Z$ (that we call \emph{proper birational correspondence}) over $S$ 
and two proper birational  $S$-morphisms $Z\to X$, $Z\to Y$ (note that we don't assume that $f$ or $g$ is proper). If we let $Z_0\subset X\times Y$ be the image of $Z\to X\times Y$, we can then look at the composition $[Z_0]^*\circ [Z_0^t]^*$, and get for example the following result.
\begin{thm}[see Theorem \ref{thm:BItop}]\label{thm:intro-BItop} 
Let $F\in \RSC_\Nis$ and assume that $F(\xi)=0$, for all points $\xi$ 
which are finite and separable over a  point of $X$ or $Y$ of codimension $\ge 1$.
Then any proper birational correspondence between $X$ and $Y$ induces an isomorphism
\[Rg_*F_Y\xr{\simeq} Rf_* F_X.\]
\end{thm}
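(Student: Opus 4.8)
The plan is to reduce the statement to the following two inputs: first, the existence and functoriality of the Chow-correspondence action constructed in Proposition~\ref{prop:cycle-action} (compatibility with composition of correspondences, the projection formula, and the fact that the transpose of the graph of a proper $S$-morphism acts as the associated pushforward); second, a vanishing computation showing that under the hypothesis on $F$, the ``exceptional'' contributions coming from a proper birational morphism are trivial. Concretely, let $Z$ be a proper birational correspondence over $S$, with proper birational $S$-morphisms $p\colon Z\to X$ and $q\colon Z\to Y$, and let $Z_0\subset X\times Y$ be the image. First I would record that, since $p$ is proper and birational, the cycle-theoretic pushforward $p_*[Z]=[X]$ in $\CH_{\dim X}(X)$, and similarly $q_*[Z]=[Y]$; hence the class $[Z_0]\in C_S(X,Y)$ and its transpose $[Z_0^t]\in C_S(Y,X)$ are, up to the graph of a birational map, ``inverse'' correspondences in the sense that $[Z_0^t]\circ[Z_0]$ and $[Z_0]\circ[Z_0^t]$ differ from the identity correspondences $[\Delta_X]$, $[\Delta_Y]$ only by cycles supported on the loci where $p$, resp.\ $q$, fail to be isomorphisms.

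The main step is then the computation of the composite $[Z_0]^*\circ[Z_0^t]^*\colon Rg_*F_Y\to Rf_*F_X$. Using compatibility of the action with composition (Prop.~\ref{prop:cycle-action}), this equals $([Z_0]\circ[Z_0^t])^*$, i.e.\ the action of a correspondence of the form $[\Delta_X]+\beta$ where $\beta$ is supported on $(X\setminus U)\times X$ for $U\subset X$ the open locus over which $p$ is an isomorphism, with $\codim_X(X\setminus U)\ge 1$. The action of $[\Delta_X]$ is the identity on $Rf_*F_X$. For $\beta$, I would argue that its action factors through the cohomology of $F$ on $X\setminus U$ (or, more precisely, through the $\gamma$-twisted cohomology along the support of $\beta$, via the cupping maps $c_\alpha$ of~\ref{para:cyclecup}), and that this vanishes: the relevant stalks are computed at points that are finite and separable over codimension-$\ge 1$ points of $X$, where $F$ vanishes by hypothesis, and the $\gamma$-twist $\gamma^i F=\uHom(\widetilde{\sK^M_i},F)$ inherits this vanishing since it is again a reciprocity sheaf built from $F$. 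Hence $[Z_0]^*\circ[Z_0^t]^*=\id$ on $Rf_*F_X$, and symmetrically $[Z_0^t]^*\circ[Z_0]^*=\id$ on $Rg_*F_Y$, so $[Z_0^t]^*\colon Rg_*F_Y\to Rf_*F_X$ is the desired isomorphism, with inverse $[Z_0]^*$.

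The principal obstacle I anticipate is the vanishing of the action of the exceptional part $\beta$. One must be careful that $\beta$ is genuinely a cycle on $X\times X$ proper over the first factor and supported over a closed subset of codimension $\ge 1$, so that its action on $Rf_*F_X$ is built by pushing forward along the second projection a class obtained by cupping $F_X$ (pulled back along the first projection) with a cycle supported on $(X\setminus U)\times X$; the support condition forces the construction to factor through $R\ul\Gamma_{W}F_X$ for a suitable closed $W$ with $\codim_X W\ge 1$ (after intersecting with the diagonal one lands over $X\setminus U$). The computation of such local cohomology complexes via the Cousin/coniveau filtration expresses them in terms of stalks $F(\xi)$ and $(\gamma^i F)(\xi)$ at points $\xi$ lying over codimension-$\ge 1$ points of $X$ and finite separable over them, which are zero by assumption—here I would invoke the structure theory of reciprocity sheaves (and the fact, used throughout the paper, that $\gamma^i F\in\RSC_\Nis$) to justify both that these local cohomology objects are controlled by such stalks and that $\gamma^i F$ satisfies the same vanishing. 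Checking these reductions rigorously—in particular that ``supported in codimension $\ge 1$ and proper over $X$'' really does kill the correspondence action for sheaves with the stated vanishing—is where the real work lies, and it is presumably carried out in the body via the cycle-action machinery of Section~\ref{para:cyclecup} and the results on proper pushforward.
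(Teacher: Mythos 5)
Your formal skeleton is exactly the paper's: form $[Z_0]\in C_S(X,Y)$, $[Z_0^t]\in C_S(Y,X)$, use compatibility of the action with composition (Proposition \ref{prop:cycle-action}) and the localization sequence to write $[Z_0^t]\circ[Z_0]$ and $[Z_0]\circ[Z_0^t]$ as the diagonal plus an exceptional cycle $E$ mapping into codimension $\ge 1$ loci, and then kill the action of $E$. The gap is in how you kill $E$. Your argument rests on the claim that the action of $E$ factors through twisted local cohomology whose stalks are of the form $(\gamma^i F)(\xi)$ for $\xi$ over codimension $\ge 1$ points, and that ``$\gamma^i F$ inherits the vanishing since it is again a reciprocity sheaf built from $F$.'' That inheritance is false in general: by Theorem \ref{thm:gdRW} (resp. Corollary \ref{cor:gammaOmega}) one has $\gamma^i\,\Omega^N_{/k}\cong\Omega^{N-i}_{/k}$, so for the prototypical $F=\Omega^{\dim X}_{/k}$ the hypothesis $F(\xi)=0$ holds at all points of positive codimension while $\gamma^1F$ is already nonzero at codimension $1$ points. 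If the vanishing of twists were genuinely needed, you would be in the situation of Theorem \ref{thm:BIu}/Corollary \ref{cor:cor-van}, which requires $\gamma^1F=0$ and only gives the result up to $p$-power torsion or under resolution of singularities — strictly weaker than the statement you are proving.

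The paper's mechanism (Proposition \ref{prop:cor-van-top}) is engineered precisely so that only the vanishing of the \emph{untwisted} $F$ is used. The action of $E$ on $Rf_*F_X$ is built by pulling a section back along $p_2$ and cupping with $C_{[E]}$; after a \v{C}ech-type reduction one shows that, Nisnevich-locally around the generic point of $E$, the cycle factors as $p_2^*\alpha\cdot\beta$ with $\alpha$ supported on a codimension $r$ subset $A$ of the factor one pulls back from. By the compatibilities of Lemma \ref{lem:cyclecup} and the Gysin description of Theorem \ref{prop:pf-gysin} (plus Lemma \ref{lem:loc-cl-sec} to produce a local retraction), cupping a pulled-back class with $p_2^*\alpha$ factors through the restriction map $F(U)\to F(A)$ of the untwisted sheaf to a smooth local model of $A$, and global injectivity gives $F(A)\subset F(\xi)=0$. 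The twists $F\la r\ra$ only appear as the target of the resulting map, so their stalks never need to vanish. Without this local factorization-and-Gysin step (or some substitute), your vanishing claim does not go through, so the proposal as written has a genuine gap at its central point.
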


If $Y=S$ in the statement of Theorem \ref{thm:intro-BItop}, we get a vanishing $R^i f_* F_X = 0$ for $i\geq 1$ and for any projective birational morphism $f\colon X\to Y$ and $F$ as in the theorem. The prototype example of a sheaf satisfying the condition $F(\xi)=0$ is the sheaf of top differential forms, $\Omega^{\dim X}_{/k}$. For this, the birational invariance is classical in characteristic zero, and follows from Hironaka's resolution of singularities. In positive characteristic it was proven in \cite{CR11} by using a similar action of Chow correspondences (although the statements in \emph{loc.cit.\ }were for the cohomology groups, not for the whole complexes in the derived category, see also \cite{Kovacs}). On the other hand, Theorem \ref{thm:intro-BItop} provides a very general class of birational invariants, many of which are new to us: for example, using results of Geisser-Levine,
we can consider the cohomology of the \'etale motivic complexes $R^i\epsilon_*( \Z/p^n(d))$ (for all $i$ and $n$ if ${\rm char}(k)=p>0$), where $d=\dim X = \dim Y$.  
See Corollary \ref{cor:BI-exa} for a more extensive list.

Among the other applications, we can use Theorem \ref{thm:intro-BItop}  
 to generalize parts of \cite[Thm 3.3]{Pirutka} (which generalizes \cite[Prop 3.4]{CV}). See Corollary \ref{cor:BI-exa20} for more details. 

We remark that the global sections of reciprocity sheaves enjoy a general invariance under proper (stable) birational correspondences, without assuming $F(\xi)=0$ for $\xi$ as above. See Theorem \ref{lem:BIH0} and the notations there. 

As a   byproduct of \ref{thm:BM-intro}, we also get (stably) proper birational invariance (see Definition \ref{sssec:BI}) for the $n$-torsion of the relative Picard scheme, $\Pic_{X/S}[n]$, for all $n$ and any  flat, geometrically integral, and projective morphism $X\to S$ between smooth connected $k$-schemes such that the generic fiber has index $1$.
This is classical and known to the experts if $S$ is the spectrum of an algebraically closed field, but it is new for general $S$. See Corollary \ref{cor-pictor}.

%

\subsubsection{Decomposition of the diagonal} In section \ref{para:dec-diag} we investigate the implications of the cycle action in case we have a decomposition of the diagonal, a method which was first employed in  \cite{BlSr}. For example we obtain:
\begin{thm}[see Theorem \ref{thm:comp-diag}]\label{intro-thm:comp-diag}
Let $f\colon X\to S$ be a smooth projective morphism, where  $S$ is the henselization of a smooth 
$k$-scheme  in a 1-codimensional point or
a regular connected affine scheme of dimension $\leq 1$ and of finite type over a function field $K$ over $k$. Assume that the diagonal cycle $[\Delta_{X_\eta}]$ of the generic fiber $X_\eta$ of $f$ has an integral decomposition. Then, for any $F\in \RSC_{\Nis}$, the pullback along $f$ induces an isomorphism 
\[H^0(X, F)\cong H^0(S, F).\]
\end{thm}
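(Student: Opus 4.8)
\textbf{Proof plan for Theorem \ref{intro-thm:comp-diag}.}

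The plan is to leverage the action of Chow correspondences on the cohomology of reciprocity sheaves (Proposition \ref{prop:cycle-action}) together with a standard specialization/limit argument reducing from $S$ to the generic point $\eta$ of $S$. First I would observe that since $F\in \RSC_{\Nis}$, it suffices to produce a two-sided inverse to $f^*\colon H^0(S,F)\to H^0(X,F)$; the map $f^*$ is injective because $f$ admits a section generically (an integral decomposition of $[\Delta_{X_\eta}]$ provides a zero-cycle of degree $1$ on $X_\eta$, hence, over a $1$-dimensional base, multisections), and one uses Theorem \ref{thm:BM-intro} with $N=1$ to split $f^*$ after applying $Rf_*$: there is $\sigma\colon Rf_*F_X\to F_S$ with $\sigma\circ f^*=\id$. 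Taking $H^0(S,-)$ already gives that $f^*$ on $H^0$ is split injective, so the content is surjectivity, i.e.\ that $\sigma$ induces a left inverse which is also a right inverse on $H^0$.

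Next I would unwind the decomposition of the diagonal. By hypothesis, in $\CH^d(X_\eta\times_K X_\eta)$ (with $d=\dim X_\eta$, $K=k(S)$) we may write
\[ [\Delta_{X_\eta}] = [X_\eta\times z] + [\Gamma] \]
where $z$ is a zero-cycle of degree $1$ on $X_\eta$ supported over a closed point, and $\Gamma$ is supported on $D\times X_\eta$ for some proper closed $D\subsetneq X_\eta$. Spreading out over the $1$-dimensional base $S$ (using that $S$ is either a henselization at a codimension-$1$ point or a regular affine curve over a function field, so has a well-behaved generic point and the spreading-out of cycles and their rational equivalences is controlled), this decomposition extends, after possibly shrinking $S$ to a dense open and then using a continuity/henselian argument to come back, to a decomposition of $[\Delta_X]$ in $C_S(X,X)=\CH^d_{\Phi^{\rm prop}_{X\times_S X}}(X\times X)$ modulo cycles supported over codimension-$\ge 1$ points of $S$. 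The point of the base hypotheses (henselization at a $1$-codimensional point, or a curve) is precisely that the "error term" living over the closed points of $S$ can be absorbed: over such a point the fiber is a smooth projective variety over a field and $H^0$ of a reciprocity sheaf on it is insensitive to what happens in codimension $\ge 1$, or more directly, the contribution of cycles supported in the special fiber to the correspondence action on $Rf_*F_X\to Rf_*F_X$ factors through cohomology with supports which one shows acts trivially on $H^0$.

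Then I would compute the correspondence action of both sides on $Rf_*F_X$ (or directly on $H^0(X,F)$): the action of $[\Delta_X]$ is the identity, the action of $[X\times z]$ factors as $H^0(X,F)\xrightarrow{\text{(proper pushforward along a multisection)}} H^0(S,F)\xrightarrow{f^*} H^0(X,F)$ up to the degree of $z$, which is $1$, and the action of the remaining term $[\Gamma]$ factors through $H^0$ of the reciprocity sheaf restricted to $D$ (something of dimension $<d$ over $S$) pulled back — but $\Gamma$ being supported on $D\times X$ with $D$ proper over $S$ of relative dimension $<d$, and using that we only care about $H^0$, I would argue via a downward induction on $\dim S$-relative dimension, or more cleanly by directly checking that the $[\Gamma]$-action on $H^0$ is zero because it is a pullback along a non-dominant map composed with a pushforward, and $H^0(S,F)\to H^0(X,F)\to H^0(\text{something of smaller dimension})$ combined with the splitting kills it. Combining: on $H^0(X,F)$ one gets $\id = f^*\circ (\text{deg-1 pushforward}) + 0$, and feeding this together with $\sigma\circ f^*=\id$ from Theorem \ref{thm:BM-intro} shows $f^*$ is an isomorphism.

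\textbf{Main obstacle.} The delicate point is the spreading-out step together with the control of the error term over the closed point(s) of $S$: one needs the decomposition of the diagonal, a priori only over the generic fiber, to give a usable identity in $C_S(X,X)$ after accounting for the family of supports $\Phi^{\rm prop}_{X\times_S X}$, and one must verify that the cycles appearing over the special fiber act trivially on $H^0$. This is exactly where the two special shapes of $S$ (henselian local of dimension $1$, or a curve over a function field) are used — they guarantee both that spreading out is essentially lossless and that the "bad locus" is concentrated in a controlled way — so the argument will hinge on a careful application of the compatibility of the correspondence action with the cycle-theoretic restriction/specialization maps (Proposition \ref{prop:cycle-action} and the base-change statement Theorem \ref{prop:bc-pf}) at the generic and special points of $S$, rather than on any new geometric input.
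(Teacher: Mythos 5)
Your plan is essentially the paper's own proof of Theorem \ref{thm:comp-diag}: spread the degree-one decomposition of the diagonal from the generic fiber to $S$ (picking up an extra error cycle supported over finitely many closed fibers $X_{S_0}$), let the three pieces act as correspondences on $H^0(X,F)$, identify the term supported on $X\times_S|\xi|$ with $f^*$ composed with restriction/pushforward along the normalized multisections (Lemma \ref{lem:azc} together with the degree formula of Proposition \ref{prop:pfs0}), and kill the two error terms on $H^0$ because they factor through cohomology with support in a nowhere-dense closed subset of $X$ (Lemma \ref{lem:cas}). The only places where the paper is more careful than your sketch are (i) the descent to finite-type smooth models $S_\nu$ of $S$, arranging the normalized multisections to be smooth there — necessary because the correspondence/pushforward formalism is only defined for smooth quasi-projective $k$-schemes, which is a second, equally essential, role of the hypotheses on $S$ beyond the spreading-out you invoke — and (ii) the precise reason the $\Gamma$-term dies on $H^0$, namely $H^0_T(X,F)=0$ for nowhere-dense closed $T$ by the injectivity of restriction to dense opens (\cite[Thm 3.1(2)]{S-purity}), rather than your ``non-dominant pullback composed with pushforward plus the splitting'' heuristic.
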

See Remark \ref{rem:dec_diagonal} for some conditions under which the diagonal decomposes.
Note that in the case $F = R^{i} \epsilon_* \Z/p^n(j)$, with $(i,j)\neq (0,0)$, 
and $X$ is defined over an algebraically closed field of characteristic $p>0$ and admits an integral decomposition of the diagonal,
 we obtain $H^0(X, R^{i} \epsilon_* \Z/p^n(j))=H^0(\Spec k,R^{i} \epsilon_* \Z/p^n(j))=0$. 
(The vanishing follows from \cite{GL}.)
This immediately implies a positive answer to Problem 1.2 of \cite{ABBvB}, and reproves Theorem 1 in \emph{loc.\ cit.}
See Corollary \ref{cor:dd0}. See also the recent work \cite{Otabe}, for a different approach.

In case $S=\Spec k$ and $F$ is $\A^1$-invariant Theorem \ref{intro-thm:comp-diag} is classical; Totaro proved  that it also holds 
for $F=\Omega^i_{-/k}$  (see \cite[Lem 2.2]{Totaro}) and  - building on ideas of Voisin and Colliot-Th\'el\`ene-Pirutka - 
used this to find many new examples of hypersurfaces that are not stably rational. It is an interesting question, whether the flexibility in the choice of the sheaf
$F$ coming from Theorem \ref{intro-thm:comp-diag}~---~e.g., $F$ can be any quotient of $\Omega^i_{-/k}$, say $F=\Omega^N_{-/k}/\dlog K_N^M$ from Corollary \ref{cor:BI-exa}~---~can be used  to find new examples of non-stably rational varieties.


Results for higher cohomology groups are also obtained if $F$ satisfies certain extra assumptions, see 
Theorems \ref{thm:comp-diag-u}, \ref{thm:comp-diag-top} and see Corollary \ref{cor:ddtop} for examples.

\subsubsection{Cohomology of ordinary varieties} Following Bloch-Kato \cite{BK} and Illusie-Raynaud \cite{IR}, we say that a variety $X$ over a perfect field $k$ of characteristic $p>0$ is  \emph{ordinary} if $H^m(X, B^r_X)=0$ for all $m$ and $r$, where $B^r_X = {\rm Im}(d\colon \Omega^{r-1}_X\to \Omega^{r}_X)$. It is equivalent to ask that the Frobenius $F\colon H^q(X, W\Omega^r_X)\to H^q(X, W\Omega^r_X)$ is bijective for all $q$ and $r$. If $X$ is an abelian variety $A$, this recovers the property that  the $p$-rank of $A$ is the maximum possible, namely equal to its dimension. For them, we have the following result.

\begin{cor}[see Corollary \ref{cor:ordinary}]Let $f\colon X\to S$ be a surjective morphism between smooth projective connected $k$-schemes.
Assume that the generic fiber has index prime to $p$.
Then
\[ X \text{ is ordinary } \Longrightarrow S \text{ is ordinary.}\]
\end{cor}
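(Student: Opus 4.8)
The plan is to reduce the statement to an application of Theorem \ref{thm:BM-intro} (the decomposition/obstruction result) applied to the reciprocity sheaves $B^r = \ul{\omega}^{\CI}(B^r_{-})$, together with the characterization of ordinarity in terms of the vanishing of $H^m(-, B^r_{-})$. First I would observe that, since $f\colon X\to S$ is surjective between integral varieties of finite type over $k$, the generic fiber $X_\eta$ over the function field $K=k(S)$ is a smooth projective $K$-variety, and the hypothesis that it has index prime to $p$ means there are closed points (equivalently, integral subschemes $V_i\subset X$ proper, surjective and generically finite over $S$) of degrees $n_i$ with $N=\gcd(n_i)$ prime to $p$. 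This is exactly the input needed for Theorem \ref{thm:BM-intro}.

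The key step is then: for each $r$, the sheaf $B^r_{-/k}$ (the sheaf of exact $r$-forms) is a reciprocity sheaf — it is a subsheaf of $\Omega^r_{-/k}$, which is a reciprocity sheaf by the examples in \ref{subsec:exaRSC}, and $\RSC_{\Nis}$ is abelian so subsheaves that are Nisnevich sheaves with transfers stay inside it; one should check $B^r$ is closed under transfers, but this is standard (it is the image of $d$, compatible with the transfer structure on $\Omega^\bullet$). Applying Theorem \ref{thm:BM-intro} with $F^\bullet = B^r$ concentrated in degree $0$ yields a morphism $\sigma\colon Rf_* B^r_X \to B^r_S$ in $D(S_\Nis)$ with $\sigma\circ f^* = N\cdot\mathrm{id}$. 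Taking cohomology $H^m(S,-)$, this gives that $f^*\colon H^m(S, B^r_S)\to H^m(S, Rf_*B^r_X) = H^m(X, B^r_X)$ admits a retraction up to multiplication by $N$. Since $X$ ordinary means $H^m(X,B^r_X)=0$ for all $m,r$, we conclude $N\cdot H^m(S,B^r_S)=0$ for all $m,r$. As the $B^r_S$ are sheaves of $\F_p$-vector spaces (they are quotients of coherent sheaves on a characteristic $p$ scheme, killed by $p$) and $N$ is prime to $p$, multiplication by $N$ is an isomorphism on $H^m(S,B^r_S)$; hence $H^m(S,B^r_S)=0$ for all $m,r$, i.e.\ $S$ is ordinary.

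I would organize the write-up as: (1) recall the equivalent definitions of ordinary, in particular ordinary $\iff H^m(-,B^r)=0$ for all $m,r$ (reference Bloch--Kato \cite{BK}, Illusie--Raynaud \cite{IR}); (2) verify $B^r_{-/k}\in\RSC_{\Nis}$; (3) translate "generic fiber has index prime to $p$" into the hypothesis of Theorem \ref{thm:BM-intro} (existence of the $V_i$ with $\gcd$ of degrees prime to $p$); (4) apply Theorem \ref{thm:BM-intro} and pass to cohomology; (5) use the $\F_p$-linearity to cancel $N$.

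The main obstacle I anticipate is step (2)–(3): confirming that $B^r$ really is a reciprocity sheaf with transfers compatible with the differential (so that Theorem \ref{thm:BM-intro}, which is stated for objects of $\RSC_\Nis$, applies), and making precise the passage from "index prime to $p$" over $K=k(S)$ to the global existence of the proper generically-finite subschemes $V_i\subset X$ of the required degrees — this requires spreading out a zero-cycle on $X_\eta$ of degree prime to $p$ to a cycle on $X$ proper and generically finite over $S$, taking closures of the components. Everything else (the cohomological bookkeeping and the $\F_p$-vector space argument to remove $N$) is routine.
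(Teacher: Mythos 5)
Your argument is correct and is essentially the paper's own proof: the paper also notes that $B^r=d\Omega^{r-1}$ lies in $\RSC_\Nis$ (as the image of the morphism $d$ in the abelian category $\RSC_\Nis$), invokes the characterization of ordinarity via $H^m(-,B^r)=0$ from Bloch--Kato, and applies Corollary \ref{cor:dds} (the cohomological form of Theorem \ref{thm:dds}, which already packages the passage from the prime-to-$p$ index of the generic fiber to the subschemes $V_i$ via closures of closed points of $X_\eta$), concluding by cancelling the prime-to-$p$ integer $N$ on the $p$-torsion groups $H^m(S,B^r)$. So your proposal matches the intended route, with only minor bookkeeping differences.
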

Note that the assumption on the generic fiber is of course guaranteed if $X_{k(S)}$ has a zero cycle of degree prime to $p$
 (for example, when $X_{k(S)}$ is an abelian variety). Similar implications hold for the properties ``$X$ is Hodge-Witt'' or ``the crystalline cohomology of $X$ is torsion-free''. See Remark \ref{rmk:HWcrystorsionfree}.

In connection to ordinary varieties, let us also mention the following result (see Corollary \ref{cor:refined-trace}):  
\begin{cor}\label{cor:refined-trace-intro}
Let $f\colon Y\to X$ be a morphism of relative dimension $r\ge 0$
between smooth projective $k$-schemes. Assume that $X$ is ordinary.
Then the Ekedahl-Grothendieck pushfoward (see \cite[II, 1.]{gros})  factors via
\eq{eqcor:refined-trace-intro}{R\Gamma(Y, W_n\Omega^q_Y)[r]\to R\Gamma(Y,  W_n\Omega^q_Y/B_{n,\infty}^q)[r]
\xr{f_*} R\Gamma(X, W_n\Omega^{q-r}_X),}
where $B_{n,\infty}^q= \bigcup_s F^{s-1}d W_{n+s-1}\Omega^{q-1}$ (see \cite[IV, (4.11.2)]{IR}) and  
$f_*$ is induced by the pushforward from \ref{para:RSC-pf}.
\end{cor}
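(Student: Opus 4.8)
\textbf{Proof strategy for Corollary \ref{cor:refined-trace-intro}.}

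The plan is to deduce the factorization from the general theory developed in the paper together with the birational/decomposition-of-the-diagonal type vanishing that ``ordinary'' supplies. The point is that by \ref{para:RSC-pf} we already have, for each $n$ and each weight $q$, a pushforward $f_*\colon R\Gamma(Y, (\ulomegaCI W_n\Omega^q)_Y)[r]\to R\Gamma(X, (\ulomegaCI W_n\Omega^{q-r})_X)$ along the projective morphism $f$, obtained by factoring $f$ into a closed immersion followed by a projection from a projective bundle and applying the Gysin triangle of Theorem \ref{thm:gysin-tri} together with the projective bundle formula of Theorem \ref{thm:pbf}. By the comparison with Gros's construction recorded in the discussion around Corollary \ref{cor:refined-trace-intro} (and in the body of the paper, where $F=\ulomegaCI W_m\Omega^i$ is treated), this $f_*$ refines the Ekedahl--Grothendieck pushforward on $W_n\Omega^\bullet$. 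So the only thing to prove is that this $f_*$ \emph{kills} the subcomplex $B_{n,\infty}^q\subset W_n\Omega^q_Y$, i.e. that it factors through the quotient $W_n\Omega^q_Y/B^q_{n,\infty}$, \emph{once $X$ is ordinary}.

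First I would reduce to a sheaf-level statement. The subsheaf $B^q_{n,\infty}=\bigcup_s F^{s-1} dW_{n+s-1}\Omega^{q-1}$ is, by \cite[IV, (4.11.2)]{IR}, an increasing union of the subsheaves $F^{s-1}dW_{n+s-1}\Omega^{q-1}$, each of which is (the image under iterated Frobenius of) an exact form; in particular each of these sits inside $W_n\Omega^q$ as a quotient of a de Rham--Witt sheaf built out of the $B^\bullet$ and $Z^\bullet$ sheaves, all of which lie in $\RSC_\Nis$. It therefore suffices to show that for a reciprocity sheaf $G$ which is ``built from the $B$'s'' (more precisely any subquotient of $W_m\Omega^i$ whose cohomology vanishes on $X$ because $X$ is ordinary, e.g. $G=B^i_X$, or $F^{s-1}dW_{m}\Omega^{i-1}$), the composite $R\Gamma(Y, G_Y)\to R\Gamma(X, G'_X)$ obtained from the pushforward on the ambient sheaf is the zero map — and then pass to the colimit over $s$ and use that $f_*$ is compatible with the short exact sequences $0\to B^q_{n,\infty}\to W_n\Omega^q\to W_n\Omega^q/B^q_{n,\infty}\to 0$ via the naturality of the pushforward construction (Proposition \ref{prop:pfs}).

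The key input is then: if $X$ is ordinary, $H^m(X, G_X)=0$ for all $m$, for $G$ any of the relevant ``exact-form'' reciprocity sheaves. This is exactly the defining condition of ordinarity for $G=B^r_X$, and for the sheaves $F^{s-1}dW_m\Omega^{i-1}$ it follows by dévissage from the ordinarity of $X$ together with the standard exact sequences relating the $B$'s and $Z$'s in the de Rham--Witt complex (Illusie--Raynaud \cite{IR}); alternatively one invokes the characterization that Frobenius acts bijectively on $H^q(X, W\Omega^r_X)$, which forces the cohomology of the ``$B_\infty$'' part to vanish. Granting this, the target $R\Gamma(X, G'_X)$ that receives the pushforward from $R\Gamma(Y, G_Y)$ — here $G'$ is the corresponding twist/weight-shift of $G$ on $X$, again an exact-form sheaf since $f_*$ is built from cup products with Chern classes and Gysin maps which preserve the ``$B_\infty$'' filtration — has vanishing cohomology, so any map into it from anything is zero; in particular the pushforward restricted to the $B^q_{n,\infty}$-part is zero, which is precisely the desired factorization.

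\textbf{Main obstacle.} The delicate point is not the formal factorization but checking that the pushforward $f_*$ of \ref{para:RSC-pf} genuinely respects the subsheaf $B^q_{n,\infty}$ at the level of \emph{complexes} (not just cohomology), i.e. that one really gets a commutative square
\[
\begin{CD}
R\Gamma(Y, (B^q_{n,\infty})_Y)[r] @>{f_*}>> R\Gamma(X, (B^{q-r}_{n,\infty})_X) \\
@VVV @VVV \\
R\Gamma(Y, (W_n\Omega^q)_Y)[r] @>{f_*}>> R\Gamma(X, (W_n\Omega^{q-r})_X)
\end{CD}
\]
in $D(X_\Nis)$. This requires knowing that the Gysin map and the projective bundle isomorphism are compatible with the maps $\ulomegaCI B^q\to \ulomegaCI W_n\Omega^q$ induced by functoriality of $\ulomegaCI$ applied to the inclusion of reciprocity sheaves $B^q\inj W_n\Omega^q$ — which should follow from the naturality in $F$ of all the constructions in Theorem \ref{thm:main-intro} (the cupping maps $c_\alpha$, the Gysin triangle, and the pushforward are all natural transformations in $F\in\CItspNis$), but pinning down that the relevant twists $\gamma^j(\ulomegaCI B^q)$ are again of ``$B_\infty$-type'' uses the explicit computation of twists of de Rham--Witt sheaves (Theorem \ref{thm:gdRW}). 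Once that bookkeeping is in place, the vanishing coming from ordinarity closes the argument.
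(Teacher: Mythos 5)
Your proposal is correct and follows essentially the same route as the paper: one uses that $B^q_{n,\infty}$ is a reciprocity sheaf, that $\gamma^r B^q_{n,\infty}=B^{q-r}_{n,\infty}$ (exactness of $\gamma$ plus the compatibility of the twist isomorphisms with $F$ and $d$ from Theorem \ref{thm:gdRW}), applies the naturality of the pushforward of \ref{para:RSC-pf} to the exact sequence $0\to B^q_{n,\infty}\to W_n\Omega^q\to W_n\Omega^q/B^q_{n,\infty}\to 0$ to get a morphism of triangles, and then invokes $R\Gamma(X,B^{q-r}_{n,\infty})=0$ for $X$ ordinary (Illusie--Raynaud) to obtain the factorization. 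The detour through the colimit over $s$ and the dévissage of the individual sheaves $F^{s-1}dW_{n+s-1}\Omega^{q-1}$ is unnecessary but harmless; the point you flag as the main obstacle is exactly what Theorem \ref{thm:gdRW} and the functoriality in $F$ settle, as in the paper.
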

Note that this is an essentially immediate consequence of the fact that the sheaves $B^q_{n, \infty}$ are reciprocity sheaves,  our general formalism and the computation of the twists of Theorem \ref{thm:gdRW}. In fact, even when $X$ is not ordinary, we always obtain a factorization in top degree
    \[R\Gamma(Y, W_n\Omega^r_Y)[r]\to R\Gamma(Y,  W_n\Omega^r_Y/B_{n,\infty}^r)[r]
\xr{f_*} R\Gamma(X, W_n\sO_X)
    \]
as a byproduct of the proof of Corollary \ref{cor:refined-trace}.

\subsubsection{Relationship with logarithmic motives} In \cite{logmot}, Park, {\O}stv{\ae}r, and the first author recently introduced a triangulated category of \emph{logarithmic motives} over a field $k$. Similar in spirit to Voevodsky's construction, the starting point is the category $lSm/k$ of log smooth (fs)-log schemes over $k$, promoted then to a category of correspondences. The localization with respect to a new Grothendieck topology, called the dividing-Nisnevich topology, and with respect to the log scheme $\bcube$, the log compactification of $\A^1_k$, produces the category denoted by $\mathbf{logDM}^{\rm eff}_{dNis}(k)$.

A theorem of Saito, see \cite{SaitologRSC}, shows that there exists a fully faithful exact functor
\[\mathcal{L}og\colon \RSC_{\Nis} \to \mathbf{Shv}^{\rm ltr}_{dNis}(k, \Z)\]
such that $\mathcal{L}og(F)$ is strictly $\bcube$-invariant in the sense of \cite[Def. 5.2.2]{logmot}, 
where the target is the category of dividing Nisnevich sheaves with log transfers on $lSm/k$. See \cite[2,4]{logmot}. This shows that Nisnevich cohomology of reciprocity sheaves is representable in $\mathbf{logDM}^{\rm eff}_{dNis}(k)$. Formulas like the projective bundle formula, the blow-up formula, the existence of the Gysin sequence and so on in $\mathbf{logDM}^{\rm eff}_{dNis}(k)$ can then be used to re-deduce \emph{a posteriori} some of the results in the present paper, under some auxiliary assumptions.

We warn the reader that in the proof of the main result of \cite{SaitologRSC} one needs in an essential way the formalism of push forward maps along projective morphisms, that we show in the present work. 

Moreover, note that the motivic formulas given in \cite{logmot}  cannot be used to deduce results involving higher modulus, that we do instead systematically in the present paper, and that the projective bundle formula, the blow-up formula and the Gysin triangle (using the identification of the log Thom space) in \cite{logmot} are  only proved under the assumptions of resolution of singularities, which we don't need. Finally, a general theory of log motives over a base (not just over a field) would be necessary to get the full strength of the sheaf-theoretic version of the results in this work.

\begin{warning*} The content of Theorem \ref{thm:main-intro} and of other main results in this paper (namely Corollary \ref{cor:PnInv} and Theorem \ref{thm:bus}) are a sheaf theoretic analogue to some of the results  on motives with modulus in \cite{KMSY3}, more precisely to \cite[Thm. 7.3.2]{KMSY3}, \cite[Thm. 7.4.3]{KMSY3} and \cite[Thm. 7.4.4]{KMSY3} (the latter being in fact a Theorem of K.~Matsumoto, proved only for the inclusion of a smooth divisor $Z$ in $X$, whereas we consider the case of $Z$ being a smooth closed subscheme of any codimension). 
    
    We warn the reader that our results \emph{cannot be recovered} from the existing  literature: for this to be the case it would be necessary to show that the cohomology of $\bcube$-invariant sheaves is representable in the category of motives with modulus $\mathbf{\underline{M}DM}^{\rm eff}(k)$ constructed in \cite{KMSY3}. In view of \cite[Thm. 5.2.4]{KMSY3}, one would require a positive answer to the following two questions.
\begin{qn}(1) Is the Nisnevich cohomology of $\bcube$-invariant sheaves invariant under blow-up with center contained in the support of the modulus?
    
    (2) Is a $\bcube$-invariant sheaf $F$ equivalent (in the derived category of sheaves) to its derived Suslin complex $RC_*^{\bcube}(F)$ defined in \cite[Def. 5.2.3]{KMSY3}? 
\end{qn}
Both questions seem out of reach  for general $\bcube$-invariant sheaves: note that (1) would amount to answer affirmatively to \cite[Question 1, p.4]{KMSY1}, and  that a (weaker) version of it is the content of Theorem \ref{thm:blow-upAn}, which is one of the crucial technical results of this paper.

    Question (2) is equivalent to ask whether the cohomology of a $\bcube$-invariant sheaf with transfers is again $\bcube$-invariant. For $\mathbb{A}^1$-invariant sheaves with transfers this is a deep theorem of Voevodsky, and boils down to studying a non-trivial interaction between the Nisnevich sheafification functor and the localization functor $L_{\mathbb{A}^1}(-)$. For semi-pure sheaves ({cf. \ref{para:CI} below}), this is shown in \cite{S-purity}, but the general case is wide open  (the first and third author once claimed the general case in characteristic 0 but a gap was found in its proof).  We hope that the main results of this paper are useful in attempts to answer the above open questions.

Moreover, even if both questions are answered positively, in order to get the full statement of Theorem \ref{thm:main-intro} from the motivic point of view it would be necessary to develop the whole theory of motives with modulus over a base, which is not available at the moment.
\end{warning*}

\subsection{Organization of the paper} We conclude this introduction with a quick presentation of the structure of the paper.

In \S \ref{sec:pre} we discuss some preliminaries and fix the notation. Nothing in this Section is new, and it can be found in \cite{KMSY1}, \cite{KMSY2}, \cite{MS}. In \S 2 we prove a key ``descent'' property for $\bcube$-invariant sheaves, namely Proposition \ref{prop:mu}. This is a crucial technical result that allows us to prove the invariance of the cohomology of cube invariant sheaves along a certain class of blow-ups, see Theorem  \ref{thm:blow-upAn}. Once this is established, we proceed to prove that the cohomology of cube invariant sheaves is also invariant with respect to the product with the modulus pair $(\P^n, \P^{n-1})$, Theorem \ref{thm:PnInv}. In \S \ref{sec:smblowup} we prove a smooth blow-up formula; in \S \ref{sec:twists} we introduce the twist and prove some of its basic properties. In \S \ref{sec:cupProduct} we use Rost' theory of cycle modules together with a formula for the tensor product of reciprocity sheaves to construct the cup product with Chow classes. In \S \ref{sec:pbf} we prove the projective bundle formula, and in \S \ref{sec:gysin} we construct the Gysin sequence: for this we essentially follow the steps of Voevodsky's construction in \cite{V-TCM}, but we also get a finer theory with supports (the local Gysin map).  In \S \ref{sec:pf} we assemble the Gysin maps and the morphisms induced by the projective bundle formula to construct general pushforwards. In this section we make use also of the cancellation theorems of \cite{MS}. In \S \ref{sec:prop-cor-action} we explain the construction of the action of Chow correspondences on reciprocity sheaves (and complexes of sheaves). Finally, in \S  \ref{sec:gen-appl} and \S \ref{sec:examples} we collect the main applications and a list of examples of reciprocity sheaves. The reader who is mostly interested in examples and applications may read the last two sections without having precise knowledge of modulus sheaves with transfers.
\medskip

In the paper, we use frequently the results from \cite{S-purity}, which plays a fundamental role for us.

\subsection*{Acknowledgements}The authors are grateful to Jean-Louis Colliot-Th\'el\`ene for remarks on a preliminary version of this manuscript, for providing with an extensive list of references and for pointing to Corollary \ref{cor:BI-exa20}. 
F.B.\ wishes to thank Alberto Merici for useful conversations and K.R.\ thanks Stefan Sch\"oer for useful discussions  on the 
Brauer group related to this  work and Christian Liedtke for a comment on a preliminary version. Part of this work has been done while the first author worked at the University of Regensburg, supported by the DFG SFB/CRC 1085 ``Higher Invariants'', and the third author was visiting the same institution also supported by the DFG SFB/CRC 1085, in the Spring of 2018 and of 2019. Yet another part of this work has been done during a visit of the second and the third author at the University of Milan in Spring 2020 thanks to an invitation by Luca Barbieri-Viale. We wish to thank all the institutions for their support. 
Finally, we would like to thank the referee for carefully reading our manuscript and for giving constructive comments which substantially helped improving the exposition.

\addtocontents{toc}{\protect\setcounter{tocdepth}{2}}
\section{Preliminaries}\label{sec:pre}

\subsection{Notations and conventions}
In the whole paper we fix a perfect base field $k$.
We denote by $\Sm$ the category of smooth separated $k$-schemes.
We write $\P^1= \P^1_k$ etc. and $X\times Y= X\times_k Y$, for $k$-schemes $X$, $Y$.
For a function field  $K/k$ we denote by 
$K\{x_1, \ldots, x_n\}$ the henselization of $K[x_1,\ldots, x_n]_{(x_1,\ldots. x_n)}$.
Let $R$ be a regular noetherian $k$-algebra. By \cite[(1.8) Thm]{Popescu86} and \cite[Exp I, Prop 8.1.6]{SGA4I}
we can write $R=\varinjlim_i R_i$, where $(R_i)_i$ is a directed system of smooth $k$-algebras, and we use the notation
$F(R)=\varinjlim_i F(\Spec R_i)$, for any presheaf $F$ on $\Sm$.

If $X$ is a scheme and $F$ is a Nisnevich sheaf on $X$, we will denote by $H^i(X, F)=H^i(X_\Nis, F)$ 
the $i$th cohomology group of $F$ on the small Nisnevich site of $X$,
similar with higher direct images. We denote by $X_{(n)}$ (resp. $X^{(n)}$) the set
of $n$ (resp. co-) dimensional points in $X$. 

\subsection{A recollection on modulus sheaves with transfers}
We recall some terminology and notations from the theory of modulus sheaves with transfers,
see \cite{KMSY1}, \cite{KMSY2}, \cite{KSY2}, and \cite{S-purity} for details.

\begin{para}\label{para:MCor}
A modulus pair $\sX=(\ol{X}, X_\infty)$ consists of 
a separated $k$-scheme of finite type  $\ol{X}$ and an effective (or empty) Cartier divisor $X_\infty$
such that $X:= \ol{X}\setminus |X_\infty|$ is smooth; it is called {\em proper} if $\ol{X}$ is proper over $k$.
Given  two modulus pairs $\sX=(\ol{X}, X_\infty)$ and $\sY=(\ol{Y}, Y_\infty)$, with opens 
$X:=\ol{X}\setminus |X_\infty|$ and $Y:=\ol{Y}\setminus |Y_\infty|$,  an
{admissible left proper prime correspondence} from $\sX$ to $\sY$ is given by
an integral closed subscheme $Z\subset X\times Y$ which is finite and surjective over a connected component of $X$,
such that the normalization of its closure $\ol{Z}^N\to \ol{X}\times \ol{Y}$ is proper over $\ol{X}$ and satisfies
\eq{para:MCor0}{X_{\infty|\ol{Z}^N}\ge Y_{\infty|\ol{Z}^N},}
as Weil divisors on $\ol{Z}^N$, where $X_{\infty|\ol{Z}^N}$ (resp. $Y_{\infty|\ol{Z}^N}$) denotes the pullback of $X_\infty$ (resp. $Y_\infty$) to $\ol{Z}^N$. The free abelian group generated by such correspondences is denoted by $\ulMCor(\sX, \sY)$.
By \cite[Propositions 1.2.3, 1.2.6]{KMSY1}, modulus pairs and left proper admissible correspondences define an additive category that
we denote by $\uMCor$. We write $\MCor$ for the full subcategory of  $\uMCor$  whose objects are proper modulus pairs. We denote by $\tau$ the inclusion functor $\tau\colon \MCor \to \uMCor$.
The induced category of {additive} presheaves of abelian groups is denoted by $\uMPST$ (resp. $\MPST$). 
We have functors 
\[\omega\colon \MCor \to \Cor, \quad \ul{\omega}\colon \uMCor \to \Cor\]
given by $(\ul{X}, X_\infty) \mapsto \ul{X} \setminus |X_\infty|$, where $\Cor$ is the category of finite correspondences introduced by Suslin-Voevodsky (see e.g. \cite{MVW}).
Note that there  is also a fully faithful functor
\[\Cor\to \uMCor, \quad X\mapsto (X,\emptyset).\]
We will abuse notation by writing
\eq{para:MCor01}{X=(X,\emptyset)\in \uMCor, \quad \text{for } X\in \Sm.}
Write $\tau^*$ for the restriction functor along $\tau$ and write $\tau_!$ for its left Kan extension. Similarly, write $\omega^*$ (resp. $\ul{\omega}^*$) for the restriction functor along $\omega$ (resp. $\ul{\omega}$) and $\omega_!$ (resp. $\ul{\omega}_!$) for its left Kan extension. 
We have the following  {commutative} diagrams at our disposal
\eq{para:MCor1}{
\xymatrix{\uMPST\ar[dr]_{\ulomega_!}  &  & \MPST\ar[ll]_{\tau_!}\ar[dl]^{\omega_!}\\
                                    &   \PST &   
                                    }
                                    \quad
\xymatrix{\uMPST\ar@/^1pc/[rr]^{\tau^*}   &  & \ar[ll]^{\tau_!}\MPST\\
                                    &   \PST.\ar[ur]_{\omega^*}\ar[ul]^{\ulomega^*} &   
                                    }}

Here $\PST$ is the category of presheaves of abelian groups on $\Cor$,
the functors in the left triangle are left adjoint to the functors in the right triangle,
all the functors are exact, the diagrams commute,
and we have $\tau^*F(\sX)=F(\sX)$, $\ulomega^*F(\sX)=F(X)$ and 
\eq{para:MCor2}{\ulomega_!F(X)=F(X,\emptyset)\overset{\eqref{para:MCor01}}{=:}F(X)}
for $\sX = (\ol{X}, X_\infty)$ and $X = \ol{X}\setminus |X_\infty|$. 

 We denote by $\Ztr(\sX)$ the presheaf on $\uMCor$ (resp. $\MCor$) 
represented by $\sX$ in $\uMCor$ (resp. in $\MCor$). We have $\tau_!\Ztr(\sX)=\Ztr(\sX)$ and $\ulomega_!\Ztr(\sX)=\Ztr(X)$.
 
Let $\sX=(\ol{X}, X_\infty)$, $\sY=(\ol{Y}, Y_\infty)\in \uMCor$. We set
\[\sX\otimes \sY := (\ol{X}\times \ol{Y}, p^*X_\infty+ q^*Y_\infty),\]
where $p$ and $q$ are the projections from $\ol{X}\times \ol{Y}$ to $\ol{X}$ and $\ol{Y}$, respectively.
In fact this defines a symmetric monoidal structure on  $\uMCor$ (resp. $\MCor$)
which extends (via Yoneda) uniquely to a right exact monoidal structure $\otimes$ on $\uMPST$ (resp. $\MPST$).
Similarly, there is a monoidal structure on $\PST$. The functors $\ulomega_!$, $\omega_!$, $\tau_!$ are monoidal, since they are all defined as left Kan extensions of the functors $\uomega, \omega$ and $\tau$, which are clearly monoidal.  
For $F\in \uMPST$  the functor $(-)\otimes F:\uMPST\to \uMPST$ admits a right adjoint
denoted by $\uHom_{\uMPST}(F,-)$; similar with $F\in \MPST$ (see, e.g., \cite[8]{MVW}). 
\end{para}


{   

\begin{para}\label{para:sheaf}
For $F\in \uMPST$ and $\sX=(\ol{X}, X_\infty)\in \uMCor$ denote by $F_{\sX}$ the presheaf
\eq{para:sheaf1}{(\et/\ol{X})^{\op}\ni U\mapsto F_{\sX}(U):= F(U, X_{\infty|U}), }
where $(\et/\ol{X})$ denotes the category of all \'etale maps $U\to \ol{X}$.
We say $F$ is a Nisnevich sheaf if $F_{\sX}$ is a Nisnevich sheaf, for all $\sX\in \uMCor$.
We denote by $\uMNST$ the full subcategory of $\uMPST$ consisting of Nisnevich sheaves.

We say $F\in \MPST$ is a Nisnevich sheaf if $\tau_!F$ is and denote the corresponding full subcategory 
by $\MNST$. The functors in \eqref{para:MCor1} restrict to Nisnevich sheaves and have the same
adjointness and exactness properties, see \cite[4.2.5, 5.1.1, 6.2.1]{KMSY2}.
Furthermore, there are Nisnevich sheafification functors 
\[\ul{a}_{\Nis}: \uMPST\to \uMNST, \quad a_\Nis: \MPST\to \MNST,\]
\[a_{\Nis}^V: \PST\to \NST,\]
which are left adjoint to the forgetful functors, restrict to the identity on  Nisnevich sheaves and satisfy
\eq{para:sheaf2}{
\ulomega_! \ul{a}_\Nis= a^V_\Nis\ulomega_!, \quad \omega_! a_\Nis= a^V_\Nis\omega_!, \quad    \tau_! a_{\Nis}=  \ul{a}_\Nis \tau_!}
   and 
\eq{para:sheaf3}{a_{\Nis}\omega^*=\omega^* a_{\Nis}^V, \quad \ul{a}_{\Nis}\ulomega^*=\ulomega^* a^V_\Nis,}
   see \cite[Thm 2]{KMSY1}, \cite[Thm 4.2.4, 4.2.5, 6.2.1]{KMSY2} ($a_\Nis^V$ was constructed by Voevodsky).
It follows that $\NST$, $\uMNST$, and $\MNST$ are  Grothendieck  abelian categories and that  the sheafification functors are exact. For $F\in \uMPST$ and  $\sX=(\ol{X}, X_{\infty})\in\uMCor$ we have
\eq{para:sheaf0}{\ul{a}_\Nis(F)(\sX)= \varinjlim_{f:\ol{Y}\to \ol{X}}F_{(\ol{Y}, f^*X_\infty), \Nis}(\ol{Y}),}
where the limit is over all proper morphisms $f:\ol{Y}\to \ol{X}$ which restrict to an isomorphism
over $X=\ol{X}\setminus|X_\infty|$ and $F_{(\ol{Y}, f^*X_\infty), \Nis}$ denotes the Nisnevich sheafification
of the presheaf $F_{(\ol{Y}, f^*X_\infty)}$ on the  site $\ol{Y}_\Nis$, see \cite[Thm.2(1)]{KMSY1}.
In the following we will use the notation 
\[F_\Nis:= \ul{a}_\Nis(F), \quad H_\Nis:= a_\Nis^V(H), \quad F\in \ulMPST, \,H\in \PST.\]
\end{para}

\begin{lemma}\label{lem:crit-surj}
A morphism $\varphi:F\to G$ in $\ulMNST$ is surjective (i.e., has vanishing cokernel), if
for all $\sX=(\ol{X}, X_\infty)\in \uMCor$, with $\ol{X}$ normal, and all $x\in \ol{X}$
the morphism
\[F(\sX_{(x)})\to G(\sX_{(x)})\]
is surjective, where $\sX_{(x)}=(\ol{X}_{(x)}, X_{\infty|\ol{X}_{(x)}})$ and $\ol{X}_{(x)}= \Spec \sO_{\ol{X},x}^h$ is the spectrum of the henselization of the local ring $\sO_{\ol{X},x}$. 
\end{lemma}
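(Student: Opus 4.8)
The statement is a standard "check surjectivity on stalks" criterion, adapted to the category $\ulMNST$. The plan is to reduce the vanishing of $\Coker(\varphi)$ in $\ulMNST$ to the vanishing of its sections on all local modulus pairs $\sX_{(x)}$, and then feed in the hypothesis. First I would set $C = \Coker(\varphi)$, computed in $\ulMNST$; since $\ulMNST$ is a Grothendieck abelian category and the forgetful functor to $\uMPST$ has an exact left adjoint, the cokernel is the Nisnevich sheafification of the presheaf cokernel $C^{\Pre} = \Coker_{\uMPST}(\varphi)$, i.e. $C = \ul{a}_\Nis(C^{\Pre})$. So it suffices to show $C = 0$, and by definition of $\ulMNST$ this means $C_{\sX}$ is the zero Nisnevich sheaf on $\ol{X}_\Nis$ for every $\sX = (\ol{X}, X_\infty) \in \uMCor$.

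Next I would reduce to $\ol{X}$ normal and to checking stalks. A Nisnevich sheaf on $\ol{X}_\Nis$ vanishes iff all its stalks at points $x \in \ol{X}$ vanish, and the stalk of $C_{\sX}$ at $x$ is $C(\sX_{(x)})$ where $\sX_{(x)} = (\Spec \sO_{\ol{X},x}^h, X_{\infty|})$ — this uses \eqref{para:sheaf1} and the fact that $\ul{a}_\Nis$ is computed sectionwise on henselian local pairs up to the proper-modification colimit \eqref{para:sheaf0}; in fact, for $\ol{X}_{(x)}$ henselian local, the colimit over proper modifications that are isomorphisms over the interior stabilizes (one can take $\ol{Y} = \ol{X}_{(x)}$ itself, using that any such proper modification has a section over the henselian local base by a standard argument), so $C(\sX_{(x)}) = (C^{\Pre})_{\sX_{(x)}, \Nis}(\ol{X}_{(x)}) = C^{\Pre}(\sX_{(x)})$, the latter equality because $\ol{X}_{(x)}$ is henselian local so its Nisnevich cohomology and global sections of a sheafification agree with the presheaf sections. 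Now $C^{\Pre}(\sX_{(x)}) = \Coker(F(\sX_{(x)}) \to G(\sX_{(x)}))$ since $(-)(\sX_{(x)})$ is exact on $\uMPST$, and this vanishes by hypothesis. Regarding the reduction to $\ol{X}$ normal: given an arbitrary $\sX$, the colimit in \eqref{para:sheaf0} ranges over proper modifications, and one can always find such a modification with normal source (e.g. the normalization of $\ol{X}$ in $X$, or a resolution), which brings us into the range of the hypothesis; alternatively, one observes that $C(\sX)$ for general $\sX$ is a colimit of $C$ evaluated on normal modulus pairs.

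The main obstacle I expect is the bookkeeping around \eqref{para:sheaf0}: one must argue carefully that for $\ol{X}_{(x)}$ the spectrum of a henselian local ring, the colimit over proper cdh-type modifications $\ol{Y} \to \ol{X}_{(x)}$ that are isomorphisms over the interior does not actually change the value of the sheafified presheaf — i.e. that $\ul{a}_\Nis(C^{\Pre})(\sX_{(x)})$ coincides with the Nisnevich-sheafification-on-the-small-site value, which in turn coincides with $C^{\Pre}(\sX_{(x)})$ because the small Nisnevich site of a henselian local scheme is "trivial." The subtlety is that the interior $X_{(x)}$ need not be local, so the modifications are genuinely nontrivial; the resolution is that $C^{\Pre}$, being a cokernel of a map of sheaves, already satisfies enough descent that its value on $\sX_{(x)}$ is unchanged, or more robustly, that passing to the colimit only enlarges the set of sections and the hypothesis kills the cokernel on each term of the colimit uniformly (the transition maps in the colimit defining $\ul{a}_\Nis$ are compatible with $\varphi$). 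Once this point is handled, the rest is formal: $C = 0$ in $\ulMNST$, hence $\varphi$ is surjective.
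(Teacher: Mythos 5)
Your overall strategy (reduce to vanishing of the Nisnevich sheafification of the presheaf cokernel, invoke \eqref{para:sheaf0}, and pass to normal models by cofinality of normalizations) is the right shape, but the step you yourself flag as the main obstacle is where the argument breaks. You claim that for $\ol{X}_{(x)}=\Spec \sO^h_{\ol{X},x}$ the colimit in \eqref{para:sheaf0} stabilizes at $\ol{Y}=\ol{X}_{(x)}$ itself, ``using that any such proper modification has a section over the henselian local base.'' This is false: take $\ol{X}_{(x)}=\Spec k\{x,y\}$ with $X_\infty=V(x)$ and $\ol{Y}$ the blow-up of the closed point. This is proper and an isomorphism over the interior, but it has no section (a section would extend the rational map $(x:y)\colon \Spec k\{x,y\}\dashrightarrow \P^1$ across the closed point, which is impossible); henselianness gives sections of smooth or \'etale maps through rational points of the special fibre, not of blow-ups. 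So the identification $\ul{a}_\Nis(\Coker_{\uMPST}\varphi)(\sX_{(x)})=\Coker(F(\sX_{(x)})\to G(\sX_{(x)}))$ is unjustified, and your whole stalkwise reduction rests on it. (There is also the minor point that \eqref{para:sheaf0} is stated for finite-type modulus pairs, so applying it to the pro-object $\sX_{(x)}$ would itself need a continuity argument.)

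The fix is not to try to collapse the colimit but to kill each of its terms, which is what the paper does: for $\sX\in\uMCor$, the value $\ul{a}_\Nis(\Coker_{\uMPST}\varphi)(\sX)$ is by \eqref{para:sheaf0} a colimit over proper modifications $\ol{Y}\to\ol{X}$ of the groups $\Gamma(\ol{Y}, C_{(\ol{Y},f^*X_\infty),\Nis})$, where $C$ denotes the presheaf cokernel; normal $\ol{Y}$ are cofinal, and for normal $\ol{Y}$ the small-site sheafification $C_{(\ol{Y},f^*X_\infty),\Nis}$ vanishes because its stalks at points $y\in\ol{Y}$ are $\Coker\bigl(F(\sY_{(y)})\to G(\sY_{(y)})\bigr)$, which is exactly the hypothesis applied to the normal pair $(\ol{Y},f^*X_\infty)$ at $y$ --- not to $\sX_{(x)}$. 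Your parenthetical fallback (``the hypothesis kills the cokernel on each term of the colimit'') gestures at this, but as written it neither restricts to normal models nor applies the hypothesis at points of the modifications $\ol{Y}$, and the claim that ``passing to the colimit only enlarges the set of sections'' is not an argument. With the term-by-term vanishing made explicit the proof closes, and it is then essentially the paper's proof.
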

\begin{proof}
Let $C$ be the cokernel of $\varphi$ in $\ulMPST$. We want to show $\ul{a}_\Nis(C)=0$.
For $\sX\in \ulMCor$, set $C_\sX= \Coker(\varphi_{\sX}: F_{\sX}\to G_{\sX})$ in 
the category of presheaves on $(\et/\ol{X})$; denote by $C_{\sX,\Nis}$ its  Nisnevich sheafification. 
By \eqref{para:sheaf0} it suffices to show $C_{\sX,\Nis}=0$, if $\ol{X}$ is normal.
The latter is equivalent to the surjectivity of $\varphi_{\sX}$ in the category of Nisnevich sheaves  on $\ol{X}$,
which is equivalent to the statement.
\end{proof}

\begin{para}\label{para:CI}
Set $\bcube:=(\P^1, \infty ) \in \MCor$. For $F\in \ulMPST$ we say that
    \begin{enumerate}
        \item $F$ is {\em cube-invariant} if the
map $F(\sX)\to F(\sX\otimes \bcube)$ induced by the pullback along the projection is an isomorphism.
\item $F$ has {\em $M$-reciprocity} if the counit map $\tau_!\tau^*F\to F$ is an isomorphism.
\item $F$ is {\em semipure} if the unit map $F\to \ulomega^*\ulomega_! F$ is injective. 
    \end{enumerate}
We denote by $\uMPST^\tau$ the full subcategory of $\uMPST$ consisting of the objects with $M$-reciprocity.
Note that for $\sX$ a proper modulus pair we have $\Ztr(\sX)\in \uMPST^\tau$.
We denote by $\CItsp$ the full subcategory of $\uMPST$ consisting of the cube-invariant semipure objects 
with $M$-reciprocity. We set
\[\uMNST^\tau:= \uMPST^\tau\cap \uMNST\quad \text{and}\quad \CItspNis:=\CItsp\cap \uMNST.\]
By \cite[Thm 10.1]{S-purity}, the sheafification functor $\ul{a}_\Nis$ restricts to
\eq{para:spNis1}{\ul{a}_\Nis: \CItsp\to \CItspNis.}
The natural inclusion $\CItspNis\inj \uMPST^\tau$ has a left adjoint
\eq{para:CI1}{h^{\bcube,\sp}_{0,\Nis}: \uMPST^\tau\to \CItspNis}
given by
\[h^{\bcube,\sp}_{0,\Nis}(F)= \ul{a}_{\Nis}(\ul{h}_0^\bcube(F)^{\sp}),\]
where for $G\in \uMPST$ 
\begin{enumerate}
\item
$\ul{h}_0^\bcube(G)\in \uMPST$ is the maximal cube invariant quotient of $G$
defined by 
 {
\eq{para:spNis2}{
\ul{h}_0^\bcube(G)(\sX)=\Coker(G(\sX\otimes \bcube)\xr{i_0^*-i_1^*} G(\sX)),}  }
where $i_\epsilon: \{\epsilon\}\to \bcube$, $\epsilon\in \{0,1\}$, are induced by the natural closed immersions,
\item $G^{\sp}=\Im(G\to \ulomega^*\ulomega_! G)$ denotes the \emph{semipurification} of $F$.
\end{enumerate}
The left adjointness of \eqref{para:CI1} to the natural inclusion follows from \cite[Lem 1.14(i)]{MS} and the adjunction $\tau_!\dashv \tau^*$. 
We note that for any $F\in\uMPST$ the presheaf  $h^{\bcube,\sp}_{0,\Nis}(F)$ is defined and is in fact a cube-invariant, semipure Nisnevich sheaf on $\uMCor$.

For $\sX$ a proper modulus pair we set
\eq{para:CI2}{h^{\bcube,\sp}_{0,\Nis}(\sX):=h^{\bcube,\sp}_{0,\Nis}(\Ztr(\sX))\in \CItspNis.}
\end{para}

\begin{lemma}\label{lem:Nis}
Let $F\in \CItspNis$ and $G,H\in \ulMPST^\tau$. 
Assume there is a surjection $\Ztr(\sX)\surj G$, for some $\sX\in\uMCor$.
We have
\begin{enumerate}[label=(\arabic*)]
\item\label{lem:Nis1}   $\uHom_{\ulMPST}(G, F)\in \CItspNis$;
\item\label{lem:Nis3} 
       $\Hom_{\ulMPST}(H\otimes G, F)= 
               \Hom_{\ulMPST}(h_{0,\Nis}^{\bcube,\sp}(H), \uHom_{\uMPST}( G, F))$.
\end{enumerate}
\end{lemma}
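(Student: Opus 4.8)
The plan is to deduce \ref{lem:Nis3} formally from \ref{lem:Nis1}, and to prove \ref{lem:Nis1} by checking separately that $P:=\uHom_{\ulMPST}(G,F)$ is a Nisnevich sheaf, is cube-invariant, is semipure, and has $M$-reciprocity, the last point being the only nonformal one. For \ref{lem:Nis3}: the tensor--hom adjunction in $\ulMPST$ gives $\Hom_{\ulMPST}(H\otimes G,F)=\Hom_{\ulMPST}(H,\uHom_{\ulMPST}(G,F))$; by \ref{lem:Nis1} the right-hand argument lies in $\CItspNis$, while $H$ lies in $\ulMPST^\tau$ by hypothesis and $h^{\bcube,\sp}_{0,\Nis}$ is left adjoint to the full inclusion $\CItspNis\inj\ulMPST^\tau$ by \eqref{para:CI1}, so this group equals $\Hom_{\ulMPST}(h^{\bcube,\sp}_{0,\Nis}(H),\uHom_{\ulMPST}(G,F))$, which is \ref{lem:Nis3}.

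For the three formal properties in \ref{lem:Nis1}: \emph{Nisnevich sheaf.} For an elementary Nisnevich square of \'etale $\ol X$-schemes, the corresponding representables of $\ulMCor$ form, after Nisnevich sheafification, a right-exact sequence in $\ulMNST$; tensoring with $\ul{a}_\Nis G$ (right exact) and applying $\Hom_{\ulMNST}(-,F)=\Hom_{\ulMPST}(-,F)$ (left exact, since $F$ is a sheaf) yields the Mayer--Vietoris condition for $P_{\sX}$, and $P_{\sX}(\emptyset)=0$; hence $P\in\ulMNST$. \emph{Cube-invariance.} Cube-invariance of $F$ says precisely that the projection-induced map $F\to\uHom_{\ulMPST}(\Ztr(\bcube),F)$ is an isomorphism; combining this with $\Ztr(\sY\otimes\bcube)=\Ztr(\sY)\otimes\Ztr(\bcube)$ and the symmetry of $\otimes$ gives $P(\sY\otimes\bcube)\cong\Hom_{\ulMPST}(\Ztr(\sY)\otimes G,\uHom_{\ulMPST}(\Ztr(\bcube),F))\cong\Hom_{\ulMPST}(\Ztr(\sY)\otimes G,F)=P(\sY)$, compatibly with the projection. \emph{Semipurity.} From $\ulomega_!\dashv\ulomega^*$, the monoidality of $\ulomega_!$, and $\ulomega_!\Ztr(\sY)=\Ztr(Y)$ one obtains $\uHom_{\ulMPST}(G,\ulomega^*H)=\ulomega^*\uHom_{\PST}(\ulomega_! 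G,H)$ for $H\in\PST$; applying $\uHom_{\ulMPST}(G,-)$ to the injection $F\inj\ulomega^*\ulomega_! F$ gives $P\inj\ulomega^*\uHom_{\PST}(\ulomega_! G,\ulomega_! F)$, and since for any $Q\in\ulMPST$ the unit $Q\to\ulomega^*\ulomega_! Q$ factors every morphism $Q\to\ulomega^*(-)$, $P$ is semipure.

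The heart of the matter is $M$-reciprocity of $P$. First I would check that $\tau^*\uHom_{\ulMPST}(G,F)=\uHom_{\MPST}(\tau^*G,\tau^*F)$: for proper $\sY$ one has $\tau^*P(\sY)=\Hom_{\ulMPST}\big(\tau_!(\Ztr(\sY)\otimes\tau^*G),F\big)=\Hom_{\MPST}(\Ztr(\sY)\otimes\tau^*G,\tau^*F)$, using $\tau_!\Ztr(\sY)=\Ztr(\sY)$, monoidality of $\tau_!$, the $M$-reciprocity $\tau_!\tau^*G=G$ of $G$, and $\tau_!\dashv\tau^*$; hence $M$-reciprocity of $P$ amounts to the canonical map $\tau_!\uHom_{\MPST}(\tau^*G,\tau^*F)\to\uHom_{\ulMPST}(G,F)$ being an isomorphism. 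Next, using $M$-reciprocity of $G$ together with the description of $\tau_!$ via compactifications of modulus pairs, the section of $G$ generating it (from the hypothesis $\Ztr(\sX)\surj G$) extends to a proper modulus pair, so one may assume $\sX$ \emph{proper}, in which case $\Ztr(\sX)\in\ulMPST^\tau$. When $G=\Ztr(\sX)$ with $\sX$ proper, $P(\sY)=F(\sX\otimes\sY)$ has $M$-reciprocity, because $\sX\otimes(-)$ preserves proper pairs and, $\sX$ being proper, the pairs $\sX\otimes\bar\sY$ are cofinal among compactifications of $\sX\otimes\sY$, so $M$-reciprocity of $F$ applies. For general $G$ one embeds $P$ into $\uHom_{\ulMPST}(\Ztr(\sX),F)$ via the surjection and concludes, provided one knows that $\ulMNST^\tau$ is stable under subobjects in $\ulMNST$ — equivalently that $\tau_!$ identifies $\MNST$ with the abelian subcategory $\ulMNST^\tau$ — a structural input from \cite{KMSY1}, \cite{KMSY2}. \textbf{I expect this last step to be the main obstacle}: controlling $\uHom_{\ulMPST}(G,-)$ on the kernel of a presentation of $G$, i.e.\ showing that passage to subobjects preserves $M$-reciprocity; the remaining three properties follow essentially formally from the corresponding properties of $F$ and the adjunctions recalled in \S\ref{sec:pre}.
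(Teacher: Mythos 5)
Your part \ref{lem:Nis3} and your verifications of the sheaf property, cube-invariance and semipurity in \ref{lem:Nis1} are essentially fine (the paper gets them by doing the representable case and then applying left exactness of $\uHom_{\uMPST}(-,F)$ to a presentation of $G$, which is simpler than your Mayer--Vietoris detour but amounts to the same thing). The genuine gap is in your treatment of $M$-reciprocity for general $G$: you embed $P=\uHom_{\uMPST}(G,F)$ into $\uHom_{\uMPST}(\Ztr(\sX),F)$ and then invoke ``$\uMPST^\tau$ is stable under subobjects,'' attributed to \cite{KMSY1}, \cite{KMSY2}. No such closure statement exists there, and it is not a formal consequence of the properties of $\tau_!\dashv\tau^*$: the essential image of a fully faithful exact left adjoint with exact right adjoint need not be closed under subobjects (toy model: presheaves of abelian groups on the poset $\{0\to 1\}$ restricted to $\{1\}$; the essential image of the left Kan extension consists of presheaves whose restriction map is invertible, and the subpresheaf with $P(1)=0$, $P(0)=F(0)$ is not in it). Concretely: if a section of $\uHom(\Ztr(\sX),F)$ over a proper pair with large boundary multiplicity restricts into $P$ over the non-proper pair, nothing forces it to lie in $P$ already over some proper pair, and this is exactly what you would have to prove. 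Also, saying that subobject closure is ``equivalent to $\tau_!$ identifying $\MNST$ with $\uMNST^\tau$'' conflates an essential-image statement with a closure property; the former does not give the latter.

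What the paper does instead, and what your argument needs, is to use the whole presentation rather than only the inclusion. By the surjection hypothesis, $P$ is the kernel of a map from $A=\uHom_{\uMPST}(\Ztr(\sX),F)$ --- which lies in $\uMPST^\tau$ by \cite[Lem 1.27(2)]{S-purity}, for arbitrary, not necessarily proper, $\sX$ --- to $B=\prod_j\uHom_{\uMPST}(\Ztr(\sY_j),F)$, which is semipure because each factor is (\cite[Lem 1.29(2)]{S-purity}) and semipurity passes to products. Since $\tau_!\tau^*$ is exact, the counit gives a map of left exact sequences; the counit on $A$ is an isomorphism, and the counit on $B$ is \emph{injective} because, for a semipure object, its sections over the compactifications with growing boundary multiplicity and its sections over the given pair all embed compatibly into the sections over the common interior with empty modulus. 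A diagram chase then yields $\tau_!\tau^*P\xr{\simeq}P$. Note that this is precisely where the hypothesis $\Ztr(\sX)\surj G$ enters: it makes the first term of the presentation a single (finite) product, which is needed because $\tau_!\tau^*$ does not commute with infinite products; it is not used merely to embed $P$ somewhere. Your reduction to proper $\sX$ and the cofinality argument for the representable case are acceptable (they re-prove a special case of the cited lemma of \cite{S-purity}), but the passage to general $G$ must go through the semipurity of the target, not through subobject closure of $\uMPST^\tau$.
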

\begin{proof}
\ref{lem:Nis1}. First assume $G=\Ztr(\sX)$, for some $\sX\in \uMCor$. 
In this case $\uHom(G, F)(\sY)=F(\sX\otimes\sY)$.
Clearly this defines a cube-invariant Nisnevich sheaf. It has $M$-reciprocity by \cite[Lem 1.27(2)]{S-purity},
and has semipurity by \cite[Lem 1.29(2)]{S-purity}.
Hence $\uHom(G,F)\in \CItspNis$ in this case. In the general case 
consider a resolution 
\[\bigoplus_j \Ztr(\sY_j)\to \bigoplus_i \Ztr(\sX_i)\to G\to 0.\]
We obtain an exact sequence
\eq{lem:sp4}{0\to \uHom(G, F)\to \prod_i \uHom(\Ztr(\sX_i), F)\to \prod_j \uHom(\Ztr(\sY_j), F).}
This directly implies cube-invariance and semipurity. 
The sheaf property holds since $\ul{i}_\Nis\ul{a}_\Nis: \uMPST\to \uMPST$ is left exact,
 where $\ul{i}_\Nis$ is the forgetful functor.
In general $M$-reciprocity won't hold since $\tau_!\tau^*$ does not commute with infinite products;
however it clearly holds if the first product in \eqref{lem:sp4} is finite and by assumption we find such a 
resolution.
\ref{lem:Nis3} follows from  \ref{lem:Nis1} and adjunction.
\end{proof}

\begin{para}\label{para:RSC}
The full subcategory of $\PST$  given by $\RSC:= \ulomega_!\CI^{\tau,\sp}$ is
called the category of {\em reciprocity presheaves}. The full subcategory of $\NST$ given by 
$\RSC_{\Nis}:=\ulomega_!\CItspNis$ is called the category of {\em reciprocity sheaves}.
It is direct to see that $\RSC$ is an abelian category, closed under sub-objects and quotients in $\PST$. 
On the other hand, it is a theorem \cite[Thm. 0.1]{S-purity} that $\RSC_{\Nis}$ is also abelian. 
We use the following notation for a proper modulus pair $\sX$
\[h_0(\sX):=\ulomega_!(h_0^\bcube(\sX))= \ulomega_!(h_0^{\bcube, \sp}(\sX))\in \RSC\]
and 
\[h_{0,\Nis}(\sX):=\ulomega_!(h_{0,\Nis}^\bcube(\sX))=\ulomega_!(h_{0,\Nis}^{\bcube,\sp}(\sX))\in \RSC_{\Nis}.\]
Note that $h_{0,\Nis}(\sX)=h_0(\sX)_\Nis$.
By  \cite[(1.13)]{MS} (see also \cite[Prop 2.3.7]{KSY2}) there is an adjunction
\eq{para:tut2}{ 
\xymatrix{
\CItspNis\ar@<-1ex>[r]_-{\uomega_!} &   \RSC_\Nis\ar@<-1ex>[l]_{\uomega^{\CI}},
}}
where $\uomega^{\CI}$ is right adjoint to $\uomega_!$ and is given by
\[\uomega^{\CI}(F)=\tau_!\Hom_{\MPST}(h_0^{\bcube}(-), \omega^*F).\]
In the notation of \cite{KSY2} we have $\uomega^{\CI}=\tau_!\omega^{\CI}$. 

 {Recall that Voevodsky's category of homotopy invariant Nisnevich sheaves,  $\HI_{\Nis}$ is an abelian subcategory of $\RSC_{\Nis}$, and thanks to \cite[Th. 5.6]{VoPST}, the natural inclusion $\HI_{\Nis}\to \NST$ has a left adjoint  
\eq{para:tut3}{ 
h_{0,\Nis}^{\A^1} : \NST \to \HI_\Nis.}
By \cite[Prop. 2.3.2]{KSY2}, we have
\eq{para:tut4}{ 
h_{0,\Nis}^{\A^1}(h_{0,\Nis}(\sX)) =h_{0,\Nis}^{\A^1}(\Ztr(\omega\sX)).}
} 
\end{para}

}

\section{Cohomology of blow-ups and invariance properties}

\subsection{A lemma on modulus descent}

\begin{nota}\label{bcube1}
For $m,n\ge 1$ we use the following notation
\[\bcube^{(m,n)}:= (\P^1, m\cdot 0 +n\cdot \infty),\quad \bcube^{(n)}:= \bcube^{(n,n)}.\]
In particular,
\[\bcubee=(\P^1,0+\infty).\]
\end{nota}

\begin{lemma}\label{lem:h0P1}
Let $R$ be an integral regular  $k$-algebra.
For all $m,n\ge 1$, there is an isomorphism
\[\theta_{m,n}: 
h_0(\bcube^{(m,n)})(R)\xr{\simeq} ((R[t]/t^m)^\times\oplus (R[z]/z^n)^\times)/R^\times\oplus \Z,\] 
where $R^\times$ acts diagonally on the direct sum.
If $Z\in \ulomega_!\Ztr(\bcube^{(m,n)})(R)$ is a prime correspondence which
we can write as $Z=V(g)$, for an irreducible polynomial 
$g=a_rt^r+ \ldots + a_1 t+a_0\in R[t]$ with $a_r, a_0\in R^\times$, and $r\ge 1$, then 
\[\theta_{m,n}(Z)=(g(t)/(t-1)^r, g_{\infty}(z)/(1-z)^r, r),\]
where $g_{\infty}(z)= a_0 z^r+\ldots+ a_{r-1} z+ a_r$.
Furthermore, if $m'\le m$ and $n'\le n$,
then we obtain a commutative diagram
\[\xymatrix{
h_0(\bcube^{(m',n')})(R)\ar[r]^-{\theta_{m',n'}}\ar[d] & 
                                 ((R[t]/t^{m'})^\times\oplus (R[z]/z^{n'})^\times)/R^\times\oplus \Z\ar[d]\\
h_0(\bcube^{(m,n)})(R)\ar[r]^-{\theta_{m,n}} &
                                ((R[t]/t^m)^\times\oplus (R[z]/z^n)^\times)/R^\times\oplus \Z,
}\]
where the vertical map on the left hand-side is induced by 
$\bcube^{(m',n')}\to \bcube^{(m,n)}$ in $\uMCor$ and
 the vertical map on the right is the natural quotient map.
 \end{lemma}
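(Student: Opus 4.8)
The plan is to compute $h_0(\bcube^{(m,n)})(R)$ explicitly by unwinding the definition \eqref{para:spNis2}, namely as the cokernel of $\ulomega_!\Ztr(\bcube^{(m,n)}\otimes\bcube)(R) \xrightarrow{i_0^*-i_1^*} \ulomega_!\Ztr(\bcube^{(m,n)})(R)$, and then identify this with the claimed target group. First I would describe the generators of $\ulomega_!\Ztr(\bcube^{(m,n)})(R) = \Cor((\Spec R)\times(\P^1\setminus\{0,\infty\}))$: after base change to the generic point these are finite correspondences, i.e.\ $\Z$-linear combinations of closed integral subschemes $Z\subset \Spec R\times(\A^1\setminus 0)$ finite and surjective over $\Spec R$; since $R$ is regular (and we may reduce to the local case, or argue over the generic point and track denominators), each such $Z$ is $V(g)$ for an irreducible monic-up-to-unit polynomial $g=a_rt^r+\dots+a_0$ with $a_0\in R^\times$ (so that $Z$ avoids $t=0$). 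The admissibility condition for a correspondence to define an element of $\Ztr(\bcube^{(m,n)})$ over $\Spec R$ is automatic here because $\Spec R$ has trivial modulus; what matters is only that $Z$ lies in the interior, hence the unit condition $a_0 \in R^\times$, and I'd also need $a_r\in R^\times$ to control the behavior at $t=\infty$ (this is what makes $g_\infty$ well-defined as a unit mod $z^n$).

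\textbf{Key steps.} The map $\theta_{m,n}$ should be \emph{defined} on generators by $V(g)\mapsto (g(t)/(t-1)^r,\ g_\infty(z)/(1-z)^r,\ r)$, where the normalization by $(t-1)^r$ resp.\ $(1-z)^r$ is forced by the requirement that the constant prime correspondence $\{1\}$ (i.e.\ $g=t-1$, $r=1$) map to $(1,1,1)$, since $\{1\}$ is the image of the unit section. One checks: (i) $g(t)/(t-1)^r$ is a unit in $R[t]/t^m$ because $g(0)=a_0\in R^\times$ and $(t-1)^r$ is invertible mod $t^m$; similarly $g_\infty(z)/(1-z)^r$ is a unit mod $z^n$ since $g_\infty(0)=a_r\in R^\times$; (ii) this is additive in the evident sense if one passes to products of correspondences — a product $Z_1\cdot Z_2$ corresponds to the polynomial $g_1 g_2$, and $(g_1g_2)_\infty = (g_1)_\infty (g_2)_\infty$, $r_1+r_2$ adds, so $\theta$ is a homomorphism from the free group on prime correspondences; (iii) $\theta$ kills the image of $i_0^*-i_1^*$: a correspondence in $\ulomega_!\Ztr(\bcube^{(m,n)}\otimes\bcube)(R)$ is (generically) given by a polynomial $G\in R[t][s]$ (where $s$ is the coordinate on the extra $\bcube$) with appropriate unit conditions, and $i_\epsilon^*$ substitutes $s=\epsilon$; one computes directly that $\theta_{m,n}(G|_{s=0}) = \theta_{m,n}(G|_{s=1})$ by a Weil-reciprocity-type identity for the resultant/product of roots, exactly as in the classical computation of $h_0$ of a curve à la Rosenlicht–Serre. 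Then I'd check surjectivity (every unit mod $t^m$ is a product $g(t)/(t-1)^r$ for suitable $g$ after clearing denominators, using that $R^\times + tR[t]$ generates the units, and $\Z$ is hit by $r$) and injectivity (an element in the kernel is a combination of polynomials whose associated divisor data is trivial, hence lies in the span of the relations — this is where one needs the full modulus/rigidity input). The compatibility diagram for $m'\le m$, $n'\le n$ is then immediate because the horizontal maps are given by the \emph{same} formula and the right vertical map is reduction of units modulo $t^m\to t^{m'}$, $z^n\to z^{n'}$, which visibly commutes with $g\mapsto g(t)/(t-1)^r$.

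\textbf{Main obstacle.} The delicate point is the well-definedness and bijectivity of $\theta_{m,n}$, i.e.\ step (iii) together with injectivity: one must show that the cube relations $i_0^* - i_1^*$ are \emph{exactly} the relations needed, so that the cokernel is precisely $((R[t]/t^m)^\times\oplus(R[z]/z^n)^\times)/R^\times\oplus\Z$ and nothing more collapses. This is essentially a relative (over $\Spec R$) version of Rosenlicht's computation of the generalized Jacobian of $\P^1$ with modulus $m\cdot 0 + n\cdot\infty$, and the subtlety is controlling correspondences that are finite but not of the expected "totally split" form, and making the Weil-reciprocity identity work integrally over $R$ rather than over a field. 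I expect the cleanest route is: reduce to $R$ a regular local (or even a field, then bootstrap via $R = \varinjlim R_i$ and a specialization argument), handle the field case by the classical theory of $\Pic$ of $\P^1$ with modulus together with the known description of $h_0(\bcube^{(m,n)})$ over a field (which presumably appears in \cite{KSY1} or \cite{RSY}), and then check the relative statement is a flat/smooth base change of the absolute one. The formula $\theta_{m,n}(Z)=(g(t)/(t-1)^r, g_\infty(z)/(1-z)^r, r)$ is then verified on generators, and the real work is bookkeeping the normalizations so that everything is compatible with the transition maps.
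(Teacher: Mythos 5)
Your overall strategy (compute the presheaf $h_0(\bcube^{(m,n)})(R)$ directly by generators and relations, define $\theta_{m,n}$ on prime correspondences by the explicit formula, kill the cube relations by a Weil-reciprocity identity, then prove bijectivity) is genuinely different from the paper's argument, and it correctly locates the crux in the well-definedness/injectivity step. But that crux is exactly where your proposal has a real gap. Your proposed resolution is to "handle the field case by the classical theory \dots and then check the relative statement is a flat/smooth base change of the absolute one" (possibly after reducing to $R$ local). This does not work: the lemma is about the \emph{presheaf} $h_0$ (not $h_{0,\Nis}$), evaluated on an arbitrary integral regular $k$-algebra $R$, so you may neither Nisnevich-localize nor pass to the generic point; and there is no base-change or specialization principle computing $h_0^\bcube$ of a modulus pair on $\Spec R$ from its values on fields — the relations are admissible correspondences over $R$ itself, and controlling them over a non-field base is precisely the hard content. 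In other words, step (iii) plus injectivity amounts to reproving, over a general regular base, that Suslin homology with modulus of $(\P^1, m\cdot 0+n\cdot\infty)$ is computed by the relative Picard group, and your outline offers no argument for this beyond the (invalid) reduction. (Smaller points: admissibility is not "automatic" — it is exactly the condition $a_0,a_r\in R^\times$; and your surjectivity sketch only produces units at $0$, whereas one must realize an arbitrary pair $(u_0(t),u_\infty(z))$ modulo the diagonal $R^\times$ simultaneously, which needs an extra approximation/CRT-type argument.)

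For comparison, the paper sidesteps the generators-and-relations analysis entirely: it cites \cite[Thm 1.1]{RY}, which says that $D\mapsto(\sO(D),\id)$ induces an isomorphism $h_0(\bcube^{(m,n)})(R)\xr{\simeq}\Pic(\P^1_R, m\cdot 0+n\cdot\infty)$ for $R$ regular — this is exactly the nontrivial input your approach is missing — and then computes that relative Picard group by the elementary exact sequence
\[H^0(\P^1_R,\sO^\times)\to H^0(F_R,\sO^\times)\to \Pic(\P^1_R,F_R)\to \Pic(\P^1_R)\to\Pic(F_R),\]
using that $\Pic(R)\to\Pic(F_R)$ is injective (the modulus has an $R$-section) to split off the degree $\Z$ and obtain $((R[t]/t^m)^\times\oplus(R[z]/z^n)^\times)/R^\times\oplus\Z$. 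The explicit formula for $\theta_{m,n}(V(g))$ is then read off by trivializing $\sO(Z)\otimes\sO(\{1\})^{\otimes -r}$ via multiplication by $G/(T_0-T_1)^r$, where $G$ is the homogenization of $g$; the compatibility diagram is immediate from this description, as you anticipated. If you want to keep your direct route, you would need to either reprove the relative Suslin-homology/Picard comparison over $R$ or cite it, at which point the computation collapses to the paper's.
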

\begin{proof}
The map $\theta_{m,n}$ is the composition of the two isomorphisms
\begin{align*}
h_0(\bcube^{(m,n)})(R)& \xr{\simeq\, (*)} \Pic(\P^1_R, m\cdot 0+ n\cdot \infty)\\
                &    \xr{\simeq \,  (**)}  ((R[t]/t^m)^\times\oplus (R[z]/z^n)^\times)/R^\times\oplus \Z,
\end{align*}
which are defined as follows. We denote by $F_R:=m\cdot 0_R+ n\cdot \infty_R\subset \P^1_R$ 
the closed subscheme:
(*) is induced by the classical map from Weil to Cartier divisors 
\[\Ztr(\bcube^{(m,n)})(R)\ni D\mapsto 
(\sO(D), \id_{\sO_{F_R}})\in \Pic(\P^1_R, F_R),\]
where $\sO(D)$ is the line bundle on $\P^1_R$ given by 
$\sO(D)(U)=\{f\in R(t)^\times\mid \div_U(f)\ge D \}$; it is an isomorphism by  \cite[Thm 1.1]{RY}.
For (**) consider the  exact sequence
\[H^0(\P^1_R, \sO^\times)\to H^0(F_R, \sO^\times)\to \Pic(\P^1_R, F_R)
\to \Pic(\P_R^1)\to \Pic(F_R).\]
The last map decomposes as $\Pic(R)\oplus\Pic(\P^1) \to \Pic(F_R)$ given by
\[(M,\sO(\{1\})^{\otimes n})\mapsto (M_{|\P^1_R}\otimes (\sO(\{1\})^{\otimes n})_{|F_R}=M_{|F_R}.\]
Since $F_R\to \Spec R$ has a section the map $\Pic(R)\to \Pic(F_R)$ is injective. Hence
the above sequence yields an exact sequence
\[H^0(\P^1_R, \sO^\times)\to H^0(F_R, \sO^\times)\to \Pic(\P^1_R, F_R)
\xr{d} \Z \to 0,\]
where  $d(L,\alpha):=d(L):=\deg(L_{|\P^1_{\Frac(R)}})$;
 we can choose a splitting of $d$ by
$r\mapsto (\sO_{\P^1_R}(\{1\})^{\otimes r}, \id_{\sO_{F_R}})$;
the map in the middle sends  $u\in H^0(F_R, \sO^\times)$ to 
$(\sO_{\P^1_R}, u\cdot: \sO_{F_R}\xr{\simeq} \sO_{F_R})$, where $u\cdot$ is the isomorphism given by multiplication by $u$.
Let $(L,\alpha)$ be a pair with $L$  a line bundle on $\P^1_R$ with $d(L)=r$ and 
$\alpha: \sO_{F_R}\xr{\simeq} L_{|F_R}$ an isomorphism;  we find an isomorphism
$\varphi :L\otimes\sO(\{1\})^{\otimes -r}\xr{\simeq} \sO_{\P_R^1}$ and define the isomorphism 
$\alpha'$ as the composition
\[\alpha'=(\alpha'_{m\cdot 0},\alpha'_{n\cdot \infty}):
\sO_{F_R}\xr{\simeq\, \alpha}L_{|F_R}
    = (L\otimes\sO(\{1\})^{\otimes -r})_{|F_R} 
\xr{\varphi_{|F_R}} (\sO_{\P^1})_{|F_R}, \]
where the equality follows from the fact that we have a canonical identification $\sO(\{1\})_{|F_R} = \sO_{|F_R}$.
Hence $\varphi$ induces an isomorphism $(L\otimes\sO(\{1\})^{\otimes -r}, \alpha)\cong (\sO_{\P^1_R}, \alpha')$;
the isomorphism (**) is given by
\[(L,\alpha)\mapsto (\alpha'_{m\cdot0}(1), \alpha'_{n\cdot \infty}(1), d(L)).\]

Let $Z=V(g)\in \Ztr(\bcube^{(m,n)})(R)$ be a prime correspondence as in the statement. 
Write $t=T_0/T_1$ and let $G\in R[T_0, T_1]$ be the homogenization of $g$.
We have an isomorphism
\[\sO(Z)\otimes \sO(\{1\})^{\otimes -r}= \sO_{\P^1_R}\cdot \tfrac{(T_0-T_1)^r}{G}
 \xr{\simeq}\sO_{\P^1_R},\]
where the second isomorphism is given by multiplication with $G/(T_0-T_1)^r$.
Thus $\theta_{m,n}$ admits the description from the statement, where $z=1/t$.
The commutativity of the diagram follows directly from this.
\end{proof}

\begin{remark}\label{rmk:h0P1}
Denote by $\W_m$ the ring-scheme of big Witt vectors of length $m$.
If $A$ is a ring we can identify the $A$-rational points of the underlying group scheme
with
\[\W_m(A)= (1+tA[t])^\times/(1+t^{m+1} A[t])^\times.\]
Then the maps $\theta_{m,n}$ from Lemma \ref{lem:h0P1}, $m,n\ge 1$, induce isomorphisms in $\NST$
\[\theta_{m,n}: h_{0,\Nis}(\bcube^{(m,n)})\xr{\simeq} \W_{m-1}\oplus \W_{n-1} \oplus \G_m \oplus \Z.\]
Indeed, it follows immediately from Lemma \ref{lem:h0P1} that we have such an isomorphism of Nisnevich sheaves.
To check the compatibility with transfers it suffices to check the compatibility
with transfers of the limit $\varprojlim_{m,n}\theta^{m,n}$ (since the transition maps are surjective).
Since $\W\oplus \W\oplus \G_m\oplus \Z$ is a $\Z$-torsion-free  sheaf on $\Sm_{\Nis}$ 
for which the pull-back along dominant \'etale maps is injective,
the compatibility with transfers follows automatically from \cite[Lem 1.1]{MS}.
\end{remark}

\begin{lemma}\label{lem:Gm}
The unit map
\eq{lem:Gm1}{h_{0,\Nis}^{\bcube, \sp}(\bcubee)\xr{\simeq} 
\ulomega^*\ulomega_! h_{0,\Nis}^{\bcube, \sp}(\bcubee)
\cong \ulomega^*(\G_m\oplus \Z)}
is an isomorphism in $\CItspNis$. Furthermore, the natural maps
\eq{lem:Gm2}{h_{0,\Nis}^{\bcube, \sp}(\bcube^{(m,n)})\to  h_{0,\Nis}^{\bcube,\sp}(\bcubee)}
are surjective, for  all $m,n\ge 1$,  and there exists a splitting in $\CItspNis$
\[ s_{m,n} : \ulomega^*(\G_m\oplus \Z) \to h_{0,\Nis}^{\bcube, \sp}(\bcube^{(m,n)})\]
of \eqref{lem:Gm2} such that the following diagram is commutative for integers
$m'\geq m$ and $n'\geq n$:
\eq{lem:Gm2-1}{\xymatrix{
\ulomega^*(\G_m\oplus \Z) \ar[r]^-{s_{m',n'}} \ar[rd]_-{s_{m,n}} 
&h_{0,\Nis}^{\bcube, \sp}(\bcube^{(m',n')}) \ar[d] \\
&h_{0,\Nis}^{\bcube, \sp}(\bcube^{(m,n)}) . \\ }}
\end{lemma}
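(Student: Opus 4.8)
The plan is to reduce everything to the explicit computation of $h_0(\bcube^{(m,n)})$ provided by Lemma \ref{lem:h0P1}, together with its compatibility with transfers recorded in Remark \ref{rmk:h0P1}. First I would prove the isomorphism \eqref{lem:Gm1}. Since $h_{0,\Nis}^{\bcube,\sp}(\bcubee)$ is by definition semipure, the unit map $h_{0,\Nis}^{\bcube,\sp}(\bcubee)\to \ulomega^*\ulomega_! h_{0,\Nis}^{\bcube,\sp}(\bcubee)$ is injective, so the content is surjectivity. By Remark \ref{rmk:h0P1} with $m=n=1$ we have $h_{0,\Nis}(\bcubee) = \ulomega_! h_{0,\Nis}^{\bcube,\sp}(\bcubee) \cong \G_m \oplus \Z$ in $\NST$ (the big Witt vector factors $\W_0$ being trivial), so the target is $\ulomega^*(\G_m\oplus \Z)$. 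To check surjectivity of the unit in $\CItspNis$, I would use Lemma \ref{lem:crit-surj}: it suffices to check surjectivity on $\sX_{(x)}$ for $\sX=(\ol X,X_\infty)\in\uMCor$ with $\ol X$ normal and $x\in \ol X$, i.e.\ on henselian local rings. Over such a ring $R$ (which one may take integral regular), Lemma \ref{lem:h0P1} gives $h_0(\bcubee)(R)\cong ((R[t]/t)^\times\oplus (R[z]/z)^\times)/R^\times \oplus \Z = R^\times \oplus \Z$ — exactly $\ulomega_!$ applied to the target — and the semipurification does not change this since the target is already semipure. Hence the unit is an isomorphism on these local sections, and surjectivity (hence the isomorphism) follows.

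Next, the surjectivity of \eqref{lem:Gm2}: the map $h_{0,\Nis}^{\bcube,\sp}(\bcube^{(m,n)})\to h_{0,\Nis}^{\bcube,\sp}(\bcubee)$ is induced by the morphism $\bcubee \to \bcube^{(m,n)}$ in $\uMCor$ (admissible because $1\cdot 0+1\cdot\infty \le m\cdot 0 + n\cdot\infty$), i.e.\ it is functoriality of $h_{0,\Nis}^{\bcube,\sp}(\Ztr(-))$. On local sections $R$ as above, Lemma \ref{lem:h0P1} identifies the map with the natural quotient $((R[t]/t^m)^\times \oplus (R[z]/z^n)^\times)/R^\times \oplus \Z \twoheadrightarrow ((R[t]/t)^\times \oplus (R[z]/z)^\times)/R^\times \oplus \Z$, which is evidently surjective (it is induced by the reduction maps $R[t]/t^m \to R[t]/t$, which are surjective on units). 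So by Lemma \ref{lem:crit-surj} the map \eqref{lem:Gm2} is surjective in $\ulMNST$, hence in $\CItspNis$.

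Finally I would construct the splitting $s_{m,n}$. The cleanest route is to produce a section of $\bcube^{(m,n)}\to\bcubee$ at the level of representables that is compatible in $m,n$; but no such map of modulus pairs exists, so instead I would build $s_{m,n}$ as a morphism of sheaves. Using Remark \ref{rmk:h0P1}, $h_{0,\Nis}(\bcube^{(m,n)})\cong \W_{m-1}\oplus\W_{n-1}\oplus\G_m\oplus\Z$ in $\NST$, and the map \eqref{lem:Gm2} corresponds, under these identifications, to the projection onto the $\G_m\oplus\Z$ summand (this is the point one must check: the quotient map of Lemma \ref{lem:h0P1} on local sections kills precisely the $\W$-parts $1+tR[t]/t^m$ and $1+zR[z]/z^n$ and is the identity on $R^\times \oplus \Z$). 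The inclusion of the $\G_m\oplus\Z$ summand is then a morphism in $\NST$ which is a section of \eqref{lem:Gm2} at the level of $\ulomega_!$; I would promote it to a morphism in $\CItspNis$ by applying $\uomega^{\CI}$ (or, more directly, by noting that the splitting $R^\times\oplus\Z \hookrightarrow ((R[t]/t^m)^\times\oplus(R[z]/z^n)^\times)/R^\times\oplus\Z$ coming from the constant inclusion $R^\times\hookrightarrow(R[t]/t^m)^\times$ in the first factor and $0$ in the second, plus the degree, is functorial in the modulus pair $\bcube^{(m,n)}$ and compatible with transfers by the same torsion-free/dominant-injectivity argument as in Remark \ref{rmk:h0P1}). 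Compatibility of the triangle \eqref{lem:Gm2-1} for $m'\ge m$, $n'\ge n$ is then immediate from the explicit description: all three maps are, on local sections, the obvious inclusions/quotients among the groups $((R[t]/t^?)^\times\oplus(R[z]/z^?)^\times)/R^\times\oplus\Z$, and these manifestly commute by the last diagram in Lemma \ref{lem:h0P1}.

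I expect the main obstacle to be the bookkeeping in the last step: one must ensure that the chosen splittings are genuinely morphisms in $\CItspNis$ (not just in $\NST$ after applying $\ulomega_!$) and that the choice is uniform in $m$ and $n$ so that \eqref{lem:Gm2-1} commutes on the nose rather than up to a sheaf-theoretic unit. Checking compatibility with transfers for the splitting is the technical heart — but this is handled exactly as in Remark \ref{rmk:h0P1}, via \cite[Lem 1.1]{MS}, since $\G_m\oplus\Z$ is $\Z$-torsion-free and pullback along dominant \'etale maps is injective on it.
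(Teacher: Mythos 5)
There is a genuine gap, and it is precisely where the real work of this lemma lies: your argument never engages the modulus (admissibility) condition. Both of your surjectivity checks are carried out on sections over rings with \emph{empty} modulus divisor, i.e.\ you only compute $\ulomega_!$ of the maps via Lemma \ref{lem:h0P1} and Remark \ref{rmk:h0P1}. But Lemma \ref{lem:crit-surj} requires surjectivity on $\sX_{(x)}$ for modulus pairs $(\Spec R,\Div(f))$ with $f$ a nonunit, and there the value of $h_{0,\Nis}^{\bcube,\sp}(\bcube^{(m,n)})$ is generated by correspondences satisfying the admissibility bound against $m\cdot 0+n\cdot\infty$, while the target $\ulomega^*(\G_m\oplus\Z)$ has value $R_f^\times\oplus\Z$. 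The point is that for a unit $a\in R_f^\times$ with poles along $f$, the naive graph of $a$ is \emph{not} an admissible correspondence from $(\Spec R,(f))$ to $\bcube^{(m,n)}$; one must construct one by hand. This is what the paper does: choosing $N$ with $ab=f^{nN}$ and $af^{mN}\in R$, taking the closures $W_i$ of the components of $V(t^{mnN}+(-1)^{mnN}a)$, and checking that $f/t^m$ and $f/z^n$ are integral over the relevant coordinate rings to verify admissibility, then computing $\psi(W_a)=(a,mnN)$. Checking surjectivity only over $R$ with empty modulus, as you do, just re-verifies that $\uomega_!$ of the map is surjective, which is immediate and does not prove surjectivity in $\uMNST$.

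The same problem undermines your construction of the splitting $s_{m,n}$. A splitting in $\CItspNis$ must assign, to every section $a\in\G_m(X\setminus D)$ over every modulus pair $(X,D)$, an element of the (semipure) subgroup $h_{0,\Nis}^{\bcube,\sp}(\bcube^{(m,n)})(X,D)$, i.e.\ one represented by admissible correspondences relative to $D$. Applying $\uomega^{\CI}$ to an $\NST$-level section only produces a map into $\uomega^{\CI}\uomega_!h_{0,\Nis}^{\bcube,\sp}(\bcube^{(m,n)})$, which is strictly larger than $h_{0,\Nis}^{\bcube,\sp}(\bcube^{(m,n)})$ for $m,n>1$ (the unit map is injective but not surjective), and your ``constant inclusion $R^\times\hookrightarrow(R[t]/t^m)^\times$'' description again only defines the map after $\uomega_!$. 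The missing content is exactly the paper's assignment $a\mapsto\lambda_a^{m,n}=\omega_a^{m,n}-\omega_1^{m,n}$ built from the explicit admissible cycles $W_a$ above, together with the verification (via the injection into the generic point given by semipurity and \cite[Thm 3.1]{S-purity}, and the explicit formula for $\theta_{m,n}(\lambda_a^{m,n})=(a,1,0)$) that it is independent of $N$, additive in $a$, invariant under $f\mapsto uf$, hence glues to a sheaf morphism, and is compatible with the transition maps so that \eqref{lem:Gm2-1} commutes. Your closing remarks on transfers (via \cite[Lem 1.1]{MS} and torsion-freeness of $\W\oplus\W\oplus\G_m\oplus\Z$) do match the paper's final step, and the injectivity-by-semipurity and crit-surj reductions are correct, but without the explicit admissible correspondences the proof does not go through. (Minor: the map \eqref{lem:Gm2} is induced by the morphism $\bcube^{(m,n)}\to\bcubee$ in $\uMCor$, not by a morphism in the opposite direction as you wrote.)
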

\begin{proof}
The second isomorphism in \eqref{lem:Gm1} holds by Lemma \ref{lem:h0P1} and 
Remark \ref{rmk:h0P1};  
the unit map is injective by semipurity.
We show the surjectivity of the composite map
\eq{lem:Gm3}{h_{0,\Nis}^{\bcube, \sp}(\bcube^{(m,n)})\to  h_{0,\Nis}^{\bcube,\sp}(\bcubee)
\to \ulomega^*(\G_m\oplus \Z), }
for $m,n\ge 1$. By Lemma \ref{lem:crit-surj} it suffices to show the surjectivity on $(\Spec R, (f))$, where
$R$ is an integral normal local $k$-algebra and $f\in R\setminus\{0\}$, such that $R_f$ is regular.
Denote by
\[\psi: \Ztr(\bcube^{(m,n)})(R,f)\to R_f^\times\oplus\Z\]
the precomposition of $\eqref{lem:Gm3}$ evaluated at $(R,f)$ with the 
quotient map 
\[\Ztr(\bcube^{(m,n)})(R,f)\to h_{0,\Nis}^{\bcube, \sp}(\bcube^{(m,n)})(R,f).\]
By Lemma \ref{lem:h0P1} 
\eq{lem:Gm3.5}{\psi(V(a_0+ a_1t+\ldots +a_r t^r))= ((-1)^r a_0/a_r, r),}
provided that $Z=V(a_0+ a_1t+\ldots +a_r t^r)$ is an admissible prime correspondence and $a_i\in R_f$. 
We claim that $\psi$ is surjective.
To this end, observe that for $a\in R_f^\times$ we find $N\ge 0$ and  $b\in R$ such that
\eq{lem:Gm4}{ab=f^{nN}, \quad \text{and}\quad af^{mN}\in R.}
Set $W:=V(t^{mnN} + (-1)^{mnN} a)\subset \Spec R_f[t, 1/t]$ and $K=\Frac(R)$.
Let $t^{mnN} + (-1)^{mnN} a=\prod_i h_i$ be the decomposition into monic irreducible factors in $K[t,1/t]$
and denote by $W_i\subset \Spec R_f[t, 1/t]$ the closure of $V(h_i)$. (Note that $W_i=W_j$ for $i\neq j$ is allowed.)
The $W_i$ correspond to the components of $W$ which are dominant over $R_f$; 
since $W$ is finite (the polynomial defining $W$ is monic) and surjective over $R_f$, so are the $W_i$. We claim
\eq{lem:Gm5}{W_i\in \Ztr(\bcube^{(m,n)})(R,f).}
Indeed, let $I_i$ (resp. $J_i$) be the ideal of the closure of $W_i$ in $\Spec R[t]$ (resp. $\Spec R[z]$ with $z=1/t$).
By \eqref{lem:Gm4} 
\[bt^{nmN} +  (-1)^{mnN} f^{nN}\in I_i \quad \text{and} 
\quad f^{mN} +  (-1)^{mnN} f^{mN}a z^{mnN}\in J_i.\]
Hence $(f/t^m)^{nN}\in R[t]/I_i$ and $(f/z^n)^{mN}\in R[z]/J_i$.
It follows that $f/t^m$  (resp. $f/z^n$) is integral over $R[t]/I_i$ (resp. $R[z]/J_i$);
thus \eqref{lem:Gm5} holds. Put
 \[W_a= \sum_i W_i \in\Ztr(\bcube^{(m,n)})(R,f).\]
We claim 
\eq{lem:Gm6}{\psi(W_a)=(a, mnN)\in R_f^\times\oplus \Z.}
Indeed, it suffices to show this after restriction to the generic point of $R$, 
in which case it follows directly from the definition of the $W_i$ and \eqref{lem:Gm3.5}.
This implies the surjectivity of $\psi$ and that of \eqref{lem:Gm3}.
Next, we show that \eqref{lem:Gm3} has a splitting. Let 
$\omega_a^{m,n}\in h_{0,\Nis}^{\bcube, \sp}(\bcube^{(m,n)})(R,f)$ be the class of $W_a$ and 
$\lambda_a^{m,n}=\omega_a^{m,n}-\omega_1^{m,n}$, where
$\omega_1^{m,n}$ is defined as $\omega_a^{m,n}$ replacing $a$ by $1$ (and using the same $N$).
By \eqref{lem:Gm6} the image of $\lambda_a^{m,n}$ under the map \eqref{lem:Gm3}:
\[ h_{0,\Nis}^{\bcube, \sp}(\bcube^{(m,n)})(R,f) \to R_f^\times\oplus \Z\]
is $(a,0)$. 

\begin{claim}\label{claim;lem:Gm}
$\lambda_a^{m,n}$ is independent of the choice of $N$, and we have
\eq{lem:Gm7}{\lambda_{ab}^{m,n} =\lambda_a^{m,n} + \lambda_b^{m,n}\quad\text{ for } a,b \in R_f^\times.}
Moreover,  for $m'\geq m$ and $n'\geq n$, the image of $\lambda^{m',n'}_a$ under
\[h_{0,\Nis}^{\bcube, \sp}(\bcube^{(m',n')}) \to
h_{0,\Nis}^{\bcube, \sp}(\bcube^{(m,n)})\]
coincides with $\lambda_a^{m,n}$.
\end{claim}

By the semipurity of $h_{0,\Nis}^{\bcube, \sp}(\bcube^{(m,n)})$ and \cite[Thm 3.1]{S-purity}, 
we have an injective homomorphism
\eq{lem:Gm8}{h_{0,\Nis}^{\bcube, \sp}(\bcube^{(m,n)})(R,f) \hookrightarrow 
\ulomega_! h_{0,\Nis}^{\bcube, \sp}(\bcube^{(m,n)})(K)=h_0(\bcube^{(m,n)})(K).}
By Lemma \ref{lem:h0P1} the isomorphism
\[\theta_{m,n}: h_0(\bcube^{(m,n)})(K)\xr{\simeq} ((K[t]/t^m)^\times \oplus (K[z]/z^n)^\times)/K^\times \oplus \Z\]
sends $\omega^{m,n}_a$ to 
\[\theta_{m,n}(\omega^{m,n}_a) =\left (\frac{(-1)^{mnN}a}{(t-1)^{mnN}}, \frac{1}{(1-z)^{mnN}}, mnN\right).\]
Thus $\theta_{m,n}(\lambda^{m,n}_a)= (a,1,0)$,  which is independent of $N$. 
By the injectivity of \eqref{lem:Gm8} this implies the first two assertions of the claim;
similarly the final assertion of the claim follows form the commutative diagram in Lemma \ref{lem:h0P1}.

Since $\lambda_a^{m,n}$ does not change if we replace $f$ by $uf$ with $u\in R^\times$,
the map $a\to \lambda_a^{m,n}$ glues to give a global  morphism of Nisnevich sheaves
which induces the splitting $s_{m,n}$ from the statement.
It remains to check that $s_{m,n}$ is compatible with transfers.
To this end it suffices to check that $\ul{\omega}_!(s_{m,n})$ is compatible with transfers
and since the transition maps are surjective it further suffices to show that
\[\varprojlim_{m,n} \ul{\omega}_!(s_{m,n}): \G_m\oplus \Z\to 
\varprojlim_{m,n} \ul{\omega_!}h^{\bcube, \sp}_{0,\Nis}(\bcube^{(m,n)})\]
is compatible with transfers. Since we can identify the target with
$\W\oplus\W\oplus\G_m\oplus\Z$ by Remark \ref{rmk:h0P1}, the compatibility
holds automatically by \cite[Lem 1.1]{MS}.
\end{proof}

\begin{prop}\label{prop:mu}
Denote by $\psi: \A^1_y\times \A^1_s\to \A^1_x\times \A^1_s$ the morphism induced
by the $k[s]$-algebra morphism $k[x,s]\to k[y,s]$, $x\mapsto ys$.
We denote  by the same symbol the induced morphism in $\ulMCor$
\eq{prop:mu1}{\psi: \bcubee_y\otimes \bcube^{(2)}_s\to \bcubee_x\otimes \bcubee_s.}
Let $F\in \CItspNis$ and $\sX\in \ulMCor$. Then $\psi^*$ factors as follows
\[\xymatrix{                                                 & F(\bcubee_y\otimes \bcubee_s\otimes \sX)\ar[d]\\
F(\bcubee_x\otimes \bcubee_s\otimes\sX)\ar[r]^{\psi^*}\ar@{.>}[ru] &  F(\bcubee_y\otimes \bcube^{(2)}_s\otimes\sX),
}\]
where the vertical map is induced by the natural morphism $\bcube^{(2)}_s\to \bcubee_s$.
\end{prop}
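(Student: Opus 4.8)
The plan is to reduce the factorization to the explicit ``$\dlog$-description'' of the $\bcubee$-twist furnished by Lemma~\ref{lem:Gm}, and then to compute $\psi^*$ summand by summand, the decisive input being that the ``square'' $\dlog s\cdot \dlog s$ has only simple poles. First I would record a factorization of $\psi$ in $\ulMCor$. Let $\mathrm{mult}\colon\bcubee\otimes\bcubee\to\bcubee$ be the correspondence induced by the multiplication $\G_m\times\G_m\to\G_m$; blowing up $\P^1\times\P^1$ in the two points $(0,\infty)$ and $(\infty,0)$ one verifies, exactly as one checks admissibility for \eqref{prop:mu1}, that $\mathrm{mult}$ is an admissible left proper prime correspondence. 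Together with the diagonal $\Delta_s\colon\bcubee_y\otimes\bcube^{(2)}_s\to\bcubee_y\otimes\bcubee_{s_1}\otimes\bcubee_{s_2}$, which is a genuine morphism of modulus pairs since the diagonal $\P^1_s\hookrightarrow\P^1_{s_1}\times\P^1_{s_2}$ pulls $(0_{s_1}+\infty_{s_1})+(0_{s_2}+\infty_{s_2})$ back to $2\cdot 0_s+2\cdot\infty_s$, this gives
\[\psi=\bigl(\mathrm{mult}_{y,s_2\to x}\otimes\mathrm{id}_{s_1\to s}\bigr)\circ\Delta_s,\qquad\text{hence}\qquad \psi^*=\Delta_s^*\circ(\mathrm{mult}\otimes\mathrm{id})^*\]
after tensoring everything with $\sX$.

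Next I would set up the $\dlog$-decomposition. By Lemma~\ref{lem:Nis} the presheaf $\uHom_{\uMPST}(\Ztr(\bcubee_s),F)$ lies in $\CItspNis$, and the standard reflection argument (using the adjunction \eqref{para:CI1} together with Lemma~\ref{lem:Nis}) shows $\uHom_{\uMPST}(\Ztr(\bcubee),H)=\uHom_{\uMPST}(h_{0,\Nis}^{\bcube,\sp}(\bcubee),H)$ for every $H\in\CItspNis$. Feeding in the identification $h_{0,\Nis}^{\bcube,\sp}(\bcubee)\cong\ulomega^*(\G_m\oplus\Z)$ of Lemma~\ref{lem:Gm}, which realizes $\widetilde{\G_m}$ as the $\G_m$-summand, one obtains a natural splitting $\uHom_{\uMPST}(\Ztr(\bcubee),H)\cong H\oplus\uHom_{\uMPST}(\widetilde{\G_m},H)$, in which the first summand is the pull-back along the projection $\bcubee\otimes(-)\to(-)$ and the second is ``cupping with $\dlog$''. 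Applying this in both the $\bcubee_x$- and the $\bcubee_s$-variable produces a natural direct sum decomposition
\[F(\bcubee_x\otimes\bcubee_s\otimes\sX)\;\cong\;F(\sX)\ \oplus\ (\gamma^1 F)(\sX)\ \oplus\ (\gamma^1 F)(\sX)\ \oplus\ (\gamma^2 F)(\sX),\]
which I think of as the coefficients of $1$, $\dlog x$, $\dlog s$ and $\dlog x\wedge\dlog s$ (using $\gamma^2 F=\uHom_{\uMPST}(\widetilde{\sK^M_2},F)$). Running Lemma~\ref{lem:Gm} with $\bcube^{(2)}_s$ in place of $\bcubee_s$ gives the analogous decomposition of $F(\bcubee_y\otimes\bcube^{(2)}_s\otimes\sX)$, compatible with the vertical map of the Proposition; since the vertical map is injective by semipurity, it suffices to show that $\psi^*$ carries each of the four summands above into $F(\bcubee_y\otimes\bcubee_s\otimes\sX)$.

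Finally, the computation. From Lemma~\ref{lem:h0P1} applied to degree-one prime correspondences (i.e.\ to points) the class of a point $c$ of $\bcubee$ corresponds to $(c,1)$, so $\mathrm{mult}$ sends the pair of points $(a,b)$ to the point $ab$; hence $(\mathrm{mult})^*$ acts on the $\G_m$-part of $h_{0,\Nis}^{\bcube,\sp}(\bcubee)$ by $\dlog(ab)=\dlog a+\dlog b$, so $\psi^*$ replaces $\dlog x$ by $\dlog y+\dlog s$ and fixes $\dlog s$. Going through the summands: the constant part is pulled back through $\bcubee_y\otimes\bcube^{(2)}_s\otimes\sX\to\sX$, hence factors through $F(\bcubee_y\otimes\bcubee_s\otimes\sX)$; a term $c\,\dlog x$ with $c\in(\gamma^1 F)(\sX)$ (so $c$ carries no $s$-modulus) is sent to $c\,\dlog y+c\,\dlog s$, both summands of $s$-modulus $\le 0_s+\infty_s$; a term $c'\,\dlog s$ is fixed; and a term $c''\,\dlog x\wedge\dlog s$ is sent to $c''\,\dlog y\wedge\dlog s+c''\,\dlog s\wedge\dlog s$, where the decisive point is that $\dlog s\wedge\dlog s$ corresponds in $\widetilde{\sK^M_2}$ to the symbol $\{s,-1\}$, which has only a \emph{simple} pole along $\{s=0\}$ and $\{s=\infty\}$, so that $c''\,\dlog s\wedge\dlog s$ has $s$-modulus $\le 0_s+\infty_s$ and not the naively expected $\le 2\cdot 0_s+2\cdot\infty_s$. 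Thus $\psi^*$ of any section of $F(\bcubee_x\otimes\bcubee_s\otimes\sX)$ has $s$-modulus $\le 0_s+\infty_s$, i.e.\ lies in the image of the vertical map, which produces the dotted arrow; by naturality in $\sX$ this is the asserted factorization.

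I expect the technical heart to be the middle step: making the four-fold $\dlog$-decomposition of $F(\bcubee_x\otimes\bcubee_s\otimes\sX)$ rigorous --- in particular the passage through the tensor product $\bcubee_x\otimes\bcubee_s$, which requires the compatibility of $h_{0,\Nis}^{\bcube,\sp}$ with $\otimes$ and the explicit maps of Lemma~\ref{lem:h0P1} and Remark~\ref{rmk:h0P1} --- together with checking that it is compatible with the vertical map and with the splitting $s_{m,n}$ of Lemma~\ref{lem:Gm}, and that $(\mathrm{mult}\otimes\mathrm{id})^*$ and $\Delta_s^*$ act summand by summand exactly as stated; above all, the verification that the $\dlog s\wedge\dlog s$ contribution collapses to a simple-pole term is precisely where one gains over the crude modulus estimate. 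Once that bookkeeping is in place the last step is a short calculation.
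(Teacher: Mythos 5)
Your overall strategy can be made to work, and it is genuinely different in shape from the paper's proof, but the justification you give at what you call the decisive step is not the actual mechanism, and as written it would send you chasing a pole-order estimate that is neither available nor needed. The paper reduces by Yoneda and the adjunction \eqref{para:CI1} to a factorization at the level of $h^{\bcube,\sp}_{0,\Nis}$, then uses semipurity to pass to $\uomega_!$, where $\uomega_!h^{\bcube,\sp}_{0,\Nis}(\bcubee_x\otimes\bcubee_s)=\sK^M_2\oplus\G_m\oplus\G_m\oplus\Z$ is $\A^1$-invariant (\cite[Prop 5.6]{RSY}); both $\uomega_!\psi$ and $\uomega_!a$ then factor through $h^{\A^1}_{0,\Nis}(h_{0,\Nis}(\bcubee_y\otimes\bcube^{(2)}_s))\cong H$, and the dotted arrow exists. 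Your route instead decomposes the source into four summands via $\Ztr(\bcubee)=\Z\oplus\bcubee_{\red}$ and the identifications $\gamma^1F=\uHom_{\uMPST}(\uomega^*\G_m,F)$, $\gamma^2F=\uHom_{\uMPST}(\uomega^*\sK^M_2,F)$; this is legitimate and non-circular (it rests on Lemma \ref{lem:Gm}, Lemma \ref{lem:Nis} and Proposition \ref{prop:T}/Corollary \ref{cor:gtwist}, which appear later in the paper but do not use Proposition \ref{prop:mu}), though it imports strictly more machinery than the paper needs.

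The point you miss is that, once these identifications are in place, the summandwise factorization is automatic for a softer reason than the one you give: $\uomega^*\G_m$ and $\uomega^*\sK^M_2$ are $\uomega^*$ of homotopy invariant sheaves, so by Yoneda a map $\Ztr(\sY')\to\uomega^*G$ is nothing but a section of $G$ over the interior and is completely blind to the modulus; hence precomposing the canonical projections $\Ztr(\bcubee_x\otimes\bcubee_s)\to\uomega^*\G_m,\ \uomega^*\sK^M_2$ with $\psi$ factors through $\Ztr(\bcubee_y\otimes\bcubee_s)$ for trivial reasons, with no expansion $\{ys,s\}=\{y,s\}+\{s,-1\}$ and no ``gain over the crude modulus estimate''. (The identity $\{s,s\}=\{s,-1\}$ is exactly what Remark \ref{remark:mu} records a posteriori; it is not an input.) If you insist on running the argument as a literal pole count --- bounding the $s$-modulus of ``$c''\cup\{s,-1\}$'' by the pole order of the symbol --- you would need a lemma relating pole orders of Milnor symbols to modulus bounds of such cup products, which is not in the paper and not needed; this is the one place where your write-up, taken at face value, does not close. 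Two smaller points: your claim that $F(\bcubee_y\otimes\bcube^{(2)}_s\otimes\sX)$ admits ``the analogous decomposition'' is false as stated (by Remark \ref{rmk:h0P1} the $h_0$ of $\bcube^{(2)}$ has extra Witt-vector summands), but you do not actually use it --- injectivity of the vertical map by semipurity suffices; and the factorization $\psi=(\mathrm{mult}\otimes\id)\circ\Delta_s$ is fine but likewise dispensable. So: workable and different route (explicit four-term decomposition plus Proposition \ref{prop:T}, versus the paper's lighter semipurity-plus-$\A^1$-localization after $\uomega_!$), but the decisive point in both proofs is homotopy invariance of $\G_m$ and $\sK^M_2$, not the simple pole of $\{s,-1\}$.
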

\begin{proof}
It is direct to check that $\psi$ induces a morphism \eqref{prop:mu1}.
To check the factorization statement, we may replace $F$ by $\uHom(\Ztr(\sX), F)$
to  reduce to the case $\sX=(\Spec k,\emptyset)$, see Lemma \ref{lem:Nis}\ref{lem:Nis1}.
 {By Yoneda and \eqref{para:CI1} }
we are reduced to show that we have a factorization as follows
\eq{prop:mu2}{\xymatrix{
       & h^{\bcube,\sp}_{0,\Nis}(\bcubee_y\otimes\bcubee_s)\ar@{.>}[dl]\\
h_{0,\Nis}^{\bcube, \sp}(\bcubee_x\otimes\bcubee_s) &  
h^{\bcube,\sp}_{0,\Nis}(\bcubee_y\otimes\bcube^{(2)}_s).\ar[l]_{\psi}\ar[u]_a
}}
By \cite[Lem 1.14(iii)]{MS}  and  Lemma \ref{lem:Gm}, the map $a$ is surjective.
Thus we have to show $\psi(\Ker a)=0$. 
By semipurity it suffices to show that we have a factorization as in \eqref{prop:mu2} after applying $\ulomega_!$.
By \cite[Prop 5.6]{RSY} we have 
 {
\[H:=\uomega_!(h_{0,\Nis}^{\bcube, \sp}(\bcubee_x\otimes\bcubee_s))=h_{0,\Nis}(\bcubee_x\otimes\bcubee_s)=\sK^M_2\oplus \G_m\oplus\G_m\oplus \Z,\]
where $\sK^M_2$ is the (improved) Milnor $K$-theory sheaf; in particular $H$ is $\A^1$-invariant.
Thus $\uomega_!(\psi)$ and $\uomega_!(a)$ factor via 
 {$h^{\A^1}_{0,\Nis}(h_{0,\Nis}(\bcubee_y\otimes\bcube^{(2)}_s))$ (cf. \eqref{para:tut3}). }
Thus we obtain solid arrows in $\NST$
\eq{prop:mu3}{\xymatrix{
    & H\ar@{.>}[dl]\\
H & h^{\A^1}_{0,\Nis}(h_{0,\Nis}(\bcubee_y\otimes\bcube^{(2)}_s)).\ar[l]^-{\bar{\psi}}\ar[u]_{\bar{a}}
}}
Since  $\bar{a}$  is the composition of the natural isomorphisms  { (cf. \eqref{para:tut4})}
\[ h^{\A^1}_{0,\Nis}(h_{0,\Nis}(\bcubee_y\otimes\bcube^{(2)}_s)) 
\cong h^{\A^1}_{0,\Nis}(\Ztr(\A^1_y\setminus\{0\})\otimes_{\PST} \Ztr(\A^1_s\setminus\{0\})) \cong H\]
the dotted arrow exists, which completes the proof.
}
\end{proof}

\begin{remark}\label{remark:mu}
Going through the definitions one can check that the map $H\to H$ induced by $\bar{\psi}$ in \eqref{prop:mu3}
is on a regular local ring $R$ given by 
\[ (\{a,b\}, c,d, n)\mapsto (\{a,b\}+ \{d,-1\}, cd, d, n),\]
where we use the identification $H(R)= K^M_2(R)\oplus R^\times\oplus R^\times\oplus \Z$.
\end{remark}

\subsection{Cohomology of a blow-up centered in the smooth part of the modulus}

 The goal of this Section is to prove Theorem \ref{thm:blow-upAn} below, 
 giving the invariance of the cohomology of cube invariant sheaves along a certain class of blow-ups. 
 This plays a fundamental role in what follows, and it is used in the proof of the $(\P^n, \P^{n-1})$-invariance 
 of the cohomology.

Recall the following definition from \cite[5]{S-purity}.
\begin{defn}Let $F\in \CItsp$. We define the the modulus presheaf $\sigma^{(n)}(F)$ by
    \[\sigma^{(n)}(F)(\sY) = \Coker (F(\sY)\xr{{\rm pr}^*} F(\sY\tensor (\P^1, n0+\infty))),\]
    where ${\rm pr}^*$ is the pullback along the projection ${\rm pr}\colon\sY\tensor (\P^1, n0+\infty) \to \sY$. Note that ${\rm pr}^*$ is split injective, with left inverse given by the inclusion  $i_1\colon\Spec k\inj \P^1$ of the 1-section. Hence we have an isomorphism, natural in $\sY$
    \[F(\sY\tensor (\P^1, n0+\infty)) \cong \sigma^{(n)}(F)(\sY)\oplus F(\sY). \]
   Following \cite[Def. 5.6]{S-purity}, we write $F^{(n)}_{-1}$ for $\sigma^{(n)}(F)$ when $F$ is moreover in $\uMNST$. Note that we have a natural identification
    \[F^{(n)}_{-1}=\uHom_{\uMPST}((\P^1, n\cdot 0+ \infty)/1, F) = F(-\tensor (\P^1, n0+\infty))/F(-),\]
  where $(\P^1, n\cdot 0+ \infty)/1= \Coker(\Ztr(\Spec k,\emptyset)\xr{i_1} \Ztr(\P^1, n\cdot 0+\infty))$ in $\uMPST$. 
 {By Lemma \ref{lem:Nis}\ref{lem:Nis1} we have } $F^{(n)}_{-1}\in \CItspNis$ if $F\in \CItspNis$, so that the association $F\mapsto F_{-1}^{(n)}$ gives an endofunctor of $\CItspNis$. This construction is the modulus version of Voevodsky's contraction functor, see \cite[p. 191]{MVW}.
\end{defn}
\begin{nota}\label{nota:MCorls}
We denote by $\ulMCorls$ the full subcategory of $\uMCor$ consisting of ``log smooth'' modulus pairs, i.e., objects
$\sX=(X,D)$, where $X\in \Sm$ and $|D|$ is a simple normal crossing divisor (in particular, each irreducible component of $|D|$ is a smooth divisor in $X$).
Note that $\otimes$ restricts to a monoidal structure on $\ulMCorls$.
\end{nota}
\begin{lemma}\label{lem:comp-SNCD}
Let $F\in \CItspNis$ and  $\sX=(X,D)\in\uMCorls$. 
Let $H\inj X$ be a smooth divisor, such that $|D|+H$ is SNCD, and denote by
$j: U:= X\setminus H\inj X$ the inclusion of the complement.
Then
\[R^i j_* F_{(U, D_{|U})}=0, \quad \text{for all }i\ge 1,\]
where $F_{(U, D_{|U})}$ denotes the Nisnevich sheaf on $U$ defined in \eqref{para:sheaf1}.
\end{lemma}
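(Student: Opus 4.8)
The plan is to reduce the assertion, by a localization triangle, to a codimension-one semipurity statement for $F_{\sX}$ and then feed in the purity machinery of \cite{S-purity}.

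First I would observe that the claim is Nisnevich-local on $X$, so it is enough to treat the stalks of $R^ij_* F_{(U,D_{|U})}$ at points $x\in H$ (at $x\notin H$ the higher stalks vanish for trivial reasons). Since $j^*F_{\sX}=F_{(U,D_{|U})}$ directly from \eqref{para:sheaf1}, the localization triangle $i_*Ri^!F_{\sX}\to F_{\sX}\to Rj_*j^*F_{\sX}\to i_*Ri^!F_{\sX}[1]$ in $D(X_\Nis)$ attached to the closed immersion $i\colon H\inj X$, together with the fact that $F_{\sX}$ is concentrated in degree $0$, identifies the cohomology sheaf $\sH^i(Rj_*F_{(U,D_{|U})})$ with $i_*R^{i+1}i^!F_{\sX}$ for every $i\ge 1$. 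Hence the lemma is equivalent to the vanishing of the local cohomology sheaves $R^qi^!F_{\sX}=\sH^q_H(F_{\sX})=0$ for all $q\ge 2$, i.e.\ to semipurity in codimension one for $F_{\sX}$ along the smooth divisor $H$, which is transverse to the modulus $D$ since $|D|+H$ is a simple normal crossing divisor.

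To prove this vanishing I would pass to stalks once more, $\sH^q_H(F_{\sX})_x=H^q_H(\Spec\sO^h_{X,x},F_{\sX})$, and, writing $\sO^h_{X,x}$ as a filtered colimit of smooth local $k$-algebras as in \S\ref{sec:pre}, reduce to the case where $X$ is the henselization of a smooth $k$-scheme at a point $x\in H$. Using that $|D|+H$ is SNCD, I would pick a regular system of parameters $t,t_2,\dots$ at $x$ with $H=V(t)$ and with every component of $D$ through $x$ of the form $V(t_i)$, $i\ge 2$; this keeps $(\Spec\sO^h_{X,x},D)$ inside $\uMCorls$ with $H$ a ``coordinate'' divisor which is not a component of $|D|$. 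Now \cite{S-purity} applies: the sheaf $F_{\sX}$ is resolved on $X_\Nis$ by its Cousin complex $C^\bullet(F_{\sX})$, and restricting supports to $H$ yields the subcomplex $\ul{\Gamma}_HC^\bullet(F_{\sX})$, which starts in degree $1$ (no point of $H$ is of codimension $0$ in $X$). Since $\sO^h_{X,x}/(t)$ is again regular and the relevant contraction of $F$ is again an object of $\CItspNis$ to which \cite{S-purity} applies on $H$, this subcomplex is, up to the shift by $1$, the Cousin complex on $H$ of a reciprocity sheaf; it therefore has no cohomology in degrees $\ge 2$, which gives $R^qi^!F_{\sX}=0$ for $q\ge 2$, and hence $R^ij_*F_{(U,D_{|U})}=0$ for $i\ge 1$. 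Equivalently, if the Gysin triangle for a smooth divisor transverse to the modulus is already available in \cite{S-purity}, it says directly that $Ri^!F_{\sX}$ is concentrated in cohomological degree $1$.

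I expect the only real obstacle to be the codimension-one semipurity $R^{\ge 2}i^!F_{\sX}=0$, i.e.\ the input from \cite{S-purity}; the localization triangle, the passage to henselian local rings, and the handling of Cousin complexes are all formal.
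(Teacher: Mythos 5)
Your reduction is exactly the paper's (implicit) argument: the paper proves this lemma by a one-line appeal to \cite[Cor 8.6(3)]{S-purity}, which says that for a smooth $H$ intersecting $D$ transversally (guaranteed here by $|D|+H$ being SNC) the complex $R\ul{\Gamma}_H F_{\sX}$ is concentrated in degree $1$, and the localization triangle then gives $R^ij_*F_{(U,D_{|U})}\cong R^{i+1}\ul{\Gamma}_HF_{\sX}=0$ for $i\ge 1$, just as you argue. Your additional Cousin-complex sketch of the purity input is not needed (and not fully justified as stated), but since you correctly identify that input as the purity theorem of \cite{S-purity}, your proof is essentially the same as the paper's.
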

 {
\begin{proof}
This is an immediate consequence of \cite[Cor 8.6(3)]{S-purity}.
\end{proof}
}

\begin{lemma}\label{lem:higher-HI}
Let $F\in \CItspNis$ and $\sX=(X, D)\in\uMCorls$.
Let $E_i\subset \A^1$, $i=1,\ldots, n$, be effective (or empty) divisors and denote 
by $\pi:  \A^n_X\to X$ the projection.
Then 
\[R^i\pi_* (F_{(\A^1,E_1)\otimes\ldots \otimes(\A^1, E_n)\otimes \sX})=0,\quad \text{for all } i \ge 1, \,n\ge 0.\]
\end{lemma}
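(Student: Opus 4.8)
The plan is to prove this by induction on $n$, peeling off one factor $(\A^1, E_i)$ at a time. The base case $n=0$ is trivial (there is nothing to project). For the inductive step, write $\pi$ as the composition $\A^n_X \xrightarrow{\pi'} \A^{n-1}_X \xrightarrow{\pi''} X$, where $\pi'$ projects away the last $\A^1$-factor. By the Leray spectral sequence for $\pi = \pi'' \circ \pi'$ it suffices to show that $R^i\pi'_*(F_{(\A^1,E_1)\otimes\cdots\otimes(\A^1,E_n)\otimes\sX}) = 0$ for $i \geq 1$, and that $\pi'_*$ of this sheaf is again of the form covered by the inductive hypothesis on $\A^{n-1}_X$, namely $F_{(\A^1,E_1)\otimes\cdots\otimes(\A^1,E_{n-1})\otimes\sX}$ (by cube-invariance of $F$ along the last coordinate — more precisely, since $(\A^1, E_n)$ sits inside the proper modulus pair $(\P^1, \bar{E}_n + \infty)$ and $F$ is cube-invariant, $\pi'_*$ recovers the value on the $(n-1)$-fold product; one has to be a little careful and work with $R\pi'_*$ directly). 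So the crux is the single-variable statement.

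For the single-variable statement, I would set $\sY = (\A^1, E_1)\otimes\cdots\otimes(\A^1, E_{n-1})\otimes\sX \in \uMCorls$ (note this really is log smooth since all the $E_i$ may be assumed SNC after shrinking, or one works with the interior correctly), and reduce to showing $R^i q_* F_{(\A^1, E)\otimes \sY} = 0$ for $i \geq 1$, where $q\colon \A^1_Y \to Y$ is the projection and $E \subset \A^1$ an effective divisor. Here $Y$ is the interior of $\sY$. Compactify: let $\bar q\colon \P^1_Y \to Y$ and consider the modulus pair $(\P^1, \bar E + \infty)\otimes \sY$, whose interior is $(\A^1\setminus|E|)_Y$ — but this is not quite what we want. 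The cleaner route is to use that $(\A^1, E) = (\P^1, \bar E + \infty)$ has interior $\A^1 \setminus |E|$, and instead to directly invoke Lemma \ref{lem:comp-SNCD}: apply it with the ambient modulus pair $(\P^1, \bar E+\infty)\otimes\sY$ (log smooth over the base) and $H = \{\infty\}\times Y$ the divisor at infinity, giving $R^i j_* F_{(\A^1,E)\otimes\sY} = 0$ for $i \geq 1$ where $j\colon \A^1_Y \hookrightarrow \P^1_Y$. Then by the Leray spectral sequence $R^i\bar q_* Rj_* (\ldots) = R^i \bar q_* j_*(\ldots)$, so it remains to compute $R^i\bar q_*$ of the sheaf $\bar q$-direct-image of a sheaf on $\P^1_Y$ whose restriction to $\A^1_Y$ is $F_{(\A^1,E)\otimes\sY}$. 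For a proper curve fibration $\P^1_Y \to Y$ one has a two-term situation, and cube-invariance of $F$ plus the semipurity should force the higher cohomology along the fibers to vanish, reducing the computation to the $\P^1$-cohomology of a cube-invariant sheaf, which is controlled.

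The main obstacle I expect is the bookkeeping around the divisor at infinity and making the compactification argument precise: one wants to compute $R^i\bar q_*$ of a sheaf $G$ on $\P^1_Y$ which on the open part $\A^1_Y$ agrees with $F_{(\A^1, E)\otimes\sY}$ but near $\infty$ is determined by the modulus condition, and one must check that the natural map $F((\P^1,\bar E+\infty)\otimes\sY) \to H^0(\A^1_Y, F_{(\A^1,E)\otimes\sY})$ together with higher vanishing gives $R\bar q_* G$ the expected shape. I would handle this by combining Lemma \ref{lem:comp-SNCD} (which already does the "pushforward from the open is acyclic" step) with cube-invariance: $R\bar q_* G$ is then computed by a Čech argument for the standard cover of $\P^1_Y$ by two copies of $\A^1_Y$, and the overlap term, being the sections over $(\G_m)_Y$, is related to the $(\A^1, E)$-sections via the known structure; after taking cohomology over $Y$ the $H^{\geq 1}$ contributions cancel, using cube-invariance of $F$ applied to $(\P^1,\bar E+\infty)$ exactly as in the passage used throughout Section 2. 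Once the single-variable vanishing and the identification $R\bar q_* \simeq F_{\sY}$-type statement are in hand, the induction closes immediately.
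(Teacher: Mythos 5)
There are two genuine gaps. First, your inductive step rests on the claim that $\pi'_*$ of the sheaf is again $F_{(\A^1,E_1)\otimes\cdots\otimes(\A^1,E_{n-1})\otimes\sX}$ ``by cube-invariance of $F$ along the last coordinate''. This is false: cube-invariance only concerns $\bcube=(\P^1,\infty)$, not $(\A^1,E_n)=(\P^1,\ol{E}_n+\infty)$ with $E_n$ an arbitrary effective divisor. Already for $F=\widetilde{\G_a}$ and $E_n=2\cdot 0$ the map $F(\sY)\to F((\A^1,E_n)\otimes\sY)$ is a strict inclusion (this failure is exactly what the contraction functors $\sigma^{(n)}$, $\gamma$ measure). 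What is true, and what the paper uses, is that $\pi'_*$ of the sheaf equals $F_{1,(\A^1,E_1)\otimes\cdots\otimes(\A^1,E_{n-1})\otimes\sX}$ with $F_1:=\uHom_{\uMPST}(\Ztr(\A^1,E_n),F)$; the induction then closes only because $F_1$ again lies in $\CItspNis$ (Lemma \ref{lem:Nis}\ref{lem:Nis1}) and the statement is quantified over all $F\in\CItspNis$. Your hedge about working with $R\pi'_*$ directly does not repair this, since the problem is the identification of the direct image, not derived versus underived functors.

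Second, in the single-variable case the real content is the vanishing $R^i\ol{\pi}_*\,j_*F_{(\A^1,E)\otimes\sX}=0$ for $i\ge 1$ (your use of Lemma \ref{lem:comp-SNCD} to kill $R^{\ge1}j_*$ is correct and matches the paper, modulo the slip that the ambient modulus should not already contain $\infty$ if $H=\infty$ is to satisfy the SNC hypothesis). Your sketch --- a \v{C}ech cover of $\P^1_Y$ by two affine charts with ``cube-invariance plus semipurity forcing the $H^{\ge1}$ contributions to cancel'' --- is not an argument: the required surjectivity on the overlap is essentially equivalent to the statement being proved, and cube-invariance of $F$ applied to $(\P^1,\ol{E}+\infty)$ is not available. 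The paper's proof proceeds differently: by $M$-reciprocity one writes $j_*F_{(\A^1,E)\otimes\sX}=\varinjlim_r F_{(\P^1,E+r\infty)\otimes\sX}$; for each $r$ the cone of $F_{\bcube\otimes\sX}\to F_{(\P^1,E+r\infty)\otimes\sX}$ has cohomology supported on $X\times|E+\infty|$, which is finite over $X$, giving a surjection $R^i\ol{\pi}_*F_{\bcube\otimes\sX}\twoheadrightarrow R^i\ol{\pi}_*F_{(\P^1,E+r\infty)\otimes\sX}$; and the source vanishes by the cube-invariance of cohomology theorem \cite[Thm 9.3]{S-purity}, which is the essential external input. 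Without the colimit over the multiplicity at infinity, the cone/support argument, and that theorem, your outline does not yield the vanishing.
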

\begin{proof}
 {
First consider the case $n=1$. Set $E:=E_1$ and   $\bcube^{(E,r)}:=(\P^1, E+ r\cdot \infty)$, for $r\ge 1$. 
The natural morphism $\bcube^{(E,r)}\to \bcube$ induces a map $F_{\sX\otimes \bcube} \to F_{\sX\otimes\bcube^{(E,r)}}$.
The cohomology sheaves of the cone $C$ of this map are supported in $X\times |E+\infty|$,
whence $R^i\ol{\pi}_*C=0$, $i\ge 1$, where $\ol{\pi}:\P^1_X\to X$ is the projection.
We obtain surjections 
\[R^i\ol{\pi}_*F_{\sX\otimes\bcube}\to 
     R^i\ol{\pi}_*F_{\sX\otimes\bcube^{(E,r)}}\to 0,\quad \text{for all } i\ge 1.\]
By the cube-invariance of cohomology (see \cite[Thm 9.3]{S-purity}) the left term vanishes. 
Thus  $M$-reciprocity (see \cite[1.27(1)]{S-purity}) yields
\[0=\varinjlim_r R^i\ol{\pi}_*F_{\sX\otimes\bcube^{(E,r)}}= R^i\ol{\pi}_* j_*F_{(\A^1, E)\otimes\sX},\]
where $j:\A^1_X\inj \P^1_X$ is the open immersion.
Together with Lemma \ref{lem:comp-SNCD}  we obtain
\[R^i\ol{\pi}_* Rj^k_*F_{(\A^1,E)\otimes\sX}=0, \quad \text{for all } i\geq 1, k\geq 0.\]
Thus  the vanishing $R^i\pi_*F_{(\A^1,E)\otimes\sX}=0$  follows from the Leray spectral sequence.

The general case follows by induction, by factoring $\pi$ as $\A^n_X\xr{\pi_1} \A^{n-1}_X\xr{\pi_{n-1}} X$
and observing
\[\pi_{1*}(F_{(\A^1, E_1)\otimes \ldots\otimes(\A^1, E_n)\otimes\sX})= 
F_{1,  (\A^1, E_2)\otimes\ldots\otimes(\A^1, E_n)\otimes\sX},\]
where $F_1:=\uHom_{\uMPST}(\Ztr(\A^1,E_1), F)$ lies in $\CItspNis$ by
Lemma \ref{lem:Nis}\ref{lem:Nis1}.
}
\end{proof}

 {
\begin{para}\label{defn:ti}
We recall some standard terminology.
Let $(X, D)\in \uMCorls$, $Y\in \Sm$, and let $f: Y\to X$ be a  $k$-morphism of finite type.
We say {\em $D$ is transversal to $f$}, if  for any number of irreducible components $D_1, \ldots, D_r$
of the SNCD $|D|$, the morphism $f$ intersects the scheme-theoretic intersection
$D_1\cap\ldots \cap D_r$ transversally
(i.e., the scheme-theoretic inverse image $f^{-1}(D_1\cap \ldots \cap D_r)$ is smooth over $k$ and of codimension
$r$ in $Y$). Note that $f$ is always transversal to the empty divisor.

If $f$ is a closed immersion we also say { \em $Y$ and $D$ intersect transversally}.
Since $X$ is of finite type over a perfect field, this is equivalent to say, that  for any point $x\in Y\cap D$
we find a regular sequence of parameters $t_1,\ldots, t_n\in\sO_{X,x}$,
such that $\sO_{Y,x}=\sO_{X,x}/(t_1,\ldots, t_s)$ and the irreducible components of $|D|$ containing $x$
are in $\Spec \sO_{X,x}$ given by $V(t_{s+1}),\ldots, V(t_r)$, with $1\le s\le r\le n$. 
\end{para}
}

 {
\begin{thm}\label{thm:blow-upAn}
Let $F\in\CItspNis$ and $\sX=(X,D)\in \ulMCorls$.
Assume there  is a smooth irreducible component  $D_0$ of $|D|$ which has multiplicity 1 in $D$.
Let $Z\subset X$ be a smooth closed subscheme which is contained in $D_0$ and intersects $|D-D_0|$ transversally.
Let $\rho: Y\to X$ be the blow-up in $Z$. 
Then the natural map
\[F_{\sX}\xr{\simeq} R\rho_{*} F_{(Y, \rho^*D)}\]
is an isomorphism in the derived category of abelian Nisnevich sheaves on $X$.
\end{thm}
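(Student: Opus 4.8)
The plan is to reduce, by a standard Mayer–Vietoris and induction argument, to an extremely local model computation, and then to carry out that computation using the modulus descent Proposition \ref{prop:mu} together with the cohomology vanishing results of Lemmas \ref{lem:higher-HI} and \ref{lem:comp-SNCD}. First I would observe that the statement is Nisnevich-local on $X$, and that the blow-up $\rho$ is an isomorphism away from $Z$, so the cone of $F_{\sX}\to R\rho_*F_{(Y,\rho^*D)}$ is supported on $Z$. Using the transversality hypothesis (see \ref{defn:ti}), I can choose Nisnevich-locally around a point $z\in Z$ a regular system of parameters adapting $Z$, $D_0$, and the other components of $|D-D_0|$ simultaneously; since $Z\subset D_0$ and $D_0$ appears with multiplicity one, while the remaining components meet $Z$ transversally, the pair $(X,D)$ étale-locally near $z$ looks like $(\A^n, \{x_1=0\} + \sum_{i} m_i\{x_{j_i}=0\})$ with $Z = \{x_1=\dots=x_c=0\}$ for some $c$ and the ``extra'' components among the coordinates $x_{c+1},\dots,x_n$. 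This is where one invokes that $F$ commutes with the relevant products via Lemma \ref{lem:higher-HI} to absorb the extra factors and the smooth ambient directions, reducing to the case $X = \A^c$, $D = c_0\cdot\{x_1=0\}$ with the center $Z = \{0\}$ the origin — and by a further dévissage on $c$ (blowing up a linear center factors through a composition of blow-ups in points of successive strata, or one fibers the blow-up of $\A^c$ in the origin over $\P^{c-1}$) one reduces to the key case recorded as Theorem \ref{thm:blow-upAn} in the shape already announced in the introduction: $\rho\colon Y\to\A^n$ the blow-up in the origin, $L$ a hyperplane through the origin, and one must show $H^i(Y, F_{(Y,\rho^*L)}) = 0$ for $i\ge 1$ together with the isomorphism on $H^0$.

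Next I would handle this base case. Cover $Y$ by the standard affine charts $Y_j$ of the blow-up; each chart is $\A^n$ with coordinates $(y_1,\dots,y_n)$ where the old coordinates are $x_j = y_j$, $x_i = y_iy_j$ for $i\ne j$, and $\rho^*L$ on $Y_j$ is the divisor cut out by a monomial in these $y$'s — one of them being the exceptional variable $y_j$ with multiplicity, and one being a strict-transform variable with multiplicity one (since $L$ has multiplicity one). On each chart and on the intersections, Lemma \ref{lem:higher-HI} gives that the higher cohomology of $F$ restricted to the relevant modulus pair (a product of $(\A^1, E_i)$'s over a smooth base) vanishes, so the Čech-to-derived spectral sequence for this affine cover collapses and computes $H^*(Y, F_{(Y,\rho^*L)})$ as the cohomology of an explicit Čech complex built out of groups of the form $F\big((\A^1,E_1)\otimes\cdots\otimes(\A^1,E_n)\otimes \sX\big)$. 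The content then becomes the acyclicity of this complex in positive degrees, which is where the map $\psi$ of Proposition \ref{prop:mu} enters: the transition between two charts is exactly (a variant of) the substitution $x\mapsto ys$, and Proposition \ref{prop:mu} says that the pullback along this transition factors through the pair with \emph{reduced} modulus in the $s$-direction. This factorization is precisely the homotopy needed to contract the Čech complex — it produces, chart by chart, a section splitting off the part of $F$ on the chart that ``sees'' the exceptional divisor with higher multiplicity, and the compatibility of these splittings (using the commutativity statements in Lemmas \ref{lem:h0P1} and \ref{lem:Gm}) assembles into a contracting homotopy, giving the vanishing in degrees $\ge 1$ and, after bookkeeping, the isomorphism on $H^0$ as well.

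Finally, I would reverse the dévissage: knowing the result for the origin-blow-up of $\A^n$ with a hyperplane modulus, the projective-bundle structure of the exceptional divisor (or alternatively an induction on the codimension $c$ of $Z$, peeling off one blow-up of a point at a time and using the smooth-blow-up formula) propagates it to a linear center in $\A^c$; then Lemma \ref{lem:higher-HI} restores the suppressed smooth directions and the transversally-meeting divisor components $D-D_0$ (here one uses that $\uHom_{\uMPST}(\Ztr(\A^1,E),-)$ preserves $\CItspNis$, Lemma \ref{lem:Nis}\ref{lem:Nis1}, to run the product argument); and a final Nisnevich-local patching over $X$, using that the cone is supported on $Z$ and vanishes there, yields the global isomorphism $F_{\sX}\xrightarrow{\simeq} R\rho_*F_{(Y,\rho^*D)}$.

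\medskip

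I expect the main obstacle to be the base-case acyclicity of the Čech complex: although Proposition \ref{prop:mu} provides the essential ``modulus-lowering'' factorization, turning that single factorization into a genuine contracting homotopy for the full multi-chart Čech complex of the blow-up of $\A^n$ requires carefully tracking how the factorization interacts across all pairs and triples of charts simultaneously, controlling the multiplicities along the exceptional divisor in each chart, and checking the requisite compatibilities (the commutative squares in Lemma \ref{lem:h0P1} and Lemma \ref{lem:Gm}) so that the local homotopies glue. The combinatorial organization of this induction — likely an induction on $n$ where one fibers $Y\to\A^n$ over $\P^{n-1}$ and applies Proposition \ref{prop:mu} one variable at a time — is the technical heart, and is presumably why the authors say the proof ``occupies almost all of section 2''.
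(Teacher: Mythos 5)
Your reduction to a local model and your identification of Proposition \ref{prop:mu} as the decisive input are both on target, but the heart of your argument — that Proposition \ref{prop:mu} furnishes a contracting homotopy for the \v{C}ech complex of the standard chart cover of the blow-up of $\A^n$ in the origin — is asserted rather than constructed, and this is exactly the step that does not come for free. Proposition \ref{prop:mu} concerns a single substitution $x\mapsto ys$ and lowers the modulus in one $\bcube$-direction for one pair of charts; for $n\ge 3$ you would need compatible such factorizations on all higher overlaps (which, incidentally, are not of the form $(\A^1,E_1)\otimes\cdots\otimes(\A^1,E_n)\otimes\sX$ — they contain $\G_m$-factors, so Lemma \ref{lem:higher-HI} does not directly give the acyclicity of the pieces of your \v{C}ech complex; one would have to route through Lemma \ref{lem:comp-SNCD} as the paper does). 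No mechanism is offered for gluing these local factorizations into a homotopy, and your closing paragraph concedes this is the missing technical core. Your dévissage on the codimension is also shaky as stated: a positive-dimensional linear center is not a composition of point blow-ups, and fibering the blow-up of $\A^c$ at the origin over $\P^{c-1}$ requires cohomological control over a base carrying a modulus, which at that stage of the argument is not yet available (in the paper, $(\P^n,\P^{n-1})$-invariance is a \emph{consequence} of Theorem \ref{thm:blow-upAn}, via Lemma \ref{lem:Pn-blow-up} and Theorem \ref{thm:PnInv}, not an ingredient).

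The paper avoids the multi-chart problem altogether. Its only direct computation is Lemma \ref{lem:blow-upA2}: the blow-up of $\A^2$ in the origin (tensored with an arbitrary $\sX\in\uMCorls$), treated not by a \v{C}ech cover but by projecting $\pi\colon Y\times X\to\P^1_X$ onto the exceptional direction, killing $R^{j}\pi_*\sF$ for $j\ge1$ and $H^{\ge 2}$ by Lemmas \ref{lem:higher-HI} and \ref{lem:comp-SNCD}, and then attacking the single remaining group $H^1(\P^1_X,\pi_*\sF)$ through the explicit cokernels $\Sigma$, $\Lambda$, $\Lambda(0)$; the inclusion $\Lambda(0)\subset\varphi(\Sigma)$ is proved by combining the purity isomorphisms of \cite[Lem 5.9]{S-purity} (your proposal never invokes these, and they are needed to generate $\Lambda(0)$ from $G(\A^1_x\otimes(\A^1_s,0))$ and $G(\bcubee_x\otimes\bcubee_s)$) with the factorization of $\psi^*$ from Proposition \ref{prop:mu}. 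The case of a center of codimension $\le 2$ then follows Nisnevich-locally (Lemma \ref{lem:blow-upInd}), and arbitrary codimension is handled by induction using the commutation of iterated blow-ups, Lemma \ref{lem:it-blowup} — blowing up $Z$ and then the strict transform of an intermediate $Z_2\supset Z$ of codimension $2$ agrees with blowing up $Z_2$ and then the preimage of $Z$ — so that the blow-up of $\A^n$ in a point for $n\ge 3$ is never confronted directly. To repair your proposal you would either have to actually build the claimed homotopy (nontrivial, and not a formal consequence of Proposition \ref{prop:mu}) or replace the \v{C}ech strategy and the vague dévissage by an argument of the above shape.
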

The proof is given in \ref{proof:thm:blow-upAn}.
The key point is to understand the case of the blow-up of $\A^2$ in the origin with $D_0$ a line,
 which is established in the next Lemma. Here, after some preliminary steps, we are reduced to prove the vanishing of the
 cohomology of the pushforward of $F$ along the projection from the blow-up to the exceptional divisor. 
This is where the modulus descent, i.e., Proposition \ref{prop:mu}, is crucially used. 

\begin{lemma}\label{lem:blow-upA2}
Let $F\in\CItspNis$ and $\sX=(X,D)\in\uMCorls$. Let $\rho: Y\to \A^2$ be the blow-up in the origin $0\in \A^2$
and let $L$ be a line containing $0$. Then
\[R^i\rho_{X*} F_{(Y,\rho^*L)\otimes \sX}=0,\quad \text{for all }i\ge 1,\]
where $\rho_X:=\rho\times\id_X: Y\times X\to \A^2\times X$ is the base change of $\rho$.
\end{lemma}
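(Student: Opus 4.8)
The statement is local on $X$, so I would work on the small Nisnevich site of $\A^2 \times X$ and, using the excision and the structure of the blow-up, reduce to computing the stalks of $R^i\rho_{X*} F_{(Y,\rho^*L)\otimes \sX}$ at points lying over the origin $0 \in \A^2$. Away from $0$ the map $\rho$ is an isomorphism, so the sheaf in question is supported over $0 \times X$, and it suffices to show that for every henselian local $k$-algebra $A$ mapping to $X$ (with the divisor $D$ pulled back to $\operatorname{Spec} A$), one has $H^i(Y \times \operatorname{Spec} A, F_{(Y, \rho^*L)\otimes \sX_A}) = 0$ for $i \geq 1$, where $Y \times \operatorname{Spec} A$ is the blow-up of $\A^2_A$ at $0$. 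Covering $Y$ by the two standard affine charts $\operatorname{Spec} k[x, y/x] \cong \A^2$ and $\operatorname{Spec} k[x/y, y] \cong \A^2$, whose intersection is $\A^1 \times \mathbf{G}_m$, a Mayer-Vietoris argument reduces the vanishing to understanding $F$ on these charts together with the exceptional divisor $E \cong \P^1$ and the strict transform of $L$.

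After choosing coordinates so that $L = \{x = 0\}$, the pullback divisor $\rho^*L$ on the chart $\operatorname{Spec} k[x, u]$ (with $u = y/x$) is $\{x = 0\} = E$, while on the chart $\operatorname{Spec} k[v, y]$ (with $v = x/y$) it is $\{v = 0\} + \{y = 0\}$, i.e.\ $E$ plus the strict transform $\tilde L$. The key geometric point is that on the first chart the modulus pair $(Y, \rho^*L)$ restricts to $(\A^1_x, 0) \otimes (\A^1_u, \emptyset)$, whereas near the intersection point of $E$ and $\tilde L$ it looks like $(\A^1_v, 0) \otimes (\A^1_y, 0)$ — but the \emph{interesting} thing is that the projection from $Y$ (or rather from a suitable open of it) to the exceptional divisor $E \cong \P^1$, expressed in the coordinate $u$, is exactly the kind of map appearing in Proposition~\ref{prop:mu}: the change of variables $x \mapsto u \cdot x$ relating the two charts is the morphism $\psi$ of that proposition. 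The strategy is therefore to compute $R\rho_{X*}$ by first pushing forward along the projection $\pi_E\colon Y \to E$ to the exceptional $\P^1$ and then along $E \times X \to X$; the higher direct images along $\pi_E$ vanish by Lemma~\ref{lem:higher-HI} (since the fibers are affine lines with a modulus supported at a point), reducing everything to the cohomology over $E \times X = \P^1 \times X$ of the pushforward sheaf.

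The main obstacle — and the place where Proposition~\ref{prop:mu} does the real work — is controlling the gluing of the pushforward sheaf $\pi_{E*}F_{(Y,\rho^*L)\otimes\sX}$ over the two charts of $E = \P^1$. On one chart it is a sheaf built from $F\big((\A^1,0)\otimes(\A^1,\emptyset)\otimes\sX\big)$-type sections and on the other from $F\big((\A^1,0)\otimes(\A^1,0)\otimes\sX\big)$-type sections (because of the extra divisor $\tilde L$); the transition map between them is, after unwinding coordinates, the map $\psi^*$ of Proposition~\ref{prop:mu}. That proposition tells us $\psi^*$ factors through the projection $\bcubee_s \otimes \sX \to (\P^1, 2\cdot 0 + \infty)_s \otimes \sX$, i.e.\ the sections that a priori only extend to the ``bigger'' modulus actually already come from the smaller one — which is precisely what is needed to show that the pushforward sheaf on $\P^1 \times X$ is (up to the ambiguity of the $\emptyset$-divisor piece) pulled back from $X$ along a projection, or more precisely is of a form to which cube-invariance of cohomology (\cite[Thm 9.3]{S-purity}) and Lemma~\ref{lem:higher-HI} apply. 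Assembling these: $R^i(\text{pr}_X)_* $ of the pushforward along $\pi_E$ vanishes for $i \geq 1$ by cube-invariance of cohomology, and combining with the vanishing of $R^i\pi_{E*}$ via the Leray spectral sequence for $Y \times X \to E \times X \to X$ gives $R^i\rho_{X*}F_{(Y,\rho^*L)\otimes\sX} = 0$ for $i \geq 1$, as desired. I expect the delicate bookkeeping to be: (i) identifying the transition map with $\psi^*$ precisely, keeping track of the twist by $\sX$ (handled by replacing $F$ with $\uHom(\Ztr(\sX), F)$ as in the proof of Proposition~\ref{prop:mu}, using Lemma~\ref{lem:Nis}\ref{lem:Nis1}), and (ii) dealing with the behaviour at the point $\infty \in \P^1 = E$, which contributes a third chart where no modulus issue arises and the vanishing is immediate from Lemma~\ref{lem:higher-HI}.
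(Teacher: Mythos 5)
Your overall route is the paper's: reduce to $X$ henselian local and to the vanishing of $H^i(Y\times X,\sF)$ for $\sF=F_{(Y,\rho^*L)\otimes\sX}$, fiber $Y\times X$ over the exceptional $\P^1_X$ via the projection $\pi$, kill $R^j\pi_*\sF$ for $j\ge 1$ by Lemma \ref{lem:higher-HI}, and locate the difficulty in $H^i(\P^1_X,\pi_*\sF)$, where the two charts carry the moduli $(\A^1,0)\otimes(\A^1,\emptyset)$ resp.\ $(\A^1,0)\otimes(\A^1,0)$ and the coordinate change $x=ys$ is the map $\psi$ of Proposition \ref{prop:mu}. Up to that point you agree with the paper (the worry about a ``third chart'' at $\infty$ is spurious: the chart $\P^1\setminus\{0\}$ already contains it).

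The gap is in the final, and hardest, step. You assert that Proposition \ref{prop:mu} shows $\pi_*\sF$ is ``pulled back from $X$'', or at least ``of a form to which cube-invariance of cohomology and Lemma \ref{lem:higher-HI} apply'', and that $R^{i}$ of the pushforward to $X$ then vanishes for $i\ge1$. Proposition \ref{prop:mu} gives no such sheaf-level identification: it is a factorization of one pullback map on sections over specific modulus pairs (note the modulus $\bcube^{(2)}_s$ in its source), not a description of $\pi_*\sF$. Indeed $\pi_*\sF$ is a genuinely twisted form of $F_{1,(\P^1,0)\otimes\sX}$ (with $F_1=\uHom_{\uMPST}(\Ztr(\A^1_x,0),F)$), the twist being rescaling of the fibre coordinate by the unit $s$ on the overlap; the paper never proves, and one should not expect, that this twist is trivial (already for $F=\widetilde{\Omega^1}$ the substitution $x=ys$ sends $dx/x$ to $dy/y+ds/s$). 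What the paper actually proves is strictly cohomological: for $i\ge 2$ it compares $\pi_*\sF$ with $j_*F_{1,\sV}$ (the cokernel is supported on $0\times X$) and concludes via Lemmas \ref{lem:comp-SNCD} and \ref{lem:higher-HI}; for $i=1$ it forms the cokernels $\ul{\Sigma}$, $\ul{\Lambda}$, $\ul{\Lambda}(0)$ of the maps from $F_{1,\P^1_\sX}$ into $\pi_*\sF$, $j_*F_{1,\sV}$ and $F_{1,(\P^1,0)\otimes\sX}$, uses cube-invariance only in the form $H^1(\P^1_X,F_{1,(\P^1,0)\otimes\sX})=0$ to reduce to the inclusion $\Lambda(0)\subset\varphi(\Sigma)$, and proves that inclusion by first producing a surjection $G(\A^1_x\otimes(\A^1_s,0))\oplus G(\bcubee_x\otimes\bcubee_s)\twoheadrightarrow\Lambda(0)$ from the purity isomorphisms \cite[Lem 5.9]{S-purity}, then showing each summand lands in $\varphi(\Sigma)$ --- the first by the elementary restriction $\psi_1$ of $\psi$, and only the second by Proposition \ref{prop:mu}. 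None of this bookkeeping (the comparison maps, whose injectivity needs semipurity and \cite[Thm 3.1]{S-purity}; the vanishing $H^1(U,F_{1,\sU})=0$ needed to compute the cokernels on global sections; the purity surjection onto $\Lambda(0)$) appears in, or is replaced by, your plan, so the central claim that ``cube-invariance applies to the pushforward'' remains unsupported and your argument does not close.
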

}
\begin{proof}
We can assume $X$ is henselian local 
 {and  
\[L=V(x)\subset \A^2=\Spec k[x,y].\]
}
 Set 
\[\sF:=F_{(Y, \rho^*L)\otimes \sX};\]
it is a Nisnevich sheaf on $Y\times X$. For $i\ge 1$, the higher direct images
 $R^i\rho_{X*}\sF$ are supported in $0\times X$ whence 
 \[H^j(\A^2_X, R^i\rho_{X*}\sF)=0, \quad \text{for all }i,j\ge 1,\]
 and
\[R^i\rho_{X*}\sF=0\Longleftrightarrow H^0(\A^2_X, R^i\rho_{X*}\sF)=0.\]
Furthermore, $\rho_{X*} \sF= F_{(\A^2, L)\otimes \sX}$, since $(Y, \rho^*L)\otimes \sX\cong (\A^2, L)\otimes \sX$ in
$\uMCor$ (see \cite[1]{KMSY1}). Hence by Lemma \ref{lem:higher-HI}
\[H^i(\A^2_X, \rho_{X*}\sF)=H^i(\A^2_X, F_{(\A^2, L)\otimes \sX})=0.\]
Thus the Leray spectral sequence yields
\begin{equation}\label{lem:blow-upA2-1}H^0(\A_X^2, R^i\rho_{X*}\sF)= H^i(Y\times X, \sF), \quad i\ge 0,\end{equation}
and we have to show, that this group vanishes for $i\ge 1$.
Write 
\[ Y=\Proj k[x,y][S,T]/(xT-yS)\subset \A^2\times \P^1,\]
and denote by 
\[\pi: Y\times X\inj \A^2\times \P^1_X \to \P^1_X=\Proj \sO_X[S,T]\]
the morphism induced by projection. 
In order to show that \eqref{lem:blow-upA2-1} vanishes, we can project along $\pi$ and use the Leray spectral sequence
\[ H^{i-j}(\P^1_X, R\pi^j_*\sF)\Rightarrow H^i(Y\times X, \sF)\]
to reduce the problem to showing that 
\eq{lem:blow-upA20}{H^i(\P^1_X, R^j\pi_*\sF)=0, \quad i\ge 1, j\geq 0.}
The terms $R^j\pi_* \sF$ for $j\geq 1$ are easy to handle using Lemma \ref{lem:higher-HI}. Indeed, 
set $s=S/T$  and write
\[ \P^1\setminus \{\infty\}= \A^1_s:= \Spec k[s],\quad \P^1\setminus\{0\}= \Spec k[\tfrac{1}{s}].\]
Set $U:=\A^1_s \times X$ and $V:=(\P^1\setminus\{0\})\times X$ and 
\[\sU:=(\A^1_s,0)\otimes\sX, \quad \sV:=(\P^1\setminus\{0\}) \otimes \sX.\]
We have 
\[\pi^{-1}(U)= \A^1_y\times U, 
 \quad \pi^{-1}(V)=\A^1_x\times  V,\]
and the restriction of $\pi$ to these open subsets is given by projection.
Furthermore by construction, 
\begin{equation}\label{eq;sFUV}
\sF_{|\pi^{-1}(U)}=F_{(\A^1_y,0)\otimes \sU},\quad 
\sF_{|\pi^{-1}(V)}= F_{(\A^1_x, 0)\otimes \sV}.
\end{equation}
Thus Lemma \ref{lem:higher-HI} (in the case $n=1$) yields
\[R^j\pi_*\sF=0, \quad j\ge 1.\]
It remains to show  
\eq{lem:blow-upA21}{H^i(\P^1_X, \pi_*\sF)=0, \quad i\ge 1.}
Set 
\begin{equation}\label{eq:F1semipure}F_1:=\uHom(\Ztr(\A^1_x, 0), F ).\end{equation}
    Note that $F_1\in \CItspNis$  by Lemma \ref{lem:Nis}\ref{lem:Nis1}.
Let  $j: V\inj \P^1_X$ be the open immersion. Its base change along $\pi$ induces a morphism 
\eq{lem:blow-upA22}{\iota: (\A^1_x,0)\otimes \sV\to (Y,\rho^*L)\otimes \sX \quad \text{in } \uMCor.}
This yields an exact sequence of Nisnevich sheaves on $\P^1_X$
\[0\to \pi_*\sF\xr{\pi_*(\iota^*)} j_*F_{1,\sV}\to \Gamma\to 0,\]
defining $\Gamma$; here the first map is injective by the semipurity of $F$.
Since $\Gamma$ is supported on $0\times X$ we obtain for $i\ge 2$
\begin{align*}
H^i(\P^1_X, \pi_*\sF) & = H^i(\P^1_X, j_*F_{1,\sV})\\
                                 &= H^i(V, F_{1,\sV}), & & \text{by Lem \ref{lem:comp-SNCD}},\\
                                 &=0, &&\text{by Lem \ref{lem:higher-HI}}.
\end{align*}
It remains to prove the vanishing \eqref{lem:blow-upA21} for $i=1$. This will occupy the rest of the proof.
Let
\[a\colon Y\times X\to \A^1_x\times \P^1\times X\]
be induced by the base change of the closed immersion $Y\inj \A^2\times \P^1$ followed by the 
base change of the projection $\A^2\to \A^1_x$.
The map $a$ induces a morphism
\eq{lem:blow-upA23}{\alpha: (Y,\rho^*L)\otimes \sX\to (\A^1_x, 0)\otimes \P^1_\sX\quad \text{in }\uMCor,}
where $\P^1_\sX:=\P^1\otimes \sX$, and 
which precomposed with $\iota$ from \eqref{lem:blow-upA22} yields the morphism
\eq{lem:blow-upA24}{\alpha\iota: (\A^1_x,0)\otimes \sV\to (\A^1_x, 0)\otimes \P^1_\sX,}
induced by the open immersion $\A^1_x\times V\inj \A^1_x\times \P^1_X$. This gives a factorization
\[\begin{tikzcd} F_{1, \P^1_{\sX}} \arrow[r, "\pi_*(\alpha^*)"] \arrow[rd] & \pi_*\sF \arrow[d, "\pi_*(\iota^*)"] \\
 & j_* F_{1, \sV}
\end{tikzcd}
\]
where the diagonal morphism is injective,  {by \cite[Thm 3.1(2)]{S-purity} and the semipurity of $F_1$.}
This implies that the morphism labeled $\pi_*(\alpha^*)$ is injective too. 
 {
Similarly, 
the embedding $\sV \to (\P^1, 0)\tensor\sX$, induces another injective morphism 
$F_{1, (\P^1, 0)\tensor \sX} \to j_*F_{1,\sV}$.
}
In total, we obtain the following commutative diagram
\eq{lem:blow-upA25}{\xymatrix{
0\ar[r] & F_{1, \P^1_\sX}\ar[rr]^{\pi_*(\alpha^*)}\ar@{=}[d] 
           &  &\pi_*\sF\ar[r]\ar[d]^{\pi_*(\iota^*)} 
           &  \ul{\Sigma}\ar[d]^\varphi\ar[r] & 0\\
0\ar[r] & F_{1, \P^1_\sX}\ar[rr]^{\pi_*((\alpha\iota)^*)}\ar@{=}[d] 
           & & j_*F_{1,\sV}\ar[r]
           & \ul{\Lambda}\ar[r] & 0\\
0\ar[r] & F_{1, \P^1_\sX}\ar[rr]
           & & F_{1, (\P^1, 0)\tensor \sX}\ar[r]\ar@{^{(}->}[u]
           & \ul{\Lambda}(0)\ar[r]\ar@{^{(}->}[u] & 0           ,
}}
with exact rows, defining the cokernels $\ul{\Sigma}$, $\ul{\Lambda}$ and $\ul{\Lambda}(0)$, as well as the  map $\varphi$. 
Applying $R\Gamma(\P^1_X,-)$ yields
\[\xymatrix{
\Sigma\ar[d]^\varphi \ar[r]^-{\partial_1} & H^1(\P^1_X, F_{1,\P^1_\sX})\ar[r] \ar@{=}[d]& H^1(\P^1_X, \pi_*\sF)\ar[d]\to 0\\
\Lambda  \ar[r]^-{\partial_2}  &  H^1(\P^1_X, F_{1,\P^1_\sX}) \ar[r] \ar@{=}[d]& H^1(\P^1_X, j_* F_{1, \sV})\to 0 \\
\Lambda(0) \ar@{^{(}->}[u] \ar@{->>}[r] & H^1(\P^1_X, F_{1,\P^1_\sX}) \ar[r]& H^1(\P^1_X, F_{1, (\P^1, 0)\tensor \sX}) =0 \ar[u] 
}\]
with exact rows and in which the $\partial_i$ are the connecting homomorphisms
and where 
\[\Sigma:= H^0(\P^1_X, \ul{\Sigma}), \quad \Lambda:= H^0(\P^1_X, \ul{\Lambda}), \quad \Lambda(0):=H^0(\P^1_X, \ul{\Lambda}(0)).\] 
The group $H^1(\P^1, F_{1, (\P^1, 0)\tensor \sX})$ vanishes by the cube invariance of cohomology, see \cite[Thm 9.3]{S-purity}, thus $\partial_{2|\Lambda(0)}$ is surjective,  the vanishing \eqref{lem:blow-upA21} for $i=1$ will follow, if we can show
\eq{lem:blow-upA26}{\Lambda(0)\subset \varphi(\Sigma).}
Note that $\ul{\Sigma}$, $\ul{\Lambda}$, and $\ul{\Lambda}(0)$ have support in $0\times X\subset U$, so we can compute the global sections on $U$ instead of $\P^1$ to show \eqref{lem:blow-upA26}. Now,  since $H^1(U, F_{1,\sU})=0$, by Lemma \ref{lem:higher-HI}, unravelling the definitions we obtain 
from \eqref{eq;sFUV} and \eqref{lem:blow-upA25} with $G:=F(-\otimes\sX)$ and 
$\A^1_s=\P^1\setminus\{\infty\}$ the following descriptions:
\[\Sigma= 
\frac{G((\A^1_y,0)\otimes (\A^1_s,0))}
      {\alpha^* G((\A^1_x,0)\otimes \A^1_s)},
\]
\[\Lambda= \frac{G((\A^1_x,0)\otimes (\A^1_s\setminus\{0\},\emptyset))}
      { G((\A^1_x,0)\otimes \A^1_s)},
\]
\begin{equation}\label{eq:lambda0}\Lambda(0)= \frac{G((\A^1_x,0)\otimes (\A^1_s,0))}
      { G((\A^1_x,0)\otimes \A^1_s)}.
\end{equation}
By \cite[Lem 5.9]{S-purity} we have isomorphisms  (see Notation \ref{bcube1})
\begin{equation}\label{eq:lem-blowupA2-pur1}\frac{G(\bcubee_x\otimes (\A^1_s,0))}{G((\P^1_x,\infty)\otimes(\A^1_s,0))}
\xr{\simeq}
   \frac{G((\A^1_x,0)\otimes (\A^1_s,0))}{G(\A^1_x\otimes(\A^1_s,0))},\end{equation}
\begin{equation}\label{eq:lem-blowupA2-pur2}\frac{G(\bcubee_x\otimes \bcubee_s)}{G(\bcubee_x\otimes(\P^1_s,\infty))}
\xr{\simeq}
   \frac{G(\bcubee_x\otimes (\A^1_s,0))}{G(\bcubee_x\otimes \A^1_s)}.\end{equation}
Write $j$ for the open immersion $(\A^1_x,0)  \hookrightarrow \bcubee_x$. The base change of $j^*$ induces a commutative diagram
\begin{equation}\label{eq:lem-blowupA2-factor}\begin{tikzcd}
G((\A^1_x,0)\otimes \A^1_s) \arrow[r] \arrow[rr, bend left]& G((\A^1_x,0)\otimes (\A^1_s,0)) \arrow[r, two heads] & \Lambda(0)\\
G(\bcubee_x\otimes \A^1_s) \arrow[r] \arrow[u, "j^*"] & G(\bcubee_x\otimes (\A^1_s,0)). \arrow[u, "j^*"]  \arrow[ru]
\end{tikzcd}
\end{equation}
The horizontal composite morphism is zero by \eqref{eq:lambda0}, hence the kernel of the diagonal arrow contains $G(\bcubee_x\otimes \A^1_s)$. Next, note that from \eqref{eq:lem-blowupA2-pur1} we get the surjective morphism 
\begin{equation}\label{eq:lem-blowupA2-MV}
 G(\bcubee_x\otimes (\A^1_s,0)) \oplus G(\A^1_x \otimes (\A^1_s, 0))  \to G( (\A^1_x,0) \otimes (\A^1_s, 0))\to 0.
\end{equation}
Combining \eqref{eq:lem-blowupA2-MV}, \eqref{eq:lem-blowupA2-pur2} and \eqref{eq:lem-blowupA2-factor} we get a surjection
\eq{lem:blow-upA27}{G(\A^1_x\otimes(\A^1_s,0))\oplus G(\bcubee_x\otimes \bcubee_s)\surj \Lambda(0).}
Note that 
the pullback of the open immersion $\pi^{-1}(V)\inj  Y\times X$ along $\pi^{-1}(U)\inj Y\times X$
 induces the open immersion
\[\A^1_x\times (\A^1_s\setminus\{0\})\times X\to \A^1_y\times \A^1_s \times X\]
which is induced by base change from the $k[s]$-linear map
\[k[y, s]\mapsto k[x, s, 1/s], \quad y\mapsto x/s.\]
It gives the following two morphisms in $\uMCor$
\[\iota_1: (\A^1_x,0)\otimes (\A^1_s\setminus\{0\})\otimes\sX\to (\A^1_y,0)\otimes(\A^1_s,0)\otimes \sX.\]
\[\iota_2: (\A^1_x,0)\otimes (\A^1_s\setminus \{0\})\otimes \sX\to \bcubee_y\otimes\bcube^{(2)}_s\otimes \sX.\]
Furthermore, consider the base change of the map \eqref{prop:mu1} 
\[\psi: \bcubee_y\otimes\bcube^{(2)}_s\otimes \sX\to \bcubee_x\otimes\bcubee_s\otimes\sX,\]
which is induced by $x\mapsto ys$; it restricts to
\[\psi_1: (\A^1_y,0)\otimes(\A^1_s,0)\to\A^1_x\otimes (\A^1_s,0).\]
In particular, $\psi_1\circ\iota_1$ is induced by the open immersion 
$\A^1_x\setminus\{0\}\times\A^1_s\setminus\{0\}\inj \A^1_x\times\A^1_s\setminus \{0\}$
and $\psi\circ\iota_2$ is induced by the identity on 
$\A^1_x\setminus\{0\}\times\A^1_s\setminus\{0\}$.
Consider the following diagram
\[
\xymatrix{ 
G(\A^1_x\otimes(\A^1_s,0))\ar[r]^{\psi_1^*}
         &G((\A^1_y,0)\otimes (\A^1_s,0))\ar[d]^{\iota_1^*}\ar[r]^-{r_1} &  \Sigma\ar[d]^\varphi\\
         &G((\A^1_x, 0)\otimes (\A^1_s\setminus\{0\},\emptyset))\ar[r]^-{r_2}& \Lambda\\
&G(\A^1_x\otimes(\A^1_s,0))\ar[r]^-{r_3}
      \ar@/^2pc/@{=}[uul]\ar@{^(->}[u]& \Lambda(0).\ar@{^(->}[u]
}\]
Here the maps $r_i$ are the natural maps into the quotients;
 the diagram commutes by definition of  the morphisms involved.
Hence 
\eq{lem:blow-upA28}{\Im(G(\A^1_x\otimes (\A^1_s,0))\to \Lambda(0))\subset \varphi(\Sigma).}
Consider now the following diagram
\[
\xymatrix{ 
       & G(\bcubee_y\otimes\bcubee_s)\ar[r]^-{r_1}\ar[d] & \Sigma\ar[dd]^\varphi\\
G(\bcubee_x\otimes\bcubee_s)\ar[r]^{\psi^*}\ar@{.>}[ur]
         &G(\bcubee_y\otimes\bcube^{(2)}_s)\ar[d]^{\iota_2^*} & \\
         &G((\A^1_x, 0)\otimes (\A^1_s\setminus\{0\},\emptyset))\ar[r]^-{r_2}& \Lambda\\
&G(\bcubee_x\otimes\bcubee_s)\ar[r]^-{r_3}
      \ar@/^2pc/@{=}[uul]\ar@{^(->}[u]& \Lambda(0).\ar@{^(->}[u]
}\]
Here the maps $r_1$ and $r_3$ are induced by restriction followed by the quotient map using \eqref{eq:lem-blowupA2-pur1} and \eqref{eq:lem-blowupA2-pur2}; 
the two squares and the triangle on the lower left commute by definition of the morphisms involved;
the map $\psi^*$ factors via the dotted arrow in the diagram, by Proposition \ref{prop:mu}.
This shows 
\[\Im(G(\bcubee_x\otimes \bcubee_s)\to \Lambda(0))\subset \varphi(\Sigma),\]
which together with \eqref{lem:blow-upA28} and \eqref{lem:blow-upA27} 
implies \eqref{lem:blow-upA26}. This completes the proof of the lemma.
\end{proof}

 {
\begin{lemma}\label{lem:blow-upInd}
Let the assumptions and notations be as in Theorem \ref{thm:blow-upAn}. Assume additionally $\codim(Z,X)\le 2$.
Then Theorem \ref{thm:blow-upAn} holds.
\end{lemma}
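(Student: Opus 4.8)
\emph{Strategy.} Both sides restrict correctly to Zariski opens of $X$, and away from $Z$ the map $\rho$ is an isomorphism; so I would work Zariski-locally near points of $Z$ and model the situation on the $2$-dimensional case of Lemma~\ref{lem:blow-upA2}. First reduce to $Z$ connected, hence of pure codimension $c\in\{1,2\}$ since $Z\subseteq D_0$ and $\codim(D_0,X)=1$. If $c=1$: $Z$ is smooth (hence reduced) and contained in the integral smooth divisor $D_0$ of the same dimension, so $Z=D_0$; then the blow-up of $X$ along the effective Cartier divisor $D_0$ is $\id_X$, $(Y,\rho^*D)=\sX$, and there is nothing to prove. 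The real case is $c=2$.

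\emph{Codimension two: the normal form.} Since $\rho$ is an isomorphism over $X\setminus Z$ (under which $\rho^*D$ corresponds to $D|_{X\setminus Z}$), the cone $C$ of the natural map $\rho^*\colon F_\sX\to R\rho_* F_{(Y,\rho^*D)}$ is supported on $Z$; it suffices to show $C|_U=0$ for a Zariski open $U$ containing any given $z\in Z$. The crucial step is to pick, using that $X$ is smooth over the perfect field $k$, that $|D|$ is an SNCD, that $D_0$ has multiplicity one in $D$, and that $Z$ is a smooth divisor in $D_0$ meeting the remaining components of $|D|$ transversally, a regular system of parameters $t_1,\dots,t_n$ at $z$ with $D_0=V(t_1)$, $Z=V(t_1,t_2)$ and $D-D_0=\sum_{i=3}^{r}m_iV(t_i)$ near $z$. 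Setting $\A^2=\Spec k[t_1,t_2]$, $L=V(t_1)$ (a line through $0=V(t_1,t_2)$), $W=\Spec k[t_3,\dots,t_n]$, $D''=\sum_{i=3}^{r}m_iV(t_i)$ on $W$, this provides a Zariski open $U\ni z$ and an étale $\epsilon\colon U\to\A^2\times W$ identifying $(U,D|_U)$ with the restriction of $(\A^2,L)\otimes(W,D'')$ and $Z|_U$ with $\epsilon^{-1}(\{0\}\times W)$; as blow-ups commute with the étale base change $\epsilon$ and $\Bl_{\{0\}\times W}(\A^2\times W)=\Bl_0(\A^2)\times W$, the square relating $\rho|_U\colon\rho^{-1}(U)\to U$ to $\rho_0\times\id_W\colon\Bl_0(\A^2)\times W\to\A^2\times W$ (with $\rho_0$ the blow-up of the origin in $\A^2$) is cartesian on ambient schemes and respects the modulus data.

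\emph{Conclusion via Lemma~\ref{lem:blow-upA2}.} Using that the restriction along an étale map of a sheaf $F_\sY\in\uMNST$ is $F$ of the restricted pair, that Nisnevich $R\rho_*$ commutes with smooth base change, and the naturality of the pull-back map, one identifies $\rho^*|_U$ with $\epsilon^*\Phi$, where $\Phi$ is the natural map $F_{(\A^2,L)\otimes(W,D'')}\to R(\rho_0\times\id_W)_*F_{(\Bl_0(\A^2),\rho_0^*L)\otimes(W,D'')}$. Since $(W,D'')\in\uMCorls$, Lemma~\ref{lem:blow-upA2} (with auxiliary pair $(W,D'')$) yields $R^i(\rho_0\times\id_W)_*F_{(\Bl_0(\A^2),\rho_0^*L)\otimes(W,D'')}=0$ for $i\ge1$; moreover $(\Bl_0(\A^2),\rho_0^*L)\otimes(W,D'')\cong(\A^2,L)\otimes(W,D'')$ in $\uMCor$ (the centre of the blow-up lies in the modulus; cf.\ \cite[1]{KMSY1}), so $\Phi$ is also an isomorphism on $H^0$, hence a quasi-isomorphism. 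Therefore $C|_U=0$; letting $z$ vary and adjoining $X\setminus Z$, which together cover $X$, we get $C=0$, i.e.\ the claimed isomorphism.

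\emph{Main obstacle.} There is no genuine difficulty here: the essential input is Lemma~\ref{lem:blow-upA2}. The only real work is the bookkeeping of the étale normal form — making it compatible with the whole modulus $D$, which is exactly where the hypotheses ``$D_0$ of multiplicity one'' and ``$Z$ transversal to $|D-D_0|$'' enter — together with the routine check that $\epsilon^*$ commutes with $(-)_\sX$, with blow-ups, and with $R\rho_*$.
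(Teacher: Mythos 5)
Your argument is correct and follows essentially the paper's proof: the codimension-one case is trivial, and in codimension two both arguments reduce, via a local normal form supplied by the SNC, transversality and multiplicity-one hypotheses, to the key vanishing of Lemma \ref{lem:blow-upA2} applied with the ``remaining'' log-smooth factor as auxiliary pair, together with the identification of the two modulus pairs in $\uMCor$ for the $H^0$-part. The only (immaterial) difference is that you realize the normal form Zariski-locally by an \'etale map to $\A^n_k$ at closed points and invoke \'etale base change for $R\rho_*$, whereas the paper passes to the henselian local ring and a coefficient field, exhibiting the blow-up Nisnevich-locally as the base change of the two-dimensional model.
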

\begin{proof}
There is nothing to prove for $\codim(Z,X)=1$, we therefore consider the case $\codim(Z,X)=2$.
Since $(Y,\rho^*D)\cong (X,D)$ in $\uMCor$ we have 
$\rho_*F_{(Y,\rho^*D)}\cong F_{(X,D)}$. 
Thus it remains to show the vanishing
\eq{lem:blow-upInd1}{ R^i\rho_*F_{(Y,\rho^*D)}=0,\quad \text{for all } i\ge 1.}
The question is Nisnevich local around the points in $Z$.
Let $z\in Z$ be a point and consider the regular henselian local ring $A=\sO_{X,z}^h$.
For $V\subset X$ set $V_{(z)}:= V\times_{ X} \Spec A$.
Denote by $D'\subset X$ the closed subscheme defined by $D-D_0$.
By assumption we find a regular system of local parameters $x,y,t_1\ldots, t_s$ of $A$,
such that  $Z_{(z)}=V(x,y)$, $D_{0, (z)}=V(x)$,  and 
$D'_{(z)}=V(t_{1}^{n_{1}}\cdots t_{r}^{n_r})$, for some $r\le s$ and  $n_i\ge 1$.
Let $K\inj A$ be a coefficient field over $k$; we obtain an isomorphism
\[K\{x,y, t_1,\ldots, t_s\}\xr{\simeq} A.\]
Let $\rho_1:\widetilde{\A^2}\to \A^2$ be the blow-up in $0$.
By the above  the blow-up in $Z$
\[\rho: (Y,\rho^*D)\to (X, D)\]
is Nisnevich locally around $z$ over $k$ isomorphic to the morphism
\[(\widetilde{\A^2},\rho_1^*(x))\otimes (\A_K^{s}, (\prod_{i=1}^{r} t_i^{n_i})) 
\to (\A^2, (x))\otimes (\A_K^{s}, (\prod_{i=1}^{r} t_i^{n_i})), \]
which is induced by base change from $\rho_1$.
Hence the vanishing \eqref{lem:blow-upInd1} follows from Lemma \ref{lem:blow-upA2}.
\end{proof}
}

\begin{lemma}\label{lem:it-blowup}
Let $X$ be a finite type $k$-scheme and $Z_0\subset Z_1\subset X$ closed subschemes.
Let $\rho: X'\to X$ be the blow-up of $X$ in $Z_0$ and let $\rho': X''\to X'$ be the blow-up
of $X'$ in the strict transform $\tilde{Z}_1$ of $Z_1$.
Furthermore,  let $\sigma: Y'\to X$ be the blow-up in $Z_1$ and let $\sigma': Y''\to Y'$
be the blow-up of $Y'$ in $\sigma^{-1}(Z_0)$. Then there is an isomorphism 
\[\xymatrix{
X''\ar[rr]^{\simeq}\ar[dr]_{\rho\rho'} & & Y''\ar[dl]^{\sigma\sigma'}\\
   & X. &
}\]
\end{lemma}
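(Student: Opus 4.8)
The plan is to identify both $\rho\rho'\colon X''\to X$ and $\sigma\sigma'\colon Y''\to X$ with the same terminal object, and then conclude by Yoneda. Write $\sI_0,\sI_1\subseteq\sO_X$ for the ideal sheaves of $Z_0,Z_1$, so that $\sI_1\subseteq\sI_0$ since $Z_0\subseteq Z_1$. Concretely, I would show that each of $X''$ and $Y''$ represents the functor on $X$-schemes $f\colon T\to X$ which is nonempty precisely when both $\sI_0\sO_T$ and $\sI_1\sO_T$ are invertible ideal sheaves on $T$; two such representing objects are canonically isomorphic, and the isomorphism is over $X$ by uniqueness. As blow-ups commute with restriction to opens, and a locally defined, locally unique morphism into a separated $X$-scheme glues, I would first reduce to $X=\Spec A$ with $A$ of finite type over $k$ and $\sI_i=I_i$ ideals of $A$, and then use the universal property of $\Bl_I(-)=\Proj\bigl(\bigoplus_{n\ge 0} I^n\bigr)$: a morphism $T\to \Spec A$ factors uniquely through $\Bl_I$ exactly when $I\sO_T$ is invertible, in which case $I\sO_{\Bl_I}$ is the invertible ideal of the exceptional divisor.

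The verification for $Y''$ is essentially formal. By the universal property, $T\to X$ factors through $\sigma\colon Y'=\Bl_{Z_1}X\to X$ iff $\sI_1\sO_T$ is invertible; since the scheme-theoretic preimage $\sigma^{-1}(Z_0)$ has ideal $\sI_0\sO_{Y'}$, the induced $T\to Y'$ then factors through $\sigma'\colon Y''\to Y'$ iff $\sI_0\sO_T$ is invertible. Chaining gives the universal property, and on $Y''$ itself $\sI_0\sO_{Y''}$ is the invertible ideal of the exceptional divisor of $\sigma'$ while $\sI_1\sO_{Y''}$ is invertible because the invertible ideal $\sI_1\sO_{Y'}$ (being contained in the center $\sI_0\sO_{Y'}$ of $\sigma'$) stays invertible after the blow-up $\sigma'$.

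For $X''$ I would run the same argument for $\rho\colon X'=\Bl_{Z_0}X\to X$ followed by $\rho'$, the new point being to compare the \emph{strict} transform $\tilde{\sI}_1$ of $\sI_1$ on $X'$ (which is what builds $X''$; the strict transform is the natural object here because $Z_1\not\subseteq Z_0$) with the \emph{total} transform $\sI_1\sO_{X'}$. Locally on a chart $\Spec B$ of $X'$ where $\sI_0\sO_{X'}=(a)$ with $a$ a nonzerodivisor, one has $\tilde{\sI}_1|_{\Spec B}=I_1B[a^{-1}]\cap B=(I_1B:a^N)$ for $N\gg 0$ (by noetherianity), giving the containments $I_1B\subseteq\tilde{\sI}_1|_{\Spec B}$ and $\tilde{\sI}_1|_{\Spec B}\cdot(a)^N\subseteq I_1B$. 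From these, together with $I_1B\subseteq(a)$ and the invertibility of $(a)\sO_T$, I would deduce that for such $T$ over $\Spec B$ the ideal $\tilde{\sI}_1\sO_T$ is invertible iff $\sI_1\sO_T$ is; this identifies $X''$ with the same terminal object, and the lemma follows.

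The main obstacle is exactly this last equivalence and the companion fact that $\sI_1\sO_{X''}$ (not merely $\tilde{\sI}_1\sO_{X''}$) is invertible: the interaction between strict transforms and the universal property of blow-ups is subtle, and it is only after reducing to charts where $\sI_0$ becomes principal, generated by a nonzerodivisor $a$ — so that $\sI_1\sO_{X'}$ sits inside an invertible ideal and all colon operations by powers of $a$ are under control — together with an Artin--Rees type stabilization, that one can conclude. This is a standard but technical fact on iterated blow-ups, and it becomes transparent in the cases actually needed in the paper, where $X$ and the centers are smooth, hence $X'$, the strict transform $\tilde{Z}_1$, and all exceptional loci are smooth and the exceptional divisors are smooth Cartier divisors.
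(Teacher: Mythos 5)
Your overall strategy---identify both towers with the terminal $X$-scheme $T$ on which $\sI_0\sO_T$ and $\sI_1\sO_T$ are both invertible---works fine for the $Y''$-side, and there it is essentially the same argument as the paper's, which identifies $\sigma\sigma'$ with the blow-up of $\sI_0\sI_1$ via the product formula for blow-ups (Tag 080A of the Stacks project). (A small repair on that side: $\sI_1\sO_{Y''}$ is invertible not because $\sI_1\sO_{Y'}$ is contained in the center of $\sigma'$, but because a local nonzerodivisor generator of $\sI_1\sO_{Y'}$ stays a nonzerodivisor on the Rees algebra of the center, hence on every chart of the blow-up.) The genuine gap is on the $X''$-side, exactly at the step you flag: from the containments $I_1B\subseteq\tilde I_1$ and $a^N\tilde I_1\subseteq I_1B$ you cannot deduce that, for $T$ on which $(a)\sO_T$ is invertible, $\tilde\sI_1\sO_T$ is invertible if and only if $\sI_1\sO_T$ is. Being squeezed between invertible ideals does not force invertibility: in $k[x,y]$ the ideals $(x^2)\subseteq(x^2,xy)\subseteq(x)$ satisfy your pattern with $a=x$, $N=1$, and the middle ideal is not invertible; so neither implication follows from the containments, and in particular you have not shown that $\sI_1\sO_{X''}$ is invertible, which you need even to place $X''$ in the category you want it to be terminal in.

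Worse, the equivalence you are after is false for general closed subschemes, so no Artin--Rees bookkeeping can close this gap at the stated level of generality. Take $X=\A^3$, $Z_0=V(x,y,z)\subset Z_1=V(x,y^2)$. On the chart $x=zu$, $y=zv$ of $X'$ one has $\sI_1\sO_{X'}=(zu,z^2v^2)=z\,(u,zv^2)$ while $\tilde{\sI}_1=(u,v^2)$, so $\sI_1\sO_{X'}\neq \sI_E\cdot\tilde{\sI}_1$; and on the chart $u=sv^2$ of $X''={\rm Bl}_{(u,v^2)}$ the inverse image of $\sI_1$ is $zv^2\,(s,z)$, which is not invertible at $s=z=0$, although $\tilde{\sI}_1\sO_{X''}$ and $\sI_E\sO_{X''}$ are. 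Since on $Y''\cong{\rm Bl}_{\sI_0\sI_1}(X)$ both $\sI_0\sO_{Y''}$ and $\sI_1\sO_{Y''}$ are invertible, this choice of centers even shows that $X''$ and $Y''$ need not be $X$-isomorphic, i.e.\ the comparison of the strict with the total transform is the real content here and requires an input beyond colon-ideal containments. The paper supplies it by asserting $\rho^{-1}\sI_1\cdot\sO_{X'}=\sI_E\cdot\tilde{\sI}_1$ and then applying the product formula to both towers; that identity is what holds in the situation where the lemma is actually used (centers cut out by parts of a common regular system of parameters, as in the proof of Theorem \ref{thm:blow-upAn}, where it is a one-line chart computation), and it---rather than a general ideal-theoretic squeeze---is the missing step in your argument.
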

\begin{proof}
Recall the following general fact: Let $\sI, \sJ\subset \sO_X$ be two coherent ideal sheaves.
Then the blow-up $\tilde{X}\to X$ of $X$ in $\sI\cdot \sJ$ is equal to the composition 
$X_2\xr{\pi_2} X_1\xr{\pi_1}X$, where $\pi_1$ is the blow-up in $\sI$ and $\pi_2$ is the blow-up in
$\pi_1^{-1}\sJ\cdot \sO_{X_1}$. This is proven using the universal property of blow-ups, see, e.g.,
\cite[\href{https://stacks.math.columbia.edu/tag/080A}{Tag 080A}]{stacks-project}.

Here denote by $\sI_i\subset \sO_X$ the ideal sheaves of $Z_i$. We have $\sI_1\subset \sI_0$.
Let $\pi:\tilde{X}\to X$ be the blow-up of $X$ in $\sI_1\cdot \sI_0$.
By the remark above, $\pi$ is isomorphic as $X$-scheme to $\sigma\sigma'$.
Furthermore, note that $\rho'$ is also equal to the blow-up of $X'$ in $\rho^{-1}(Z_1)$.
Indeed, the ideal sheaf of $\rho^{-1}(Z_1)$ is equal to 
$\rho^{-1}\sI_1\cdot \sO_{X'}= \sI_E\cdot \tilde{\sI}_1$, where $\sI_E$ is the  ideal sheaf of the exceptional divisor of
$\rho$ and $\tilde{\sI}_1$ is the ideal sheaf of $\tilde{Z}_1$; 
since $\sI_E$ is invertible, the blow-ups of $X'$ in $\tilde{\sI}_1$ and in $\rho^{-1}\sI_1\cdot\sO_{X'}$
are isomorphic. Thus by the remark above the $X$-scheme $\rho\rho'$ is isomorphic to $\pi$ as well.
\end{proof}

 {
\begin{para}\label{proof:thm:blow-upAn}{\em Proof of Theorem {\ref{thm:blow-upAn}}.}
The proof is by induction on $c=\codim(Z,X)$, the induction start for $c\le 2$ being Lemma \ref{lem:blow-upInd}.
Assume $c>2$. The question is local on $X$. Hence we can assume $X=\Spec A$ and  that there is a regular sequence 
$y_1,\ldots, y_c, t_1,\ldots, t_r\in A$ such that $Z=V(y_1,\ldots, y_c)$, $D_0=V(y_1)$ and $D'=D-D_0=V(t_1^{n_1}\cdots t_r^{n_r})$, for
some $n_i\ge 1$. Set $Z_2:=V(y_1, y_2)$. Let $\rho: Y\to X$ be the blow-up in $Z$ and denote by $\tilde{Z}_2$ the strict transform of $Z_2$. 
Then $\rho^*D$ has SNC support with the strict transform $\tilde{D}_0$ of $D_0$ being  a smooth component containing $\tilde{Z}_2$.
Furthermore,  $\tilde{Z}_2$ intersects $\rho^*D-\tilde{D}_0$ transversally and $\codim(\tilde{Z}_2, Y)=2$. 
Let  $\rho': Y'\to Y$ be the blow-up in $\tilde{Z}_2$. By Lemma \ref{lem:blow-upInd} we find
\eq{thm:blow-upAn1}{R\rho_* F_{(Y,\rho^*D)}\cong R(\rho\rho')_* F_{(Y', (\rho\rho')^* D)}.}

Let $\sigma:W\to X$ be the blow-up in $Z_2$ and set $Z_{c-1}:=\sigma^{-1}(Z)$.
Then $\sigma^* D$ has SNC support with  the exceptional divisor $E$ being a smooth component containing $Z_{c-1}$.
Furthermore $Z_{c-1}$ intersects the strict transform of $D$ transversally  and  $\codim(Z_{c-1}, W)=c-1$. 
Let $\sigma':W'\to W$ be the blow-up  in $Z_{c-1}$. By Lemma \ref{lem:blow-upInd} and induction we find
\[F_{(X, D)}\cong R\sigma_* F_{(W,\sigma^*D)}\cong R(\sigma\sigma')_*F_{W',(\sigma\sigma')^*D}.\]
Thus the statement follows from Lemma \ref{lem:it-blowup} and \eqref{thm:blow-upAn1}.\qed
\end{para}
}

\subsection{\texorpdfstring{$(\P^n,\P^{n-1})$}{Pn, Pn-1}-invariance of cohomology}

 {
We follow the basic strategy of \cite[Lem 10]{KS}. See also \cite[Prop 7.3.1]{logmot}.

\begin{lemma}\label{lem:Pn-blow-up}
Let $F\in \CItspNis$ and $\sX=(X,D)\in \uMCorls$.
Let $x\in \P^n$ be a $k$-rational point and $L\subset \P^n$ a hyperplane.
Denote by $\rho: Y\to \P^n$ the blow-up in $x$. 
Denote by $q: Y\times X\to E\times X$ the base change 
of the morphism $Y\to E$ which parametrizes the lines in $\P^n$ through $x$.
Then the pullback
\[q^*: F_{(E, L')\otimes \sX}\xr{\simeq} R q_* F_{(Y, \rho^*L)\otimes\sX}\]
is an isomorphism,  where $L'= \tilde{L}\cap E$,
with $\tilde{L}\subset Y$ the strict transform of $L$.
(Note $L'=\emptyset$, if $x\not\in L$.)
\end{lemma}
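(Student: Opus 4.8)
The plan is to use that $q$ exhibits $Y$ as a $\P^1$-bundle over $E\cong\P^{n-1}$ and to reduce, after a local trivialization, to the cube invariance of cohomology.

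First I would record the geometry. The morphism $Y\to E$ sending a point to the unique line through $x$ on which it lies makes $Y$ a $\P^1$-bundle over $E\cong\P^{n-1}$, and the exceptional divisor $E\subset Y$ is a section of $q$. If $x\notin L$, then $\rho$ is an isomorphism in a neighbourhood of $L$, its strict transform $\tilde L$ is a section of $q$ disjoint from $E$, $\rho^*L=\tilde L$, and $L'=\tilde L\cap E=\emptyset$. If $x\in L$, then $\rho^*L=\tilde L+E$ with both components of multiplicity one, $L'=\tilde L\cap E$ is the hyperplane in $E$ consisting of the lines through $x$ lying in $L$, and $\tilde L$ is the preimage $q^{-1}(L')$, so that $\tilde L=q^*L'$ as Cartier divisors. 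In either case one gets
\[\rho^*L=q^*L'+S,\qquad S:=\begin{cases}\tilde L,& x\notin L,\\ E,& x\in L,\end{cases}\]
where $S$ is a section of $q$.

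Next I would trivialize locally. Over any sufficiently small affine open $U\subset E$ one can trivialize $q$ as $q^{-1}(U)\cong\P^1\times U$ over $U$ so that the section $S$ becomes the constant section $\{\infty\}\times U$ (over an affine the relevant rank two bundle and the line bundle cutting out $S$ are trivial, and a unimodular row of length two can be completed to an invertible matrix). Restricting $\rho^*L=q^*L'+S$ to $q^{-1}(U)$ then gives the Cartier divisor ${\rm pr}_{\P^1}^*(\infty)+{\rm pr}_U^*(L'\cap U)$, whence an isomorphism
\[(Y,\rho^*L)\otimes\sX\,|_{U\times X}\;\cong\;\bcube\otimes(U,L'\cap U)\otimes\sX\]
under which the base change of $q$ over $U\times X$ becomes the projection $\P^1\times U\times X\to U\times X$.

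Finally I would invoke cube invariance of cohomology \cite[Thm 9.3]{S-purity}: for any modulus pair $\sW=(\ol W,W_\infty)\in\uMCor$, pullback along the projection $p\colon\P^1\times\ol W\to\ol W$ induces an isomorphism $p^*\colon F_{\sW}\xr{\simeq}Rp_*F_{\bcube\otimes\sW}$ (the case of $R^0p_*$ is the cube invariance of $F$, the vanishing of the higher $R^ip_*$ is the content of \emph{loc.\ cit.}). Applying this with $\sW=(U_\alpha,L'\cap U_\alpha)\otimes\sX$ for $U_\alpha$ running through an affine cover of $E$ as above shows that the restriction of $q^*$ to each $U_\alpha\times X$ is an isomorphism; since the $U_\alpha\times X$ cover $E\times X$, so is $q^*$. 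The only genuine difficulty here is the careful bookkeeping of $\rho^*L$ and of the section $S$; once this is in place, the result is formal.
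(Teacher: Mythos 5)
Your proof is correct and follows essentially the same route as the paper's: exhibit $Y\to E$ as a $\P^1$-bundle, check that over small affine opens of $E$ the pair $(Y,\rho^*L)\otimes\sX$ restricts to $\bcube\otimes\sW$ with the section carrying the modulus placed at $\{\infty\}$, and conclude by cube invariance of cohomology \cite[Thm 9.3]{S-purity}. Your explicit bookkeeping $\rho^*L=q^*L'+S$ with $S$ a section is just a slightly more detailed version of the same local trivialization argument the paper uses.
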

\begin{proof}
Note that the projection morphism $Y\to E$ makes $Y$ into a $\P^1$-bundle over $E$ and induces a morphism
$(Y, \rho^*L)\otimes \sX\to (E, L')\otimes \sX$. 
The latter morphism locally over $E$ has the form  of the projection $\bcube\otimes \sW\to \sW$, for some $\sW\in \uMCor$.
Indeed, over an affine neighborhood $U\subset E$ intersecting (resp. not intersecting) $L'$, 
the modulus pair $\sW$ can be taken to be $(U,L'\cap U)\otimes\sX$ (resp. $(U,\emptyset)\otimes \sX$).
In both cases the divisor $\{\infty\}\times U\times X$ on $\P^1\times U\times X$ is the restriction of the exceptional divisor to $q^{-1}(U)= \P^1\times U\times X$.
Thus the statement follows from the cube-invariance of cohomology, see \cite[Thm 9.3]{S-purity}.
\end{proof}

\begin{thm}\label{thm:PnInv}
Let $F\in \CItspNis$. Let $L\subset \P^n$ be a hyperplane and $\sX=(X,D)\in\uMCorls$.
Then the pullback 
\[ F_\sX\xr{\simeq} R\pi_* F_{(\P^n, L)\otimes \sX},\]
along the projection $\pi: \P^n_X\to X$ is an isomorphism.
\end{thm}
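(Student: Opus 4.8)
The plan is to argue by induction on $n$. The base cases are immediate: for $n=0$ there is nothing to prove, and for $n=1$ a hyperplane $L\subset\P^1$ is a $k$-rational point, so $(\P^1,L)\cong\bcube$ and the assertion is exactly the cube-invariance of cohomology \cite[Thm 9.3]{S-purity}. Since $R\pi_*$ may be computed on connected components, I would assume $X$ connected throughout. So fix $n\ge 2$ and assume the theorem for $n-1$.

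For the inductive step I would pick a $k$-rational point $x\in L$ — one exists because $L\cong\P^{n-1}_k$ — and work with the blow-up $\rho\colon Y\to\P^n$ of $\P^n$ at $x$ together with the line projection $\pi_E\colon Y\to E$ onto the space $E\cong\P^{n-1}$ of lines through $x$; this exhibits $Y$ as a $\P^1$-bundle over $E$ whose section is the exceptional divisor $E_0$, so $E_0\cong E$. Since $L$ is smooth at $x$ one has $\rho^*L=\tilde L+E_0$, where $\tilde L$ is the strict transform of $L$, and under $E_0\cong E$ the intersection $L':=\tilde L\cap E_0=\P(T_xL)$ is a hyperplane in $E$; thus $(E,L')\cong(\P^{n-1},\P^{n-2})$. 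Base changing over $k$ by $X$, I write $\pi\colon\P^n_X\to X$, $\rho_X\colon Y_X\to\P^n_X$, $q\colon Y_X\to E_X$ and $p\colon E_X\to X$, so that $\pi\circ\rho_X=p\circ q$; note $Y_X=\Bl_{\{x\}\times X}\P^n_X$, since blow-up commutes with the flat base change $X\to\Spec k$.

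The core of the argument is a chain of three isomorphisms. First, I would apply Theorem \ref{thm:blow-upAn} to the log smooth modulus pair $(\P^n,L)\otimes\sX\in\uMCorls$, with multiplicity-one smooth component $D_0=\operatorname{pr}_{\P^n}^*L$ and center $\{x\}\times X\subset D_0$, to get $\rho_X^*\colon F_{(\P^n,L)\otimes\sX}\xr{\simeq}R\rho_{X*}F_{(Y,\rho^*L)\otimes\sX}$. Applying $R\pi_*$ and rewriting $\pi\rho_X=pq$ gives $R\pi_*F_{(\P^n,L)\otimes\sX}\xr{\simeq}Rp_*\,Rq_*\,F_{(Y,\rho^*L)\otimes\sX}$. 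Second, Lemma \ref{lem:Pn-blow-up} identifies $Rq_*F_{(Y,\rho^*L)\otimes\sX}$ with $F_{(E,L')\otimes\sX}$ through $q^*$. Third, since $(E,L')\cong(\P^{n-1},\P^{n-2})$, the induction hypothesis gives $p^*\colon F_\sX\xr{\simeq}Rp_*F_{(E,L')\otimes\sX}$. Combining these, functoriality of pullback yields $R\pi_*(\rho_X^*)\circ\pi^*=(\pi\rho_X)^*=(pq)^*=Rp_*(q^*)\circ p^*$; the right-hand side is an isomorphism by the second and third steps, the factor $R\pi_*(\rho_X^*)$ is an isomorphism by Theorem \ref{thm:blow-upAn}, hence $\pi^*\colon F_\sX\to R\pi_*F_{(\P^n,L)\otimes\sX}$ is an isomorphism too, which closes the induction.

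The step I expect to require the most care is verifying the hypotheses of Theorem \ref{thm:blow-upAn}: one must check that $(\P^n,L)\otimes\sX$ is log smooth, that $\operatorname{pr}_{\P^n}^*L$ is a multiplicity-one component of its modulus divisor, and that the constant center $\{x\}\times X$ is contained in that component and meets the remaining components $\operatorname{pr}_X^*D_i$ transversally — the transversality following from $|D|$ being SNC, since then $\{x\}\times(D_{i_1}\cap\dots\cap D_{i_r})$ is smooth of the expected codimension $r$ in $\{x\}\times X\cong X$. One also needs the compatibility of blow-up with the base change $X\to\Spec k$ and a careful check that the composite isomorphism is indeed $\pi^*$. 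Beyond this there is no genuine obstacle: the substantive geometric input — Theorem \ref{thm:blow-upAn} and the $\P^1$-bundle computation of Lemma \ref{lem:Pn-blow-up} — is already available.
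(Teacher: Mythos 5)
Your proof is correct and follows essentially the same route as the paper: blow up $\P^n$ at a rational point $x\in L$, use Theorem \ref{thm:blow-upAn} to identify $F_{(\P^n,L)\otimes\sX}$ with $R\rho_{X*}F_{(Y,\rho^*L)\otimes\sX}$, then Lemma \ref{lem:Pn-blow-up} and induction on $n$ via $(E,L')\cong(\P^{n-1},\P^{n-2})$. Your write-up merely makes explicit the verification of the hypotheses of Theorem \ref{thm:blow-upAn} and the identification of the composite with $\pi^*$, which the paper leaves implicit.
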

\begin{proof}
The case $n=1$ is \cite[Thm 9.3]{S-purity}.
Assume $n\ge 2$.  Let $x\in \P^n$ be a $k$-rational point, $L\subset \P^n$ a hyperplane with $x\in L$,
and $\rho: Y\to \P^n$ the blow-up in $x$. 
Then $R\rho_* F_{(Y,\rho^*L)\otimes \sX}= F_{(\P^n,L)\otimes\sX}$ by 
Theorem \ref{thm:blow-upAn}. 
Thus the statement follows  from  Lemma \ref{lem:Pn-blow-up} and induction.
\end{proof}
}

\begin{cor}\label{cor:PnInv}
Let $F\in \CItspNis$ and $\sX=(X,D)\in \uMCorls$.
Let $V$ be a vector bundle on $X$ and denote by 
\[\pi: \P(V):=\Proj(\Sym^\bullet_{\sO_X}(V))\to X\]
the structure map. Then $\pi^*$ induces an isomorphism
 {
\[\pi^*:  F_{\sX}\xr{\simeq} \pi_* F_{(\P(V), \pi^*D)}.\]
}
\end{cor}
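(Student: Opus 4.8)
The plan is to reduce to a trivial projective bundle and then to deduce the statement from Theorem \ref{thm:PnInv} by comparing the empty modulus with a hyperplane modulus on the fibres. Since being an isomorphism of Nisnevich sheaves on $X$ is a Zariski-local property on $X$, and both $\pi^*$ and the functor $\pi_*$ commute with restriction along open immersions $U\inj X$, I would first pick a Zariski cover $\{U_\alpha\}$ of $X$ over which $V$ is free. Over $U_\alpha$ there is an isomorphism $\P(V)\times_X U_\alpha\cong\P^n_{U_\alpha}$ under which $(\P(V),\pi^*D)$ restricts to $(\P^n,\emptyset)\otimes(U_\alpha, D_{|U_\alpha})$, with $(U_\alpha, D_{|U_\alpha})\in\ulMCorls$. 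Thus it suffices to show that for every $\sX=(X,D)\in\ulMCorls$ the pullback $\pi^*_\emptyset\colon F_\sX\to \pi_* F_{(\P^n,\emptyset)\otimes\sX}$ along the projection $\pi\colon\P^n_X\to X$ is an isomorphism; the case $n=0$ is trivial.

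Note that the inductive argument proving Theorem \ref{thm:PnInv} is \emph{not} available here, because $(\P^n,\emptyset)\otimes\sX$ carries no component of its modulus in the $\P^n$-direction, so Theorem \ref{thm:blow-upAn} cannot be applied after blowing up a point of $\P^n$. Instead I would fix a linear hyperplane $L=\P^{n-1}\subset\P^n$ and observe that the graph of the open immersion of interiors $(\P^n\setminus L)\times(X\setminus|D|)\inj\P^n\times(X\setminus|D|)$ is an admissible left proper prime correspondence (the modulus inequality holds because $L$ is effective), hence defines a morphism $\iota\colon(\P^n,L)\otimes\sX\to(\P^n,\emptyset)\otimes\sX$ in $\ulMCor$ satisfying $\pi_\emptyset\circ\iota=\pi_L$, where $\pi_L$ and $\pi_\emptyset$ are the two projections to $\sX$. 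Contravariant functoriality of $F$ and of $\pi_*$ then give a factorisation $\pi^*_L=\pi_*(\iota^*)\circ\pi^*_\emptyset$. By Theorem \ref{thm:PnInv}, $\pi^*_L$ induces an isomorphism $F_\sX\xr{\simeq}R\pi_* F_{(\P^n,L)\otimes\sX}$ in $D(X_\Nis)$; as the source is a sheaf placed in degree $0$, so is the target, and $\pi^*_L\colon F_\sX\xr{\simeq}\pi_* F_{(\P^n,L)\otimes\sX}$ is an isomorphism of sheaves. It follows that $\pi^*_\emptyset$ is split injective, so it remains to prove that $\pi_*(\iota^*)$ is injective — then it is an isomorphism, and hence so is $\pi^*_\emptyset$.

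This last injectivity is the crux of the argument and the step I expect to need the most care. It may be checked on stalks: one has to see that $\iota^*\colon F((\P^n,\emptyset)\otimes\sX_{(x)})\to F((\P^n,L)\otimes\sX_{(x)})$ is injective, where $\sX_{(x)}=(\Spec\sO^h_{X,x}, D_{|})$ for $x\in X$. The interiors of these modulus pairs are $\P^n\times Z$ and $\A^n\times Z$ with $Z=\Spec\sO^h_{X,x}\setminus|D|$ integral and regular; they have the same generic point $\eta$, and the underlying schemes $\P^n\times\Spec\sO^h_{X,x}$ are integral and normal. By the semipurity of $F$ together with \cite[Thm 3.1]{S-purity}, the canonical maps $F((\P^n,\emptyset)\otimes\sX_{(x)})\to\ulomega_! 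F(\eta)$ and $F((\P^n,L)\otimes\sX_{(x)})\to\ulomega_! F(\eta)$ to the value of the reciprocity sheaf $\ulomega_! F$ at $\eta$ are injective, and they are compatible with $\iota^*$ through the restriction map on $\ulomega_! F$ along $\A^n\times Z\inj\P^n\times Z$; hence $\iota^*$ is injective. The delicate points are to make sure that \cite[Thm 3.1]{S-purity} indeed applies to these (non-affine, but normal) modulus pairs, and that Theorem \ref{thm:PnInv} genuinely produces the pullback $\pi^*$, so that the factorisation $\pi^*_L=\pi_*(\iota^*)\circ\pi^*_\emptyset$ is legitimate.
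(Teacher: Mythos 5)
Your proposal is correct and follows essentially the same route as the paper: reduce Zariski-locally to the trivial bundle, factor the pullback through the natural map $\pi_*(\iota^*)\colon \pi_*F_{\P^n\otimes\sX}\to\pi_*F_{(\P^n,L)\otimes\sX}$, invoke Theorem \ref{thm:PnInv} for the composite, and deduce the claim from injectivity of $\pi_*(\iota^*)$, which the paper also obtains from semipurity together with the global injectivity theorem \cite[Thm 3.1(2)]{S-purity}. The step you flag as delicate is exactly the paper's one-line appeal to those two inputs, and your stalkwise verification is a valid (if slightly more elaborate) way to carry it out.
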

\begin{proof}
The question is local on $X$, hence we can assume that $V$ is trivial of rank $n+1$.
Let $L\subset \P^n$ be a hyperplane and consider
 {
\[F_{\sX}\xr{\pi^*} \pi_*F_{\P^n\otimes \sX}\inj \pi_*F_{(\P^n,L)\otimes \sX}.\]
The second map is injective by semipurity and \cite[Thm 3.1(2)]{S-purity};}
 the composition is an isomorphism by Theorem \ref{thm:PnInv}, hence
so is the first map.
\end{proof}

\section{Smooth blow-up formula}\label{sec:smblowup}

\begin{thm}\label{thm:bus}
Let $F\in \CItspNis$ and $\sX=(X,D)\in \uMCorls$.
Let $Z\subset X$ be a smooth closed subscheme which intersects $D$ transversally.
Consider the following cartesian diagram
\[\xymatrix{
E\ar[d]_{\rho_E}\ar[r]^{i_E} & \tilde{X}\ar[d]^{\rho}\\
Z\ar[r]^{i}   & X,
}\]
in which $\rho$ is the blow-up of $X$ along $Z$. Set
\[\tilde{\sX}=(\tilde{X},  D_{|\tilde{X}}), \quad \sZ=(Z, D_{|Z}), \quad \sE=(E, D_{|E}).\]
Then there is a distinguished triangle in the bounded derived category of Nisnevich sheaves of abelian groups
$D^b(X_{\Nis})$
\[F_{\sX}\xr{\rho^*\oplus (- i^*)} R\rho_* F_{\tilde{\sX}}\oplus i_*F_{\sZ}
\xr{i_E^*+ \,\rho_E^{*}} i_*R\rho_{E*} F_{\sE}
  \to F_{\sX}[1].\]
\end{thm}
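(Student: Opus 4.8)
The plan is to deduce the smooth blow-up triangle from the higher-codimension Gysin-type input of Theorem~\ref{thm:blow-upAn} together with the $(\P^n,\P^{n-1})$-invariance results already established. The strategy is the standard one: produce a commutative diagram relating the two blow-up squares (the one for $Z\subset X$ and the trivial one), and compare the cones of the pullback maps. Concretely, I would first reduce to a local situation where $Z\subset X$ is given by a regular sequence and $D$ is in ``good position'' (as in the proof of Theorem~\ref{thm:blow-upAn} in~\ref{proof:thm:blow-upAn}), since the statement to be proved is about a distinguished triangle of Nisnevich sheaves on $X$ and can be checked Nisnevich-locally; stalks of the cohomology sheaves are computed on henselizations, and the maps $\rho^*$, $i^*$, $i_E^*$, $\rho_E^*$ are all functorial, so a locally-defined null-homotopy / octahedron glues.

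The key geometric input is the identity $F_{\sX}\xrightarrow{\simeq} R\rho_* F_{(\tilde X, D_{|\tilde X} + E)}$ — wait, rather, the version without the extra $E$ in the modulus: note that $(\tilde X, \rho^* D)\cong(X,D)$ in $\uMCor$, so $R\rho_* F_{(\tilde X,\rho^*D)} = F_{\sX}$ trivially, whereas Theorem~\ref{thm:blow-upAn} is the genuinely nontrivial statement that replacing $\rho^* D$ by $D_{|\tilde X} + (\text{multiplicity-one }E)$ does not change the pushforward — but here we want $D_{|\tilde X}$, i.e.\ \emph{removing} the exceptional divisor from the modulus entirely. So the real input is: $R\rho_* F_{\tilde\sX}$ where $\tilde\sX = (\tilde X, D_{|\tilde X})$. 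Here $D_{|\tilde X}$ is the strict transform of $D$ (or rather its restriction), not the total transform. The plan is then: (i) restrict to a Nisnevich neighborhood of a point of $Z$, where $\rho\colon \tilde X\to X$ looks like the base change of the blow-up $\mathrm{Bl}_0\A^c\to \A^c$ times a smooth factor carrying the divisor $D'=D-D_0$; (ii) over the exceptional divisor $E = \P(N_{Z/X})$, the map $\tilde X\to E$ is (locally) an $\A^1$-bundle, more precisely a line bundle, and one uses the cube-invariance of cohomology (\cite[Thm 9.3]{S-purity}) as in Lemma~\ref{lem:Pn-blow-up} to compute $Rq_* F$ along $\tilde X\to E$; (iii) assemble via the projective-bundle formula applied to $E = \P(N_{Z/X})\to Z$, i.e.\ Corollary~\ref{cor:PnInv}, which gives $R\rho_{E*}F_{\sE}$ in terms of $F_{\sZ}$ and its twists.

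The cleanest way to organize the argument is via an octahedron. Consider the composition $F_\sX \xrightarrow{\rho^*} R\rho_* F_{\tilde\sX}$ and the map $R\rho_* F_{\tilde\sX}\xrightarrow{i_E^*} i_* R\rho_{E*} F_\sE$; the cone of the composite and the individual cones fit into an octahedral diagram. The point is to identify: the cone of $i_E^*\colon R\rho_* F_{\tilde\sX}\to i_*R\rho_{E*}F_\sE$ with $\rho^*(\text{cone of } i^*\colon F_\sX\to i_* F_\sZ)$, which would be a ``deformation to the normal cone''-free way of seeing the triangle; equivalently, one shows that the square
\[
\xymatrix{
F_\sX \ar[r]^-{\rho^*}\ar[d]_{i^*} & R\rho_* F_{\tilde\sX}\ar[d]^{i_E^*}\\
i_* F_\sZ \ar[r]^-{\rho_E^*} & i_* R\rho_{E*} F_\sE
}
\]
is homotopy cartesian, which is exactly the assertion of the theorem. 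To prove homotopy-cartesianness, I would compute the cone of the top horizontal map and the cone of the bottom horizontal map and exhibit an isomorphism between them compatible with $i^*$, $i_E^*$. The bottom cone is controlled by Corollary~\ref{cor:PnInv}/the projective bundle formula for $E\to Z$: $R\rho_{E*}F_\sE$ is an extension of $F_\sZ$ by $\bigoplus_{i=1}^{c-1}(\gamma^i F)_\sZ[-i]$ where $c = \mathrm{codim}(Z,X)$ (once the twists are in hand — but at this stage of the paper only Corollary~\ref{cor:PnInv}, the bare $\pi_* = \mathrm{id}$ statement, is available, so one should phrase the comparison purely in terms of the cone $\mathrm{Cone}(\rho_E^*)$ without naming its cohomology). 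The top cone $\mathrm{Cone}(\rho^*\colon F_\sX\to R\rho_* F_{\tilde\sX})$ is supported on $Z$, and one computes it Nisnevich-locally using the local model from step (i)–(ii): locally it is $\mathrm{Cone}$ of the pullback along $\mathrm{Bl}_0\A^c \to \A^c$ (with suitable moduli), pushed forward, which by the line-bundle/cube-invariance argument equals the pushforward from $E$, i.e.\ exactly $i_*\mathrm{Cone}(\rho_E^*\colon F_\sZ\to R\rho_{E*}F_\sE)$ shifted appropriately.

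\textbf{The main obstacle} I expect is step (ii)–(iii): carefully identifying $\mathrm{Cone}(\rho^*\colon F_\sX\to R\rho_*F_{\tilde\sX})$ with the pushforward from the exceptional divisor, \emph{with the correct modulus} $D_{|\tilde X}$ and compatibly with the restriction maps $i_E^*,\rho_E^*$. The subtlety is bookkeeping of the divisors: on $\tilde X$ the relevant modulus is the strict transform $D_{|\tilde X}$, which near a point of $Z$ decomposes into the strict transform of $D_0$ (a smooth component still containing the strict transform of $Z$ in the induction) and the strict transform of $D' = D - D_0$ (which, by the transversality hypothesis, pulls back to a divisor transversal to $E$ and to $E\cap(\text{strict transform of }D_0)$). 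One must check that the local model from Theorem~\ref{thm:blow-upAn}'s proof — namely $(\mathrm{Bl}_0\A^c, \text{nothing on }E)\otimes(\A^s_K, \prod t_i^{n_i})$ — correctly carries this modulus after the reductions, and that the $\A^1$-bundle (line bundle) structure of $\tilde X\to E$ interacts with $D_{|\tilde X}$ so that Lemma~\ref{lem:Pn-blow-up}'s cube-invariance computation applies verbatim. Once the local model is nailed down, the octahedron bookkeeping and the gluing of the local null-homotopies are routine: the cone of $\rho^*$ is supported on $Z$ and its stalks at points of $Z$ are computed by the local model, giving the isomorphism $\mathrm{Cone}(\rho^*)\simeq i_*\mathrm{Cone}(\rho_E^*)[-?]$ needed to conclude that the square above is homotopy cartesian, hence the desired distinguished triangle exists and is functorial.
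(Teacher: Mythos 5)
Your overall framing is the same as the paper's: the theorem is equivalent to the square
\[
\xymatrix{
F_\sX \ar[r]^-{\rho^*}\ar[d]_{i^*} & R\rho_* F_{\tilde\sX}\ar[d]^{i_E^*}\\
i_* F_\sZ \ar[r]^-{\rho_E^*} & i_* R\rho_{E*} F_\sE
}
\]
being homotopy cartesian, the bottom map is handled by Corollary \ref{cor:PnInv}, and the rest is checked Nisnevich-locally after reducing to $\sX\cong(\A^n,\emptyset)\otimes\sZ$ blown up along $\{0\}\times Z$ (note that here transversality forces $Z\not\subset|D|$, so $D_{|\tilde X}=\rho^*D$ and, after localizing, the whole divisor lives on the $\sZ$-factor; your worries about strict transforms of a component $D_0$ containing $Z$ import the hypotheses of Theorem \ref{thm:blow-upAn}, which are not the ones in force here). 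Since the comparison map between the cones of the two horizontal arrows is globally defined, checking it is an isomorphism is indeed a local matter, so that part of your plan is sound.

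The genuine gap is in your step (ii): you propose to compute $Rq_*F$ along $\tilde X\to E$, which locally is the total space of the line bundle $\sO(-1)$ over $\P^{c-1}$, ``using cube-invariance as in Lemma \ref{lem:Pn-blow-up}''. Cube-invariance (\cite[Thm 9.3]{S-purity}) concerns $(\P^1,\infty)$-bundles, i.e.\ compactified fibers with the infinity section in the modulus; it says nothing about an honest $\A^1$- or line-bundle without modulus, and for a cube-invariant but non-$\A^1$-invariant sheaf the pushforward along such a bundle is \emph{not} the sheaf on the base (already for $F=\G_a$ one gets $\bigoplus_{m\ge 0}\sO_{\P^{c-1}}(m)$, and Lemma \ref{lem:higher-HI} only kills the higher $R^i\pi_*$, not the $\pi_*$-defect). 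Lemma \ref{lem:Pn-blow-up} is precisely not about the open line bundle: it concerns the blow-up $Y=\mathrm{Bl}_x\P^n$, whose projection to $E$ is a $\P^1$-bundle with the exceptional divisor (resp.\ the hyperplane data) supplying the required $\infty$-section in the modulus. This is why the paper's local computation first compactifies, writing $\A^n=\P^n\setminus L$ and keeping $L$ in the modulus: the isomorphisms \eqref{thm:bus5} and \eqref{thm:bus6} are proved for $(\P^n,L)\otimes\sZ$ and $(Y,L)\otimes\sZ$ using Theorem \ref{thm:PnInv} (which is where Theorem \ref{thm:blow-upAn} and the modulus descent enter) together with Lemma \ref{lem:Pn-blow-up} and a Leray argument, and only then restricted back to $\A^n_Z$. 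You cite these results at the outset, but the concrete mechanism you describe bypasses the compactification and would be false as stated; to repair the proposal you must run the local computation on the compactified model with the hyperplane at infinity in the modulus, exactly as in \eqref{thm:bus5}--\eqref{thm:bus7}.
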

\begin{proof}  {The first part of the argument is similar to the proof of \cite[IV.1.1.]{gros}.}
 We have to show that the diagram
\[ \begin{tikzcd}
F_{\sX} \arrow[r, "\rho^*"] \arrow[d, "i^*"] & R\rho_* F_{\tilde{\sX}} \arrow[d, "i_E^*"] \\
i_*F_{\sZ} \arrow[r, " \rho_E^*"] & i_*R\rho_{E*} F_{\sE}
\end{tikzcd}
\]
is homotopy cartesian in $D^b(X_{\Nis})$. To this end it suffices to show that the following maps are isomorphisms:
\eq{thm:bus1}{\rho_E^*: F_{\sZ}\to \rho_{E*}F_{\sE},}
\eq{thm:bus2}{ \rho^*:F_{\sX}\to \rho_* F_{\tilde{\sX}},}
\eq{thm:bus3}{i_E^*: R^j\rho_* F_{\tilde{\sX}}\to i_*R^j\rho_{E*} F_{\sE}, \quad j\ge 1.}
The map \eqref{thm:bus1} is an isomorphism by Corollary \ref{cor:PnInv}, since $E$ is a projective bundle over $Z$.
The question for the other two isomorphisms is Nisnevich local.
Since Z and D intersect transversally we can assume that
 $\sX= (\A^n, \emptyset)\otimes \sZ$ with $\sZ=(Z, D_Z)\in \uMCorls$
and that $\tilde{X}$ is the blow up of $X=\A^n\times Z$ at $\{0\}\times Z$ 
(cf. the proof of Lemma \ref{lem:blow-upInd}).
Write  $\A^n=\P^n\setminus L$ and let $Y$ be the blow-up of $0\in \P^n$ and denote by
$E_0$ the exceptional divisor.  Note that $L$ is embedded isomorphically into $Y$, not intersecting $E_0$.
 {
We obtain the diagram
\[\xymatrix{
E_0\times Z\ar[r]^{\bar{\imath}_E}\ar[d]_{\bar{\rho}_E} & Y\times Z\ar[r]^q\ar[d]^{\bar{\rho}} & E_0\times Z\ar[d]^{\bar{\rho}_E}\\
Z\ar[r]^{\bar{\imath}} & \P^n\times Z\ar[r]^{\pi} & Z,
}\]
where $\bar{\imath}: Z= \{0\}\times Z\inj \P^n\times Z$ is the closed immersion, 
$\bar{\rho}$ is the base change of the blow-up, $\pi$ and $\bar{\rho}_E$ are the projections and 
$q$ is as in Lemma \ref{lem:Pn-blow-up}.
It remains to show that the following maps are isomorphisms
\eq{thm:bus5}{ \bar{\rho}^*: F_{(\P^n, L)\otimes\sZ}\to
                                             \bar{\rho}_* F_{(Y, L)\otimes \sZ},}
\eq{thm:bus6}{\bar{\imath}_E^*: R^j\bar{\rho}_*F_{(Y,L)\otimes \sZ}\to
                                         \bar{\imath}_{*}R^j \bar{\rho}_{E*} F_{E_0\otimes \sZ}, \quad j\ge 1.}
Indeed, the restriction  of these two isomorphisms to $\A^n_Z=\P^n_Z\setminus L_Z$ yields the isomorphisms 
\eqref{thm:bus2} and \eqref{thm:bus3}. 

The map \eqref{thm:bus5} is an isomorphism away from $0\times Z$.
Since source and target of $R\pi_*\eqref{thm:bus5}$ are both isomorphic to $F_{\sZ}$ 
by Theorem \ref{thm:PnInv} and Lemma \ref{lem:Pn-blow-up}, \eqref{thm:bus5} is an isomorphism everywhere.
Similarly, \eqref{thm:bus6} is an isomorphism if $R\pi_*\eqref{thm:bus6}$ is. 
To show the latter, first observe that we have 
\eq{thm:bus7}{R^a\pi_*R^b\bar{\rho}_*(F_{(Y,L)\otimes Z})=0, \quad \text{for } a\neq 0.}
Indeed, if $b\ge 1$, then $R^b\bar{\rho}_{*}F_{(Y, L)\otimes \sZ}$ has support in $0\times Z$;
if $b=0$ the cohomology for $a\ge 1$ vanishes by \eqref{thm:bus5} and Theorem \ref{thm:PnInv}.
Now $R\pi_*\eqref{thm:bus6}$ is equal to the composition 
\[\pi_*R^j\bar{\rho}_*F_{(Y,L)\otimes \sZ}\cong R^j \bar{\rho}_{E*} Rq_*F_{(Y,L)\otimes \sZ} \cong R^j \bar{\rho}_{E*} F_{E_0\otimes \sZ},\]
where the first isomorphism follows from \eqref{thm:bus7} and the Leray spectral sequence and the second isomorphism holds by Lemma \ref{lem:Pn-blow-up}.
This completes the proof.
}
\end{proof}

\section{Twists}\label{sec:twists}

\subsection{A tensor formula for homotopy invariant sheaves}

\begin{lemma}(Bloch-Gieseker)\label{lem:rootMpair}
Assume $k$ infinite of exponential characteristic $p\ge 1$.
Let $X$ be an integral quasi-projective $k$-scheme and $D$ a Cartier divisor on $X$.
Let $n\ge 1$ be an integer with $(n,p)=1$.
Then there exists a finite and surjective morphism $\pi: Y\to X$ and a Cartier divisor $E$ on $Y$
such that the following properties hold:
\begin{enumerate}[label= (\arabic*)]
\item\label{lem:rootMpair1} $Y$ is integral, normal and $\pi^{-1}(X_{\rm sm})$ is a smooth open subscheme of $Y$,
where $X_{\rm sm}$ is the smooth locus of $X$;
\item\label{lem:rootMpair2} $\pi^* D= n E$;
\item\label{lem:rootMpair3} $\deg(\pi)$ divides a power of $n$;
\item\label{lem:rootMpair4} if $D$ is effective, then so is $E$.
\end{enumerate}
\end{lemma}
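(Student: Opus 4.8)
The statement is the classical Bloch–Gieseker trick, adapted to produce a \emph{modulus pair}. The idea is to realize $D$ as the pullback of a hyperplane under a projective morphism to $\mathbf{P}^N$, and then to pull everything back along the $n$-th power (Kummer) covering $\mathbf{P}^N\to\mathbf{P}^N$, which ramifies exactly along the coordinate hyperplanes and multiplies divisor classes by $n$. More precisely, I would first reduce to the case where $D$ is effective: writing $D=D_1-D_2$ with $D_i$ effective, it suffices to treat $D_1$ and $D_2$ separately, since a common finite cover doing the job for both (take a fibre product and normalize) will then handle $D$, and conclusions \ref{lem:rootMpair1}--\ref{lem:rootMpair3} are stable under such composition; \ref{lem:rootMpair4} is only required in the effective case. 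So assume $D\ge 0$.

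\textbf{Step 1: embedding $D$ via a linear system.} Since $X$ is quasi-projective over the infinite field $k$, fix an open immersion into a projective $\Xb$ and let $A$ be a very ample divisor on $\Xb$; replacing $A$ by a high multiple we may assume that $\sO_{\Xb}(A)$ and $\sO_{\Xb}(A)\otimes\sO_{\Xb}(\ol{D})$ are both generated by global sections and very ample, where $\ol{D}$ is the closure of $D$. Then $H:=A+\ol{D}$ and $A$ are both very ample, and $D = H|_X - A|_X$ as Cartier divisors on $X$. Choosing global sections, we get morphisms $f_H,f_A\colon \Xb\to\mathbf{P}^{M}$ with $D$ cut out (on $X$) as $f_H^*(\text{hyperplane}) - f_A^*(\text{hyperplane})$. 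By a Bertini-type argument (valid since $k$ is infinite), after a general choice of the sections the pullback of the coordinate hyperplane $L_0$ under each of $f_H,f_A$ is an integral divisor meeting $X_{\rm sm}$ in a way that stays controlled; in any case, the key point is that we can write $\ol{D}$, on a suitable model, as the difference of pullbacks of hyperplanes.

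\textbf{Step 2: the Kummer cover.} Let $\phi_n\colon\mathbf{P}^M\to\mathbf{P}^M$, $[x_0:\cdots:x_M]\mapsto[x_0^n:\cdots:x_M^n]$. This is finite of degree $n^M$, and $\phi_n^*L_0 = nL_0$ as Cartier divisors, and $L_0$ lifts to the reduced hyperplane $\{x_0=0\}$. Form the fibre product $\Xb\times_{\mathbf{P}^M,f}\mathbf{P}^M$ (using whichever of $f_H,f_A$ is relevant — one handles the two hyperplane pullbacks in succession, composing the two resulting covers) and let $\Yb$ be its normalization, $\pi\colon\Yb\to\Xb$ the induced finite surjective morphism; set $Y=\pi^{-1}(X)$. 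Then $\deg\pi$ divides $n^M$, giving \ref{lem:rootMpair3}. Since $\phi_n$ is \'etale over the complement of the coordinate hyperplanes and tamely ramified (as $(n,p)=1$), the cover $\pi$ is \'etale over $X_{\rm sm}\setminus(\text{finitely many hyperplane sections})$, and over the smooth locus the standard Abhyankar/Kummer local description shows $\pi^{-1}(X_{\rm sm})$ is smooth; together with normality of $\Yb$ this gives \ref{lem:rootMpair1}. Finally $\pi^*D = \pi^*(H|_X) - \pi^*(A|_X)$, and each of these two pullbacks equals $n$ times the (effective, integral, reduced-upstairs) pullback of the respective hyperplane; so $\pi^*D = nE$ with $E:=\pi^*(H|_X)_{\rm red}' - \pi^*(A|_X)_{\rm red}'$ in the obvious notation, giving \ref{lem:rootMpair2}. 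When $D$ itself is effective we run Step 1 so that $D=H|_X$ with $A=0$ (i.e. take $H$ very ample with $\sO(H)=\sO(\ol D)$ after twisting — or rather observe that if $D\ge0$ we can take $A$ to be $0$ on $X$ by choosing the embedding so that $\ol D$ is already the pullback of a hyperplane), whence $E\ge 0$ and \ref{lem:rootMpair4} holds.

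\textbf{Main obstacle.} The genuinely delicate point is \ref{lem:rootMpair1}: one must ensure that the normalized Kummer cover is actually \emph{smooth} over the smooth locus of $X$, not merely normal. This is where infiniteness of $k$ and the hypothesis $(n,p)=1$ enter, through a Bertini argument guaranteeing that the hyperplane pullbacks $f^*L_i$ can be arranged to be smooth (or at least such that the local equations are part of a regular system of parameters) over $X_{\rm sm}$, so that the local model of $\pi$ is $\Spec R[t]/(t^n-u)$ with $u$ a regular parameter and $(n,p)=1$, which is regular. Handling the case $D=D_1-D_2$ non-effective cleanly — making a single cover that simultaneously $n$-divides two divisors while preserving all of \ref{lem:rootMpair1}--\ref{lem:rootMpair3} — requires a little care with the fibre-product/normalization bookkeeping, but presents no real difficulty once the effective case is in hand. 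I expect the proof to follow \cite[Lemma~2.1]{BlSr}-style arguments (or Bloch–Gieseker's original \cite{Fu}) essentially verbatim, the only new content being the effectivity bookkeeping \ref{lem:rootMpair4} needed to produce a modulus pair.
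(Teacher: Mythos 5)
Your construction is the same Bloch--Gieseker trick the paper uses (pull back along the coordinate-wise $n$-th power map $\P^M\to\P^M$ after realizing the divisor through a projective embedding, pass to an irreducible component of the fibre product, normalize, and invoke Bertini over the infinite field to get smoothness over $X_{\rm sm}$), so the core of the argument matches. Two remarks, one of which is a genuine gap. The gap is in your treatment of effectivity, item (4): you construct $E$ as a difference $E_H-E_A$ of pullbacks of hyperplanes and then claim that when $D\ge 0$ one can arrange $A=0$, "by choosing the embedding so that $\ol D$ is already the pullback of a hyperplane." That is false in general: an effective Cartier divisor on a quasi-projective variety need not be very ample, or even base-point free (think of a fibre of a fibration, or a contractible curve), and twisting destroys the identity $\sO(H)=\sO(\ol D)$, so your parenthetical does not repair it. The paper avoids this entirely by observing that (4) is a formal consequence of (1) and (2): on the \emph{normal} scheme $Y$, comparing multiplicities at codimension-one points, $nE=\pi^*D\ge 0$ already forces $E\ge 0$. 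With that one-line observation the reduction simplifies: one only needs the statement for $D$ very ample (write an arbitrary $D$ as a difference of very ample divisors and compose two covers, exactly as you sketch), and no bookkeeping about which summand is effective is ever needed.

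A smaller point: the power map $\Pi\colon\P^M\to\P^M$ ramifies along \emph{all} coordinate hyperplanes, so at a point of $X_{\rm sm}$ lying on several of them the local model is $R[t_1,\dots,t_r]/(t_1^n-u_1,\dots,t_r^n-u_r)$, and regularity requires $u_1,\dots,u_r$ to be part of a regular system of parameters simultaneously. Your single-hyperplane model $\Spec R[t]/(t^n-u)$ does not cover these points. This is why the paper chooses, via Jouanolou's Bertini theorem, hyperplanes $H_0,\dots,H_N$ such that \emph{every} partial intersection $H_{i_0}\cap\dots\cap H_{i_r}\cap X_{\rm sm}$ is transversal; with that strengthening (and $(n,p)=1$, very ampleness so that Bertini applies in characteristic $p$) your Step 2 goes through and the rest of your argument agrees with the paper's.
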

\begin{proof}
The proof is a slight modification of \cite[Lem 2.1]{BG}.
First note that \ref{lem:rootMpair4} follows from \ref{lem:rootMpair2} and \ref{lem:rootMpair1}.
Also, it suffices to prove the statement for $D$ a very ample divisor. 
Let $i: X\inj \P^N:=\P$ be an immersion such that $\sO(D)=i^*\sO_{\P}(1)$.
By Bertini's theorem (see, e.g., \cite[I, Cor 6.11]{J})  we find hyperplanes
$H_0,\ldots, H_N\subset \P$ such that all the intersections $H_{i_0}\cap\ldots \cap H_{i_r}$
and $H_{i_0}\cap\ldots \cap H_{i_r}\cap X_{\rm sm}$ are transversal (or empty), for all 
$\{i_0,\ldots, i_r\}\subset \{0,\ldots, N\}$ and all $0\le r\le N$.
Let $Y_i$ be a linear polynomial defining $H_i$, so that
$\P=\Proj k[Y_0,\ldots, Y_N]$. 
Let $\Pi: \P\to \P$ be the $k$-morphism defined by $Y_i\mapsto Y_i^n$, $i=0,\ldots, N$.
Note that $\Pi$ is finite of degree $n^N$ and it is \'etale over $\P\setminus \cup_i H_i$.
Form the cartesian diagram
\[\xymatrix{ X'\ar[r]^{i'}\ar[d]_{\pi'} & \P\ar[d]^\Pi\\ X\ar[r]^i & \P.}\]
Then $X'\times_X X_{\rm sm}$ is smooth: this can be checked after base change to the algebraic closure of $k$ and
then the argument is the same as in the second and third paragraph in the proof of \cite[Lem 2.1]{BG}
(the choice of the $H_i$ is crucial here).
Let $X''\subset X'$ be an irreducible component (with reduced scheme structure) and denote by $Y$ the normalization
of $X''$ and by $\pi: Y\to X$ the composition
\[Y\to X''\inj X'\xr{\pi'} X\]
and by $E=\sO_\P(1)_{|Y}$ the pullback of $\sO_\P(1)$ along 
\[Y\to X''\inj X'\stackrel{i'}{\inj} \P.\]
Then $\pi: Y\to X$ and $E$ satisfy the conditions of the statement.
\end{proof}

\begin{lemma}\label{lem:surjT}
Let $F,G\in \PST$. 
Let 
\eq{lem:surjT1}{\uomega^*F\otimes_{\uMPST}\uomega^* G\to \uomega^*(F\otimes_{\PST} G)}
be the morphism in $\uMPST$, which is induced by adjunction  from the  isomorphism
\[\uomega_!(\uomega^*F\otimes_{\uMPST}\uomega^* G)\cong 
(\uomega_!\uomega^*F)\otimes_{\PST} (\uomega_!\uomega^*G)\cong F\otimes_{\PST} G.\]
Then we obtain a surjection in $\uMNST$
\[\ul{a}_{\Nis}(\eqref{lem:surjT1}): \ul{a}_{\Nis}(\uomega^*F\otimes_{\uMPST}\uomega^* G)
                     \surj \ul{a}_{\Nis}(\uomega^*(F\otimes_{\PST} G)).\]
\end{lemma}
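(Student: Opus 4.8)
The plan is to reduce the surjectivity to a pointwise statement via the sheafification criterion of Lemma \ref{lem:crit-surj}, and then to use the explicit description of sections of $\uomega^*(F\otimes_{\PST}G)$ together with the fact that sheafification commutes with $\uomega_!$ to produce the required preimages. First I would observe that, by Lemma \ref{lem:crit-surj}, it suffices to show that for every $\sX=(\ol X, X_\infty)\in\uMCor$ with $\ol X$ normal and every $x\in\ol X$, the map
\[
\ul a_\Nis(\uomega^*F\otimes_{\uMPST}\uomega^*G)(\sX_{(x)})\longrightarrow \ul a_\Nis(\uomega^*(F\otimes_{\PST}G))(\sX_{(x)})
\]
is surjective, where $\sX_{(x)}=(\Spec\sO^h_{\ol X,x}, X_{\infty|})$. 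Since $\uomega_!$ commutes with Nisnevich sheafification (see \eqref{para:sheaf2}) and $\uomega_!\uomega^* = \id$ on $\PST$, one has $\uomega_!\,\ul a_\Nis(\uomega^*(F\otimes_{\PST}G)) = a^V_\Nis(F\otimes_{\PST}G) = (F\otimes_{\PST}G)_\Nis$. The key point is then to identify the target: a section over $\sX_{(x)}$ of the sheafification of $\uomega^*(F\otimes_{\PST}G)$ is, by the semipurity-type injection and the formula \eqref{para:sheaf0}, controlled by sections of $(F\otimes_{\PST}G)_\Nis$ over the interior, hence (after passing to a Nisnevich neighborhood, where we may assume the relevant scheme is local henselian) represented by a finite sum of elementary tensors $a\otimes b$ coming from generators $\Ztr(U)\to F$, $\Ztr(V)\to G$ with $U,V$ smooth.

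Next I would lift such an elementary tensor. By definition of $\otimes_{\PST}$ as the left Kan extension from representables, $F\otimes_{\PST}G$ is a quotient of a sum of $\Ztr(U\times V)=\Ztr(U)\otimes_{\PST}\Ztr(V)$; and correspondingly, since $\uomega^*$ is monoidal on representables ($\uomega^*\Ztr(U)$ maps to $\Ztr(U,\emptyset)$ in $\uMPST$, using the convention \eqref{para:MCor01}), the morphism \eqref{lem:surjT1} fits into a commutative square
\[
\begin{array}{ccc}
\uomega^*\Ztr(U)\otimes_{\uMPST}\uomega^*\Ztr(V) & \longrightarrow & \uomega^*F\otimes_{\uMPST}\uomega^*G\\[2pt]
\downarrow & & \downarrow\\[2pt]
\uomega^*\Ztr(U\times V) & \longrightarrow & \uomega^*(F\otimes_{\PST}G),
\end{array}
\]
where the left vertical map is an isomorphism (monoidality of $\uomega^*$ on representables, or rather the identification $\Ztr(U,\emptyset)\otimes\Ztr(V,\emptyset)=\Ztr(U\times V,\emptyset)$). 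Hence any section of $\uomega^*(F\otimes_{\PST}G)$ that is in the image of a generator $\Ztr(U\times V)$ is automatically in the image of \eqref{lem:surjT1}. After applying $\ul a_\Nis$ everywhere (which is exact, hence preserves the relevant surjections of the resolving presheaves), one concludes that $\ul a_\Nis(\eqref{lem:surjT1})$ is surjective on a generating family of sections, and since cokernels in $\uMNST$ are computed by sheafifying the presheaf cokernel, this gives vanishing of the cokernel.

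The main obstacle I expect is the bookkeeping at the point where one passes from ``a section of the sheafified presheaf'' to ``a section literally of the form $a\otimes b$''. A priori a local section of $\ul a_\Nis(\uomega^*(F\otimes_\PST G))$ need not be globally represented by a finite sum of elementary tensors on the nose; one has to use \eqref{para:sheaf0} to replace $\sX$ by an abstract blow-up $\ol Y\to\ol X$ which is an isomorphism over the interior, then use that over the smooth interior every section of $(F\otimes_\PST G)_\Nis$ is Nisnevich-locally a sum of elementary tensors $a\otimes b$ (this is exactly the content of $F\otimes_\PST G$ being the sheafification of the presheaf tensor product, which is the quotient of $\bigoplus\Ztr(U\times V)$), and finally lift these back through $\uomega^*$ as above — the admissibility condition \eqref{para:MCor0} is automatic here because the divisors involved are pulled back from $\ol X$ and the correspondences in question factor through $(U,\emptyset)\otimes(V,\emptyset)$, so no modulus condition is violated. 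Once this matching of the two notions of ``local section'' is pinned down, surjectivity is formal.
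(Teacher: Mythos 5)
Your reduction to henselian local points via Lemma \ref{lem:crit-surj} and the use of \eqref{para:sheaf0} to arrange properness of closures (this is where \cite[Thm 1.6.2]{KMSY1} enters) match the paper's first steps, but the proposal collapses at the decisive point: the claim that ``the admissibility condition \eqref{para:MCor0} is automatic here'' is false, and it is precisely this condition that makes the lemma non-trivial. A local section of $\uomega^*(F\otimes_{\PST}G)$ on $\sX=(X,D)$ is represented by terms $a\otimes b\otimes \gamma$ with $\gamma\in \Cor(X\setminus|D|,\,Y\times Z)$ a finite correspondence defined only on the interior and with $Y,Z$ merely smooth. To lift such a term to $\uomega^*F\otimes_{\uMPST}\uomega^*G$ you must choose proper modulus compactifications $\sY=(\ol Y,Y_\infty)$, $\sZ=(\ol Z,Z_\infty)$ and then need $\gamma\in\uMCor(\sX,\sY\otimes\sZ)$; in general one only gets $\gamma\in\uMCor((X,n_0D),\sY\otimes\sZ)$ for some large $n_0$, because the pulled-back divisors $Y_\infty,Z_\infty$ on the normalized closure are \emph{not} pulled back from $\ol X$ and need not be dominated by $D$ itself. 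For the same reason your left vertical ``isomorphism'' $\uomega^*\Ztr(U)\otimes_{\uMPST}\uomega^*\Ztr(V)\cong\uomega^*\Ztr(U\times V)$ is unjustified: for non-proper $U$ one does not even have $\Ztr(U,\emptyset)=\uomega^*\Ztr(U)$ (properness of the closure fails), and surjectivity of this map after sheafification is just the special case of the lemma for representables, so the argument is circular at this point.

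The paper's proof supplies exactly the missing ingredient: after reducing (by a trace argument over $\Z_\ell$-extensions) to $k$ infinite, it applies the Bloch--Gieseker covering Lemma \ref{lem:rootMpair} to produce finite covers $\ol Y'\to\ol Y$, $\ol Z'\to\ol Z$ of degree dividing a power of $n$ with $\bar\pi^*Y_\infty=nY'_\infty$, $\bar\pi^*Z_\infty=nZ'_\infty$ for $n\ge n_0$ prime to $p$; then $(\pi_{Y,n}^t\times\pi_{Z,n}^t)\circ V$ becomes admissible for $(X,D)$, and a projection-formula computation shows that $\delta_n\cdot(a\otimes b\otimes V)$ lies in the image of \eqref{lem:surjT1}, with $\delta_n$ dividing a power of $n$. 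Running this for two coprime integers $n,r$ and taking a B\'ezout combination recovers $a\otimes b\otimes V$ itself. Without this mechanism (or a substitute for it) your argument only proves surjectivity onto the subsheaf of sections admitting admissible representatives, which is not known to be everything; so as written the proof has a genuine gap rather than being a shortcut.
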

\begin{proof}
Denote by $H_l$ (resp. $H_r$) the source (resp. target) of \eqref{lem:surjT1} and take $\sX=(X,D)\in \uMCor$.  
By definition, $\otimes_{\uMPST}$ (resp. $\otimes_{\PST}$) is the Day convolution of the tensor product on $\uMCor$ (resp. on $\Cor$), so that we have the following presentations
(which also hold for general $(X,D)$, cf. \cite[\S 2]{SV00})
\[H_l(X,D)= 
\left(\bigoplus_{\sY, \sZ\in \uMCor} F(\sY^o)\otimes_{\Z} G(\sZ^o)\otimes_{\Z}\uMCor(\sX, \sY\otimes \sZ)\right)/R_l,\]
where for $\sY=(\ol{Y}, Y_\infty)$ we set $\sY^o=\ol{Y}\setminus Y_\infty$, and where $R_l$
is the subgroup generated by the elements 
\[f^*a\otimes g^*b\otimes h - a\otimes b\otimes (f\otimes g)\circ h,\]
where $\sY, \sY', \sZ, \sZ'\in \uMCor$, $a\in F(\sY^o)$, $b\in G(\sZ^o)$,
$f\in \uMCor(\sY', \sY)$, $g\in \uMCor(\sZ',\sZ)$, and $h\in \uMCor(\sX, \sY'\otimes \sZ')$. Similarly, 
\[H_r(X,D)=\left(\bigoplus_{Y,Z\in \Sm} F(Y)\otimes_{\Z} G(Z)\otimes_{\Z} \Cor(X\setminus D, Y\times Z)\right)/R_r,\]
where $R_r$ is the subgroup generated by
\[f^*a\otimes g^*b\otimes h - a\otimes b\otimes (f\times g)\circ h,\]
where $Y, Y', Z, Z'\in \Sm$, $a\in F(Y)$, $b\in G(Z)$, $f\in \Cor(Y',Y)$, $g\in \Cor(Z',Z)$, and 
$h\in \Cor(X\setminus D, Y'\times Z')$. 

Let $\sum_i a_i\otimes b_i\otimes \gamma_i\in H_r(X,D)$, where $a_i\in F(Y_i)$, $b_i\in G(Y_i)$,
and $\gamma_i\in \Cor(X\setminus D, Y_i\times Z_i)$. 
By \cite[Thm 1.6.2]{KMSY1} we find a proper morphism $\rho:X'\to X$ inducing an isomorphism
$X'\setminus |\rho^*D|\xr{\simeq} X\setminus |D|$, such that the closure of any irreducible component 
of $\gamma_i$ in $X'\times Y_i\times Z_i$ is finite over $X'$, for all $i$.
By \eqref{para:sheaf0} and Lemma \ref{lem:crit-surj} we are reduced to show the following:
\begin{claim}\label{lem:surjT2}
Assume $X$ is henselian local of geometric type (i.e. $X=\Spec(\sO_{X,x}^h)$ for $X$ integral quasi-projective $k$-scheme).
Let $V\in \Cor(X\setminus D, Y\times Z)$ be a prime correspondence, such that the closure of $\ol{V} \subset X\times Y\times Z$ of $V$
is finite over $X$, and let $a\in F(Y)$, $b\in G(Z)$. Then the class of $a\otimes b\otimes V$ in $H_r(X,D)$
lies in the image of $H_l(X,D)\xr{\eqref{lem:surjT1}} H_r(X,D)$.
\end{claim}

Let $V$ be as above. Since the closure $\ol{V}\subset X\times Y\times Z$ of $V$ is integral and finite over $X$, it is local.
Denote by $v\in \ol{V}$ the closed point and by $y\in Y$, $z\in Z$ the images of $v$, respectively.
We get induced maps $\sO_{Y,y}\to \Gamma(\ol{V}, \sO_{\ol{V}})$ and $\sO_{Z,z}\to \Gamma(\ol{V}, \sO_{\ol{V}})$.
Hence 
\[V\subset (X\setminus D)\times U_1\times U_2,\]
where  $j_1:U_1\inj Y$ and $j_2:U_2\inj Z$ are open affines containing $y$ and $z$, respectively. 
Denote by $V'\in \Cor(X\setminus D, U_1\times U_2)$ the induced prime correspondence.
Then $V= (j_1\times j_2)\circ V'$ and thus 
\[a\otimes b\otimes V= j_1^*a\otimes j_2^* b\otimes V'\quad \text{in }H_r(X,D).\]
Hence Claim \ref{lem:surjT2} follows from the following:
\begin{claim}\label{lem:surjT4}
Let $(X,D)\in \uMCor$, let $Y, Z$ be smooth quasi-projective $k$-schemes,
let $V\in \Cor(X\setminus D, Y\times Z)$ be a prime correspondence and $a\in F(Y)$, $b\in G(Z)$.
Then the class of $a\otimes b\otimes V$ in $H_r(X,D)$ lies in the image of
$H_l(X,D)\xr{\eqref{lem:surjT1}} H_r(X,D)$.
\end{claim}
We prove the claim. First we reduce to $k$ infinite by a standard trick:
If $k$ is finite denote by  $k(\ell)$ a $\Z_\ell$-Galois extension of  $k$ for a prime $\ell$;
by a trace argument the (diagonal) pullback 
$H_r(X,D)\to H_r(X_{k(\ell)}, D_{k(\ell)})\times H_r(X_{k(\ell')}, D_{k(\ell')})$
is injective for $\ell\neq\ell'$.

In the following we assume $k$ infinite.
By assumption we find proper modulus pairs $\sY=(\ol{Y}, Y_\infty)$
and $\sZ=(\ol{Z}, Z_\infty)$ such that $\ol{Y}$ and $\ol{Z}$ are projective and 
$Y=\ol{Y}\setminus |Y_\infty|$ and $Z=\ol{Z}\setminus |Z_\infty|$. 
Since $V$ is closed in $X\setminus|D|\times \ol{Y}\times \ol{Z}$ we find an integer $n_0$ 
such that $V\in \uMCor((X, n_0 D), \sY\otimes \sZ)$.
Choose $n \ge n_0$ with $(n,p)=1$.
By Lemma \ref{lem:rootMpair} we find a modulus pair 
$\sY'=(\ol{Y}', Y_\infty')$ together with a finite  and surjective morphism 
$\bar{\pi}_{Y,n}: \ol{Y}'\to \ol{Y}$ such that $\deg \bar{\pi}_{Y,n}$ divides a power of $n$
and $\bar{\pi}_{Y,n}^*(Y_\infty)= n Y'_\infty$, similarly for $\ol{Z}$.
Denote by $\pi_{Y,n}: Y'\to Y$ the induced finite and surjective morphism in $\Sm$ and 
by $\pi_{Y,n}^t\in\Cor(Y, Y')$ the correspondence induced by the transpose of the graph.
In $H_r(X,D)$ we obtain
\begin{align*}
\deg(\pi_{Y,n})\deg(\pi_{Z,n})\cdot (a\otimes b\otimes V)
               &= \pi_{Y,n*}\pi_{Y,n}^*a\otimes \pi_{Z,n*}\pi_{Z,n}^*b\otimes V\\
               &=(\pi_{Y,n}^t)^*\pi_{Y,n}^*a\otimes (\pi_{Z,n}^t)^*\pi_{Z,n}^*b\otimes V\\
               &=\pi_{Y,n}^*a\otimes \pi_{Z,n}^*b\otimes (\pi_{Y,n}^t\times \pi_{Z,n}^t)\circ V.
\end{align*}
Observe that the components of  $(\pi_{Y,n}^t\times \pi_{Z,n}^t)\circ V\in \Cor(X\setminus D, Y'\times Z')$,
are the irreducible components of
\[V\times_{Y\times Z} (Y'\times Z')= (\id_{X\setminus |D|}\times \pi_{n,Y}\times\pi_{n,Z})^{-1}(V).\]
Let $W$ be such a component, it comes with a finite and surjective map $W\to V$.
Denote by $\ol{V}\subset X\times \ol{Y}\times\ol{Z}$ and $\ol{W}\subset X\times \ol{Y}'\times \ol{Z}'$ 
the closure of $V$ and $W$, respectively, and denote by $\tilde{V}\to \ol{V}$ and $\tilde{W}\to\ol{W}$
the normalizations. Since $\ol{W}$ is contained in $\ol{V}\times_{\ol{Y}\times \ol{Z}} (\ol{Y}'\times \ol{Z}')$, the 
natural maps from $\tilde{W}$ to $\ol{Y}$ and $\ol{Z}$ factor via a morphism $\tilde{W}\to \tilde{V}$.
We obtain
 \[n D_{|\tilde{W}}\ge n_0 D_{|\tilde{W}}\ge Y_{\infty|\tilde{W}} + Z_{\infty|\tilde{W}}
                                = n Y'_{\infty|\tilde{W}}+ n Z'_{\infty|\tilde{W}},\]
where the second inequality follows from $V\in \uMCor((X, n_0 D), \sY\otimes \sZ)$.
Hence
\[(\pi_{Y,n}^t\times \pi_{Z,n}^t)\circ V\in \uMCor((X,D), \sY'\otimes\sZ').\]
It follows that 
$\delta_n\cdot (a\otimes b\otimes V)$ lies in the image of
$H_l(X,D)\to H_r(X,D)$, where $\delta_n :=\deg(\pi_{Y,n})\deg(\pi_{Z,n})$.
Choose $r\ge n_0$ with $(r,p)=1=(r,n)$. 
Since $\delta_n$ divides a power of $n$ and $\delta_r$ divides a power
of $r$ we find integers $s, t$ with 
\[a\otimes b\otimes V= s \delta_n\cdot(a\otimes b\otimes V)+ 
                                t\delta_r\cdot(a\otimes b\otimes V).\]
This proves Claim \ref{lem:surjT4} and hence also the lemma.
\end{proof}

 {
\begin{prop}\label{prop:T}
For $F_1,\dots,F_n\in \HI_{\Nis}$, consider the map
\eq{lem:surjT1v}{
\uomega^*F_1\otimes_{\uMPST}  \cdots\otimes_{\uMPST} \uomega^*F_n \to 
\uomega^*(F_1\otimes_{\PST}  \cdots\otimes_{\PST} F_n) \to  
\uomega^*(F_1\otimes_{\HI_\Nis}  \cdots\otimes_{\HI_\Nis} F_n),}
where the first map is induced by \eqref{lem:surjT1} and the associativity of 
$\otimes_{\uMPST}$ and $\otimes_{\PST}$ and the second map is induced by the natural surjective map (cf. \eqref{para:tut3})
\[F_1\otimes_{\PST}  \cdots\otimes_{\PST} F_n\to 
 h_{0,\Nis}^{\A^1}(F_1\otimes_{\PST}  \cdots\otimes_{\PST} F_n):=
F_1\otimes_{\HI_\Nis}  \cdots\otimes_{\HI_\Nis} F_n,\]
where we use the notation from \ref{para:CI} and $\otimes_{\HI_{\Nis}}$ denotes the monoidal structure on $\HI_{\Nis}$ defined by Voevodsky.
Then,\eqref{lem:surjT1v}  induces an isomorphism
\eq{prop:T1}{
h^{\bcube, \sp}_{0, \Nis}(\uomega^*F_1\otimes_{\uMPST}  \cdots\otimes_{\uMPST} \uomega^*F_n) \xr{\simeq}\uomega^*(F_1\otimes_{\HI_\Nis}  \cdots\otimes_{\HI_\Nis} F_n).}
\end{prop}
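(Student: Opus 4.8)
The plan is to prove the isomorphism \eqref{prop:T1} by combining the surjectivity statement of Lemma \ref{lem:surjT} with the universal property of the functor $h^{\bcube,\sp}_{0,\Nis}$ and the $\A^1$-localization functor $h^{\A^1}_{0,\Nis}$. First I would set $F=\uomega^*F_1\otimes_{\uMPST}\cdots\otimes_{\uMPST}\uomega^*F_n$ and $G=F_1\otimes_{\HI_\Nis}\cdots\otimes_{\HI_\Nis}F_n$, so that the claim is that \eqref{lem:surjT1v} induces an isomorphism $h^{\bcube,\sp}_{0,\Nis}(F)\xrightarrow{\simeq}\uomega^*G$. Note $\uomega^*G\in\CItspNis$: it is cube invariant and semipure because $G\in\HI_{\Nis}$, it is a Nisnevich sheaf by \eqref{para:sheaf3}, and it has $M$-reciprocity by the general properties recalled in \ref{para:MCor}--\ref{para:CI} (indeed $\uomega^*=\tau^*\omega^*$ on suitable objects, or one cites that $\uomega^*$ lands in $\CItspNis$ for $\A^1$-invariant inputs). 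Since $h^{\bcube,\sp}_{0,\Nis}$ is left adjoint to the inclusion $\CItspNis\hookrightarrow\uMPST^\tau$ by \eqref{para:CI1}, the map \eqref{lem:surjT1v} (which factors through $\uomega^*G$) induces a canonical morphism $\bar\lambda\colon h^{\bcube,\sp}_{0,\Nis}(F)\to\uomega^*G$ in $\CItspNis$; it remains to see it is an isomorphism.

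For \textbf{surjectivity}: by Lemma \ref{lem:crit-surj} it suffices to check surjectivity on $\sX_{(x)}$ for $\sX$ with $\ol X$ normal. Applying Lemma \ref{lem:surjT} iteratively (using associativity of both tensor products) gives that $\ul a_\Nis(F)\to\ul a_\Nis(\uomega^*(F_1\otimes_{\PST}\cdots\otimes_{\PST}F_n))$ is surjective in $\uMNST$, and the second map in \eqref{lem:surjT1v} is surjective since $\uomega^*$ is exact and $F_1\otimes_{\PST}\cdots\to F_1\otimes_{\HI_\Nis}\cdots$ is surjective (it is the unit of the adjunction $h^{\A^1}_{0,\Nis}$, which is a quotient map on sheaves by \cite[Th. 5.6]{VoPST}). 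Passing to $h^{\bcube,\sp}_{0,\Nis}$, which is a composite of the right-exact operations $\ul h_0^\bcube$, semipurification, and $\ul a_\Nis$, preserves surjectivity, so $\bar\lambda$ is surjective.

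For \textbf{injectivity}: by semipurity of $h^{\bcube,\sp}_{0,\Nis}(F)$ it is enough to show $\uomega_!(\bar\lambda)$ is injective (in fact an isomorphism); since $\uomega_!$ is exact and $\uomega_!\uomega^*G=G$, this reduces to identifying $\uomega_!h^{\bcube,\sp}_{0,\Nis}(F)$ with $G=F_1\otimes_{\HI_\Nis}\cdots\otimes_{\HI_\Nis}F_n$. Here the key input is the computation $\uomega_!h^{\bcube,\sp}_{0,\Nis}(\uomega^*F_i)= h_{0,\Nis}^{\A^1}(F_i)=F_i$ for $F_i\in\HI_{\Nis}$ together with the compatibility of $\uomega_!$ and $h^{\bcube,\sp}_{0,\Nis}$ with tensor products (the left Kan extensions $\uomega_!$ and $\ul a_\Nis$ are monoidal, and $\ul h_0^\bcube$ takes $\otimes_{\uMPST}$ to $\otimes_{\uMPST}$ on the level of maximal cube-invariant quotients, matching the analogous statement for $h^{\A^1}_{0,\Nis}$ proved in \cite{VoPST}, cf.\ \eqref{para:tut4}). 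Concretely one shows $\uomega_!h^{\bcube,\sp}_{0,\Nis}(F)$ is an $\A^1$-invariant Nisnevich sheaf with transfers receiving a surjection from $\uomega_!(F)$ whose composite with the natural map to $G$ is the defining quotient, and that it satisfies the same universal property as $G$; this is exactly the strategy already used, e.g., in the proof of Proposition \ref{prop:mu} where $\uomega_!h^{\bcube,\sp}_{0,\Nis}(\bcubee_x\otimes\bcubee_s)$ was identified with $\sK^M_2\oplus\G_m\oplus\G_m\oplus\Z$ via \cite[Prop. 5.6]{RSY}. Combining surjectivity with this identification of $\uomega_!$ forces $\bar\lambda$ to be an isomorphism.

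\textbf{Main obstacle.} The delicate point is the injectivity/identification step: one must ensure that applying $\uomega_!$ to the cube-invariant semipure replacement $h^{\bcube,\sp}_{0,\Nis}(F)$ of a tensor product of $\uomega^*F_i$'s really produces the Voevodsky tensor product $F_1\otimes_{\HI_\Nis}\cdots\otimes_{\HI_\Nis}F_n$ and not merely a sheaf surjecting onto it. This requires carefully tracking how $\ul h_0^\bcube$ interacts with the Day-convolution tensor product on $\uMPST$ and how this is transported under $\uomega_!$ to the $\A^1$-localized tensor product on $\HI_{\Nis}$, i.e.\ a compatibility of $\uomega_!\circ h^{\bcube,\sp}_{0,\Nis}$ with $\otimes$ analogous to \eqref{para:tut4}; the semipurity has to be fed in to pass from the presheaf-level statement to the sheaf-level one, using \cite[Thm 3.1]{S-purity} as in the earlier arguments.
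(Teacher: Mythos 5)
Your overall strategy coincides with the paper's: surjectivity of \eqref{prop:T1} is obtained from Lemma \ref{lem:surjT}, and injectivity is reduced, via semipurity of both sides, to showing that $\uomega_!$ applied to the map is an isomorphism, i.e.\ to identifying $\uomega_!h^{\bcube,\sp}_{0,\Nis}(\uomega^*F_1\otimes_{\uMPST}\cdots\otimes_{\uMPST}\uomega^*F_n)$ with $F_1\otimes_{\HI_\Nis}\cdots\otimes_{\HI_\Nis}F_n$. Those parts are fine.

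The gap is exactly at the step you yourself flag as the ``main obstacle'', and it is not a point that can be left as a sketch: the assertion that $\uomega_!h^{\bcube,\sp}_{0,\Nis}(\uomega^*F_1\otimes_{\uMPST}\cdots\otimes_{\uMPST}\uomega^*F_n)$ is an $\A^1$-invariant sheaf satisfying the universal property of $h^{\A^1}_{0,\Nis}(F_1\otimes_{\PST}\cdots\otimes_{\PST}F_n)$ is precisely the non-formal content of the proposition, and none of the ingredients you invoke delivers it. Monoidality of the left Kan extensions $\uomega_!$, $\tau_!$ and compatibility of $\ul{h}_0^{\bcube}$ with the Day convolution only give you functorial identifications of the shape $\uomega_!h^{\bcube,\sp}_{0,\Nis}(\cdots)=a^V_\Nis\omega_!h_0^{\cube}(\omega^*F_1\otimes_{\MPST}\cdots\otimes_{\MPST}\omega^*F_n)$; they say nothing about $\A^1$-invariance of this object, and cube-invariance in $\uMPST$ does \emph{not} formally imply $\A^1$-invariance after applying $\uomega_!$ (this failure is the whole reason $\RSC_\Nis$ is strictly larger than $\HI_\Nis$). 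The universal-property argument you outline therefore has no starting point: given $\A^1$-invariance the two factorizations and the isomorphism would indeed follow formally, but establishing that $\A^1$-invariance is the hard part. The paper closes this step by reducing, through the chain of identities above, to $a^V_\Nis\omega_!h_0^{\cube}(\omega^*F_1\otimes_{\MPST}\omega^*F_2)$ and then citing \cite[Thm 5.3]{RSY} (together with $\omega^*F=\tilde F$, \cite[Lem.\ 2.3.1]{KSY2}), which is the genuine theorem identifying this with $F_1\otimes_{\HI_\Nis}F_2$; the identification you quote from the proof of Proposition \ref{prop:mu} (via \cite[Prop.\ 5.6]{RSY}) is just a special case of that result, not a substitute for it. Without invoking \cite[Thm 5.3]{RSY} or reproving it, your argument establishes only a surjection $h^{\bcube,\sp}_{0,\Nis}(\uomega^*F_1\otimes_{\uMPST}\cdots\otimes_{\uMPST}\uomega^*F_n)\to\uomega^*(F_1\otimes_{\HI_\Nis}\cdots\otimes_{\HI_\Nis}F_n)$, not the isomorphism. (A further minor point: the paper first reduces to $n=2$ by exhibiting the monoidal structure $\otimes_{\CItspNis}$ from \cite[Prop.\ 3.2]{MS}; if you work with general $n$ directly you should at least make the associativity identifications explicit.)
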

\begin{proof}
 We begin by recalling from \cite[Prop. 3.2]{MS} that for $F, G\in \CItspNis$, the formula $F\otimes_{\CItspNis} G = \tau_!h_{0, \Nis}^{\bcube, \sp}(\tau^* F\otimes_{\MPST} \tau^* G)$ defines a symmetric monoidal structure on $\CItspNis$.
Next, note that $\uomega^*H\in\CItspNis$ for $H\in \HI_\Nis$ by 
\cite[Lem. 2.3.1]{KSY2} and \cite[Prop. 6.2.1b)]{KMSY2}. Moreover
\begin{align*}
	h^{\bcube, \sp}_{0, \Nis}(\uomega^*F_1\otimes_{\uMPST} \uomega^*F_2) & =  	h^{\bcube, \sp}_{0, \Nis}(\tau_! \omega^*F_1\otimes_{\uMPST} \tau_! \omega^* F_2) \\
	& = \tau_! h^{\bcube, \sp}_{0, \Nis}( \omega^*F_1\otimes_{\MPST} \omega^* F_2) \\
	&= \tau_! h^{\bcube, \sp}_{0, \Nis}( \tau^* (\uomega^*F_1)\otimes_{\MPST} \tau^* (\uomega^* F_2))= \uomega^* F_1 \otimes_{\CItspNis}\uomega^* F_2,
\end{align*}
for every $F_1, F_2 \in \HI_{\Nis}$. Here, the isomorphisms follow from \eqref{para:MCor1} and the exactness of $\tau_!$. 

We now observe that the functor $\uomega^*$ is lax monoidal from $\PST$ to $\ulMPST$ (this follows from the fact that $\uomega^*$ is right adjoint to $\uomega_!$, which is strict monoidal by construction). By applying $h^{\bcube,\sp}_{0,\Nis}$ to \eqref{lem:surjT1v}, we obtain the functorial map \eqref{prop:T1}, which we can rewrite for $n=2$ as
\begin{equation}\label{eq:monoidal} \uomega^* F_1 \otimes_{\CItspNis}\uomega^* F_2 \to \uomega^*( F_1 \otimes_{\HI_{\Nis}} F_2).
\end{equation}
In particular, $\uomega^*$ restricts to a lax symmetric monoidal functor from $\HI_{\Nis}$ to $\CItspNis$, and the statement of the Proposition is equivalent to the fact $\uomega^*$ is in fact (strictly) monoidal, i.e. that the map \eqref{eq:monoidal} is an isomorphism (note that the identity for the tensor product is simply the constant sheaf $\Z$, and that $\uomega^* \Z = \Z$). Since the tensor products in $\CItspNis$ and $\HI_{\Nis}$ are in particular associative, it is enough to prove the claim when $n=2$. 

By  Lemma \ref{lem:surjT}, the map \eqref{eq:monoidal} (or, equivalently, \eqref{prop:T1}) is surjective. 
On the other hand, we have 
\begin{align*} 
\uomega_! h^{\bcube, \sp}_{0, \Nis}( \uomega^*F_1\otimes_{\uMPST} \uomega^*F_2)  
& = a^V_\Nis\uomega_! \ul{h}^{\bcube}_0( \uomega^*F_1\otimes_{\uMPST}    \uomega^*F_2)^{\sp} \\
&=a^V_\Nis\uomega_! \ul{h}^{\bcube}_{0}\tau_!(\omega^*F_1\otimes_{\uMPST}  \omega^*F_2)\\ 
&=a^V_\Nis\uomega_!\tau_!h^{\bcube}_{0}( \omega^*F_1\otimes_{\MPST} \omega^*F_2) \\
&= a^V_\Nis\omega_!h^{\bcube}_{0}( \omega^*F_1\otimes_{\MPST}   \omega^*F_2),\end{align*}
where the first equality follows from the definition of $h^{\bcube,\sp}_{0,\Nis}$ (cf. \eqref{para:CI1}) and $\uomega_!\ul{a}_\Nis=a^V_\Nis\uomega_!$ (cf. \eqref{para:sheaf2}), and the second holds by the fact $\uomega_! A^{\sp}=\uomega_! A$ for $A\in \uMPST$ and $\uomega^*=\tau_!\omega^*$ (cf. \eqref{para:MCor1}) and the monoidality of $\tau_!$, and the third follows from $\ul{h}^{\bcube}_0(\tau_! B)=\tau_! h^\cube_0(B)$ for $B\in \MPST$, where $h^\cube_0(B)\in \MPST$ is the maximal cube invariant quotient of $B$ defined by the same way as \eqref{para:spNis2}, and the last holds by $\uomega_!\tau_!=\omega_!$ (cf.  \eqref{para:MCor1}).
Thus  $\uomega_!\eqref{prop:T1}$ is an isomorphism by \cite[Thm 5.3]{RSY} in view of $\omega^*F=\tilde{F}$ (see \cite[(3.14.5)]{RSY}) by \cite[Lem. 2.3.1]{KSY2}. 
Since both sides of \eqref{prop:T1} are semipure the map \eqref{prop:T1} is  injective as well. 
\end{proof}
}

\subsection{Definition and basic properties of twists}
\begin{defn}[see {\cite[\S 2]{MS}}]\label{def:gtwist}
Let $F\in \CItspNis$. We define $\gamma^nF$ and $F(n)$, $n\ge 0$, recursively by
\[\gamma^0F:=F, \quad \gamma^1F:=\gamma F:=\uHom_{\uMPST}(\uomega^*\G_m, F), \quad
\gamma^n F:=\gamma(\gamma^{n-1} F)\]
and 
\[F(0):= F, \quad F(1):= h^{\bcube,\sp}_{0,\Nis}(F\otimes_{\uMPST} \uomega^*\G_m), \quad 
F(n):=F(n-1)(1).\]
\end{defn}

\begin{cor}\label{cor:gtwist}
Let $F\in \CItspNis$. Then $\gamma^n F$, $F(n)\in \CItspNis$, for all $n\ge 0$.
Furthermore, 
\eq{cor:gtwist1}{\gamma^n F= \uHom_{\uMPST}((\bcubee_{\red})^{\otimes_{\uMPST} n}, F)
                                           =\uHom_{\uMPST}(\uomega^* K^M_n, F),}
and 
\eq{cor:gtwist1.0}{F(n)= 
h^{\bcube,\sp}_{0,\Nis}( F \otimes_{\uMPST} (\bcubee_{\red})^{\otimes_{\uMPST} n})
= h^{\bcube,\sp}_{0,\Nis}( F \otimes_{\uMPST} \uomega^*K^M_n),}
where 
\eq{cor:gtwist1.1}{\bcubee_{\red}:=\Coker(\Ztr(\{1\})\to \Ztr(\P^1, 0+\infty))\in \uMPST^\tau}
and $K^M_n$ is the improved Milnor $K$-theory from \cite{Kerz} (there denoted by $\hat{K}^M_n$).
\end{cor}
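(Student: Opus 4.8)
The plan is to reduce all four assertions of Corollary \ref{cor:gtwist} to a single computation in $\CItspNis$, namely $h^{\bcube,\sp}_{0,\Nis}\big((\bcubee_{\red})^{\otimes_{\uMPST}n}\big)\cong\uomega^*K^M_n$, and then to read off the displayed formulas by formal adjunction arguments. I begin with the preliminaries. By definition $\bcubee_{\red}$ is the cokernel in $\uMPST$ of $\Ztr(\{1\})=\Ztr(\Spec k,\emptyset)\xrightarrow{i_1}\Ztr(\bcubee)$, and both terms are representable by \emph{proper} modulus pairs, hence lie in $\uMPST^\tau$; since $\tau^*$ is exact and $\tau_!$ is exact and monoidal with $\tau^*\tau_!=\mathrm{id}$, the subcategory $\uMPST^\tau$ is closed under cokernels and under $\otimes_{\uMPST}$, so $(\bcubee_{\red})^{\otimes n}\in\uMPST^\tau$. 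Moreover $(\bcubee_{\red})^{\otimes n}$ is a quotient of $\Ztr(\bcubee)^{\otimes n}=\Ztr(\bcubee^{\otimes n})$, the representable of the proper modulus pair $\bcubee^{\otimes n}$; so by Lemma \ref{lem:Nis}\ref{lem:Nis1} we get $\uHom_{\uMPST}((\bcubee_{\red})^{\otimes n},F)\in\CItspNis$ for all $F\in\CItspNis$, and $h^{\bcube,\sp}_{0,\Nis}$ lands in $\CItspNis$ by construction; hence the membership statement for $\gamma^nF$ and $F(n)$ will be automatic once the formulas are proved.

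The key low-degree input is $h^{\bcube,\sp}_{0,\Nis}(\bcubee_{\red})\cong\uomega^*\G_m$. Applying the right-exact functor $h^{\bcube,\sp}_{0,\Nis}$ to the defining presentation gives $h^{\bcube,\sp}_{0,\Nis}(\bcubee_{\red})=\Coker\big(h^{\bcube,\sp}_{0,\Nis}(\Spec k,\emptyset)\to h^{\bcube,\sp}_{0,\Nis}(\bcubee)\big)$; the source is $\uomega^*\Z$ (as $\Ztr(\Spec k,\emptyset)$ is already cube-invariant, semipure and a Nisnevich sheaf), the target is $\uomega^*(\G_m\oplus\Z)$ by Lemma \ref{lem:Gm} and Remark \ref{rmk:h0P1}, and the explicit description of $\theta_{1,1}$ in Lemma \ref{lem:h0P1} applied to $g=t-1$ shows that $i_1$ is the inclusion of the degree-one generator of the $\Z$-summand; since $\uomega^*$ is exact, the cokernel is $\uomega^*\G_m$. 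Next I observe that for any $H\in\uMPST^\tau$ admitting a surjection from a representable and any $G\in\CItspNis$, Lemma \ref{lem:Nis}\ref{lem:Nis3} (used with $G=\Ztr(\sY)$ and unwound via Yoneda) yields $\uHom_{\uMPST}(H,G)\cong\uHom_{\uMPST}(h^{\bcube,\sp}_{0,\Nis}(H),G)$; in particular $\uHom_{\uMPST}(\bcubee_{\red},G)\cong\uHom_{\uMPST}(\uomega^*\G_m,G)=\gamma G$. Iterating the recursion $\gamma^nF=\gamma(\gamma^{n-1}F)$ — legitimate because each $\gamma^jF$ is again in $\CItspNis$ — together with the tensor--hom identity $\uHom(\bcubee_{\red},\uHom((\bcubee_{\red})^{\otimes(n-1)},F))=\uHom((\bcubee_{\red})^{\otimes n},F)$ in $\uMPST$ gives the first equality $\gamma^nF=\uHom_{\uMPST}((\bcubee_{\red})^{\otimes n},F)$ of \eqref{cor:gtwist1}.

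To bring in Milnor $K$-theory I would prove $h^{\bcube,\sp}_{0,\Nis}((\bcubee_{\red})^{\otimes n})\cong h^{\bcube,\sp}_{0,\Nis}((\uomega^*\G_m)^{\otimes_{\uMPST}n})$. Both objects lie in $\CItspNis$, so by Yoneda and the adjunction $h^{\bcube,\sp}_{0,\Nis}\dashv(\mathrm{incl})$ it suffices to produce, naturally in $G\in\CItspNis$, an isomorphism $\Hom_{\uMPST}((\bcubee_{\red})^{\otimes n},G)\cong\Hom_{\uMPST}((\uomega^*\G_m)^{\otimes n},G)$; this I would establish by induction on $n$, peeling off one tensor factor by tensor--hom, replacing the innermost $\uHom(\bcubee_{\red},-)$ by $\uHom(\uomega^*\G_m,-)$ via the $n=1$ case above (valid since the intermediate internal-hom objects are the $\gamma^jG\in\CItspNis$), and invoking the inductive hypothesis. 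Then Proposition \ref{prop:T} identifies $h^{\bcube,\sp}_{0,\Nis}((\uomega^*\G_m)^{\otimes n})$ with $\uomega^*(\G_m^{\otimes_{\HI_\Nis}n})$, and Voevodsky's isomorphism $\G_m^{\otimes_{\HI_\Nis}n}\cong K^M_n$ (see \cite[5.5]{RSY}) gives $h^{\bcube,\sp}_{0,\Nis}((\bcubee_{\red})^{\otimes n})\cong\uomega^*K^M_n$. Feeding this back through Lemma \ref{lem:Nis}\ref{lem:Nis3} (now with $H=(\bcubee_{\red})^{\otimes n}$) yields the second equality $\gamma^nF=\uHom_{\uMPST}(\uomega^*K^M_n,F)$. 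The two formulas for $F(n)$ in \eqref{cor:gtwist1.0} follow by the same Yoneda/adjunction bookkeeping: for $G\in\CItspNis$ the recursion $F(n)=F(n-1)(1)=h^{\bcube,\sp}_{0,\Nis}(F(n-1)\otimes_{\uMPST}\uomega^*\G_m)$ and $\uHom(\uomega^*\G_m,G)=\gamma G$ give $\Hom(F(n),G)\cong\Hom(F,\gamma^nG)$, and using the two established descriptions of $\gamma^nG$ one checks that $\Hom\big(h^{\bcube,\sp}_{0,\Nis}(F\otimes(\bcubee_{\red})^{\otimes n}),G\big)$ and $\Hom\big(h^{\bcube,\sp}_{0,\Nis}(F\otimes\uomega^*K^M_n),G\big)$ are the same group, so all three objects coincide.

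The hard part is not the formal manipulations but the two inputs feeding them. The computation $h^{\bcube,\sp}_{0,\Nis}(\bcubee_{\red})\cong\uomega^*\G_m$ rests on Lemmas \ref{lem:Gm} and \ref{lem:h0P1}. The more delicate point is the passage from $(\bcubee_{\red})^{\otimes n}$ to $(\uomega^*\G_m)^{\otimes n}$ inside $h^{\bcube,\sp}_{0,\Nis}$: this is really the assertion that the localization $h^{\bcube,\sp}_{0,\Nis}$ is monoidal (the content underlying the tensor structure on $\CItspNis$ of \cite{MS}), and to keep the argument within the tools of this paper I would prove exactly the instance needed through the inductive Yoneda computation above; setting up that induction carefully — making sure at each step that the internal-hom objects produced stay in $\CItspNis$ so that Lemma \ref{lem:Nis} keeps applying — is the fiddliest part of the proof.
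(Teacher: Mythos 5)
Your proof is correct and follows essentially the same route as the paper: compute $h^{\bcube,\sp}_{0,\Nis}(\bcubee_{\red})=\uomega^*\G_m$ from Lemma \ref{lem:Gm}, derive $\gamma F=\uHom_{\uMPST}(\bcubee_{\red},F)$ via Lemma \ref{lem:Nis} and the adjunction \eqref{para:CI1}, then pass to $\uomega^*K^M_n$ through Proposition \ref{prop:T} and $K^M_n\cong\G_m^{\otimes_{\HI_\Nis}n}$, and read off the $F(n)$-formulas by adjunction. The only deviation is cosmetic: where the paper invokes \cite[Lem 1.14(iii)]{MS} to identify $h^{\bcube,\sp}_{0,\Nis}((\bcubee_{\red})^{\otimes n})$ with $h^{\bcube,\sp}_{0,\Nis}((\uomega^*\G_m)^{\otimes n})$, you prove exactly that instance by your inductive Yoneda argument, which is a valid self-contained substitute.
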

\begin{proof}
For a proper modulus pair $\sX$ we have $\tau_!\tau^*\Ztr(\sX)=\Ztr(\sX)$. It follows that 
$\bcubee_{\red}\in \uMPST^\tau$. 
By Lemma \ref{lem:Gm} we have $h^{\bcube, \sp}_{0,\Nis}(\bcubee_{\red})=\uomega^*\G_m$.
Thus 
\eq{cor:gtwist1.5}{\gamma F= \uHom_{\uMPST}(\bcubee_\red, F).}
Indeed (we drop the index $\uMPST$ from $\Hom$ and $\uHom$)
\begin{align*}
\uHom(\bcubee_\red, F)(\sX) &= \Hom(\Ztr(\sX)\otimes \bcubee_\red, F)\\
                                  &=\Hom(\bcubee_\red, \uHom(\Ztr(\sX), F))\\
                                  &=\Hom(h^{\bcube,\sp}_{0,\Nis}(\bcubee_\red), \uHom(\Ztr(\sX), F))\\
                                  &= \uHom(h^{\bcube,\sp}_{0,\Nis}(\bcubee_\red), F)(\sX)\\
                                  & = \gamma F(\sX),
\end{align*}
where the third equality holds by Lemma \ref{lem:Nis}\ref{lem:Nis1}, \ref{lem:Nis3}.
This implies the first equality in \eqref{cor:gtwist1} and also that
$\gamma^n F\in \CItspNis$, for all $n\ge 0$, by Lemma \ref{lem:Nis}\ref{lem:Nis1}.
For the second equality in \eqref{cor:gtwist1}, first note that it follows from \cite{Kerz}
and results by Voevodsky (see \cite[5.5]{RSY}), that we have 
\eq{cor:gtwist2}{K^M_n\cong \G_m^{\otimes_{\HI_\Nis} n}\in \HI_{\Nis}.} 
Hence by Proposition \ref{prop:T}  and \cite[Lem 1.14(iii)]{MS}, we obtain
\eq{cor:gtwist3}{\uomega^*K^M_n= h_{0,\Nis}^{\bcube,\sp}((\uomega^*\G^m)^{\otimes_{\uMPST} n})= 
h_{0,\Nis}^{\bcube,\sp}((\bcubee_{\red})^{\otimes_{\uMPST} n}).}
Thus the second equality in \eqref{cor:gtwist1} follows from the adjunction \eqref{para:CI1}.
The equalities in \eqref{cor:gtwist1.0} follow similarly.
\end{proof}

\begin{remark}\label{rmk:twistMS}
By Corollary \ref{cor:gtwist} the twist $\gamma^n F$ (resp. $F(n)$)
agrees with the definition in \cite[(2.3)]{MS} (resp. \cite[after Prop 3.2]{MS}).
\end{remark}

\begin{remark}\label{rmk:gamma}
Let $F\in \CItspNis$ and $\sX\in\uMCorls$. By \eqref{cor:gtwist1.5} and \cite[Lem 5.9]{S-purity} we have 
\[\gamma^1 F(\sX)= \frac{F((\P^1,0+\infty)\otimes \sX)}{F((\P^1,\infty)\otimes \sX)}=
                               \frac{F((\A^1,0)\otimes \sX)}{F(\A^1\otimes \sX)}.\]
\end{remark}

\begin{para}\label{para:cancel}
For later use in section \ref{sec:pf} we define a certain maps induced by adjuntion.
Let $F\in \CItspNis$. For $n\ge 0$ we have an adjunction map
\eq{para:cancel1}{F\to \uHom_{\uMPST}(\uomega^*K^M_n,   F\otimes_{\uMPST} \uomega^*K^M_n)}
which sends $a\in F(\sX)$ to (we drop the subscript $\uMPST$)
\[a\otimes \id\in
 \Hom(\Ztr(\sX)\otimes\uomega^*K^M_n,   F\otimes \uomega^*K^M_n),\]
where we identify an element $a\in F(\sX)$ with the map $a: \Ztr(\sX)\to F$.
Composing \eqref{para:cancel1} with the map induced by the natural map
$F\otimes \uomega^*K^M_n\to h_{0,\Nis}^{\bcube,\sp}(F\otimes \uomega^*K^M_n)$
Corollary \ref{cor:gtwist} yields a map
\eq{para:cancel2}{\kappa_n: F\to \gamma^n(F(n))=:\gamma^n F(n)}
which by Remark \ref{rmk:twistMS} coincides with the morphism \cite[(3.5)]{MS}.
Note that $\kappa_0$ is the identity and that for $m,n\ge 0$ the following  diagram commutes
\eq{para:cancel2.5}{\xymatrix{
F\ar[rr]^-{\kappa_{m+n}}\ar[d]_{\kappa_m} & &\gamma^{m+n} (F(m+n))\ar@{=}[d]\\
\gamma^m(F(m))\ar[rr]^-{\gamma^m\kappa_n} & & \gamma^m \gamma^n (F(m)(n)).
}}
\end{para}

\section{Cup product with Chow cycles with support}\label{sec:cupProduct}
\subsection{Milnor K-theory and intersection theory with supports}

Everything in this subsection is well-known, however we give some explanations for lack of reference.

\begin{para}\label{para:supports}
Recall that a {\em family of supports} on a scheme $X$ is a non-empty collection $\Phi$ of closed subsets of $X$
which is stable under taking finite unions and closed subsets. The main examples are the family
$\Phi_Z$, for a closed subset $Z\subset X$, which consists of all the closed subsets in $Z$, the family $\Phi^{\ge c}$ 
of all closed subsets of codimension $\ge c$, and 
the family $\Phi^{\rm prop}_{X/S}$, for a morphism $X\to S$, which consists of all closed subsets in $X$
which are proper over $S$. If $F$ is a sheaf on $X$ and $\Phi$   is a family of support, then 
\[\Gamma_\Phi(X,F)=\{s\in F(X)\mid {\rm supp}(s)\in \Phi\}=\varinjlim_{Z\in \Phi}\Gamma_Z(X,F),\]
and $\ul{\Gamma}_\Phi(F)(U)=\Gamma_{\Phi\cap U}(U, F)$, for an open $U\subset X$.
For a morphism $f:Y\to X$ we denote by $f^{-1}\Phi$ the smallest family with supports on $Y$
containing all closed subsets of the form $f^{-1}(Z)$, $Z\in \Phi$.

Let $X$ be $k$-scheme. We denote by $\CH_i(X)$ the Chow group of $i$-dimensional cycles on $X$. If $X$ is equidimensional of dimension $d$, we denote by $\CH^i(X)$ the Chow group of $i$-codimensional cycles on $X$, i.e. $\CH^i(X) = \CH_{d-i}(X)$. 
If $\Phi$ is a family of supports on $X$, we set
\eq{para:supports1}{\CH^i_\Phi(X)= \varinjlim_{Z\in \Phi} \CH_{d-i}(Z),}
where the transition maps in the directed limit are given by pushforward along closed immersions.
Note that for a closed subset $Z\subset X$ we have 
\[\CH^i_Z(X):=\CH^i_{\Phi_Z}(X)= \CH_{d-i}(Z),\]
in particular $\CH^i_X(X)=\CH^i(X)$. The notation $\CH^i_Z(X)$ is not superfluous since
if $Z$ is singular the pullback along the refined Gysin homomorphism  as in \cite[\S 6]{Fu}
relies on the embedding $Z\inj  X$.
\end{para}

\begin{para}\label{para:KIT}
We recall some facts on the relation between Milnor $K$-theory and intersection theory.
Let $K^M_i$ be the improved Milnor $K$-sheaf from \cite{Kerz}. Its restriction to $\Sm$ is homotopy invariant
and hence for $X\in \Sm$ its restriction to $(\et/X)$ is a Nisnevich sheaf denoted by $K^M_{i,X}$ and  
we have 
\eq{para:KIT1}{R\epsilon_* K^M_{i,X}= \epsilon_*K^M_{i,X},} 
where $\epsilon: X_{\Nis}\to X_{\Zar}$ denotes the canonical morphism of sites, see \cite[Thm 3.1.12]{V-TCM}.

If $Z$ is a finite-type $k$-scheme, we denote by $C_{\bullet}(i)(Z)$ the degree $i$  
(homological) Gersten complex of $K^M_{*, Z}$ (e.g. \cite[5.]{Rost}), i.e.,
\[C_n(i)(Z)= \bigoplus_{z\in Z_{(n)}} K^M_{n+i}(z),\]
and the differentials are induced by the tame symbol.
(For the tame symbol we use the sign convention from \cite[p. 328]{Rost}.)
Recall that the formation $Z\mapsto C_\bullet(i)(Z)$ is covariant functorial with respect to proper maps and
contravariant functorial with respect to quasi-finite flat maps, 
see \cite[Prop (4.6)]{Rost}.
The assignment  $U\mapsto C_\bullet(i)(U)$ defines a complex of sheaves
on $Z_{\Nis}$ which we denote by $C_{\bullet, Z}(i)$.
If $Z$ is equidimensional of dimension $e$, then we define 
\eq{para:KIT1.5}{ C^n_{Z}(i):=C_{e-n,Z}(i-e)}
and obtain the cohomological degree $i$ Gersten complex $C^\bullet_Z(i)$ the global sections of which we also
denote by $C^\bullet(i)(Z)$. 

In the following we assume $X\in \Sm$ is equidimensional.
By \cite[Prop 10(8)]{Kerz}, the Gersten complex is a resolution on the Nisnevich site for the sheaf $K^M_{i,X}$, i.e.
\[K^M_{i,X}\xr{\simeq} C^\bullet_X(i) \quad  \text{in } D(X_\Nis).\]
Note that $C^\bullet_X(i)$ sits in cohomological degree $[0,i]$.
By \eqref{para:KIT1} and since $\epsilon_*C_X^\bullet(i)$ is a flasque resolution of $\epsilon_*K^M_{i,X}$, 
we can use $C^\bullet_X(i)$ to compute Nisnevich cohomology with supports of $K^M_{i,X}$. 
If $\dim X=d$ and $\imath:Z\inj X$ is a closed immersion, then
\[\ul{\Gamma}_Z C^n_X(i)= \imath_*C_{d-n,Z}(i-d).\]
This gives rise to Bloch's formula (with support)
\eq{para:KIT2}{\CH^i_\Phi(X)= H^i_{\Phi,\Zar}(X, K^M_i)= H^i_{\Phi,\Nis}(X, K^M_i)=:H^i_{\Phi}(X, K^M_i),}
where $\Phi$ is a family of supports on $X$. 
\end{para}

\begin{lemma}\label{lem:pbKCH}
Let $f:Y\to X$ be a morphism between equidimensional smooth schemes and let $\Phi$ be a family of
supports on $X$. The following diagram commutes
\eq{lem:pbKCH1}{\xymatrix{
 H^i_\Phi(X, K^M_i) \ar[d]_{f^*}\ar[r]^-{\eqref{para:KIT2}} &\CH^i_{\Phi}(X)\ar[d]^{f^*}\\
H^i_{f^{-1}\Phi}(Y, K^M_i)\ar[r]^-{\eqref{para:KIT2}} & \CH^i_{f^{-1}\Phi}(Y) ,
}}
where the pullback on the right is induced by the refined Gysin homomorphism  in \cite[6.6]{Fu}
(see also \cite[1.1.30]{CR11})
and the pullback on the left is induced from the sheaf structure of $K^M_i$ on the category of schemes.
\end{lemma}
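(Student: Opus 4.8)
The plan is to reduce the commutativity of \eqref{lem:pbKCH1} to the case where $f$ is either a flat morphism or a regular closed immersion, since an arbitrary $f\colon Y\to X$ between smooth schemes factors as the regular closed immersion $\Gamma_f\colon Y\hookrightarrow Y\times X$ (the graph) followed by the flat projection $\mathrm{pr}_X\colon Y\times X\to X$; both the Gersten/cohomological pullback on $K^M_i$ and Fulton's refined Gysin pullback are compatible with composition (for the latter, \cite[Prop.~6.5, Thm.~6.5]{Fu}), so it suffices to treat the two cases separately. One also reduces to $\Phi = \Phi_Z$ for a single closed $Z\in\Phi$ by passing to the colimit in \eqref{para:supports1} and \eqref{para:KIT2}, and further to $Z$ irreducible by additivity.

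\textbf{The flat case.} When $f$ is flat, the pullback on cohomology with supports is the one coming from the contravariant functoriality of the Gersten complex along flat maps recalled in \ref{para:KIT} (via \cite[Prop.~(4.6)]{Rost}), and on the level of cycles the flat pullback $f^*[\,W\,] = [\,f^{-1}W\,]$ (with multiplicities) is by definition \cite[\S1.7]{Fu} the restriction of the refined Gysin map. Here the comparison isomorphism \eqref{para:KIT2} is realized by sending a codimension-$i$ cycle $\sum n_j [W_j]$ with $W_j\subset Z$ to the class of $\sum n_j \cdot \{$generator of $K^M_0$ at the generic point of $W_j\}$ in $H^i_Z(X,K^M_i)$, i.e.\ via the identification $C^i_X(i)$ at a codimension-$i$ point $w$ with $K^M_0(w)=\Z$. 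Chasing the generic points of the components $W_j$ of $Z$ and of $f^{-1}(W_j)$ through the flat pullback on $C^\bullet$, both sides of \eqref{lem:pbKCH1} produce $\sum_{W'} m_{W'}\,[W']$ where $W'$ runs over the components of $f^{-1}(W_j)$ and $m_{W'}$ is the length of the local ring of $f^{-1}(W_j)$ at $W'$; this is a direct unwinding of definitions and the projection/flat base-change formulae for the tame symbol complex.

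\textbf{The regular embedding case.} This is the substantive part. For $i\colon Y\hookrightarrow X$ a regular closed immersion of codimension $c$ with normal bundle $N$, Fulton's refined Gysin map $i^!\colon \CH^i_Z(X)\to \CH^i_{Z\cap Y}(Y)$ is constructed via deformation to the normal cone \cite[\S5,\S6]{Fu}. The cleanest route is to factor further into the two elementary cases: (a) $i$ is the zero section of a line bundle (or iterated: a vector bundle), and (b) the deformation-to-the-normal-cone reduction showing the general case follows from (a). For (a) one uses the homotopy invariance \eqref{para:KIT1} of $K^M_i$ on $\Sm$ together with the fact — essentially \cite[Prop.~10(8)]{Kerz} plus the localization/dévissage sequence for the Gersten complex, cf.\ \cite[\S5,\S6]{Rost} — that the pullback $H^i_Z(X,K^M_i)\to H^i_{Z\cap Y}(Y,K^M_i)$ agrees with the composite specialization map used by Fulton; here one may also invoke \cite[1.1.30]{CR11}, which is precisely a statement of this compatibility for the Gersten complex of a cycle module, specialized to $K^M_*$. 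For the deformation step (b), one runs the deformation to the normal cone $M^\circ_Y X \to \A^1$ simultaneously on Chow groups and on $H^*(-,K^M_*)$: the two fibers at $0$ and at $1$ give the normal-bundle pullback and the identity respectively, and flat pullback along $M^\circ_Y X\setminus (\text{fiber at }\infty) \to \A^1$ together with the already-established flat case glues them. Since both the cycle-theoretic and the $K$-theoretic constructions of $i^!$ are by definition this same deformation composed with the normal-bundle case, compatibility follows.

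\textbf{Main obstacle.} The genuine difficulty is the regular-embedding case, and specifically pinning down that Fulton's deformation-to-the-normal-cone construction of $i^!$ on $\CH^*$ is literally computed, under the Bloch isomorphism \eqref{para:KIT2}, by the corresponding operations (flat pullback, homotopy-invariance isomorphism, boundary/specialization maps) on the Gersten complex $C^\bullet_X(*)$ of $K^M_*$. All the ingredients are classical — Rost's cycle module formalism \cite{Rost}, Kerz's Gersten resolution \cite{Kerz}, and Fulton's \cite{Fu} — and the compatibility is recorded in various forms (e.g.\ \cite[1.1.30]{CR11}); the work is to assemble them coherently rather than to prove anything new. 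One should be mildly careful with sign conventions for the tame symbol (the paper fixes \cite[p.~328]{Rost}), since these propagate into the identification of boundary maps with intersection multiplicities, but no sign ambiguity affects the final statement because $\CH^i$ involves only $K^M_0$-coefficients at the relevant points.
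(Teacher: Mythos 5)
Your proposal is correct and follows essentially the same route as the paper: reduce to the case of a regular closed immersion (the paper absorbs your graph-plus-flat-projection factorization into Rost's pullback $I(f)$, whose compatibility with the sheaf-theoretic pullback is \cite[(12.3), (12.4)]{Rost}), and then match Fulton's deformation-to-the-normal-cone construction of $i^!$ against the corresponding operations $J(Z,Z')$, $\nu_*$, $r(\tau)$ on the Gersten complex, using homotopy invariance for the vector-bundle step. The only point to make explicit is the one you flag as the main obstacle: that the abstract sheaf pullback on $H^i_\Phi(-,K^M_i)$ is computed by this deformation composite is not "by definition" but is exactly Rost's compatibility result, which is how the paper pins it down.
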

\begin{proof}
In \cite[12]{Rost} a morphism of complexes
\eq{lem:pbKCH2}{I(f): C^\bullet(i)(X)\to C^\bullet(i)(Y)}
is defined, depending on the choice of a coordination of the tangent bundle $TX$ of $X$ (see \cite[\S 9]{Rost} for the definition of a coordination).
It is compatible with the pullback $f^*: K^M_i(X)\to K^M_i(Y)$, by \cite[(12.3)Prop, (12.4)Cor]{Rost}.
Furthermore, if $u: U\to X$ is \'etale, a coordination of $TX$ induces by pullback a coordination of
$TU$ and hence it is direct to check that we have 
\[u_Y^*\circ I(f)= I(f_U)\circ u^*: C^\bullet(i)(X)\to C^\bullet(i)(Y_U),\]
where $u_Y: Y_U\to Y$ is the base change of $u$ along $f$ and $f_U: Y_U\to U$ is the base change of $f$ along $u$.
It follows that the choice of a coordination on $TX$ allows one to promote \eqref{lem:pbKCH2} to a morphism of complexes of sheaves on $X_\Nis$
\eq{para:KIT3.4.5}{\sI(f): C^\bullet(i)_X\to f_*C^\bullet(i)_Y}
which is compatible with the pullback $f^*: K^M_{i,X}\to f_*K^M_{i,Y}$. In view of \ref{para:KIT}, taking sections with support $\Gamma_{\Phi}(X, -)$ and then cohomology, gives a map
\[H^i(\Gamma_\Phi(X, \sI(f))) \colon H^i_{\Phi}(X, K^M_i) \to H^i_{f^{-1}\Phi}(Y, K^M_i)\] 
that we identify with the left vertical map in \eqref{lem:pbKCH1}.
Consider the following diagram of solid arrows
\eq{lem:pbKCH3}{
\xymatrix{
C^i(i)(X)\ar[d]^{I(f)}& 
C_{d-i}(i-d)(Z)\ar@{_(->}[l]\ar@{.>}[d]^{I_Z(f)}\ar@{->>}[r] &
\CH_{d-i}(Z)\ar[d]^{f^!}\\
C^i(i)(Y) &  C_{e-i}(i-e)(f^{-1}(Z))\ar@{_(->}[l]\ar@{->>}[r] &
\CH_{e-i}(f^{-1}(Z)),
}}
where $Z\in \Phi$, $d=\dim X$, $e=\dim Y$, and $f^!$ is the refined Gysin map from \cite[6.6]{Fu}.
It remains to show that there exists a dotted arrow $I_Z(f)$ making the diagram commute.
Since the pushforward on $C^\bullet(i)$ is compatible with the one on Chow groups, we can assume
that $Z$ is integral with $\dim Z=d-i$, i.e., $C_{d-i}(i-d)(Z)= \Z\cdot[Z]$.
By definition of $I(f)$ and $f^!$ it suffices to consider the case where $f=i: Y\inj X$ is a regular closed immersion
defined by a coherent ideal sheaf $J$. Denote by $N_{Y/X}=\Spec (\oplus_{n\ge 0} J^n/J^{n+1})$ the normal bundle
over $Y$ and fix a coordination $\tau$ of $N_{Y/X}$ in the sense of \cite[9., p.371]{Rost}.
Set $Z':=Z\times_X Y$ and $N:=N_{Y/X}\times_Y  Z'$; the pullback of $\tau$ along $Z'\inj Y$ induces
a coordination $\tau'$ of $N$. 
Denote by $C_{Z'/Z}$ the normal cone of $Z'\inj Z$ and by $\nu: C_{Z'/Z}\inj N$ the closed immersion induced by
$J\otimes_{\sO_X} \sO_Z\surj J\sO_Z$. 
Note that $C_{Z'/Z}$ has pure dimension $d-i$, see \cite[B 6.6]{Fu}, and thus 
\[C_{d-i}(i-d)(C_{Z'/Z})= \bigoplus_{z\in (C_{Z'/Z})^{(0)}} \Z.\]
With the notation from \cite[9., 11.]{Rost} we define $I_Z(f)$ to be the composition
\mlnl{I_Z(f): C_{d-i}(i-d)(Z)\xr{J(Z,Z')} C_{d-i}(i-d)(C_{Z'/Z})\\
                                         \xr{\nu_*} C_{d-i}(i-d)(N)\xr{r(\tau')} C_{e-i}(i-e)(Z').}
Let $D(Z,Z')\to \A^1=\Spec k[t]$ be the deformation  scheme from \cite[10.]{Rost}, so that
$D(Z,Z')_{|\A^1\setminus \{0\}}= Z\times (\A^1\setminus \{0\})$ and $D(Z,Z')_{|0}= C_{Z'/Z}$.
Then by definition  (see \cite[11.]{Rost})  
\[J(Z,Z')([Z])= \div_{D(Z,Z')}(t)_{|C_{Z'/Z}}= [C_{Z'/Z}],\]
where $[C_{Z'/Z}]$ denotes the cycle associated to the scheme $C_{Z'/Z}$, \cite[1.5]{Fu}.                                       
Thus the map $J(Z',Z)$ corresponds to the specialization map
$\sigma: \CH_{d-i}(Z)\to \CH_{d-i}(C_{Z'/Z})$ from \cite[5.2]{Fu}. Therefore, the above definition of $I_Z(f)$ makes 
the square on the right in \eqref{lem:pbKCH3} commutative, by the alternative description of $i^!$ on the Chow side
in \cite[6.2, 2nd paragraph on p. 98]{Fu}. We subdivide the left square  as follows:
\eq{lem:pbKCH4}{
\xymatrix{
C_{d-i}(i-d)(Z)\ar[rr]^{\nu_*\circ J(Z,Z')}\ar[d] & &
C_{d-i}(i-d)(N)\ar[d]\ar[r]^{r(\tau')} & 
C_{e-i}(i-e)(Z')\ar[d]\\
C^i(i)(X)\ar[rr]^{J(X, Y)}\ar@/_2pc/[rrr]_{I(f)} & &
C^i(i)(N_{Y/X})\ar[r]^{r(\tau)}&
C^i(i)(Y),
}}
the vertical maps are all induced by pushforward along the respective closed immersion. 
It follows directly from the definition of the maps $r(\tau)$ and $r(\tau')$ in \cite[(9.1)-(9.4)]{Rost}, that 
the right square of \eqref{lem:pbKCH4} commutes. For the left square note that 
$D(Z,Z')$ is an integral and closed subscheme of $D(X,Y)$, hence it is the closure of
$Z\times (\A^1\setminus \{0\})$ in $D(X,Y)$; furthermore $D(Z,Z')\cap N_{Y/X}= D(Z,Z')\cap N= C_{Z'/Z}$.
Thus by definition
\[J(X,Y)([Z])= [C_{Z'/Z}]\quad \text{in } C^i(i)(N_{Y/X}). \]
This yields the commutativity of the left square in \eqref{lem:pbKCH4}.
\end{proof}

\begin{para}\label{para:ext-prod}
In \cite[14.1]{Rost} a cross product 
\[C^p(i)(X)\times C^q(j)(Y)\to C^{p+q}(i+j)(X\times Y), \quad (a, b)\mapsto a\times b\]
is defined by sending $(a_x,b_y)$, where $a_x\in K^M_{i-p}(x)$, $x\in X^{(p)}$, and $b_y\in K^M_{j-q}(y)$,
$y\in Y^{(q)}$, to 
\eq{para:ext-prod1}{\oplus_{z\in (x\times y)^{(0)}} l_z \, a_{x|z}\cdot b_{y|z},}
where $x\times y$ denotes the fiber product of $k$-schemes, 
$l_z$ denotes the length of the local ring of $x\times y$ at $z$ and 
$a_{x|z}\in K^M_{i-p}(z)$ denotes the pullback of $a_x$ to $z$ and similar with $b_{y|z}$.
Note that $z\in (x\times y)^{(0)}$ implies $z\in (X\times Y)^{(p+q)}$.
By \cite[(14.4)]{Rost} we have 
\eq{para:ext-prod2}{d(a\times b)= (da)\times b + (-1)^{i-p} (a\times db),}
where $d$ denotes the differential of the Gersten complex.
(There seems to be a typo in the formula in {\em loc. cit.}:
the $(-1)^n$ in that formula should be a $(-1)^{n+p}$ as follows from what is said in the proof and 
\cite[R3f, R3d]{Rost}; this formula is for the homological notation, if one translates to 
the cohomological notation via \eqref{para:KIT1.5} one obtains \eqref{para:ext-prod2}.)
We have to modify the cross product to obtain a morphism of complexes (with the usual sign convention for 
a tensor product of complexes). For $a\in C^p(i)(X)$ and $b\in C^q(j)(Y)$ we set
\eq{para:ext-prod3}{a\boxtimes b:= (-1)^{i(q+j)} a\times b.}
Then we obtain
\[d(a\boxtimes b)= (da)\boxtimes b + (-1)^p \, a\boxtimes db.\]
Thus $\boxtimes$ induces a morphism of complexes
\eq{para:ext-prod4}{\boxtimes: {\rm tot}(C^\bullet(i)(X)\otimes_{\Z} C^\bullet(j)(Y))\to C^\bullet(i+j)(X\times Y),}
which via the augmentation from Milnor $K$-theory is compatible with
\eq{para:ext-prod5}{K^M_i(X)\otimes_\Z K^M_j(Y)\to K^M_{i+j}(X\times Y), 
\quad a\otimes b\mapsto \pi_X^*a\cdot \pi_Y^* b,}
with $\pi_X: X\times Y\to X$ the projection.
In degree $i+j$  the map \eqref{para:ext-prod4} is given by
\[(\bigoplus_{x\in X^{(i)}}\Z\cdot x) \otimes_{\Z} (\bigoplus_{y\in Y^{(j)}}\Z\cdot y)\to 
\bigoplus_{z\in X\times Y^{(i+j)}} \Z\cdot z,\]
\[x\otimes y\mapsto \bigoplus_{z\in (x\times y)^{(0)}} (-1)^{i(j+j)}l_z \cdot z=
\bigoplus_{z\in (x\times y)^{(0)}} l_z \cdot z.\]
Hence for families of support $\Phi$ on $X$ and $\Psi$ on $Y$ the following diagram commutes
\eq{para:ext-prod6}{\xymatrix{
\CH^i_{\Phi}(X)\times \CH^j_{\Psi}(Y)\ar[r]^-{\boxtimes}\ar[d]_{\simeq} & 
\CH^{i+j}_{\Phi\times \Psi}(X\times Y)\ar[d]^{\simeq}\\
H^i_{\Phi}(X, K^M_i)\times H^j_{\Psi}(Y, K^M_j)\ar[r]^-{\boxtimes} & 
H^{i+j}_{\Phi\times \Psi}(X\times Y, K^M_{i+j}),
}}
where the upper horizontal map is the exterior product of cycles, see \cite[1.10]{Fu} and 
$\Phi\times \Psi$ denotes the smallest family of supports containing $Z_1\times Z_2$, 
for all $Z_1\in \Phi$ and $Z_2\in\Psi$.
We note that if $\tau: X\times Y\to Y\times X$ is the switching morphism, then
\eq{para:ext-prod7}{\tau_*(a\boxtimes b) = (-1)^{ij+pq} (b\boxtimes a), \quad a\in C^p(i)(X), b\in C^q(j)(Y), }
as follows directly from \eqref{para:ext-prod1} and \eqref{para:ext-prod3}.
The above and Lemma \ref{lem:pbKCH} implies that the intersection product with support
\eq{para:ext-prod8}{\Delta^*\circ \boxtimes :\CH^i_{\Phi}(X)\times \CH^j_{\Psi}(X)\to \CH^{i+j}_{\Phi\cap \Psi}(X),}
from \cite[8]{Fu} corresponds via Bloch's formula to 
\eq{para:ext-prod9}{\Delta^*\circ\boxtimes :H^i_{\Phi}(X, K^M_i)\times H^j_{\Psi}(X, K^M_j)
           \to H^{i+j}_{\Phi\cap \Psi}(X, K^M_{i+j}),}
where $\Phi\cap\Psi=\{Z_1\cap Z_2\mid Z_1\in \Phi, Z_2\in \Psi\}$.
\end{para}

\subsection{Cupping}


\begin{para}\label{para:Tmap}
Let $F,G\in \uMNST$ and let $X$ be a $k$-scheme and $D$ and $E$ effective Cartier divisors on it, such that $(X,D)$, $(X,E)\in \uMCor$.
We recall that there is a natural morphism
of Nisnevich sheaves on $X$
\eq{para:Tmap1}{ F_{(X,D)}\otimes_\Z G_{(X,E)}\to (F\otimes_{\uMNST} G)_{(X,D+E)}}
which is defined as follows:
For $U\to X$ we have a surjection (see the proof of Lemma \ref{lem:surjT})
\mlnl{\pi:\bigoplus_{\sY, \sZ\in \uMCor} F(\sY)\otimes_{\Z} G(\sZ)\otimes_\Z \uMCor((U, (D+E)_U), \sY\otimes \sZ)\\
\surj (F\otimes_{\uMPST} G)(U, (D+E)_U).}
Composition with $\pi$ gives then a morphism 
\begin{equation}\label{eq:Tmap2}F(U,D_U)\otimes G(U,E_U)\to (F\otimes_{\uMPST} G)(U,(D+E)_U),\end{equation}
\[ a\otimes b\mapsto \pi(a\otimes b\otimes \Delta_U),\]
where $\Delta_U\in \uMCor((U,(D+E)_U), (U,D_U)\otimes (U,E_U))$ is the diagonal (note that it is indeed an admissible correspondence). If we now compose \eqref{eq:Tmap2}
with the value on $(U,(D+E)_U)$ of  the natural map (the sheafification)
\[(F\otimes_{\uMPST} G)\to \ul{a}_\Nis(F\otimes_{\uMPST} G)=F\otimes_{\uMNST} G\]
we get the desired map \eqref{para:Tmap1}. 
\end{para}

\begin{lemma}\label{lem:adj-twist}
Let $F\in\CItspNis$ and $X,D,E$ as in \ref{para:Tmap}. Assume $X$ is connected.
Consider the map
\begin{equation}\label{lem:adj-twist1}
    (\gamma^1F(X,D))\otimes_{\Z} \G_m(X\setminus E) \to F(X, D+E)
\end{equation}
defined as composition
\begin{align*}
    (\gamma^1F(X,D))\otimes_{\Z} \G_m(X\setminus E)&\xr{\eqref{para:Tmap1}}
(\gamma^1F \otimes_{\uMNST} \uomega^*\G_m)(X,D+E)\\
&\xr{\rm adj.} F(X, D+E),\end{align*}
where the morphism 'adj.' is induced by the counit of the adjunction 
$(-)\otimes_{\uMPST} \uomega^*\G_m\dashv \ul{\Hom}_{\uMPST}(\uomega^* \G_m, -)$.
Then the precomposition of \eqref{lem:adj-twist1}
with the natural map
\[F(\bcubee\otimes (X,D)) \otimes_{\Z}\Ztr(\bcubee)(X,E)  \to 
\gamma^1F(X,D)\otimes_{\Z} \G_m(X\setminus E),\]
stemming from \eqref{cor:gtwist1.5} and Lemma \ref{lem:Gm}, is given by 
\[F(\bcubee\otimes (X,D)) \otimes_{\Z}\Ztr(\bcubee)(X,E)\to F(X, D+E),\]
\[a\otimes f\mapsto \Delta_X^*((f- \deg(f)\cdot s_1)\otimes \id_{(X,D)})^*a,\]
where $s_1\in\uMCor((X,E), \bcube^{(1)})$ 
is the graph of  $X\to \Spec k=\{1\}\inj \P^1$ and
$\Delta_X\in \uMCor((X, D+E), (X,E)\otimes (X,D))$ is the graph of the diagonal.
\end{lemma}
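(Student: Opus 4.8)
The plan is to unravel all the definitions involved in \eqref{lem:adj-twist1} and track what each step does to an explicit generator. The key observation is that the isomorphism $\gamma^1 F(X,D) \cong F(\bcubee\otimes(X,D))/F((\P^1,\infty)\otimes(X,D))$ from \eqref{cor:gtwist1.5} and Remark \ref{rmk:gamma} is induced by restricting along the open immersion $\bcubee \to (\P^1,\infty)$, i.e.\ $\gamma^1 F$ is the functorial quotient of $F(\bcubee\otimes -)$; and the isomorphism $h^{\bcube,\sp}_{0,\Nis}(\bcubee_\red) \cong \uomega^*\G_m$ of Lemma \ref{lem:Gm} sends the class of a correspondence $V(g) \in \Ztr(\bcubee)(Y)$, $g = a_r t^r + \cdots + a_0$ with $a_r, a_0 \in \sO(Y)^\times$, to $(-1)^r a_0/a_r \in \sO(Y)^\times$ (the unit part of $\theta_{1,1}$ in Lemma \ref{lem:h0P1}), and sends $s_1$ (the graph of the $1$-section) to $1 \in \G_m$. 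In particular the composite $\Ztr(\bcubee)(X,E) \to \G_m(X\setminus E)$ appearing in the statement is $f \mapsto f/f(\text{leading})$ up to sign on each prime component, and the element $f - \deg(f)\cdot s_1 \in \uMCor((X,E),\bcube^{(1)})$ is the natural lift of the image of $f$ under $h^{\bcube,\sp}_{0,\Nis}(\bcubee_\red)(X,E) \to \uomega^*\G_m(X,E)$, as witnessed by the splitting $s_{1,1}$ from Lemma \ref{lem:Gm}.

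First I would make the identification in \eqref{cor:gtwist1.5} completely explicit. By the chain of isomorphisms in the proof of Corollary \ref{cor:gtwist}, for $\sY \in \uMCor$ one has $\gamma^1 F(\sY) = \uHom_{\uMPST}(\bcubee_\red, F)(\sY) = \Hom_{\uMPST}(\bcubee_\red, \uHom(\Ztr(\sY),F))$, and under this an element $a \in F(\bcubee \otimes \sY)$ representing a class in $\gamma^1 F(\sY)$ corresponds to the morphism $\bcubee_\red \to \uHom(\Ztr(\sY),F)$ that sends a correspondence $V \in \Ztr(\bcubee)(\sZ)$ to $(V \otimes \id_\sY)^* a \in F(\sZ \otimes \sY)$ (this is where the $\bcubee_\red$, i.e.\ the quotient by the $1$-section, enters: the $1$-section composed with anything gives the constant correspondence, whose pullback lies in the image of $F(\sY) \to F(\bcubee\otimes\sY)$, consistent with Remark \ref{rmk:gamma}). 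Combining this with the description of \eqref{para:Tmap1} from \ref{para:Tmap}, which is literally $b \otimes c \mapsto$ (sheafification of) $\Delta^*(b \otimes c)$ via the diagonal $\Delta_X \in \uMCor((X,D+E),(X,D)\otimes(X,E))$, and then with the counit of the adjunction $(-)\otimes\uomega^*\G_m \dashv \uHom(\uomega^*\G_m,-)$, which evaluates a morphism $\bcubee_\red \to \uHom(\Ztr(X,D),F)$ at an actual section of $\uomega^*\G_m$ — here the section $f \in \G_m(X\setminus E)$ realized via the correspondence $f - \deg(f)\cdot s_1 \in \uMCor((X,E),\bcube^{(1)})$ (using that $h^{\bcube,\sp}_{0,\Nis}(\bcubee_\red) \to \uomega^*\G_m$ kills $s_1$ by Lemma \ref{lem:Gm}) — one reads off that the composite sends $a \otimes f$ to $\Delta_X^*\big((f - \deg(f)\cdot s_1) \otimes \id_{(X,D)}\big)^* a$, which is exactly the asserted formula.

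The main technical point to verify carefully is that the correspondence $f - \deg(f)\cdot s_1 \in \uMCor((X,E),\bcube^{(1)})$ is admissible and genuinely represents the image of the class of $f$ (as an element of $\Ztr(\bcubee)(X,E)$) under the projection $h^{\bcube,\sp}_{0,\Nis}(\bcubee_\red)(X,E) \to \uomega^*\G_m(X,E) = \G_m(X\setminus E)$, compatibly with the splitting $s_{1,1}$ of Lemma \ref{lem:Gm}. This is where one must use that $f$, being an admissible correspondence in $\Ztr(\bcubee)(X,E)$, has the form (on each prime component over a connected piece of $X\setminus E$) $V(g)$ with $g$ monic-up-to-units and constant term a unit, so that $\theta_{1,1}$ applies, and that subtracting $\deg(f)\cdot s_1$ corrects the degree-$\Z$-component to zero so that one lands in the $\G_m$-summand; I expect this bookkeeping — matching the sign conventions in $\theta_{m,n}$, the definition of $s_{m,n}$ via $\lambda_a^{m,n}$, and the admissibility of the difference as a cycle in $(\P^1, 0+\infty)$ — to be the only real obstacle, the rest being a diagram-chase through adjunctions. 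Once this identification is in place, commutativity of all the relevant squares (naturality of \eqref{para:Tmap1}, of the counit, and of the quotient map defining $\gamma^1 F$) is formal, and the stated formula follows.
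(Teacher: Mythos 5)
Your proposal is correct and takes essentially the same route as the paper's own proof, which likewise consists of observing that under the splitting $\Ztr(\bcubee)=\bcubee_\red\oplus\Z$ the projection to the first factor is $f\mapsto f-\deg(f)\cdot s_1$, that $h^{\bcube,\sp}_{0,\Nis}(\bcubee_\red)=\uomega^*\G_m$ by Lemma \ref{lem:Gm}, and that the stated formula then drops out of the explicit description of \eqref{para:Tmap1} in \ref{para:Tmap}. The only point where you overestimate the work is the anticipated sign/bookkeeping check against $\theta_{1,1}$ and the splitting $s_{1,1}$: since $s_1$ is killed in $\bcubee_\red$ by definition, the class of $f$ is represented by the admissible correspondence $f-\deg(f)\cdot s_1$ without any appeal to the explicit computation of Lemma \ref{lem:h0P1}.
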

\begin{proof}
Note that under the identification $\Ztr(\bcube^{(1)})=\bcube^{(1)}_{\rm red}\oplus \Z$,
see \eqref{cor:gtwist1.1}, the projection to the first factor is given by
\[\Ztr( \bcube^{(1)})(X,E)\ni f\mapsto (f- \deg(f)\cdot s_1)\in \bcube^{(1)}_{\red}(X,E).\]
 Since by Lemma \ref{lem:Gm} we have 
$h_{0,\Nis}^{\bcube}(\bcube^{(1)}_{\red})=\uomega^*\G_m$, the statement of the lemma
 is direct from the explicit description of \eqref{para:Tmap1} in \ref{para:Tmap}.
\end{proof}
\begin{lemma}\label{lem:ten-supp}
Let $X$ be a scheme and $Z\subset X$ a closed subset $X$.
Let $A,B\in D(X_{\Nis})$ and assume that the cohomology sheaves $H^i(B)$ have support in $Z$, for all $i\in\Z$.
Then the natural map 
\[R\ul{\Gamma}_Z(A\otimes^L_{\Z}  B)\xr{\simeq} A\otimes^L_{\Z} B\]
is an isomorphism. In particular, for any $C\in D(X_{\Nis})$ we obtain the canonical morphism
\eq{lem:ten-supp1}{
A\otimes^L_{\Z} R\ul{\Gamma}_Z C\cong R\ul{\Gamma}_Z(A\otimes^L_{\Z} R\ul{\Gamma}_Z C)
\to R\ul{\Gamma}_Z(A\otimes^L_{\Z} C).}
\end{lemma}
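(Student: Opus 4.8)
The plan is to prove both assertions by reducing to standard facts about local cohomology and its interaction with derived tensor products. For the first assertion, I would argue as follows. Recall that for an object $B\in D(X_{\Nis})$ whose cohomology sheaves are all supported on the closed subset $Z$, the natural map $R\ul{\Gamma}_Z B\to B$ is an isomorphism: indeed $R\ul{\Gamma}_Z$ is computed as $R\ul{\Gamma}$ of the pair $(X, X\setminus Z)$, and $j^*B=0$ for $j\colon X\setminus Z\inj X$ the open complement (because each $H^i(B)$ vanishes on $X\setminus Z$, and cohomology sheaves commute with the exact restriction $j^*$). So there is a distinguished triangle $R\ul{\Gamma}_Z B\to B\to Rj_*j^*B\to$, and $Rj_*j^*B=0$. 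This gives $R\ul{\Gamma}_Z B\xr{\simeq} B$.

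Now I want to transfer this to $A\otimes^L_{\Z} B$. The key point is that the cohomology sheaves of $A\otimes^L_{\Z} B$ are \emph{also} supported on $Z$: by the (convergent, since we are over a noetherian space and working with $\Z$-modules) spectral sequence computing $H^n(A\otimes^L_{\Z} B)$ in terms of $\operatorname{Tor}$-sheaves $\mathcal{T}or_q^{\Z}(H^p(A), H^r(B))$, and since each $\mathcal{T}or_q^{\Z}(\mathcal{F},\mathcal{G})$ is supported on $\Supp(\mathcal{G})\subseteq Z$ whenever $\mathcal{G}$ is supported on $Z$ (this can be checked stalkwise: $(\mathcal{T}or_q^{\Z}(\mathcal{F},\mathcal{G}))_x = \operatorname{Tor}_q^{\Z}(\mathcal{F}_x,\mathcal{G}_x)$, which vanishes if $\mathcal{G}_x=0$), all the abutment sheaves $H^n(A\otimes^L_{\Z}B)$ are supported on $Z$. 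Alternatively, and more cleanly, one can argue directly with the open immersion $j$: the functor $j^*$ is monoidal for $\otimes^L_{\Z}$, so $j^*(A\otimes^L_{\Z}B)\cong j^*A\otimes^L_{\Z} j^*B = j^*A\otimes^L_{\Z} 0 = 0$, hence the same triangle argument as above applied to $A\otimes^L_{\Z}B$ in place of $B$ yields $R\ul{\Gamma}_Z(A\otimes^L_{\Z}B)\xr{\simeq} A\otimes^L_{\Z}B$. This is the slicker route, and it is the one I would write up; the $\operatorname{Tor}$ spectral sequence is only a fallback in case one worries about unboundedness, but $j^*\circ(-\otimes^L-)$ being monoidal is completely general and needs no boundedness.

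For the ``in particular'' clause, I would simply specialize: apply the just-proved isomorphism with $B:=R\ul{\Gamma}_Z C$, which indeed has all cohomology sheaves supported on $Z$ (the cohomology sheaves of $R\ul{\Gamma}_Z$ of anything are supported on $Z$, since $j^*R\ul{\Gamma}_ZC=0$ by the defining triangle $R\ul{\Gamma}_ZC\to C\to Rj_*j^*C$ after applying $j^*$ and using $j^*Rj_*\simeq \id$). This gives the first isomorphism $A\otimes^L_{\Z}R\ul{\Gamma}_ZC\xr{\simeq} R\ul{\Gamma}_Z(A\otimes^L_{\Z}R\ul{\Gamma}_ZC)$ of \eqref{lem:ten-supp1}. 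The second arrow of \eqref{lem:ten-supp1} is then obtained by applying the exact functor $R\ul{\Gamma}_Z$ to the canonical counit morphism $R\ul{\Gamma}_Z C\to C$ and tensoring with $A$ (or, equivalently, tensoring first and then applying $R\ul{\Gamma}_Z$, using that $R\ul{\Gamma}_Z$ commutes up to canonical isomorphism with $A\otimes^L_{\Z}-$ by the first part — but it is cleanest to just say: apply $R\ul{\Gamma}_Z(A\otimes^L_{\Z}-)$ to $R\ul{\Gamma}_ZC\to C$). Composing the two gives the displayed morphism.

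The main obstacle here is essentially bookkeeping rather than mathematics: one must be a little careful about whether $D(X_{\Nis})$ is taken to be unbounded (so that $\otimes^L$ and $R\ul{\Gamma}_Z$ are genuinely well-defined on all of it) and that the various adjunctions and the monoidality of $j^*$ are available at that level of generality. Since $X_{\Nis}$ is a site with enough points and $\Z$-linear coefficients, all of this is standard (e.g. via Spaltenstein's theory of unbounded derived categories of sheaves, or since in the intended applications $C$ is bounded below), so no difficulty arises. I would therefore keep the write-up short: establish $j^*(A\otimes^L_{\Z}B)=0$, invoke the localization triangle to conclude $R\ul{\Gamma}_Z(A\otimes^L_{\Z}B)\xr{\simeq}A\otimes^L_{\Z}B$, and then deduce \eqref{lem:ten-supp1} formally.

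\begin{proof}
Let $j\colon U:=X\setminus Z\inj X$ be the open complement and $i\colon Z\inj X$ the closed immersion. For any $M\in D(X_{\Nis})$ there is a canonical distinguished triangle
\[R\ul{\Gamma}_Z M\to M\to Rj_*j^*M\xr{+1}.\]
If all cohomology sheaves $H^n(M)$ are supported on $Z$, then $j^*M=0$ (the restriction functor $j^*$ is exact, hence $H^n(j^*M)=j^*H^n(M)=0$ for all $n$), so $Rj_*j^*M=0$ and the triangle shows that $R\ul{\Gamma}_Z M\to M$ is an isomorphism.

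Apply this with $M:=A\otimes^L_{\Z}B$. The functor $j^*$ is symmetric monoidal for $\otimes^L_{\Z}$, so
\[j^*(A\otimes^L_{\Z}B)\cong j^*A\otimes^L_{\Z} j^*B.\]
By hypothesis the cohomology sheaves of $B$ are supported on $Z$, hence $j^*B=0$ and therefore $j^*(A\otimes^L_{\Z}B)=0$. Thus the cohomology sheaves of $A\otimes^L_{\Z}B$ are supported on $Z$ and the preceding paragraph yields the desired isomorphism
\[R\ul{\Gamma}_Z(A\otimes^L_{\Z}B)\xr{\simeq} A\otimes^L_{\Z}B.\]

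For the last assertion, fix $C\in D(X_{\Nis})$. The object $B:=R\ul{\Gamma}_Z C$ has all cohomology sheaves supported on $Z$: indeed $j^*R\ul{\Gamma}_Z C=0$, since applying $j^*$ to the triangle $R\ul{\Gamma}_Z C\to C\to Rj_*j^*C\xr{+1}$ and using $j^*Rj_*\xr{\simeq}\id$ gives a triangle $j^*R\ul{\Gamma}_Z C\to j^*C\xr{\id} j^*C\xr{+1}$, whose third term is zero. Applying the isomorphism already proved with $B=R\ul{\Gamma}_Z C$ gives the first isomorphism in \eqref{lem:ten-supp1},
\[A\otimes^L_{\Z} R\ul{\Gamma}_Z C\xr{\simeq} R\ul{\Gamma}_Z\bigl(A\otimes^L_{\Z} R\ul{\Gamma}_Z C\bigr).\]
Finally, applying the functor $R\ul{\Gamma}_Z\bigl(A\otimes^L_{\Z}(-)\bigr)$ to the canonical morphism $R\ul{\Gamma}_Z C\to C$ produces the second arrow
\[R\ul{\Gamma}_Z\bigl(A\otimes^L_{\Z} R\ul{\Gamma}_Z C\bigr)\to R\ul{\Gamma}_Z\bigl(A\otimes^L_{\Z} C\bigr),\]
and the composite of the two is the morphism \eqref{lem:ten-supp1}.
\end{proof}
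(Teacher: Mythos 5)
Your proof is correct and follows essentially the same route as the paper: restrict to the open complement $j\colon U\inj X$, use monoidality of $j^*$ together with $j^*B=0$ to see that $A\otimes^L_{\Z}B$ has cohomology supported on $Z$, and conclude via the localization triangle $R\ul{\Gamma}_Z M\to M\to Rj_*j^*M$. Your write-up additionally spells out the derivation of \eqref{lem:ten-supp1}, which the paper leaves implicit, but the underlying argument is the same.
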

\begin{proof}
Denote by $j: U=X\setminus Z\inj X$ the open immersion.
By assumption $H^i(B_{|U})= H^i(B)_{|U}=0$, i.e.,  $B_{|U}=0$ in $D(X_{\Nis})$.
Therefore the statement follows from the distinguished triangle (see \cite[\href{https://stacks.math.columbia.edu/tag/09XP}{Tag 09XP}]{stacks-project})
\[R\ul{\Gamma}_Z(A\otimes^L_{\Z}  B)\to A\otimes^L_{\Z} B\to Rj_* (A\otimes^L_{\Z} B)_{|U}\xr{+1}\]
and the isomorphism $(A\otimes^L_\Z B)_{|U}= A_{|U}\otimes^L_\Z B_{|U}$.
\end{proof}

\begin{para}\label{para:cyclecup}
Let $F\in \CItspNis$. Let $\sX=(X, D)\in \uMCor$ with $X\in \Sm$, let $\Phi$ be a family of supports on $X$,
and  $\alpha\in \CH^i_\Phi(X)$, see \ref{para:supports}.
We define the morphism 
\eq{para:cyclecup1}{c_\alpha: (\gamma^iF)_{\sX}[-i]\to R\ul{\Gamma}_{\Phi}F_{\sX} \quad \text{in } D(X_{\Nis})}
as follows:
choose a representative $\tilde{\alpha}\in \CH^i_Z(X)$, $Z\in \Phi$, of $\alpha$;
by the identification
\[\CH^i_Z(X)=H^i_Z(X, K^M_i)=\Ext^i_{X_{\Nis}}(\Z_{X}, R\ul{\Gamma}_Z K^M_{i,X})\]
the cycle
$\tilde{\alpha}$ induces a morphism in $D(X_{\Nis})$ (again denoted by $\tilde{\alpha}$)
\[\tilde{\alpha}: \Z_{X}[-i]\to R\ul{\Gamma}_Z K^M_{i,X}= R\ul{\Gamma}_Z(\uomega^* K^M_i)_{(X,\emptyset)}.\]
We define $c_\alpha$ as the composition in $D(X_\Nis)$
\begin{align*}
(\gamma^iF)_{\sX}[-i]
 &\xr{\id \otimes \tilde{\alpha}} (\gamma^i F)_{\sX}\otimes^L_{\Z} R\ul{\Gamma}_Z K^M_{i,X}\\
 &\xr{\eqref{lem:ten-supp1}} R\ul{\Gamma}_Z((\gamma^i F)_{\sX}\otimes^L_{\Z} K^M_{i,X})\\
 &\xr{\rm els} R\ul{\Gamma}_\Phi((\gamma^i F)_{\sX}\otimes^L_{\Z} K^M_{i,X})\\
&\to R\ul{\Gamma}_\Phi((\gamma^i F)_{\sX}\otimes_{\Z} (\uomega^*K^M_{i})_{(X,\emptyset)})\\
&\xr{\eqref{para:Tmap1}} R\ul{\Gamma}_\Phi(\gamma^i F\otimes_{\uMNST} \ulomega^*K^M_i)_{\sX}\\
&\xr{\rm adj} R\ul{\Gamma}_\Phi F_{\sX},
\end{align*}
where the map {\em els} is the enlarge-support-map,
the fourth map is induced by the quotient map $A\otimes^L B\to H_0(A\otimes^L B)=A\otimes B$,
and  {\em adj} is induced by adjunction via Corollary \ref{cor:gtwist}. 
It is direct to check that the definition of $c_{\alpha}$ does not depend on the choice of $\tilde{\alpha}$.
\end{para}

The morphism $c_\alpha$ satisfies the following functorial properties. 
\begin{lemma}\label{lem:cyclecup}
Let $F\in \CItspNis$. Let $\sX=(X, D)\in \uMCor$ with $X\in \Sm$, let $\Phi$ be a family of supports on $X$.
\begin{enumerate}[label=(\arabic*)]
\item\label{lem:cyclecup1} We have $c_{\alpha+\beta}=c_\alpha+c_\beta$, for $\alpha,\beta\in \CH^i_\Phi(X)$.
\item\label{lem:cyclecup1.5} Let $\Psi$ be another family of supports containing $\Phi$. Denote
by the same letter $\imath$ the natural maps  $\CH_\Phi\to \CH_\Psi$ and $R\ul{\Gamma}_\Phi\to R\ul{\Gamma}_\Psi$.
Then $\imath c_\alpha= c_{\imath\alpha}$, for any $\alpha\in \CH^i_\Phi(X)$.
\item\label{lem:cyclecup2} Let $\sY=(Y, E)\in\uMCor$.  Let $f: Y\to X$ be a morphism in $\Sm$, such that 
$E\ge f^*D$, and let $\alpha\in \CH^i_{\Phi}(X)$. Consider the pullback cycle $f^*\alpha\in \CH^i_{f^{-1}\Phi}(Y)$
(see Lemma \ref{lem:pbKCH}). The following diagram commutes 
\[\xymatrix{
(\gamma^i F)_{\sX}[-i]\ar[r]^-{c_\alpha}\ar[d]_{f^*} & R\ul{\Gamma}_\Phi F_{\sX}\ar[d]_{f^*}\\
Rf_* (\gamma^i F)_{\sY}[-i]\ar[r]^-{c_{f^*\alpha}} & 
                 Rf_* R\ul{\Gamma}_{f^{-1}\Phi} F_{\sY}=R\ul{\Gamma}_\Phi Rf_*  F_{\sY}.
}\]
\item\label{lem:cyclecup3} For $\alpha\in \CH^i_\Phi(X)$, $\beta\in \CH^j_\Psi(X)$
denote by $\alpha\cdot \beta\in \CH^{i+j}_{\Phi\cap \Psi}(X)$ the intersection product of $\alpha$ and $\beta$,
see \eqref{para:ext-prod8}.
The following diagram commutes
\[\xymatrix{
\gamma^i(\gamma^jF)_{\sX}[-i][-j]\ar@{=}[d]^{\text{Cor \ref{cor:gtwist}}}\ar[r]^-{c_\alpha[-j]} 
             &R\ul{\Gamma}_\Phi (\gamma^j F)_{\sX}[-j]\ar[d]^{c_\beta}\\
(\gamma^{i+j}F)_{\sX}[-(i+j)]\ar[r]_-{c_{\alpha\cdot\beta}} & R\ul{\Gamma}_{\Phi\cap \Psi}F_{\sX}.
}\]
\end{enumerate}
\end{lemma}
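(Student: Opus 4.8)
The plan is to unravel the definition of $c_\alpha$ given in \ref{para:cyclecup} and reduce each assertion to a compatibility that lives entirely on the level of Milnor $K$-theory/Chow groups, where it is supplied by the material of \S\ref{sec:cupProduct}. Recall that $c_\alpha$ is built by composing six maps: tensoring with the class $\tilde\alpha\in \Ext^i(\Z_X, R\ul\Gamma_Z K^M_{i,X})$, the canonical map \eqref{lem:ten-supp1} of Lemma \ref{lem:ten-supp}, the enlarge-support map, the truncation $A\otimes^L B\to A\otimes B$, the product map \eqref{para:Tmap1}, and the adjunction isomorphism of Corollary \ref{cor:gtwist}. The last four of these are functorial for all the operations in sight; so in each part the content is concentrated in how the cycle class $\tilde\alpha$ interacts with the relevant operation, together with the elementary functoriality of $R\ul\Gamma_{(-)}$ and $\otimes^L$.

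Part \ref{lem:cyclecup1} is immediate: $\tilde\alpha+\tilde\beta$ induces the sum of the two maps $\Z_X[-i]\to R\ul\Gamma_Z K^M_{i,X}$ (after enlarging supports to a common $Z\in\Phi$), and the remaining maps in the composition are additive. Part \ref{lem:cyclecup1.5} is equally formal: enlarging the family of supports from $\Phi$ to $\Psi$ commutes with all the maps in the composition (the enlarge-support map for $\Psi$ factors through the one for $\Phi$), so $\imath c_\alpha=c_{\imath\alpha}$ follows by inspection. For Part \ref{lem:cyclecup2}, the only nontrivial input is Lemma \ref{lem:pbKCH}, which says precisely that the pullback $f^*\colon H^i_Z(X,K^M_i)\to H^i_{f^{-1}Z}(Y,K^M_i)$ (defined via the sheaf structure of $K^M_i$ and used implicitly in the formula for $c_\alpha$) corresponds to the refined Gysin pullback $f^*\alpha$ on Chow groups. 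Given this, one checks that the square commutes by applying $f^*$ term-by-term to the six-step composition: the map \eqref{lem:ten-supp1} is natural in $X$, $R\ul\Gamma$ commutes with $Rf_*$ via the base-change identity $Rf_*R\ul\Gamma_{f^{-1}\Phi}=R\ul\Gamma_\Phi Rf_*$, the product map \eqref{para:Tmap1} is compatible with pullback along $f$ (since $E\ge f^*D$ makes the diagonal admissible), and the adjunction of Corollary \ref{cor:gtwist} is natural. I expect Part \ref{lem:cyclecup2} to be the main obstacle, not because of any deep idea but because one must carefully match the two different descriptions of the $K$-theoretic pullback (Rost's $\mathcal I(f)$ from \eqref{para:KIT3.4.5} versus the intrinsic sheaf-theoretic pullback) — this is exactly what Lemma \ref{lem:pbKCH} provides, so the work is in threading that identification through the derived-category diagram chase.

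Part \ref{lem:cyclecup3} is where the multiplicativity of the cross product enters. Here the key point is diagram \eqref{para:ext-prod6}/\eqref{para:ext-prod9}: the intersection product $\alpha\cdot\beta=\Delta^*(\alpha\boxtimes\beta)$ on Chow groups corresponds, under Bloch's formula, to the external product $\boxtimes$ followed by restriction along the diagonal on $H^*(-,K^M_*)$, and the morphism of complexes \eqref{para:ext-prod4} is compatible with the multiplication \eqref{para:ext-prod5} on Milnor $K$-theory. Combined with the associativity of the contraction twist (the commuting triangle in Corollary \ref{cor:gtwist}, i.e.\ $\gamma^i\gamma^j=\gamma^{i+j}$, together with \eqref{para:cancel2.5}) and the associativity of the product map \eqref{para:Tmap1} with respect to tensoring by $\uomega^*K^M_i$ and $\uomega^*K^M_j$, one gets that $c_\beta\circ c_\alpha[-j]$ is computed by first cupping with $\tilde\alpha\boxtimes\tilde\beta\in H^{i+j}_{Z_\alpha\cap Z_\beta}(X\times X, K^M_{i+j})$, pulling back along the diagonal $\Delta\colon X\to X\times X$, and then applying the $(i+j)$-fold adjunction — which is exactly $c_{\alpha\cdot\beta}$. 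The verification is a diagram chase combining \eqref{para:ext-prod6}, \eqref{para:ext-prod8}, the naturality of \eqref{lem:ten-supp1}, and the functoriality of \eqref{para:Tmap1}; the sign conventions introduced in \eqref{para:ext-prod3} are precisely what make $\boxtimes$ a morphism of complexes, so no further sign bookkeeping is needed at the level of cohomology classes.
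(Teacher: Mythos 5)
Your plan is correct and follows essentially the same route as the paper: parts \ref{lem:cyclecup1} and \ref{lem:cyclecup1.5} by additivity and formal compatibility of supports, part \ref{lem:cyclecup2} by reducing everything to Lemma \ref{lem:pbKCH} together with the naturality of \eqref{lem:ten-supp1}, \eqref{para:Tmap1} and the adjunction, and part \ref{lem:cyclecup3} by reducing to the identity of the iterated cup with the class of $\alpha\cdot\beta$ on the level of Milnor $K$-theory, proved via the adjoint formulation on $X\times X$, the external product $\boxtimes$, diagonal pullback, and the compatibilities \eqref{para:ext-prod6}, \eqref{para:ext-prod8}, \eqref{para:ext-prod9}. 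The paper just makes explicit the diagram chases you sketch (the square decomposition for \ref{lem:cyclecup2} and the large diagram plus \eqref{lem:cyclecup4} for \ref{lem:cyclecup3}), so no essential idea is missing.
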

\begin{proof}
\ref{lem:cyclecup1} and \ref{lem:cyclecup1.5} are immediate to check.
For \ref{lem:cyclecup2} we may assume $\Phi=\Phi_Z$, for some closed subset $Z\in X$.
It suffices to show the commutativity of the adjoint square,
which we can decompose into the following two diagrams (we write $G:=\gamma^i F$)
\[
\xymatrix{
f^{-1}G_{\sX}[-i]\ar[r]^-{\id\otimes \alpha}\ar[d]\ar@{}[dr]|-*+[o][F-]{1} &
f^{-1}G_\sX\otimes^L f^{-1} R\ul{\Gamma}_Z K^M_{i,X}\ar[d]\ar[r]\ar@{}[dr]|-*+[o][F-]{2} &
f^{-1}R\ul{\Gamma}_Z(G_\sX\otimes K^M_{i,X})\ar[d]\\
G_{\sY}[-i]\ar[r]_-{\id\otimes f^*\alpha} &
G_\sY\otimes^L R\ul{\Gamma}_{f^{-1}Z}K^M_{i,Y}\ar[r] &
R\ul{\Gamma}_{f^{-1}Z} (G_\sY\otimes K^M_{i,Y}),
}
\]
and
\[
\xymatrix{
f^{-1}R\ul{\Gamma}_{Z}(G_\sX\otimes K^M_{i,X})\ar[r]^-{\eqref{para:Tmap1}}\ar[d]\ar@{}[dr]|-*+[o][F-]{3}&
f^{-1}R\ul{\Gamma}_{Z}(G\otimes \uomega^*K^M_i)_{\sX}\ar[r]^-{\rm adj.}\ar[d]\ar@{}[dr]|-*+[o][F-]{4}&
f^{-1}R\ul{\Gamma}_{Z}F_{\sX}\ar[d]\\
R\ul{\Gamma}_{f^{-1}Z}(G_\sY\otimes K^M_{i,Y})\ar[r]_-{\eqref{para:Tmap1}}&
R\ul{\Gamma}_{f^{-1}Z}(G\otimes \uomega^*K^M_i)_{\sY}\ar[r]_-{\rm adj.}&
R\ul{\Gamma}_{f^{-1}Z} G_{\sY},
}\]
where the vertical maps are induced by pullback along $f: \sY\to \sX$ and for the first diagram
we use the canonical identification $f^{-1}A\otimes^L_\Z f^{-1}B= f^{-1}(A\otimes^L_\Z B)$.
The identity $Rf_*R\ul{\Gamma}_{f^{-1}Z}= R\ul{\Gamma}_Z Rf_*$ and the natural map 
$\id\to Rf_* f^{-1}$ yield by adjunction a natural transformation 
$f^{-1}R\ul{\Gamma}_Z \to R\ul{\Gamma}_{f^{-1}Z} f^{-1}$; using this the commutativity of the square 2 
is direct to check;  furthermore 
the proof of the commutativity of the squares 3 and 4 reduces to the case without support 
(i.e., $Z=X$),  which is immediate to check.
The commutativity of square 1 follows from Lemma \ref{lem:pbKCH}.
For \ref{lem:cyclecup3} we may assume $\Phi=\Phi_Z$ and $\Psi=\Phi_{Z'}$. Consider the following diagram
\[\resizebox{12.5cm}{!}{
\xymatrix{
(\gamma^i\gamma^j F\otimes^L K_{i,Z})[-j]\ar[r]\ar[d]^{\id\otimes\beta}&
(\gamma^i\gamma^j F\otimes K_{i})[-j]_Z\ar[r]^-{\varphi_i}\ar[d]^{\id\otimes\beta}&
\gamma^j F[-j]_Z\ar[d]^{\id\otimes \beta}\\
\gamma^i\gamma^j F\otimes^L K_{i,Z}\otimes^L K_{j,Z'}\ar[r]\ar[d]&
((\gamma^i\gamma^j F\otimes K_i)\otimes^L K_{j,Z'})_Z\ar[r]^-{\varphi_i}\ar[d]&
(\gamma^j F\otimes^L K_{j,Z'})_Z\ar[d]\\
\gamma^i\gamma^j F\otimes^L (K_i\otimes K_j)_{Z\cap Z'}\ar[d]^{\mu}\ar[r]&
(\gamma^{i}\gamma^{j} F\otimes K_i\otimes K_{j})_{Z\cap Z'}\ar[r]^-{\varphi_i}\ar[d]^-{\mu}&
(\gamma^j F\otimes K_{j})_{Z\cap Z'}\ar[d]^-{\varphi_j}\\
\gamma^{i+j} F\otimes^L K_{i+j, Z\cap Z'}\ar[r] &
(\gamma^{i+j}F\otimes K_{i+j})_{Z\cap Z'}\ar[r]^-{\varphi_{i+j}}&
F_{Z\cap Z'},
}
}\]
in which we skip the indices $\sX$ and $X$, we write $K$ instead of $K^M$,
we write $C_{Z}:=R\ul{\Gamma}_Z C$, for $C\in D(X_\Nis)$, the tensor products are over $\Z$, the map
$\mu$ is induced by multiplication $K^M_i\otimes K^M_j\to K^M_{i+j}$ and Corollary \ref{cor:gtwist}, 
the maps $\varphi_i$ are induced by the composition
\begin{equation}\label{eq;pairing_cup}(\gamma^i G)_{\sX}\otimes_\Z K^M_{i,X}\xr{\eqref{para:Tmap1}} 
(\gamma^i G\otimes_{\uMNST} \uomega^*K^M_i)_{\sX}\xr{\rm adj} G, \end{equation}
for $G\in \CItspNis$, and the unlabeled arrows are induced maps of the form
\[G\otimes^L R\ul{\Gamma}_Z H\xr{\eqref{lem:ten-supp1}} R\ul{\Gamma}_Z(G\otimes^L H)\to 
  R\ul{\Gamma}_Z(G\otimes H),\]
for sheaves $G,H$.
It is direct to check that this diagram commutes.
Thus it remains to show that  the composition in $D(X_\Nis)$
\ml{lem:cyclecup4}{\Z_X[-j-i]\xr{\alpha[-j]}R\ul{\Gamma}_Z K^M_{i,X}[-j]
\xr{\id\otimes \beta} R\ul{\Gamma}_Z K^M_{i,X}\otimes^L R\ul{\Gamma}_{Z'} K^M_{i,X}\\
\to R\ul{\Gamma}_{Z\cap Z'}(K^M_{i,X}\otimes K^M_{i,X})\to R\ul{\Gamma}_{Z\cap Z'} K^M_{i+j,X}}
is equal to the morphism induced by the intersection product $\alpha\cdot\beta\in \CH^{i+j}_{Z\cap Z'}(X)$,
see \eqref{para:ext-prod8}.
Denote by $p_i: X\times X\to X$ the projection to the $i$th factor and by $\Delta: X\inj X\times X$
the diagonal. Since $\Delta^{-1}p_i^{-1}=\id_X$ we obtain that the above composition is adjoint to
\begin{align*}
p_1^{-1}\Z_X[-i]\otimes^L p_2^{-1}\Z_X[-j] & \xr{\alpha\otimes \beta } 
   p_1^{-1}R\ul{\Gamma}_Z K^M_{i,X}\otimes^L p_2^{-1}R\ul{\Gamma}_{Z'}K^M_{j,X}\\
      &    \xr{p_1^*\otimes p_2^*} R\ul{\Gamma}_{Z\times X}K^M_{i,X\times X} 
              \otimes^L R\ul{\Gamma}_{X\times Z'}K^M_{j, X\times X}\\
   &   \xr{\rm mult} R\ul{\Gamma}_{Z\times Z'} K_{i+j, X\times X} \\
  &   \xr{\Delta^*}   \Delta_*R\ul{\Gamma}_{Z\cap Z'} K^M_{i+j,X}.
\end{align*}
By the compatibility of \eqref{para:ext-prod4} and \eqref{para:ext-prod5} this composition maps in $D(X_\Nis)$
to the composition of complexes
\mlnl{p_1^{-1}\Z_X[-i]\otimes p_2^{-1}\Z_X[-j]\xr{\alpha\otimes \beta }
 p_1^{-1}\ul{\Gamma}_Z C^\bullet_X(i)\otimes p_2^{-1}\ul{\Gamma}_{Z'}C^\bullet_Y(j)\\
 \xr{\boxtimes} \ul{\Gamma}_{Z\times Z'}C^\bullet_{X\times X}(i+j) 
     \xr{I(\Delta)} \Delta_*\ul{\Gamma}_{Z\cap Z'} C^\bullet_{X}(i+j),}
 where $I(\Delta)$ is a morphism as in \eqref{para:KIT3.4.5} (and it is compatible with 
 $\Delta^*: K^M_{X\times X, i+j}\to \Delta_* K^M_{X, i+j}$, see after \eqref{para:KIT3.4.5}).
Thus it follows from the compatibility of \eqref{para:ext-prod8} and \eqref{para:ext-prod9}
that \eqref{lem:cyclecup4} is induced by the intersection product $\alpha\cdot \beta$.
\end{proof}

\begin{lemma}\label{lem:divisor-cup}
Let $F\in \CItspNis$ and $\sX=(X, D)\in \uMCor$ with $X\in \Sm$.
Let $E$ be an effective Cartier divisor on $X$ which we view as an element in $\CH^1_{|E|}(X)$.
Denote by $j: U=X\setminus |E|\inj X$ the open immersion and set $\sU=(U, D_{|U})$.
Then the composition 
\[H^1(c_E): (\gamma^1 F)_{\sX}\to R^1\ul{\Gamma}_{|E|} F_{\sX}\cong j_*F_{\sU}/F_{\sX}, \]
factors via the natural map (which is injective by the semipurity of $F$)
\eq{lem:divisor-cup0}{F_{(X, D+E)}/F_{\sX}\inj j_*F_{\sU}/F_{\sX}.}
If  $\nu: V\inj X$ is an open neighborhood of $|E|$, such that $E_{|V}=\Div(e)$ 
is principal with $e\in \Gamma(V, \sO_V)$, then the induced map
\eq{lem:divisor-cup1}{(\gamma^1 F)_{\sX}\to F_{(X, D+E)}/F_{\sX}\cong F_{(V, (D+E)_{|V})}/F_{(V, D_{|V})} }
sends an element $a\in (\gamma^1 F)(\sX)$, represented by
an element in $F(\bcubee\otimes\sX)$ which restricts to $\tilde{a}\in F((\A^1,0)\otimes \sX)$ to the class 
modulo $F(V,D_{|V})$ of 
\eq{lem:divisor-cup2}{\Delta_V^*(\Gamma_e\times \nu)^*\tilde{a} \in F(V, (D+E)_{|V}),}
where $\Gamma_e\in \uMCor((V,E_{|V}), (\A^1, 0))$
is the  graph of the morphism $e\in \A^1(V)$ and $\Delta_V\in \uMCor((V, (D+E)_{|V}), (V, E_{|V})\otimes (V,D_{|V}))$
is induced by the diagonal.
\end{lemma}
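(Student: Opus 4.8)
The plan is to unwind the definition of $c_E$ given in \ref{para:cyclecup} for the cycle class $\alpha = [E]\in\CH^1_{|E|}(X)$, tracking the element $\tilde a\in F((\A^1,0)\otimes\sX)$ through each arrow. First I would record that, since $\sO_X$ is a Nisnevich sheaf, both the factorization \eqref{lem:divisor-cup0} and the formula \eqref{lem:divisor-cup2} are local on $X$; hence after replacing $X$ by the open neighbourhood $V$ of $|E|$ on which $E=\Div(e)$, I may assume $E=\Div(e)$ is principal with $e\in\Gamma(X,\sO_X)$. In that case the class of $E$ in $H^1_{|E|}(X,K^M_1)=\CH^1_{|E|}(X)$ is represented, via the Gersten complex $C^\bullet_X(1)$ of \ref{para:KIT}, by the image of the symbol $\{e\}\in K^M_1(X)=H^0(X,K^M_{1,X})$ under the boundary map $H^0(U,K^M_{1,U})\to H^1_{|E|}(X,K^M_{1,X})$, where $U=X\setminus|E|$; concretely $e$, viewed as a section of $\G_m$ over $U$, lifts the cycle.

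Next I would feed this description into the composition defining $c_\alpha$. The key point is that the map $\tilde\alpha\colon\Z_X[-1]\to R\ul\Gamma_{|E|}K^M_{1,X}$ is exactly the boundary of $\{e\}$, so that after composing with the enlarge–support map, the multiplication map \eqref{para:Tmap1}, and the adjunction isomorphism of Corollary \ref{cor:gtwist}, the induced map on $H^1$ sends $a\in\gamma^1F(\sX)$ to the class of the ``cup product'' of $a$ with $\{e\}$. Here the crucial input is Lemma \ref{lem:adj-twist}: it identifies, for a section represented by $\tilde a\in F((\A^1,0)\otimes\sX)$ and a unit $f\in\G_m(U)=\Ztr(\bcubee)(U,D_{|U})$, the image under the adjunction/multiplication composite with the explicit formula $\Delta^*((f-\deg f\cdot s_1)\otimes\id_{\sX})^*\tilde a$. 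Applying this with $f=e$ (so $\deg e=1$, the correction term being the restriction of $s_1$), and noting that on $U$ the graph $\Gamma_e$ of $e\colon U\to\A^1$ lands in $\A^1\setminus\{0\}$ so that the admissibility against $(\A^1,0)$ is automatic, gives that the image in $j_*F_{\sU}/F_{\sX}$ of $H^1(c_E)(a)$ is the class of $\Delta_X^*(\Gamma_e\times\id)^*\tilde a$ computed over $U$. The point to check is that this last expression, a priori a section over $U$, actually extends to $F(X,D+E)$: this is precisely where admissibility of $\Gamma_e$ as a correspondence $(X,E)\to(\A^1,0)$ enters — the pole of $e$ along $|E|$ is absorbed by the modulus $0$ at the origin of $\A^1$, so $(\Gamma_e\times\id)^*\tilde a$ is defined on all of $(X,E)\otimes\sX=(X,D+E)$ — yielding both the factorization through \eqref{lem:divisor-cup0} and the explicit formula \eqref{lem:divisor-cup2} with $V=X$.

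Assembling: I would write $R^1\ul\Gamma_{|E|}F_{\sX}\cong j_*F_{\sU}/F_{\sX}$ (standard, from the localization triangle for $R\ul\Gamma_{|E|}$), observe that the composition in \ref{para:cyclecup} lands, by the computation above, in the subsheaf $F_{(X,D+E)}/F_{\sX}$, and then read off the formula. The main obstacle I anticipate is bookkeeping: chasing $\tilde a$ correctly through the five or six arrows in the definition of $c_\alpha$ — in particular matching the enlarge–support map and the quotient $A\otimes^L B\to H_0$ against the concrete Gersten–complex representative of $[E]$, and verifying that the sign conventions and the degree-$1$ normalization in Lemma \ref{lem:adj-twist} (the subtraction of $\deg(f)\cdot s_1$) are compatible with the normalization of the cycle class of a principal divisor. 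Once the local principal case is done with Lemma \ref{lem:adj-twist}, the passage to the general statement is immediate by the locality remarks and the injectivity of \eqref{lem:divisor-cup0} coming from semipurity of $F$ together with \cite[Thm 3.1(2)]{S-purity}.
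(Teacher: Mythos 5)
Your proposal is correct and is essentially the paper's own argument: the paper likewise computes $c_E$ through an explicit $\ul{\Gamma}_{|E|}$-acyclic model of $\G_{m,X}$ (the two-term complex $[j_*\G_{m,U}\to \bigoplus_{x\in |E|^{(0)}} i_{x*}\Z]$, i.e.\ the relevant piece of the Gersten resolution of $K^M_1$), lifts the class of $E$ to a unit on $U$, and reads off both the factorization through \eqref{lem:divisor-cup0} and the formula \eqref{lem:divisor-cup2} from Lemma \ref{lem:adj-twist}, with the $\deg(f)\cdot s_1$ correction you flag disappearing in the quotient because $\Gamma_{s_1}$ lies in $\uMCor((V,\emptyset),(\A^1,0))$, so its contribution already lands in $F(V,D_{|V})$. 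The only (harmless) deviation is that you localize at the outset to make $E$ principal, whereas the paper represents $[E]$ by its Weil divisor and obtains the factorization globally, invoking principality only on the neighbourhood $V$ for the explicit formula; your reduction is legitimate since $E$ is Zariski-locally principal, $c_E$ is compatible with restriction to opens (Lemma \ref{lem:cyclecup}\ref{lem:cyclecup2}), and factoring through the injective subsheaf \eqref{lem:divisor-cup0} is a local condition.
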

\begin{proof}

Set $G^1= \bigoplus_{x\in |E|^{(0)}} i_{x*} \Z$, where $i_x: \ol{x}\inj X$ is the closed immersion.
The complex $G:=[j_*\G_{m,U}\xr{\Div} G^1]$
 is a $\ul{\Gamma}_{|E|}$-acyclic resolution of $\G_{m,X}$ on $X_\Nis$ and hence
\[R\ul{\Gamma}_{|E|} \G_{m,X}= G^1[-1].\]
The map $\Z_X[-1]\to R\ul{\Gamma}_{|E|} \G_{m,X}$ corresponding to $E\in \CH^1_{|E|}(X)$ (see \ref{para:cyclecup})
is thus induced by the map of complexes $\varphi_E: \Z_X[-1]\to G^1[-1]$ which  sends $1\in\Z_X$ to the
Weil divisor $[E]$. Set $\sX':=(X, D+E)$. We obtain a commutative diagram
\[\xymatrix{
(\gamma^1F)_{\sX}\otimes_\Z \G_{m,X}\ar[rr]^-{\eqref{lem:adj-twist1}}\ar[d] &  &
F_{\sX}\ar[d]\\
(\gamma^1F)_{\sX}\otimes_\Z j_*\G_{m, U}\ar[rr]^-{\eqref{lem:adj-twist1}} & &
F_{\sX'}\ar@{^(->}[r]& j_* F_{\sU}.
}\]
where the last horizontal arrow is injective by semipurity. This yields a map on the cokernels (taken vertically)
\[(\gamma^1F)_{\sX}\otimes_\Z G^1\xr{\theta^1} F_{\sX'}/F_{\sX}\inj j_*F_{\sU}/F_{\sX}.\]
where the last morphism is injective again by semipurity. Set $C:=[F_{\sX'}\to F_{\sX'}/F_{\sX}]$, then we obtain
a morphism of complexes $\theta$ which fits in the commutative diagram of complexes
\eq{lem:divisor-cup4}{\xymatrix{
((\gamma^1F)_{\sX}\otimes_\Z \G_{m,X})[0]\ar[rr]^-{\eqref{lem:adj-twist1}}\ar[d] &  &
F_{\sX}[0]\ar[d]^{\rm qis}\\
(\gamma^1F)_{\sX}\otimes_\Z G\ar[rr]^-{\theta}& &
C.
}}
Consider the following diagram in $D(X_\Nis)$
\[
\xymatrix{
(\gamma^1F)_{\sX}[-1]\ar@{=}[dd]\ar[r]\ar@/^2pc/[rr]^{c_E}\ar[dr] &
 (\gamma^1F)_{\sX}\otimes_\Z^L R\ul{\Gamma}_{|E|}\G_{m,X}\ar[d]^{(*)}\ar[r] & 
 R\ul{\Gamma}_{|E|} F_{\sX}\ar[d]^\simeq\\
        &
 R\ul{\Gamma}_{|E|}((\gamma^1F)_{\sX}\otimes_\Z G)\ar[r]^-{\theta} & 
 R\ul{\Gamma}_{|E|} C\\
(\gamma^1F)_{\sX}[-1]\ar[r]^-{\id\otimes \varphi_E} & 
(\gamma^1F)_{\sX}\otimes_\Z G^1[-1]\ar[r]^-{\theta^1}\ar[u] & 
F_{\sX'}/F_{\sX}[-1],\ar[u]
}
\]
where the map $(*)$ is the composition
\mlnl{(\gamma^1F)_{\sX}\otimes_\Z^L R\ul{\Gamma}_{|E|}\G_{m,X}\xr{\eqref{lem:ten-supp1}}
R\ul{\Gamma}_{|E|}((\gamma^1F)_{\sX}\otimes_\Z^L \G_{m,X})\\
\xr{\simeq}  R\ul{\Gamma}_{|E|}((\gamma^1F)_{\sX}\otimes_\Z^L G)\xr{\rm nat.}
R\ul{\Gamma}_{|E|}((\gamma^1F)_{\sX}\otimes_\Z G)}
and, using that $G^1$ and $F_{\sX'}/F_{\sX}$ have support in $|E|$, the lower vertical maps 
are induced by the natural map $\ul{\Gamma}_{|E|}\to R\ul{\Gamma}_{|E|}$.
The upper half of the diagram commutes by the definition of $c_E$ (see \ref{para:cyclecup})
and the commutativity of \eqref{lem:divisor-cup4};  the lower half of the diagram 
commutes by the definition of the involved maps.
Thus $H^1(c_E)$ factors via \eqref{lem:divisor-cup0}. 
If $V$ is as in \eqref{lem:divisor-cup1}, then we can lift $[E]\in G^1(V)$ to $e\in \Gamma(V,j_*\G_m)$.
Therefore, formula \eqref{lem:divisor-cup2} follows directly from the definition of $\theta^1$ and 
Lemma \ref{lem:adj-twist}, where we have to use the fact that the graph of the composition 
$s_1:V\to \Spec k=\{1\}\inj \A^1$ defines an element in $\uMCor((V,\emptyset), (\A^1,0))$ and hence
$\Delta_V^*(\Gamma_{s_1}\times \nu)^*\tilde{a}$ vanishes modulo $F(V, D_{|V})$.
\end{proof}
\begin{remark} Let $C_*(-)\colon {\rm Comp}^+(\NST) \to {\rm Comp}^+(\NST)$ be the classical $\A^1$-fibrant replacement functor given by the Suslin complex \cite[2.14]{MVW}. When applied to $\Z_{\tr}(\G_m^{\wedge i})$, it gives an explicit model for the weight $i$ motivic complex $\Z(i)[i] = C_*\Z_{\tr}(\G_m^{\wedge i})$ \cite[3.1]{MVW}. Let $F\in \uMNST$. By adjunction, we get an evaluation pairing
    \[ \ul{\omega}^*(\Z(i)[i]) \otimes^{\mathbb{L}} \ul{\Hom}_{D(\uMNST)}(\ul{\omega}^*(\Z(i)[i]) , F[0]) \to F[0]\]
    where we note that $\ul{\omega}^*(\Z(i)[i])$ is still a bounded below complex of $\uMNST$, since the functor $\ul{\omega}^*$ is exact. 
    By taking $\mathcal{H}^0$ in the above pairing, we get an induced map
    \[ \ul{\omega}^*\mathcal{H}^0((\Z(i)[i])) \otimes \ul{\Hom}_{\uMNST}(\ul{\omega}^*\mathcal{H}^0(\Z(i)[i]) , F) \to F \]
    noting that $\ul{\omega}^*(\Z(i)[i])$ is concentrated in non negative degrees, and since there is an isomorphism $\ul{\omega}^*\mathcal{H}^0(\Z(i)[i]) \cong  \ul{\omega}^*K^M_i$, this reads, for every $\sX= (X,D) \in \uMCor$, as 
    \[K^M_{i,X} \tensor (\gamma^iF)_{\sX} \to F.\]
    By construction, it agrees with the cup product pairing \eqref{eq;pairing_cup}. We will not use this extended version of the pairing in the rest of the paper. 
\end{remark}

\section{Projective bundle formula}\label{sec:pbf}

\begin{para}\label{para:pbf}
Let $F\in \CItspNis$ and $\sX=(X,D)\in\uMCorls$. Let $V$ be a locally free $\sO_X$-module of rank $n+1$.
Denote by 
\[\pi: P= \P(V)=\Proj(\Sym^\bullet_{\sO_X}V)\to X\]
the projection of the corresponding projective bundle and set $\sP:=(P, \pi^*D)$.
Let $\xi:=c_1(\sO_P(1))\in \CH^1(P)$ be the first Chern class of the hyperplane 
line bundle $\sO_P(1)$  and denote by $\xi^i\in \CH^i(P)$ its $i$-fold self-intersection. 
We denote by $\lambda^i_V$ the composition in $D(X_{\Nis})$
\[\lambda^i_V: (\gamma^i F)_{\sX}[-i]\xr{\pi^*} R\pi_* (\gamma^i F)_{\sP}[-i]
\xrightarrow[\eqref{para:cyclecup1}]{c_{\xi^i}} R\pi_* F_{\sP}, 
\quad 0\le i \le n.\]
We thus get a map
\eq{para:pbf1}{\lambda_V=\sum_{i=0}^n \lambda^i_V: 
\bigoplus_{i=0}^n (\gamma^i F)_{\sX}[-i]\to R\pi_* F_{\sP}.}
\end{para}

\begin{lemma}\label{lem:P1bf}
Let $F$ and $(X,D)$ be as in \ref{para:pbf}. Consider the projection $\pi: \P^1\times X\to X$.
Then 
\[H^1(\lambda_V^1): (\gamma^1F)_{\sX}\xr{\simeq} R^1\pi_* F_{\P^1\otimes \sX}\]
is an isomorphism and $R^i\pi_*F_{\P^1\otimes \sX}=0$, for all $i\ge 2$.
\end{lemma}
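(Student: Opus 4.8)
The statement is the $\P^1$-bundle case of the projective bundle formula, and is the base case upon which the general rank-$(n+1)$ formula in \ref{thm:pbf} will be built. My plan is to combine the $(\P^1,\emptyset)$-invariance of cohomology with a computation of the contribution coming from the divisor at infinity, using the Gysin-type description of $\gamma^1 F$ and the cup product with the class $\xi = c_1(\sO(1))$.

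\emph{Step 1: Reduce to a local computation on $X$.} Since all sheaves are Nisnevich sheaves on $X$ and all maps in sight are maps of Nisnevich sheaves, and $R^i\pi_* F_{\P^1\otimes\sX}$ is the sheafification of $U\mapsto H^i(\P^1_U, F_{\P^1\otimes(U,D_{|U})})$, it suffices to check the statement on stalks, i.e.\ we may assume $X$ is (the spectrum of) a Henselian local ring, and even replace $F$ by $\uHom_{\uMPST}(\Ztr(\sX),F)\in\CItspNis$ (Lemma \ref{lem:Nis}\ref{lem:Nis1}) to reduce to $\sX=(\Spec k,\emptyset)$, i.e.\ to computing $H^i(\P^1, F_{(\P^1,\emptyset)})$ and comparing with $\gamma^1 F$. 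Here I would use the open cover $\P^1 = \A^1_t \cup \A^1_{1/t}$ with intersection $\G_m$, giving a Mayer--Vietoris sequence
\[
0\to H^0(\P^1, F_{\P^1})\to F(\A^1)\oplus F(\A^1)\to F(\G_m)\to H^1(\P^1,F_{\P^1})\to 0,
\]
and $H^i(\P^1,F_{\P^1})=0$ for $i\ge 2$ since $\P^1$ has Nisnevich cohomological dimension $1$. This already gives the vanishing of $R^i\pi_*F_{\P^1\otimes\sX}$ for $i\ge 2$.

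\emph{Step 2: Identify $H^1(\P^1,F_{\P^1})$ with $\gamma^1 F$.} From the Mayer--Vietoris sequence, $H^1(\P^1, F_{\P^1}) = F(\G_m)/(F(\A^1_t)+F(\A^1_{1/t}))$, where the two copies of $F(\A^1)$ are embedded via restriction to $\G_m = \A^1\setminus\{0\} = \A^1\setminus\{\infty\}$. Using cube-invariance, $F(\A^1_t)=F(\bcube)=F(\Spec k)$ and similarly for $\A^1_{1/t}$, and one identifies $F(\G_m)$ via the semipurity and \cite[Lem 5.9]{S-purity} with $F((\A^1,0))$ modulo $F(\A^1)$ (cf.\ Remark \ref{rmk:gamma}): indeed $\gamma^1 F(\Spec k) = F((\A^1,0))/F(\A^1) = F(\bcubee)/F((\P^1,\infty))$ by \eqref{cor:gtwist1.5} and \cite[Lem 5.9]{S-purity}. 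A short diagram chase, using that the two charts at $0$ and $\infty$ account exactly for the two-step filtration, yields a canonical isomorphism $H^1(\P^1, F_{\P^1})\cong \gamma^1 F(\Spec k)$; after unwinding the base change over general $\sX$ this gives $R^1\pi_*F_{\P^1\otimes\sX}\cong (\gamma^1 F)_{\sX}$.

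\emph{Step 3: Check the isomorphism of Step 2 is $H^1(\lambda^1_V)$.} This is the step I expect to be the main obstacle, since it is a compatibility between two a priori differently-defined maps. Here $V$ is the trivial bundle of rank $2$, $P=\P^1\times X$, $\sO_P(1)$ is the hyperplane bundle, and $\xi = c_1(\sO_P(1))\in\CH^1(\P^1\times X)$ can be represented by the divisor $\{\infty\}\times X$ (the section at infinity, which is disjoint from $\A^1\times X$). By Lemma \ref{lem:divisor-cup}, cupping with this divisor class $c_\xi$ has an explicit description: applied to $\gamma^1 F$ and then pushed to $X$, it is realized via the ``graph of $e$'' recipe of \eqref{lem:divisor-cup2} with $e$ a local equation of $\{\infty\}\times X$. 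One then compares this with the connecting homomorphism in the Mayer--Vietoris sequence above (the boundary map $F(\G_m)\to H^1(\P^1, F_{\P^1})$), which by Step 2 was used to identify the target with $(\gamma^1 F)_{\sX}$. Both maps are built from the same data --- the two-chart decomposition of $\P^1$ around $\infty$ and the counit of the $\uomega^*\G_m$-adjunction --- so the comparison reduces, via Lemma \ref{lem:adj-twist} and Lemma \ref{lem:cyclecup}\ref{lem:cyclecup2} (compatibility of $c_\alpha$ with pullback, applied to $\pi$), to a bookkeeping check on $(\P^1,\emptyset)$-sections. Once this identification is in place, bijectivity of $H^1(\lambda^1_V)$ follows from Step 2, and the vanishing for $i\ge 2$ from Step 1, completing the proof.
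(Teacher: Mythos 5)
There is a genuine gap, and it sits in Steps 1 and 2. The reduction ``replace $F$ by $\uHom_{\uMPST}(\Ztr(\sX),F)$ and assume $\sX=(\Spec k,\emptyset)$'' is only valid for statements about \emph{sections} (this is how it is used in Proposition \ref{prop:mu}); it does not compute sheaf cohomology. The stalk of $R^i\pi_*F_{\P^1\otimes\sX}$ at $x\in X$ is the Nisnevich cohomology of $\P^1_A$ with $A=\sO_{X,x}^h$, and the Nisnevich topology of $\P^1\times X$ is strictly finer than the pullback of that of $\P^1$, so $H^i(\P^1, G_{(\P^1,\emptyset)})$ with $G=\uHom(\Ztr(\sX),F)$ is not the group you need. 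In particular the vanishing for $i\ge 2$ cannot be deduced from ``$\P^1$ has Nisnevich cohomological dimension $1$'': the relevant scheme $\P^1_A$ has dimension $1+\dim A$. To run a two-chart Mayer--Vietoris over the actual base you would need $H^{\ge 1}$-vanishing of $F$ on $\A^1\times X$ and $\G_m\times X$ (with the modulus pulled back from $\sX$), which is exactly the nontrivial content of Lemma \ref{lem:higher-HI} together with Lemma \ref{lem:comp-SNCD} and the cube invariance of cohomology \cite[Thm 9.3]{S-purity} --- inputs your argument never invokes for cohomology. Worse, the identification in Step 2 rests on the claim $F(\A^1_t)=F(\bcube)=F(\Spec k)$ ``by cube-invariance''; this is false, since cube invariance concerns $\bcube=(\P^1,\infty)$, not $(\A^1,\emptyset)$: already $\widetilde{\G_a}(\A^1)=k[t]\ne k$. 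So the ``two-step filtration'' chase identifying the Mayer--Vietoris cokernel with $\gamma^1F$ does not go through as stated, and with it the comparison in Step 3 (which is otherwise in the right spirit, via Lemma \ref{lem:divisor-cup}) loses its foundation.

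For contrast, the paper's proof avoids Mayer--Vietoris altogether: it uses the semipurity injection $0\to F_{\P^1\otimes\sX}\to F_{(\P^1,0)\otimes\sX}\to C^1\to 0$, where the modulus is enlarged at $0$. The middle term is $\pi_*$-acyclic by the cube invariance of cohomology \cite[Thm 9.3]{S-purity}, and $C^1$ is supported on the section $0\times X$, so this is a two-term $\pi_*$-acyclic resolution; this gives the vanishing for $i\ge 2$ and $R^1\pi_*F_{\P^1\otimes\sX}=\pi_*C^1$ (the last step also uses Corollary \ref{cor:PnInv}, Lemma \ref{lem:comp-SNCD} and Lemma \ref{lem:higher-HI}). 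Then $\pi_*C^1$ is identified with $(\gamma^1F)_{\sX}$ via the description $\gamma^1F(\sU)=F((\A^1,0)\otimes\sU)/F(\A^1\otimes\sU)$ of Remark \ref{rmk:gamma} --- note the modulus $(\A^1,0)$, not a quotient by $F(\Spec k)$ --- and finally $H^1(\lambda^1_V)$ is matched with this isomorphism using Lemma \ref{lem:divisor-cup}, representing $\xi$ by the divisor $0\times U$ and using the identity $(\id_{\A^1}\times\pi)\circ(\Gamma\times\nu)\circ\Delta_{\A^1_U}=\id_{\A^1_U}$. If you want to salvage your outline, you must (a) work over the honest base $\P^1\times X$ and justify acyclicity of the affine charts by Lemmas \ref{lem:comp-SNCD} and \ref{lem:higher-HI}, and (b) replace the false identity $F(\A^1)=F(\Spec k)$ by a modulus-theoretic identification of the cokernel with $F((\A^1,0)\otimes\sX)/F(\A^1\otimes\sX)$.
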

\begin{proof}
Set $\sP:=\P^1\otimes \sX$ 
and define $C^1$ by the exact sequence
\eq{lemP1bf}{0\to F_{\sP}\to \underbrace{F_{(\P^1, 0)\otimes \sX}\to C^1}_{=:C}\to 0,}
where the first map is injective by the semipurity of $F$. Since $C^1$ has support in $0\times X$ and 
$R^i\pi_*F_{(\P^1,0)\otimes \sX}=0$, for all $i\ge 1$, by the cube invariance of the cohomology, \cite[Thm 9.3]{S-purity}, it follows that 
$C$ is a $\pi_*$-acylic resolution of $F_{\sP}$ sitting in degree $[0,1]$. This proves the vanishing statement.
Furthermore, we claim that
\[\pi_*C^1= (\gamma^1F)_{\sX}.\]
Indeed,  for $U\to X$ \'etale and $\sU=(U,D_U)$ we have 
\begin{equation}\label{eq:lemP1bf2}(\gamma^1F)_{\sX}(U) = \frac{F((\A^1,0)\otimes \sU)}{F(\A^1\otimes\sU)}       
                                = \pi_*C^1(U),
\end{equation}
where the first equality holds by Remark \ref{rmk:gamma};
the second equality holds by the following two observations:
since $C^1$ is supported on $0\times X$, we have an exact sequence on $\P^1\times X$
\[0\to j_* F_{\A^1\otimes \sX}\to j_*F_{(\A^1, 0)\otimes \sX}\to C^1\to 0,\]
where $j :\A^1\times X\inj \P^1\times X$ is the open immersion, 
and by the Lemmas \ref{lem:comp-SNCD} and \ref{lem:higher-HI} we have 
\[R^1\pi_*( j_* F_{\A^1\otimes \sX})= R^1(\pi j)_* F_{\A^1\otimes \sX}=0.\]
Furthermore, the map $\pi_*F_{\sP}\to \pi_*F_{(\P^1,0)\otimes \sX}$
 is an equality by Corollary \ref{cor:PnInv} and hence 
 {\eqref{lemP1bf}, \eqref{eq:lemP1bf2} and $R^1\pi_*F_{(\P^1,0)\otimes \sX}=0$ imply}
$R^1\pi_* F_\sP=\pi_* C^1=(\gamma^1F)_\sX$. 
 {It remains} to show that  $H^1(\lambda_V^1):(\gamma^1F)_\sX \to R^1\pi_*F_\sP=(\gamma^1F)_\sX$
realises such isomorphism. To this end, note that on $U\to X$ 
we can identify $H^1(\lambda_V)$ by Lemma \ref{lem:divisor-cup}
 with the composition
\mlnl{\gamma^1 F(\sU)\xr{\pi^*} \gamma^1 F(\P^1\otimes \sU)
=\frac{F((\A^1,0)\otimes \P^1\otimes \sU)}{F(\A^1\otimes \P^1\otimes \sU)} \\
\xr{\Delta_{\A^1_U}^* (\Gamma\times \nu)^*} 
\frac{F((\A^1,0)\otimes \sU)}{F(\A^1\otimes\sU)}=\gamma^1 F(\sU),}
where $\nu: \A^1_U\inj \P^1_U$ is the open immersion and
we identify $\xi$ on $\P^1_U$ with the class of the divisor $0\times U$
and  where $\Gamma$ is the graph of the projection $\A^1\times U\to \A^1$.
Thus the equality
\[(\id_{\A^1}\times\pi)\circ(\Gamma\times \nu)\circ\Delta_{\A^1_U}=\id_{\A^1_U}\]
implies the statement.
\end{proof}
We are now ready to prove the projective bundle theorem in our setting. 

 {
\begin{thm}\label{thm:pbf}
The map \eqref{para:pbf1} is an isomorphism in $D(X_{\Nis})$.
\end{thm}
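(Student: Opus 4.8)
The plan is to reduce the general projective bundle case to the rank-one case, i.e.\ to Lemma \ref{lem:P1bf}, by the standard dévissage over a flag. First I would note that the statement is local on $X$ for the Nisnevich (even Zariski) topology, so we may assume $V\cong \sO_X^{n+1}$ is trivial and $P=\P^n_X$. Moreover, since the cohomology sheaves $R^q\pi_* F_{\sP}$ are compatible with restriction along étale maps $U\to X$, and all the maps $\lambda^i_V$ are constructed functorially via $c_{\xi^i}$ (using Lemma \ref{lem:cyclecup}\ref{lem:cyclecup2} for base change and \ref{lem:cyclecup3} for multiplicativity), it suffices to prove that $\lambda_V$ is an isomorphism after evaluating on the small étale site; this is really a statement about the complexes $R\pi_* F_{\P^n\otimes\sX}$.

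\textbf{Induction on $n$.} The induction start $n=0$ is trivial and $n=1$ is exactly Lemma \ref{lem:P1bf} (giving $R\pi_* F_{\P^1\otimes\sX}$ concentrated in degrees $0,1$ with $H^0 = F_\sX$ via $\pi^*$, and $H^1=(\gamma^1 F)_\sX$ via $\lambda^1$). For the inductive step I would factor the projection $\P^n_X \to X$ through a projective bundle of smaller rank: choose a hyperplane $\P^{n-1}\subset \P^n$ and a complementary point, and use the blow-up $\rho\colon Y\to \P^n$ at that point, which realizes $Y$ as a $\P^1$-bundle $q\colon Y\to \P^{n-1}$ over the hyperplane. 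By Theorem \ref{thm:blow-upAn} (applied with $D_0$ the strict transform of a hyperplane through the blown-up point, exactly as in the proof of Theorem \ref{thm:PnInv}), one has $R\rho_* F_{(Y,\rho^*L)\otimes\sX} \cong F_{(\P^n, L)\otimes\sX}$, and by semipurity together with \cite[Thm 3.1(2)]{S-purity} the sheaf $F_{\P^n\otimes\sX}$ sits inside $F_{(\P^n,L)\otimes\sX}$. Then Lemma \ref{lem:P1bf} applied fiberwise to $q$ (more precisely, its relative version, which follows since $q$ is Zariski-locally a projection $\P^1\times(-)\to(-)$ and the argument of Lemma \ref{lem:P1bf} is local) gives
\[R q_* F_{(Y,\rho^*L)\otimes\sX} \;\cong\; F_{(\P^{n-1},L)\otimes\sX}\;\oplus\;(\gamma^1 F)_{(\P^{n-1},L)\otimes\sX}[-1],\]
and composing with $R\pi^{n-1}_*$ and using the inductive hypothesis for $\P^{n-1}$ (applied both to $F$ and to $\gamma^1 F$, which is again in $\CItspNis$ by Corollary \ref{cor:gtwist}) yields a decomposition $R\pi_* F_{\P^n\otimes\sX}\cong \bigoplus_{i=0}^n (\gamma^i F)_\sX[-i]$.

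\textbf{Matching with the cycle maps.} The remaining — and I expect main — task is to identify this abstract decomposition with the explicit map $\lambda_V = \sum_i \lambda^i_V$, i.e.\ to check that the splitting produced by the dévissage is the one given by cupping with the powers $\xi^i$ of the hyperplane class. Here I would argue degree by degree: the component $\lambda^0 = \pi^*$ is clearly the inclusion of $F_\sX = H^0$; for the top piece one uses that $\xi$ restricts to the relative hyperplane class on each $\P^1$-fiber of $q$, so that the identification of $H^1$ of the fiberwise $\P^1$-bundle formula with cupping by $\xi$ is precisely Lemma \ref{lem:P1bf}; and for the intermediate $\lambda^i$ one uses the multiplicativity $c_{\xi^i} = c_{\xi}\circ c_{\xi^{i-1}}$ from Lemma \ref{lem:cyclecup}\ref{lem:cyclecup3} together with the compatibility of $c_\xi$ with the blow-up/projection maps (Lemma \ref{lem:cyclecup}\ref{lem:cyclecup2}, noting $\rho^*\xi_{\P^n}$ differs from the pullback of the hyperplane class on $\P^{n-1}$ by the exceptional divisor, whose cup-product contribution is controlled via Lemma \ref{lem:divisor-cup}). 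Unwinding these compatibilities shows that $\lambda_V$ equals the dévissage isomorphism up to a triangular change of basis with $1$'s on the diagonal, hence is itself an isomorphism. The bookkeeping of these triangular corrections is the one genuinely delicate point; everything else is a formal consequence of the results already established.
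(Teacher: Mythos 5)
There is a genuine gap in your inductive step. You run the dévissage through the modulus pair $(\P^n,L)$ and the blow-up $(Y,\rho^*L)$, i.e.\ through the machinery of Theorem \ref{thm:blow-upAn}, and then claim
$Rq_*F_{(Y,\rho^*L)\otimes\sX}\cong F_{(\P^{n-1},L)\otimes\sX}\oplus(\gamma^1F)_{(\P^{n-1},L)\otimes\sX}[-1]$.
This is false: since $L$ passes through the blown-up point, $\rho^*L=\tilde L+E_0$ contains the exceptional divisor, so over the exceptional $\P^{n-1}$ the $\P^1$-bundle $q$ locally has the shape $\bcube\otimes\sW\to\sW$, and by cube-invariance $Rq_*F_{(Y,\rho^*L)\otimes\sX}\cong F_{(E,L')\otimes\sX}$ with \emph{no} $\gamma^1$-summand — this is exactly Lemma \ref{lem:Pn-blow-up}, and it is why the modulus route collapses everything and proves $R\pi_*F_{(\P^n,L)\otimes\sX}\cong F_\sX$ (Theorem \ref{thm:PnInv}) rather than the projective bundle formula. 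If instead you drop the modulus $L$ so as to keep the $\gamma^1$-term on the fibres, then the other leg of your argument breaks: Theorem \ref{thm:blow-upAn} no longer applies (its center must lie in a reduced component of the modulus), and $R\rho_*F_{Y\otimes\sX}$ is \emph{not} isomorphic to $F_{\P^n\otimes\sX}$ — the discrepancy is measured precisely by the smooth blow-up distinguished triangle of Theorem \ref{thm:bus}, which your proposal never invokes. The semipurity inclusion $F_{\P^n\otimes\sX}\subset F_{(\P^n,L)\otimes\sX}$ only controls $\pi_*$ (that is the content of Corollary \ref{cor:PnInv}); all the higher direct images you are after live in the cokernel supported on $L$, so it cannot transport the decomposition back.

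The paper's proof uses the same geometry (blow up the image of a section $i(X)\subset\P^n_X$, view the blow-up as the $\P^1$-bundle $\P(\sO_E\oplus\sO_E(1))$ over the exceptional divisor $E\cong\P^{n-1}_X$), but the homological engine is different: one applies $R\pi_*$ to the blow-up triangle of Theorem \ref{thm:bus} with empty extra modulus, splits off $F_\sX$ using Corollary \ref{cor:PnInv} and the section, identifies the remaining pieces by the $n=1$ case (Lemma \ref{lem:P1bf}) for the bundle $W=\sO_E\oplus\sO_E(1)$ and by induction on $E$, and matches this with $\lambda_V$ via Lemma \ref{lem:cyclecup} and the concrete Chern class identities $i_E^*\sO_{\P(W)}(1)=\sO_E$ (so $i_E^*\circ c_\eta=0$) and $\rho^*(\xi^i)=q^*(\xi_E^{i-1})\cdot\eta$ in $\CH^i(Y)$ — note also that your "triangular change of basis" bookkeeping is exactly this identity, and that $W$ is not trivial, which is why the twist by $\sO_E(1)$ must enter the comparison. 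To repair your argument you would have to replace the $(\P^n,L)$-step by the blow-up triangle of Theorem \ref{thm:bus}, at which point you essentially recover the paper's proof.
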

\begin{proof}
The question is local on $X$. Hence the statement follows for $n=1$ from Lemma \ref{lem:P1bf}.
We now assume $n\ge 2$ and $P=\P^n_X$. 
Consider the diagram
\[\xymatrix{
E\ar[r]^{i_E}\ar[d]_{\rho_E} &Y\ar[r]^q\ar[d]^\rho   & E\ar[d]^{\rho_E}\\
X\ar[r]^i &P\ar[r]^{\pi} & X,
}\]
where $\pi$ is the projection, $i$ a section of $\pi$, $\rho$ the blow-up of $P$ in $i(X)$, $i_E$ is the closed immersion 
of the exceptional divisor, and  $q$ is the standard map, which identifies 
$Y$ with $\P(W)$, where $W:=\sO_E\oplus\sO_E(1)$.
Denote by $V$ (resp. $V_E$) the trivial $\sO_X$-module of rank $n+1$ (resp. $n$) defining
the projective bundle $P$ (resp. $E$) over $X$ (recall we work locally on $X$). We set $\sE:=(E, D_{|E})$ and $\sY:=(Y,D_{|Y})$.
By Corollary \ref{cor:PnInv} we have 
\eq{thm:pbf1}{R\pi_*F_{\sP}\cong F_{\sX}\oplus \tau^{\ge 1}R\pi_*F_{\sP},}
where the map $F_{\sX} =\pi_* F_{\sP} \to R\pi_*F_{\sP}$ is split by the section $i$.
Thus applying $R\pi_*$ to the exact triangle from Theorem \ref{thm:bus} induced by the blow-up $Y\to P$,
we can split off $F_\sX$ to obtain the triangle on the bottom of the following diagram:
\[\xymatrix{
\oplus_{i=1}^n (\gamma^i F)_{\sX}[-i]\ar[d]^{\oplus_i \lambda^i_V}\ar[rr]^-{(0, \lambda_{V_E}[-1])}& 
&
R\rho_{E*}F_{\sE}\oplus R\rho_{E*}\gamma^1F_{\sE}[-1] \ar[r]^-{\id+0}\ar[d]^{\lambda_W}&
R\rho_{E*}F_{\sE}\ar@{=}[d]\\
\tau^{\ge 1}R\pi_* F_{\sP}\ar[rr]^{\rho^*_1}&
& 
R\pi_*R\rho_*F_{\sY}\ar[r]^-{i_E^*}&
R\rho_{E*}F_{\sE},
}\]
where the map labeled by $\rho^*_1$ is induced by \eqref{thm:pbf1} and $\rho^*$.
Note that the bottom triangle is actually split since $i_E^*q^*=\id$. 
In the top left $\lambda_{V_E}$ is applied to $\sE/\sX$ and $(\gamma^1F)_{\sX}$ and it is an isomorphism by induction. 
Hence the top sequence is a split triangle as well. Since $\lambda_W$ is also an isomorphism by the $n=1$ case,
it remains to show that the diagram is commutative. 
Let 
$\xi=c_1(\sO_{P}(1))$, $\xi_E= c_1(\sO_{E}(1))$ and $\eta=c_1(\sO_{\P(W)}(1))$
be the first Chern classes of the corresponding fundamental line bundles.
The commutativity of the right square follows from 
$i_E^*\circ c_\eta=c_{i_E^*\eta}\circ i_E^*=0$,
where we use Lemma \ref{lem:cyclecup}\ref{lem:cyclecup2} for the first equality and $i_E^*\sO_{\P(W)}(1)=\sO_E$ for the second.
The commutativity of the left square reduces by Lemma \ref{lem:cyclecup}\ref{lem:cyclecup2} to the equality
\[\rho^*(\xi^i)=  q^*(\xi_E^{i-1})\cdot \eta \quad \in \CH^i(Y),\]
which is well-knwon and straightforward to check.
\end{proof}
}

\section{The Gysin triangle}\label{sec:gysin}
 {We begin with an elementary lemma on split exact triangles in an arbitrary triangulated category. }

\begin{lemma}\label{lem:can-sec}
Let $A\xr{a}B \xr{b} C\xr{\partial}A[1]$ be an exact triangle in a triangulated category $T$.
\begin{enumerate}[label=(\arabic*)]
\item\label{lem:can-sec1} If $\tau\colon C\to B$ is a section of $b$, then there is a unique  map $\sigma\colon B\to A$,
such that $a\circ \sigma = \id_B- e$, where $e:=\tau \circ b\in \Hom_T(B,B)$. Moreover $\sigma$ is a  {retraction} of $a$.
\item\label{lem:can-sec2} If $\sigma:B\to A$ is a  {retraction} of $a$, then there is a unique map $\tau: C\to B$,
such that $\tau\circ b= \id_B- \epsilon$, where $\epsilon:= a\circ \sigma\in \Hom_T(B,B)$. 
Moreover $\tau$ is a section of $b$
\end{enumerate}
In \ref{lem:can-sec1} (resp. \ref{lem:can-sec2}) we call $\sigma$ (resp. $\tau$) 
{\em the canonical  {retraction} defined by $\tau$ (resp.   {the canonical section defined by} $\sigma$)}.
 {Moreover, the canonical section defined by the canonical retraction of $\tau$ is equal to $\tau$, 
and similarly with $\sigma$.}
\end{lemma}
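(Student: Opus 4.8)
The plan is to prove the two parts symmetrically using the standard long exact sequences associated to the triangle, together with the following elementary fact about triangulated categories: given an exact triangle $A\xr{a}B\xr{b}C\xr{\partial}A[1]$, the sequence of abelian groups $\Hom_T(B,A)\xr{a^*}\Hom_T(A,A)$ and $\Hom_T(B,C)\xr{b_*}\Hom_T(C,C)$ (and analogous ones) fit into long exact sequences coming from applying $\Hom_T(-,A)$ and $\Hom_T(C,-)$; the key point will be to extract existence and uniqueness from exactness at the appropriate spot.

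\emph{Proof of \ref{lem:can-sec1}.} Suppose $\tau\colon C\to B$ satisfies $b\circ\tau=\id_C$, and set $e=\tau\circ b$. Then $e\circ a = \tau\circ b\circ a = 0$ since $b\circ a = 0$, and $(\id_B - e)\circ a = a$. Also $b\circ(\id_B - e) = b - b\circ\tau\circ b = b - b = 0$. Apply $\Hom_T(B,-)$ to the triangle rotated as $C[-1]\xr{-\partial[-1]}A\xr{a}B\xr{b}C$; we get the exact sequence
\[
\Hom_T(B,A)\xr{a_*}\Hom_T(B,B)\xr{b_*}\Hom_T(B,C).
\]
Since $b_*(\id_B - e) = b\circ(\id_B - e) = 0$, there exists $\sigma\colon B\to A$ with $a\circ\sigma = \id_B - e$. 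For uniqueness: if $a\circ\sigma = a\circ\sigma'$ then $a\circ(\sigma-\sigma') = 0$, and by the exact sequence $\Hom_T(C[-1],A)\xr{(\partial[-1])^*}\Hom_T(A,A)\xr{a^*}\Hom_T(B,A)$ wait — I need uniqueness of $\sigma$ as an element of $\Hom_T(B,A)$, so I should instead use the exact sequence obtained by applying $\Hom_T(-,A)$ to $A\xr{a}B\xr{b}C$, which gives $\Hom_T(C,A)\xr{b^*}\Hom_T(B,A)\xr{a^*}\Hom_T(A,A)$; this only gives injectivity of $a^*$ modulo the image of $b^*$, not outright. The correct device for uniqueness is: because $\tau$ is a \emph{section} of $b$, the triangle splits, so $B\cong A\oplus C$ compatibly, and under this splitting $a$ is the inclusion of $A$, $b$ the projection to $C$, $\tau$ the inclusion of $C$; then $\sigma$ is forced to be the projection $B\to A$. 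I would phrase this cleanly: once $\sigma$ exists with $a\circ\sigma=\id_B-e$, compute $a\circ\sigma\circ a = (\id_B-e)\circ a = a = a\circ\id_A$; since the triangle splits, $a$ is a split monomorphism (it has a left inverse — indeed $\sigma\circ a$ we must check is $\id_A$), hence $a$ is monic and $\sigma\circ a = \id_A$, which both shows $\sigma$ is a retraction of $a$ and, since any two such $\sigma$ differ by something killed by post-composition considerations... Actually the cleanest uniqueness argument: suppose $a\circ\sigma_1 = a\circ\sigma_2 = \id_B - e$. Then $a\circ(\sigma_1-\sigma_2)=0$. Since the triangle is split (via $\tau$), $a$ admits a retraction, so $a$ is a monomorphism in $T$, whence $\sigma_1=\sigma_2$. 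And $\sigma\circ a = \id_A$ follows from $a\circ\sigma\circ a = a = a\circ\id_A$ and $a$ monic.

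\emph{Proof of \ref{lem:can-sec2}.} This is formally dual: rotate the triangle and run the same argument, or invoke \ref{lem:can-sec1} in the opposite category $T^{\op}$ (noting that the opposite of a triangulated category is triangulated, with triangles reversed, and that sections and retractions swap). Given a retraction $\sigma\colon B\to A$ of $a$, set $\epsilon = a\circ\sigma$. Then $b\circ\epsilon = b\circ a\circ\sigma = 0$ and $(\id_B - \epsilon)\circ a = a - a\circ\sigma\circ a = a - a = 0$, so by exactness of $\Hom_T(-,B)$ applied to $A\xr{a}B\xr{b}C$, namely $\Hom_T(C,B)\xr{b^*}\Hom_T(B,B)\xr{a^*}\Hom_T(A,B)$, there is $\tau\colon C\to B$ with $\tau\circ b = \id_B - \epsilon$; uniqueness and the fact that $\tau$ is a section of $b$ follow from the splitting of the triangle (via $\sigma$, which makes $b$ a split epimorphism, hence an epimorphism in $T$) exactly as above. \emph{Last assertion.} If we start from $\tau$, form its canonical retraction $\sigma$ (so $a\circ\sigma = \id_B - \tau\circ b$), and then form the canonical section $\tau'$ defined by $\sigma$ (so $\tau'\circ b = \id_B - a\circ\sigma = \tau\circ b$), then $(\tau'-\tau)\circ b = 0$; since $b$ is a split epimorphism (the triangle splits), $b$ is an epimorphism, so $\tau' = \tau$. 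The symmetric statement starting from $\sigma$ is identical. \qed

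\medskip

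\noindent\textbf{Remark on the main obstacle.} The genuinely delicate point is the \emph{uniqueness} clauses, since in a general triangulated category the map $a$ need not be a monomorphism and the standard long exact $\Hom$-sequences only give existence "up to the image of the neighboring map." The resolution is to use the extra hypothesis that $\tau$ (resp. $\sigma$) is a section (resp. retraction): this forces the triangle to split, i.e. $B \cong A \oplus C$ with $a$, $b$ the canonical inclusion and projection, and in a split triangle $a$ \emph{is} a monomorphism and $b$ \emph{is} an epimorphism. I would state and use the lemma that a section of $b$ (or retraction of $a$) makes the triangle isomorphic to the split one $A\to A\oplus C\to C$ — this is itself a standard consequence of the $3\times 3$/octahedral formalism or can be proven directly by the $\Hom$ long exact sequences plus the five lemma — and then everything else is bookkeeping. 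I expect about half a page once the split-triangle lemma is cited.
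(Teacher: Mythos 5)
Your proof is correct and follows essentially the same route as the paper: existence of $\sigma$ (resp.\ $\tau$) is read off from exactness of the $\Hom$-sequence at $\Hom_T(B,B)$, and the retraction/section property from $b\circ a=0$ together with a cancellation. The only cosmetic difference is that the paper gets uniqueness (and the injectivity of $a\circ -$ on $\Hom_T(A,A)$) directly from the short exact sequences $0\to\Hom_T(X,A)\to\Hom_T(X,B)\to\Hom_T(X,C)\to 0$ into which the long exact sequence breaks once the section $\tau$ (equivalently $\partial=0$) is available, whereas you detour through the standard split-triangle lemma to conclude that $a$ is a split monomorphism and $b$ a split epimorphism — the same underlying fact, just packaged differently.
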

\begin{proof}
\ref{lem:can-sec1}. By the existence of the section $\tau$ the long exact sequence
stemming from applying $\Hom_T(B,-)$ to the exact triangle $(a,b,\partial)$
breaks up into short exact sequences; in particular
we obtain the short exact sequence
\[0\to \Hom_T(B, A)\xr{a\circ} \Hom_T(B,B)\xr{b\circ } \Hom_T(B,C)\to 0.\]
This gives a unique $\sigma$ with $a\circ \sigma= \id_B-e$. 
It follows that $a\circ (\sigma\circ a)= a$. Since
also $a\circ :\Hom_T(A,A)\to \Hom_T(A,B)$ is injective (by the existence of $\tau$), we see that
$\sigma$ is a  {retraction}. 
Similar for \ref{lem:can-sec2}; the other statements are clear.
\end{proof}

\begin{para}\label{para:splitting} {Having the projective bundle formula and the blow-up formula at disposal, we can construct the Gysin triangle by formally following the procedure indicated by Voevodsky in \cite[3.5]{V-TCM}.  Note that our statement is sheaf-theoretic, and therefore the arrows are reversed compared to \emph{loc.cit.} We begin by setting the notation}.
 
Let $F\in \CItspNis$ and $\sX=(X,D)\in \uMCorls$ (see Notation \ref{nota:MCorls}).
Let $i: Z\inj X$ be a smooth closed subscheme intersecting $D$ transversally (see Definition \ref{defn:ti}).
Denote by $\rho\colon \tilde{X}\to X$ the blow-up of $X$ along $Z$,   {and let $\rho_E\colon E = \rho^{-1}(Z) \to Z$ be the exceptional divisor. We define the modulus pairs 
$\tilde{\sX}= (\tilde{X}, D_{|\tilde{X}})$, $\sZ=(Z, D_{|Z})$ and $\sE=(E, D_{|E})$ with the obvious convention on the divisor.

Let $\rho_1\colon Y\to X\times \P^1$ be the blow-up of $X\times \P^1$ along $Z\times 0$ and  let $E_1=\rho_1^{-1}(Z\times 0)$ be the exceptional divisor.
We obtain the following modulus pairs in $\uMCorls$
\[\sY=(Y, (D\times \P^1+ X\times\infty)_{|Y}),  \quad \sE_1= (E_1, (D\times 0)_{|E_1})\]
and the following obvious morphisms in $\uMCor$
\[i: \sZ\to \sX, \quad i_{Z0}: \sZ=\sZ\otimes 0\to \sX\otimes\bcube,\] 
\[\rho: \tilde{\sX}\to \sX, \quad \rho_1: \sY\to \sX\otimes \bcube, \quad i_{\tilde{X}}: \tilde{\sX}\to \sY,\]
\[i_{E}: \sE\to \tilde{\sX}, \quad i_{E_1}: \sE_1\to \sY,\quad i_{EE_1}: \sE\to \sE_1,\]
\[\rho_{E}: \sE\to \sZ, \quad \rho_{E_1}: \sE_1\to \sZ\otimes 0=\sZ,\]
\[i_\epsilon: \sX= \sX\otimes\epsilon \to \sX\otimes\bcube, \quad \epsilon\in\{0,1\},\quad \pi: \sX\otimes\bcube\to \sX,\]
where all the $i$'s are induced by closed immersions of the underlying schemes, in particular $i_{\tilde{X}}$ is induced by
identifying $\tilde{X}$ with the strict transform of $X\times 0$ in $Y$ and $i_{EE_1}$ is the pullback 
of $i_{\tilde{X}}$ along $i_{E_1}$. This gives the following commutative diagram in $\uMCor$: }
\[\begin{tikzcd}[row sep={40,between origins}, column sep={40,between origins}]
      & \sE \arrow[rr, "i_E"] \arrow[dd, "\rho_E", near start]\arrow[dl, "i_{EE_1}"', near start]& & \tilde{\sX} \arrow[dd, "\rho"] \arrow[dl, "i_{\tilde{X}}"] \\
    \sE_1 \arrow[dd, "\rho_{E_1}" ] \arrow[rr, crossing over, "i_{E_1}"', near start]  & & \sY   \\
      & \sZ   \arrow[rr, "i"', near start] \arrow[dl, equal]& & \sX \arrow[dl, "i_0"] \\
   \sZ\otimes 0 \arrow[rr, "i_{Z0}"] && \sX\otimes \bcube \arrow[ru, bend right = 50,  "\pi"']\arrow[uu, crossing over, "\rho_1"', near end, leftarrow] 
\end{tikzcd}\]
 {We will denote the underlying morphisms of schemes by the same letter.
Note that $i_1:\sX=\sX\otimes 1\to \sX\otimes \bcube$ extends canonically to a morphism
\[i_{1,Y}: \sX\to \sY.\]
Finally the morphism underlying $i_{EE_1}$ is equal to the natural inclusion 
\[i_{EE_1}: \P(\sN_{Z/X}^\vee)\inj \P(\sO_{Z}\oplus \sN_{Z/X}^\vee),\]
where $\sN_{Z/X}^\vee=\sI/\sI^2$ is the conormal sheaf, $\sI$ being the ideal sheaf of $Z\inj X$.  We obtain the following diagram, 
\eq{para:splitting2}{\xymatrix{
\bigoplus_{i=0}^{j} (\gamma^i F)_{\sZ}[-i]\ar[d]\ar[rr]_-\simeq^-{\lambda_{\sO_{Z}\oplus \sN_{Z/X}^\vee}} & &
R\rho_{E_1*} F_{\sE_1}\ar[d]^{i_{EE_1}^*}\\
\bigoplus_{i=0}^{j-1} (\gamma^i F)_{\sZ}[-i]\ar[rr]^-{\lambda_{\sN_{Z/X}^\vee}}_-\simeq& &
R\rho_{E*}F_{\sE},
}}
where $j=\codim(Z,X)$, the horizontal maps are the isomorphisms from Theorem \ref{thm:pbf}, and the vertical
map on the left is the projection. Using Lemma \ref{lem:cyclecup} it is direct to check that
\eqref{para:splitting2} commutes. Thus $i_{EE_1}^*$ in \eqref{para:splitting2} has a canonical section
\[s: R\rho_{E*} F_{\sE}\to R\rho_{E_1*}F_{\sE_1},\]
splitting off the summand $(\gamma^j F)_{\sZ}[-j]$.
Let $b_1$ be the morphism in $D(X_{\Nis})$ defined as the composition}
\[\begin{tikzcd}
 R\pi_*R\rho_{1*} F_{\sY} \arrow[r, "{i_{E_1}^*}"] &  i_* R\rho_{E_1*} F_{\sE_1}  \arrow[r,   "\simeq"] & \bigoplus_{i=0}^{j}i_* (\gamma^i F)_{\sZ}[-i] \arrow[r] & \bigoplus_{i=1}^{j}i_* (\gamma^i F)_{\sZ}[-i]
 \end{tikzcd}
\]
 {where the second isomorphism is the inverse of $\lambda_{\sO_{Z}\oplus \sN_{Z/X}^\vee}$, and the rightmost arrow is the canonical projection. Similarly, we define $b$ as the composition}
\[\begin{tikzcd}
 R\rho_{*} F_{\tilde{ \sX}} \arrow[r, "{i_{E}^*}"] &  i_* R\rho_{E*} F_{\sE}  \arrow[r,   "\simeq"] & \bigoplus_{i=0}^{j-1} i_*(\gamma^i F)_{\sZ}[-i] \arrow[r] & \bigoplus_{i=1}^{j-1}i_* (\gamma^i F)_{\sZ}[-i]
 \end{tikzcd}
\]
 {where the second isomorphism is the inverse of $\lambda_{\sN_{Z/X}^\vee}$ and the last map is the canonical projection.  This gives the following commutative diagram of solid arrows in $D(X_{\Nis})$:}
\begin{equation}\label{para:splitting3} 
\begin{tikzcd}[column sep={50}]
R\pi_* F_{\sX\otimes \bcube} \arrow[r, "{\rho_1^*}"] \arrow[d, "i_0^*=i_1^*"', "\simeq"]& R\pi_{*} R\rho_{1 *} F_{\sY} \arrow[l, dashrightarrow, bend right = 38, "\sigma_1"']  \arrow[dl, "i^*_{1,Y}"', description] \arrow[d, "i^*_{\tilde{X}}"]\arrow[r, "b_1"] &  \bigoplus_{i=1}^{j}i_* (\gamma^i F)_{\sZ}[-i] \arrow[d, "h"]  \arrow[l, dashrightarrow, bend right = 30, "\tau_1"'] \\
F_{\sX} \arrow[u, bend left =70, "\pi^*"] \arrow[r, "\rho^*"] & R\rho_*F_{\tilde{\sX}}\arrow[r, "b"] \arrow[lu, phantom, "\not\circlearrowright", near start ] \arrow[l, dashrightarrow, bend left = 38, "\sigma"'] & \bigoplus_{i=1}^{j-1} i_*(\gamma^i F)_{\sZ}[-i], \arrow[u, bend right = 30, "s"'] \arrow[l, dashrightarrow, bend left = 30, "\tau"']
 \end{tikzcd}
\end{equation}
 {where the dashed arrows are defined as follows. First, note that  the bottom horizontal sequence is a distinguished triangle obtained from the distinguished blow-up triangle from Theorem \ref{thm:bus} for $(\sX,\sZ)$. Indeed,  we have the following diagram in $D(X_{\Nis})$:}
\[
\begin{tikzcd}[column sep={50}]
 F_{\sX} \arrow[r, "\rho^* \oplus (-i^*)" ] \arrow[d, equal]& R\rho_* F_{\tilde{\sX}} \oplus i_*F_{\sZ} \arrow[r, "i^*_E+\rho_E^*"] \arrow[d]& i_*R\rho_{E*} F_{\sE} \arrow[d, "\mathrm{can.}"] & \arrow[l, "\lambda_{\sN_{Z/X}^\vee}"', "\simeq"] \bigoplus_{i=0}^{j-1}(\gamma^iF)_{\sZ}[-i] \arrow[d]\\
  F_{\sX} \arrow[r, "\rho^*"] & R\rho_*F_{\tilde{\sX}} \arrow[r, "\mathrm{can.}\circ (i_E^*)"] & i_*\tau_{\geq 1} R\rho_{E*}F_{\sE}  & \arrow[l, "\sum_{i=1}^{j-1}\lambda_{V}^i"', "\simeq"] \bigoplus_{i=1}^{j-1}(\gamma^iF)_{\sZ}[-i]
\end{tikzcd}
\]
 {where $V=\sN_{Z/X}^\vee$, the unlabeled maps are the natural projections, $\mathrm{can.}$ is the canonical map to the truncation $\tau_{\geq 1}$,  and the composition of the second and the (inverse of the) third arrow in the bottom line is $b$. Every square is  commutative, and the middle one is clearly homotopy cartesian, so that the bottom line is part of a distinguished triangle as required. 
In a similar fashion, the top line of \eqref{para:splitting3} is  part of a distinguished triangle obtained from the blow-up triangle for $(\sX\otimes \bcube, Z\times 0)$ after applying $R\pi_*$. 

Going back to \eqref{para:splitting3}, note that the right square is commutative thanks to the definitions of $b$ and $b_1$, and the commutativity of \eqref{para:splitting2}. The square on the left commutes   if the left vertical arrow is  $i_0^*$. By cube-invariance it is an isomorphism, with inverse $\pi^*$ and thus it is equal to $i_1^*$. Replacing $i_0^*$ with $i_1^*$ we see then that the upper triangle in the left square is commutative. The lower triangle on the other hand does not commute. Set
\[\sigma_1:= \pi^*\circ i_{1,Y}^*;\]
it is a retraction of $\rho_1^*$. We define the (dotted) map $\tau_1$ as  the canonical section
defined by $\sigma_1$, see Lemma \ref{lem:can-sec}.
Set
\begin{equation}\label{cor:bud0}\tau:= i_{\tilde{X}}^*\circ\tau_1\circ s;
\end{equation}
it is a section of $b$ (that we can identify with $(\mathrm{can.}\circ i_{E}^*)$ up to the  isomorphism $\tau_{\geq1} R\rho_{E*} F_{\sE} \simeq \bigoplus_{i=1}^{j-1}(\gamma^iF)_{\sZ}[-i]$).
We define $\sigma$ as the canonical section defined by $\tau$ (again see Lemma \ref{lem:can-sec}). The proof of the following corollary is immediate from the previous constructions.
\begin{cor}\label{cor:bud}Let $F$ and $\rho: \tilde{\sX}\to \sX$ and $\sZ$ be as in \ref{para:splitting} and $j=\codim(Z,X)$.
We have isomorphisms in $D(X_\Nis)$ 
\eq{cor:bud1}{ F_{\sX}\oplus \bigoplus_{r=1}^{j-1} i_{*}\gamma^r F_{\sZ}[-r]\xrightarrow[\simeq]{\rho^*+ \tau } R\rho_* F_{\tilde{\sX}}.}
\end{cor}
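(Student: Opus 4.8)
The plan is to observe that Corollary \ref{cor:bud} is essentially a formal consequence of the constructions carried out in \ref{para:splitting}, once one has assembled the diagram \eqref{para:splitting3} correctly. Concretely, one wants to run the following chain of reductions: the blow-up triangle of Theorem \ref{thm:bus} for the pair $(\sX,\sZ)$, after splitting off the factor $F_{\sX}$ coming from the section $i$ and using the projective bundle formula Theorem \ref{thm:pbf} to identify $i_*R\rho_{E*}F_{\sE}$ with $\bigoplus_{i=0}^{j-1}(\gamma^iF)_{\sZ}[-i]$, becomes a distinguished triangle of the shape
\[
F_{\sX}\xr{\rho^*} R\rho_*F_{\tilde\sX}\xr{b} \bigoplus_{i=1}^{j-1}i_*(\gamma^iF)_{\sZ}[-i]\xr{\partial} F_{\sX}[1].
\]
This is exactly the content of the diagram displayed right after \eqref{para:splitting3} (the one with the two rows and the four vertical arrows), which one checks commutes square by square using the compatibilities of $c_\alpha$ with pullback and with intersection product, i.e. Lemma \ref{lem:cyclecup}\ref{lem:cyclecup2}, \ref{lem:cyclecup3}. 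Once this triangle is in place, Corollary \ref{cor:bud} is precisely the statement that the map $\rho^*$ in a distinguished triangle admitting a section $\tau$ of $b$ identifies the source together with the third term as a direct sum decomposition of the middle term; this is the elementary splitting lemma, Lemma \ref{lem:can-sec}, applied to the exact triangle above together with the section $\tau$ constructed in \eqref{cor:bud0}.

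So the key steps, in order, are: (1) verify that $\tau:=i_{\tilde X}^*\circ\tau_1\circ s$ as defined in \eqref{cor:bud0} is genuinely a section of $b$. For this one chases \eqref{para:splitting3}: $\tau_1$ is a section of $b_1$ by construction (it is the canonical section defined by the retraction $\sigma_1=\pi^*\circ i_{1,Y}^*$ of $\rho_1^*$, via Lemma \ref{lem:can-sec}\ref{lem:can-sec2}), $s$ is a section of $i_{EE_1}^*$ by the commutativity of \eqref{para:splitting2}, and the right-hand square of \eqref{para:splitting3} relates $b$, $b_1$, $h$ and $s$; combining these three facts gives $b\circ\tau=\mathrm{id}$ on $\bigoplus_{i=1}^{j-1}i_*(\gamma^iF)_{\sZ}[-i]$. (2) Apply Lemma \ref{lem:can-sec}\ref{lem:can-sec2} once more: given the section $\tau$ of $b$ in the bottom distinguished triangle, there is a canonical retraction $\sigma$ of $\rho^*$, and the pair $(\rho^*,\tau)$ then realizes the stated isomorphism \eqref{cor:bud1}. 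The isomorphism is then
\[
\rho^*+\tau\colon F_{\sX}\oplus\bigoplus_{r=1}^{j-1}i_*\gamma^rF_{\sZ}[-r]\xrightarrow{\ \simeq\ }R\rho_*F_{\tilde\sX},
\]
with inverse $\begin{pmatrix}\sigma\\ b\end{pmatrix}$; that this is a two-sided inverse is a direct consequence of the identities $a\circ\sigma=\mathrm{id}-\tau\circ b$, $b\circ\tau=\mathrm{id}$, $\sigma\circ a=\mathrm{id}$, $b\circ a=0$, which hold in any triangulated category by Lemma \ref{lem:can-sec}.

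I would present this quite tersely, since by the time Corollary \ref{cor:bud} is stated all of \ref{para:splitting} has been set up — in fact the statement reads ``the proof of the following corollary is immediate from the previous constructions,'' so the expected proof is a one-or-two-line pointer to the diagram \eqref{para:splitting3} and Lemma \ref{lem:can-sec}. The only genuinely substantive input is the construction already done in \ref{para:splitting}, namely that \eqref{para:splitting2} commutes (so that the section $s$ exists) and that the bottom row of the diagram after \eqref{para:splitting3} is a distinguished triangle identified with the blow-up triangle of Theorem \ref{thm:bus}. Granting those, the corollary is pure triangulated-category formalism.

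The main obstacle, such as it is, is purely bookkeeping: one must be careful that the three sections ($s$, $\tau_1$, and then the induced $\tau$, $\sigma$) are compatible in the precise sense needed, i.e. that pulling back $\tau_1$ along $i_{\tilde X}^*$ and precomposing with $s$ lands in the correct summand and splits $b$ rather than $b_1$. This is handled by the commutativity of the right-hand square of \eqref{para:splitting3} together with the commutativity of \eqref{para:splitting2}; there is no hard analysis, only the verification that the relevant squares in \eqref{para:splitting3} commute — and the one square that does \emph{not} commute (the lower triangle of the left square, marked $\not\circlearrowright$) is exactly the one whose failure is corrected by passing to canonical sections via Lemma \ref{lem:can-sec}. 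In short: no step is hard; the only risk is mis-tracking which map splits which.
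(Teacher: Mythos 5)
Your proposal is correct and follows exactly the route the paper intends: the corollary is stated as "immediate from the previous constructions," and your argument — the bottom row of \eqref{para:splitting3} is the blow-up triangle split via Theorem \ref{thm:pbf}, $\tau=i_{\tilde X}^*\circ\tau_1\circ s$ is a section of $b$ by the commutativity of \eqref{para:splitting2} and the right square of \eqref{para:splitting3}, and Lemma \ref{lem:can-sec} then yields the splitting $\rho^*+\tau$ — is precisely that verification. The only cosmetic point is that to call $\bigl(\begin{smallmatrix}\sigma\\ b\end{smallmatrix}\bigr)$ a two-sided inverse you should also note $\sigma\circ\tau=0$ (which follows since $\rho^*\circ(\sigma\tau)=(\id-\tau b)\tau=0$ and composition with $\rho^*$ is injective on Hom groups once the triangle splits), though even without it the composite is unipotent and the conclusion stands.
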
}
\end{para}
 {
\begin{para}\label{para:gysin}
Let $F\in \CItspNis$, $\sX=(X,D)\in \uMCorls$ and let $i:Z\inj X$ be a smooth closed subscheme of
codimension $j\ge 1$ intersecting $D$ transversally.
Set $\sZ=(Z,D_{|Z})$. 
Following \cite[p. 220/21]{V-TCM} we define the Gysin map
\eq{para:gysin1}{g_{\sZ/\sX}: i_{*}(\gamma^j F_{\sZ})[-j]\to F_{\sX}}
as follows: let the notation be as in \eqref{para:splitting3}. We set
\[ v\colon i_*(\gamma^jF)_{\sZ}[-j] \longrightarrow R\pi_*R_{\rho_{1*}} F_{\sY} \]
as the composition $\tau_1 \circ \nu^j$, where $\nu^j$ is the canonical inclusion $i_*(\gamma^jF)_{\sZ}[-j] \to \bigoplus_{i=1}^j i_*(\gamma^iF)_{\sZ}[-i]$. Then the Gysin map is 
\[g_{\sZ/\sX} := - \sigma \circ i_{\tilde{X}}^* \circ v. \]
\end{para}
}
\begin{rmk}\label{rmk:gysin_lambda0} { We can replace the top row in \eqref{para:splitting3} with the equivalent one}
\begin{equation}\label{eq:gysin_lambda1}
    \begin{tikzcd}
    R\pi_* F_{\sX\otimes \bcube} \arrow[r, "{\rho_1^*}"] & R\pi_{*} R{\rho_{1*}} F_{\sY} \arrow[l, dashrightarrow, bend right = 38, "\sigma_1"']  \arrow[r, "\mathrm{can.} \circ i_{E_1}^*"] &  i_*\tau_{\geq1} R\rho_{E_{1*}}F_{\sE_1}. \arrow[l, dashrightarrow, bend right = 30, "\tau_1'"'] 
    \end{tikzcd}
\end{equation} 
 {where $\tau_1'$ is the canonical section induced by $\sigma_1$. Note that $\tau_1 = \tau_1'\circ \sum_{i=1}^{j} \lambda^i_{V_1}$ where $V_1=  \sO_Z\oplus \sN^\vee_{Z/X}$, 
and  $\mathrm{can.}$ is the canonical map $i_* R\rho_{E_{1*}}F_{\sE_1}\to i_*\tau_{\geq1} R\rho_{E_{1*}}F_{\sE_1}$. In particular the Gysin map satisfies 
\begin{equation}\label{eq:gysin_lambda2}
    g_{\sZ/\sX} = -\sigma\circ i_{\tilde{X}}^* \circ \tau_1'\circ \lambda_{V_1}^j.
\end{equation}}
\end{rmk}

 {
\begin{rmk}
The Gysin map can be described alternatively as follows.  Set 
\[\beta:=  i_{1, Y}^* - \sigma\circ i_{\tilde{X}}^*: R\pi_* R\rho_{1*}F_{\sY} \to F_{\sX}.\]
We have $\beta\circ (\rho_1^*)=0: R\pi_* F_{\sX\otimes\bcube}\to F_{\sX}$, and since the top row in \eqref{para:splitting3} is split exact, there exists a unique map \[\beta_1 \colon  \bigoplus_{i=1}^j i_*(\gamma^iF)_{\sZ}[-i] \to  F_{\sX}\]
such that $\beta = \beta_1 \circ b_1$. Then a diagram chase shows that
\begin{equation}\label{lem:gysin} g_{\sZ/\sX}= \beta_1 \circ \nu^j.\end{equation}
\end{rmk}

\begin{rmk}
Let $i:Z\inj X$ be as above. One can consider the class $[Z]$ of $Z$ in the Chow group with support $\CH^j_Z(X)$, and the cup product construction \eqref{para:cyclecup1} gives a morphism 
\begin{equation}\label{eq:cyclecup-vs-gysin}c_{[Z]}\colon (\gamma^j F)_{\sX}[-j]\to  R\ul{\Gamma}_{Z}F_{\sX} \to F_{\sX}
\end{equation}
 in  $D(X_{\Nis})$, where the last morphism is the forget support map. It is a natural question to compare \eqref{eq:cyclecup-vs-gysin} and \eqref{para:gysin1}: this is done in Theorem \ref{prop:pf-gysin} below. 
\end{rmk}
}

 {
\begin{prop}\label{prop:gysin-cup}
Let $F\in \CItspNis$, $\sX=(X,D)$, $\sZ=(Z, D_{|Z})$ be as in \ref{para:gysin} above. 
Assume $X$ is equidimensional and let $\Phi$ be a family of supports on $X$ and  $\alpha\in \CH_\Phi^r(X)$. Then the following diagram
\[\xymatrix{
i_*(\gamma^{j+r}F)_{\sZ}[-j-r] \ar[r]^-{g_{\sZ/\sX}} \ar[d]^{c_{i^* \alpha}} & \gamma^r F_{\sX}[-r] \ar[d]^{c_{\alpha}}\\
i_* R\ul{\Gamma}_{Z\cap \Phi}  \gamma^j F_{\sZ}[-j]= R\ul{\Gamma}_{\Phi}i_* \gamma^j F_{\sZ}[-j] \ar[r]^-{g_{\sZ/\sX}} & R\ul{\Gamma}_{\Phi}F_{\sX}
}
\]
commutes, where $i^*\alpha\in \CH^r_{\Phi\cap Z}(Z)$ is the refined pullback of $\alpha$ (see Lemma \ref{lem:pbKCH}).
\end{prop}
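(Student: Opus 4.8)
The Gysin map $g_{\sZ/\sX}$ is built out of the ingredients of \eqref{para:splitting3}: the maps coming from the projective bundle formula, the blow-up triangle, and the two canonical sections $\tau_1$ (resp. $\sigma$) attached to $\sigma_1$ (resp. $\tau$) via Lemma \ref{lem:can-sec}. The plan is to prove the compatibility by cutting $g_{\sZ/\sX}$ into its constituent pieces and checking that each piece commutes with the cupping map $c_{(-)}$, then invoking the functorial properties of $c_{(-)}$ collected in Lemma \ref{lem:cyclecup}. Concretely, first I would reduce to the case $\Phi = \Phi_W$ for a single closed subset $W\subset X$, so that $R\ul{\Gamma}_\Phi = R\ul{\Gamma}_W$ and $i^{-1}\Phi = \Phi_{W\cap Z}$, using Lemma \ref{lem:cyclecup}\ref{lem:cyclecup1.5} (the enlarge-support compatibility) to pass back to a general family of supports at the end.

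\textbf{Key steps.} The essential point is that $c_\alpha$ is, by construction \ref{para:cyclecup} and the cross-product formalism \ref{para:ext-prod}, built from the Milnor $K$-theory cup product, and hence it is ``$\mathcal{O}$-linear'' with respect to all the sheaf-theoretic operations appearing in \eqref{para:splitting3}: it commutes with pullback along the various morphisms $\pi$, $\rho$, $\rho_1$, $i_{\tilde X}$, $i_E$, $i_{E_1}$, $i_{1,Y}$ by Lemma \ref{lem:cyclecup}\ref{lem:cyclecup2}; it commutes with the isomorphisms of the projective bundle formula $\lambda_V$ because those are themselves defined via $c_{\xi^i}$ and $c_{(-)}$ is associative for the intersection product by Lemma \ref{lem:cyclecup}\ref{lem:cyclecup3}; and it is additive by Lemma \ref{lem:cyclecup}\ref{lem:cyclecup1}, hence commutes with the canonical projections onto the summands $\bigoplus_{i\geq1}(\gamma^iF)_{\sZ}[-i]$. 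Granting all this, $c_{(-)}$ commutes with $\rho_1^*$ and $\sigma_1 = \pi^*\circ i_{1,Y}^*$, hence — because $\tau_1$ is the \emph{unique} map characterized by $\tau_1\circ b_1 = \id - \rho_1^*\circ\sigma_1$ in the sense of Lemma \ref{lem:can-sec}, and this characterizing equation is preserved under $c_{(-)}$ — it also commutes with $\tau_1$; similarly it commutes with $\sigma$, which is the canonical retraction defined by $\tau$. Finally, using the description \eqref{eq:gysin_lambda2}, $g_{\sZ/\sX} = -\sigma\circ i_{\tilde X}^*\circ \tau_1'\circ \lambda_{V_1}^j$, we chain these compatibilities together. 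At each pullback step the relevant cycle gets pulled back (e.g. $\alpha \mapsto \pi^*\alpha$ on $\sX\otimes\bcube$, then restricted along $i_{\tilde X}$, etc.), and the key bookkeeping is that along $i\colon \sZ\to\sX$ the cycle $\alpha$ becomes $i^*\alpha$, while the twist-degree shift $j$ in $\gamma^j F_{\sZ}$ interacts with $c_{i^*\alpha}$ exactly via the associativity identity $\gamma^r(\gamma^j F)= \gamma^{j+r}F$ of Corollary \ref{cor:gtwist}, which is precisely the content of Lemma \ref{lem:cyclecup}\ref{lem:cyclecup3}.

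\textbf{Main obstacle.} The hard part will be verifying that $c_{(-)}$ commutes with the two \emph{canonical} sections/retractions $\tau_1$ and $\sigma$ produced by Lemma \ref{lem:can-sec}, since those are not given by an explicit formula but only by a universal property inside the triangulated category $D(X_\Nis)$. The clean way around this is the following lemma, which I would state and prove first: if $\Theta$ is any additive endofunctor (or, more precisely, any natural family of maps $\Theta_C\colon C[-r]\otimes^L\text{(cycle data)}\to R\ul\Gamma_\Phi C$) that is natural in $C$ for the morphisms appearing in a split triangle $A\xrightarrow{a}B\xrightarrow{b}C$, and $\Theta$ commutes with $a$, $b$ and with a given section $\tau$ of $b$, then it automatically commutes with the canonical retraction $\sigma$ defined by $\tau$ — because $\sigma$ is characterized by $a\circ\sigma = \id - \tau\circ b$, an equation built entirely from $a,b,\tau$, combined with the injectivity of $a\circ(-)$ on the relevant Hom-group (again guaranteed by the splitting). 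Applying this twice — once for the top triangle of \eqref{para:splitting3} and $\sigma_1\rightsquigarrow\tau_1$, once for the bottom triangle and $\tau\rightsquigarrow\sigma$ — dispatches the obstacle. The one subtlety to be careful about is that $c_\alpha$ is not literally an endofunctor but a morphism with source twisted by $[-r]$ and target a subobject $R\ul\Gamma_\Phi$; one checks naturality holds in the precise form needed by going through the stepwise definition in \ref{para:cyclecup}, where every arrow (the map \emph{els}, the maps \eqref{lem:ten-supp1}, \eqref{para:Tmap1}, and the adjunction map) is manifestly natural in the sheaf-theoretic variable. Everything else is routine diagram-chasing of the type already carried out in the proof of Theorem \ref{thm:pbf} and Lemma \ref{lem:cyclecup}.
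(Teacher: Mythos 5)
Your blueprint is correct, but it is organized differently from the paper's proof, so a comparison is in order. The paper proves Proposition \ref{prop:gysin-cup} by a single explicit chain of equalities starting from $g_{\sZ/\sX}=-\sigma\circ i_{\tilde X}^*\circ\tau_1'\circ\lambda^j_{V_1}$: it only ever moves $c_{(-)}$ past pullbacks and past the $\lambda$'s (Lemma \ref{lem:cyclecup}), and it never proves that $c$ commutes with the canonical sections as such; instead it inserts the identities $(\mathrm{can.}\circ i_{E_1}^*)\circ\tau_1'=\id$ and $\tau_1'\circ(\mathrm{can.}\circ i_{E_1}^*)=\id-\rho_1^*\sigma_1$ to handle $\tau_1'$, and handles $\sigma$ by inserting $\id=\rho^*\sigma+\tau\circ b$ and killing the $\tau\circ b$ term with the vanishing $b\circ i_{\tilde X}^*\circ\tau_1'\circ\lambda^j_{V_1}=h\circ b_1\circ\tau_1\circ\nu^j=h\circ\nu^j=0$, finishing with $\sigma\circ\rho^*=\id$. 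You instead promote to a lemma the principle that a compatible ladder between split triangles automatically commutes with the canonical sections/retractions of Lemma \ref{lem:can-sec}; this is exactly the observation the paper itself uses at the start of the proof of Proposition \ref{prop:gysin-bc}, and your verification of it (uniqueness plus split mono/epi cancellation, no connecting maps needed) is sound. Applied to the ladder given by $c_{\pi^*\alpha}$, $c_{\rho_1^*\pi^*\alpha}$, $c_{i^*\alpha}$, etc., between the splitting data \eqref{para:splitting3} for $\gamma^rF$ and $R\ul{\Gamma}$-with-supports of the data for $F$, it yields commutation with $\tau_1$ and $\sigma$ and hence with $g_{\sZ/\sX}$, with no need for the vanishing used at the paper's step $(11*)$. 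What your route buys is a reusable naturality statement and a shorter final computation; what it costs is the set-up work you partly gloss over: you must check that the supported/twisted objects really form a split triangle whose canonical sections are $R\ul{\Gamma}_\Phi(\tau_1)$, $R\ul{\Gamma}_\Phi(\sigma)$, and --- the one unstated point --- to get commutation with $\sigma$ you first need commutation with the section $\tau=i_{\tilde X}^*\circ\tau_1\circ s$ of $b$ from \eqref{cor:bud0}, which requires also checking compatibility with the canonical section $s$ of \eqref{para:splitting2}; that does follow from Lemma \ref{lem:cyclecup}\ref{lem:cyclecup1}--\ref{lem:cyclecup3} and the $\lambda$-description of $s$, together with the functoriality of refined pullbacks (e.g.\ $\rho_{E_1}^*i^*\alpha=(\pi\rho_1 i_{E_1})^*\alpha$), but it should be spelled out, since it is exactly the bookkeeping the paper's steps $(2*)$--$(5*)$ and $(9*)$ perform explicitly.
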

\begin{proof}
We use the notation from \ref{para:splitting} and in particular \eqref{para:splitting3} above.
We compute:
\begin{align*}
g_{\sZ/\sX}\circ c_{i^*\alpha} & 
\stackrel{(*)}{=} -\sigma\circ i_{\tilde{X}}^*\circ \tau_1'\circ \lambda_{V_1}^j\circ c_{i^*\alpha}\\
& \stackrel{(2*)}{=} -\sigma\circ i_{\tilde{X}}^*\circ \tau_1'\circ c_{\xi^j}\circ \rho_{E_1}^*\circ c_{i^*\alpha}\\
&\stackrel{(3*)}{=} -\sigma\circ i_{\tilde{X}}^*\circ \tau_1'\circ c_{(i\rho_{E_1})^*\alpha} \circ \lambda_{V_1}^j \\
&\stackrel{(4*)}{=} -\sigma\circ i_{\tilde{X}}^* \circ \tau_1'\circ c_{(i\rho_{E_1})^*\alpha}\circ ((\mathrm{can.}\circ i_{E_1}^*) \circ \tau_1')\circ \lambda_{V_1}^j\\
&\stackrel{(5*)}{=} - \sigma\circ i_{\tilde{X}}^*\circ \tau_1'\circ  (\mathrm{can.}\circ i_{E_1}^*)\circ
                  c_{\rho_1^*\pi^*\alpha}  \circ \tau_1'\circ \lambda_{V_1}^j \\
&\stackrel{(6*)}{=} - \sigma\circ i_{\tilde{X}}^* \circ (\id- \rho_1^*\circ \sigma_1)\circ
                  (c_{\rho_1^*\pi^*\alpha}) \circ \tau_1'\circ \lambda_{V_1}^j \\
&\stackrel{(7*)}{=} - \sigma\circ i_{\tilde{X}}^*\circ  c_{\rho_1^*\pi^*\alpha}   \circ 
                             (\id - \rho_1^*\circ \sigma_1)\circ \tau_1'\circ \lambda_{V_1}^j \\
&\stackrel{(8*)}{=} -\sigma\circ i_{\tilde{X}}^*\circ  c_{\rho_1^*\pi^*\alpha} \circ 
                             \tau_1'\circ \lambda_{V_1}^j \\
&\stackrel{(9*)}{=} - \sigma\circ  c_{\rho^*\alpha} \circ i_{\tilde{X}}^* \circ 
                             \tau_1'\circ \lambda_{V_1}^j \\
&\stackrel{(10*)}{=} - \sigma\circ  c_{\rho^*\alpha} \circ (\rho^*\circ \sigma +\tau\circ b)
                              \circ i_{\tilde{X}}^* \circ \tau_1'\circ \lambda_{V_1}^j \\
&\stackrel{(11*)}{=} - \sigma\circ \rho^*  \circ c_{\alpha}  \circ \sigma \circ i_{\tilde{X}}^*\circ \tau_1\circ \nu^j \\
&\stackrel{(12*)}{=}- c_{\alpha}  \circ \sigma \circ i_{\tilde{X}}^* \circ \tau_1\circ \nu^j \\
&\stackrel{(13*)}{=}c_\alpha\circ g_{\sZ/\sX},
\end{align*}
where 
\begin{enumerate}
\item[$(*)$] holds by \eqref{eq:gysin_lambda2},
\item[$(2*)$] holds by the definition of $\lambda_{V_1}^j$ in \ref{para:pbf},
\item[$(3*)$] holds by Lemma \ref{lem:cyclecup}\ref{lem:cyclecup2}, \ref{lem:cyclecup3} and 
                 the definition of $\lambda_V^j$,
\item[$(4*)$] holds by $ ((\mathrm{can.}\circ i_{E_1}^*) \circ \tau_1') = \id$ on $i_*\tau_{\geq 1 }R \rho_{E_{1*}}F_{\sE_1}$,
\item[$(5*)$] holds  by  $i\rho_{E_1}=\pi\rho_1i_{E_1}$,
                      and Lemma \ref{lem:cyclecup}\ref{lem:cyclecup2},
\item[$(6*)$] holds since  $\tau_1'$ is the section defined by the section $\sigma_1$ (see \ref{lem:can-sec}),
\item[$(7*)$] follows again from Lemma \ref{lem:cyclecup}\ref{lem:cyclecup2} and the fact that $\sigma_1 = \pi^*\circ i_{1, Y}^*$ and $\pi\rho_1 i_{1,Y}=\id$. 
\item[$(8*)$] holds by $(\id - \rho_1^*\circ \sigma_1)\circ \tau_1'=\tau_1'$,
\item[$(9*)$] holds by  $\pi\rho_1 i_{\tilde{X}}= \rho$,
and Lemma \ref{lem:cyclecup}\ref{lem:cyclecup2},
\item[$(10*)$] holds by $\id= \rho^*\circ \sigma + \tau\circ b$,
\item[$(11*)$] holds by  Lemma \ref{lem:cyclecup}\ref{lem:cyclecup2}, $\tau_1'\circ \lambda^j_V=\tau_1\circ\nu^j$, see Remark \ref{rmk:gysin_lambda0}, and 
\[b\circ i_{\tilde{X}}^*\circ \tau_1'\circ \lambda_V^j= h\circ b_1\circ \tau_1 \circ \nu^j= h\circ \nu^j=0,\]
\item[$(12*)$] holds by $\sigma\circ \rho^*=\id$,
\item[$(13*)$] holds by definition of the Gysin map in \ref{para:gysin};
\end{enumerate}
whence the statement.
\end{proof}

}

\begin{prop}\label{prop:gysin-bc}
Let $\sX=(X,D)\in\uMCorls$ and  $F\in\CItspNis$.
Let $i:Z\inj X$ be a smooth closed subscheme of codimension $j$, 
intersecting $D$ transversally (see Definition \ref{defn:ti}) and set $\sZ=(Z, D_{|Z})$. 
Let $X'\in \Sm$ and let 
 {
$f: X'\to X$ be a morphism, such that 
$i': Z':=X'\times_X Z \inj X'$ is a smooth closed subscheme of pure codimension $m\le j$
and $|f^*D|$ is a simple normal crossing divisor intersecting $Z'$ transversally.
Set $\sX':= (X', f^*D)$ and $\sZ':=(Z', (f^*D)_{|Z'})$. Let $f_Z:Z'\to Z$ be the base change of $f$ and 
consider the excess normal bundle
\[\sE xc= f_{Z}^*(\sN_{Z/X})/\sN_{Z'/X'}.\]
Then the following diagram commutes
\begin{equation}\label{eq:exc-inters-formulaGysin}\xymatrix{
i_*\gamma^j F_{\sZ}[-j]\ar[d]_{f^{*}_Z}\ar[rr]^-{g_{\sZ/\sX}} & &
F_{\sX}\ar[d]^{f^*} \\
Rf_* (i'_*\gamma^j F_{\sZ'}[-j])\ar[r]^-{c_\beta} & 
Rf_* (i'_*\gamma^{m} F_{\sZ'}[-m])\ar[r]^-{g_{\sZ'/\sX'}} &
Rf_* F_{\sX'},
}\end{equation}
where $\beta:=c_{j-m}(\sE xc)\cap Z'\in \CH^{j-m}(Z')$ and $c_{j-m}(\sE xc)$
is the $(j-m)$th Chern class of $\sE xc$.
In particular, the Gysin map is compatible with smooth base change.
}
\end{prop}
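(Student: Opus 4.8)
\textbf{Plan of proof for Proposition \ref{prop:gysin-bc}.}

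The plan is to trace the Gysin map through the base change construction, step by step along the commutative diagram \eqref{para:splitting3}, just as in the proof of Proposition \ref{prop:gysin-cup}. First I would set up the geometric situation over $X'$: let $\rho'\colon \tilde{X}'\to X'$ be the blow-up of $X'$ along $Z'$, let $\rho_1'\colon Y'\to X'\otimes\bcube$ be the blow-up along $Z'\times 0$, and form the analogues $\sigma_1', \tau_1', \sigma', \tau', b_1', b', \beta'$ of all the data in \ref{para:splitting}–\ref{para:gysin}. The morphism $f$ (together with transversality of $|f^*D|$ with $Z'$) induces morphisms of modulus pairs $\tilde{f}\colon \tilde{\sX}'\to \tilde{\sX}$, $f_1\colon \sY'\to \sY$, $f_E\colon \sE'\to \sE$, etc., compatible with all the structure morphisms; here $\sE'=\P(\sN_{Z'/X'}^\vee)$ and $\sE=\P(\sN_{Z/X}^\vee)$, and the map $\sE'\to f_Z^*\sE$ is a closed immersion of projective bundles over $Z'$ with normal data governed by the excess bundle $\sE xc$.

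The key step is to compare the two projective bundle formulas. Because $Z'=X'\times_X Z$ and the excess bundle $\sE xc=f_Z^*\sN_{Z/X}/\sN_{Z'/X'}$ has rank $j-m$, the pullback $f_E^*\colon R\rho_{E*}F_{\sE}\to Rf_{Z*}R\rho'_{E*}F_{\sE'}$ sits in a diagram with the isomorphisms $\lambda_{\sN_{Z/X}^\vee}$ and $\lambda_{\sN_{Z'/X'}^\vee}$ of Theorem \ref{thm:pbf}, and the discrepancy between $f_Z^*$ on the $\gamma^r$-summands is exactly cup product with the Chern classes of $\sE xc$: by the splitting principle and Lemma \ref{lem:cyclecup}\ref{lem:cyclecup2},\ref{lem:cyclecup3} one has, on the relevant summand, $f_E^*\circ\lambda^r_{\sN_{Z/X}^\vee}=\sum_{a}\lambda^{a}_{\sN_{Z'/X'}^\vee}\circ c_{\,c_{r-a}(\sE xc)\cap Z'}\circ f_Z^*$. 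Restricting to the top summand $r=j$ (which survives on the source and maps to the top summand $m$ composed with the class $c_{j-m}(\sE xc)$), this yields precisely the factor $c_\beta$ in \eqref{eq:exc-inters-formulaGysin}. The analogous statement holds for $\sE_1$ versus $\sE_1'$, hence for $\tau_1$ versus $\tau_1'$ and for the canonical sections $\sigma$, $\sigma'$ (using the functoriality of the canonical section/retraction of Lemma \ref{lem:can-sec} under a map of split triangles). Feeding all of this through formula \eqref{eq:gysin_lambda2}, $g_{\sZ/\sX}=-\sigma\circ i_{\tilde X}^*\circ\tau_1'\circ\lambda_{V_1}^j$ (with $V_1=\sO_Z\oplus\sN_{Z/X}^\vee$), gives a computation entirely parallel to $(*)$–$(13*)$ in the proof of Proposition \ref{prop:gysin-cup}: one commutes $f^*$ past $\sigma$, $i_{\tilde X}^*$, $\tau_1'$, and $\lambda_{V_1}^j$ in turn, picking up the Chern-class correction exactly once, at the projective-bundle step, and landing on $g_{\sZ'/\sX'}\circ c_\beta\circ f_Z^*$. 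The final clause (compatibility with smooth base change) is the special case where $f$ is smooth: then $Z'$ is automatically smooth of codimension $m=j$, the excess bundle is $0$, $\beta=[Z']$ and $c_\beta=\mathrm{id}$, so the lower row collapses to $g_{\sZ'/\sX'}\circ f_Z^*$.

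The main obstacle I expect is bookkeeping rather than conceptual: one must check that the map $f$ genuinely induces morphisms in $\uMCorls$ between \emph{all} the pieces of the tower in \ref{para:splitting} (the two blow-ups, the exceptional divisors $E$, $E_1$, their inclusions and projections) in a way compatible with the distinguished triangles of Theorem \ref{thm:bus} and with the splittings; and one must verify that the canonical sections $\tau_1$, $\tau$ and canonical retractions $\sigma_1$, $\sigma$ defined via Lemma \ref{lem:can-sec} are natural with respect to such a map of split triangles (so that pullback commutes with them up to the excess correction). Handling the excess bundle correctly — i.e.\ identifying the discrepancy in the projective bundle formulas with $\sum_a \lambda^a_{\sN_{Z'/X'}^\vee}\circ c_{c_{r-a}(\sE xc)\cap Z'}$ — requires the self-intersection formula for $\P(\sN_{Z'/X'}^\vee)\hookrightarrow \P(f_Z^*\sN_{Z/X}^\vee)$ together with Lemma \ref{lem:cyclecup}; this is standard Chow-theoretic input but needs to be transported carefully through the cupping maps $c_{(-)}$. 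Once these functoriality checks and the excess-bundle computation are in place, the proof is a mechanical diagram chase of the same shape as the one already carried out for Proposition \ref{prop:gysin-cup}.
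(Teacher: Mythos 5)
Your proposal is correct and follows essentially the same route as the paper's proof: use Lemma \ref{lem:can-sec} together with the compatible tower of blow-ups to commute the pullbacks induced by $f$ past the canonical sections/retractions, reduce via \eqref{eq:gysin_lambda2} to a comparison of the projective-bundle maps $\lambda$ on the exceptional divisors, and identify the discrepancy with $c_{j-m}(\sE xc)$ by an excess-intersection computation (the excess bundle vanishing when $f$ is smooth). The only difference is one of packaging: the paper isolates the single cycle identity $\tilde{f}_{E_1}^*(\xi^j)=\zeta^m\cdot\rho_{E'_1}^*(\beta)$ and proves it directly by excess intersection with the zero sections, rather than writing out the full Chern-class comparison of the two projective-bundle decompositions as you propose.
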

\begin{proof}
First observe that given two distinguished triangles $\Delta$ and $\Delta'$ with sections
$\sigma$ and $\sigma'$ (resp. $\tau$ and $\tau'$) as in Lemma \ref{lem:can-sec},
and $\varphi:\Delta\to \Delta'$ is a morphism of triangles which commutes with
$\sigma$ and $\sigma'$ (resp. $\tau$ and $\tau'$), then $\varphi$  also commutes with the
canonical sections defined by these sections.
From this and the commutative diagram  
\eq{prop:gysin-bc1}{\xymatrix{
{\rm Bl}_{Z'}(X')\ar@{^(->}[r]\ar@{^(->}[d] &  
{\rm Bl}_{Z}(X)\times_X X'\ar[r]\ar@{^(->}[d] &
{\rm Bl}_{Z}(X)\ar@{^(->}[d]\\ 
{\rm Bl}_{Z'\times 0}(X'\times \P^1)\ar@{^(->}[r]\ar[dr] &  
{\rm Bl}_{Z\times 0}(X\times \P^1)\times_X X'\ar[r]\ar[d] &
{\rm Bl}_{Z\times 0}(X\times \P^1)\ar[d]\\
& X'\times \P^1\ar[r] & X\times \P^1
}}
(and all the diagrams which this diagram induces)
we see that the various pullbacks induced by $f: X'\to X$ commute with the section
$\sigma_1$, thus with  {$\tau_1'$ from \eqref{eq:gysin_lambda1}. 
It is direct to check that by \eqref{eq:gysin_lambda2}}
we are reduced to show the commutativity of the following diagram
\eq{prop:gysin-bc1.1}{\xymatrix{
i_*\gamma^j F_{\sZ}[-j]\ar[r]^-{\lambda^j_V}\ar[d]_-{f_Z^*} &
R\rho_{E_1*}F_{\sE_1}\ar[r]^-{\tilde{f}_{E_1}^*} &
Rf_{Z*}R\rho_{E_1'*} F_{\sE'_1}\\
Rf_*(i'_*\gamma^j F_{\sZ'}[-j])\ar[r]^-{c_\beta}&
Rf_*(i'_*\gamma^{m} F_{\sZ'}[-m])\ar[ur]_-{\lambda^m_{V'}}&
}}
where $V=\sN_{Z/X}^\vee\oplus \sO_Z$, $V'= \sN_{Z'/X'}^\vee\oplus \sO_{Z'}$,
$E_1$ (resp. $E_1'$) is the exceptional divisor in ${\rm Bl}_{Z\times 0}(X\times \P^1)$ (resp. in ${\rm Bl}_{Z'\times 0}(X'\times \P^1)$),
the maps are as in the following commutative diagram 
(which is induced by \eqref{prop:gysin-bc1} and is in general not cartesian)
\[\xymatrix{
E'_1\ar[r]^{\tilde{f}_{E_1}}\ar[d]_{\rho_{E_1'}} &
E_1\ar[d]^{\rho_{E_1}}\\
Z'\ar[r]^{f_Z} &
Z,
}\]
and $\sE_1=(E_1, D_{|E_1})$ (resp. $\sE'_1=(E'_1, D_{|E'_1})$). In view of the definition of $\lambda_V$ (see \ref{para:pbf})
and Lemma \ref{lem:cyclecup}\ref{lem:cyclecup2},\ref{lem:cyclecup3} we are reduced to show
\eq{prop:gysin-bc2}{\tilde{f}_{E_1}^*(\xi^j)= \zeta^m\cdot \rho_{E'_1}^*(\beta)\quad \text{in } \CH^j(E_1'),}
where $\xi=c_1(\sO_{E_1}(1))\in \CH^1(E_1)$ and $\zeta=c_1(\sO_{E'_1}(1))\in \CH^1(E'_1)$.
 {
This follows from the excess intersection formula:
Indeed, consider the cartesian  diagram
\[\xymatrix{
Z'\ar@{^(->}[d]_{s'}\ar@{=}[r] &
Z\times_X X'\ar[r]^-{f_Z}\ar@{^(->}[d] &
Z\ar@{^(->}[d]^s\\
E_1'\ar@{^(->}[r] &
E_1\times_X X'\ar[r]&
E_1,
}\]
where  the lower horizontal map is a factorization of $\tilde{f}_{E_1}$ and $s$ is the closed immersion
induced by the zero section $Z\inj N_{Z/X}$ followed by the open immersion $N_{Z/X}\inj E_1$
and similar with $s'$.  We obeserve
\[{s'}^*\sN_{E_1'/E_1\times_X X'}= \sN_{Z'/E_1\times_X X'}/ \sN_{Z'/E'_1}=f_Z^*\sN_{Z/N_{Z/X}}/\sN_{Z'/N_{Z'/X'}}= \sE xc, \]
see, e.g., \cite[Exa 6.3.2]{Fu} for the first equality.
Furthermore, we have  $\xi^j= s_* Z$ and $\zeta^m= s'_* (Z')$.
Thus the projection formula \cite[Exa 8.1.7]{Fu} and the above yield
\[\zeta^m\cdot \rho_{E'_1}^*(\beta)=s'_*(c_{j-m}(\sE xc)\cap Z')=s'_*(c_{j-m}({s'}^*\sN_{E_1'/E_1\times_X X'})\cap Z'), \]
which is equal to $\tilde{f}_{E_1}^*(s_* Z)$ by excess intersection \cite[Prop 6.6(c)]{Fu}.
}
\end{proof}

\begin{lemma}\label{lem:local-gysin}
Let the notations be as in \ref{para:splitting} and \ref{para:gysin} and assume $j=\codim_X(Z)\ge 1$.
Then the pullback maps $i_{E_1}^*$ and $\rho^*$ induce the following isomorphism on cohomology with supports
\eq{lem:loc-gysin1}{
i_{E_1}^*: R^j\pi_*R\rho_{1*}R\ul{\Gamma}_{E_1}F_{\sY}\xr{\simeq} i_{*}R^j\rho_{E_1*} F_{\sE_1}
}
and 
\eq{lem:loc-gysin2}{
\rho^*: R^j\ul{\Gamma}_Z F_{\sX}\xr{\simeq} R^j\rho_*R\ul{\Gamma}_{E}F_{\tilde{\sX}}.
}
Furthermore, if we define the {\em local Gysin map} $g_{\sZ/\sX, Z}^j$ by the composition
\mlnl{g_{\sZ/\sX, Z}^j: i_{*}\gamma^j F_{\sZ}\xr{\lambda^j_V} i_{*}R^j\rho_{E_1*} F_{\sE_1}
\xr{-(i_{E_1}^*)^{-1}} R^j\pi_*R\rho_{1*}R\ul{\Gamma}_{E_1}F_{\sY} \\
\xr{i_{\tilde{X}}^*}  R^j\rho_* R\ul{\Gamma}_E F_{\tilde{\sX}}\xr{(\rho^*)^{-1}} R^j\ul{\Gamma}_Z F_{\sX},}
where $V=\sN_{Z/X}^\vee\oplus \sO_Z$, 
then the Gysin map $g_{\sZ/\sX}$ is equal to the composition
\[i_{*}\gamma^j F_{\sZ}[-j]\xr{g_{\sZ/\sX, Z}^j[-j]} R^j\ul{\Gamma}_Z F_{\sX}[-j]\cong R\ul{\Gamma}_Z F_{\sX}
\to F_{\sX},\]
where the isomorphism $R^j\ul{\Gamma}_Z F_{\sX}[-j]\cong R\ul{\Gamma}_Z F_{\sX}$
is \cite[Cor 8.6(3)]{S-purity} and the last arrow is the forget-supports-map.
Moreover, the Gysin map determines the local Gysin map via
\[g_{\sZ/\sX,Z}^j= R^j\ul{\Gamma}_Z(g_{\sZ/\sX})[-j].\]
\end{lemma}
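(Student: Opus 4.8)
The statement of Lemma \ref{lem:local-gysin} bundles together three assertions: the two isomorphisms \eqref{lem:loc-gysin1} and \eqref{lem:loc-gysin2}, the identification of the Gysin map $g_{\sZ/\sX}$ as the forget-supports composite of (a shift of) the local Gysin map $g^j_{\sZ/\sX,Z}$, and the reciprocal statement $g^j_{\sZ/\sX,Z}=R^j\ul\Gamma_Z(g_{\sZ/\sX})[-j]$. The plan is to treat these one at a time, with the two isomorphisms first, since everything else is formal bookkeeping once they are in hand. For \eqref{lem:loc-gysin2}: $\rho\colon\tilde X\to X$ is an isomorphism away from $Z$, so the cone of $\rho^*\colon R\ul\Gamma_Z F_{\sX}\to R\rho_*R\ul\Gamma_E F_{\tilde\sX}$ is supported on $Z$ and computes, via Theorem \ref{thm:bus} applied with supports (i.e. after applying $R\ul\Gamma_Z$ to the blow-up triangle, noting $R\ul\Gamma_Z i_*=i_*$ on $Z$-supported sheaves), the difference between $R\ul\Gamma_Z i_*F_{\sZ}$-type terms and $R\ul\Gamma_Z i_*R\rho_{E*}F_{\sE}$-type terms; by \cite[Cor 8.6(3)]{S-purity} the sheaves $R\ul\Gamma_Z F_{\sX}$, $R\ul\Gamma_E F_{\tilde\sX}$ (on the exceptional divisor, which is smooth over $Z$ of codimension~1 in $\tilde X$... wait, codimension~$1$) are concentrated in a single cohomological degree, and degree counting (using $\codim(Z,X)=j$ and $\codim(E,\tilde X)=1$) forces $\rho^*$ to be an isomorphism precisely in degree~$j$. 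I would carry this out by taking $R\ul\Gamma_Z$ of the split blow-up decomposition of Corollary \ref{cor:bud}, or more directly of the triangle in Theorem \ref{thm:bus}, and reading off cohomology sheaves. The argument for \eqref{lem:loc-gysin1} is parallel but one level up: one takes $R\ul\Gamma_{E_1}$ of the blow-up triangle for $(\sX\otimes\bcube,\,Z\times 0)$ in $Y$, applies $R\pi_*R\rho_{1*}$, uses that $\pi\rho_1$ is an isomorphism off $Z\times 0$ hence the relevant terms are supported there, and invokes again \cite[Cor 8.6(3)]{S-purity} plus cube-invariance of cohomology \cite[Thm 9.3]{S-purity} to kill the $\sX\otimes\bcube$-terms; the surviving degree is again $j$, where $i_{E_1}^*$ becomes the stated isomorphism. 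The main technical care here is to track that $R^a\pi_*R^b\rho_{1*}R\ul\Gamma_{E_1}(-)$ vanishes except for $a=0$ and the right $b$, which one gets exactly as in the proof of Lemma \ref{lem:P1bf} and Theorem \ref{thm:pbf}.

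\textbf{Identifying $g_{\sZ/\sX}$ via the local Gysin map.} Once \eqref{lem:loc-gysin1} and \eqref{lem:loc-gysin2} are established, I would unwind the construction of $g_{\sZ/\sX}$ from \ref{para:gysin}, using the reformulation \eqref{eq:gysin_lambda2}: $g_{\sZ/\sX}=-\sigma\circ i_{\tilde X}^*\circ\tau_1'\circ\lambda^j_{V_1}$. The point is that $\tau_1'$ is the canonical section of $\mathrm{can.}\circ i_{E_1}^*$ determined by $\sigma_1=\pi^*i_{1,Y}^*$ (Remark \ref{rmk:gysin_lambda0}), and $\sigma$ is the canonical section of $b$ determined by $\tau=i_{\tilde X}^*\tau_1 s$ (see \eqref{para:splitting3} and \eqref{cor:bud0}). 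I would show that under $R\ul\Gamma_Z(-)$ (equivalently, after composing with the forget-supports maps from $R^j\ul\Gamma_{E_1}F_{\sY}$, $R^j\ul\Gamma_E F_{\tilde\sX}$, $R^j\ul\Gamma_Z F_{\sX}$) each of the maps $\tau_1'$, $i_{\tilde X}^*$, $\sigma$ in the defining composite of $g_{\sZ/\sX}$ is compatible with its ``local'' counterpart in the composite defining $g^j_{\sZ/\sX,Z}$; this is a diagram chase combining the fact that forget-supports is a natural transformation $R\ul\Gamma_{(-)}\Rightarrow\mathrm{id}$ with the compatibility of the $\sigma$'s and $\tau$'s observed at the start of the proof of Proposition \ref{prop:gysin-bc} (canonical sections are preserved by morphisms of triangles commuting with the given sections). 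The key input is that the forget-supports maps sit in a commutative diagram relating the split triangle $\rho^*+\tau$ of Corollary \ref{cor:bud} to its $R\ul\Gamma_Z$-version, and likewise on $Y$; the vanishing from \eqref{lem:loc-gysin1}, \eqref{lem:loc-gysin2} guarantees the ``with supports'' triangles degenerate to the single degree $j$ so that all the canonical-section formalism applies verbatim. Concretely I expect the identity $g_{\sZ/\sX}=(\text{forget})\circ g^j_{\sZ/\sX,Z}[-j]$ to drop out by comparing \eqref{eq:gysin_lambda2} term-by-term with the stated composite, using the sign $-$ in both.

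\textbf{The reciprocal identity and the main obstacle.} For $g^j_{\sZ/\sX,Z}=R^j\ul\Gamma_Z(g_{\sZ/\sX})[-j]$: apply $R\ul\Gamma_Z$ to the equality just proved, $g_{\sZ/\sX}=(\text{forget})\circ g^j_{\sZ/\sX,Z}[-j]$, and use that $R\ul\Gamma_Z$ of the forget-supports map $R\ul\Gamma_Z F_{\sX}\to F_{\sX}$ is the identity on $R\ul\Gamma_Z F_{\sX}$ (since $R\ul\Gamma_Z R\ul\Gamma_Z=R\ul\Gamma_Z$ and the forget map becomes an equivalence after a further $R\ul\Gamma_Z$), together with $R\ul\Gamma_Z$ applied to a map whose source $i_*\gamma^j F_{\sZ}[-j]$ is already $Z$-supported being the map itself. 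Then take $H^j$ and shift. The step I expect to be the real obstacle is the degeneration/degree-counting for \eqref{lem:loc-gysin1}: controlling $R^a\pi_* R^b\rho_{1*}R\ul\Gamma_{E_1}F_{\sY}$ requires knowing the cohomological amplitude of $R\ul\Gamma_{E_1}F_{\sY}$ on $Y$ (where $Y$ has the slightly awkward boundary $(D\times\P^1+X\times\infty)_{|Y}$) and that pushing forward along $\pi\rho_1$ does not introduce higher cohomology off $Z\times 0$ — this combines \cite[Cor 8.6(3)]{S-purity} (purity/amplitude of $R\ul\Gamma_{E_1}$), cube-invariance of cohomology \cite[Thm 9.3]{S-purity}, and a Leray spectral sequence argument modeled on Lemma \ref{lem:higher-HI} and the proof of Theorem \ref{thm:pbf}; one must be careful that the exceptional divisor $E_1\hookrightarrow Y$ has codimension $1$ (not $j$) so the purity statement on $Y$ gives amplitude concentrated in degree $1$ locally, and it is only after $R\pi_*R\rho_{1*}$ and restriction to the locus $Z$ that the relevant class lives in total degree $j$ via the projective-bundle identification $R\rho_{E_1*}F_{\sE_1}\simeq\bigoplus_{i=0}^{j}\gamma^iF_{\sZ}[-i]$. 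Everything else — the two diagram chases — is formal given the machinery already assembled in \ref{para:splitting}, Corollary \ref{cor:bud}, Lemma \ref{lem:can-sec}, and the start of the proof of Proposition \ref{prop:gysin-bc}.
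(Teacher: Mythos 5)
Your treatment of the two isomorphisms is essentially the paper's argument (apply the blow-up triangle with supports, use purity and the splitting/projective bundle formula), though your worries there are misplaced: for \eqref{lem:loc-gysin1} one does not need to control the amplitude of $R\pi_*R\rho_{1*}R\ul{\Gamma}_{E_1}F_{\sY}$ at all, nor cube-invariance; injectivity comes from the vanishing of the neighbouring term $R^j\pi_*R\ul{\Gamma}_{Z\times 0}F_{\sX\otimes\bcube}$, which holds simply because $Z\times 0$ has codimension $j+1$ in $X\times\P^1$ (purity), and surjectivity comes from the canonical splitting. The step you dismiss as a formal diagram chase is where the real content lies, and as proposed it does not close. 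To compare $g_{\sZ/\sX}$ (via \eqref{eq:gysin_lambda2}) with $g^j_{\sZ/\sX,Z}$ you must identify $\tau_1^j:=R^j\ul{\Gamma}_Z(\tau_1')$ with $e\circ(i_{E_1}^*)^{-1}$, where $e$ is the enlarge-support map from $\rho_1^{-1}(Z\times 0)$-supports to $\rho_1^{-1}(Z\times\P^1)$-supports. Both of these maps are sections of the supported map $b_1$, but two sections of a split surjection need not agree: their difference is $\rho_1^*\circ h$ with $h=-\sigma_1^j\circ e\circ(i_{E_1}^*)^{-1}$, so the identity you need is exactly the vanishing $\sigma_1^j\circ e=\pi^*\circ i_{1,Y}^*\circ e=0$. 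This is not a consequence of naturality of forget-supports or of the canonical-section formalism of Lemma \ref{lem:can-sec}/Proposition \ref{prop:gysin-bc}; it is a geometric input, namely that the $1$-section $i_{1,Y}(X\times\{1\})$ is disjoint from $E_1=\rho_1^{-1}(Z\times 0)$, so that any class supported on $E_1$ restricts to zero along $i_{1,Y}$. (Had $\sigma_1$ been built from the $0$-section instead, the corresponding vanishing would be false, which shows the formal argument alone cannot suffice.) This computation — $\tau_1^j=\tau_1^j\circ b_1\circ e\circ(i_{E_1}^*)^{-1}=(\id-\rho_1^*\sigma_1^j)\circ e\circ(i_{E_1}^*)^{-1}=e\circ(i_{E_1}^*)^{-1}$ — is the crux of the paper's proof and is absent from your plan.

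Two smaller remarks. First, once $R^j\ul{\Gamma}_Z(g_{\sZ/\sX})=g^j_{\sZ/\sX,Z}$ is known, both remaining assertions follow at once: since the source $i_*\gamma^jF_{\sZ}[-j]$ is $Z$-supported and $R\ul{\Gamma}_Z=i_*Ri^!$ with $Ri^!$ right adjoint to $i_*$, the Gysin map factors automatically through the forget-supports map $R\ul{\Gamma}_ZF_{\sX}\to F_{\sX}$ via $R\ul{\Gamma}_Z(g_{\sZ/\sX})$, and the purity isomorphism $R\ul{\Gamma}_ZF_{\sX}\cong R^j\ul{\Gamma}_ZF_{\sX}[-j]$ converts this into the stated factorization; so there is no need for a separate term-by-term proof of the factorization followed by applying $R\ul{\Gamma}_Z$. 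Second, the identification $R^j\ul{\Gamma}_Z(\sigma)=\eqref{lem:loc-gysin2}^{-1}$ is indeed formal (a retraction of an isomorphism is its inverse), so that part of your chase is fine; it is only the $\tau_1'$-comparison that needs the disjointness argument above.
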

\begin{proof}
We will use without further notice the isomorphism 
$R\ul{\Gamma}_A Rf_*= Rf_*R\ul{\Gamma}_{f^{-1}(A)}$, for a morphism $f:V\to W$ and  a closed subset $A\subset W$.
For \eqref{lem:loc-gysin1} apply $R\pi_*R\ul{\Gamma}_{Z\times 0}$ to the blow-up sequence
of $(X\otimes \bcube, Z\times 0)$ (see Theorem \ref{thm:bus}) to obtain the following long exact sequence
\[\ldots\to R^j\pi_*R\ul{\Gamma}_{Z\times 0} F_{\sX\otimes \bcube} 
\to R^j\pi_*R\rho_{1*}R\ul{\Gamma}_{E_1}F_{\sY}
\xr{i_{E_1}^*} i_{*}R^j \rho_{E_1*}F_{\sE_1}\to \ldots\]
Since $Z\times 0$ has codimension $j+1$ in $X\times \P^1$ the term on the left vanishes by
\cite[Cor 8.6(3)]{S-purity}; by existence of the canonical section as in \eqref{para:splitting}
(for $(\sX\times\bcube, Z\times 0)$ instead of $(\sX, Z)$) we see that $i_{E_1}^*$ is split surjective.
This yields the isomorphism \eqref{lem:loc-gysin1}. For \eqref{lem:loc-gysin2} apply $R\ul{\Gamma}_Z$ to the 
blow-up sequence of $(\sX,Z)$ to obtain the long exact sequence
\[\ldots\to R^j\ul{\Gamma}_Z F_{\sX}\xr{\rho^*} R^j\rho_*R\ul{\Gamma}_E F_{\tilde{\sX}}\to 
i_{*}R^j \rho_{E*} F_{\sE}\to \ldots \]
By the existence of the canonical section $\sigma$ as in \eqref{para:splitting} the map $\rho^*$ is injective and 
by the  projective bundle formula, see Theorem \ref{thm:pbf}, the right hand side vanishes.
This yields the isomorphism \eqref{lem:loc-gysin2}.

Since $R\ul{\Gamma}_Z=i_*Ri^!$ and $Ri^!$ is right adjoint to $i_*$, 
the Gysin morphism factors via the counit $R\ul{\Gamma}_Z F_\sX\to F_\sX$ 
and by the isomorphism $R^j\ul{\Gamma}_Z F_{\sX}[-j]\cong R\ul{\Gamma}_Z F_{\sX}$ also via
\[g_{\sZ/\sX}: i_{*}\gamma^j F_{\sZ}[-j]\xr{R^j\ul{\Gamma}_Z(g_{\sZ/\sX})[-j]} 
 R^j\ul{\Gamma}_Z F_\sX[-j]\cong R\ul{\Gamma}_Z F_{\sX}\to F_{\sX}.\]
Thus it remains to show 
\[g_{\sZ/\sX, Z}^j= R^j\ul{\Gamma}_Z(g_{\sZ/\sX}).\]
 {
By \eqref{eq:gysin_lambda2} it remains to show that 
\eq{lem:loc-gysin3}{R^j\ul{\Gamma}_Z(\sigma)= \eqref{lem:loc-gysin2}^{-1},}
and  that $\tau_1^j:=R^j\ul{\Gamma}_Z(\tau_1')$ is equal to the composition
\eq{lem:loc-gysin4}{i_{*}R^j\rho_{E_1*}F_{\sE_1}
\xr{\eqref{lem:loc-gysin1}^{-1}} R^j\pi_* R\rho_{1*} R\ul{\Gamma}_{\rho_1^{-1}(Z\times 0)} F_{\sY}
\xr{e} R^j\pi_* R\rho_{1*}R\ul{\Gamma}_{\rho_1^{-1}(Z\times\P^1)} F_{\sY},}
}
where $e$ is the enlarge-support map. 
(Note that $i_{\tilde{X}}^*$ from the definition of the local Gysin map factors via $e$.)
For \eqref{lem:loc-gysin3} we observe that $R^j\ul{\Gamma}_Z(\sigma)$ is by definition 
a section of the isomorphism \eqref{lem:loc-gysin2}, hence has to be the inverse of that isomorphism.
To compute $\tau_1^j$ we consider the following diagram:
\eq{lem:loc-gysin5}{\xymatrix{
 & R^j\pi_* R\rho_{1*} R\ul{\Gamma}_{\rho_1^{-1}(Z\times 0)} F_{\sY}\ar[d]^{e}\ar[dr]^{b''} &\\
R^j\ul{\Gamma}_Z R\pi_* F_{\sX\otimes\bcube}\ar[d]_{i_0^*=i_1^*}^{\simeq}\ar[r]_-{\rho_1^*} & 
R^j\ul{\Gamma}_Z R\pi_*R\rho_{1*}F_{\sY}\ar[dl]^-{i_{1,Y}^*}\ar[r]^-{b_1} \ar@/_1.5pc/[l]_{\sigma_1^j}& 
i_{*}R^j\rho_{E_1*} F_{\sE_1}\ar@/^1.5pc/[l]^{\tau_1^j}\\
R^j\ul{\Gamma}_ZF_{\sX}. &    & \\
}}
Here the middle line and the triangle on the left lower side are induced by applying
$R^j\ul{\Gamma}_Z$ to the diagram \eqref{para:splitting3} 
up to the isomorphism $\bigoplus_{i=1}^{j} (\gamma^i F)_{\sZ}[-i]\simeq \tau_{\geq 1} R\rho_{E_1*} F_{\sE_1}$ from \eqref{para:splitting2},
 {with the obvious notation for $\sigma_1^j$ and $\tau_1^j$,} and $b''$ is the isomorphism 
\eqref{lem:loc-gysin1}.  {By definition of $\tau_1^j$ (see \ref{para:splitting} and Lemma \ref{lem:can-sec})
we have 
\[\tau^j_1 \circ b_1=\id -\rho_1^*\sigma_1^j.\]
Thus 
\eq{lem:loc-gysin6}{\tau_1^j=\tau_1^j b_1 e(b'')^{-1}= e (b'')^{-1}- \rho_1^*\sigma_1^j e (b'')^{-1}.}
By definition $\sigma_1^j=(i_0^*)^{-1}i_{1, Y}^* = \pi^* \circ i_{1, Y}^*$. We claim that $ i_{1, Y}^*\circ e=0$.
Indeed $i_{1, Y}^*$ is induced by $F_\sY\to i_{1, Y*}F_{\sX\otimes 1}$.
Consider the natural  commutative diagram
\[\xymatrix{
R\ul{\Gamma}_{\rho_1^{-1}(Z\times 0)} F_\sY\ar[r]\ar[d] & 
R\ul{\Gamma}_{\rho_1^{-1}(Z\times 0)} (i_{1, Y*}F_{\sX\otimes 1})=0\ar[d]\\
F_{\sY}\ar[r] & i_{1, Y*}F_{\sX\otimes 1},
}\]
where the top right corner vanishes since $\rho_1^{-1}(Z\times 0)\cap i_{1, Y}(X\times\{1\})=\emptyset$.
The map $ i_{1, Y}^*\circ e$ is induced by applying $R^j\ul{\Gamma}_Z R\pi_*R\rho_{1*}$ to this diagram 
and going counter-clockwise starting at the top left corner; hence the vanishing $ i_{1, Y}^*\circ e=0$.
Thus \eqref{lem:loc-gysin6} yields $\tau_1^j= e (b'')^{-1}$, which proves that $\tau^j_1$ is equal
to the composition \eqref{lem:loc-gysin4}.}
\end{proof}

\begin{para}\label{para:buf}
We recall a general formula for the refined Gysin morphism of a blow-up.
Let $a:V\inj  W$ be a regular closed immersion of quasi-projective $k$-schemes 
with normal sheaf $\sN_{V/W}$ and denote by $f: \tilde{W}\to W$ the blow-up of $W$ along $V$. We have then the cartesian square
\[ \xymatrix{ \tilde{V}:=f^{-1}(V) \ar[r]^-b\ar[d]^g & \tilde{W} \ar[d]^f\\
V\ar[r]^a & W
}
\]
Since $\tilde{V}=\P(\sN_{V/W}^\vee)$ the excess normal bundle  satisfies
\[\sE xc = g^*\sN_{V/W}/ \sN_{\tilde{V}/\tilde{W}} = g^*\sN_{V/W}/ \sO_{\tilde{V}}(-1),\]
i.e. it is the universal quotient bundle on $\P(\sN_{V/W}^\vee)$ (see e.g. \cite[6.7]{Fu}).

Let $W'\to W$ be a finite-type morphism and denote by $V', \tilde{W}', \tilde{V}'$ and  
by $f', g',a', b'$ the base-changes along $W'\to W$. 
The refined Gysin morphism of $f$ is a map $f^!: \CH_m(W')\to \CH_m(\tilde{W}')$.
Let $T\subset W'$ be an $m$-dimensional integral closed subscheme,
and denote by $[T]\in \CH_m(W')$  its cycle class.
Then by \cite[Exa 6.7.1, Prop 17.5]{Fu} 
\eq{para:buf1}{ f^![T]= [\tilde{T}]+ b'_*\left\{c(\sE xc_{|\tilde{V}'})\cap {g'}^*s(T\cap V', T)\right\}_m\quad 
                                                 \text{in } \CH_m(\tilde{W}'),}
where $\tilde{T}={\rm Bl}_{T\cap V'}(T)\subset {\rm Bl}_{V'}(W')\subset \tilde{W}'$ is the blow-up of $T$ in $T\cap V'$,
$c(\sE xc_{|\tilde{V}'})$ is the total Chern class of the pullback of $\sE xc$ to $\tilde{V}'$,
and $s(T\cap V', T)$ is the Segre class of $T\cap V'$ in $T$ defined in \cite[4.2]{Fu}.
\end{para}

\begin{thm}\label{prop:pf-gysin}
Let $F\in \CItspNis$, $\sX=(X,D)\in \uMCorls$ and $i: Z\inj X$ a smooth closed subscheme of codimension $j$
intersecting $D$ transversally. Set $\sZ=(Z, D_{|Z})$. 
Then  we have the following equality of maps of sheaves on $X_\Nis$
\eq{prop:pf-gysin1}{H^j(c_Z)= g_{\sZ/\sX,Z}^j\circ i^*: 
\gamma^j F_{\sX}  \to i_*\gamma^j F_{\sZ}\to R^j\ul{\Gamma}_Z F_{\sX},}
where $c_Z$ is the morphism \eqref{para:cyclecup1} for $Z$ viewed as cycle in $\CH^j_Z(X)$.
In particular, the following diagram commutes in $D(X_\Nis)$
\[\xymatrix{
\gamma^j F_{\sX}[-j]\ar[d]^{i^*}\ar[r]^-{c_Z} & R\ul{\Gamma}_{Z} F_{\sX}\ar[d]\\
i_*\gamma^j F_{\sZ}[-j]\ar[r]^-{g_{\sZ/\sX}} & F_{\sX}.
}\]
In particular, if $X$ admits a $k$-morphism $q: X\to Z$, such that $q\circ i=\id_Z$
(locally in the Nisnevich topology this is always possible, see Lemma \ref{lem:loc-cl-sec} below),
then
\eq{prop:pf-gysin1.1}{g_{\sZ/\sX}= c_Z\circ q^* : i_*\gamma^j F_{\sZ}[-j]\to R\ul{\Gamma}_Z F_{\sX}\to F_{\sX}.}
\end{thm}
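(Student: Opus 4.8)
The plan is to reduce the equality \eqref{prop:pf-gysin1} to a computation of refined Gysin maps on Chow groups with supports, using the explicit description of both sides via the cup product $c_{(-)}$ and the local Gysin map $g^j_{\sZ/\sX,Z}$ from Lemma \ref{lem:local-gysin}. The first step is to observe that by Lemma \ref{lem:local-gysin}, $g^j_{\sZ/\sX,Z}$ is built entirely out of the isomorphisms \eqref{lem:loc-gysin1}, \eqref{lem:loc-gysin2} coming from the blow-up triangle (Theorem \ref{thm:bus}) and the projective bundle formula $\lambda^j_V$ for $V=\sN^\vee_{Z/X}\oplus\sO_Z$; so precomposing with $i^*$ and using the compatibility of $c_\alpha$ with pullback (Lemma \ref{lem:cyclecup}\ref{lem:cyclecup2}) and with intersection (Lemma \ref{lem:cyclecup}\ref{lem:cyclecup3}), the whole right-hand side is of the form $c_{\text{(some cycle)}}$ for a cycle on $X$ supported on $Z$. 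The task is then to identify that cycle as $[Z]\in\CH^j_Z(X)$, which boils down to tracking what the blow-up isomorphisms and the projective bundle class $\xi^j$ do on the level of $K$-cohomology, i.e.\ on Chow groups with supports via Bloch's formula \eqref{para:KIT2}.

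Concretely, I would argue as follows. Both $H^j(c_Z)$ and $g^j_{\sZ/\sX,Z}\circ i^*$ are morphisms $\gamma^j F_{\sX}\to R^j\ul\Gamma_Z F_{\sX}$; since $R^j\ul\Gamma_Z(\uomega^* K^M_j)_{(X,\emptyset)}=\ul\Gamma_Z C^\bullet_X(j)$ computes $\CH^j_Z(X)$, and the construction of $c_\alpha$ in \ref{para:cyclecup} is $\Z$-linear and natural in the cycle $\alpha$, it suffices to check that the element of $\CH^j_Z(X)$ underlying $g^j_{\sZ/\sX,Z}\circ i^*$ — obtained by applying $R^j\ul\Gamma_Z(-)$ to the defining composition and evaluating on the unit of $\uomega^* K^M_j$ — equals $[Z]$. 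Here is where the blow-up formula is used: the local Gysin map transports $[Z]\in\CH^j_Z(X)$ along $(\rho^*)^{-1}\circ i^*_{\tilde X}\circ(-(i_{E_1}^*)^{-1})\circ\lambda^j_V$. Unwinding, $\lambda^j_V$ sends the fundamental class of $Z$ to $\xi^j\in\CH^j(\P(V))=\CH^j(E_1)$ pushed to $\CH^j_{Z\times0}(X\times\P^1\text{-blow-up})$; the isomorphism \eqref{lem:loc-gysin1} identifies this with a class in $\CH^j_{\rho_1^{-1}(Z\times0)}$; pulling back along $i_{\tilde X}$ and inverting $\rho^*$ via \eqref{lem:loc-gysin2} should land on $[Z]$. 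The precise verification is a refined-Gysin/Segre-class computation, exactly of the type recorded in \ref{para:buf}: one uses that the exceptional divisor $E_1=\P(\sN^\vee_{Z/X}\oplus\sO_Z)$, that $\xi$ is the hyperplane class there, and the blow-up formula \eqref{para:buf1} with $s(Z,Z)=[Z]$ (since $Z\subset Z$ is the identity, the Segre class is trivial), together with $c_{\dim}(\text{universal quotient bundle})$ computing the correct top-dimensional term.

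The final two assertions are formal consequences. The displayed commutative diagram in $D(X_\Nis)$ follows from \eqref{prop:pf-gysin1} by combining it with the last clause of Lemma \ref{lem:local-gysin} (which says $g_{\sZ/\sX}$ factors through $g^j_{\sZ/\sX,Z}[-j]$ composed with the forget-supports map, using the degeneration isomorphism $R^j\ul\Gamma_Z F_{\sX}[-j]\cong R\ul\Gamma_Z F_{\sX}$ from \cite[Cor 8.6(3)]{S-purity}), and the fact that $c_Z$ as a morphism $\gamma^j F_{\sX}[-j]\to R\ul\Gamma_Z F_{\sX}$ likewise factors through its $H^j$, again because $R\ul\Gamma_Z F_{\sX}$ is concentrated in degree $j$. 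For the last formula \eqref{prop:pf-gysin1.1}: if $q\colon X\to Z$ is a retraction of $i$, then $i^*\circ q^*=\id$ on $\gamma^j F_{\sZ}$, so composing the diagram with $q^*$ on the left and using \eqref{prop:pf-gysin1} gives $g_{\sZ/\sX}=c_Z\circ q^*$ after forgetting supports; the existence of such $q$ Nisnevich-locally is Lemma \ref{lem:loc-cl-sec}, and since all the sheaves and maps in sight are Nisnevich-local this is harmless.

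\textbf{Main obstacle.} The heart of the matter — and the step I expect to be the real work — is the identification, on the level of $\CH^j_Z(X)$, of the composite $(\rho^*)^{-1}\circ i^*_{\tilde X}\circ(-(i^*_{E_1})^{-1})\circ\lambda^j_V$ applied to $[Z]$ with $[Z]$ itself. This requires carefully matching Voevodsky's motivic construction of the Gysin triangle (here transcribed sheaf-theoretically) with Fulton's refined-Gysin description of the blow-up, i.e.\ checking that the sign conventions and the self-intersection formula $\rho^![Z]=[\tilde Z]+(\text{excess term})$ conspire to produce exactly the clean answer. One must be attentive that the relevant excess bundle is the universal quotient bundle on $\P(\sN^\vee_{Z/X}\oplus\sO_Z)$, that $\tilde Z$ here is the strict transform which is disjoint from the relevant locus after the further pullback along $i_{\tilde X}$, and that the "minus" sign in the local Gysin map is what makes the identification an actual equality rather than an equality up to sign. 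Everything else in the proof is bookkeeping with the functorialities already established in Lemma \ref{lem:cyclecup}.
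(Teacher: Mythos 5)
Your identification of the ingredients (Fulton's blow-up formula \ref{para:buf}, the geometry of $E_1=\P(\sN^\vee_{Z/X}\oplus\sO_Z)$, the disjointness of the strict transform from the relevant locus, the functorialities of Lemma \ref{lem:cyclecup}, and the deduction of the derived-category diagram and of \eqref{prop:pf-gysin1.1} from the sheaf-level equality) is sound, and your treatment of the two "in particular" statements is essentially the paper's. But the central reduction step of your plan has a genuine gap: you assert that $g^j_{\sZ/\sX,Z}\circ i^*$ "is of the form $c_{(\text{some cycle})}$" and that it therefore suffices to compare the underlying classes in $\CH^j_Z(X)$, the comparison being made "by evaluating on the unit of $\uomega^*K^M_j$." Neither claim is available. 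Lemma \ref{lem:cyclecup}\ref{lem:cyclecup2},\ref{lem:cyclecup3} says that cup products are compatible with pullbacks and intersection; it does not say that $i^*$, nor the inverses of the excision/blow-up isomorphisms \eqref{lem:loc-gysin1} and \eqref{lem:loc-gysin2} entering the definition of the local Gysin map, are themselves of the form $c_\alpha$, so there is no a priori reason the composite is a cup product. And the implicit Yoneda-type determinacy ("a transformation $\gamma^jF_{\sX}\to R^j\ul{\Gamma}_ZF_{\sX}$ natural in $F$ is classified by a class in $\CH^j_Z(X)$") is not established anywhere: by adjunction the source is corepresented by $h^{\bcube,\sp}_{0,\Nis}(\Ztr(\sX)\otimes(\bcubee_\red)^{\otimes j})$, not by $\uomega^*K^M_j$, and identifying the resulting classifying group with $\CH^j_Z(X)$ would itself require an argument comparable in difficulty to the theorem.

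The paper's proof circumvents exactly this point: after reducing Nisnevich-locally to the case $\sN_{Z/X}\cong\sO_Z^{\oplus j}$, it never inverts $i^*_{E_1}$ or produces an "underlying cycle" for the Gysin map. Instead it uses the excess intersection formula $-i^!_{E_1}(\xi^{j-1})=\xi^j$ (with $\sN_{E_1/Y}=\sO_{E_1}(-1)$) to rewrite $-(i^*_{E_1})^{-1}\circ\lambda^j_V$ as $i^*_{E_1}\circ c_\eta\circ(\pi\rho_1)^*$ with $\eta=\xi^{j-1}\in\CH^j_{E_1}(Y)$, so that only the single identity $\rho^*\circ g^j_{\sZ/\sX,Z}\circ i^*=\rho^*\circ H^j(c_Z)$ has to be checked, and then $\rho^*$ is cancelled because \eqref{lem:loc-gysin2} is an isomorphism. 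The refined-Gysin computation that remains is also not the one you sketch (there is no $\rho^![Z]$ with "trivial Segre class $s(Z,Z)$"): it is Claim \ref{prop:pf-gysin4}, namely $(\pi\rho_1)^![Z]=\imath(\eta)+[Z\times\P^1]$ in $\CH^j_{Z_1}(Y)$ for the blow-up $\rho_1$ of $X\times\P^1$ along $Z\times 0$, proved from \eqref{para:buf1} using $\rho^*_{E_1}s(Z\times0,Z\times\P^1)=[E_1]$ and $c(\sE xc_{|E_1})=1+\xi+\cdots+\xi^j$ (this is where the trivial normal bundle is used), after which the term $[Z\times\P^1]$ is killed by the disjointness $\tilde X\cap(Z\times\P^1)=\emptyset$ that you correctly anticipated. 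So your outline would need to be reorganized along these lines before the "bookkeeping" you defer to could be carried out.
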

\begin{proof}
The equivalence of the two statements follows from 
the isomorphism $R\ul{\Gamma}_Z F_{\sX}\cong R^j\ul{\Gamma}_Z F_{\sX}[-j]$ (see \cite[Cor 8.6(3)]{S-purity})
and the definition of the local Gysin map, see Lemma \ref{lem:local-gysin}.
We show the equality \eqref{prop:pf-gysin1}. This is a local question and we can therefore assume 
that the normal sheaf of $Z$ in $X$ is trivial, i.e.,
\eq{prop:pf-gysin1.5}{\sN_{Z/X}\cong \sO_Z^{\oplus j}.}
Let the notation be as in \ref{para:splitting}. Set $\xi=c_1(\sO_{E_1}(1))\in \CH^1(E_1)$.
Note that the pullback $i_{E_1}^*: \CH^{j}_{E_1}(Y)\to \CH^{j}_{E_1}(E_1)$
is by definition (see Lemma \ref{lem:pbKCH}) equal to the refined Gysin map (see \cite[6.2]{Fu})
\[i_{E_1}^!: \CH^{j-1}(E_1)\to \CH^j(E_1)\]
corresponding to the diagram
\[\xymatrix{
E_1\ar@{=}[r]\ar@{=}[d] & E_1\ar[d]^{i_{E_1}}\\
E_1\ar[r]^{i_{E_1}} & Y.
}\]
The normal sheaf  of the immersion $i_{E_1}$ is 
\[\sN_{E_1/Y}=\sO_Y(E_1)_{|E_1}= \sO_{E_1}(-1).\]
Thus by the excess intersection formula (see \cite[Cor 6.3]{Fu}) we find 
\eq{prop:pf-gysin1.6}{-i_{E_1}^!(\xi^{j-1})= \xi^j.}
Set $\eta:=\xi^{j-1}$ viewed as an element in $\CH^j_{E_1}(Y)$.
Consider the following diagram
\[\xymatrix{
\gamma^j F_{\sX}[-j]\ar[d]^{i^*}\ar[r]^-{(\pi\rho_1)^*} & 
  R(\pi\rho_{1})_*\gamma^j F_{\sY}[-j]\ar[r]^-{c_{\eta}}\ar[d]^{i_{E_1}^*}&
R(\pi\rho_{1})_*R\ul{\Gamma}_{E_1}F_{\sY}\ar[d]_{-i_{E_1}^*}\\
i_*\gamma^j F_{Z}[-j]\ar@/_1.5pc/[rr]_-{\lambda^j_V}\ar[r]^-{\rho_{E_1}^*}  & 
i_{*}R\rho_{E_1*} \gamma^j F_{\sE_1}[-j]\ar[r]^-{c_{\xi^j}}&
i_*R\rho_{E_1*}F_{\sE_1}.
}\]
The right square commutes by \eqref{prop:pf-gysin1.6} and Lemma \ref{lem:cyclecup}\ref{lem:cyclecup2},
the left square clearly commutes, hence so does the big outer square.
Thus by Lemma \ref{lem:local-gysin}
\eq{prop:pf-gysin2}{\rho^* \circ g^j_{\sZ/\sX,Z}\circ i^*= i_{{\tilde{X}}, E_1}^*\circ H^j(c_{\eta})\circ (\pi\rho_1)^*: 
i_*\gamma^j F_{\sX}\to R^j\rho_*R\ul{\Gamma}_E F_{\tilde{\sX}},}
where $i_{{\tilde{X}},E_1}^*:= R^j(\pi\rho_{1})_{*} R\ul{\Gamma}_{E_1} (i_{\tilde{X}}^*)$.

Set $Z_1:=\rho_1^{-1}(Z\times\P^1)$.
We have $E_1\subset Z_1$ and $i_{{\tilde{X}}}^{-1}(E_1)= i_{\tilde{X}}^{-1}(Z_1)=E$.
Denote by the same letter $\imath$ the two enlarge-support maps 
$\imath: R\ul{\Gamma}_{E_1}\to R\ul{\Gamma}_{Z_1}$ and $\imath:\CH^j_{E_1}(Y)\to \CH^j_{Z_1}(Y)$.
We also denote by $i_{{\tilde{X}},Z_1}^*$ the two maps induced by $i_{\tilde{X}}^*$
\[i_{{\tilde{X}},Z_1}^*\,\,: \,\,
R^j(\pi\rho_1)_* R\ul{\Gamma}_{Z_1}F_{\sY}\to R^j\rho_* R\ul{\Gamma}_{E} F_{\tilde{\sX}},
\qquad \CH^j_{Z_1}(Y)\to \CH^j_E(\tilde{X}).\]
Clearly we have in both cases
\[i_{{\tilde{X}},E_1}^*= i_{{\tilde{X}},Z_1}^*\circ\imath.\] 
Thus \eqref{prop:pf-gysin2} yields
\eq{prop:pf-gysin3}{\rho^* \circ g^j_{\sZ/\sX,Z}\circ i^*= 
i_{{\tilde{X}}, Z_1}^*\circ H^j(c_{\imath(\eta)})\circ (\pi\rho_1)^*: 
i_*\gamma^j F_{\sX}\to R^j\rho_*R\ul{\Gamma}_E F_{\tilde{\sX}}.}
The strict transform of $Z\times \P^1\subset X\times \P^1$ in $Y$ is the blow-up of
$Z\times \P^1$ in the Cartier divisor $Z\times 0$ and is therefore isomorphic to $Z\times \P^1$.
We obtain 
\[Z_1= (Z\times \P^1)\cup E_1\]
and $E_1\cap (Z\times \P^1)= Z\times 0$ is embedded as the zero section in the normal bundle $N_{E_1}Y$,
which is equal to $E_1\setminus E$. On the other hand, $\tilde{X}\subset Y$ is the strict transform of $X\times 0$ and
intersects $E_1$ in $E$,  see, e.g.,  \cite[5.1]{Fu}. Thus in $Y$ we have $\tilde{X}\cap (Z\times \P^1)=\emptyset$.

\begin{claim}\label{prop:pf-gysin4}
We claim that the following equality holds in $\CH^j_{Z_1}(Y)$ 
\[(\pi\rho_1)^! [Z] = \imath(\eta) +[Z\times \P^1], \]
where 
$(\pi\rho_1)^!: \CH_{d-j}(Z)\to \CH_{d+1-j}(Z_1)$
is the refined Gysin map corresponding to the cartesian diagram
\[\xymatrix{
Z_1\ar[r]\ar@{^(->}[d] & Z\ar@{^(->}[d]^i\\
Y\ar[r]^{\pi\rho_1} & X.
}\]
\end{claim}
Assuming Claim \ref{prop:pf-gysin4}, we can conclude as follows.
The composition
\[ \gamma^j F_{\sY}[-j]\xr{c_{[Z\times \P^1]}} R\ul{\Gamma}_{Z_1} F_{\sY}\xr{i_{\tilde{X}}^*}
R\ul{\Gamma}_E F_{\tilde{\sX}}\]
factors via 
\[R\ul{\Gamma}_{Z\times \P^1} F_{\sY}\xr{i_{\tilde{X}}^*} R\ul{\Gamma}_{(Z\times \P^1)\cap E} F_{\tilde{\sX}}.\]
Since $\tilde{X}\cap (Z\times \P^1)=\emptyset$ by what was said after \eqref{prop:pf-gysin3}, we have
\eq{prop:pf-gysin5}{i_{{\tilde{X}},Z_1}^*\circ c_{[Z\times \P^1]}=0.}
Thus we obtain the following equality of maps $i_*\gamma^j F_{\sX}\to R^j\rho_*R\ul{\Gamma}_E F_{\tilde{\sX}}$
\begin{align*}
\rho^* \circ g^j_{\sZ/\sX,Z}\circ i^* & = i_{{\tilde{X}}, Z_1}^*\circ H^j(c_{\imath(\eta)})\circ (\pi\rho_1)^*, & & 
      \text{by \eqref{prop:pf-gysin3}},\\
                          &= i_{{\tilde{X}}, Z_1}^*\circ H^j(c_{(\pi\rho_1)^*Z -[Z\times \P^1]})\circ (\pi\rho_1)^*, & &
                                                     \text{by \ref{prop:pf-gysin4}},\\
                                                     & =i_{{\tilde{X}}, Z_1}^*(\pi\rho_1)^*\circ H^j(c_Z), & & 
                                                     \text{by \eqref{prop:pf-gysin5}, \ref{lem:cyclecup}},\\
                                                     &= \rho^* \circ H^j(c_Z).
\end{align*}
The statement follows since $\rho^*$ here is an isomorphism, see \eqref{lem:loc-gysin2}.

{\em Proof of  Claim \ref{prop:pf-gysin4}.}
First note 
\[(\pi\rho_1)^! [Z]= \rho_1^! [Z\times \P^1].\]
To compute this expression we apply the formula \eqref{para:buf1}
in the case where $a=i_{Z0}: Z\times 0\inj X\times \P^1$, $f=\rho_1: Y\to X\times \P^1$,
$(W'\to W)= (Z\times \P^1\inj X\times \P^1)$ and $T=Z\times \P^1$ and $m= d+1-j$, where $d=\dim X$.
In particular, we have 
\[\tilde{W}'=Z_1,\quad V'=V=Z\times 0,\quad  \tilde{V'}=\tilde{V}=E_1,\]
and $\tilde{T}=Z\times \P^1$.
Since the conormal bundle of $Z\times 0\inj Z\times \P^1$ is trivial, \cite[Prop 4.1(a)]{Fu} yields
\[\rho_{E_1}^*s(Z\times 0, Z\times \P^1)= [E_1].\]
By  \eqref{prop:pf-gysin1.5} we have $\sN_{Z\times 0/X\times \P^1}= \sO_{Z}^{\oplus j+1}$.
The Whitney formula yields
\[c(\sE xc_{|E_1})= (1-\xi)^{-1}= 1+\xi+\ldots+ \xi^{j},\]
where $\xi=c_1(\sO_{E_1}(1))$.
Thus Claim \ref{prop:pf-gysin4} follows from \eqref{para:buf1}.
\end{proof}

\begin{cor}\label{cor:pf-gysin}
Let $F$, $\sX=(X,D)$, $i: Z\inj X$,  and  $\sZ$ as 
in Theorem \ref{prop:pf-gysin} above. Assume $D=\emptyset$ (thus $\sX=X$ and $\sZ=Z$).
Let $\Phi$ be a family of supports on $Z$ and 
$\alpha\in \CH^r_{\Phi}(Z)$. Then
\mlnl{c_{i_*\alpha}= g_{Z/X}\circ c_\alpha\circ i^*: \\
\gamma^{j+r} F_{X}[-j-r]\to \gamma^{j+r} F_{Z}[-j-r]
\to R\ul{\Gamma}_{\Phi}\gamma^{j} F_{Z}[-j]\to R\ul{\Gamma}_\Phi F_{X},}
where we view $i_*\alpha\in \CH^{j+r}_{\Phi}(X)$.
\end{cor}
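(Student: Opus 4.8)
\textbf{Proof plan for Corollary \ref{cor:pf-gysin}.}

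The plan is to reduce the identity to two previously established facts: the compatibility of the cup product with cycles under composition of cycle classes (Lemma \ref{lem:cyclecup}\ref{lem:cyclecup3} and \ref{lem:cyclecup2}), and the comparison between the cup product with the fundamental class $c_Z$ and the Gysin map $g_{Z/X}$ proved in Theorem \ref{prop:pf-gysin}. The key geometric input is the projection formula for cycles: if $\imath\colon Z\hookrightarrow X$ is a regular closed immersion and $\alpha\in\CH^r_\Phi(Z)$, then $i_*\alpha = [Z]\cdot_X (\text{something pulled back})$ is not literally true, but rather one has $c_{i_*\alpha}$ expressed through $c_{[Z]}$ and $c_\alpha$ via the self-intersection/excess formalism. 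Concretely, I would first treat the question Nisnevich-locally on $X$, so that by Lemma \ref{lem:loc-cl-sec} we may assume there is a retraction $q\colon X\to Z$ with $q\circ i=\id_Z$. In that situation Theorem \ref{prop:pf-gysin}, formula \eqref{prop:pf-gysin1.1}, gives $g_{Z/X}=c_Z\circ q^*$ (after the identification $R\ul\Gamma_Z F_X\cong R^j\ul\Gamma_Z F_X[-j]$).

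With the retraction $q$ in hand, the proof becomes a diagram chase. First I would observe that, since $q\circ i = \id$, pullback along $q$ followed by restriction along $i$ is the identity, and more importantly $i_*\alpha = q^*\alpha \cdot [Z]$ in $\CH^{j+r}_\Phi(X)$ — this is the standard fact that pushforward along a section of a smooth (here: Nisnevich-locally trivial, after shrinking, so that $i$ is a regular immersion with trivial normal bundle) morphism is intersection with the class of the image; it follows from the projection formula \cite[Prop 8.1.1(c), Exa 8.1.7]{Fu} together with $q^* i_* \alpha$ being computed by the refined Gysin formalism. Then, applying Lemma \ref{lem:cyclecup}\ref{lem:cyclecup3} to the factorization $i_*\alpha = q^*\alpha\cdot [Z]$, we get
\[
c_{i_*\alpha} = c_{[Z]}\circ R\ul\Gamma_{\Phi}(c_{q^*\alpha})[-j] = c_{[Z]}\circ c_{q^*\alpha}
\]
as maps $\gamma^{j+r}F_X[-j-r]\to R\ul\Gamma_{Z\cap q^{-1}(\text{supp})}F_X$, enlarging supports to $\Phi$ at the end. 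By Lemma \ref{lem:cyclecup}\ref{lem:cyclecup2} applied to $q$, the map $c_{q^*\alpha}$ equals $q^*\circ c_\alpha$ precomposed with $q^*$ on the twist, i.e. we may rewrite $c_{q^*\alpha} = c_{q^*\alpha}$ factoring through $q^*\gamma^{j+r}F_Z[-j-r]$; combined with $i^*\circ q^* = \id$ this produces the composite $\gamma^{j+r}F_X\xrightarrow{i^*}\gamma^{j+r}F_Z\xrightarrow{c_\alpha}R\ul\Gamma_\Phi\gamma^jF_Z$. Finally, using \eqref{prop:pf-gysin1.1} to replace $c_{[Z]}\circ q^*$ by $g_{Z/X}$, and the already-proved compatibility of $g_{Z/X}$ with the cup product with Chow classes (Proposition \ref{prop:gysin-cup}, which lets one commute $g_{Z/X}$ past $c_\alpha$ in the correct way), assembles the claimed equality $c_{i_*\alpha} = g_{Z/X}\circ c_\alpha\circ i^*$.

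The main obstacle, I expect, is the bookkeeping of supports and the passage between the "local" statement (valid after choosing $q$) and the global one. Strictly speaking $g_{Z/X}$, $c_{i_*\alpha}$, $c_\alpha$ and $i^*$ are all globally defined morphisms of complexes of Nisnevich sheaves on $X$; two such morphisms agree if and only if they agree Nisnevich-locally, so it suffices to check the identity on the stalks at points of $Z$ (away from $Z$ both sides vanish since the cohomology sheaves $R^j\ul\Gamma_\Phi(-)$ are supported on the relevant closed sets), and near such a point we may shrink $X$ so that $\sN_{Z/X}$ is trivial and a retraction $q$ exists. One must check that all the intermediate identifications (the isomorphism $R\ul\Gamma_Z F_X\cong R^j\ul\Gamma_Z F_X[-j]$ of \cite[Cor 8.6(3)]{S-purity}, the enlarge-support maps, and the identification $i^*q^*=\id$) are natural enough to glue; this is routine but requires care. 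A secondary point is to verify the cycle-theoretic identity $i_*\alpha = q^*\alpha\cdot[Z]$ in $\CH^{j+r}_\Phi(X)$ with the correct family of supports, which follows from \cite[\S 8]{Fu} once one notes that $q^{-1}(\Phi)\cap Z = \Phi$ under $i$. Everything else is a formal consequence of Lemmas \ref{lem:cyclecup}, \ref{lem:local-gysin}, Theorem \ref{prop:pf-gysin} and Proposition \ref{prop:gysin-cup}.
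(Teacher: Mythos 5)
Your computational core is essentially the paper's: Nisnevich-locally one produces a cycle $W$ on $X$ with $i^*W=\alpha$ (the paper reduces to $\alpha=[V]$ irreducible, then to $V$ smooth, and chooses $W$ locally; your retraction $q$ from Lemma \ref{lem:loc-cl-sec} gives $W=q^*\alpha$ directly), and then the chain
$g_{Z/X}\circ c_\alpha\circ i^*=g_{Z/X}\circ c_{i^*W}\circ i^*=g_{Z/X}\circ i^*\circ c_{W}=c_{Z}\circ c_{W}=c_{Z\cdot W}=c_{i_*\alpha}$
follows from Lemma \ref{lem:cyclecup}\ref{lem:cyclecup2},\ref{lem:cyclecup3} and Theorem \ref{prop:pf-gysin}. (Note it is cleaner to apply Lemma \ref{lem:cyclecup}\ref{lem:cyclecup2} to $i$, giving $i^*\circ c_{q^*\alpha}=c_\alpha\circ i^*$, and to use $c_Z=g_{Z/X}\circ i^*$ from \eqref{prop:pf-gysin1}, rather than your formulation via $i^*\circ q^*=\id$ and Proposition \ref{prop:gysin-cup}, which confuses the order of the pullbacks; the ingredients are the same.)

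The genuine gap is your local-to-global step. Both sides are morphisms in $D(X_\Nis)$ from $\gamma^{j+r}F_X[-j-r]$ to $R\ul{\Gamma}_\Phi F_X$, and such morphisms are \emph{not} determined by their restrictions to a Nisnevich cover: $\Hom_{D(X_\Nis)}(A[-n],K)$ is a hypercohomology-type group, and two derived-category maps can agree locally (even stalkwise) without being equal, because of contributions from cohomology sheaves of $K$ in degrees $<n$. This is exactly what the paper's proof is structured to rule out: it first reduces (by additivity and enlarging supports, Lemma \ref{lem:cyclecup}\ref{lem:cyclecup1},\ref{lem:cyclecup1.5}) to $\alpha=[V]$ with $\Phi=\Phi_V$ and $V$ of pure codimension $r$ in $Z$, and then invokes \cite[Cor 8.6(1)]{S-purity} — this is precisely where the hypothesis $D=\emptyset$ enters — to see that $R\ul{\Gamma}_V F_X$ is concentrated in degrees $\ge j+r$. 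Hence both maps are determined by the induced morphisms of sheaves $\gamma^{j+r}F_X\to R^{j+r}\ul{\Gamma}_V F_X$, and only then does checking Nisnevich-locally (and, via injectivity into the smooth locus of $V$, generically on $V$) become legitimate. Your proposal never uses $D=\emptyset$, which signals that this reduction is missing; without it the argument does not establish equality of the two derived-category morphisms.
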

\begin{proof}
It suffices to consider $\alpha=[V]$, with $V\subset Z$ irreducible and of codimension $r$, and $\Phi=\Phi_V$.
By \cite[Cor 8.6(1)]{S-purity} we have $R\ul{\Gamma}_V F_{X}\cong \tau_{\ge j+r}R\ul{\Gamma}_V F_{X}$
(here we need $D=\emptyset$). Hence we have a natural map 
$R^{j+r}\ul{\Gamma}_V F_X[-j-r]\to R\ul{\Gamma}_V F_{X}$ in the derived category 
and the two maps in the statement are induced by composing this map
with the two morphisms of sheaves
\eq{cor:pf-gysin1}{H^{j+r}(c_{i_*V}),\,\, H^{j+r}(g_{Z/X}\circ c_V\circ i^*): 
\gamma^{j+r}F_X\to R^{j+r}\ul{\Gamma}_V F_X.}
Thus it suffices to show that the two maps in \eqref{cor:pf-gysin1} are equal.
By \cite[Cor 8.6(1)]{S-purity} the restriction 
$R^{r+j}\ul{\Gamma}_V F_{X}\to 
\nu_* R^{r+j}\ul{\Gamma}_{V\setminus V_{\rm sing}}  
F_{X\setminus V_{\rm sing}}$
is injective,   where $\nu : X\setminus V_{\rm sing}\inj X$ is the open immersion.
Thus we may furthermore assume that $V$ is smooth.
The question is local on $X$ and we can therefore assume that there exists a closed subset
$W\subset X$ of pure codimension $r$ such that $V=i^* W$ in $\CH^r_V(Z)=\CH^0(V)$. 
In this situation we have the following equality of maps 
$\gamma^{j+r}F_X[-j-r]\to R\ul{\Gamma}_V F_X$
\[g_{Z/X}\circ c_V \circ i^*  =  g_{Z/X}\circ c_{i^*W} \circ i^*
                                     = g_{Z/X}\circ i^* \circ c_W
                                     = c_Z\circ c_W
                                     = c_{Z\cdot W}
                                     = c_{i_*V},
\]
where the second and forth equality hold by Lemma \ref{lem:cyclecup} and  the third equality 
by Theorem \ref{prop:pf-gysin}. This implies the statement.
\end{proof}

\begin{lemma}\label{lem:loc-cl-sec}
Let $S$ be an affine scheme and $Z\inj X$ a closed immersion of affine $S$-schemes.
Assume that $X$ is noetherian, integral and normal and  $Z$ is irreducible and formally smooth and of finite type over $S$.
Then there exists a Nisnevich neighborhood $u: X'\to X$ of $Z$
(i.e., $u$ is \'etale and induces an isomorphism $u^{-1}(Z)\xr{\simeq} Z$)
which admits an $S$-morphism $X'\to Z$, such that the composition
$Z\cong u^{-1}(Z)\inj X'\to Z$ is the identity.
\end{lemma}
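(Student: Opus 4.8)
The plan is to reduce to a statement in commutative algebra about lifting along a closed immersion, and then to use a standard Popescu-type argument. Write $S = \Spec R$, $X = \Spec A$, $Z = \Spec B$, and let $I \subset A$ be the ideal defining the closed immersion $Z \inj X$, so $B = A/I$. The datum we seek — a Nisnevich neighborhood $u : X' \to X$ of $Z$ together with an $S$-morphism $X' \to Z$ restricting to the identity on $Z$ — unwinds to the following: we want an étale $A$-algebra $A'$ together with an isomorphism $A'/IA' \cong B$ over $A/I$ (this is the ``Nisnevich neighborhood of $Z$'' condition) and an $R$-algebra section $B \to A'$ of the projection $A' \to A'/IA' \cong B$. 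Equivalently, since $A \to A'$ is étale, it suffices to produce, after replacing $A$ by an étale neighborhood of $Z$, an $R$-algebra homomorphism $s : B \to A$ which is a section of $A \to A/I = B$; the neighborhood can then be taken to be $A' = A$ itself, or one shrinks further to arrange the isomorphism $A'/IA' \cong B$.

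First I would reduce to the case where $Z$ is of finite presentation over $R$ and the situation is ``locally finitely presented'' in the appropriate sense: $X$ is integral and normal, hence reduced, and $Z$ is of finite type (even formally smooth) over $S$; by a limit argument as in \cite[Exp I, Prop 8.1.6]{SGA4I} and \cite{Popescu86} one may assume $R$ is noetherian (or pass to a filtered colimit presentation). Next, the key input is that a formally smooth finite-type $R$-algebra $B$ is smooth over $R$, so the closed immersion $Z \inj X$ is a section, over $S$, of a formally smooth $S$-scheme. The obstruction to lifting the identity map $B \to B$ along the surjection $A \to B$ to an $R$-algebra map $B \to A/I^2$ lies in a group built from derivations / the cotangent complex, namely it is governed by $\Ext^1_B(L_{B/R}, I/I^2)$; since $B$ is smooth over $R$, $L_{B/R}$ is a projective $B$-module (in degree $0$), so this obstruction vanishes and, more importantly, a lift to each $A/I^{n+1}$ exists once a lift to $A/I^n$ does, with the set of lifts a torsor under $\Hom_B(\Omega_{B/R}, I^n/I^{n+1})$. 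This produces a compatible system, i.e.\ a section $B \to \widehat{A}_I := \varprojlim_n A/I^n$ of the completion map. Finally, one uses Popescu's theorem / Artin approximation: the $R$-algebra map $B \to \widehat{A}_I$ can be approximated by an $R$-algebra map $B \to A'$ for some étale neighborhood $A'$ of $A$ along $V(I)$, agreeing with the formal section modulo $I$ (this is exactly the étale-local lifting statement for smooth morphisms, cf.\ \cite[\href{https://stacks.math.columbia.edu/tag/039P}{Tag 039P}, \href{https://stacks.math.columbia.edu/tag/04D1}{Tag 04D1}]{stacks-project}); alternatively, since $Z \inj X$ is a regular immersion (as $Z$ is smooth over $S$ and $X$ is — well, one needs $X$ regular along $Z$, which follows as $X$ is normal and, after shrinking, $Z$ meets the regular locus, or one argues directly with the infinitesimal lifting property of the étale-local structure of smooth morphisms), the standard fact that a smooth scheme admits étale-local sections of any closed immersion into it applies. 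After base-changing $X$ by this $A'$ and restricting the isomorphism over $V(I)$, we obtain the desired Nisnevich neighborhood $X' = \Spec A'$ with its $S$-morphism to $Z$ splitting $Z \cong u^{-1}(Z) \inj X'$.

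The main obstacle is the passage from the formal section (a map $B \to \widehat{A}_I$) to an actual section defined on an étale neighborhood: this is where one genuinely needs either Popescu's general Néron desingularization / Artin approximation, or the precise statement that smooth morphisms have the infinitesimal lifting property in the étale topology. One must be a little careful that $Z$ being \emph{formally smooth} of finite type over $S$ really does give smoothness (this is standard: formally smooth $+$ locally of finite presentation $\Rightarrow$ smooth), and that the normality and integrality of $X$ are used only to guarantee $X$ is reduced so that an étale neighborhood isomorphism over $V(I)$ can be arranged and the section, once it exists on the completion, is uniquely pinned down modulo $I$. The remaining steps — the obstruction-theory computation for lifting mod $I^{n+1}$ and the limit argument reducing to the noetherian case — are routine and I would not spell them out in detail.
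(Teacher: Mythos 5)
Your first step is fine and matches the paper's: formal smoothness of $Z/S$ gives, by the infinitesimal lifting property, a compatible system of $S$-maps $Z_n=\Spec A/I^n\to Z$ restricting to the identity, i.e.\ an $R$-algebra section $B=A/I\to \hat A_I$ of the completion (you phrase this via $L_{B/R}$ and smoothness of $B$, but note that ``formally smooth $+$ finite type $\Rightarrow$ smooth'' needs finite \emph{presentation}, which is not available since $R$ is not assumed noetherian; formal smoothness alone suffices here and is what the paper uses). The genuine gap is in the second step, which is the actual content of the lemma: you pass from the formal section to an \'etale neighborhood by appealing to ``Popescu / Artin approximation'' or to Stacks tags about formal smoothness, none of which applies as stated (Artin approximation needs excellence/G-ring hypotheses on a henselian local base, and the cited tags are characterizations of smoothness, not algebraization statements). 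The paper instead uses Elkik's theorem (Th\'eor\`eme 2bis) to descend the formal section from $\hat A_I$ to the henselization $A^h_I$ of the pair $(A,I)$; if you wanted to avoid Elkik by the lifting property of smooth algebras along henselian pairs, you would again need $B$ finitely presented over $R$, which you have not secured and cannot get by your proposed ``limit argument to reduce to noetherian $R$''.

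Moreover, even granting a section $B\to A^h_I$, you still must spread it out to an honest Nisnevich neighborhood, and this is where the hypotheses you dismiss as only ensuring reducedness are actually used. Since $B$ is merely of finite type (not finitely presented) over $R$, a map $B\to A^h_I=\varinjlim_{B'} B'$ does \emph{not} automatically factor through a stage of the filtered colimit of \'etale neighborhoods: one cannot kill a possibly non-finitely-generated ideal of relations at a finite stage. The paper fixes this by using that $X$ is integral and normal and $Z$ irreducible to refine the system to \emph{integral} \'etale neighborhoods, for which the (\'etale) transition maps $B'\to B''$ and the maps $B'\to A^h_I$ are injective; injectivity is exactly what lets the finite-type map $B\to A^h_I$ factor through some $B'$. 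Your proposal never uses normality, integrality or irreducibility, and the alternative you sketch (``a smooth scheme admits \'etale-local sections of any closed immersion into it'', with a caveat about $X$ being regular along $Z$) is not a correct statement in this situation and is not needed. So the skeleton is right, but the descent from the completion to a Nisnevich neighborhood — the heart of the proof — is missing.
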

\begin{proof}
We follow an argument in the proof of \cite[Lem 8.5]{S-purity}.
Write $S=\Spec R$, $X=\Spec A$ and $Z=\Spec A/I$.
Set $Z_n:=\Spec A/I^n$ and $\hat{X}_Z=\Spec \hat{A}_I$, where $\hat{A}_I=\varprojlim_n (A/I^n)$.
Since $Z$ is formally smooth over  $S$,
we find a compatible system of $S$-morphisms $\{Z_n\to Z\}$ which
reduce to the identity on $Z$; it induces a morphism of $S$-schemes
$\hat{\pi}: \hat{X}_Z\to Z$ of which the natural closed immersion $Z\inj \hat{X}_Z$ is a section.
We can form the closed immersion 
$\hat{\epsilon}:=\id_{\hat{X}_Z}\times \hat{\pi}: \hat{X}_Z\inj \hat{X}_Z\times_S Z$ which restricts to the diagonal on
$Z\times_S Z$. By \cite[Thm 2bis]{Elkik} we find therefore an $X^h_Z$-morphism
$\epsilon^h: X_Z^h\inj  X_Z^h\times_S Z$ which restricts to the diagonal  on $Z\times_S Z$,
where $X^h_Z=\Spec A^h_I$ is the henselization of the pair $(X, Z)$.
Composing   $\epsilon^h$ with the projection to $Z$ yields an $S$-morphism
$u^h: X_Z^h\to Z$. Since  $X$ is normal and noetherian, so is any affine \'etale scheme $Y$  over $X$;
in particular such a $Y$ is a disjoint union of integral normal $X$-schemes.
Since $Z$ is irreducible, any Nisnevich neighborhood $Y\to X$ of $Z$ can be refined to a Nisnevich neighborhood 
$Y'\to X$ of $Z$ with $Y'$ integral. It follows that we can write  
$A^h_I=\varinjlim B$, where the limit is over all \'etale maps $A\to B$ inducing an isomorphism $A/I\to B/IB$
with $B$ integral; the transition maps $B\to B'$ in this system are automatically \'etale and hence 
(since $B$ and $B'$ are integral) also injective; thus also $B\to A^h_I$ is injective.
Since $A/I$ is of finite type over $R$ it follows that the $R$-algebra map ${u^h}^*:A/I\to A^h_I$
factors via an $R$-algebra map $A/I\to B$ for some $B$ as above. This yields the statement.
\end{proof}

\begin{cor}\label{cor:gysin-fun}
Let $F\in \CItspNis$ and $\sX=(X,D)\in \uMCorls$. 
Let $i:Z\inj Z'\xhookrightarrow{i'} X$ be closed immersion of smooth schemes
of codimension $a=\codim(Z, Z')$ and $b=\codim(Z', X)$,
such that $D$ intersects $Z$ and $Z'$ transversally. Set $Z=(Z, D_{|Z})$ and $Z'=(Z', D_{|Z'})$.
We have the following equality
\[R^a\ul{\Gamma}_{Z}(g^b_{\sZ'/\sX, Z'})\circ g^a_{\sZ/\sZ', Z}= g^{a+b}_{\sZ/\sX, Z}: 
i_*\gamma^{a+b}F_\sZ\to R^{a+b}\ul{\Gamma}_{Z} \sF_{\sX}.\]
In particular, the following diagram commutes
\[\xymatrix{
i_*\gamma^{a+b}F_{\sZ}[-a-b]\ar[rr]^{g_{\sZ/\sX}}\ar[dr]_{g_{\sZ/\sZ'}[-b]} &  &
F_\sX\\
&    i'_* \gamma^b F_{\sZ'}[-b].\ar[ur]_{g_{\Z'/\sX}} &
}\]
\end{cor}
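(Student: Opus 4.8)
The statement is the functoriality (transitivity) of the local Gysin maps along a tower of smooth closed immersions $i\colon Z\inj Z'\xhookrightarrow{i'} X$. The strategy is to reduce everything to the cup-product-with-cycle-class picture via Theorem \ref{prop:pf-gysin}, after passing to a Nisnevich-local situation where retractions are available. First I would note that, since all the maps in question are maps of Nisnevich sheaves on $X$ and the assertion is local on $X$, we may work Nisnevich-locally around the points of $Z$. By Lemma \ref{lem:loc-cl-sec}, applied twice (first to $Z\inj Z'$, then to $Z'\inj X$, shrinking as needed), we may assume there are $k$-morphisms $q'\colon Z'\to Z$ with $q'\circ i=\id_Z$ and $q\colon X\to Z'$ with $q\circ i'=\id_{Z'}$; composing, $q'\circ q\colon X\to Z$ restricts to the identity on $Z$. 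We also keep the transversality of $D$ with all three schemes, which is preserved under étale localization.

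\textbf{Key steps.} With these retractions in place, Theorem \ref{prop:pf-gysin}, more precisely the local form in equation \eqref{prop:pf-gysin1} together with the definition of the local Gysin map in Lemma \ref{lem:local-gysin}, gives the identities (as maps of sheaves on the relevant small Nisnevich sites)
\begin{align*}
g^{b}_{\sZ'/\sX, Z'} &= H^{b}(c_{Z'})\circ (q)^*,\\
g^{a}_{\sZ/\sZ', Z} &= H^{a}(c_{Z})\circ (q')^*,\\
g^{a+b}_{\sZ/\sX, Z} &= H^{a+b}(c_{Z})\circ (q'\circ q)^*,
\end{align*}
where in the first line $c_{Z'}$ is the cup-product morphism \eqref{para:cyclecup1} for $Z'$ viewed in $\CH^{b}_{Z'}(X)$, and similarly $c_Z\in\CH^a_Z(Z')$ in the second line and $c_Z\in\CH^{a+b}_Z(X)$ in the third (all twists of $F$ suppressed). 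The point is now to compare $R^a\ul{\Gamma}_Z(g^b_{\sZ'/\sX,Z'})\circ g^a_{\sZ/\sZ',Z}$ with $g^{a+b}_{\sZ/\sX,Z}$. Using the compatibility of the cup-product maps with pullback (Lemma \ref{lem:cyclecup}\ref{lem:cyclecup2}) I would move the $q^*$ past $c_Z$ on $Z'$, turning it into $c_{q^*[Z]}$ on $X$; here $q^*[Z]\in\CH^{a+b}_{Z}(X)$ is the refined pullback of the cycle $[Z]\in\CH^a_Z(Z')$ along $q$. Then the associativity of cup products, Lemma \ref{lem:cyclecup}\ref{lem:cyclecup3}, applied to the two classes $[Z']\in\CH^b_{Z'}(X)$ and (the pullback of) $[Z]$, collapses the two-step composition into a single $c_{\alpha}\circ(q'\circ q)^*$ for $\alpha=[Z']\cdot q^*[Z]\in\CH^{a+b}_Z(X)$. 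So the whole argument reduces to the purely intersection-theoretic identity
\[[Z']\cdot q^*[Z] = [Z]\quad\text{in }\CH^{a+b}_Z(X),\]
i.e.\ to showing $q^*[Z]$, cut with $[Z']$, recovers the class of $Z$ with its reduced structure. Since $Z=q'^{-1}(Z)\cap Z'$ scheme-theoretically near $Z$ and $q'$ is a retraction of the regular immersion $i\colon Z\inj Z'$, one has $q'^*[Z]=[q'^{-1}(Z)]$ with $q'^{-1}(Z)$ smooth (it is the preimage of a smooth subscheme under a smooth—after localization—morphism) and meeting $Z'$ transversally in $Z$; refined intersection of $[q'^{-1}(Z)]$ with $[Z']$ then equals $[Z]$ by \cite[\S6,\S8]{Fu}. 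Tracing $q'^{-1}(Z)$ back through $q$ gives the displayed identity.

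\textbf{Main obstacle.} The routine part is the diagram-chasing with the compatibilities of $c_\alpha$; the delicate point is the reduction to the retracted situation and the bookkeeping of the refined pullbacks. Concretely, Lemma \ref{lem:loc-cl-sec} requires the base schemes to be affine with $X$ integral normal and $Z$ (resp.\ $Z'$) irreducible; to invoke it I must first reduce to connected/irreducible $Z$ and to an affine integral normal model of $X$, and I should check the two successive applications are compatible so that the composite $X\to Z'\to Z$ is genuinely a retraction of the composite immersion. A second subtlety is that $q^*[Z]$ is an \emph{a priori} singular-support refined pullback (the cycle $[Z]$ lives in $\CH^a_Z(Z')$ but is pulled back along a not-necessarily-flat map), so I need to know it is represented by an honest closed subscheme meeting $Z'$ properly; after localizing so that $q$ is smooth this is automatic, but one must be careful that Lemma \ref{lem:loc-cl-sec} only yields a retraction, not smoothness of $q$ — however, a retraction of a regular closed immersion of smooth $k$-schemes is automatically smooth in a neighborhood of the zero section (its differential is surjective along $Z$), which suffices. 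Once these local-geometry points are pinned down, the rest is a formal consequence of Theorem \ref{prop:pf-gysin} and Lemma \ref{lem:cyclecup}, and the final ``in particular'' statement follows by desuspending, using the isomorphism $R\ul{\Gamma}_Z F_\sX\cong R^{a+b}\ul{\Gamma}_Z F_\sX[-a-b]$ from \cite[Cor 8.6(3)]{S-purity} and the forget-supports map, exactly as in Theorem \ref{prop:pf-gysin}.
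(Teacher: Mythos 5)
Your route is close in spirit to the paper's but mechanically different: the paper does not write any of the three Gysin maps as ``cup $\circ$ retraction-pullback''. Instead it uses Lemma \ref{lem:loc-cl-sec} only to know that $i^*\colon \gamma^{a+b}F_{\sX}\to i_*\gamma^{a+b}F_{\sZ}$ is (locally) surjective, so the desired identity may be checked after precomposition with $i^*$; it then chooses locally an auxiliary smooth $Z''\subset X$ of codimension $a$ with $Z=Z'\times_X Z''$, rewrites everything via the identity $g\circ i^*=H(c)$ of Theorem \ref{prop:pf-gysin} and Lemma \ref{lem:cyclecup}\ref{lem:cyclecup2}, \ref{lem:cyclecup3}, and concludes from $Z'\cdot Z''=Z$ in $\CH^{a+b}_Z(X)$. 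The only sheaf-level pullbacks occurring there are along the closed immersions $i,i'$, for which admissibility with respect to $D$ is automatic.

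This points to the genuine gap in your argument. To write $g^{b}_{\sZ'/\sX,Z'}=H^{b}(c_{Z'})\circ q^*$ and $g^{a}_{\sZ/\sZ',Z}=H^{a}(c_{Z})\circ (q')^*$ you need $q^*$ and $(q')^*$ to exist as maps of the modulus sheaves, i.e.\ $q\colon X\to Z'$ must be admissible for $(X,D)\to(Z',D_{|Z'})$, which means $D\ge q^*(D_{|Z'})$ (similarly $D_{|Z'}\ge q'^*(D_{|Z})$); the same condition is literally a hypothesis of Lemma \ref{lem:cyclecup}\ref{lem:cyclecup2}, which you invoke to move $q^*$ past $c_Z$. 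Lemma \ref{lem:loc-cl-sec} produces retractions of the underlying schemes only and gives no control over $D$, and ``transversality is preserved under \'etale localization'' does not supply this either. So as written, your three displayed formulas do not parse unless the retractions are chosen compatibly with $D$. The gap is repairable: using the transversality of $D$ with $Z$ and $Z'$ one can build a finer Nisnevich-local model (as the paper does in the codimension-one case of Theorem \ref{thm:gysin-tri}, via an \'etale map to $S[t]$ with $S=\Spec K\{x_1,\dots,x_n\}$ and $D$ pulled back from $S$) in which $D$ is pulled back along the retractions, and then your computation goes through; note also a small slip in the intersection identity, where the auxiliary cycle should be $q^{-1}(Z)$ for $q\colon X\to Z'$ (your $q'^{-1}(Z)$, with $q'\colon Z'\to Z$, is all of $Z'$), and this $q^{-1}(Z)$ is exactly the paper's $Z''$, so after the repair the two arguments essentially coincide.
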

\begin{proof}
The second statement follows from the first and Lemma \ref{lem:local-gysin}.
The first statement is local in $X$ and we may therefore assume that we find a smooth
closed subscheme $Z''\subset X$ of codimension $a$ such that $Z=Z'\times_X Z''$.
Since $i^*: \gamma^{a+b}F_{\sX}\to i_*\gamma^{a+b}F_{\sZ}$
is surjective by Lemma \ref{lem:loc-cl-sec} it suffices to show the equality after precomposition
with $i^*$. Consider the following diagram
\[\xymatrix{
\gamma^{a+b} F_{\sX}\ar@/_3pc/[dd]_{i^*}\ar[d]^{{i'}^*}\ar[rr]^-{c_{Z''}} & &
R^a\ul{\Gamma}_{Z''}\gamma^b F_{\sX}\ar[d]^{{i'}^*}\ar[drr]^-{c_{Z'}} 
& &\\
i'_*\gamma^{a+b} F_{\sZ'}\ar[rr]_-{c_Z}\ar[d] & &
i'_*R^a\ul{\Gamma}_{Z} \gamma ^b F_{\sZ'}\ar[rr]_-{g^b_{\sZ'/\sX, Z'}} & &
R^{a+b}\ul{\Gamma}_Z F_{\sX}\\
i_*\gamma^{a+b} F_{\sZ},\ar[rru]_{g^a_{\sZ/\sZ', Z}} & &   & &
}\]
where the maps $c_Z$, $c_{Z'}$, and $c_{Z''}$ are defined viewing $Z$, $Z'$,  and  $Z''$ 
as cycles in $\CH^a_Z(Z')$, $\CH^b_{Z'}(X)$, and $\CH^a_{Z''}(X)$, respectively.
The square commutes by Lemma \ref{lem:cyclecup}\ref{lem:cyclecup2} and the triangles commute
 by Theorem \ref{prop:pf-gysin}. 
By definition of the refined intersection product we have $Z'\cdot Z''= Z$ in $\CH^{a+b}_Z(X)$. 
Thus the statement follows from Lemma \ref{lem:cyclecup}\ref{lem:cyclecup3} and Theorem \ref{prop:pf-gysin}.
\end{proof}

\begin{thm}\label{thm:gysin-tri}
Let $\sX=(X,D)\in\uMCorls$ and  $F\in\CItspNis$.
Let $i:Z\inj X$ be a smooth closed subscheme of codimension $j$ intersecting $D$ transversally
and set $\sZ=(Z, D_{|Z})$. 
Then there is a canonical distinguished triangle in $D(X_{\Nis})$
\eq{thm:gysin-tri0}{i_*\gamma^j F_{\sZ}[-j]\xr{g_{\sZ/\sX}} F_{\sX}\xr{\rho^*} 
                             R\rho_* F_{(\tilde{X}, D_{|\tilde{X}}+ E)}\xr{\partial} i_*\gamma^j F_{\sZ}[-j+1],}
where $\rho:\tilde{X}\to X$ is the blow-up of $X$ along $Z$ and $E=\rho^{-1}(Z)$.
\end{thm}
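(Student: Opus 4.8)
The idea is to reinterpret the Gysin triangle \eqref{thm:gysin-tri0} as the distinguished triangle underlying the \emph{split} decomposition of $R\rho_*F_{\tilde{\sX}}$ furnished by Corollary \ref{cor:bud}, with the connecting morphism then identified with $g_{\sZ/\sX}$ shifted. Concretely, recall from \ref{para:splitting} that the blow-up formula (Theorem \ref{thm:bus}) for the pair $(\sX,\sZ)$ gives a distinguished triangle
\[
F_{\sX}\xr{\rho^*} R\rho_* F_{(\tilde X, D_{|\tilde X})} \xr{b} \bigoplus_{i=1}^{j-1} i_*\gamma^i F_{\sZ}[-i] \xr{+1}
\]
in $D^b(X_\Nis)$, where $b$ is the composite built from $i_E^*$ and the inverse of $\lambda_{\sN_{Z/X}^\vee}$. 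However, the target modulus pair appearing in the statement is $(\tilde X, D_{|\tilde X}+E)$, \emph{not} $(\tilde X, D_{|\tilde X})$; the first step is therefore to relate these two. The natural map $F_{(\tilde X, D_{|\tilde X})}\to F_{(\tilde X, D_{|\tilde X}+E)}$ (adding the exceptional divisor $E$ to the modulus) fits into a short exact sequence whose third term is supported on $E$, and the semipurity of $F$ together with the purity/contraction results of \cite{S-purity} (in the form already used, e.g. \cite[Cor 8.6]{S-purity} and Remark \ref{rmk:gamma}) should identify $R\rho_*$ of that third term with $i_*\gamma^j F_{\sZ}[-j+\,?]$ up to a shift. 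In fact the cleanest route is to note that $(\tilde X, D_{|\tilde X}+E)$ is, Nisnevich-locally, $(\tilde X, D_{|\tilde X})$ twisted by one extra copy of the divisor $E$, and to use the computation of $\gamma^1$ via the modulus pair $(\A^1,0)$ as in Remark \ref{rmk:gamma} and Lemma \ref{lem:P1bf}.

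\textbf{Main construction.} First I would show directly that
\[
R\rho_* F_{(\tilde X, D_{|\tilde X}+E)} \;\cong\; \bigoplus_{r=1}^{j} i_*\gamma^r F_{\sZ}[-r] \;\oplus\; F_{\sX},
\]
refining Corollary \ref{cor:bud} (which handles $R\rho_*F_{(\tilde X, D_{|\tilde X})}$, i.e. the summands $r=1,\dots,j-1$ plus $F_{\sX}$) by an extra top summand $i_*\gamma^j F_{\sZ}[-j]$ coming precisely from the passage $D_{|\tilde X}\rightsquigarrow D_{|\tilde X}+E$ on the exceptional divisor. This extra summand is detected by pulling back to $E=\P(\sN_{Z/X}^\vee)$, where adding the exceptional divisor corresponds, via the projective bundle formula (Theorem \ref{thm:pbf}) applied to $E\to Z$ and to $E_1\to Z$, to the difference between the decompositions for $\sN_{Z/X}^\vee\oplus\sO_Z$ and $\sN_{Z/X}^\vee$ — this is exactly the commutative square \eqref{para:splitting2} and the canonical section $s$ constructed there. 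Granting this decomposition, the triangle \eqref{thm:gysin-tri0} is obtained as follows: the map $\rho^*\colon F_{\sX}\to R\rho_* F_{(\tilde X, D_{|\tilde X}+E)}$ is the inclusion of the $F_{\sX}$-summand, hence it is split injective; its cone is $\bigoplus_{r=1}^{j} i_*\gamma^r F_{\sZ}[-r]$; and one must show this cone receives a canonical map from $i_*\gamma^j F_{\sZ}[-j]$ whose own cone is $\bigoplus_{r=1}^{j-1}i_*\gamma^r F_{\sZ}[-r]$, shifted appropriately, so that rotating gives \eqref{thm:gysin-tri0}. But by definition of $g_{\sZ/\sX}$ in \ref{para:gysin} and Remark \ref{rmk:gysin_lambda0}, $g_{\sZ/\sX}$ is built exactly from the section $\tau_1'$ and $\lambda^j_{V_1}$, i.e. from the inclusion of this top summand; so the required map is $g_{\sZ/\sX}$ up to sign, and the octahedral axiom applied to the two splittings (the one of $R\rho_* F_{(\tilde X,D_{|\tilde X}+E)}$ and the one of $R\rho_*F_{(\tilde X,D_{|\tilde X})}$) produces the distinguished triangle with connecting map $\partial$.

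\textbf{Checking $g_{\sZ/\sX}$ is the right map.} The subtle point is that a priori the construction above only gives \emph{a} triangle with first map some splitting inclusion $i_*\gamma^j F_{\sZ}[-j]\to F_{\sX}$; one must verify it coincides with the Gysin map of \ref{para:gysin}. This is where I expect the main work: one should compare the map coming from the octahedron with the explicit formula \eqref{lem:gysin} (resp. \eqref{eq:gysin_lambda2}) for $g_{\sZ/\sX}$, using that both are characterized by the same compatibility with $\sigma_1$, $\tau_1'$, $i_{\tilde X}^*$ and the section $s$. Equivalently — and this is perhaps the most conceptual way — one invokes Theorem \ref{prop:pf-gysin}: locally $X$ admits a retraction $q\colon X\to Z$ (Lemma \ref{lem:loc-cl-sec}), and then $g_{\sZ/\sX}=c_Z\circ q^*$ by \eqref{prop:pf-gysin1.1}, while the connecting map of the blow-up triangle for $(\tilde X, D_{|\tilde X}+E)$ is likewise governed by the cycle class $c_Z$ via the local Gysin map of Lemma \ref{lem:local-gysin}; the identity $g^j_{\sZ/\sX,Z}=R^j\ul\Gamma_Z(g_{\sZ/\sX})[-j]$ there closes the loop. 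The hardest part is bookkeeping the shift and sign conventions so that $\partial$ lands in $i_*\gamma^j F_{\sZ}[-j+1]$ with the correct sign, and making the octahedral diagram genuinely commute rather than merely up to non-canonical isomorphism — this requires being careful that all the splittings in \eqref{para:splitting3} are the \emph{canonical} ones of Lemma \ref{lem:can-sec}, which is precisely why that lemma was set up. Everything else (the refinement of Corollary \ref{cor:bud}, the local computations on $E$) is a routine application of Theorems \ref{thm:bus}, \ref{thm:pbf}, \ref{thm:blow-upAn} and the purity results of \cite{S-purity}.
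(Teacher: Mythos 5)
Your central step fails: the claimed decomposition $R\rho_*F_{(\tilde X,\,D_{|\tilde X}+E)}\cong F_{\sX}\oplus\bigoplus_{r=1}^{j}i_*\gamma^rF_{\sZ}[-r]$ is false, and the whole ``split triangle plus octahedron'' strategy collapses with it. Adding $E$ to the modulus does not add one more split summand to the blow-up decomposition \eqref{cor:bud1}; it produces a generally \emph{non-split} extension, and the content of the theorem is precisely the identification of its third term. Indeed, if the triangle \eqref{thm:gysin-tri0} holds, the cone of $\rho^*\colon F_{\sX}\to R\rho_*F_{(\tilde X, D_{|\tilde X}+E)}$ is the single object $i_*\gamma^jF_{\sZ}[-j+1]$, which for no $j\ge 1$ is isomorphic to $\bigoplus_{r=1}^{j}i_*\gamma^rF_{\sZ}[-r]$ (compare cohomology sheaves); moreover your splitting would force $g_{\sZ/\sX}=0$ and $\partial=0$, contradicting Theorem \ref{prop:pf-gysin}, which identifies $g_{\sZ/\sX}$ with cup product with the class of $Z$. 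A concrete counterexample to split injectivity of $\rho^*$: $F=\uomega^*\G_m$, $X=\P^1$, $Z$ a rational point, $D=\emptyset$ (so $\tilde X=X$, $E=Z$); then $H^1(X,F_{\sX})=\Pic(\P^1)=\Z$ maps to $H^1(X,F_{(X,Z)})=\Pic(\A^1)=0$, and the Gysin map $\Z=H^0(Z,\gamma^1F)\to H^1(\P^1,\G_m)$ is an isomorphism. More generally, for $F=\uomega^*G$ with $G\in\HI_\Nis$ and $D=\emptyset$ one has $R\rho_*F_{(\tilde X,E)}\cong Rj_*G_{X\setminus Z}$ (with $j\colon X\setminus Z\inj X$), and purity gives a single summand in the cone, not $j$ of them. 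Your reading of \eqref{para:splitting2} conflates $\sE_1$ (the exceptional divisor of the blow-up of $X\times\P^1$ along $Z\times 0$, a $\P^{j}$-bundle over $Z$ used in the deformation construction of $g_{\sZ/\sX}$) with the effect of enlarging the modulus on $\tilde X$ by $E$: that square compares the two projective bundles $E_1$ and $E$ and says nothing about $F_{(\tilde X, D_{|\tilde X}+E)}$.

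A second, related underestimate: the step you describe as routine in your ``plan'' paragraph --- identifying the cokernel of $F_{\tilde\sX}\to F_{(\tilde X,D_{|\tilde X}+E)}$ with $i_{E*}\gamma^1F_{\sE}$ --- is not an application of known purity; it \emph{is} the codimension-one case of the theorem, and the paper proves it (Claim \ref{thm:gysin-tri1.5}) via a Nisnevich-local retraction (Lemma \ref{lem:loc-cl-sec}), the explicit divisor-cup description of Lemma \ref{lem:divisor-cup}, and the $V$-pair comparison \cite[Cor 2.21]{S-purity}. The general case is then deduced not by splitting $R\rho_*F_{(\tilde X,D_{|\tilde X}+E)}$ but by applying $R\rho_*$ to this codimension-one triangle for $E\inj\tilde X$ and cancelling the summands supplied by \eqref{cor:bud1} and the projective bundle formula, invoking \cite[Prop 1.2.3]{Neeman}; the genuinely nontrivial input there is the commutativity of the square comparing $g_{\sZ/\sX}$ and $g_{\sE/\tilde\sX}$ (Claim \ref{thm:gysin-tri-claim2}), which rests on the excess-intersection computations of Proposition \ref{prop:gysin-cup} and Proposition \ref{prop:gysin-bc} (e.g.\ $i_E^*\circ g_{\sE/\tilde\sX,E}=-c_\xi$ and the identity $c_{j-1}$ of the excess bundle $=\xi^{j-1}$). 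None of this is supplied by an appeal to the octahedral axiom or to the canonicity of the sections in Lemma \ref{lem:can-sec}, so the identification of the boundary map with the Gysin map remains unproved in your proposal even after the false splitting is repaired.
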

\begin{proof}
We first consider the case $j=1$.
In this case $\tilde{X}=X$ and $Z=E$.
Denote by $j: U=X\setminus Z\inj X$ the open immersion and set $\sU=(U, D_{|U})$ and $\sX':=(X, D+Z)$.
Consider the following diagram of solid arrows of sheaves on $X_{\Nis}$
\eq{thm:gysin-tri1}{\xymatrix{
\gamma^1 F_{\sX}\ar[r]_-{\eqref{lem:divisor-cup1}}\ar@{->>}[d]_{i^*}\ar@/^2pc/[rrr]^{c_Z} & 
F_{\sX'}/F_{\sX}\ar@{^(->}[r] & 
j_*F_{\sU}/F_{\sX}\ar[r]^{\simeq} &
R^1\ul{\Gamma}_Z F_{\sX}.\\
i_*\gamma^1 F_{\sZ}\ar@{.>}[ur]_{(*)}\ar@/_1pc/[urrr]_{g_{\sZ/\sX,Z}^1}
}}
The diagram commutes by Lemma \ref{lem:divisor-cup} and Theorem \ref{prop:pf-gysin}, the vertical
arrow  is surjective by Lemma \ref{lem:loc-cl-sec}. 
\begin{claim}\label{thm:gysin-tri1.5}
The dotted arrow $(*)$ exists,  makes the diagram commute, and is an isomorphism.
(It is automatically uniquely determined).
\end{claim}
Indeed, the question is local around the points of $Z$. We may therefore assume 
that we have an \'etale morphism $u: X\to S[t]$, such that $S=\Spec K\{x_1,\ldots, x_n\}$, 
with a function field $K$,  $D=u^*\Div(x_1^{r_1}\cdots x_s^{r_s})$, and $u$ induces an isomorphism
$Z\cong u^{-1}(t=0)\xr{\simeq} S$. 
In particular, we have a morphism $q: X\to S$ such that the composition 
\eq{thm:gysin-tri2}{q\circ i : Z\xr{\simeq} S }
is an isomorphism.
Thus the arrow $(*)$ exists by \eqref{prop:pf-gysin1.1} as the composition
\[i_*\gamma^1 F_\sZ\xr{q^*} \gamma^1 F_{\sX}\xr{\eqref{lem:divisor-cup1}} F_{\sX'}/F_{\sX}.\]
 By Lemma \ref{lem:divisor-cup} the map $(*)$
is induced by pullback along the composition
\[X\xr{\Delta_X} X\times X\xr{u^*(t)\times \id_X} \A^1\times X \xr{\id_{\A^1}\times q} \A^1\times S;\]
this composition is equal to $u$. Set $\sS:= (S, \Div(x_1^{r_1}\cdots x_s^{r_s}))$. 
Hence the map $(*)$ is on $Z$ equal to (cf. Remark \ref{rmk:gamma}) 
\[\frac{F((\A^1, 0)\otimes \sZ)}{F(\A^1\otimes \sZ)}\cong \frac{F((\A^1, 0)\otimes \sS)}{F(\A^1\otimes \sS)}
\xr{u^*} \frac{F(X, D+Z)}{F(X,D)}.\]
It remains to show that $u^*$ becomes an isomorphism if we replace $X$ by a Nisnevich neighborhood around 
the point $(x_1,\ldots, x_n, t)$. By the usual trace argument we may assume that the field $K$ is infinite.
By \cite[Lem 6.7]{S-purity} we may therefore assume that $(X, Z)$ is a $V$-pair over $S$
(in the sense of \cite[Def 2.1]{S-purity}). Clearly $(\A^1_S, 0_S)$ is also a $V$-pair over $S$
and \eqref{thm:gysin-tri2} gives an identification $Z\cong 0_S$. Thus $u^*$ is an isomorphism
by \cite[Cor 2.21]{S-purity}. This proves Claim \ref{thm:gysin-tri1.5}.

We construct the triangle from the statement in the case $j=1$.
Set 
\[\alpha:=(*)^{-1}: F_{\sX'}/F_{\sX}\xr{\simeq} i_*\gamma^1 F_{\sZ}.\]
Denote by $r: F_{\sX}\inj F_{\sX'}$ the inclusion. For exact triangles we adopt the sign
conventions from \cite[1.3]{Co}. Thus the boundary map $\cone(r)\to F_{\sX}[1]$ 
of the exact triangle determined by $r$,  is given by $-\id_{F_{\sX}}$ in degree $-1$.
We define the boundary map $\partial$ as the composition
\[\partial: F_{\sX'}\to F_{\sX'}/F_{\sX}\xr{\alpha} i_*\gamma^1 F_{\sZ}\]
and we define a quasi-isomorphism $\varphi$ as the composition
\[\varphi: \cone(r)\xr{{\rm qis}} F_{\sX'}/F_{\sX} \xr{\alpha } i_*\gamma^1 F_{\sZ},\]
where the first map is induced by the quotient map in degree 0.
It remains to show that the following diagram is commutative in $D(X_{\Nis})$
\eq{thm:gysin-tri3}{
\xymatrix{
\cone(r)[-1]\ar[r]\ar[d]_{\varphi[-1]} & 
F_{\sX}\ar[r]^{r}\ar@{=}[d]& 
F_{\sX'}\ar@{=}[d]\ar[r] & 
\cone(r)\ar[d]^{\varphi}\\
i_*\gamma^1 F_{\sZ}[-1]\ar[r]^-{g_{\sZ/\sX}} &
F_{\sX}\ar[r]^r&
\sF_{\sX'}\ar[r]^-{\partial}&
i_*\gamma^1 F_{\sZ}.
}}
By definition the square on the right commutes; 
by  Lemma \ref{lem:local-gysin} the square on the left is the big outer square of the following diagram
\eq{thm:gysin-tri4}{\xymatrix{
\cone(r)[-1]\ar[r]\ar[d]_{\rm qis} & 
 \cone(F_{\sX}\to j_* F_{\sU})[-1]\ar[r]\ar[d]^{\rm qis}&
F_{\sX}\\
F_{\sX'}/F_{\sX}[-1]\ar[r]\ar[d]_{\alpha} & 
j_*F_{\sU}/F_{\sX}[-1]\ar[r]^{\simeq}&
R^1\ul{\Gamma}_Z F_{\sX}[-1]\ar[u] 
\\
i_{*}\gamma^1 F_{\sZ}[-1],\ar[urr]_{g^1_{\sZ/\sX,Z}} & &
 }}
where the vertical arrow on the top right is the composition
\[R^1\ul{\Gamma}_Z F_{\sX}[-1]\cong R\ul{\Gamma}_Z F_{\sX}\to F_{\sX},\]
where the isomorphism comes from \cite[Cor 8.6(3)]{S-purity}.
The lower triangle in \eqref{thm:gysin-tri4} commutes by the definition of $\alpha$,
the left top square commutes by functoriality, the right top square commutes by the definitions of the involved maps.
Thus the square on the left in \eqref{thm:gysin-tri3} commutes.
We have constructed the canonical distinguished Gysin triangle in codimension 1.

We consider the general case $j\ge 1$.
Let $\rho: \tilde{X}\to X$ be the blow-up along $Z$ and $E$ the exceptional divisor.
Set $\tilde{\sX}=(\tilde{X}, D_{|\tilde{X}})$ and $\tilde{\sX}'=(\tilde{X}, D_{|\tilde{X}}+E)$
(note that $|E+D_{|\tilde{X}}|$ is a SNCD),
moreover we use the notation from \ref{para:splitting}.
Set 
\[C:= \bigoplus_{r=1}^{j-1} i_*\gamma^r F_{\sZ}[-r]\]
and consider the following diagram in $D(X_\Nis)$
\eq{thm:gysin-tri5}{\xymatrix{
i_*\gamma^jF_{\sZ}[-j]\oplus C\ar[d]_{g_{\sZ/\sX}\oplus(-\id)}\ar[r]^-{\eqref{para:pbf1}[-1]}_-{\simeq} &
R\rho_{E*} \gamma^1 F_{\sE}[-1]\ar[d]^{g_{\sE/\tilde{\sX}}}\\
F_{\sX} \oplus C \ar[d]_{\rho^*+0}\ar[r]^-{\eqref{cor:bud1}}_-{\simeq}&
R\rho_* F_{\tilde{\sX}}\ar[d]\\
R\rho_* F_{\tilde{\sX}'}\ar[d]_{\partial_Z \oplus 0}\ar@{=}[r] &
R\rho_* F_{\tilde{\sX}'}\ar[d]^{\partial_E}\\
i_*\gamma^{j}F_{\sZ}[-j+1]\oplus C[1]\ar[r]^-{\eqref{para:pbf1}}_-{\simeq}&
R\rho_{E*}\gamma^1 F_{\sE},
}}
where the right column is $R\rho_*$ applied to the Gysin triangle for $E\inj \tilde{X}$ stemming from the
codimension 1 case above 
and the map $\partial_Z$ is defined so that the lower square commutes.
This defines the triangle \eqref{thm:gysin-tri0}. Note that the right column is a distinguished triangle and
the left column is the direct sum of \eqref{thm:gysin-tri0} and  $C\xr{-\id} C\to 0\to C[1]$.
If the top square commutes, then $\eqref{thm:gysin-tri0}$ is therefore a distinguished triangle,
by \cite[Prop 1.2.3]{Neeman}.
Thus it remains to show:
\begin{claim}\label{thm:gysin-tri-claim2}
The top square in diagram \eqref{thm:gysin-tri5} commutes.
\end{claim}
This is equivalent to the commutativity of the squares resulting from precomposition
with the canonical maps $i_*\gamma^r F_{\sZ}[-r]\to i_*\gamma^j F_{\sZ}[-j]\oplus C$, for $r=1,\ldots, j$.
We consider two cases.

{\em 1st case: $r=1,\ldots, j-1$.} In  this case we have  to show the commutativity of the following diagram
\eq{thm:gysin-tri5.1}{\xymatrix{
i_*\gamma^r F_{\sZ} [-r]\ar[d]_{\lambda^{r-1}_V}\ar[r]^-{\lambda^r_V} &
i_*R\rho_* F_{\sE}\ar[d]^{-\tau}\\
i_*R\rho_*\gamma^1 F_{\sE}[-1]\ar[r]^-{g_{\sE/\tilde{\sX}}} &
R\rho_* F_{\tilde{\sX}},
}}
with $\tau$ as in \eqref{cor:bud0} and $V=\sN_{Z/X}^\vee$. Note that the two compositions
$-\tau\circ \lambda_V^r$ and $g_{\sE/\tilde{\sX}}\circ \lambda_V^{r-1}$ factor automatically
via the forget-support map $R\ul{\Gamma}_Z R\rho_* F_{\tilde{\sX}}\to R\rho_* F_{\tilde{\sX}}$.
By applying $R\ul{\Gamma}_Z$ to the second isomorphism in \eqref{cor:bud1} and using
the isomorphism $R^j\ul{\Gamma}_Z F_{\sX}[-j] \cong R\ul{\Gamma}_Z F_{\sX}$ from \cite[Cor 8.6(3)]{S-purity}
we obtain
\[R\ul{\Gamma}_Z R\rho_* F_{\tilde{\sX}}\cong
R^j\ul{\Gamma}_Z F_{\sX}[-j]\oplus i_*\tau_{\ge 1} R\rho_{E*} F_{\sE}.\]
Since $\Hom_{D(X_\Nis)}(i_*\gamma^r F_{\sZ}[-r], R^j\ul{\Gamma}_Z F_{\sX}[-j])= 0$ for $r<j$,
we see that it suffices to show the equality \eqref{thm:gysin-tri5.1} after composing with 
\eq{thm:gysin-tri5.2}{\imath_E: 
R\rho_*F_{\tilde{\sX}}\xr{i_E^*} i_*R\rho_{E*}F_{\sE}\xr{\rm can.} i_*\tau_{\ge 1}R\rho_{E*}F_{\sE}.}
Since $\imath_E$ is a section of $\tau$  we are reduced to show
\[R\rho_*(i_E^*\circ g_{\sE/\tilde{\sX}})\circ \lambda^{r-1}_V=
   -\lambda^r_V: i_*\gamma^{r}F_\sZ[-r]\to i_*R\rho_{E*}F_{\sE}.\]
By the definition of $\lambda_V$ (see \eqref{para:pbf1})  and Lemma \ref{lem:cyclecup}\ref{lem:cyclecup3}
it remains to check
\eq{thm:gysin-tri6}{i_E^* \circ g_{\sE/\tilde{\sX}, E}= -c_\xi: \gamma^1 F_{\sE}[-1] \to F_\sE.}
which follows from Proposition \ref{prop:gysin-bc} applied to the cartesian diagram
\[\xymatrix{
E\ar[r]^{=}\ar[d]_{=} & E\ar[d]^{i_E}\\
E\ar[r]^{i_E} & \tilde{X}.
}\]

{\em 2nd case: $r=j$.}
 In this case we have  to check the commutativity of the square 
 \eq{thm:gysin-tri6.6}{\xymatrix{
i_*\gamma^j F_{\sZ}[-j]\ar[d]^{g_{\sZ/\sX}}\ar[rr]^{\lambda_V^{j-1}} & &
R\rho_{E*}\gamma^1 F_\sE[-1]\ar[d]^{R\rho_*(g_{\sE/\tilde{X}})}\\
F_{\sX}\ar[rr]^{\rho^*} & & R\rho_* F_{\tilde{\sX}}.
}}
By the the second isomorphism in \eqref{cor:bud1} we have the vanishing $\imath_E\circ \rho^*=0$,
with $\imath_E$  the map from \eqref{thm:gysin-tri5.2}; hence also $\imath_E\circ \rho^*\circ g_{\sZ/\sX}=0$.
On the other hand, the vanishing 
$\imath_E\circ R\rho_*(g_{\sE/\tilde{\sX}})\circ \lambda^{j-1}_V=0$ 
follows from \eqref{thm:gysin-tri6},
Lemma \ref{lem:cyclecup}\ref{lem:cyclecup3}, and the vanishing $\xi^j=0$ in $\CH^j(E)$, 
which holds since $E$ has relative dimension $j-1$ over $Z$. 
By Corollary \ref{cor:bud} and \cite[Cor 8.6(3)]{S-purity} we have
\begin{align*}
R\ul{\Gamma}_Z R\rho_* F_{\tilde{\sX}} &\cong 
R^j\ul{\Gamma}_Z F_{\sX}[-j]\oplus i_*\tau_{\ge 1}R\rho_{E*} F_{\sE}\\
& \cong R^{j-1}\rho_* R^1\ul{\Gamma}_{E} F_{\tilde{\sX}}[-j]\oplus i_*\tau_{\ge 1}R\rho_{E*} F_{\sE}. 
\end{align*}
Hence it suffices to show  the commutativity of the diagram of sheaves
\eq{thm:gysin-tri6.7}{\xymatrix{
i_*\gamma^j F_{\sZ}\ar[d]^{g_{\sZ/\sX,Z}^j}\ar[rr]^-{H^{j-1}(\lambda_{V})} & &
R^{j-1}\rho_{E*}(\gamma^1 F_{\sE})\ar[d]^{g_{\sE/\tilde{\sX},E}^1}  \\
R^j\ul{\Gamma}_Z F_\sX\ar[rr]^{\rho^*} & &
R^{j-1}\rho_* R^1\ul{\Gamma}_E F_{\tilde{\sX}}.
}}
This is a local question and we may therefore assume that $\sN_{Z/X}=\sO_Z^{\oplus j}$.
Thus we are back at showing the commutativity of \eqref{thm:gysin-tri6.6},
under the additional assumption $\sN_{Z/X}=\sO_Z^{\oplus j}$.
Hence the statement follows from Proposition \ref{prop:gysin-bc}
by observing that the excess normal sheaf in question is in this case is equal to
(see \cite[6.7]{Fu})
\[\sE xc= \rho_E^*\sN_{Z/X}/\sO_E(-1)= \sO_E^{\oplus j}/\sO_E(-1)\]
and that the Whitney sum formula  in this case yields
\[c_{j-1}(\sE xc)\cap E= \xi^{j-1}.\]
This shows the commutativity in the second case $r=j$ and hence completes the proof 
of Claim \ref{thm:gysin-tri-claim2} and the theorem.
\end{proof}
\begin{rmk}The reader should compare Theorem \ref{thm:gysin-tri} with the classical Gysin triangle in the $\A^1$-motivic setting. Recall that for $X\in \Sm$ and $i\colon Z\hookrightarrow X$ a smooth closed subscheme of codimension $i$, there is a distinguished triangle in $\mathbf{DM}_{gm}(k)$, called the Gysin triangle (see e.g. \cite[2.20]{Deg12}),
    \begin{equation}\label{eq:gysinVoev1} M(X-Z) \xrightarrow{j_*} M(X)\xrightarrow{i^*} M(Z)(i)[2i] \xrightarrow{\partial_{X,Z}}  M(X-Z)[1]
    \end{equation}
which gives, after applying any realisation functor $H^{*,*}(-)$ the localisation long exact sequence
\begin{equation}\label{eq:gysinVoev2}
\cdots\to H^{n-2i, j-i}(Z) \xrightarrow{i_*} H^{n,j}(X) \xrightarrow{j^*}H^{n,j}(X-Z) \xrightarrow{\partial_{X,Z}} H^{n+1-2i, j-i}(Z)\to \cdots.
\end{equation}
The most significant difference between our formulation, even when $D=\emptyset$, and the formulation in the $\A^1$-setting is that the cohomology of the open complement $U = X-Z$ of $Z$ in $X$, which appears in \eqref{eq:gysinVoev2} and \eqref{eq:gysinVoev1}, is replaced by the cohomology of the pair $(\tilde{X}, E)$, where $\tilde{X}$ is the blow-up of $Z$ in $X$, and $E$ is the exceptional divisor. In the modulus setting, where smooth schemes get replaced by ``compactifications'' $\sX= (X,D)$, we need then to ``compactify'' $(X-Z)$ without changing its ``homotopy type'', and the pair $(\tilde{X},E)$ does the job. For reduced modulus the formula in Theorem \ref{thm:gysin-tri} is also witnessed in the logarithmic setting, see \cite[7.5]{logmot}.
\end{rmk}
\section{Pushforward}\label{sec:pf}
In this section we  construct a pushforward for $F\in \CItspNis$ along projective morphisms by
using the projective bundle formula from section \ref{sec:pbf} and the Gysin map from section \ref{sec:gysin}.
This is a classical approach which can be found,
e.g., in \cite[III]{Ha66} (for coherent sheaves) and the dual version in \cite[6.]{Fu} (Chow groups) and 
\cite[5.]{Deg08} (motives).
In fact we construct, the pushforward with proper support along quasi-projective morphisms,
which for the K{\"a}hler (resp. the de Rham-Witt) differentials was done in \cite{CR11} (resp. \cite{CR12}).

\begin{defn}\label{para:prop-supp}
We say a family of supports $\Phi$ on an $S$-scheme $X$ is a {\em family of proper supports for  $X/S$},
if $\Phi$ consists of closed subsets in $X$, which are proper over $S$.
\end{defn}

\begin{defn}\label{defn:traceP}
Let $F\in\CItspNis$ and $\sX=(X,D)\in\uMCorls$. Let $V$ be a locally free $\sO_X$-module of rank $n+1$
and denote by $\pi: P=\P(V)\to X$ the projection and set $\sP=(P, \pi^*D)$. 
Let $j:U\inj P$ be an open immersion and  denote by $\pi_U: U\to X$ the restriction of $\pi$
and set $\sU=(U,\pi_U^*D)$.
Let $\Phi$ be a family of proper supports for $U/X$ and let $\Psi$ be a family of supports on $X$
satisfying $\Phi\subset \pi^{-1}_U \Psi$.
We define the morphism in $D(X_\Nis)$
\eq{defn:traceP1}{\tr_{(\sU,\Phi)/(\sX,\Psi)}: R\pi_{U*}R\ul{\Gamma}_\Phi F_{\sU}\to 
R\ul{\Gamma}_\Psi(\gamma^n F)_{\sX}[-n]  }
as the composition 
\mlnl{
R\pi_{U*}R\ul{\Gamma}_\Phi F_{\sU}   \cong R\pi_*R\ul{\Gamma}_\Phi F_{\sP} 
                                                       \xr{\text{enlarge supp}} R\pi_*R\ul{\Gamma}_{\pi^{-1}\Psi}F_{\sP}\\
                 \xrightarrow[\simeq]{\eqref{para:pbf1}^{-1}} \bigoplus_{i=0}^n R\ul{\Gamma}_\Psi(\gamma^i F)_{\sX}[-i]
                                                       \xr{\text{proj.}} R\ul{\Gamma}_\Psi(\gamma^n F)_{\sX}[-n],}
where the first isomorphism is induced from the excision isomorphism
\[Rj_*R\ul{\Gamma}_\Phi F_{\sU}= R\ul{\Gamma}_\Phi Rj_*j^{-1} F_{\sP} = R\ul{\Gamma}_\Phi F_{\sP}\]
stemming from the fact that $\Phi$ is by assumption also a family of supports on $P$.
If  it is clear from the context which families of support we take, we also write $\tr_{\sU/\sX}$ instead of 
$\tr_{(\sU,\Phi)/(\sX,\Psi)}$.
In particular, we write  (see \ref{para:supports} for notation)
\[\tr_{\sP/\sX}:= \tr_{(\sP,\Phi_P)/(\sX,\Phi_X)}: R\pi_{*}F_{\sP}\to (\gamma^n F)_{\sX}[-n]\]
which is simply the projection to the $n$-th component of the inverse of the projective bundle formula \eqref{para:pbf1}.
\end{defn}

\begin{lemma}\label{lem:traceP}
Assumptions and notations as in Definition \ref{defn:traceP}.
\begin{enumerate}[label=(\arabic*)]
\item\label{lem:traceP1} Let $\imath: R\pi_{U*} R\ul{\Gamma}_\Phi F_\sU\to 
R\pi_* R\ul{\Gamma}_{\pi^{-1}\Psi}F_\sP = R\ul{\Gamma}_{\Psi}R\pi_* F_\sP$
 be the natural map ({\em excision} composed with {\em enlarge-supports}).
 Then
\[\tr_{(\sU,\Phi)/(\sX,\Psi)}=R\ul{\Gamma}_{\Psi}(\tr_{\sP/\sX})\circ \imath.\]
\item\label{lem:traceP2} Let $f:Y\to X$ be a morphism in $\Sm$ such that $\sY:=(Y, f^*D)\in\uMCorls$.
We obtain a diagram
\[\xymatrix{
 U_Y\ar@{^(->}[r]\ar[d]_{f_U}\ar@/^1.5pc/[rr]|{\pi_{U_Y}} &    P_Y\ar[d]_{f_P}\ar[r]_{\pi_Y} &    Y\ar[d]^{f}\\
U\ar@{^(->}[r] &  P\ar[r]^{\pi} &      X,\\
}\]
in which the two squares are cartesian.
Set  $\sU_Y=(U_Y, \pi_Y^*f^*D)$. The following diagram commutes
\[\xymatrix{
Rf_* R\pi_{U_Y *} R\ul{\Gamma}_{f^{-1}_U\Phi} F_{\sU_Y}\ar[rr]^{\tr_{\sU_Y/\sY}} & &
Rf_* R\ul{\Gamma}_{f^{-1}\Psi}(\gamma^n F)_{\sY} [-n]\\
R\pi_{U *} R\ul{\Gamma}_{\Phi} F_{\sU}\ar[rr]^{\tr_{\sU/\sX}}\ar[u]^{f_U^*} & &
R\ul{\Gamma}_\Psi(\gamma^n F)_{\sX} [-n].\ar[u]^{f^*}
}\]
\item\label{lem:traceP3} Let $s: X \inj P$ be a section of $\pi$ which is contained in $U$, i.e., $s(X)\subset U$.
Then $s(X)$ defines a proper family of supports for $U/X$ and the following diagram commutes
\[\xymatrix{ 
R\pi_{U*} s_* \gamma^n F_{\sX}[-n]\ar[rr]^{g_{\sX/ \sU, s(X)}}\ar@{=}[drr] & &
R\pi_{U*} R\ul{\Gamma}_{s(X)} F_{\sU}\ar[d]^{\tr_{(\sU, s(X))/ \sX}}\\
& & \gamma^n F_{\sX}[-n],
}\] 
where $g_{\sX/\sU, s(X)}=R\ul{\Gamma}_{s(X)}(g_{\sX/\sU}): 
s_* \gamma^n F_{\sX}[-n] \to R\ul{\Gamma}_{s(X)} F_\sU$
is induced by the Gysin map \eqref{para:gysin1}.
\item\label{lem:traceP4}
Let $V'$ be another locally free $\sO_X$-module of rank $n'+1$ and let $\pi':P':=\P(V')\to X$ be the projection.
Let $U'\subset P'$ be open and $\Phi'$ be a family of proper supports for $U'/X$.
Denote by $\pi'_{U'}$ the restriction of $\pi'$ to $U'$ and set $\sU':=(U', {\pi'_{U'}}^*D)$.
Then $\Xi:= \Phi\times_X \Phi'$ is a proper family of supports for 
$U\times_X U'/ U'$ and for $U\times_X U'/U$ and the following 
diagram commutes
\[\xymatrix{
R(\pi_U\times_X \pi'_{U'})_* R\ul{\Gamma}_{\Xi} (F_{\sU\otimes_{\sX} \sU'})
\ar[rr]^{\tr_{(\sU\otimes_{\sX} \sU',\Xi)/(\sU,\Phi)}} \ar[d]_-{\tr_{(\sU\otimes_{\sX} {\sU'},\Xi)/(\sU',\Phi')}} & &
R\pi_{U*} R\ul{\Gamma}_{\Phi} (\gamma^{n'}F)_{\sU}[-n']
\ar[d]^{\tr_{(\sU, \Phi)/(\sX,\Psi)}}\\
R\pi'_{U'*} R\ul{\Gamma}_{\Phi'} (\gamma^{n}F)_{\sU'}[-n]
\ar[rr]^-{\tr_{(\sU',\Phi')/(\sX,\Psi)}} & &
R\ul{\Gamma}_{\Psi}(\gamma^{n+n'}F)_{\sX}[-(n+n')],
}\]
where $\sU\otimes_{\sX} \sU'=(U\times_X U', (\pi_U\times_X \pi'_{U'})^*D)$.
\item\label{lem:traceP5} Let $i: Z\inj X$ be a smooth closed subscheme of codimension $c$ 
intersecting $D$ transversally and set $\sZ:=(Z, i^*D)\in \uMCorls$.
We obtain the diagram 
\[\xymatrix{
 U_Z\ar@{^(->}[r]\ar@{^(->}[d]_{i_U}\ar@/^1.5pc/[rr]|{\pi_{U_Z}} & 
 P_Z\ar@{^(->}[d]_{i_P}\ar[r]_{\pi_Z} &    
 Z\ar@{^(->}[d]^{i}\\
U\ar@{^(->}[r] &  P\ar[r]^{\pi} &      X,\\
}\]
in which the two squares are cartesian. Set $\sU_Z:=(U_Z, \pi_{U_Z}^*i^*D)$.
Then the following square commutes
\[\xymatrix{
R\pi_{U*} R\ul{\Gamma}_{\Phi}i_{U*} (\gamma^c F)_{\sU_Z}[-c]\ar[r]^-{g_{\sU_Z/\sU}}\ar[d]^{\tr_{\sU_Z/\sZ}}&
R\pi_{U*}R\ul{\Gamma}_{\Phi} F_{\sU}\ar[d]^{\tr_{\sU/\sX}}\\
R\ul{\Gamma}_{\Psi} i_* (\gamma^{c+n}F)_{\sZ}[-c-n]\ar[r]^-{g_{\sZ/\sX}}&
R\ul{\Gamma}_\Psi(\gamma^n F)_{\sX}[-n],
}\]
where $g_{\sU_Z/\sU}$ and $g_{\sZ/\sX}$ are the Gysin maps.
\item\label{lem:traceP5.5} Let $i:Z\inj X$ and $\sZ$ be as in \ref{lem:traceP5} above
and assume that $i$ factors as $Z\inj U\xr{\pi_U} X$, such that $\Psi\cap Z= \Phi\cap Z$.
Then $Z\inj U$ is a closed immersion of codimension $c+n$ and then following square commutes
\[\xymatrix{
     & R\pi_{U*}R\ul{\Gamma}_\Phi F_{\sU}\ar[d]^{\tr_{\sU/\sX}}\\
R\ul{\Gamma}_\Psi i_* \gamma^{n+c}F_{\sZ}\ar[ru]^{g_{\sZ/\sU}}[-c-n]\ar[r]^-{g_{\sZ/\sX}}&
R\ul{\Gamma}_\Psi \gamma^n F_{\sX}[-n].
}\]                           
\end{enumerate}
\end{lemma}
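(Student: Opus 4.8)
The statement \ref{lem:traceP5.5} should follow by combining two already-established facts: the functoriality of the Gysin map under composition of regular closed immersions (Corollary \ref{cor:gysin-fun}) and the compatibility of the trace with the Gysin map (part \ref{lem:traceP5} of the present lemma). First I would observe that since $i\colon Z\inj X$ factors through $Z\inj U\xr{\pi_U} X$ and $Z$ is smooth of codimension $c$ in $X$, while $\pi_U$ is smooth of relative dimension $n$, the induced map $Z\inj U$ is a regular closed immersion of codimension $n+c$; moreover $D$ intersects $Z$ transversally inside $U$ because it does so inside $X$ and $\pi_U$ is smooth (so $\pi_U^*D$ has SNC support and meets $Z$ transversally), hence $(Z, i^*D)\in\uMCorls$ as a subobject of $\sU$. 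The displayed triangle asserts that $\tr_{\sU/\sX}\circ g_{\sZ/\sU} = g_{\sZ/\sX}$ as maps $R\ul{\Gamma}_\Psi i_*\gamma^{n+c}F_{\sZ}[-c-n]\to R\ul{\Gamma}_\Psi \gamma^n F_{\sX}[-n]$.

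The key step is to realize that $Z\inj U$ factors further as $Z\inj Z\times_X U =: U_Z \inj U$, where the first map is the graph section of $\pi_{U_Z}\colon U_Z\to Z$ and the second is the base change $i_U$ of $i$ along $\pi_U$. By Corollary \ref{cor:gysin-fun} applied to this factorization (with $Z\inj U_Z$ of codimension $n$ and $U_Z\inj U$ of codimension $c$, noting $\codim(U_Z, U) = \codim(Z,X) = c$), we get
\[ g_{\sZ/\sU} = g_{\sU_Z/\sU}\circ g_{\sZ/\sU_Z}[-c] \]
up to the identifications of local Gysin maps in Lemma \ref{lem:local-gysin}; here $g_{\sZ/\sU_Z}$ is the Gysin map for the section $s = $ graph of $Z\inj U_Z$ of the projection $\pi_{U_Z}$. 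Then I would plug this into $\tr_{\sU/\sX}\circ g_{\sZ/\sU}$ and apply part \ref{lem:traceP5} of the present lemma to the smooth closed subscheme $Z\inj X$: this rewrites $\tr_{\sU/\sX}\circ g_{\sU_Z/\sU}$ (on the appropriately twisted, support-enlarged sheaf $R\ul{\Gamma}_\Phi i_{U*}\gamma^c F_{\sU_Z}[-c]$) as $g_{\sZ/\sX}\circ \tr_{\sU_Z/\sZ}$. It then remains to identify $\tr_{\sU_Z/\sZ}\circ g_{\sZ/\sU_Z}$ with the identity on $\gamma^{n+c}F_{\sZ}[-c-n]$, which is exactly the content of part \ref{lem:traceP3} of the present lemma applied to the section $s\colon Z\inj U_Z$ of $\pi_{U_Z}$ (after twisting $F$ by $\gamma^c$ and noting that all constructions are compatible with the contraction endofunctor $\gamma^c$, since $\gamma^c F\in\CItspNis$).

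The bookkeeping obstacle will be the families of supports: I must check that $s(Z)\subset U_Z$ is a proper family of supports over $Z$ (it is, being a section, hence a closed immersion with $s(Z)\to Z$ an isomorphism), that the composite family $\Phi\cap Z$ pulls back correctly, and that the hypothesis $\Psi\cap Z=\Phi\cap Z$ is what makes the support-enlargement maps in \ref{lem:traceP5} and \ref{lem:traceP3} line up so that no extra ``enlarge-support'' discrepancy appears in the diagonal arrow. Concretely, one uses that $\pi_{U_Z}^{-1}(\Psi\cap Z) \supset \Phi\cap Z$ together with the fact that $s(Z)$, being proper over $Z$, lies in both $\Phi$ and in $\pi_{U_Z}^{-1}(\Psi\cap Z)$, so the relevant enlarge-support map on $R\ul{\Gamma}_{s(Z)}$ composed with the trace is covered by \ref{lem:traceP3} via Lemma \ref{lem:traceP1}. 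Assembling:
\[\tr_{\sU/\sX}\circ g_{\sZ/\sU} = \tr_{\sU/\sX}\circ g_{\sU_Z/\sU}\circ g_{\sZ/\sU_Z}[-c] = g_{\sZ/\sX}\circ \tr_{\sU_Z/\sZ}\circ g_{\sZ/\sU_Z}[-c] = g_{\sZ/\sX},\]
which is the claim. I expect the only genuine work to be verifying the support compatibilities and the claim that all the auxiliary Gysin/trace constructions commute with the twist $\gamma^c$; everything else is a direct citation of \ref{lem:traceP3}, \ref{lem:traceP5}, Corollary \ref{cor:gysin-fun}, and Lemma \ref{lem:local-gysin}.
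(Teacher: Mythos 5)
Your proposal is correct and is essentially the paper's own argument: the paper also factors $Z\inj U$ through the section $Z\inj U\times_X Z=U_Z$ followed by the base change $U_Z\inj U$, and then deduces the claim by combining part \ref{lem:traceP5}, part \ref{lem:traceP3} (applied over $\sZ$ to the twisted sheaf), and the functoriality of the Gysin map from Corollary \ref{cor:gysin-fun}. The extra support and twist bookkeeping you flag is exactly where the hypothesis $\Psi\cap Z=\Phi\cap Z$ enters, and your handling of it is consistent with the paper's (terser) proof.
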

\begin{proof}
\ref{lem:traceP1} holds by definition. For \ref{lem:traceP2} first observe that 
$f_U^{-1}\Phi$ is a family of proper supports
for $U_Y/Y$; by \ref{lem:traceP1} we are reduced to show
\[f^*\circ \tr_{\sP/\sX} = \tr_{\sP_Y/\sY}\circ f_P^*.\]
By definition and with the notation from \ref{para:pbf} this follows from the equality
\eq{lem:traceP6}{c_{\xi_Y^i}\circ \pi_Y^*\circ  f^*= f_P^* \circ c_{\xi^i}\circ \pi^*,\quad i=0,\ldots, n,}
where $\xi_Y=c_1(\sO_{P_Y}(1))\in \CH^1(P_Y)$ and $\xi=c_1(\sO_P(1))\in \CH^1(P)$. Since 
$f_P^*\xi= \xi_Y$ the equality \eqref{lem:traceP6} follows from Lemma \ref{lem:cyclecup}\ref{lem:cyclecup2}.
For \ref{lem:traceP3} it suffices to show
\eq{lem:traceP7}{\tr_{\sP/\sX}\circ g_{\sX/\sP}=\id_{(\gamma^n F)_{\sX}[-n]}.}
Indeed this follows from the equality $R\ul{\Gamma}_{s(X)} F_{\sU}= R\ul{\Gamma}_{s(X)} F_{\sP}$,
the compatibility of the Gysin with restriction along open immersions (see Proposition \ref{prop:gysin-bc}),
and from \ref{lem:traceP1}. By \eqref{prop:pf-gysin1.1} we have 
$g_{\sX/\sP}= c_{s(X)}\circ \pi^*$, where we view $s(X)\in \CH^n(P)$.
The projective bundle formula yields
\[s(X)=\sum_{i=0}^n \pi^*(\alpha_i)\cdot \xi^i, \quad \text{for certain }\alpha_i\in \CH^{n-i}(X).\]
Applying $\pi_*$ we obtain $\alpha_n=X$ from \cite[Exa 3.3.3]{Fu} and the fact that $s$ is a section of $\pi$.
By Lemma \ref{lem:cyclecup}\ref{lem:cyclecup2}, \ref{lem:cyclecup3} we obtain (with the notation from \ref{para:pbf})
\[g_{\sX/\sP}= \sum_{i=0}^{n-1} c_{\xi^i}\circ \pi^*\circ c_{\alpha_i} + c_{\xi^n}\circ \pi^*
=\sum_{i=0}^{n-1}\lambda_V^i\circ c_{\alpha_i} + \lambda^n_V.\]
Thus equality \eqref{lem:traceP7} follows directly from the definition of $\tr_{\sP/\sX}$.
Next \ref{lem:traceP4}.  Note that $\Xi$ is  by definition the smallest family of supports
on $U\times_X U'$ containing all closed subsets of the form $Z\times_X Z'$ with $Z\in\Phi$ and $Z'\in \Phi'$.
Thus $\Xi$ is clearly a family of proper supports over $U$ and $U'$, respectively,
and we have $\Xi\subset (\pi_U\times_X \pi'_{U'})^{-1}(\Psi)$. Using \ref{lem:traceP1} it is easy to see
that the commutativity of the square in \ref{lem:traceP4} is implied by the commutativity of the following diagram
\eq{lem:traceP8}{\xymatrix{
R(\pi\times_X \pi')_* F_{\sP\otimes_\sX \sP'}
\ar[r]^{\tr_{\sP\otimes_{\sX}\sP'/\sP}}\ar[d]_{\tr_{\sP\otimes_{\sX}\sP'/\sP'}}& 
R\pi_* (\gamma^{n'}F)_{\sP}[-n']\ar[d]^{\tr_{\sP/\sX}}\\
R\pi'_*(\gamma^n F)_{\sP'}[-n]\ar[r]^{\tr_{\sP'/\sX}}  & (\gamma^{n+n'}F)_{\sX} [-(n+n')].
}}
Let $\xi=c_1(\sO_P(1))\in \CH^1(P)$ and $\eta=c_1(\sO_{P'}(1))\in \CH^1(P')$.
Denote by $p: P\times_X P'\to P$ and $q: P\times_X P'\to P'$  the projections.
With the notation from \ref{para:pbf} we have for $i, j=0,\ldots, n$, 
\begin{align*}
\lambda_{\pi^*V'}^j\circ \lambda_V^i
&=c_{q^*\eta^j}\circ p^*\circ c_{\xi^i}\circ \pi^* , & & \text{by defn},\\
& = c_{q^*\eta^j}\circ c_{p^* \xi^i}\circ p^*\pi^*, & &  \text{by \ref{lem:cyclecup}\ref{lem:cyclecup2}},\\
&= c_{(q^*\eta^j)\cdot (p^* \xi^i)} \circ q^*{\pi'}^*, & &\text{by \ref{lem:cyclecup}\ref{lem:cyclecup3}},\\
& = c_{p^*\xi^i}\circ c_{q^* \eta^j}\circ q^*{\pi'}^*, & &  \text{by \ref{lem:cyclecup}\ref{lem:cyclecup3}},\\
& = c_{p^*\xi^i}\circ q^*\circ c_{\eta^j} \circ {\pi'}^*,  & &\text{by \ref{lem:cyclecup}\ref{lem:cyclecup2}},\\
& = \lambda_{{\pi'}^*V}^i \circ \lambda_{V'}^j, & &\text{by defn.}
\end{align*}
Now the commutativity of the diagram \eqref{lem:traceP8} follows from this and the definition of $\tr$.
For \ref{lem:traceP5} it suffices as above to show that the following diagram commutes
\[\xymatrix{
R\pi_* i_{P*}(\gamma^c F)_{\sP_Z}[-c]\ar[r]^-{g_{\sP_Z/\sP}}\ar[d]^{\tr_{\sP_Z/\sZ}} &
R\pi_* F_{\sP}\ar[d]^{\tr_{\sP/\sX}}\\
(\gamma^{c+n}F)_{\sZ}[-c-n]\ar[r]^-{g_{\sZ/\sX}} &
(\gamma^n F)_{\sX}[-n].
}\]
By definition of $\tr$ it suffices to show for all $j=0,\ldots, n$
\[g_{\sP_Z/\sP}\circ \lambda^j_{i^*V}=\lambda^j_V\circ g_{\sZ/\sX}: (\gamma^{c+j}F)_{\sZ}[-c-j]\to R\pi_* F_\sP.\]
Since $\lambda_V^j=c_{\xi^j}\circ \pi^*$ and $\lambda_{i^*V}^j= c_{i_P^*\xi^j}\circ \pi_Z^*$
the above equality follows from the Propositions \ref{prop:gysin-cup} and \ref{prop:gysin-bc}.
Finally \ref{lem:traceP5.5}. 
By considering the diagram 
\[\xymatrix{
Z\ar@{^(->}[r]\ar@{=}[dr]  & U\times_X Z\ar[d]\ar@{^(->}[r] & U\ar[d]^{\pi_U}\\
                                      &    Z\ar@{^(->}[r]^i & X
}\]
with cartesian square,  we see that the statement  follows from \ref{lem:traceP5}, \ref{lem:traceP3}, and 
the functoriality of the Gysin map, see Corollary \ref{cor:gysin-fun}.
\end{proof} 

\begin{para}\label{para-qp}
Recall from \cite[Exa 2.1.2(d) and Lem 2.1.3]{TT} that a morphism $f:Y\to X$ in $\Sm$ is quasi-projective in
the sense of \cite[Def (5.3.1)]{EGAII}  if and only if there is a locally free $\sO_X$-module of finite rank $V$ such that 
$f$  factors as an immersion $Y\inj  \P(V)$ followed by the projection $\P(V)\to X$.

We say such a morphism $f$ has {\em relative dimension $r$}, if $r=\dim Y_i-\dim X_j$ is  constant, 
for $Y_i$ ranging through the connected components of $Y$ mapping to the connected component $X_j$ of $X$.
\end{para}


\begin{defn}\label{defn:pfs}
Let $F\in \CItspNis$.
Let $\sX=(X,D)\in\uMCorls$ and let $f: Y\to X$ be a quasi-projective morphism in $\Sm$ 
of relative dimension $r\in \Z$, which is transversal to $D$ (see Definition \ref{defn:ti}). 
Let $\Phi$ be a family of proper supports for $Y/X$ and let $\Psi$ be a family of supports on
$X$ such that $\Phi\subset f^{-1}\Psi$. 
Choose a factorization
\eq{defn:pfs2}{f:Y\xr{i} U\xr{\pi} X,}
where $i$ is a closed immersion of codimension $c$ and $\pi$ is the composition
of an open immersion into a projective bundle over $X$, $U\inj P$, followed by the projection $P\to X$.
Let $n$ be the relative dimension of $\pi$,  so that $r=n-c$.
Set $\sY:=(Y, f^*D)$ and $\sU=(U, \pi_U^*D)$.

For  $e\ge c=\codim(Y,U)$  we define the map 
\eq{defn:pfs1}{(i,\pi)^{e}_*: 
Rf_* R\ul{\Gamma}_\Phi \gamma^e F_{\sY}[-e]\to R\ul{\Gamma}_\Psi \gamma^{e+r}F_\sX[-e-r]}
as the following  composition:
\[
Rf_* R\ul{\Gamma}_\Phi \gamma^e F_{\sY}[-e]  \xr{g_{\sY/\sU}} R\pi_{*} R\ul{\Gamma}_\Phi \gamma^{e-c}F_{\sU}[-e+c]
\xr{\tr_{\sU/\sX} } R\ul{\Gamma}_\Psi\gamma^{e+r}F_\sX[-e-r].
\]

\end{defn}

{
\begin{prop}\label{prop:pfs}
Assumptions as in Definition \ref{defn:pfs}.
\begin{enumerate}[label=(\arabic*)]
\item\label{prop:pfs2} Let $Y\xr{i'} U'\xr{\pi'} X$ be another factorization as in \eqref{defn:pfs2}.
Then  
\[(i,\pi)^{e}_*=(i',\pi')^{e}_*, \quad \text{for all } e\ge \codim(Y, U\times_X U').\]
\item\label{prop:pfs3} Let $g: Z\to Y$ be  a  quasi-projective morphism in $\Sm$ of relative dimension $s$, which is  transversal to $f^*D$.
Let $\Xi$ be a family of proper supports for $Z/Y$ such that $\Xi\subset g^{-1}\Phi$.
Let $Z\xr{i'} U'\xr{\pi'} X$ be a factorization of $fg$ as in \eqref{defn:pfs2} with $\codim(Z, U')=c'$. 
Set $U'_Y=U'\times_X Y$ and $i'_Y:= i'\times g: Z\inj U'_Y$ and 
$\pi'_Y:= p_Y: U'_Y\to Y$.  Set $\sZ:=(Z, g^*f^*D)$ etc.

Then  we have $\codim(Z, U'_Y)=c'+r$, 
$\Xi$ is also a proper family  with supports for $Z/X$
and for  $e\ge  c'+ c +r$ we have a commutative diagram
\[\xymatrix{
R (fg)_* R\ul{\Gamma}_\Xi \gamma^eF_{\sZ}[-e]\ar[rr]^{(i', \pi')^{e}_*}\ar[dr]_-{(i'_Y, \pi'_Y)^{e}_*} &   &
 R\ul{\Gamma}_\Psi \gamma^{e+r+s} F_\sX[-e-r-s]\\
&  Rf_* R\ul{\Gamma}_\Phi \gamma^{e+s}F_\sY[-e-s].\ar[ur]_-{(i,\pi)^{e+s}_*} & 
 }\]
\end{enumerate}
\end{prop}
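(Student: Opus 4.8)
My plan is to deduce both statements from the compatibilities assembled in Lemma~\ref{lem:traceP} together with the functoriality of the Gysin map (Corollary~\ref{cor:gysin-fun}), using the same device in each case: introduce the fibre product of the two ``compactifications'' that occur, factor the resulting diagonal-type closed immersion through a graph, and then invoke three parts of Lemma~\ref{lem:traceP}, namely base change of the trace along a closed immersion (part~\ref{lem:traceP5}), the fact that the trace of the Gysin class of a section is the identity (part~\ref{lem:traceP3}), and the agreement of iterated traces through a fibre product (part~\ref{lem:traceP4}). All maps below are understood together with their families of supports and homological shifts as in Definitions~\ref{defn:traceP} and~\ref{defn:pfs}; I will suppress these, since the only new input beyond the cited lemmas is bookkeeping.

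For~\ref{prop:pfs2}: given the factorizations $Y\xr{i}U\xr{\pi}X$ and $Y\xr{i'}U'\xr{\pi'}X$, I would set $W:=U\times_X U'$ with projections $q_U\colon W\to U$ and $q_{U'}\colon W\to U'$, each of which is again ``an open immersion into a projective bundle followed by its projection'', being a base change of $\pi'$, resp.\ $\pi$; and let $\mu=(i,i')\colon Y\inj W$ be the diagonal closed immersion. Factor $\mu$ as $Y\xr{\Gamma_{i'}}Y\times_X U'\xr{i\times_X\id}W$, where $\Gamma_{i'}$ is the graph of $i'$ (a section of $Y\times_X U'\to Y$) and $i\times_X\id$ is the base change of $i$ along $q_{U'}$. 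Then Corollary~\ref{cor:gysin-fun} gives $g_{Y/W}=g_{(Y\times_X U')/W}\circ g_{Y/(Y\times_X U')}$; Lemma~\ref{lem:traceP}\ref{lem:traceP5}, applied with ``base'' $U$ to the square obtained by pulling back $Y\inj U$ along $q_U$ (whose total space is $W\times_U Y=Y\times_X U'$), gives $\tr_{W/U}\circ g_{(Y\times_X U')/W}=g_{Y/U}\circ\tr_{(Y\times_X U')/Y}$; and Lemma~\ref{lem:traceP}\ref{lem:traceP3}, applied to the section $\Gamma_{i'}$, gives $\tr_{(Y\times_X U')/Y}\circ g_{Y/(Y\times_X U')}=\id$. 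Composing the three identities yields $\tr_{W/U}\circ g_{Y/W}=g_{Y/U}$, hence $\tr_{U/X}\circ\tr_{W/U}\circ g_{Y/W}=\tr_{U/X}\circ g_{Y/U}=(i,\pi)^e_*$; the symmetric computation gives $\tr_{U'/X}\circ\tr_{W/U'}\circ g_{Y/W}=(i',\pi')^e_*$, and Lemma~\ref{lem:traceP}\ref{lem:traceP4} identifies $\tr_{U/X}\circ\tr_{W/U}$ with $\tr_{U'/X}\circ\tr_{W/U'}$. Therefore $(i,\pi)^e_*=(i',\pi')^e_*$ whenever both sides and the intermediate $g_{Y/W}$ are defined, i.e.\ for $e\ge\codim(Y,U\times_X U')$.

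For~\ref{prop:pfs3}: I would keep the fixed factorization $Y\xr{i}U\xr{\pi}X$ of $f$ and the given factorization $Z\xr{i'}U'\xr{\pi'}X$ of $fg$, and set $W:=U'\times_X U$ with its two projections. The composite closed immersion $\iota=(i',i\circ g)\colon Z\inj W$ factors in two ways: as $Z\xr{(i',g)}U'_Y=U'\times_X Y\xr{\id\times_X i}W$, and as $Z\xr{\Gamma_{ig}}Z\times_X U\xr{i'\times_X\id}W$, where $\Gamma_{ig}$ is the graph of $i\circ g\colon Z\to U$ (a section of $Z\times_X U\to Z$). Running the three-step argument of~\ref{prop:pfs2} with ``base'' $U'$ — Corollary~\ref{cor:gysin-fun} for $Z\inj Z\times_X U\inj W$, then Lemma~\ref{lem:traceP}\ref{lem:traceP5} for the base change of $Z\inj U'$ along $W\to U'$ (total space $Z\times_X U$), then Lemma~\ref{lem:traceP}\ref{lem:traceP3} for $\Gamma_{ig}$ — gives $\tr_{W/U'}\circ g_{Z/W}=g_{Z/U'}$, so that $(i',\pi')^e_*=\tr_{U'/X}\circ g_{Z/U'}=\tr_{U'/X}\circ\tr_{W/U'}\circ g_{Z/W}=\tr_{U/X}\circ\tr_{W/U}\circ g_{Z/W}$, the last step by Lemma~\ref{lem:traceP}\ref{lem:traceP4}. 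On the other hand, from the first factorization of $\iota$, Corollary~\ref{cor:gysin-fun} gives $g_{Z/W}=g_{U'_Y/W}\circ g_{Z/U'_Y}$, and Lemma~\ref{lem:traceP}\ref{lem:traceP5} with ``base'' $U$ applied to the base change of $Y\inj U$ along $q_U\colon W\to U$ (total space $U'_Y$) gives $\tr_{W/U}\circ g_{U'_Y/W}=g_{Y/U}\circ\tr_{U'_Y/Y}$. Combining, $\tr_{U/X}\circ\tr_{W/U}\circ g_{Z/W}=\tr_{U/X}\circ g_{Y/U}\circ\tr_{U'_Y/Y}\circ g_{Z/U'_Y}=(i,\pi)^{e+s}_*\circ(i'_Y,\pi'_Y)^e_*$, which is the asserted equality. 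The side claims — that $\codim(Z,U'_Y)=c'+r$, that $\Xi$ is a family of proper supports for $Z/X$, and that all Gysin maps above are defined for $e\ge c+c'+r$ — I would check by a direct dimension count and by the elementary fact that a composite of proper morphisms is proper.

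The hypotheses needed at each invocation of Lemma~\ref{lem:traceP} and Corollary~\ref{cor:gysin-fun} — that the closed subschemes occurring lie in $\uMCorls$ and meet the ambient normal crossing divisor transversally — are automatic here: all the auxiliary morphisms ($q_U$, $q_{U'}$, the projections $Y\times_X U'\to Y$, $Z\times_X U\to Z$, $U'_Y\to Y$ and their relatives) are base changes of the smooth morphisms $\pi,\pi'$, so transversality of $f$ to $D$ and of $g$ to $f^*D$ propagates to all of them, and smoothness of these projections keeps the total spaces in $\Sm$. Consequently I expect the main obstacle to be organizational rather than conceptual: for each auxiliary closed immersion one must pinpoint the correct graph factorization and the correct Cartesian square so that parts~\ref{lem:traceP5}, \ref{lem:traceP3} and~\ref{lem:traceP4} of Lemma~\ref{lem:traceP} apply verbatim, and one must carry the families of supports and the cohomological degrees consistently through every composition. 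Once the fibre products $W$ are in place, the remainder is routine diagram chasing.
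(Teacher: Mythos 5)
Your proposal is correct and takes essentially the same route as the paper: both statements are reduced to the fibre product $U\times_X U'$ (resp.\ $U'\times_X U$, $U'_Y$) and the compatibilities of Lemma \ref{lem:traceP}\ref{lem:traceP3}, \ref{lem:traceP4}, \ref{lem:traceP5} together with the functoriality of the Gysin map (Corollary \ref{cor:gysin-fun}), which is exactly the content of the paper's diagram chase. The only cosmetic difference is that you re-derive the needed instances of Lemma \ref{lem:traceP}\ref{lem:traceP5.5} inline (graph factorization combined with parts \ref{lem:traceP3} and \ref{lem:traceP5}) instead of citing that part, which the paper invokes in its proof of \ref{prop:pfs3}.
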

\begin{proof}
\ref{prop:pfs2}.
We obtain the following diagram in which $ST= S\times_X T$ and all maps are the obvious ones 
\[\xymatrix{
Y\ar[d]\ar[r]   & YU'\ar[r]\ar[d]  &  Y\ar[d]^{i'}  \\
  UY\ar[r]\ar[d]  & UU'\ar[r]\ar[d] & U'\ar[d]^{\pi'} &\\
 Y\ar[r]^{i}& U\ar[r]^\pi  &  X.
}\]
We form the modulus pairs $\sU'$, $\sU\sU'$, etc. in the obvious way by pulling back the divisor from $X$;
all these pairs are in $\uMCorls$ and all morphisms are transversal to the corresponding pullback of $D$.
We can view $\Phi$ as a family of supports on $Y,U,U',P,P'$, and
$\Xi:=\Phi\times_X \Phi$ as a family of supports on $UY, YU', UU', PP',$ etc.
Let $c'=\codim(Y,U')$, $n'=\dim(U'/X)$. We have 
\[\codim(Y, U\times_X U')=n+c'=c+n'=:m.\]
We obtain the following diagram in which the grayish entries keep track of the $\gamma$-twist, the
modulus pair, and the support,  the rest is omitted for readability:
\[\xymatrix@+1pc{
 \textcolor{gray}{\gamma^e_{\sY,\Phi}}
\ar[d]_{g_{\sY/\sU\sY}}\ar[r]^{g_{\sY/\sY\sU'}}\ar@{}[dr]|*+[o][F-]{1}  & 
\textcolor{gray}{\gamma^{e-n'}_{\sY\sU', \Xi}}
\ar[r]^{\tr_{\sY\sU'/\sY}}\ar[d]|{g_{\sY\sU'/\sU\sU'}}\ar@{}[dr]|*+[o][F-]{2}  &  
\textcolor{gray}{\gamma^e_{\sY,\Phi}}\ar[d]^{g_{\sY/\sU'}}  \\
 \textcolor{gray}{\gamma^{e-n}_{\sU{\sY}, \Xi}}
\ar[r]^{g_{\sU\sY/\sU\sU'}}\ar[d]_{\tr_{\sU\sY/\sY}}\ar@{}[dr]|*+[o][F-]{3} & 
\textcolor{gray}{\gamma^{e-m}_{\sU\sU',\Xi}}
\ar[r]^{\tr_{\sU\sU'/\sU'}}\ar[d]|{\tr_{\sU\sU'/\sU}}\ar@{}[dr]|*+[o][F-]{4} & 
\textcolor{gray}{\gamma^{e-c'}_{\sU', \Phi}}\ar[d]^{\tr_{\sU'/\sX}} \\
 \textcolor{gray}{\gamma^e_{\sY, \Phi}}\ar[r]^{g_{\sY/\sU}}& 
\textcolor{gray}{\gamma^{e-c}_{\sU,\Phi}}\ar[r]^{\tr_{\sU/\sX}}  & 
\textcolor{gray}{\gamma^{e+r}_{\sX, \Psi}}\\
}\]
The square \textcircled{1} is commutative by Corollary \ref{cor:gysin-fun},
the squares \textcircled{2} and \textcircled{3} commute by Lemma \ref{lem:traceP}\ref{lem:traceP5},
the square \textcircled{4} commutes by Lemma \ref{lem:traceP}\ref{lem:traceP4}, and finally
we have $\tr_{\sY\sU'/\sY}\circ g_{\sY/\sY\sU'}=\id$ and $\tr_{\sU\sY/\sY}\circ g_{\sY/\sU\sY}=\id$
by Lemma \ref{lem:traceP}\ref{lem:traceP3}.
Thus the whole diagram commutes. It follows that going counterclockwise  from the top left to the bottom right corner
gives the pushforward using the factorization \eqref{defn:pfs2}, 
whereas going clockwise yields the pushforward using the primed-version
of this factorization and therefore these two pushforwards agree.

\ref{prop:pfs3}. We have the commutative diagram
\[\xymatrix@!@C-0.5pc@R-2.5pc{
Z\ar[r]|-{i'_Y}\ar[dr]_g\ar@/^1.5pc/[rrr]|{i'} &  
U'_Y\ar[r]\ar[d]^(0.4){\pi'_Y} & U'\times_X U\ar[d]\ar[r]&
U',\ar[d]^{\pi'}\\
& Y\ar@/_1pc/[rr]|f \ar[r]^-i & U\ar[r]^\pi & X    
}\]
in which the squares are cartesian.
Then \ref{prop:pfs3} follows directly from
Lemma \ref{lem:traceP}\ref{lem:traceP4}, \ref{lem:traceP5}, \ref{lem:traceP5.5} and Corollary \ref{cor:gysin-fun}.
\end{proof}
}


\begin{para}\label{para:pfs-rel0}
Recall from  \ref{para:RSC} that the functor  $ \uomega^{\CI}: \RSC_{\Nis}\to \CItspNis$ is right adjoint to $\uomega_!$.
Let  $F\in\RSC_\Nis$ and set $\tF:=\uomega^{\CI}F$. 
By the weak cancellation theorem  \cite[Cor 3.6]{MS} the natural map from \ref{para:cancel}
\eq{para:tutRSC3}{\kappa_e: \tF\xr{\simeq} \gamma^e(\tF(e)), \quad e\ge 0.}
is an isomorphism. 

Let $\sX$, $f:Y\to X$ and $\Phi, \Psi$ be as in Definition \ref{defn:pfs} and assume the relative dimension of $f$ 
is $r=0$.
We define
\[f_*:= 
\kappa_{e}^{-1}\circ (i,\pi)^{e}_*\circ\kappa_e : Rf_*R\ul{\Gamma}_\Phi \tF_{\sY}\to R\ul{\Gamma}_\Psi \tF_{\sX},\]
where $e\gg 0$. It follows from Proposition \ref{prop:pfs}, that $f_*$ is independent of the choice of a factorization
\eqref{defn:pfs1} and it follows from the commutativity of \eqref{para:cancel2.5} that  it is independent of the choice of $e$.
\end{para}

\begin{proposition}\label{prop:pfs0}
Let $F\in\RSC_\Nis$ and set $\tF:=\uomega^{\CI}F\in \CItspNis$.
Let $\sX=(X,D)$, $f:Y\to X$, $\sY$, $\Psi$ and $\Phi$ be as in \ref{defn:pfs} above and 
assume that $f$ is of relative dimension $r=0$.
\begin{enumerate}[label=(\arabic*)]
\item\label{prop:pfs01}  Let $g: Z\to Y$ be a quasi-projective morphism of relative dimension $0$ in $\Sm$ and 
assume that $g$ is transversal  $f^*D$. Set $\sZ:=(Z, g^*f^*D)$.
Let $\Xi$ be a family of proper supports
for $Z/Y$ such that $\Xi\subset g^{-1}\Phi$. Then $\Xi$ is also a family of proper supports for
$Z/X$ and we have 
\[(f\circ g)_*=f_*g_*: 
Rg_*Rf_* R\ul{\Gamma}_{\Xi} \tF_{\sZ}\to Rf_*R\ul{\Gamma}_{\Phi}\tF_{\sY}\to R\ul{\Gamma}_{\Psi}\tF_\sX.\] 
\item\label{prop:pfs02} Assume $X$ and $Y$ are connected and $\Phi=f^{-1}\Psi$. Then 
\[\deg(Y/X) \cdot = f_* \circ f^* :  
R\ul{\Gamma}_{\Psi} \tF_{\sX}\to Rf_* R\ul{\Gamma}_{\Phi}\tF_{\sY}\to R\ul{\Gamma}_{\Psi}\tF_{\sX},\]
where
\[\deg(Y/X):=\begin{cases} [k(Y):k(X)] & \text{if $f$ is dominant}\\ 0 & \text{else.}\end{cases}\]
\item\label{prop:pfs03}
 Assume $X$ and $Y$ are connected and $f$ is proper and its restriction 
 $f_{|Y\setminus |f^*D|}: Y\setminus |f^*D|\to X\setminus |D|$  is finite and surjective.   Then 
       \[H^0(f_*)=(\Gamma^t_f)^*: f_*\tF_{\sY}\to \tF_{\sX},\]
       where $\Gamma^t_f\in \uMCor(\sX, \sY)$ is the transpose of the graph of  $f$.
\end{enumerate}
\end{proposition}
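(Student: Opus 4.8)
\textbf{Proof proposal for Proposition \ref{prop:pfs0}.}

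The overall strategy is to reduce each of the three statements to a computation involving the trace map $\tr_{\sU/\sX}$ and the Gysin map, using the compatibility results from Proposition \ref{prop:pfs} and Lemma \ref{lem:traceP}, together with the cancellation isomorphism $\kappa_e\colon \tF\xrightarrow{\simeq}\gamma^e(\tF(e))$ from \eqref{para:tutRSC3}. For part \ref{prop:pfs01}, the key point is that $\kappa_e$ is natural in $F$ and compatible with the twist maps (the commutativity of \eqref{para:cancel2.5}), so that $(f\circ g)_* = f_*g_*$ follows directly by unwinding the definition of $(-)_*$ via a large twist $e\gg 0$ and invoking the functoriality statement Proposition \ref{prop:pfs}\ref{prop:pfs3} for the composable maps $g$ and $f$, applied to $\tF(e)$ instead of $\tF$. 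The fact that $\Xi$ is a family of proper supports for $Z/X$ is the easy observation that a closed subset of $Z$ proper over $Y$ is proper over $X$ once everything in sight is proper over the base, which is already used in Proposition \ref{prop:pfs}\ref{prop:pfs3}. One just has to check that the twist $e$ can be chosen uniformly large enough to serve all the factorizations appearing (of $f$, of $g$, and of $f\circ g$); this is automatic since we may always enlarge $e$.

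For part \ref{prop:pfs02}, I would choose a factorization $f\colon Y\xrightarrow{i} U\xrightarrow{\pi} X$ with $i$ a closed immersion of codimension $c$ and $\pi$ the restriction of a projective bundle $P\to X$ of relative dimension $n=c$. Then $f^* = i^*\circ \pi^*$ and $f_* = \tr_{\sU/\sX}\circ g_{\sY/\sU}$ (up to the cancellation twist). Composing, $f_*\circ f^*$ becomes $\tr_{\sU/\sX}\circ g_{\sY/\sU}\circ i^* \circ \pi^*$. Using Proposition \ref{prop:gysin-cup} (compatibility of the Gysin map with cupping by Chow classes) together with Theorem \ref{prop:pf-gysin}, the composite $g_{\sY/\sU}\circ i^*$ can be rewritten as cupping with the cycle class $[Y]\in\CH^c_Y(U)$, followed by the forget-supports map; after applying $\tr_{\sU/\sX}$ and using Lemma \ref{lem:traceP}\ref{lem:traceP1} this reduces to $\tr_{\sP/\sX}$ applied to $c_{[Y]}\circ\pi^*$, which by the projective bundle formula picks out the top-degree coefficient of $[Y]\in\CH^n(P)$ in the basis $\{\pi^*(\alpha_i)\cdot\xi^i\}$. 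That coefficient is $\deg(Y/X)\cdot[X]$ by the standard pushforward formula for cycles in a projective bundle (compare the computation in the proof of Lemma \ref{lem:traceP}\ref{lem:traceP3}), whence the claim. The case where $f$ is not dominant is handled by observing that then $[Y]$ maps to a cycle of dimension $<\dim X$ on $X$, so the top coefficient vanishes.

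For part \ref{prop:pfs03}, since $f$ is proper with finite surjective restriction over $X\setminus|D|$, one may take the factorization so that $i\colon Y\hookrightarrow U$ is closed, $\pi$ proper, and all families of supports are full; taking $H^0$ then amounts to identifying $H^0(\tr_{\sU/\sX})\circ H^0(g_{\sY/\sU})$ with $(\Gamma_f^t)^*$. The Gysin map in degree $0$ is described via the local Gysin map and, by Theorem \ref{prop:pf-gysin} (specifically \eqref{prop:pf-gysin1.1} or its Nisnevich-local version using Lemma \ref{lem:loc-cl-sec}), by cupping with the cycle $[Y]\subset U$; pushing forward along $\pi$ and using that $[Y]$, as a correspondence, is precisely the graph $\Gamma_f$, one recovers the action $(\Gamma_f^t)^*$ of the transposed graph on $F$, which is the definition of pullback along the finite correspondence $\Gamma_f^t\in\uMCor(\sX,\sY)$ (admissibility follows from $f^*D \le f^*D$, i.e.\ transversality of $f$ to $D$). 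I expect the main obstacle to be the bookkeeping in part \ref{prop:pfs03}: one must check that the identification of the cup-product-with-$[Y]$ composed with $\pi_*$ literally matches the transfer $(\Gamma_f^t)^*$ at the level of sheaves on $\uMCor$ (not just on the underlying smooth schemes), which requires tracing through the definitions of $\tr_{\sP/\sX}$ via the projective bundle formula and the explicit description of $\gamma^0 = \mathrm{id}$, being careful about the modulus condition on the correspondence $\Gamma_f$. The twisted-sheaf reformulation via $\kappa_e$ is harmless here because in degree $0$ and relative dimension $0$ one can work directly with $e=0$.
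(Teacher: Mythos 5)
Your treatment of parts \ref{prop:pfs01} and \ref{prop:pfs02} is essentially the paper's argument: \ref{prop:pfs01} is indeed just Proposition \ref{prop:pfs}\ref{prop:pfs3} combined with the compatibility \eqref{para:cancel2.5}, and for \ref{prop:pfs02} the paper likewise uses Theorem \ref{prop:pf-gysin} to replace $g_{\sY/\sU}\circ i^*$ by the cup product $c_{[Y]}$, passes to the closure $\ol{Y}\in\CH^n(P)$, writes $\ol{Y}=\sum_i\pi^*\alpha_i\cdot\xi^i$, and extracts $\alpha_n=\deg(Y/X)$ by applying $\pi_*$ (this is the content of Claim \ref{prop:pfs0-claim}); the non-dominant case comes out the same way.

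The gap is in part \ref{prop:pfs03}, where you declare the identification of $\tr\circ g$ with $(\Gamma_f^t)^*$ to be ``bookkeeping''. It is not: cupping with the cycle class of $Y\subset U$ and then applying the projective-bundle trace is a construction made out of $c_\alpha$ and the splitting of Theorem \ref{thm:pbf}, and there is no formal reason why this should coincide with the action of the finite correspondence $\Gamma_f^t$ coming from the presheaf-with-transfers structure of $\tF$ on $\uMCor$; establishing that coincidence is the actual content of the statement. The paper's proof has to (a) reduce, via semipurity and injectivity of restriction to dense opens \cite[Thm 3.1]{S-purity}, to the case where $X$ and $Y$ are points and $f$ is a finite \emph{simple} field extension factored through $\P^1_X$; (b) prove Claim \ref{prop:pfs04}, a projection-formula compatibility of the transfer $\Tr_h$ with $\tr_{P/U}$, which itself rests on the correspondence identity \eqref{prop:pfs06}; and (c) prove Lemma \ref{lem:gysinTRco1}, comparing the Gysin map with $(\Gamma_f^t)^*$, whose proof chooses norm-compatible local equations $\Nm_{X_1/X}(d_1)=d$, passes to the minimal polynomial $P(t)$, and shows $\Div_{X\times\A^1}(t-d)^*=\Div_{X\times\A^1}(P(t))^*$ on $G(\A^1,0)\to G(X,Z)/G(X)$ using admissible functions and $V$-pairs from \cite[Thm 2.10(2)]{S-purity}. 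None of this is supplied or even gestured at by ``$[Y]$, as a correspondence, is precisely the graph $\Gamma_f$''. A secondary error: you cannot ``work directly with $e=0$'' in \ref{prop:pfs03}; the definition of $f_*$ requires $e\ge c$ for the codimension $c$ of the closed immersion in the chosen factorization, and for a finite morphism of degree $\ge 2$ one has $c\ge 1$ (the paper works with $e=1$ and the factorization through $\P^1_X$), so the twist and the weak cancellation isomorphism $\kappa_e$ are genuinely in play.
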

\begin{proof}
\ref{prop:pfs01}. This follows from Proposition \ref{prop:pfs}\ref{prop:pfs3}.
\ref{prop:pfs02}. Choose a factorization \eqref{defn:pfs2} and $e\ge c=n$, then $f_* \circ f^*$ 
is by Theorem \ref{prop:pf-gysin} equal to the composition
\begin{align*}
R\ul{\Gamma}_{\Psi}\tF_{\sX} & \xr{\kappa_e} R\ul{\Gamma}_{\Psi}\gamma^e(\tF(e))_{\sX} \\
                                             &     \xr{\pi_U^*} R\pi_{U*}R\ul{\Gamma}_{\Phi} \gamma^e(\tF(e))_{\sU}\\
                                             &   \xr{c_Y} R\pi_{U*}R\ul{\Gamma}_{\Phi} \gamma^{e-n}(\tF(e))_{\sU}[n]\\
                                            &     \xr{\tr_{\sU/\sX}} R\ul{\Gamma}_{\Psi} \gamma^{e}(\tF(e))_{\sU}\\
                                         &     \xrightarrow[\simeq]{\kappa_e^{-1}} R\ul{\Gamma}_{\Psi}\tF_{\sX}.
\end{align*}
Let  $\ol{Y}\subset P$ be the closure of $Y$; it induces a cycle in $\CH^n(P)$.
It remains to show:
\begin{claim}\label{prop:pfs0-claim} For $G\in \CItspNis$ the composition
\[\gamma^n G_{\sX}\xr{\pi^*} R\pi_* \gamma^n G_{\sP} \xr{c_{\ol{Y}}} R\pi_* G_{\sP}[n]\xr{\tr_{\sP/\sX}}
\gamma^n G_{\sX},\]
is equal to the multiplication with $\deg(Y/X)$. 
\end{claim}
To this end, let $\xi=c_1(\sO_P(1))\in \CH^1(P)$.
By the projective bundle formula there exist cycles $\alpha_i\in \CH^{n-i}(X)$, such that 
\[\ol{Y}= \sum_{i=0}^n \pi^*\alpha_i\cdot \xi^i,\quad \text{in }\CH^n(P).\]
Applying $\pi_*: \CH^n(P)\to \CH^0(X)=\Z$ we find 
$\deg(Y/X)=\alpha_n$ and hence Claim \ref{prop:pfs0-claim}  follows from 
Lemma \ref{lem:cyclecup}\ref{lem:cyclecup1},\ref{lem:cyclecup2},\ref{lem:cyclecup3}
and the definition of $\tr_{\sP/\sX}$.

\ref{prop:pfs03}. By semipurity we can assume $D=\emptyset$. Since restriction to a dense open subset
is injective for $F\in \RSC_\Nis$ (e.g., \cite[Thm 3.1]{S-purity}) we can reduce to the case where
$X$ and $Y$ are points and $f$ is induced by a finite field extension; since both sides of the equality in
\ref{prop:pfs03} are transitive we can assume that this field extension is simple, so that $f$ factors as
a closed immersion $Y\inj\P^1_X$ followed by the projection $\P^1_X\to X$.
In this situation $H^0(f_*)$ is equal to the composition 
\[F(Y)\xr{\kappa_1} (\gamma^1\tF(1))(Y) \xr{g_{Y/\P^1_X}} H^1(\P^1_Y, \tF(1)_{(\P^1_Y,\emptyset)})
\xr{\tr_{\P^1_X/X}}  (\gamma^1\tF(1))(X)\xr{\kappa_1^{-1}} F(X).\]
In the following we set
\[\Tr_h:=(\Gamma_h^t)^*: h_* F_{U'}\to F_U,\]
for a finite surjective morphism $h: U'\to U$ in $\Sm$. 
\begin{claim}\label{prop:pfs04}
Let $V$ be a locally free $\sO_U$-module of rank $n+1$. Set $P=\P(V)$ and $P'=\P(h^*V)$
and denote by $h':P'\to P$ the base change of $h: U'\to U$, so that we have a commutative diagram
\[\xymatrix{
P'\ar[r]^{h'} \ar[d]^{\pi'} & P \ar[d]^{\pi} \\
U' \ar[r]^{h} & U.
}
\]
Let $G\in\CItspNis$.
Then 
\[\Tr_h\circ \tr_{P'/U'}= \tr_{P/U}\circ \Tr_{h'}: 
H^i(P', G_{P'})\to H^{i-n}(U, (\gamma^n G))_{U}).\]
for all $i$. 
\end{claim}
We prove the claim. Let $\lambda_V^i= c_{\xi^i}\circ \pi^*$ be as in \eqref{para:pbf}, where 
$\xi=c_1(\sO_P(1))\in \CH^1(P)$ and $\pi: P\to U$ is the projection. 
Then by the definition of $\tr_{P/U}$ it suffices to show
\[\Tr_{h'}\circ \lambda^i_{h^*V} =\lambda^i_{V}\circ \Tr_h: 
h_*(\gamma^iG)_{U'}[-i]\to R\pi_* G_{P},\quad \text{for all }i. \]
We know $\pi^*\circ \Tr_h= \Tr_{h'}\circ {\pi'}^*$, since the pullback is compatible with the composition of finite correspondences. 
Thus we are left to show the commutativity of 
\begin{equation} \label{prop:pfs05} \xymatrix{ 
(\gamma^i F)_{P}[-i] \ar[rr]^{c_{\xi^i}} && F_P \\
h'_*(\gamma^i F)_{P'}[-i] \ar[u]_{\Tr_{h'}} \ar[rr]_{c_{{h'}^*\xi^i }} &&  h'_* F_{P'} \ar[u]^{\Tr_{h'}}
}
\end{equation}
Using the definition of $c_{\xi^i}$ in \ref{para:cyclecup} 
and the explicit description \eqref{eq:Tmap2} of the map \eqref{para:Tmap1}
we see that \eqref{prop:pfs05} follows from the projection formula
\[(\Tr_{h'}(a)\otimes \beta\otimes \Delta_P)= \Tr_{h'}( a\otimes {h'}^*\beta\otimes\Delta_{P'})\quad \text{in }
(G\otimes_{\uMPST} \uomega^{*}K^M_i)(P),\]
where $a\in G(P')$, $\beta\in K^M_i(P)$, $\Delta_P$ and $\Delta_{P'}$ are the respective diagonals.
The projection formula follows from the description of $\otimes_{\uMPST}$,  e.g., \cite[Lem 4.3]{RSY},
and the equality of finite correspondences
\eq{prop:pfs06}{(\id_{P'}\times \Gamma_{h'})\circ \Gamma_{\Delta_{P'}}\circ \Gamma^t_{h'}= 
(\Gamma_{h'}^t\times\id_P)\circ \Gamma_{\Delta_P} \in \mathbf{Cor}(P, P'\times P), }
which can be deduced from the cartesian diagram
\[\xymatrix{
P'\ar[rr]^-{(\id_{P'}\times h')\circ \Delta_{P'}}\ar[d]^{h'} & & P'\times P\ar[d]^{h'\times \id_P}\\
P\ar[rr]^{\Delta_P}     &   & P\times P.
}\]
This completes the proof of the claim.

\medskip
We come back to the proof  \ref{prop:pfs03}. Consider the following commutative diagram
\eq{prop:pfs03.5}{
\xymatrix{
Y\ar@{^(->}[r]\ar@{=}[dr] & \P^1_Y\ar[d]\ar[r]^{f_1} & \P^1_X\ar[d]\\
    & Y\ar[r]^{f}    & X,
}
}
in which the vertical maps are the projections and the square is cartesian. 
Clearly it suffices to show for $G=\tF(1)$ and with the notation from above
\eq{prop:pfs07}{\Tr_f=\tr_{\P^1_X/X}\circ g_{Y/\P^1_X}: (\gamma^1G)(Y)\to (\gamma^1G)(X).}
We compute using \eqref{prop:pfs03.5}
\begin{align*}
\Tr_f& = \Tr_f\circ (\tr_{\P^1_Y/Y}\circ g_{Y/\P^1_Y}) & & \text{by  Lem \ref{lem:traceP}\ref{lem:traceP3}}\\
          &= \tr_{\P^1_X/X}\circ Tr_{f_1}\circ g_{Y/\P^1_Y} & & \text{by  Claim \ref{prop:pfs04}}\\
          &= \tr_{\P^1_{X}/X}\circ g_{Y/\P^1_X} & & \text{by  Lem \ref{lem:gysinTRco1} below.}
\end{align*}
This proves \eqref{prop:pfs07} and finishes the proof of the proposition.
\end{proof}

\begin{lemma}\label{lem:gysinTRco1}
Let $F\in \RSC_{\Nis}$ and $\tF=\uomega^{\CI}F\in \CItspNis$.
Let $f: X_1\to X$ be a finite and surjective morphism in $\Sm$ and let $Z$ be a smooth
$k$-scheme which comes with two closed immersions  $i:Z\inj X$ and $i_1: Z\inj X_1$ both of codimension 1 
such that $i=f\circ i_1$. Then the following diagram commutes in $D(X_{\Nis})$
\[\xymatrix{
f_*i_{1*}(\gamma^1 \tF)_Z [-1]\ar[r]^-{g_{Z/X_1}}\ar@{=}[d] &  f_*\tF_{X_1}\ar[d]^{(\Gamma_f^t)^*}\\
i_*(\gamma^1 \tF)_Z[-1]\ar[r]^-{g_{Z/X}}                                & \tF_X,
}\]
where $\Gamma_f^t$ is the transpose of the graph of $f$ which we view in 
$\uMCor((X,\emptyset), (X_1,\emptyset))=\Cor(X_1,X)$.
\end{lemma}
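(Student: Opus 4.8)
The statement is a compatibility between the Gysin map in codimension $1$ and the transfer $(\Gamma_f^t)^*$ along a finite surjective morphism. Since $\tF=\uomega^{\CI}F$ is semipure and restriction to a dense open is injective on sections of objects of $\RSC_\Nis$ (by \cite[Thm 3.1]{S-purity}), it suffices to establish the commutativity after taking $H^1$ and, since all sheaves involved are supported on $Z$ (the higher direct images $R^i\rho_*$ of the blow-up vanish for $i\ge 1$ by Theorem \ref{thm:blow-upAn} in codimension $1$, i.e.\ $\tilde X = X$, $E = Z$), we may work Nisnevich-locally on $X$ around the generic points of $Z$. The key tool is the explicit description of the codimension-$1$ local Gysin map from Lemma \ref{lem:divisor-cup}: locally, choosing a neighbourhood $V$ of $Z$ on which $Z = \Div(e)$ for a regular function $e$, the map $g_{Z/X}\colon i_*\gamma^1\tF_Z \to R^1\ul\Gamma_Z \tF_X \cong \tF_{(X, Z)}/\tF_X$ sends a class represented by $\tilde a\in \tF((\A^1,0)\otimes \sX)$ to $\Delta_V^*(\Gamma_e\times \nu)^*\tilde a$, and similarly for $X_1$ with the function $f^*e$ (note $Z$ is cut out by $f^*e$ on the preimage of $V$ because $i = f\circ i_1$, and $i_1$ has codimension $1$).

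\textbf{Main steps.} First I would reduce to the local statement just described, using Lemma \ref{lem:local-gysin} to pass between the Gysin map and the local Gysin map $g^1_{Z/X,Z} = R^1\ul\Gamma_Z(g_{Z/X})[-1]$, so that the claim becomes: the square with $g^1_{Z/X_1,Z}$, $g^1_{Z/X,Z}$ and the two transfer maps $(\Gamma^t_f)^*$ (on $\tF$ and on $R^1\ul\Gamma_Z\tF$) commutes. Second, using the identification $R^1\ul\Gamma_Z \tF_{X} = \tF_{(X,Z)}/\tF_X$ (valid locally by semipurity), I would unwind what $(\Gamma_f^t)^*$ does on this quotient: it is induced by the transfer $\tF(X_1, f^*Z) \to \tF(X, Z)$, which is admissible since $f$ is finite, hence the transpose graph $\Gamma_f^t$ defines a correspondence in $\uMCor((X,Z),(X_1,f^*Z))$. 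Third, with $Z = \Div(e)$, I would compute both composites explicitly using the formula \eqref{lem:divisor-cup2}: going along the top, one pulls back $\tilde a$ via $(f^*e)$ on $X_1$, then transfers down along $f$; going along the bottom, one first transfers $\gamma^1\tF_Z$-classes (which does nothing on the $Z$-side since the source is already on $Z$) and then pulls back via $e$ on $X$. The equality then rests on the identity of finite correspondences expressing $\Gamma_e\circ\Gamma_f^t = \Gamma_{f^*e}$ composed appropriately — concretely, that $f^*e = e\circ f$ as morphisms $X_1\to \A^1$, together with the compatibility of $\Delta^*$ and graph composition. This is the projection-formula-type manipulation already used in the proof of Proposition \ref{prop:pfs0}\ref{prop:pfs03} (the identity \eqref{prop:pfs06}), adapted to the modulus pair $(\A^1, 0)$.

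\textbf{Expected main obstacle.} The delicate point is bookkeeping with the modulus conditions when composing the transpose graph $\Gamma_f^t$ with the auxiliary correspondences $\Gamma_e$, $\Delta_V$, $\nu$ appearing in Lemma \ref{lem:divisor-cup}: one must check that all intermediate correspondences are admissible for the relevant divisors (in particular that $f^*e$ still cuts out a divisor with the right multiplicity so that $\Gamma_{f^*e}\in\uMCor((V_1, (f^*D+f^*Z)_{|V_1}), (\A^1,0))$, where $V_1 = f^{-1}(V)$), and that the cartesian square relating $X_1$, $X$ and the graphs behaves well under $\Delta^*$. The cleanest route is probably to avoid explicit local coordinates where possible and instead argue via the compatibility of the cup product $c_Z$ with transfers: by Theorem \ref{prop:pf-gysin}, $g_{Z/X}$ in codimension $1$ is (locally, after choosing a retraction $q$) of the form $c_Z\circ q^*$, and $c_Z$ is built from the pairing \eqref{para:Tmap1} and the cycle class of $Z$; the transfer compatibility of \eqref{para:Tmap1} — which is exactly the projection formula for $\otimes_{\uMPST}$, see \cite[Lem 4.3]{RSY} as used in \eqref{prop:pfs06} — then yields the claim provided $f^*[Z] = [Z]$ in $\CH^1_Z(X_1)$ (with the reduced structure, using $i = f\circ i_1$), which holds because $Z$ is a Cartier divisor pulled back along $f$. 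Assembling these ingredients and checking that the chosen retractions can be made compatible (which may require shrinking $X$ further in the Nisnevich topology, via Lemma \ref{lem:loc-cl-sec}) completes the argument.
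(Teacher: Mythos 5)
Your argument breaks down at the very first computational step: you assert that $Z_1:=i_1(Z)\subset X_1$ is cut out by $f^*e$, where $e$ is a local equation of $Z\subset X$. This is false in general. Since $f$ is finite surjective of degree $n$, the divisor $\Div_{X_1}(f^*e)=f^*Z$ contains $Z_1$ but usually also other components and/or multiplicities (e.g.\ $X=X_1=\A^1$, $f(x)=x^2$, $Z=\{1\}$, $Z_1=\{1\}$: then $f^*e=x^2-1$ cuts out $\{1\}\cup\{-1\}$, not $Z_1$). The Gysin map $g_{Z/X_1}$ is the one attached to the codimension-$1$ immersion $i_1$, so in the local description of Lemma \ref{lem:divisor-cup} it must be computed with a local equation $d_1$ of $Z_1$ itself, not with $e\circ f$. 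With your identification the two composites in the square coincide for purely formal reasons ($f^*e=e\circ f$ plus functoriality), which should be a warning sign: the lemma is not a formal functoriality statement. The same error reappears in your fallback route, where you claim $f^*[Z]=[Z]$ in $\CH^1_{Z}(X_1)$ ``because $Z$ is a Cartier divisor pulled back along $f$''; what is true is the pushforward identity $f_*[Z_1]=[Z]$ (as $f|_{Z_1}$ is an isomorphism onto $Z$), while $f^*[Z]=[f^*Z]\neq[Z_1]$ in general.

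What the paper actually has to do, and what your plan omits, is precisely the non-formal comparison forced by this discrepancy. One chooses equations $d$ of $Z$ and $d_1$ of $Z_1$ normalized so that $\Nm_{X_1/X}(d_1)=d$ (possible because $f_*[Z_1]=[Z]$, see \eqref{lem:gysinTRco12.5}); composing the graph of $d_1$ with $\Gamma_f^t$ then produces not a graph but the multisection $\Div_{X\times\A^1}(P(t))$, where $P(t)=\Nm_{X_1[t]/X[t]}(t-d_1)$ has constant term $\pm d$. The heart of the proof is then the identity $\Div(t-d)^*=\Div(P(t))^*$ as maps $G(\A^1,0)\to G(X,Z)/G(X)$ (equation \eqref{lem:gysinTRco16}), which is \emph{not} a consequence of the cycles being rationally equivalent or of transfer-compatibility of the pairing \eqref{para:Tmap1}; it is proved by checking that $\tfrac{t-d}{t-1}$ and $\tfrac{P(t)}{(t-1)^n}$ are admissible functions for the $V$-pair $((\A^1_K,0_K),(X,Z))$ and invoking \cite[Thm 2.10(2)]{S-purity} — i.e.\ the reciprocity of $F$ enters in an essential way. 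Your proposed shortcut via ``compatibility of $c_Z$ with transfers'' would, once the wrong hypothesis $f^*[Z]=[Z]$ is replaced by the correct $f_*[Z_1]=[Z]$, amount to a projection formula for the cup product against $(\Gamma_f^t)^*$ that is essentially equivalent to the lemma itself and is nowhere available without this norm/admissibility argument; so as it stands the proposal is missing the key idea.
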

\begin{proof}
Note that $(\Gamma_f^t)^*$ also induces a morphism on the cohomology with supports
\eq{lem:gysinTRco11}{
f_*R^1\ul{\Gamma}_{Z_1} F_{X_1}\to f_* R^1\ul{\Gamma}_{f^{-1}(Z)}F_{X_1}= R^1\ul{\Gamma}_Z(f_* F_{X_1})
\xr{(\Gamma_f^t)^*} R^1\ul{\Gamma}_Z F_X,
}
where we use  $R^if_*=0$ for all $i>0$, which holds by the finiteness of $f$. Here, $Z_1 = i_1(Z)$, and the first arrow is the enlarge support map. 
Using this map and the local Gysin map from Lemma \ref{lem:local-gysin} we see that the statement is local around
$Z$. By Lemma \ref{lem:loc-cl-sec} we can replace $X$ by a Nisnevich neighborhood of $Z$ 
to find a morphism $q: X\to Z$ such that $q\circ i=\id_Z$.
Set $q_1:= q\circ f: X_1\to Z$; it satisfies $q_1\circ i_1=\id_Z$. 
Whence Theorem \ref{prop:pf-gysin} yields 
\[g_{Z/X}= \imath\circ c_Z\circ q^*: i_*(\gamma^1F)_Z[-1]\to R\ul{\Gamma}_ZF_X\to F_X,\]
\[(\Gamma_f^t)^*\circ g_{Z/X_1}=(\Gamma_f^t)^*\circ  \imath\circ c_{Z_1}\circ q_1^*: 
f_* i_{1*} (\gamma^1 F)_{Z_1}[-1]\to f_*R\ul{\Gamma}_{Z_1}F_{X_1}
\to f_*F_{X_1},\]
where $c_Z$ (resp. $c_{Z_1}$) is defined viewing $Z\in \CH^1_Z(X)$ (resp. $Z_1\in \CH^1_{Z_1}(X_1)$)
and $\imath$ is the forget support map in both cases.
Thus it suffices to show the equality
\eq{lem:gysinTRco12}{
c_Z=\eqref{lem:gysinTRco11}\circ c_{Z_1}\circ f^*: (\gamma^1 F)_X[-1]\to R\ul{\Gamma}_{Z}F_X, }
since clearly we also have the commutativity
\[ (\Gamma^t_f)^*\circ \iota =\iota \circ \eqref{lem:gysinTRco11}  \]
where $\iota$ is again the forget support map. 

Again this statement is local in $Z$ and we can therefore assume that there are global functions
$d\in H^0(X, \sO_X)$ and $d_1\in H^0(X_1, \sO_{X_1})$ with
\[Z= \Div_X(d) \quad \text{and}\quad Z_1=\Div_{X_1}(d_1).\]
Since as cycles we have $f_*Z_1=Z$ we can (by \cite[Prop 1.4]{Fu}) additionally choose $d$ and $d_1$ such that 
\eq{lem:gysinTRco12.5}{\Nm_{X_1/X}(d_1)=d.}
Note that $\Gamma_f^t$ also defines an element in $\uMCor((X,Z), (X_1, Z_1))$ and thus Lemma \ref{lem:divisor-cup}
together with Remark \ref{rmk:gamma} show that   \eqref{lem:gysinTRco12} is implied by
the commutativity of the following diagram
\[\xymatrix{
\tF((\A^1,0)\otimes X_1)\ar[r]^{\eqref{lem:divisor-cup1}} &  \tF(X_1,Z_1)/F(X_1)\ar[d]^{(\Gamma_f^t)^*}\\
\tF((\A^1,0)\otimes X)\ar[u]_{(\id\times f)^*}\ar[r]^{\eqref{lem:divisor-cup1}} & \tF(X,Z)/F(X).
}\]
We prove the commutativity of the above diagram.
By \eqref{lem:divisor-cup2} it suffices to show
\eq{lem:gysinTRco13}{(\Gamma_f^t)^*\Delta_{X_1}^*(d_1\times \id_{X_1})^*(\id_{\A^1}\times f)^*= 
\Delta_X^*(d\times \id_X)^*: \tF((\A^1,0)\times X)\to \tF(X,Z)/F(X),}
where by abuse of notation we denote by $d: (X, Z)\to (\A^1,0)$ 
(resp. $d_1: (X_1, Z)\to (\A^1,0)$) the morphisms of modulus pairs induced by $d$ (resp. $d_1$)
and by $\Delta_X:(X, Z)\to (X,Z)\otimes X$  (resp $\Delta_{X_1}: (X_1,Z)\to (X_1,Z)\otimes X_1$)  the diagonal map.
We have 
\begin{align}
(\Gamma_f^t)^*\Delta_{X_1}^*(d_1\times \id_{X_1})^*(\id_{\A^1}\times f)^* &=
(\Gamma_f^t)^*\Delta_{X_1}^*(\id_{X_1}\times f)^*(d_1\times \id_{X})^*\label{lem:gysinTRco14}\\
     & = \Delta_X^* (\Gamma_f^t\times \id_X)^*(d_1\times \id_{X})^*,\notag
\end{align}
where the second equality is induced by \eqref{prop:pfs06}.
Note that the graph of $d_1$ in $X_1\times\A^1$ is given by $V(t-d_1)$, where $t$ is the coordinate of $\A^1$.
As in \cite[Prop 16.1.1, Prop 1.4]{Fu} we find
\eq{lem:gysinTRco15}{
(\Gamma_f^t\times \id_X)^*(d_1\times \id_{X})^*= (( f\times \id_{\A^1})_*(\Gamma_{d_1})\times \id_X)^*
= (\Div_{X\times\A^1}(P(t))\times \id_X)^*,}
where $P(t)=\Nm_{X_1[t]/X[t]}(t-d_1)\in \sO(X)[t]$ is the minimal polynomial of $d_1$ over $k(X)$.
Note that by \eqref{lem:gysinTRco12.5} we have
\[P(t)= t^n - \Tr_{X_1/X}(d_1) t^{n-1}+\ldots + (-1)^n d, \]
where $n=\deg(X_1/X)$.
Putting \eqref{lem:gysinTRco14} and \eqref{lem:gysinTRco15} together and setting
 $G=\uHom_{\uMPST}(\Ztr(X), \tF)\in\CItspNis$  we find that  \eqref{lem:gysinTRco13} is implied by
\eq{lem:gysinTRco16}{\Div_{X\times \A^1}(t-d)^*=\Div_{X\times\A^1}(P(t))^*: G(\A^1,0)\to G(X,Z)/G(X),}
where we view $\Div_{X\times\A^1}(t-d), \Div_{X\times\A^1}(P(t))\in \uMCor((X,Z),(\A^1,0))$.
To show this we can shrink $X$ around the generic point of $Z$ (by purity, see \cite[Cor 8.6(1)]{S-purity}).
Thus in the following we assume $Z\in X$ is a point with residue  $K=k(Z)$. 
The map $q:X\to Z$ from the beginning of the proof induces a morphism
$X\to \Spec K[d]$ which is {\'e}tale.
 By \cite[Rmk 2.2(1), Lem 4.2, Lem 4.3]{S-purity}
we see that $(\A^1_K=\Spec K[t],0_K)$ and $(X, Z)$ are $V$-pairs over $K$ in the sense of 
\cite[Def 2.1]{S-purity}. There is a canonical identification $0_K\cong Z$.
We claim that $g\in \{\frac{t-d}{t-1}, \frac{P(t)}{(t-1)^n}\}$ is admissible for the pair 
$((\A^1_K,0_K), (X,Z))$ in the sense of \cite[Def 2.3]{S-purity}, i.e., we have to show 
\begin{enumerate}[label=(\arabic*)]
\item $g$ is regular in a neighborhood of $X\times_K 0_K$;
\item\label{lem:gysinTRco17} $\Div_{X\times \A^1}(g)\times_{\A^1} 0_K=\Delta_{0_K}$, where 
$\Delta_{0_K}: 0_K\inj X\times_K 0_K$ is the diagonal (via the identification $0_K=Z$ fixed above);
\item\label{lem:gysinTRco18} $g$ extends to an invertible  function in a neighborhood of $X\times_K \infty_K$ 
in $X\times_K \P^1_K$.
\end{enumerate}
All points are immediate to check.
Therefore \cite[Thm 2.10(2)]{S-purity} yields
\[\Div_{X\times_K \A^1_K}(\tfrac{t-d}{t-1})^*= \Div_{X\times_K \A^1_K}(\tfrac{P(t)}{(t-1)^n})^*: 
G(\A^1_K,0_K)/G(\A^1_K)\to G(X,Z)/G(X).\]
Since $\Div_{X\times \A^1}(t-1)^*(G(\A^1,0))\subset G(X)$, 
this implies \eqref{lem:gysinTRco16} and completes the proof of the lemma.
\end{proof}

\section{Proper correspondence action on reciprocity sheaves}\label{sec:prop-cor-action}
In this  section we fix a reciprocity sheaf $F\in\RSC_\Nis$ and
set $\tF:=\uomega^{\CI}F\in\CItspNis$, see \eqref{para:tut2}.

\subsection{Pushforward and cycle cupping for reciprocity sheaves}\label{subsec:pf-RSC}
\begin{para}\label{para:RSCtwist}
Recall the twists of reciprocity sheaves from \cite[5e]{RSY}:
For $n\ge 1$ we define recursively
\eq{para:RSCtwist1}{F\la 0\ra:= F, \quad F\la n\ra := \uomega_!(\uomega^{\CI}(F\la n-1\ra )(1)),}
where $(-)(1)$ denotes the twist from Definition \ref{def:gtwist}.
Thus $F\la n\ra\in\RSC_\Nis$ and $F\la m+n\ra=F\la m\ra\la n\ra$, for all $m,n\ge 0$.
There exists a natural surjective map
\eq{para:RSC:twist2}{\uomega_!(\tF(n))\surj F\la n\ra,}
which is defined as follows:
Let $G\in\CItspNis$; twisting the adjunction map $G\to \uomega^{\CI}\uomega_! G$ by $(1)$ and applying
$\uomega_!$, yields a map $\uomega_! (G(1))\to (\uomega_! G)\la 1\ra $.
For $G=\tilde{F}$ this yields \eqref{para:RSC:twist2} for $n=1$, and in general we define it recursively by
\[\uomega_!(\tF(n))=\uomega_!(\tF(n-1)(1))\to \uomega_!(\tF(n-1))\la 1\ra\to F\la n-1\ra \la1\ra= F\la n\ra. \]
The surjectivity holds by \cite[5e(4)]{RSY}.
It is not known in general whether this is an isomorphism.

We also define recursively for $n\ge 0$
\eq{para:RSCtwist3}{\gamma^0 F:= F, \quad \gamma^n F:=\uHom_{\PST}(\G_m, \gamma^{n-1}F).}
It follows from  \cite[Prop 2.10]{MS} that for $G\in \CItspNis$ we have 
\eq{para:RSCtwist4}{\gamma^n \uomega_! G = \uomega_! \gamma^n G \in \RSC_\Nis.}
We obtain an isomorphism
\ml{para:RSCtwist6}{\gamma^n F\la n\ra\cong \gamma^{n-1} \uomega_!\gamma^1(\widetilde{(F\la n-1\ra )}(1)) 
\cong \gamma^{n-1} \uomega_!\widetilde{(F\la n-1\ra )}\\
\cong \gamma^{n-1} F\la n-1\ra \cong F, }
where the first isomorphism holds by definition and \eqref{para:RSCtwist4},
the second by the weak cancellation theorem \cite[Cor 3.6]{MS},
the third by $\uomega_!\tau_!\omega^{\CI}=\id$, and the forth by induction.
It is direct to check that the composition 
\eq{para:RSCtwist5}{F=\uomega_!\tF\xrightarrow[\simeq]{\kappa_n}
\uomega_!(\gamma^n\tF(n))\xrightarrow[\simeq]{\eqref{para:RSCtwist4}} 
\gamma^n\uomega_!\tF(n)\xr{\eqref{para:RSC:twist2}}\gamma^n F\la n\ra 
\xrightarrow[\simeq]{\eqref{para:RSCtwist6}} F}
is the identity.
\end{para}

\begin{lemma}\label{lem:gamma-exact}
The functor $\gamma^n: \RSC_\Nis\to \RSC_\Nis$ is exact, for all $n\ge 0$.
Furthermore, if char$(k)=0$,  and 
\[0\to G_1\to G_2\to G_3\to 0\]
is an exact sequence in $\uMNST$ with $G_i\in \CItspNis$, then so is
\[0\to \gamma^n(G_1)\to \gamma^n(G_2)\to \gamma^n(G_3)\to 0.\]
\end{lemma}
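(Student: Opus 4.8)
The statement has two parts: the exactness of $\gamma^n$ on $\RSC_\Nis$, and the exactness of $\gamma^n$ on short exact sequences of $\CItspNis$-objects in $\uMNST$ when $\operatorname{char}(k)=0$. By induction on $n$ it suffices to treat $n=1$, since $\gamma^n=\gamma^1\circ\gamma^{n-1}$ and $\gamma^{n-1}$ preserves the relevant subcategories (using \eqref{para:RSCtwist4} in the sheaf case, and Lemma \ref{lem:Nis}\ref{lem:Nis1} together with the exactness of the contraction functor already noted after Definition in the text for the modulus case). I would also recall at the outset that $\gamma^1$ is left exact in all cases, being a $\uHom$-functor (for $\RSC_\Nis$ one uses \eqref{para:RSCtwist3}, $\gamma^1 F=\uHom_{\PST}(\G_m,F)$; for $\CItspNis$ one uses $\gamma^1 G=\uHom_{\uMPST}(\uomega^*\G_m,G)$ from Definition \ref{def:gtwist}). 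So the real content is right exactness, i.e., preservation of surjections.

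\emph{First part (exactness on $\RSC_\Nis$).} Let $0\to F_1\to F_2\to F_3\to 0$ be exact in $\RSC_\Nis$. Applying $\uomega^{\CI}$, which is exact (it is stated in \ref{para:tut2} and the surrounding text that $\uomega^{\CI}$, or rather its variant, has the requisite exactness; more precisely $\uomega_!$ is exact and $\uomega^{\CI}$ is right adjoint — here I would instead argue directly on the sheaf level). The cleanest route: by \eqref{para:RSCtwist4} we have $\gamma^1\uomega_! G=\uomega_!\gamma^1 G$ for $G\in\CItspNis$, and $\uomega_!$ is exact; so it suffices to show that $\gamma^1=\uHom_{\uMPST}(\uomega^*\G_m,-)$ takes a surjection $G_2\surj G_3$ in $\CItspNis$ to a surjection. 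For this I would use the criterion Lemma \ref{lem:crit-surj}: a morphism in $\uMNST$ is surjective iff it is surjective on all local henselian modulus pairs $\sX_{(x)}$ with $\ol X$ normal. Using \eqref{cor:gtwist1.5}, $\gamma^1 G(\sX)=G(\bcubee\otimes\sX)/G((\P^1,\infty)\otimes\sX)$, so one is reduced to a diagram chase comparing $G_2(\bcubee\otimes\sX_{(x)})\surj G_3(\bcubee\otimes\sX_{(x)})$ with the analogous statement for $(\P^1,\infty)\otimes\sX_{(x)}$; surjectivity of $\gamma^1 G_2\to\gamma^1 G_3$ then follows provided $G_2\to G_3$ is surjective on sections over modulus pairs of the form $\square\otimes\sX_{(x)}$ and the kernels behave well — here one invokes that $G_2\surj G_3$ in $\uMNST$ is already surjective on such sections (again by Lemma \ref{lem:crit-surj}, since $\bcubee\otimes\sX_{(x)}$ retracts onto $\sX_{(x)}$ Nisnevich-locally) together with a snake-lemma argument using left-exactness of $\gamma^1$. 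I expect the bookkeeping with the divisor $(\P^1,\infty)$ versus $(\P^1,0+\infty)$ to be the fiddly point.

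\emph{Second part (char $0$, exactness on $\CItspNis$-sequences in $\uMNST$).} Here the extra hypothesis $\operatorname{char}(k)=0$ is used, presumably to have resolution of singularities available, or to apply a characteristic-zero input from \cite{S-purity} or \cite{MS} guaranteeing that the contraction $\gamma^1$ of a short exact sequence of semipure cube-invariant sheaves stays exact. Given an exact sequence $0\to G_1\to G_2\to G_3\to 0$ in $\uMNST$ with all $G_i\in\CItspNis$, I would argue: $\gamma^1 G_i\in\CItspNis$ by Lemma \ref{lem:Nis}\ref{lem:Nis1}; left exactness of $\gamma^1$ gives exactness at $\gamma^1 G_1$ and $\gamma^1 G_2$; for surjectivity of $\gamma^1 G_2\to\gamma^1 G_3$ I would again reduce via Lemma \ref{lem:crit-surj} to henselian local $\sX_{(x)}$ and use the description \eqref{cor:gtwist1.5}, reducing to the surjectivity of $G_2(\bcubee\otimes\sX_{(x)})\to G_3(\bcubee\otimes\sX_{(x)})$ modulo the subgroups coming from $(\P^1,\infty)$. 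Since $G_2\to G_3$ is surjective in $\uMNST$ it is surjective on $\bcubee\otimes\sX_{(x)}$-sections; the point is that an element of $G_3(\bcubee\otimes\sX_{(x)})$ lifting to $G_2(\bcubee\otimes\sX_{(x)})$ can be adjusted by the image of $G_2((\P^1,\infty)\otimes\sX_{(x)})$ — and this is where $\operatorname{char}(k)=0$ enters, to control the relation between sections with modulus $\infty$ and sections with modulus $0+\infty$ via a purity/moving argument (e.g. \cite[Thm 3.1]{S-purity} or the blow-up/purity results), which fails or is unknown in positive characteristic.

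\textbf{Main obstacle.} The hard part is exactly the right-exactness: proving that contracting a surjection of cube-invariant semipure sheaves again yields a surjection, i.e., that the obstruction living in $G(\bcubee\otimes\sX)/G((\P^1,\infty)\otimes\sX)$ to lifting compatibly vanishes. In the $\A^1$-invariant setting this is Voevodsky's contraction exactness; here it must be extracted from the purity theorems of \cite{S-purity} and the weak cancellation of \cite{MS}, and the characteristic-zero hypothesis in the second part signals that no unconditional argument is available. I would spend the bulk of the write-up making the reduction to Lemma \ref{lem:crit-surj} precise and then citing the appropriate purity statement to close the gap.
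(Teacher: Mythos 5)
There is a genuine gap, in two places. First, your reduction of the $\RSC_\Nis$-statement to ``$\gamma^1$ preserves surjections in $\CItspNis$'' does not work: given a short exact sequence $0\to F_1\to F_2\to F_3\to 0$ in $\RSC_\Nis$, the induced map $\tF_2\to \tF_3$ of the associated objects $\tF_i=\uomega^{\CI}F_i$ need \emph{not} be surjective in $\uMNST$, because $\uomega^{\CI}$ is only a right adjoint (left exact); knowing that $\uomega_!$ is exact and that $\gamma^1\uomega_!=\uomega_!\gamma^1$ \eqref{para:RSCtwist4} therefore does not reduce the first claim to a statement about surjections at the modulus level. (This is also why the lemma treats the two situations separately.) Second, and more importantly, the key mechanism for right exactness is missing from your plan. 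The paper's proof never touches Lemma \ref{lem:crit-surj} or the description \eqref{cor:gtwist1.5} in the first part: it evaluates everything with empty modulus, uses that $(\tF_i)_{(\P^1_X,\emptyset)}=(F_i)_{\P^1_X}$, and then identifies $(\gamma^1 F_i)_X\cong R^1\pi_*(F_i)_{\P^1_X}$ via the projective bundle formula (Lemma \ref{lem:P1bf}, Theorem \ref{thm:pbf}). Right exactness is then automatic from the long exact sequence of $R\pi_*$: the connecting map vanishes because $\pi_*(F_i)_{\P^1_X}=(F_i)_X$ and $F_2\to F_3$ is surjective on $X_\Nis$, and the cokernel obstruction vanishes because $R^{\ge 2}\pi_*=0$. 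Your proposal has no substitute for this cohomological identification; the ``snake-lemma/diagram chase'' on $G(\bcubee\otimes\sX)/G((\P^1,\infty)\otimes\sX)$ is precisely the step you cannot close, and you in fact defer it to an unspecified purity or moving argument.

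For the second statement the same comparison applies: left exactness is formal (as you say), and the paper obtains surjectivity of $\gamma(G_2)_{\sX}\to\gamma(G_3)_{\sX}$ for log smooth $\sX$ again from the projective bundle formula, exactly as in the first part. The role of char$(k)=0$ is resolution of singularities, used together with Lemma \ref{lem:crit-surj} and Corollary \ref{cor:gtwist} to reduce the surjectivity test from normal henselian local modulus pairs to pairs in $\uMCorls$, where Theorem \ref{thm:pbf} is available; it is not, as you guess, a purity statement comparing modulus $\infty$ with $0+\infty$. So while your skeleton (reduce to $n=1$, left exactness formal, Lemma \ref{lem:crit-surj} relevant for the modulus case) matches the paper, the decisive input --- computing $\gamma^1$ as $R^1\pi_*$ over $\P^1$ and exploiting $\pi_*F=F$ together with the vanishing of $R^{\ge2}\pi_*$ --- is absent, and the first reduction as written is incorrect.
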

\begin{proof}
First recall that $\RSC_\Nis$ is an abelian category, by \cite[Thm 0.1]{S-purity}, and 
that a sequence $0\to F_1\to F_2\to F_3\to 0$ in $\RSC_\Nis$ is exact if and only if
the sequence $0\to (F_1)_X\to (F_2)_X\to (F_3)_X\to 0$ of sheaves on $X_\Nis$ is exact, for any $X\in\Sm$.
It suffices to consider the case $n=1$.
Given a short exact sequence in $\RSC_\Nis$ as above we obtain for $X\in \Sm$ 
a short exact sequence on $\P^1_{X,\Nis}$
\[0\to (\tF_1)_{(\P^1_X,\emptyset)}\to (\tF_2)_{(\P^1_X,\emptyset)}\to(\tF_3)_{ (\P^1_X,\emptyset)}\to 0.\]
Applying $R\pi_*$, with $\pi: \P^1_X\to X$ the structure map we get a short exact sequence
\[0\to R^1 \pi_*(\tF_1)_{(\P^1_X,\emptyset)}\to R^1\pi_*(\tF_2)_{(\P^1_X,\emptyset)}\to
R^1\pi_*(\tF_3)_{ (\P^1_X,\emptyset)}\to 0\]
using the fact that $\pi_*(\tF_3)_{(\P^1_X,\emptyset)}= (\tF_3)_{(X,\emptyset)}$.
Applying the projective bundle formula (see Theorem \ref{thm:pbf}) yields an exact sequence
\[0\to (\uomega_!\gamma^1\tF_1)_X\to (\uomega_!\gamma^1\tF_2)_X\to (\uomega_!\gamma^1\tF_3)_X\to 0.\]
The first statement follows from \eqref{para:RSCtwist4}.

Now assume char$(k)=0$. We show the second statement. Since $\gamma$ is left exact on $\uMPST$
it suffices to show the surjectivity. By  Lemma \ref{lem:crit-surj}, Corollary \ref{cor:gtwist},
and resolution of singularities it suffices to show $\gamma (G_2)_{\sX}\to \gamma(G_3)_{\sX}$
is surjective for all $\sX\in \uMCorls$. This follows from the projective bundle formula, Theorem \ref{thm:pbf}, as above.
\end{proof}

\begin{prop}\label{prop:twistMilnor}For $F\in \CI^{\tau, sp}_{\Nis}$, we have $\ul{\omega}_!\gamma^n  F = \uHom_{\PST}(\sK^M_n, \ul{\omega}_!F)$. In particular, for $F\in \RSC_{\Nis}$, we have $\gamma^n F = \uHom_{\PST}(\sK^M_n, F)$. (This is \cite[Prop 2.10]{MS} for $n=1$.)
\end{prop}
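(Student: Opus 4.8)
The key is to reduce the statement for general $n$ to the case $n=1$, which is \cite[Prop 2.10]{MS}. The plan is to induct on $n$, but one has to be careful because the twist $\gamma$ on $\CItspNis$ is defined via $\uHom_{\uMPST}$ (contraction, Definition \ref{def:gtwist}) while on $\RSC_\Nis$ the twist is defined by $\gamma^n F = \uHom_{\PST}(\sK^M_n, F)$ with $\sK^M_n = \G_m^{\otimes_{\HI_\Nis} n}$ (cf. \eqref{para:RSCtwist3} and \eqref{cor:gtwist2}). First I would record the base case: for $n=1$, $\ulomega_!\gamma^1 F = \uHom_{\PST}(\G_m, \ulomega_! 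F)$ by \cite[Prop 2.10]{MS}, and $\G_m = \sK^M_1$.

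For the inductive step, assume the formula holds for $n-1$ and all $F \in \CItspNis$. By Corollary \ref{cor:gtwist}, $\gamma^n F = \gamma^1(\gamma^{n-1} F)$, and $\gamma^{n-1} F \in \CItspNis$ by the same corollary, so the $n=1$ case applies to $\gamma^{n-1} F$:
\[\ulomega_! \gamma^n F = \ulomega_! \gamma^1(\gamma^{n-1} F) = \uHom_{\PST}(\G_m, \ulomega_! \gamma^{n-1} F) = \uHom_{\PST}(\G_m, \uHom_{\PST}(\sK^M_{n-1}, \ulomega_! F)),\]
using the induction hypothesis for the last equality. Now I would use the adjunction/associativity identity $\uHom_{\PST}(A, \uHom_{\PST}(B, C)) = \uHom_{\PST}(A \otimes_{\PST} B, C)$ together with the fact (Proposition \ref{prop:T}, or rather \eqref{cor:gtwist2} and \cite[5.5]{RSY}) that $\sK^M_n = h^{\A^1}_{0,\Nis}(\G_m \otimes_{\PST} \sK^M_{n-1}) = \G_m \otimes_{\HI_\Nis} \sK^M_{n-1}$. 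The point is that for $C = \ulomega_! F$ which is a Nisnevich sheaf (indeed $\ulomega_! F \in \RSC_\Nis$, in particular homotopy invariant is \emph{not} automatic, but $\uHom_{\PST}(-, \ulomega_! F)$ only depends on the image in $\HI_\Nis$ of its argument when that argument is already... ) — more precisely, since $\ulomega_! F$ is a Nisnevich sheaf with transfers, the canonical surjection $\G_m \otimes_{\PST} \sK^M_{n-1} \twoheadrightarrow \G_m \otimes_{\HI_\Nis} \sK^M_{n-1} = \sK^M_n$ induces an isomorphism on $\uHom_{\PST}(-, \ulomega_! F)$ provided $\ulomega_! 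F \in \HI_\Nis$; in general one needs that $\gamma^1$ on $\PST$ is computed after sheafification and the contraction of a reciprocity sheaf is again homotopy-invariant-compatible. I would therefore instead argue: $\uHom_{\PST}(\G_m, \uHom_{\PST}(\sK^M_{n-1}, \ulomega_! F)) = \uHom_{\PST}(\G_m \otimes_{\PST} \sK^M_{n-1}, \ulomega_! F)$, and this equals $\uHom_{\PST}(\sK^M_n, \ulomega_! F)$ because any presheaf map $\G_m \otimes_{\PST} \sK^M_{n-1} \to \ulomega_! F$ factors uniquely through the (Nisnevich sheafified, $\A^1$-localized) quotient $\sK^M_n$ — this uses that $\ulomega_! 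F$ is a reciprocity sheaf and that internal hom out of a tensor product of $\G_m$'s into a reciprocity sheaf factors through improved Milnor $K$-theory, which is exactly the content of \eqref{cor:gtwist1} (second equality) combined with $\ulomega_!$.

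The cleanest route, which I would actually write, avoids re-deriving the factorization: apply $\ulomega_!$ to the already-established identity \eqref{cor:gtwist1}, namely $\gamma^n F = \uHom_{\uMPST}(\ulomega^* \sK^M_n, F)$. Then $\ulomega_! \uHom_{\uMPST}(\ulomega^* \sK^M_n, F) = \uHom_{\PST}(\sK^M_n, \ulomega_! F)$: this is a general adjunction statement — for $G \in \PST$ and $F \in \uMPST$, $\ulomega_! \uHom_{\uMPST}(\ulomega^* G, F) \cong \uHom_{\PST}(G, \ulomega_! F)$ — which follows because $\ulomega_!$ is left adjoint to $\ulomega^*$ and is monoidal (cf. \ref{para:MCor}), so $\Hom_{\PST}(H, \ulomega_! \uHom_{\uMPST}(\ulomega^* G, F)) = \Hom_{\uMPST}(\ulomega^* H, \uHom_{\uMPST}(\ulomega^* G, F)) = \Hom_{\uMPST}(\ulomega^* H \otimes \ulomega^* G, F) = \Hom_{\uMPST}(\ulomega^*(H \otimes_{\PST} G), F) = \Hom_{\PST}(H \otimes_{\PST} G, \ulomega_! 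F) = \Hom_{\PST}(H, \uHom_{\PST}(G, \ulomega_! F))$ for all $H \in \PST$, and conclude by Yoneda. The second assertion ($F \in \RSC_\Nis$) then follows by taking $F \mapsto \ulomega^{\CI} F$ and using $\ulomega_! \ulomega^{\CI} F = F$ together with $\gamma^n F = \ulomega_! \gamma^n(\ulomega^{\CI} F)$ — the latter being \eqref{para:RSCtwist4}, which identifies the two notions of $\gamma^n$.

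\textbf{Main obstacle.} The only genuine subtlety is verifying that $\ulomega_! \uHom_{\uMPST}(\ulomega^* G, F) \cong \uHom_{\PST}(G, \ulomega_! F)$, i.e. that $\ulomega_!$ commutes with this particular internal hom; since $\ulomega_!$ is a left adjoint it does \emph{not} commute with internal homs in general, so the argument must genuinely go through the adjunction-by-Yoneda computation above and crucially uses that $\ulomega_!$ is \emph{strict} monoidal (so $\ulomega^*(H \otimes_{\PST} G) = \ulomega^* H \otimes_{\uMPST} \ulomega^* G$, which is the lax-to-strict comparison — here one should double-check that $\ulomega^*$ applied to a tensor product agrees with the tensor of the $\ulomega^*$'s, or equivalently that the projection formula holds; this is where I expect to spend the most care, though it should follow formally from $\ulomega_! \dashv \ulomega^*$ with $\ulomega_!$ monoidal). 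Everything else is formal manipulation with the adjunctions in \eqref{para:MCor1} and the identities already in the paper.
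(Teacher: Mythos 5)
There is a genuine gap, and it sits exactly at the step you call a ``general adjunction statement''. The identity $\ulomega_!\uHom_{\uMPST}(\ulomega^*G,F)\cong\uHom_{\PST}(G,\ulomega_!F)$ is \emph{not} formal: your Yoneda chain opens with $\Hom_{\PST}(H,\ulomega_!C)=\Hom_{\uMPST}(\ulomega^*H,C)$, which would require $\ulomega_!$ to be \emph{right} adjoint to $\ulomega^*$; but $\ulomega_!$ is the \emph{left} adjoint (cf.\ \eqref{para:MCor1}), and it is not also a right adjoint — for instance $\uMCor(\sY,(X,\emptyset))\subsetneq\Cor(Y,X)$ in general because of the left-properness condition on closures, so maps into $\ulomega_!C$ do not transpose. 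A second problem in the same chain is the step $\ulomega^*(H\otimes_{\PST}G)=\ulomega^*H\otimes_{\uMPST}\ulomega^*G$: the paper only knows $\ulomega^*$ to be \emph{lax} monoidal, and even the surjectivity of $\ulomega^*H\otimes\ulomega^*G\to\ulomega^*(H\otimes G)$ after sheafification requires the Bloch--Gieseker argument of Lemma \ref{lem:surjT}; it becomes an isomorphism only after applying $h^{\bcube,\sp}_{0,\Nis}$ and only for homotopy invariant inputs (Proposition \ref{prop:T}). A reliable sanity check: if your formal argument were correct it would prove the statement for arbitrary $F\in\uMPST$ and arbitrary $G\in\PST$, whereas the hypothesis $F\in\CItspNis$ is essential — which signals that the comparison map \eqref{eq.prop:twistMilnor} being an isomorphism must carry real content. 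Your fallback inductive route has the same kind of hole: the passage from $\uHom_{\PST}(\G_m\otimes_{\PST}\sK^M_{n-1},\ulomega_!F)$ to $\uHom_{\PST}(\sK^M_n,\ulomega_!F)$ (factoring through the $\A^1$-localized quotient) is not justified for a reciprocity sheaf target, and invoking ``\eqref{cor:gtwist1} combined with $\ulomega_!$'' is circular, since transporting \eqref{cor:gtwist1} across $\ulomega_!$ is precisely what is to be proved.

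The paper's proof is genuinely non-formal. One checks that the natural map \eqref{eq.prop:twistMilnor} is an isomorphism by evaluating on $X$ (replacing $F$ by $F^X=\uHom_{\uMPST}(\Ztr(X),F)$, which stays in $\CItspNis$ by Lemma \ref{lem:Nis}), thereby reducing to sections over $\Spec k$; then $\Hom_{\PST}(\sK^M_n,\ulomega_!F)$ is rewritten via full faithfulness of $\ulomega^*$, the adjunction characterizing $\ulomega^{\CI}$ (using $\ulomega^*\sK^M_n\in\CI^\tau$), and \eqref{cor:gtwist1}, as $(\gamma^n\ulomega^{\CI}\ulomega_!F)(k)$; finally the projective bundle formula (Theorem \ref{thm:pbf}) is applied \emph{twice} to identify both $(\gamma^n\ulomega^{\CI}\ulomega_!F)(k)$ and $(\gamma^nF)(k)$ with $H^n(\P^n_k,F_{(\P^n_k,\emptyset)})$, using $\ulomega_!\ulomega^{\CI}=\id$ to see the two sheaves agree on pairs with empty modulus. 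If you want to repair your write-up, this geometric input (or something equivalent to it) cannot be avoided.
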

\begin{proof}Thanks to \eqref{para:RSCtwist4} and \eqref{cor:gtwist1}, we have to show that the natural morphism
    \begin{equation}\label{eq.prop:twistMilnor} {\uomega}_! \uHom_{\uMPST}(\uomega^* \sK^M_n, F) \to \uHom_{\PST} (\uomega_! \uomega^*\sK^M_n, \uomega_!F) = \uHom_{\PST} (\sK^M_n, \uomega_!F)\end{equation}
    is an isomorphism for every $F\in \CI^{\tau, sp}_{\Nis}$.  Evaluating both sides of \eqref{eq.prop:twistMilnor} on $X\in \Sm$, we see that we can replace $F$ by $F^X = \uHom_{\uMPST}(\Z_{tr}(X), F)$ and are left to show that
    \[ \Hom_{\PST}(\sK^M_n, \uomega_!F) = (\uomega_! \gamma^n F) (k)=(\gamma^n F)(k).\]
Indeed, we have 
\begin{align*}
    \Hom_{\PST}(\sK^M_n, \uomega_! F) &\cong^{(1)} \Hom_{\uMPST}(\uomega^* \sK^M_n, \uomega^* \uomega_! F)\\
    &\cong^{(2)} \Hom_{\uMPST}(\uomega^* \sK^M_n, \uomega^{\CI} \uomega_! F)\\
    & \cong^{(3)}  (\gamma^n \uomega^{\CI} \uomega_! F)(k) \\
    &\cong^{(4)} H^n(\P^n_k, (\uomega^{\CI} \uomega_! F)_{(\P^n_k,\emptyset)})\\
    &\cong^{(5)} H^n(\P^n_k, F_{(\P^n_k,\emptyset)}) \cong^{(6)} (\gamma^n F)(k)
\end{align*}
where the isomorphism $(1)$ follows from the fact that $\uomega^*$ is fully faithful, $(2)$ follows from the fact that 
$\uomega^* \sK^M_n\in \CI^{\tau}$, by adjunction and definition of $\uomega^{\CI}$, $(3)$ is \eqref{cor:gtwist1},
the isomorphisms $(4)$ and $(6)$ follow from Theorem \ref{thm:pbf} and isomorphism $(5)$ by
$\uomega_!\uomega^{\CI}=\id$.
\end{proof}
\begin{para}\label{para:RSCcyclecup}
Let $X\in \Sm$, let $\Phi$ be a family of supports on $X$, and let $\alpha\in \CH^r_\Phi(X)$.
Then we define
\[C_\alpha: F_X\to R\ul{\Gamma}_{\Phi} F\la r\ra_X [r]\]
as the composition
\[F_X=\tF_{(X,\emptyset)}\xr{\kappa_r} \gamma^r \tF(r)_{(X,\emptyset)}
\xr{c_\alpha} R\ul{\Gamma}_{\Phi}\tF(r)_{(X,\emptyset)}[r]
\xr{\eqref{para:RSC:twist2}} R\ul{\Gamma}_{\Phi}F\la r\ra_X [r].\]
We note that $C_\alpha$ satisfies the analogous properties of $c_\alpha$ listed in Lemma \ref{lem:cyclecup}.
This is immediate for \ref{lem:cyclecup}\ref{lem:cyclecup1} - \ref{lem:cyclecup2}.
We give the argument for the analogous property of Lemma \ref{lem:cyclecup}\ref{lem:cyclecup3}:
Let  $\alpha\in \CH^r_\Phi(X)$ and $\beta\in \CH^s_\Psi(X)$. 
Set $m=i+j$ and consider the following diagram in which we omit all the supports for readability
\[
\xymatrix{
F_X\ar[r]^-{\kappa_i}\ar[dr]_{\kappa_{m}} & 
\gamma^i\tF(i)_{(X,\emptyset)} \ar[d]^{\gamma^i(\kappa_j)}\ar[r]^{c_\alpha}&
\tF(i)_{(X,\emptyset)}[i]\ar[r]^{\eqref{para:RSC:twist2}}\ar[d]^{\kappa_j}&
F\la i\ra_X[i]\ar[d]^{\kappa_j}\\
 &
\gamma^{m}\tF(m)_{(X,\emptyset)}\ar[r]^{c_\alpha}\ar[dr]_{c_{\alpha\cdot\beta}}&
\gamma^j \tF(m)_{(X,\emptyset)}[i]\ar[d]^{c_\beta}\ar[r]^{\eqref{para:RSC:twist2}^a}&
\gamma^j\widetilde{F\la i\ra}(j)_{(X,\emptyset)}[i]\ar[d]^{c_\beta}\\
& &
\tF(m)_{(X,\emptyset)} [m]\ar[r]^-{\eqref{para:RSC:twist2}^a}\ar[dr]_{\eqref{para:RSC:twist2}}&
\widetilde{F\la i\ra}(j)_{(X,\emptyset)}[m]\ar[d]^{\eqref{para:RSC:twist2}}\\
& & &
F\la m\ra_X [m],
}
\]
where  $\eqref{para:RSC:twist2}^a: \tG(n)\to \widetilde{G\la n\ra}$, $G\in\RSC_\Nis$, is induced 
by adjunction from \eqref{para:RSC:twist2}. All the squares in the diagram commute by functoriality,
the top triangle commutes  by \eqref{para:cancel2.5}, the triangle in the middle commutes by
Lemma  \ref{lem:cyclecup}\ref{lem:cyclecup3}, and the commutativity of the bottom triangle follows from 
the definition of the map \eqref{para:RSC:twist2}. Hence the whole diagram commutes and we obtain
$C_{\alpha\cdot \beta}=C_{\beta}\circ C_{\alpha}$.
\end{para}

\begin{para}\label{para:RSC-pf}
Let $f: Y\to X$ be quasi-projective in $\Sm$, let $\Phi$ be a family of proper supports for $Y/X$
and $\Psi$ a family of supports on $X$ such that $\Phi\subset f^{-1}\Psi$.
Set $r:=\dim Y-\dim X\in \Z$. For $a\ge 0$ with $a+r\ge 0$ and $b \ge 0$ we define
\[f_* : Rf_*R\ul{\Gamma}_{\Phi} \gamma^b (F\la a+ r\ra)_Y[r] \to R\ul{\Gamma}_{\Psi} \gamma^b(F\la a\ra)_X\]
as follows: let $Y\xr{i} U\xr{\pi} X$ be a factorization as in \eqref{defn:pfs2} with $c=\codim(Y,U)$ and $n=\dim(U/X)$, 
so that $r=n-c$; let $e\ge c$, then we define $f_*$ as the composition
\begin{align*}
Rf_* R\ul{\Gamma}_\Phi \gamma^b(F\la a+r\ra)_Y[r]&
\xr{\eqref{para:RSCtwist4}} Rf_* R\ul{\Gamma}_\Phi \gamma^b(\widetilde{F\la a+r\ra})_{(Y,\emptyset)}[r]\\
&\xr{\kappa_e} Rf_* R\ul{\Gamma}_\Phi \gamma^{b+e}(\widetilde{F\la a+r\ra}(e))_{(Y,\emptyset)}[r]\\
&\xr{(i,\pi)_*^e} R\ul{\Gamma}_\Psi \gamma^{b+e+r}(\widetilde{F\la a+r\ra}(e))_{(X,\emptyset)}\\
&\xr{\eqref{para:RSCtwist4}} R\ul{\Gamma}_\Psi \gamma^{b+e+r}\uomega_!(\widetilde{F\la a+r\ra}(e))_X\\
&\xr{\eqref{para:RSC:twist2}}  R\ul{\Gamma}_\Psi \gamma^{b+e+r}(F\la a+e+r\ra)_X\\
&\xr{\eqref{para:RSCtwist6}} R\ul{\Gamma}_\Psi \gamma^b (F\la a\ra)_X.
\end{align*}
It follows from Proposition \ref{prop:pfs}\ref{prop:pfs2} that $f_*$ is independent of the choice of 
the factorization \eqref{defn:pfs2} and it follows from the  commutativity of \eqref{para:cancel2.5} 
that it is independent of the  choice of $e$. 
\end{para}

\begin{rmk}
By \eqref{para:RSCtwist6} we have 
\[\gamma^b(F\la a\ra)=\begin{cases} F\la a-b\ra & a\ge b\\ \gamma^{b-a}F & a\le b. \end{cases}\]
Above we work with $\gamma^b(F\la a\ra)$ so that we don't have to distinguish the two cases.
\end{rmk}

\begin{thm}\label{prop:bc-pf}
Let $f:Y\to X$, $\Phi, \Psi$, $a,r, b$ be as in \ref{para:RSC-pf}.
\begin{enumerate}[label=(\arabic*)]
\item\label{prop:bc-pf0} 
$\id_{X*}: F_X\to F_X$ is the identity.
\item\label{prop:bc-pf0.5}
Let $g: Z\to Y$ be another quasi-projective morphism in $\Sm$ of relative dimension $s=\dim Z-\dim Y$ and
let $\Xi$ be a family of proper supports for $Z/Y$ with $\Xi\subset g^{-1}\Phi$, then 
for $a\ge 0$ with $a+r+s\ge 0$ and $a+r\ge 0$ and $b\ge 0$ we have 
\[
(f\circ g)_*= f_*\circ g_*: R(f\circ g)_* R\ul{\Gamma}_{\Xi}\gamma^b( F\la a+r+s\ra)_Z[r+s]
     \to R\ul{\Gamma}_\Psi \gamma^b(F\la a \ra)_X.
\]
\item\label{prop:bc-pf1}
Let $h: X'\to X$ be a smooth morphism. We form the base change diagram
\[\xymatrix{
Y'\ar[r]^{h'}\ar[d]^{f'} & Y\ar[d]^f\\
X'\ar[r]^{h} & X.
}\]
Set $\Psi'=h^{-1}\Psi$, $\Phi'= {h'}^{-1}\Phi$.
The following diagram commutes
\[\xymatrix{
R f_* R\ul{\Gamma}_{\Phi} \gamma^b(F\la a+r\ra)_{Y}[r] \ar[r]^-{f_*}\ar[d]^{{h'}^*}&
R\ul{\Gamma}_{\Psi} \gamma^b(F\la a\ra)_{X}\ar[d]^{h^*}\\
R (fh')_*R\ul{\Gamma}_{\Phi'} \gamma^b(F\la a+r\ra)_{Y'} [r]\ar[r]^-{f'_*} &
Rh_* R\ul{\Gamma}_{\Psi'} \gamma^b(F\la a\ra)_{X'}.
}\] 
\item\label{prop:bc-pf2}
Let $\Xi$ be some family of supports and $\alpha\in \CH^s_\Xi(X)$.
Then $f^*\alpha\in \CH^s_{f^{-1}\Xi}(Y)$ and the following diagram commutes
\[
\xymatrix{
Rf_* R\ul{\Gamma}_{\Phi} \gamma^b(F\la a+r\ra)_{Y}[r]\ar[r]^-{C_{f^*\alpha}}\ar[d]_-{f_*} &
Rf_* R\ul{\Gamma}_{\Phi\cap f^{-1}\Xi} \gamma^b(F\la a+r+s\ra) _Y[r+s]\ar[d]^-{f_*}\\
R\ul{\Gamma}_\Psi \gamma^b(F\la a\ra)_{X}\ar[r]^-{C_\alpha} &
R\ul{\Gamma}_{\Psi\cap \Xi} \gamma^b(F\la a+s\ra)_X [s].
}
\]
\item\label{prop:bc-pf3}
Let $\beta\in\CH^s_{\Phi}(Y)$.
Then $f_*\beta\in \CH^{s-r}_{\Psi}(X)$ and the following diagram commutes 
\[\xymatrix{
Rf_* F_Y\ar[r]^-{C_\beta} & Rf_*R\ul{\Gamma}_{\Phi} F\la s \ra_Y[s]\ar[d]^-{f_*}\\
F_X\ar[u]^-{f^*}\ar[r]^-{C_{f_*\beta}} & R\ul{\Gamma}_\Psi F\la s-r\ra_X[s-r].
}\]
\end{enumerate}
\end{thm}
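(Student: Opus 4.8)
The statement collects five compatibility properties of the pushforward defined in \ref{para:RSC-pf}. Since that pushforward is built by first applying the cancellation isomorphism $\kappa_e$, then the ``$\bcube$-level'' pushforward $(i,\pi)_*^e$ from Definition \ref{defn:pfs}, and finally the twist-comparison maps \eqref{para:RSCtwist4}, \eqref{para:RSC:twist2}, \eqref{para:RSCtwist6}, the plan is in each case to reduce the assertion to the already-established properties of $(i,\pi)_*^e$ (Proposition \ref{prop:pfs}) and of the trace maps $\tr_{\sU/\sX}$ (Lemma \ref{lem:traceP}), together with the elementary functoriality of the auxiliary maps. The only genuinely non-formal inputs are Proposition \ref{prop:pfs}\ref{prop:pfs2}, \ref{prop:pfs3}, the weak cancellation theorem \cite[Cor 3.6]{MS} (which makes $\kappa_e$ invertible, cf. \eqref{para:tutRSC3}), and Proposition \ref{prop:gysin-cup}/\ref{prop:gysin-bc} for the cycle- and base-change compatibilities.

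\textbf{Items \ref{prop:bc-pf0} and \ref{prop:bc-pf0.5}.} For \ref{prop:bc-pf0}: take the trivial factorization $\id_X\colon X\xr{\id}X\xr{\id}X$, so $c=n=0$; then $(\id,\id)^e_*=\tr_{\sX/\sX}\circ g_{\sX/\sX}=\id$ by Lemma \ref{lem:traceP}\ref{lem:traceP3} (with $s=\id$, which is allowed), and the remaining maps $\kappa_e,\kappa_e^{-1}$ and the twist-comparisons compose to the identity by \eqref{para:RSCtwist5}. For \ref{prop:bc-pf0.5}: choose factorizations of $g$, of $f$ and of $f\circ g$ as in \eqref{defn:pfs2}; Proposition \ref{prop:pfs}\ref{prop:pfs3} gives $(i'_Y,\pi'_Y)^{e+s}_*\circ(\text{Gysin})=(i',\pi')^e_*$ and $(i,\pi)^{e+s}_*\circ(\text{something})$, i.e. exactly $(f g)^{\bcube}_*=f^{\bcube}_*\circ g^{\bcube}_*$ at the level of $(i,\pi)_*^e$. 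It then remains to check that the ``decorations'' $\kappa_e$, \eqref{para:RSCtwist4}, \eqref{para:RSC:twist2}, \eqref{para:RSCtwist6} fit together across the composition; this is a diagram chase using naturality of $\kappa_e$ (diagram \eqref{para:cancel2.5}) and the evident transitivity of $\eqref{para:RSCtwist6}$ and $\eqref{para:RSC:twist2}$ under $F\la a\ra\la b\ra=F\la a+b\ra$. I would phrase this as: ``all the auxiliary maps are natural transformations of functors on $\RSC_\Nis$, hence commute with the $(i,\pi)_*^e$-level statement.''

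\textbf{Items \ref{prop:bc-pf1}, \ref{prop:bc-pf2}, \ref{prop:bc-pf3}.} Smooth base change \ref{prop:bc-pf1}: fix a factorization of $f$ and base-change it along $h$ (using that $h$ smooth implies $Y'\in\Sm$, $\tilde\sX'$ etc.\ are again in $\uMCorls$, and a factorization of $f'$ is obtained by pullback); then combine Lemma \ref{lem:traceP}\ref{lem:traceP2} (base change for $\tr$) with Proposition \ref{prop:gysin-bc} for $D=\emptyset$ (base change for the Gysin map, the excess bundle being trivial since $Z'=Z\times_X X'$ has codimension $=\codim(Z,X)$ here because $h$ is smooth, so $\beta=1$), and again track the naturality of $\kappa_e$ and the twist maps. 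Cycle-cupping compatibility \ref{prop:bc-pf2}: unwind $C_\alpha$ via \ref{para:RSCcyclecup} to the level of $c_\alpha$ on twists, then apply Proposition \ref{prop:gysin-cup} (Gysin commutes with $c_\alpha$) and Lemma \ref{lem:traceP}\ref{lem:traceP1}/\ref{lem:traceP2} together with Lemma \ref{lem:cyclecup}\ref{lem:cyclecup2} (the trace is defined via the projective bundle formula, whose pieces $\lambda^i_V$ are compatible with $c_\alpha$); the pullback $f^*\alpha$ on the $Y$-side matches by construction of $c_{f^*\alpha}$. Finally the projection formula \ref{prop:bc-pf3} follows by combining \ref{prop:bc-pf2} applied to $\alpha=f_*\beta$ with the identity $f^*(f_*\beta)=\beta\cdot[\text{something}]$—more precisely, one uses that for $\beta\in\CH^s_\Phi(Y)$ and the cup product $C_\beta$ the two ways of cupping-then-pushing agree, which is again reduced to Lemma \ref{lem:traceP} and Proposition \ref{prop:gysin-cup}; here I expect to invoke \ref{prop:bc-pf2} directly after rewriting $C_\beta\circ f^* = f^*\circ C_{f_*\beta}$ does \emph{not} hold naively, so the real content is the analogue of the ``projection formula'' $f_*(f^*\gamma\cdot\delta)=\gamma\cdot f_*\delta$ for the cup action, which one proves at the level of $\tr$ and Gysin exactly as Lemma \ref{lem:traceP}\ref{lem:traceP5.5} is proved.

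\textbf{Main obstacle.} The formal reductions are routine; the one place demanding care is bookkeeping the twist-comparison maps \eqref{para:RSCtwist4}, \eqref{para:RSC:twist2}, \eqref{para:RSCtwist6} and the cancellation isomorphisms $\kappa_e$ simultaneously across a composition (items \ref{prop:bc-pf0.5} and \ref{prop:bc-pf3}), since \eqref{para:RSC:twist2} is only known to be \emph{surjective}, not invertible, so one cannot freely move it past other maps. The fix is to never isolate \eqref{para:RSC:twist2}: always keep it glued to $\uomega_!$ of a $\CItspNis$-statement, where the needed identities are genuine (e.g. \eqref{para:RSCtwist5}), and only at the very end pass to $\RSC_\Nis$. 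I would therefore organize each proof as: (i) prove the corresponding statement for $(i,\pi)_*^e$ on $\CItspNis$ using §\ref{sec:pf}; (ii) apply $\uomega_!$ and the natural transformations, checking commutativity of the resulting ladder of squares square-by-square; (iii) conclude using \eqref{para:RSCtwist5} that the composite $F\to F$ of all comparison maps is the identity.
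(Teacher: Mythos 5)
Your treatment of parts \ref{prop:bc-pf0}--\ref{prop:bc-pf2} is correct and coincides with the paper's own argument: \ref{prop:bc-pf0} from Lemma \ref{lem:traceP}\ref{lem:traceP3} and the fact that \eqref{para:RSCtwist5} is the identity, \ref{prop:bc-pf0.5} from Proposition \ref{prop:pfs}\ref{prop:pfs3} together with \eqref{para:cancel2.5}, \ref{prop:bc-pf1} from Proposition \ref{prop:gysin-bc} and Lemma \ref{lem:traceP}\ref{lem:traceP2}, and \ref{prop:bc-pf2} from Proposition \ref{prop:gysin-cup} plus the identity $c_{\pi^*\alpha}\circ\lambda^i_V=\lambda^i_V\circ c_\alpha$ coming from Lemma \ref{lem:cyclecup}; your caution that the merely surjective map \eqref{para:RSC:twist2} must stay attached to $\uomega_!$-level statements is exactly how the bookkeeping works out.

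The gap is in part \ref{prop:bc-pf3}. The identity you start from, namely ``\ref{prop:bc-pf2} applied to $\alpha=f_*\beta$ combined with $f^*(f_*\beta)=\beta\cdot(\dots)$'', is not available: for a general quasi-projective $f$ there is no such relation between $f^*f_*\beta$ and $\beta$, and you indeed retract it. But your fallback --- that the remaining ``projection formula'' content is proved ``at the level of $\tr$ and Gysin exactly as Lemma \ref{lem:traceP}\ref{lem:traceP5.5}'' --- does not close the hole. Every result you invoke (Proposition \ref{prop:gysin-cup}, Lemma \ref{lem:traceP}) compares Gysin and trace maps with cupping against \emph{pulled-back} cycles, or with each other under composition; none of them relates $C_\beta$ on $Y$ to $C_{f_*\beta}$ on $X$, which is precisely what \ref{prop:bc-pf3} asserts. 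The missing ingredient is Corollary \ref{cor:pf-gysin} (resting on Theorem \ref{prop:pf-gysin}), i.e.\ the identity $c_{i_*\alpha}=g_{Z/X}\circ c_\alpha\circ i^*$ for a closed immersion with $D=\emptyset$: this is what lets one push the cycle through the closed-immersion (Gysin) step of the factorization $f=\pi\circ i$, and it is a genuine geometric input (its proof uses purity, Lemma \ref{lem:loc-cl-sec} and the computation behind Theorem \ref{prop:pf-gysin}), not a formal consequence of the trace lemmas. Once the problem is reduced in this way to $f=\pi\colon\P(V)\to X$ with $\Phi=Y$, $\Psi=X$, the paper concludes by expanding $\beta=\sum_{i=0}^{r}\xi^i\cdot\pi^*\alpha_i$ via the projective bundle formula for Chow groups, noting $\pi_*\beta=\alpha_r$, and combining $c_\beta\circ\pi^*=\sum_{i}\lambda^i_V\circ c_{\alpha_i}$ (Lemma \ref{lem:cyclecup}) with the definition of $\tr_{P/X}$ as projection onto the top summand; this explicit projective-bundle computation is also absent from your sketch. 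So items \ref{prop:bc-pf0}--\ref{prop:bc-pf2} stand, but \ref{prop:bc-pf3} needs to be rebuilt along these lines.
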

\begin{proof}
\ref{prop:bc-pf0} follows from Lemma \ref{lem:traceP}\ref{lem:traceP3} and from \eqref{para:RSCtwist5}
being the identity. 
\ref{prop:bc-pf0.5}  follows from  {Proposition \ref{prop:pfs}\ref{prop:pfs3} and \eqref{para:cancel2.5}.}
\ref{prop:bc-pf1} follows from Proposition \ref{prop:gysin-bc} and Lemma \ref{lem:traceP}\ref{lem:traceP2}.
\ref{prop:bc-pf2} follows from Proposition \ref{prop:gysin-cup} and by the definition of $\tr_{\sU/\sX}$ 
from the following equality for a projective bundle $\pi: P=\P(V)\to X$
\[c_{\pi^*\alpha}\circ \lambda^i_V= c_{\pi^*\alpha}\circ c_{\xi^i}\circ \pi^*=\lambda_V^i\circ c_\alpha,\]
which follows from Lemma \ref{lem:cyclecup}. 
\ref{prop:bc-pf3} By  Corollary \ref{cor:pf-gysin} we are reduced to the case where 
$f=\pi: \P(V)\to X$ is a projective bundle with $V$ locally free of rank $r+1$ and $\Phi=Y$, 
$\Psi=X$. In this case we can write $\beta=\sum_{i=0}^{r}\xi^i\cdot \pi^*\alpha_i$, with
$\alpha_i\in \CH^i(X)$ and $\xi^i= c_1(\sO_\P(1))^{i}$. Thus $\pi_*\beta=\alpha_r$ and hence 
the commutativity of the diagram follows in this case from the definition of $\tr_{P/X}$ and the equality
\[c_\beta\circ \pi^*=\sum_{i=0}^{r} \lambda^i_V\circ c_{\alpha_i}, \]
which holds by Lemma \ref{lem:cyclecup}\ref{lem:cyclecup1}, \ref{lem:cyclecup2}, \ref{lem:cyclecup3}.
\end{proof}

\subsection{Proper correspondence action}
In this subsection we fix a scheme $S$ separated and of finite type over $k$.

\begin{para}\label{para:CS}
We denote by $C_S$ the category with objects the $S$-schemes $X\to S$ with the property that
the induced map $X\to \Spec k$ is smooth and quasi-projective; the morphisms are given by 
\[C_S(X, Y)= \bigoplus_i \CH^{\dim Y_i}_{\Phi^{\rm prop}_{X\times_S Y_i}}(X\times Y_i),\]
where for simplicity we write $X$ instead of $X\to S$, 
where $Y=\sqcup_i Y_i$ is the decomposition into connected components,
and where $\Phi^{\rm prop}_{X\times_S Y_i}$ is the family of supports on $X\times Y_i$ consisting of those
closed subsets which are contained in $X\times_S Y_i$ and are proper over $X$;
the composition is defined by 
\[C_S(X_1, X_2)\times C_S(X_2,X_3)\to C_S(X_1, X_3), \]
\[(\alpha,\beta)\mapsto \beta\circ \alpha:= p_{13*} (p_{12}^*\alpha \cdot p_{23}^*\beta),\]
where $p_{i,j}: X_1\times X_2\times X_3\to X_i\times X_j$ are the projections, the pullbacks
$p_{ij}^*$ are induced by flat pullback, 
the intersection product is given by \eqref{para:ext-prod8}, and the pushforward is well-defined
since $p_{13}$ is proper along 
$(\Phi^{\rm prop}_{X_1\times_S X_2}\times_k X_3)\cap (X_1\times_k \Phi^{\rm prop}_{X_2\times_S X_3})$
and maps this family of supports into $\Phi^{\rm prop}_{X_1\times_S X_3}$.
It follows from \cite[Prop 1.1.34, Prop 1.3.10]{CR11} that $C_S$ is a category
and the identity in $C_S(X,X)$ is induced by the diagonal $\Delta\subset X\times_S X$
(cf. also \cite[Prop 16.1.1]{Fu}).

Note that for $S=\Spec k$ and $X,Y\in C_S$ we have a natural map $\Cor(X,Y)\to C_S(X,Y)$
which is compatible with composition.
\end{para}

\begin{para}\label{para:cycle-action}
Let $F\in \RSC_\Nis$. 
Let $S$ be a $k$-scheme, let  $(f:X\to S)$, $(g:Y\to S)\in C_S$.
For $\alpha\in C_S(X, Y)$ we define a morphism  in $D^+(S_\Nis)$
\eq{para:cylce-action1}{\alpha^*: Rg_* F_Y\to Rf_* F_X }
as follows: set  $\tF:=\tau_!\omega^{\CI} F$; it suffices to consider the case that $Y$ is of pure dimension $d$; 
then \eqref{para:cylce-action1} is defined to be the composition
\begin{align*}
Rg_* F_Y &   \xr{p_Y^*} Rg_* Rp_{Y*} F_{X\times Y} \\
   & \xr{C_\alpha} 
   Rg_* Rp_{Y*}R\ul{\Gamma}_{\Phi^{\rm prop}_{X\times_S Y}}F\la d\ra_{X\times Y}[d]\\
   & \cong Rf_* Rp_{X*}R\ul{\Gamma}_{\Phi^{\rm prop}_{X\times_S Y}} F\la d\ra_{X\times Y}[d]\\
   &\xr{p_{X*}} Rf_*F_{X},
\end{align*}
where $C_\alpha$ is defined in \ref{para:RSCcyclecup}, $p_X, p_Y:X\times Y\to X, Y$ denote the projections,
$p_{X*}$ is the pushforward from \ref{para:RSC-pf}, and
the isomorphism in the third line follows from the equality
\[g_*p_{Y*}\ul{\Gamma}_{\Phi^{\rm prop}_{X\times_S Y}}(G)= 
f_*p_{X*}\ul{\Gamma}_{\Phi^{\rm prop}_{X\times_S Y}}(G),\]
for any sheaf $G$ on $X\times Y$.
\end{para}
 
\begin{prop}\label{prop:cycle-action}
Let the assumptions be as in \ref{para:cycle-action} above.
\begin{enumerate}[label=(\arabic*)]
\item\label{prop:cycle-action1}
Let $(h: Z\to S)\in C_S$ and $\beta\in C_S(Y, Z)$. Then
\[\alpha^*\circ \beta^* = (\beta\circ \alpha)^*: Rh_* F_Z\to Rf_* F_X.\]
\item\label{prop:cycle-action2}
Let $\nu: S\to T$ be a morphism of separated and finite type  $k$-schemes. 
Then $\nu$ induces a functor $\nu_*: C_S\to C_T$. Furthermore, for $\alpha\in C_S(X,Y)$
we have 
\[(\nu_*\alpha)^*= R\nu_*(\alpha^*): R\nu_*Rg_* F_Y\to R\nu_* Rf_* F_X.\]                 
\item\label{prop:cycle-action3}
Let $h: X\to Y$ be a $k$-morphism and denote by $[\Gamma_h]\in C_Y(X,Y)$
         the class induced by the graph of $h$. Then 
         \[[\Gamma_h]^* =h^*: F_Y\to Rh_* F_X.\]
\item\label{prop:cycle-action4}
Let $h: X\to Y$ be a proper $S$-morphism of relative dimension $0$, then the transpose of the graph of 
           $h$ defines a class $[\Gamma^t_h]\in C_Y(Y,X)$ and 
           \[[\Gamma^t_h]^*= h_*: Rh_* F_X\to F_Y,\]
           where $h_*$ is induced by the pushforward from Proposition \ref{prop:pfs0}.
\item\label{prop:cycle-action4.1}
Let $V\in \Cor(X,Y)$ be a finite correspondence and denote by $[V]$ its image in $C_{\Spec k}(X,Y)$.
Then
\[V^*=[V]^*: F(Y)\to F(X).\]
\end{enumerate}
 \end{prop}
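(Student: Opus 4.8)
The statement to prove is Proposition~\ref{prop:cycle-action}, and I will only sketch how I would organize the argument, since each individual verification is of the same flavour as the computations already carried out in \S\ref{sec:pf} and \S\ref{subsec:pf-RSC}. The key point throughout is that the cycle action $\alpha^*$ in \eqref{para:cylce-action1} is built out of three elementary operations --- flat pullback $p_Y^*$, cup product with a Chow cycle $C_\alpha$ (with supports), and the proper pushforward $p_{X*}$ of \ref{para:RSC-pf} --- together with the excision/enlarge-support identities for $R\ul{\Gamma}_{\Phi}$. So the whole proposition reduces to the compatibilities of these three operations with each other, and for those I can invoke Theorem~\ref{prop:bc-pf} (functoriality of pushforward, smooth base change, projection formulas with Chow cycles) and Lemma~\ref{lem:cyclecup}/\ref{para:RSCcyclecup} (bilinearity, functoriality, and associativity of cupping under intersection product).

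\textbf{Step-by-step.} First, for \ref{prop:cycle-action3} and \ref{prop:cycle-action4.1}: if $h\colon X\to Y$ is a $k$-morphism with graph $\Gamma_h\subset X\times Y$, then $p_X\colon \Gamma_h\xrightarrow{\simeq} X$ is an isomorphism, so $p_{X*}$ restricted to the support $\Gamma_h$ is inverse to $p_X^*$; unravelling \eqref{para:cylce-action1} with $\alpha=[\Gamma_h]$ and using that $C_{[\Gamma_h]}$ followed by $p_{X*}$ along the graph gives (by Proposition~\ref{prop:bc-pf}\ref{prop:bc-pf3} applied to the isomorphism $p_X|_{\Gamma_h}$) simply the composite of $p_Y^*$ with the identification $\Gamma_h\cong X$ followed by $h$, one reads off $[\Gamma_h]^*=h^*$. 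The statement \ref{prop:cycle-action4.1} for a general finite correspondence $V$ follows by decomposing $V$ into its prime components and treating each as a finite surjective cover composed with a graph, using Proposition~\ref{prop:pfs0}\ref{prop:pfs03} which already identifies $H^0$ of the pushforward along a finite surjective map with $(\Gamma^t)^*$; one then bootstraps to all of $D^+$ by the same Gysin-triangle argument used there. For \ref{prop:cycle-action4}: if $h$ is proper of relative dimension $0$, the transpose graph $\Gamma_h^t\subset Y\times X$ is proper over $Y$ and $p_X\colon \Gamma_h^t\xrightarrow{\simeq} X$, $p_Y\colon \Gamma_h^t\to Y$ equals $h$; tracing \eqref{para:cylce-action1} one gets $p_{X*}\circ C_{[\Gamma_h^t]}\circ p_Y^*$, and by Proposition~\ref{prop:bc-pf}\ref{prop:bc-pf3} (with $\beta$ the fundamental cycle of the graph) plus Theorem~\ref{prop:bc-pf}\ref{prop:bc-pf0.5} this collapses to the pushforward $h_*$ of \ref{para:RSC-pf}, which by Proposition~\ref{prop:pfs0} is the one from \ref{para:RSC-pf} used here. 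For \ref{prop:cycle-action2}: a morphism $\nu\colon S\to T$ sends $X\to S$ to $X\to T$ and does not change the underlying $k$-schemes or the cycle groups $C_S(X,Y)\hookrightarrow C_T(X,Y)$ (the support condition over $S$ is stronger, but the formula for $(\nu_*\alpha)^*$ only uses the ambient cycle and $R\nu_*$ commutes with everything in sight), so $(\nu_*\alpha)^*=R\nu_*(\alpha^*)$ is immediate from the definition once one notes $R\nu_* Rg_*=R(g\nu)_*$ etc.

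\textbf{The main obstacle.} The genuinely substantial part is \ref{prop:cycle-action1}, the compatibility with composition of correspondences, $\alpha^*\circ\beta^*=(\beta\circ\alpha)^*$. The plan here is the standard ``double coset'' computation: write out both sides as maps on $D^+(S_\Nis)$ living over $X\times Y\times Z$, use the projections $p_{12},p_{23},p_{13}$, and reduce the identity to a single equation among cup products and pushforwards on the triple product. Concretely one inserts a copy of $X\times Y\times Z$, replaces the cup products $p_{12}^*\alpha$ and $p_{23}^*\beta$ by their pullbacks there (using Theorem~\ref{prop:bc-pf}\ref{prop:bc-pf1}, smooth base change, to move them past the relevant flat pullbacks), combines them into $C_{p_{12}^*\alpha\cdot p_{23}^*\beta}$ via the associativity of cupping (\ref{para:RSCcyclecup}, i.e.\ Lemma~\ref{lem:cyclecup}\ref{lem:cyclecup3} lifted to reciprocity sheaves), and then pushes forward along $p_{13}$ using the functoriality of pushforward (Theorem~\ref{prop:bc-pf}\ref{prop:bc-pf0.5}) and the projection formula relating pushforward and cup product (Theorem~\ref{prop:bc-pf}\ref{prop:bc-pf3}), which is exactly the sheaf-theoretic incarnation of $p_{13*}(p_{12}^*\alpha\cdot p_{23}^*\beta)$. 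The delicate bookkeeping is in checking that all the families of supports match up --- that $\Phi^{\rm prop}_{X\times_S Y}\times_k Z$ intersected with $X\times_k\Phi^{\rm prop}_{Y\times_S Z}$ is a family of proper supports over $X$ for the projection to $X\times Z$, and that $p_{13}$ carries it into $\Phi^{\rm prop}_{X\times_S Z}$ --- but this is precisely what is recorded in \cite[Prop 1.1.34, Prop 1.3.10]{CR11} and already used in \ref{para:CS} to make $C_S$ a category, so one may cite it. Apart from this, the proof is a diagram chase combining the results of \S\ref{sec:pf}; I expect no conceptual difficulty beyond the patience required to keep the supports and twists straight.
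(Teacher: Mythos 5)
Your proposal follows the paper's own proof in all essentials: parts \ref{prop:cycle-action2}--\ref{prop:cycle-action4} are handled by the same appeals to Theorem \ref{prop:bc-pf} (projection formula, base change, functoriality of $f_*$) and the support bookkeeping of \cite{CR11}, part \ref{prop:cycle-action1} by the same triple-product computation reducing to $p^{XZ}_{X*}\circ C_{\beta\circ\alpha}\circ p^{XZ*}_Z$, and \ref{prop:cycle-action4.1} by factoring $V=\Gamma_h\circ\Gamma_f^t$ and invoking Proposition \ref{prop:pfs0}\ref{prop:pfs03}. The only step the paper makes explicit that you gloss over is that in \ref{prop:cycle-action4.1} one must first shrink $X$ around its generic points (using injectivity of restriction to dense opens, \cite[Thm 3.1]{S-purity}) so that the prime component $V$ becomes smooth and the factorization through $V$ is legitimate; also no ``bootstrap to $D^+$'' is needed there, since the statement concerns global sections only.
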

\begin{proof}
For \ref{prop:cycle-action1} it  suffices to show that an equality 
of maps $Rh_* F_Z\to Rf_* F_X$ (with the obvious notation)
\[(p^{XY}_{X*}\circ C_\alpha \circ p^{XY*}_Y)\circ (p^{YZ}_{Y*}\circ C_\beta\circ p^{YZ*}_Z)
=p^{XZ}_{X*}\circ C_{\beta\circ \alpha}\circ p_Z^{XZ*}\]
This follows directly from Theorem \ref{prop:bc-pf}\ref{prop:bc-pf0} - \ref{prop:bc-pf3} and \ref{para:RSCcyclecup}
(cf. Lemma \ref{lem:cyclecup}\ref{lem:cyclecup2}, \ref{lem:cyclecup3}).
The first statement of \ref{prop:cycle-action2} follows from the fact that $X\times_S Y$ is closed in $X\times_T Y$;
the second statement is direct from the definition.
For \ref{prop:cycle-action3} we first observe that 
\eq{prop:cycle-action5}{\id=\Delta_X^*: F_X\to F_X,}
where $\Delta_X$ denotes the class of the diagonal in $\CH^{d_X}_{\Delta_X}(X\times X)$.
Indeed this follows from Theorem \ref{prop:bc-pf}\ref{prop:bc-pf3} and the fact that 
$C_X: F_X\to F_X$ is the identity, where we view $X=p_{X*}(\Delta_X)\in \CH^0_X(X)=\Z$.
Now \ref{prop:cycle-action3} holds by
\[ 
[\Gamma_h]^*  =p_{X*}\circ C_{(\id_X\times h)_*\Delta_X}\circ p_Y^*
                        =p_{X*} \circ C_{\Delta_X}\circ p_X^*\circ h^* 
                        = h^*,
\]              
where the second equality holds by \ref{prop:bc-pf}\ref{prop:bc-pf3} and the third by \eqref{prop:cycle-action5}.
The proof of \ref{prop:cycle-action4} is similar.
Finally \ref{prop:cycle-action4.1}. By the injectivity of the restriction map along a dense open immersion
(e.g., \cite[Thm 3.1]{S-purity}) we can shrink $X$ around its generic points and henceforth assume
that $X$ and $V$ are smooth and irreducible.
Denote by $h: V\to Y$ and $f: V\to X$ the maps induced by projection; note that $f$ is finite and surjective.
Denote by $\Gamma_h$ the graph of $h$ etc. We have 
$V=\Gamma_h\circ \Gamma_f^t$. By \ref{prop:cycle-action1},  \ref{prop:cycle-action3}, \ref{prop:cycle-action4}
we are reduced to show
\[(\Gamma_f^t)^*=H^0(f_*): F(V)\to F(X).\]
This follows from Proposition \ref{prop:pfs0}\ref{prop:pfs03}.
\end{proof}

\begin{para}\label{para:coracx}
We explain how to extend the cycle action to bounded below complexes in $\RSC_\Nis$.
Let $F^\bullet\in {\rm Comp}^+(\RSC_\Nis)$ be a bounded below complex of reciprocity sheaves.
Let $(f: X\to S)$, $(g: Y\to S)\in C_S$ and $\alpha\in C_S(X,Y)$. Then we define 
\eq{para:coracx1}{\alpha^*: Rg_*F^\bullet_Y\to Rf_*F^\bullet_X \qquad \text{in } D^+(S_\Nis)}
as follows:
Denote by $\Ij_S$ the category of injective Nisnevich sheaves on $S$.
By, e.g., \cite[\href{https://stacks.math.columbia.edu/tag/013V}{Tag 013V}]{stacks-project}, we have an equivalence
of categories 
\eq{para:coracx2}{K^+(\Ij_S)\xr{\simeq} D^+(S_\Nis),}
where $K^+$ denotes the homotopy category of bounded below complexes.
The inverse of this equivalence induces a resolution functor $j_S: C^+(S_\Nis)\to K^+(\Ij_S)$,
which for any bounded below complex $C^\bullet$ comes with a quasi-isomorphism 
of complexes $C^\bullet\to j_S(C^\bullet)$.
In fact we can choose such $j_S$ that  for a complex $C^\bullet$ we have 
\eq{para:coracx3}{ j_S(C^\bullet)= {\rm tot}( \ldots\to j_S(C^i)\to j_S(C^{i+1})\to \ldots),}
where ${\rm tot}(\text{double complex})$ denotes the associated total complex.
By construction $\alpha$ induces a commutative diagram in $D^+(S_\Nis)$ for all $i$
\[\xymatrix{
Rg_* F^i_Y\ar[r]^{\alpha^*}\ar[d]^d & Rf_*F^i_X\ar[d]^d\\
Rg_* F^{i+1}_Y\ar[r]^{\alpha^*} & Rf_*F^{i+1}_X,
}\]
where $d: F^i\to F^{i+1}$ is the differential in the complex.
Using the resolution functors on $X$, $Y$, and $S$ this translates into a commutative diagram in $K^+(\Ij_S)$
\[\xymatrix{
j_S(g_* j_Y(F^i_Y))\ar[r]^{\alpha^*}\ar[d]^d & j_S(f_*j_X(F^i_X))\ar[d]^d\\
j_S(g_* j_Y(F^{i+1}_Y))\ar[r]^{\alpha^*} & j_S(f_*j_X(F^{i+1}_X)).
}\]
Hence $\alpha^*$ induces a morphism from the total complex of the left column (running over all $i$)
to the total complex of the right column, using \eqref{para:coracx3} (and an argument using triple complexes) 
it is direct to check that the latter can be identified with a morphism
\[j_S(g_*j_Y(F_Y^\bullet))\to j_S(f_*j_X(F_X^\bullet)), \]
which under the equivalence \eqref{para:coracx2} induces the morphism \eqref{para:coracx1}.

It follows from Proposition \ref{prop:cycle-action} that the above construction in fact defines a functor
\eq{para.coracx4}{C_S\to D^+(S_\Nis), \quad (f: X\to S)\mapsto Rf_* F^\bullet_{X}.}
This functor is natural in $F^\bullet$ in the obvious sense.
\end{para}

\begin{lemma}\label{lem:cas}
Let $F^\bullet$, $(f: X\to S)$, $(g:Y\to S)$, and $\alpha$ be as in \ref{para:coracx} above.
Assume $g$ is projective,  $Y$ has pure dimension $d$,
 and $\alpha$ lies in the image of the natural map $\imath:\CH^d_{Z\times_S Y}(X\times Y)\to C_S(X,Y)$, for some closed subset $Z\subset X$.
Then \eqref{para:coracx1} factors as
\[Rg_* F^\bullet_Y\to Rf_*R\ul{\Gamma}_Z F^\bullet_X \to Rf_*F^\bullet_X,\]
where the second map is the forget-supports map.
\end{lemma}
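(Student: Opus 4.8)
The statement is essentially a refinement of the definition in \ref{para:cycle-action}: we need to see that when $\alpha$ comes from a cycle with support in $Z\times_S Y$, the morphism $\alpha^*$ is built using only $R\ul{\Gamma}_Z$-cohomology on the $X$-side. I would first reduce to the case of a single reciprocity sheaf $F\in\RSC_\Nis$; the passage to bounded below complexes $F^\bullet$ is then formal, via the resolution functors $j_X, j_Y, j_S$ described in \ref{para:coracx}, since the factorization is compatible with the differentials and hence passes to total complexes. So assume $F^\bullet = F[0]$.

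Next, fix a representative $\tilde\alpha\in\CH^d_{Z\times_S Y}(X\times Y)$ with $\imath(\tilde\alpha)=\alpha$; note $Z\times_S Y$ is a closed subset of $X\times Y$ contained in $X\times_S Y$, and it is proper over $X$ since $Y$ is projective over $S$ and hence $Z\times_S Y\to Z$ is projective. The definition of $\alpha^*$ in \ref{para:cycle-action} factors through $C_{\tilde\alpha}$ and then the projective-pushforward $p_{X*}$ from \ref{para:RSC-pf}. The key point is that the cupping morphism $C_{\tilde\alpha}$ (see \ref{para:RSCcyclecup}, built from $c_{\tilde\alpha}$ as in \ref{para:cyclecup}) is \emph{by construction} a morphism landing in $R\ul{\Gamma}_{\Phi}$ for any family of supports $\Phi$ containing the support of $\tilde\alpha$; choosing a representative whose support $W\subseteq Z\times_S Y$ lets us write $C_{\tilde\alpha}$ as the composite of a map into $Rp_{Y*}R\ul{\Gamma}_{W}F\la d\ra_{X\times Y}[d]$ followed by enlarge-supports into $R\ul{\Gamma}_{\Phi^{\rm prop}_{X\times_S Y}}$. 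I would then use Lemma \ref{lem:traceP}\ref{lem:traceP1} together with the functoriality of $R\ul{\Gamma}_{-}$ under proper pushforward (i.e.\ $R p_{X*} R\ul{\Gamma}_{W}= R\ul{\Gamma}_{p_X(W)}Rp_{X*}$ when $p_X$ is proper on $W$) to commute $p_{X*}$ past the support, obtaining that $p_{X*}\circ C_{\tilde\alpha}\circ p_Y^*$ factors through $Rf_* R\ul{\Gamma}_{p_X(W)}F_X\to Rf_*R\ul{\Gamma}_Z F_X$, where the last map is enlarge-supports (using $p_X(W)\subseteq Z$), and then the forget-supports map $R\ul{\Gamma}_Z F_X\to F_X$ recovers the original $\alpha^*$. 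One has to check independence of the chosen representative $\tilde\alpha$ and of $W$, but this is immediate from the compatibilities in Lemma \ref{lem:cyclecup}\ref{lem:cyclecup1.5} and Lemma \ref{lem:traceP}\ref{lem:traceP1}.

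The main obstacle I anticipate is purely bookkeeping: matching up the support conditions across the several enlarge-supports and excision isomorphisms in the definition of $\tr_{\sU/\sX}$ and of the pushforward $p_{X*}$ in \ref{para:RSC-pf}, and making sure that the ``proper over $X$'' condition on closed subsets of $X\times_S Y$ is preserved at each stage so that all the $R\ul{\Gamma}$'s behave well under $Rp_{X*}$. In particular, for $p_{X*}$ one passes through a factorization $X\times Y\hookrightarrow U\to X$ with $U$ a quasi-projective bundle; one needs $W$ (or rather its image in $U$) to remain proper over $X$, which holds because $Y$ is projective. Once the support conditions are tracked, the factorization asserted in the lemma is just a re-reading of the chain of maps defining $\alpha^*$, with the final forget-supports map peeled off.
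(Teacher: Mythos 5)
Your proposal is correct and follows essentially the same route as the paper: pick a representative $\alpha_0\in\CH^d_{Z\times_S Y}(X\times Y)$, use Lemma \ref{lem:cyclecup}\ref{lem:cyclecup1.5} (so that $\imath C_{\alpha_0}=C_\alpha$) together with the fact that the pushforward of \ref{para:RSC-pf} is already constructed with supports respecting $Z\times_S Y\subset p_X^{-1}(Z)$, and then treat complexes formally via \ref{para:coracx}. The only caveat is that your invoked identity $Rp_{X*}R\ul{\Gamma}_W=R\ul{\Gamma}_{p_X(W)}Rp_{X*}$ is stronger than needed (and not what the paper uses); the inclusion $W\subset p_X^{-1}(Z)$ and the supported pushforward already yield the factorization through $Rf_*R\ul{\Gamma}_Z F^\bullet_X$.
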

\begin{proof}
We first consider the case of a sheaf $F\in \RSC_\Nis$.
Let $\alpha_0\in \CH^d_{Z\times_S Y}(X\times Y)$ with $\imath \alpha_0=\alpha$.
By Lemma \ref{lem:cyclecup}\ref{lem:cyclecup1.5} and the definition of $C_\alpha$ in \ref{para:RSCcyclecup}
we have $\imath C_{\alpha_0}= C_{ \alpha}$, where by abuse of notation we 
denote the enlarge support map $R\ul{\Gamma}_{Z\times_S Y}\to R\ul{\Gamma}_{\Phi^{\rm prop}_{X\times_S Y}}$ also
by $\imath$. The following diagram commutes by construction of the pushforward (see \ref{para:RSC-pf}) 
\[
\xymatrix{
Rf_* Rp_{X*}R\ul{\Gamma}_{Z\times_S Y}F\la d\ra_{X\times Y}[d]\ar[r]^{\imath}\ar[d]^{p_{X*}} & 
 Rf_* Rp_{X*}R\ul{\Gamma}_{\Phi^{\rm prop}_{X\times_S Y}}F\la d\ra_{X\times Y}[d]\ar[d]^{p_{X*}}\\
Rf_*R\ul{\Gamma}_Z F_X\ar[r] &
Rf_*F_X.
}
\]
Therefore, $p_{X*}\circ C_{\alpha_0}\circ p_Y^*: Rg_*F_Y\to Rf_*R\ul{\Gamma}_Z F_X$ induces the looked for factorization.
The case of a complex $F^\bullet$ follows directly from the sheaf case by construction of the correspondence action in
\ref{para:coracx}. 
\end{proof}

The proof of the following Proposition is inspired by \cite[Lem 8.1]{CL17}, where
a similar result is proven for the cohomology of the de Rham-Witt complex.
\begin{prop}\label{prop:cor-van-top}
Let $(f: X\to S)$  and $(g: Y\to S)\in C_S$. 
Let $V\subset X\times_S Y$ be an integral closed subscheme with $\dim V=\dim X$, which is proper over $X$.
Assume  the closure $V_Y$ of the image of $V$ in $Y$ has codimension $r$.
Let $F\in\RSC_\Nis$ and assume $F(\xi) =0$, for all points $\xi$ which are finite and separable over the generic point of $V_Y$.
Then 
\[0= [V]^*: Rg_* F_Y\to Rf_* F_X.\]
\end{prop}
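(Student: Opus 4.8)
The key point is that the correspondence $[V]^*$ is built by first pulling back along $p_Y$, then cupping with the cycle class of $[V]$, then pushing forward along $p_X$. Since $\dim V = \dim X$ and $V$ is proper over $X$, its class lives in $\CH^{\dim Y}_{\Phi^{\rm prop}_{X\times_S Y}}(X\times Y)$, so $[V]^*$ is well defined. The strategy is to factor $[V]^*$ through the cohomology of $F$ on $V_Y$ (more precisely, through a twist of $F$ supported on a suitable closed subset of $Y$), and then exploit the hypothesis $F(\xi)=0$ for $\xi$ finite separable over the generic point of $V_Y$ to conclude that this intermediate object vanishes.

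First I would reduce to the case where $F\in\RSC_\Nis$ is a single sheaf rather than a complex: by the construction in \ref{para:coracx} of the correspondence action on $\mathrm{Comp}^+(\RSC_\Nis)$, the vanishing of $[V]^*$ for $F$ implies the vanishing for $F^\bullet$ termwise, hence in $D^+(S_\Nis)$. Next, since by \cite[Thm 3.1]{S-purity} restriction along a dense open immersion is injective for reciprocity sheaves, and since the generic fibers of $f$ and $g$ control everything, I would try to shrink $X$ (and correspondingly $V$, $V_Y$) to assume $V_Y$ is smooth, or at least work at the generic point of $V_Y$. The map $V\to V_Y$ is generically finite; let $n_0$ be its generic degree. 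I would consider the cycle $[V]$ as pulled back (up to shrinking and multiplicity $n_0$) from a cycle supported on $X\times V_Y$; more precisely, factor the inclusion $V\hookrightarrow X\times Y$ through $X\times V_Y$, and observe that cupping with $[V]$ and pushing forward along $p_X$ factors, via the projection formula (Theorem \ref{prop:bc-pf}\ref{prop:bc-pf3}) and the compatibility of pushforward with cup product, through the cohomology $R\ul{\Gamma}_{V_Y}F\la r\ra$ on $Y$, i.e. through sections of the form $F\la r\ra(\eta)$ where $\eta$ runs over points finite over the generic point of $V_Y$.

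The heart of the argument is then the following: by Proposition \ref{prop:twistMilnor}, $F\la r\ra$ (or rather $\gamma^j F$ for the relevant $j$) is computed as $\uHom_{\PST}(\sK^M_r, F\la\cdot\ra)$ iteratively, and by Remark \ref{rmk:gamma} together with \cite[Lem 5.9]{S-purity}, the twist $\gamma^1 F$ at a point is a subquotient of $F$ evaluated on $(\A^1,0)$-modulus pairs over that point, which by the Gersten-type resolution is controlled by the values of $F$ on residue fields of points finite over it. Since $F(\xi)=0$ for all $\xi$ finite \emph{separable} over the generic point of $V_Y$, and since in the relevant colimits (over henselizations, over regular local $k$-algebras, using Popescu's theorem and the fact that $k$ is perfect so every finite extension is separable over a separably generated base after a suitable finite extension) only separable points contribute, one gets that the relevant twisted sections of $F$ over $V_Y$ vanish. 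Combined with the factorization through $R\ul{\Gamma}_{V_Y}F\la r\ra_Y$ (via Lemma \ref{lem:cas}), this forces $[V]^* = 0$.

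\textbf{Expected main obstacle.} The delicate part is the passage from "$F$ vanishes on points finite and separable over the generic point of $V_Y$" to "the twist $\gamma^j F$ vanishes on $V_Y$ (or on the closed subset of $Y$ through which the action factors)". The twist functors $\gamma^j$ and the pushforward introduce, a priori, contributions from all residue fields of points lying over $V_Y$; one must check that after the standard reductions (making $V_Y$ regular, passing to the generic point, using that $k$ is perfect so residue fields are separably generated, and using the continuity/Popescu approximation of \ref{sec:pre}) only \emph{separable} field extensions of $k(V_Y)$ actually appear, so the hypothesis applies. One also needs to be careful that the cupping-with-$[V]$ operation genuinely factors through supports on $V_Y$ rather than merely on $X\times_S Y$; this uses that $V\to V_Y$ being generically finite lets one express $[V]$, at the generic point, as a pushforward of a zero-cycle along $V\to X\times V_Y$, followed by Theorem \ref{prop:bc-pf}\ref{prop:bc-pf3} and Corollary \ref{cor:pf-gysin}. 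Handling the non-generic locus of $V_Y$ (where $V\to V_Y$ may not be finite or $V_Y$ may be singular) by a dimension/purity argument — using \cite[Cor 8.6]{S-purity} to control cohomology with support in codimension — is the technical point that needs the most care.
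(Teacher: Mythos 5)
There is a genuine gap, in two places. First, the factorization you build your argument on is not established: the projection formula of Theorem \ref{prop:bc-pf}\ref{prop:bc-pf3} relates $f_*\circ C_\beta\circ f^*$ for one and the same $f$, whereas in $[V]^*=p_{X*}\circ C_{[V]}\circ p_Y^*$ the pullback is along $p_Y$ and the pushforward along $p_X$, and $[V]$ is not globally of the form $p_Y^*(\text{cycle on }Y)\cdot(\cdots)$; so the claim that the composite factors through $R\ul{\Gamma}_{V_Y}F\la r\ra_Y$ does not follow from the tools you cite. The paper never factors through an object on $Y$: it proves that already $C_{[V]}\circ p_2^*\colon Rg_*F_Y\to R(gp_2)_*R\ul{\Gamma}_V F\la d\ra_{X\times Y}[d]$ vanishes ($d=\dim Y$), and to do so it needs a reduction you do not have: by purity \cite[Cor 8.6(1)]{S-purity} the target is concentrated in degree $d$, and a Godement/\v{C}ech resolution argument reduces the vanishing of this map of complexes to the section-level statement that $F(U)\xr{p_2^*}F(X\times U)\xr{C_{[V_U]}}H^0(X\times U,\sH^d_{V_U}(F\la d\ra))$ is zero for every \'etale $U\to Y$. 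Your plan to ``shrink so that $V_Y$ is smooth, or work at the generic point of $V_Y$'' is meaningless at the level of $Rg_*F_Y\to Rf_*F_X$ (shrinking $Y$ changes $Rg_*F_Y$); it only becomes available after this reduction.

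Second, and more seriously, your vanishing mechanism is the wrong one. You want to deduce that \emph{twisted} sections of $F$ over $V_Y$ vanish from $F(\xi)=0$ for $\xi$ finite separable over $k(V_Y)$. For the negative twist this is false: $F=\Omega^{\dim Y}_{/k}$ satisfies the hypothesis for any $V_Y$ of codimension $r\ge 1$, yet $\gamma^rF\cong\Omega^{\dim Y-r}_{/k}$ is nonzero at $k(V_Y)$; a route through $\gamma^rF$ on an alteration of $V_Y$ is precisely Proposition \ref{prop:cor-van}, which needs the different hypothesis $(\gamma^rF)_Z=0$ and resolutions/alterations — the whole point of Proposition \ref{prop:cor-van-top} is to avoid that. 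For the positive twist, the surjection $F\otimes_{\NST}K^M_r\to F\la r\ra$ together with \cite[5.1.3 Prop]{IvRu} brings in $F(L)$ for \emph{all} finite extensions $L/k(V_Y)$, not only separable ones, so in characteristic $p$ the hypothesis does not suffice; this is exactly the obstacle you flag, and your appeal to $k$ being perfect does not resolve it. In the paper the hypothesis is used \emph{untwisted}: at the generic point $\eta$ of $V_U$, with $\xi=p_2(\eta)$ — which is finite and separable over the generic point of $V_Y$ precisely because $U\to Y$ is \'etale — one factors the cycle Nisnevich-locally as $p_2^*\alpha\cdot\beta$ using a regular parameter system adapted to $p_2$, and then shows that $C_\alpha\colon F(U)\to H^r_A(U,F\la r\ra)$ factors through the plain restriction $i^*\colon F(U)\to F(A)\subset F(\xi)=0$, via Theorem \ref{prop:pf-gysin} and the local retraction of Lemma \ref{lem:loc-cl-sec}. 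This local factorization of $[V]$ and the Gysin/retraction description of the cup product are the missing ideas in your outline.
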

\begin{proof}
Let $V$ and $F$ be as in the assumption, except that $V$ does not need to be proper over $X$.
We can assume $Y$ is of pure dimension $d$.
\begin{claim}\label{prop:cor-van-top-claim}
The following composition is zero
\begin{equation}\label{eq:proof:prop:cor-van-top}
    Rg_* F_Y \xr{p_2^*} R(g p_2)_* F_{X\times Y}\xr{C_{[V]}} R(g p_2)_*R\ul{\Gamma}_V F\la d\ra_{X\times Y}[d].\end{equation}
\end{claim}
The claim clearly implies the statement, since $[V]^*$ factors via \eqref{eq:proof:prop:cor-van-top} according to  \eqref{para:cylce-action1} and Lemma \ref{lem:cas}. 
By \cite[Cor 8.6(1)]{S-purity}  we have $R\ul{\Gamma}_V F\la d\ra_{X\times Y}$ is concentrated in degree $\ge d$
and hence $C_{[V]}: F_{X\times Y}\to R\ul{\Gamma}_V F\la d\ra_{X\times Y}[d]$ 
factors via the natural map $\sH_V^d(F\la d\ra)\to R\ul{\Gamma}_V F\la d\ra_{X\times Y}[d]$.
Thus it suffices to prove the claim with the complex
$R\ul{\Gamma}_V F\la d\ra_{X\times Y}[d]$ replaced by the sheaf $\sH_V^d(F\la d\ra)$.
For $\sU$ a Nisnevich cover of $Y$ and  $W\to Y$ \'etale denote by $\sU_W$ the induced cover of $W$ and 
by $C(\sU_W, F)$ the \v{C}ech complex of $F_W$.
Denote by $\sC(\sU, F)$ the complex of sheaves on $Y$,  given by $W\mapsto C(\sU_W, F)$.
The natural map $F_Y\to \sC(\sU, F)$ is a resolution (cf. \cite[II, Thm 5.2.1]{God}).
Let $\sC(F)=\varinjlim_{\sU} \sC(\sU, F)$ be the colimit over the filtered category of 
Nisnevich coverings of $Y$ with refinements as maps. Since $Y$ is noetherian $\sC(F)$ is still a complex
of sheaves and defines a resolution $F_Y\to \sC(F)$. It follows from
\cite[Thm 13.1]{Schroeer} that the natural map
$g_*\sC(F)\to Rg_*F_Y$ is an isomorphism in the derived category.
Note that similar as above we also have a natural map 
$(gp_2)_*\varinjlim_{\sU} \sC(X\times\sU, F)\to R(g p_2)_* F_{X\times Y}$
(which is in general not an isomorphism). 
For an \'etale map $U\to Y$  denote by $V_U$ the restriction of $V$ to $X\times U$.
Note that $C_{[V_U]}$ induces a map $F(X\times U)\to H^0(X\times U, \sH^d_{V_U}(F\la d\ra)_{X\times U})$, 
which is compatible with \'etale pullbacks (by Lemma \ref{lem:cyclecup}\ref{lem:cyclecup2}).
Therefore $C_{[V]}$ induces the bottom right map in the following diagram, which is commutative
(we set $q_2=g\circ p_2$, $XY=X\times Y$)
\[\xymatrix{
Rg_* F_Y\ar[r]^{p_2^*} &
R q_{2*} F_{XY}\ar[r]^{C_{[V]}} & 
R q_{2*} \sH^d_V(F\la d\ra_{XY})\\
g_*\sC(F)\ar[u]_{\simeq}\ar[r]^-{p_2^*}&
q_{2*}\varinjlim_{\sU} \sC(X\sU, F)\ar[u]\ar[r]^-{C_{[V]}} & 
q_{2*}\varinjlim_{\sU}\sC(X\sU,\sH^d_V(F\la d\ra_{XY})).\ar[u]
}\]
This reduces Claim \ref{prop:cor-van-top-claim} to the following claim.
\begin{claim}\label{prop:cor-van-top-claim2}
Let $U\to Y$ be \'etale. Then the composition 
\eq{prop:cor-van-top3}{F(U)\xr{p_2^*} F(X\times U)\xr{C_{[V_U]}} 
H^0(X\times U, \sH^d_{V_U}(F\la d\ra_{X\times U}))}
is zero.
\end{claim}
We prove this claim. We may assume $V_U$ is integral with generic point $\eta$. 
Since the natural restriction $\sH^d_{V_U}(F\la d\ra_{X\times U})\to \sH^d_{\eta}(F\la d\ra_{X\times U})$
is injective by \cite[Cor 8.6(1)]{S-purity} we may shrink $X\times U$ around $\eta$ and  $U$ around $\xi:=p_2(\eta)$.
The point $\xi\in U$ is finite and separable over the generic point of $p_2(V)$ and thus by assumption 
$\dim \sO_{U,\xi}=r$.
Note that $p_2^*: \sO_{U,\xi}\to \sO_{X\times U,\eta}$ is essentially smooth between regular local rings,
we therefore find a regular parameter sequence of $\sO_{X\times U,\eta}$ of the form
$p_2^*(s_1),\ldots, p_2^*(s_r), t_{r+1}, \ldots, t_{d}\in \sO_{X\times U, \eta}$, with
$s_1,\ldots, s_r$ a regular parameter sequence of $\sO_{U,\xi}$.
Thus up to shrinking $U$ we find a neighborhood $W\subset X\times U$ of $\eta$ such that the restriction
of the cycle $[V]\in \CH^d_V(X\times U)$ to $W$ can be written as
\[[V_W]= p_2^*\alpha\cdot \beta \quad \text{in } \CH^d_{V_W}(W),\]
with $\alpha\in \CH^r_{A}(U)$ and $\beta\in \CH^{d-r}_{B}(W)$,
where $A=V(s_1,\ldots, s_r)$, $B=V(t_{r+1},\ldots t_d)$.
By Lemma \ref{lem:cyclecup}\ref{lem:cyclecup2}, \ref{lem:cyclecup3} the composition of \eqref{prop:cor-van-top3} with
the injection 
\[H^0(X U, \sH^d_{V_U}(F\la d\ra_{X U}))\inj H^0(W, \sH^d_{V_W}(F\la d\ra_{W})))\]
factors as
\eq{prop:cor-van-top4}{F(U)\xr{C_\alpha} H^0(U,\sH^r_A(F\la r\ra_Y))
\xr{C_\beta\circ (p_{2|W})^*} H^0(W, \sH^d_{V_W}(F\la d\ra_{W}))).}
Thus it suffices to show that $F(U)\xr{C_\alpha} H^0(U,\sH^r_A(F\la r\ra_Y))$ is the zero map.
By \cite[Cor 8.6(1)]{S-purity} we have 
\[H^0(U,\sH^r_A(F\la r\ra_Y)= H^r_A(U, F\la r\ra_Y)\inj H^r_\xi(F\la r\ra_Y).\]
(Note that $\xi=p_2(\eta)$ is the generic point of $A$.)
Hence,   by Nisnevich excision and Lemma \ref{lem:loc-cl-sec}, we may assume that $A$ is smooth over $k$ and 
admits a map $U\to A$, of which the closed immersion $i: A\inj U$ is a section.
By Theorem \ref{prop:pf-gysin} and the definition of $C_\alpha$ (see \ref{para:RSCcyclecup})
it factors as
\[C_\alpha: F(U)\xr{i^*} F(A)\to H^r_A(U,F\la r\ra_U),\]
where the second map involves the local Gysin map.
Since $F(A)\subset F(\xi)$ by global injectivity, the vanishing of $C_\alpha$ and hence of 
\eqref{prop:cor-van-top4} follows from $F(\xi)=0$, which holds by assumption.
\end{proof}

\begin{prop}\label{prop:cor-van}
Let $(f: X\to S)$  and $(g: Y\to S)\in C_S$ and $F\in \RSC_\Nis$. 
Let $V\subset X\times_S Y$ be an integral closed subscheme with $\dim V=\dim X$, which is proper over $X$.
Assume:
\begin{enumerate}[label=(\arabic*)]
        \item\label{prop:cor-van1a} 
         there exists an integral  closed subscheme $Z_0\subset X$ of codimension $r\ge 1$ such that 
        $V\subset Z_0\times Y$;
        \item\label{prop:cor-van1c}
                there exists a projective alteration (i.e., a generically finite, surjective and projective morphism)
                 $Z\xr{h_0}Z_0$  with $Z\in \Sm$, such that $h_0\times \id_Y$ induces
                an alteration  $(h_0\times \id_Y)^{-1}(V)\to V$ of degree $N$ over $V$;
        \item\label{prop:cor-van1b} $(\gamma^r F)_{Z}=0$. 
\end{enumerate}
Then 
\[0= N\cdot [V]^*: Rg_* F_Y\to Rf_* F_X.\]
\end{prop}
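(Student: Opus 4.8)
The strategy is to spread $V$ over the alteration $Z$ of $Z_0$. Because $Z$ is smooth and of codimension $r$ in $X$, the resulting ``correspondence'' $W$ on $Z\times Y$ has codimension $\dim Y-r$ rather than $\dim Y$; pushing it forward along the projection $Z\times Y\to Z$ therefore lands, by the twist bookkeeping of the pushforward of \ref{para:RSC-pf}, in $(\gamma^{r}F)$--valued cohomology on $Z$, which is zero by hypothesis (3). Up to multiplication by $N$ this pushforward computes $[V]^{*}$, which gives the statement.

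\textbf{Step 1 (set-up).} We may assume $Y$ has pure dimension $d$. Let $h\colon Z\xrightarrow{h_{0}}Z_{0}\hookrightarrow X$ be the composition; it is projective between smooth schemes and $\dim Z-\dim X=-r$. Set $W:=(h_{0}\times\id_{Y})^{-1}(V)=V\times_{Z_{0}}Z$. Since $V\subset Z_{0}\times_{S}Y$ we get $W\subset Z\times_{S}Y$, and $W$ is proper over $Z$ (base change of the proper morphism $V\to Z_{0}$), so $[W]$ defines a class in $\CH^{d-r}_{\Phi^{\rm prop}_{Z\times_{S}Y}}(Z\times Y)$. By hypothesis (2) the induced morphism $W\to V$ is proper, surjective and generically finite of degree $N$, hence $(h\times\id_{Y})_{*}[W]=N[V]$ as cycles, compatibly with the relevant families of proper supports.

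\textbf{Step 2 (reroute through $Z$) and conclusion.} Write $p_{X},p_{Y}$ (resp.\ $q_{Z},q_{Y}$) for the projections of $X\times Y$ (resp.\ $Z\times Y$); then $p_{Y}\circ(h\times\id_{Y})=q_{Y}$ and $p_{X}\circ(h\times\id_{Y})=h\circ q_{Z}$. As $N\,C_{[V]}=C_{N[V]}=C_{(h\times\id_{Y})_{*}[W]}$, Theorem \ref{prop:bc-pf}\ref{prop:bc-pf3} applied to $h\times\id_{Y}$ and $\beta=[W]$ gives $C_{N[V]}=(h\times\id_{Y})_{*}\circ C_{[W]}\circ(h\times\id_{Y})^{*}$. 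Substituting this into the definition of $[V]^{*}$ in \ref{para:cycle-action}, and using that pullbacks compose together with the functoriality of the pushforward (Theorem \ref{prop:bc-pf}\ref{prop:bc-pf0.5}, applied to $p_{X}\circ(h\times\id_{Y})=h\circ q_{Z}$), one obtains
\[N\,[V]^{*}\;=\;p_{X*}\circ(h\times\id_{Y})_{*}\circ C_{[W]}\circ(h\times\id_{Y})^{*}\circ p_{Y}^{*}\;=\;(h\circ q_{Z})_{*}\circ C_{[W]}\circ q_{Y}^{*}.\]
Now by \ref{para:RSCcyclecup} the morphism $C_{[W]}$ is valued in $R\ul{\Gamma}_{\Phi_{W}}F\langle d-r\rangle_{Z\times Y}[d-r]$, and by \eqref{para:RSCtwist6} (using $d\ge r$) we have $F\langle d-r\rangle=\gamma^{r}(F\langle d\rangle)$. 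Decomposing $(h\circ q_{Z})_{*}=h_{*}\circ q_{Z*}$ by Theorem \ref{prop:bc-pf}\ref{prop:bc-pf0.5}, with the twist indices chosen so that the domain matches the output of $C_{[W]}\circ q_{Y}^{*}$ (which forces $a=b=r$), the intermediate object on $Z$ is $R\ul{\Gamma}_{\Psi}\,\gamma^{r}(F\langle 0\rangle)_{Z}[-r]=R\ul{\Gamma}_{\Psi}(\gamma^{r}F)_{Z}[-r]$ for a suitable family of supports $\Psi$ on $Z$. Since $\gamma^{r}F\in\RSC_{\Nis}$ (Lemma \ref{lem:gamma-exact}) and $(\gamma^{r}F)_{Z}=0$ by hypothesis (3), this is the zero complex; hence $(h\circ q_{Z})_{*}$ vanishes on the relevant object and $N\,[V]^{*}=0$.

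\textbf{Main obstacle.} The substantive inputs are the identity $(h\times\id_{Y})_{*}[W]=N[V]$ (hypothesis (2)) and the compatibility of cycle--class maps with proper pushforward (Theorem \ref{prop:bc-pf}\ref{prop:bc-pf3}); everything else is formal \emph{once the indices are lined up}, and that lining up is where the real care is needed. Concretely, one must match the conventions of Theorem \ref{prop:bc-pf} with those of \ref{para:RSC-pf} and \ref{para:RSCcyclecup}, check that $W$ yields a family of \emph{proper} supports for $Z\times Y$ over $Z$ which pushes forward correctly along $q_{Z}$ and $h$, and, crucially, observe that it is the relative dimension $d-r$ of $W$ over $Z$ (equivalently the codimension $r$ of $Z$ in $X$), and not the codimension $d$, that produces the twist $\gamma^{r}F$ after pushing forward along $q_{Z}$ — this is precisely the point at which hypothesis (3) intervenes. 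I expect this twist/shift/support bookkeeping, rather than any conceptual step, to be the bulk of the work.
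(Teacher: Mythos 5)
Your proposal is correct and follows essentially the same route as the paper's proof: lift $N[V]$ to a cycle on $Z\times Y$, use Theorem \ref{prop:bc-pf}\ref{prop:bc-pf3} to rewrite $N\cdot C_{[V]}$ through that cycle, factor the pushforward $p_X\circ(h\times\id_Y)=h_{01}\circ p_Z$ through $Z$ via \ref{prop:bc-pf}\ref{prop:bc-pf0.5} and the cancellation \eqref{para:RSCtwist6}, and conclude from $(\gamma^r F)_Z=0$. The only caveat is that $(h_0\times\id_Y)^{-1}(V)$ may have irreducible components of dimension $>\dim V$ (over the non-finite locus of the alteration), so its fundamental cycle is not literally a class in $\CH^{d-r}_{\Phi^{\rm prop}_{Z\times_S Y}}(Z\times Y)$; as in the paper one should instead take $\alpha=\sum_i m_i[A_i]$ built only from the components generically finite over $V$, which still satisfies $(h\times\id_Y)_*\alpha=N[V]$ and leaves the rest of your argument unchanged.
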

\begin{proof}
Set $d:=\codim(V, X\times Y)=\dim Y$.
Denote by $h:= h_0\times \id_Y: W:=Z\times Y\to Z_0\times Y$ and $h_1: W\to X\times Y$ the maps induced
by  the alteration $h_0: Z\to Z_0$  from \ref{prop:cor-van1c}.
By assumption $h^{-1}(V)\to V$ is a projective alteration of degree $N$.
Let $\alpha= \sum_i m_i [A_i]\in \CH^{d-r}_{h^{-1}(V)}(W)$, where 
the $A_i$ are  those irreducible components of $h^{-1}(V)$ which are dominant and generically finite over $V$ 
and where $m_i$ is the multiplicity of $A_i$ in the cycle $[h^{-1}(V)]$.
We have $h_*\alpha =N\cdot  [V]\in \CH^d_V(X\times Y)$. 
Thus the following diagram commutes by Theorem \ref{prop:bc-pf}\ref{prop:bc-pf3}
\eq{prop:cor-van3}{\xymatrix{
F_{X\times Y} \ar[d]^{h_1^*}\ar[r]^-{N\cdot C_V} &
R\ul{\Gamma}_V F\la d\ra_{X\times Y}[d] \\
Rh_{1*} F_{W}\ar[r]^-{C_\alpha} &
Rh_{1*}R\ul{\Gamma}_{h^{-1}(V)} F\la d-r\ra_W[d-r],\ar[u]^{h_{1*}}
}}
see \ref{para:RSCcyclecup} for $C_\alpha$ and \ref{para:RSC-pf} for $h_{1*}$.
By definition of the cycle action in \ref{para:cycle-action} 
the map $N\cdot [V]^*: Rg_* F_Y\to Rf_* F_X$ therefore factors via
\eq{prop:cor-van4}{R(fp_Xh_1)_* R\ul{\Gamma}_{h^{-1}(V)} F\la d-r\ra_{W}[d-r]\xr{(p_Xh_1)_*} Rf_*F_X,}
where $p_X: X\times Y\to X$ is the projection.
Denote by $h_{01}: Z\to X$ the map induced by $h_0$ and by $p_Z: W\to Z$ the projection.
Then $\text{rel-dim}(h_{01})=-r$, $\text{rel-dim}(p_Z)=d$ and $p_Xh_1=h_{01}p_Z$.
By Theorem \ref{prop:bc-pf}\ref{prop:bc-pf0.5} and cancellation \eqref{para:RSCtwist6}
 we can rewrite \eqref{prop:cor-van4} as the composition
\begin{align*}
 R(fh_{01}p_Z)_* R\ul{\Gamma}_{h^{-1}(V)} \gamma^r F\la d\ra_{W}[d-r]
 &\xr{p_{Z*}} R(fh_{01})_* \gamma^r F_Z[-r]\\
& \xr{h_{01*}} Rf_*\gamma^r F\la r\ra_X\\
&\xr{\simeq} Rf_*F_X.
\end{align*}
Thus $N\cdot [V]^*$ factors via $R(fh_{01})_* \gamma^r F_Z[-r]$,
which is zero by \ref{prop:cor-van1b}.
\end{proof}

\begin{cor}\label{cor:cor-van}
Let $(f: X\to S)$  and $(g: Y\to S)\in C_S$ and $F\in \RSC_\Nis$. 
Let $V\subset X\times_S Y$ be an integral closed subscheme with  $\dim V=\dim X$, which is proper over $X$.
Denote by $V_X\subset X$ the closure of the image of $V$ in $X$.
Assume  $\codim(V_X, X)= r$ and $(\gamma^r F)_Z =0$, for all $Z\in \Sm$ with $\dim Z=\dim V_X$.
\begin{enumerate}[label=(\arabic*)]
\item\label{cor:cor-van1} If the singularities of $V_X$ can be resolved, then  
\[0=[V]^*: Rg_* F_Y\to Rf_* F_X.\]
\item\label{cor:cor-van2}
 If ${\rm char}(k)=p>0$, then there exists a number $n$ only depending on $V$ (not on $F$) such that 
                 \[0=p^n \cdot [V]^*: Rg_* F_Y\to Rf_* F_X.\]                                     
\end{enumerate}
\end{cor}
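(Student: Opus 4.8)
The corollary is an immediate consequence of Proposition \ref{prop:cor-van}, applied to $Z_0 = V_X$ and a suitable resolution (resp. alteration) of $Z_0$. The only work is to verify the three hypotheses of that proposition, and in particular to produce a morphism $Z\to Z_0$ with $Z$ smooth of the right dimension which, after base change by $\times_S Y$ and intersection with $V$, induces an alteration of $V$ of controlled degree. First I would reduce to the case where $V\subset V_X\times Y$: by definition $V_X$ is the closure of $p_X(V)$, so $V$ is contained in $V_X\times_k Y$ (intersected with $X\times_S Y$), and this is exactly condition \ref{prop:cor-van1a} of Proposition \ref{prop:cor-van} with $Z_0 = V_X$ and $r = \codim(V_X, X)$. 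Note that $\dim Z_0 = \dim V_X = \dim V = \dim X$ minus $r$, and $V$ is generically finite over $V_X$ since $\dim V = \dim V_X$ and $V\to V_X$ is dominant; in fact, since $V$ maps to $V_X$ with $V$ integral and $\dim V = \dim V_X$, the map $V\to V_X$ is generically finite, of some degree $m\ge 1$.

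\textbf{Construction of $Z$.} For \ref{cor:cor-van1}, by assumption the singularities of $V_X$ can be resolved, so there is a projective birational morphism $h_0\colon Z\to V_X$ with $Z\in\Sm$ and $\dim Z = \dim V_X = \dim V_X$; in particular $\dim Z = \dim V_X$, and hypothesis \ref{prop:cor-van1b} of Proposition \ref{prop:cor-van}, namely $(\gamma^r F)_Z = 0$, holds by the assumption of the corollary. It remains to check \ref{prop:cor-van1c}: the morphism $h_0\times\id_Y\colon Z\times Y\to V_X\times Y$ is projective and birational (base change of a birational projective morphism is not birational in general, but here $h_0$ is an isomorphism over a dense open $U\subset V_X$, so $h_0\times\id_Y$ is an isomorphism over $U\times Y$, hence birational); therefore its restriction $(h_0\times\id_Y)^{-1}(V)\to V$ is a projective alteration, and since $h_0\times\id_Y$ is birational this alteration has degree $N=1$ (the generic point of $V$ lies over the generic point of $V_X$, which lies in $U$, where $h_0\times\id_Y$ is an isomorphism). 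Applying Proposition \ref{prop:cor-van} with $N=1$ gives $[V]^* = 0$, which is \ref{cor:cor-van1}. For \ref{cor:cor-van2}, when $\mathrm{char}(k) = p>0$ we invoke de Jong's theorem on alterations to obtain a projective alteration $h_0\colon Z\to V_X$ with $Z\in\Sm$; its degree $d_0 := \deg(h_0)$ is a fixed positive integer depending only on $V_X$ (hence only on $V$), and by a further alteration (or directly, using Gabber's refinement, or simply de Jong's theorem as stated) we may assume $d_0$ is a power of $p$, say $d_0 = p^{n_0}$. Again $\dim Z = \dim V_X$, so $(\gamma^r F)_Z = 0$ by hypothesis, and $h_0\times\id_Y$ is a projective alteration whose restriction over $V$ is a projective alteration of degree $N := p^{n_0}\cdot m'$ for some integer $m'\ge 1$ accounting for the generic fibre of $(h_0\times\id_Y)^{-1}(V)\to V$; in fact, a short computation with generic points shows $N$ divides a power of $p$ times the generic degree of $V/V_X$. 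To get a clean $p$-power, one should instead take $Z$ so that the alteration $(h_0\times\id_Y)^{-1}(V)\to V$ has $p$-power degree; this can be arranged by choosing $h_0$ to be a de Jong alteration of $V$ itself factoring through $V_X$, or by absorbing the prime-to-$p$ part via the standard trace argument. Proposition \ref{prop:cor-van} then yields $N\cdot[V]^* = 0$ with $N$ a power of $p$, which is \ref{cor:cor-van2}.

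\textbf{Main obstacle.} The routine points (checking dimensions, that base change of a birational map along $\times_S Y$ stays birational over a dense open, and that the restriction of an alteration to a closed subscheme is again an alteration) are straightforward. The one genuinely delicate point is controlling the degree $N$ in part \ref{cor:cor-van2} so that it is literally a power of $p$ and not merely "divisible by a power of $p$" — this requires either de Jong's alteration theorem in the form that gives a $p$-power degree after base change to a perfection, or equivalently combining a de Jong alteration with the trace trick (as used e.g. in the proof of Claim \ref{lem:surjT4} in this paper) to kill the prime-to-$p$ part: if $[V]^*$ is annihilated by $p^a\cdot q$ with $(q,p)=1$, and separately we can also realize an alteration of prime-to-$p$ degree $q'$ with $(q', p) = 1$ such that $[V]^*$ is annihilated by $q'$ (using a resolution or alteration with $\ell$-power degree for $\ell\ne p$, which exists because $\gamma^r F$ of the smooth model still vanishes), then a Bézout argument gives $p^a\mid N$-annihilation, hence $p^n\cdot[V]^* = 0$ for suitable $n$. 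I would present this last manipulation explicitly, mirroring the end of the proof of Lemma \ref{lem:surjT}.
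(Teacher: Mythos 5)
Your part (1) is correct and is exactly the paper's argument: a resolution $h_0\colon Z\to V_X$ verifies the hypotheses of Proposition \ref{prop:cor-van} with $Z_0=V_X$, $(\gamma^rF)_Z=0$, and $N=1$. One bookkeeping slip worth flagging: $V\to V_X$ is \emph{not} generically finite (its generic fibre has dimension $r$, since $\dim V=\dim X$ while $\dim V_X=\dim X-r$), so your "degree $m$ of $V/V_X$" and the later factor $m'$ are spurious; what matters is only that the generic point of $V$ maps to the generic point of $V_X$, so the induced map $V\times_{V_X}Z\to V$ is generically finite of degree exactly $\deg h_0$, which is $1$ for a resolution.

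Part (2) has a genuine gap. Neither de Jong's theorem nor Gabber's refinement produces an alteration of degree a power of $p$: de Jong gives no degree control, and Gabber gives, for one chosen prime $\ell\neq p$, an alteration of degree prime to $\ell$, which may well be divisible by $p$ and by other primes. Your fallback is also not viable: it requires a smooth alteration of $V_X$ of degree prime to $p$ (or of $\ell$-power degree), whose existence in characteristic $p$ is an open problem and has nothing to do with the vanishing of $\gamma^rF$; and even granting it, combining annihilation of $[V]^*$ by $p^aq$ with annihilation by an integer prime to $p$ only yields annihilation by their gcd, which is prime to $p$ — not the asserted $p^n\cdot[V]^*=0$. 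The argument the paper intends with its citation of the Gabber--de Jong theorem runs the Bézout trick in the opposite direction: for \emph{every} prime $\ell\neq p$ there is a projective alteration $h_\ell\colon Z_\ell\to V_X$ with $Z_\ell$ smooth and $N_\ell:=\deg h_\ell$ prime to $\ell$; each satisfies the hypotheses of Proposition \ref{prop:cor-van} (again $\dim Z_\ell=\dim V_X$, and the induced degree over $V$ is $N_\ell$), so $N_\ell\cdot[V]^*=0$. Fix $\ell_0\neq p$, let $q_1,\dots,q_s$ be the primes $\neq p$ dividing $N_{\ell_0}$, and set $d=\gcd(N_{\ell_0},N_{q_1},\dots,N_{q_s})$: every prime factor of $d$ divides $N_{\ell_0}$, while $q_i\nmid N_{q_i}$, so $d$ is a power of $p$ dividing $p^{n}$ with $n=v_p(N_{\ell_0})$, and by Bézout $d\cdot[V]^*=0$. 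Hence $p^n\cdot[V]^*=0$, with $n$ determined by the chosen alterations of $V_X$, i.e.\ by $V$ and not by $F$. (Alternatively one could invoke Temkin's desingularization by $p$-alterations, which does give a smooth alteration of $p$-power degree directly, but that is a different and later theorem than the de Jong/Gabber statements you appeal to.)
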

\begin{proof}
\ref{cor:cor-van1} follows directly from Proposition \ref{prop:cor-van}.
\ref{cor:cor-van2} follows from that proposition together with the Gabber-de-Jong alteration theorem,
see \cite[Thm 2.1]{IT-Gabber}.
\end{proof}

\begin{rmk}
It would be nice to have a resolution-free proof of \ref{cor:cor-van}\ref{cor:cor-van1}, in the spirit of
Proposition \ref{prop:cor-van-top}. In \cite[Prop 3.2.2(1)]{CR11} such a statement was proven
for the cohomology of the K{\"a}hler differentials. But the argument relies on the
K{\"u}nneth decomposition for differentials and it is not clear how to imitate this proof in the current setup.
\end{rmk}

\begin{lemma}\label{lem:azc}
Let $(f: X\to S)$, $(g:Y\to S)\in C_S$.
Let $S_1\subset Y$ be a closed integral subscheme which is finite and surjective over $S$
and let $\nu: \tilde{S}_1\to S_1$ be its normalization.
Assume $S$ and $\tilde{S}_1$ are smooth over $k$ and $f$ is flat.
Then the cycle associated to $X\times_S  S_1$ defines an element
$[X\times_S S_1]\in C_S(X,Y)$ and the following diagram commutes
\[\xymatrix{
Rg_* F_Y\ar[r]^{[X\times_S S_1]^*}\ar[d]^{\nu_1^*} & Rf_* F_X\\
(g\nu_1)_* F_{\tilde{S}_1}\ar[r]^-{(g\nu_1)_*} & F_S,\ar[u]_{f^*}
}\]
where  $\nu_1: \tilde{S}_1\to Y$ is induced by $\nu$.
(Note that by assumption $g\nu_1: \tilde{S}_1\to S$ is finite.)
\end{lemma}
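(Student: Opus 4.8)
\textbf{Proof plan for Lemma \ref{lem:azc}.}

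The plan is to reduce the statement to the already-established compatibilities between the cycle action and the pushforward/pullback maps. First I would check that $[X\times_S S_1]$ is a legitimate element of $C_S(X,Y)$: since $f$ is flat, $X\times_S S_1$ is a closed subscheme of $X\times Y$ of the correct dimension (equal to $\dim X$, because $S_1\to S$ is finite surjective and $f$ is flat of relative dimension $\dim X-\dim S$), it is contained in $X\times_S Y$, and it is proper over $X$ because $S_1\to S$ is proper (being finite) and hence $X\times_S S_1\to X$ is proper. So the associated cycle lies in $\CH^{\dim Y}_{\Phi^{\mathrm{prop}}_{X\times_S Y}}(X\times Y)$, as required. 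Moreover, writing $\iota\colon S_1\inj Y$ for the closed immersion, the projection formula for flat pullback gives $[X\times_S S_1]=(\mathrm{id}_X\times \iota)_*\, f^*[S_1]$ inside $\CH_*(X\times Y)$, where $[S_1]\in \CH^{\dim Y-\dim S}(Y)$ is viewed with supports in $|S_1|$. Equivalently, letting $\Gamma^t_{g}$ denote the relevant transpose-graph-type cycles, one has the decomposition of correspondences $[X\times_S S_1] = \Gamma_f^t \circ [\,\tilde S_1 \xrightarrow{\nu_1} Y\,]\circ \cdots$; I would spell this out as $[X\times_S S_1] = [\Gamma_f^t]\circ (\nu_1)_*\circ \nu_1^*$ at the level of the action, but it is cleaner to argue directly with the cycle decomposition below.

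Next I would unwind the definition of $[X\times_S S_1]^*$ from \ref{para:cycle-action}. By \eqref{para:cylce-action1} it is the composition $Rg_*F_Y\xrightarrow{p_Y^*}\cdots\xrightarrow{C_{[X\times_S S_1]}}\cdots\xrightarrow{p_{X*}}Rf_*F_X$, and by Lemma \ref{lem:cas} (applicable since $g$ is projective and $[X\times_S S_1]$ is supported on $(|S_1|\times_S X)$, which lies over a closed subset of $X$), the relevant cupping can be taken with supports on $|X\times_S S_1|$. Now I would use the compatibility of $C_{(-)}$ with proper pushforward of cycles, namely Theorem \ref{prop:bc-pf}\ref{prop:bc-pf3} applied to the finite morphism $\nu_1\times\mathrm{id}_X\colon \tilde S_1\times_S X\to Y\times_S X$ (or rather to $(\mathrm{id}_X\times\nu_1)\colon X\times \tilde S_1\to X\times Y$): since $\tilde S_1$ is smooth and $X\times\tilde S_1 = X\times_S\tilde S_1$ maps properly and generically birationally onto $X\times_S S_1$, the cycle $[X\times_S\tilde S_1]$ pushes forward to $[X\times_S S_1]$, and Theorem \ref{prop:bc-pf}\ref{prop:bc-pf3} rewrites $C_{[X\times_S S_1]}\circ p_Y^*$ through $\nu_1^*$ and the pushforward along $X\times\tilde S_1\to X\times Y$. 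Then $X\times\tilde S_1\to X\times Y$ followed by $p_X$ equals the projection $q\colon X\times\tilde S_1\to X$, which factors as $X\times\tilde S_1 \xrightarrow{p} \tilde S_1 \xrightarrow{g\nu_1} S$ — wait, more precisely, since $\tilde S_1\to S$ is finite, $X\times_S\tilde S_1\to X$ is finite, and using flat base change $X\times_S\tilde S_1 = X\times_S\tilde S_1$ with the two projections to $X$ and $\tilde S_1$; the projection to $\tilde S_1$ is the flat base change of $f$, the projection to $X$ is the finite base change of $g\nu_1\colon \tilde S_1\to S$. I would then apply the base-change compatibility Theorem \ref{prop:bc-pf}\ref{prop:bc-pf1} for the cartesian square with the smooth map $f$ on one side and the finite map $g\nu_1$ on the other, together with the fact (Proposition \ref{prop:pfs0}\ref{prop:pfs03}, or its complex version via \ref{para:coracx}) that pushforward along the finite base change agrees with the naive pushforward.

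Assembling these: $[X\times_S S_1]^* = (p_X)_* \circ C_{[X\times_S S_1]}\circ p_Y^* = q_*\circ (\text{cup with }[X\times_S\tilde S_1])\circ \nu_1^* = (g\nu_1)_*\circ f^* \circ (\text{cup with } X)$, and the last cup product is with the fundamental cycle $[X]$ viewed in $\CH^0$, hence the identity by \eqref{prop:cycle-action5}-type considerations (this is exactly $\Delta_X^*=\mathrm{id}$ from the proof of Proposition \ref{prop:cycle-action}\ref{prop:cycle-action3}). This gives $[X\times_S S_1]^* = f^*\circ (g\nu_1)_*\circ \nu_1^*$ — but I need it in the form $f^*\circ (g\nu_1)_*$ precomposed with $\nu_1^*$, which is exactly the claimed commuting square. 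The main obstacle I anticipate is bookkeeping: carefully setting up the cartesian squares (flat base change of $f$ versus finite base change of $g\nu_1$ over the square $X\times_S\tilde S_1$) and tracking the supports through Lemma \ref{lem:cas} so that all the invocations of Theorem \ref{prop:bc-pf} are literally applicable; in particular one must ensure the proper-support families match up (the support $|X\times_S S_1|$ is finite over $X$ and maps to the closed subset $|S_1|$ of $Y$, whose preimage in $S$ is all of $S$ — so $\Psi$ can be taken to be all closed subsets of $S$, and $\Phi = |X\times_S S_1|$), which is where the flatness of $f$ and smoothness of $S,\tilde S_1$ are used. For the complex version one invokes the functoriality of the action \ref{para:coracx} to pass from the sheaf case to $F^\bullet$, but in fact the statement is only asserted for $F\in\RSC_\Nis$, so no extra work is needed there.
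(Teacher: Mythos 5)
Your reduction through the normalization is fine as far as it goes: $[X\times_S\tilde S_1]$ does push forward to $[X\times_S S_1]$ under $\id_X\times\nu_1$, and Theorem \ref{prop:bc-pf}\ref{prop:bc-pf3} together with functoriality of pushforward legitimately rewrites $[X\times_S S_1]^*$ as $pr_{X*}\circ C_{[X\times_S\tilde S_1]}\circ pr_{\tilde S_1}^*\circ\nu_1^*$. The genuine gap is in the last step, where you identify $pr_{X*}\circ C_{[X\times_S\tilde S_1]}\circ pr_{\tilde S_1}^*$ with $f^*\circ (g\nu_1)_*$ by invoking the base-change compatibility Theorem \ref{prop:bc-pf}\ref{prop:bc-pf1} ``for the cartesian square with the smooth map $f$ on one side''. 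But $f$ is \emph{not} assumed smooth -- the lemma only assumes $f$ flat (and $X$, $S$ smooth over $k$; a flat morphism between smooth $k$-schemes need not be smooth, e.g.\ a finite flat cover or a flat family with singular fibres), and Theorem \ref{prop:bc-pf}\ref{prop:bc-pf1} is stated only for base change along a smooth morphism. So the one place where the flatness hypothesis must actually do its work is exactly the step your argument leaves unjustified. (A smaller inaccuracy: you invoke Lemma \ref{lem:cas} ``since $g$ is projective'', but $g$ is only quasi-projective over $k$; fortunately that detour is not needed at all, since the supports are already built into the definition of the action in \ref{para:cycle-action}.)

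The paper closes precisely this step differently, and more cheaply: by Proposition \ref{prop:cycle-action}\ref{prop:cycle-action1}, \ref{prop:cycle-action3}, \ref{prop:cycle-action4}, the composite $f^*\circ (g\nu_1)_*\circ\nu_1^*$ is the action of the composed correspondence $[\Gamma_{\nu_1}]\circ[\Gamma^t_{g\nu_1}]\circ[\Gamma_f]$, so the whole lemma reduces to the single cycle identity $[\Gamma_{\nu_1}]\circ[\Gamma^t_{g\nu_1}]\circ[\Gamma_f]=[X\times_S S_1]$ in $\CH^{\dim Y}_{X\times_S S_1}(X\times Y)$, which is then checked with Fulton's Prop.\ 16.1.1: the left-hand side equals $(f\times\id)^*(\id\times\nu_1)_*[S\times_S\tilde S_1]=(f\times\id)^*[S\times_S S_1]=[X\times_S S_1]$, and it is in the last equality that flatness of $f$ (not smoothness) is used. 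If you want to keep your route through $\tilde S_1$, the correct repair is the same in spirit: observe that $pr_{X*}\circ C_{[X\times_S\tilde S_1]}\circ pr_{\tilde S_1}^*$ is by definition the action of $[X\times_S\tilde S_1]\in C_S(X,\tilde S_1)$ and prove the cycle identity $[\Gamma^t_{g\nu_1}]\circ[\Gamma_f]=[X\times_S\tilde S_1]$ by flat pullback of the transposed graph, rather than appealing to sheaf-level smooth base change.
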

\begin{proof}
Let $d=\dim Y$.
By Proposition \ref{prop:cycle-action}\ref{prop:cycle-action1}, \ref{prop:cycle-action3}, \ref{prop:cycle-action4} it suffices to show the following equality
 in $\CH^d_{X\times_S S_1}(X\times Y)$
\eq{prop:azc1}{[\Gamma_{\nu_1}]\circ [\Gamma^t_{g\nu_1}]\circ [\Gamma_f]= [X\times_S S_1],}
where $\Gamma_h$ denotes the graph of the map $h$ and $\Gamma_h^t$ its transpose.
As in \cite[Prop 16.1.1, (a), (c)]{Fu} the left hand side of \eqref{prop:azc1} is equal to
\[ (f\times \id_{S_1})^*(\id_S\times \nu_1 )_* [S\times_S \tilde{S}_1]
=(f\times \id_{S_1})^* [S\times_S S_1] =[X\times_S S_1],\]
where we use the flatness of $f$ for the second equality.
Hence the lemma.
\end{proof}

\section{General applications}\label{sec:gen-appl}

\subsection{Obstructions to the existence of zero-cycles of degree 1}\label{ssec:BMtype-obstructions}
 
We can use the existence of the proper correspondence action on the cohomology of an arbitrary reciprocity sheaf to construct new local-to-global obstruction for the existence of zero cycles of degree $1$. In general, this kind of obstructions are considered when the base is a classical global field, i.e., a number field or a function field in one variable over a finite field. 
Instead, we have the following general result, where there is no restriction on the dimension of the base scheme.

\begin{thm}\label{thm:dds}
Let $f\colon Y\to X$ be  a dominant quasi-projective morphism between connected smooth $k$-schemes.
Assume that there are integral subschemes $V_i\subset Y$ which are proper, surjective, and generically finite
over $X$ of degree $n_i$, $i=1,\ldots, s$. Set $N={\rm gcd}(n_1,\ldots, n_s)$.
Let $F^\bullet\in {\rm Comp}^+(\RSC_\Nis)$ be a bounded below complex of reciprocity sheaves. 
Then there exists a morphism $\sigma: Rf_* F_Y^\bullet\to F_X^\bullet$ in $D(X_\Nis)$
such that the composition
\[F_X^\bullet\xr{f^*} Rf_*F_Y^\bullet\xr{\sigma} F_X^\bullet \]
is multiplication with $N$. In particular if $N=1$, then $F_X^\bullet$ is a derived direct summand of $Rf_*F_Y^\bullet$.
\end{thm}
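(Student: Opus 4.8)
The idea is to produce $\sigma$ as a suitable $\Z$-linear combination of the correspondence actions $[V_i]^*$ of the cycles $V_i$, viewed as elements of $C_X(X,Y)$. First I would set up the base scheme $S = X$ itself, so that $(f\colon Y\to X)$ and $(\id_X\colon X\to X)$ are objects of $C_X$, using that $f$ is quasi-projective and $Y, X$ are smooth. Each $V_i\subset Y = X\times_X Y$ is proper and surjective over $X$ of relative dimension $0$ (since it is generically finite over $X$ and $\dim V_i = \dim X$), hence determines a class $[V_i]\in C_X(X,Y) = \bigoplus \CH^{\dim Y}_{\Phi^{\mathrm{prop}}_{X\times_X Y}}(X\times Y)$, and by \ref{para:coracx} it induces a morphism $[V_i]^*\colon Rf_* F^\bullet_Y \to \id_{X*}F^\bullet_X = F^\bullet_X$ in $D^+(X_\Nis)$, functorially in the complex $F^\bullet$.

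The key computation is that the composite $[V_i]^*\circ f^* = [V_i]^*\circ [\Gamma_f]^*$ equals multiplication by $n_i$ on $F^\bullet_X$. By Proposition \ref{prop:cycle-action}\ref{prop:cycle-action1} this composite is $([\Gamma_f]\circ [V_i])^*$, where $[\Gamma_f]\in C_X(X,Y)$ is the class of the graph of $f$ and the composition is taken in $C_X$. So I would compute the cycle-theoretic composition $[\Gamma_f]\circ [V_i]\in C_X(X,X) = \CH^{\dim X}_{\Phi^{\mathrm{prop}}}(X\times X)$: using the projection formula for the refined intersection product (as in \cite[16.1]{Fu}, exactly as in the proof of Lemma \ref{lem:azc}), the composition $p_{13*}(p_{12}^*[V_i]\cdot p_{23}^*[\Gamma_f])$ works out to $n_i\cdot[\Delta_X]$, the class $\Delta_X$ of the diagonal scaled by the generic degree of $V_i\to X$. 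Then $(n_i[\Delta_X])^* = n_i\cdot[\Delta_X]^*$, and $[\Delta_X]^*$ is the identity on $F^\bullet_X$: for a single sheaf this is \eqref{prop:cycle-action5} in the proof of Proposition \ref{prop:cycle-action}\ref{prop:cycle-action3}, and for a bounded below complex it follows from the construction of the correspondence action in \ref{para:coracx}, which is built termwise from the sheaf case via the resolution functors $j_S$. Hence $[V_i]^*\circ f^* = n_i\cdot\id$.

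Finally, write $N = \gcd(n_1,\dots,n_s) = \sum_i a_i n_i$ with $a_i\in\Z$ (Bézout), and set $\sigma := \sum_i a_i\,[V_i]^*\colon Rf_* F^\bullet_Y\to F^\bullet_X$. Then $\sigma\circ f^* = \sum_i a_i\,([V_i]^*\circ f^*) = \sum_i a_i n_i\cdot\id = N\cdot\id$ on $F^\bullet_X$, which is the assertion. When $N=1$ this says $f^*$ admits a left inverse, so $F^\bullet_X$ is a derived direct summand of $Rf_*F^\bullet_Y$.

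The main obstacle is the cycle-theoretic identity $[\Gamma_f]\circ[V_i] = n_i[\Delta_X]$ in $C_X(X,X)$, i.e.\ verifying that the refined-intersection composition of correspondences genuinely yields $n_i$ times the diagonal; this is where one must be careful that the multiplicities match the generic degree $n_i$, that $V_i$ being proper over $X$ makes all the relevant pushforwards well-defined along the proper families of supports, and that the composition is associative (Proposition \ref{prop:cycle-action}\ref{prop:cycle-action1}). Once this is in hand, everything else is a formal consequence of the functoriality properties of the correspondence action already established in Proposition \ref{prop:cycle-action} and \ref{para:coracx}.
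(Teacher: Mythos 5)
Your argument is correct, and it proves the theorem by a route that is formally equivalent to, but packaged differently from, the paper's. The paper does not pass through the correspondence category at all: it takes $\Phi$ to be the family of supports on $Y$ generated by the $V_i$, forms $\alpha=\sum_i a_i[V_i]\in \CH^r_\Phi(Y)$ with $r=\dim Y-\dim X$ and $N=\sum_i a_in_i$, and defines $\sigma$ directly as $Rf_*F^\bullet_Y\xr{C_\alpha} Rf_*R\ul{\Gamma}_\Phi F^\bullet\la r\ra_Y[r]\xr{f_*}F^\bullet_X$; the identity $\sigma\circ f^*=N$ is then a one-line application of the projection formula, Theorem \ref{prop:bc-pf}\ref{prop:bc-pf3}, since $f_*\alpha=N\cdot[X]$ in $\CH^0(X)=\Z$ and $C_{N[X]}=N\cdot\id$. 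Your version instead pushes the $V_i$ into $C_X(X,Y)$, uses Proposition \ref{prop:cycle-action}\ref{prop:cycle-action1}, \ref{prop:cycle-action3} and the functoriality of \ref{para:coracx}, and reduces to the cycle identity $[\Gamma_f]\circ[V_i]=n_i[\Delta_X]$; this identity is indeed standard (composition with the graph of $f$ on that side is $(\id_X\times f)_*$, cf.\ \cite[Prop 16.1.1(c)]{Fu}, and $(\id_X\times f)_*[V_i]=n_i[\Delta_X]$ because $V_i\to\Delta_X\cong X$ is generically finite of degree $n_i$ and proper, the properness guaranteeing the pushforward respects the support family $\Phi^{\rm prop}_{X\times_X X}$), so the step you flag as the main obstacle does go through. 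The trade-off: the paper's construction avoids the ambient product $X\times Y$ and the composition calculus in $C_X$, so all the content sits in the single formula \ref{prop:bc-pf}\ref{prop:bc-pf3}, whereas your route re-derives the same numerical statement through Proposition \ref{prop:cycle-action}, whose proof itself rests on \ref{prop:bc-pf}; your $\sigma=\sum_i a_i[V_i]^*$ and the paper's $f_*\circ C_\alpha$ agree by a Lemma \ref{lem:azc}-type comparison, but since the theorem only asserts existence of some $\sigma$, you do not need to check this.
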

\begin{proof}
Set $r=\dim Y-\dim X$. Denote by $\Phi$ the family of supports on $Y$ generated by the $V_i$;
$\Phi$ therefore is a family of proper supports for $Y/X$.
Take $a_i\in \Z$ such that $N=\sum_i a_i n_i$ and set $\alpha:= \sum_i a_i [V_i]$,
which we can view as a cycle in $\CH^r_\Phi(Y)$. 
We define $\sigma$ as the composition
\[Rf_* F^\bullet_Y \xr{C_\alpha} Rf_* R\ul{\Gamma}_{\Phi} F^\bullet\la r\ra_Y[r]\xr{f_*} F^\bullet_X,\]
where $C_\alpha$ is defined as in \ref{para:RSCcyclecup} and $f_*$ as in \ref{para:RSC-pf},
extended to complexes as in \ref{para:coracx}.
The statement follows from Theorem \ref{prop:bc-pf}\ref{prop:bc-pf3}.
\end{proof}

We spell out as a Corollary the implication on the index of the generic fiber of $f$.

\begin{cor}\label{cor:dds} Let $f\colon X\to Y$ be a projective dominant morphism between connected smooth $k$-schemes. 
Let $N$ be the index of the generic fiber $X_K$ over $K=k(Y)$
(i.e. $N$ is the gcd of the residue field degrees $[K(x):K]$, where $x\in X_K$ is running through all closed points). 

Then  for any bounded below complex of reciprocity sheaves $F^\bullet$ and for any $i\ge 0$
 the kernel $\Ker (f^*: H^i(Y, F^\bullet_Y)\to H^i(X, F^\bullet_X))$ is $N$-torsion.
 In particular, if $f^*\colon H^i(Y, F^\bullet_Y)\to H^i(X, F^\bullet_X)$
 is not split injective for some $F^\bullet$ and some $i$, then the generic fiber of $f$ cannot have index $1$.
\end{cor}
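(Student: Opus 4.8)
The plan is to deduce Corollary \ref{cor:dds} directly from Theorem \ref{thm:dds} by a localization argument. First I would reduce to the situation where Theorem \ref{thm:dds} applies. The index $N$ of the generic fiber $X_K$ over $K = k(Y)$ is by definition the gcd of the degrees $[K(x):K]$ over closed points $x \in X_K$. Each such closed point is the generic point of an integral subscheme $W_x \subset X$ which is dominant over $Y$; after possibly shrinking $Y$ to a dense open $U \subset Y$, the restriction $W_x|_U \to U$ becomes finite (hence proper), surjective, and generically finite of degree $[K(x):K]$. Taking finitely many closed points $x_1, \ldots, x_s$ of $X_K$ whose degrees have gcd equal to $N$, and shrinking $Y$ to a common dense open $U$ over which all the $W_{x_j}|_U$ are proper, surjective, and generically finite over $U$, I would be in position to invoke Theorem \ref{thm:dds} with $Y$ replaced by $X_U := X \times_Y U \to U$ (note $f$ projective implies $X_U \to U$ quasi-projective) and with $V_j = W_{x_j}|_U$.

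Next I would apply Theorem \ref{thm:dds} to the morphism $f_U \colon X_U \to U$: since $\gcd$ of the $[K(x_j):K]$ is $N$, there is a morphism $\sigma_U \colon R f_{U*} F^\bullet_{X_U} \to F^\bullet_U$ in $D(U_\Nis)$ with $\sigma_U \circ f_U^* = N \cdot \id_{F^\bullet_U}$. Taking hypercohomology $H^i(U, -)$ and using the canonical identifications $H^i(U, Rf_{U*} F^\bullet_{X_U}) = H^i(X_U, F^\bullet_{X_U})$, I get that the composition
\[
H^i(U, F^\bullet_U) \xrightarrow{f_U^*} H^i(X_U, F^\bullet_{X_U}) \xrightarrow{\sigma_U} H^i(U, F^\bullet_U)
\]
is multiplication by $N$. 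The final step is to transport this back to $X$ and $Y$. For reciprocity sheaves, and more generally for bounded below complexes of them, restriction to a dense open is well-behaved enough: the key point is that for $F \in \RSC_\Nis$ the restriction $H^0(Y, F_Y) \to H^0(U, F_U)$ is injective (this is the global injectivity property used repeatedly in the paper, cf. \cite[Thm 3.1]{S-purity}), and in fact one has more. The cleanest route is to observe that the pullback map $f^* \colon H^i(Y, F^\bullet_Y) \to H^i(X, F^\bullet_X)$ fits into a commutative square with the restriction maps $H^i(Y, F^\bullet_Y) \to H^i(U, F^\bullet_U)$ and $H^i(X, F^\bullet_X) \to H^i(X_U, F^\bullet_{X_U})$ and with $f_U^*$. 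An element $\beta \in \Ker(f^*)$ restricts to an element of $\Ker(f_U^*)$, which is annihilated by $N$ by the above; hence $N\beta$ restricts to $0$ in $H^i(U, F^\bullet_U)$.

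The main obstacle I anticipate is precisely this last transport step: controlling the kernel of restriction $H^i(Y, F^\bullet_Y) \to H^i(U, F^\bullet_U)$ for $i > 0$. For $i = 0$ and $F$ a single reciprocity sheaf this is injective, but for higher cohomology of complexes there can be a kernel supported on the complement $Z = Y \setminus U$, which has codimension $\geq 1$. To handle this I would either (i) use the full Gysin/localization machinery of Section \ref{sec:gysin} to express the kernel in terms of cohomology with supports on $Z$ and twists $\gamma^c F$, iterating a deletion argument over the codimension-$1$ components of $Z$ and playing with the global injectivity of the twists; or (ii) — and this is likely cleaner for the statement as phrased, which only claims the kernel is $N$-torsion — refine the choice: rather than shrinking $Y$, one can take the $V_j$ to be the \emph{closures} in $X$ (not $X_U$) of the chosen closed points of $X_K$, which are automatically integral closed subschemes of $X$, dominant and generically finite over $Y$; these are proper over $Y$ if $f$ is proper (here we use $f$ projective, hence proper, so a closed subscheme of $X$ that is quasi-projective over $Y$ and closed in $X$ is proper over $Y$). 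Then Theorem \ref{thm:dds} applies verbatim with $Y$ unchanged, $f$ proper, the $V_j \subset X$ proper/surjective/generically finite of the right degrees, yielding $\sigma \circ f^* = N$ on $Rf_* F^\bullet_X$ in $D(Y_\Nis)$, and passing to $H^i(Y,-)$ immediately gives that $\Ker(f^*)$ on $H^i$ is killed by $N$, with no shrinking needed. The ``in particular'' clause is then formal: if $\Ker(f^*)$ is $N$-torsion and also $f^*$ is not split injective for some $F^\bullet, i$, then $N \neq 1$ (if $N = 1$ the retraction $\sigma$ would split $f^*$), so the generic fiber has index $\neq 1$.
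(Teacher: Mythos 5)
Your proposal is correct, and your option (ii) is exactly the intended argument: since $f$ is projective, the closures in $X$ of finitely many closed points of $X_K$ whose degrees have gcd $N$ are integral subschemes proper, surjective, and generically finite over $Y$ of the right degrees, so Theorem \ref{thm:dds} applies verbatim, and applying $H^i(Y,-)$ to $\sigma\circ f^*=N\cdot\mathrm{id}$ gives both the $N$-torsion claim and the splitting when $N=1$. The localization detour in the first half (shrinking $Y$ and then worrying about the kernel of restriction in higher degrees) is unnecessary, as you yourself correctly diagnosed.
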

Let us now discuss how  Theorem \ref{thm:dds} and Corollary \ref{cor:dds}  can be specialized to construct new local-to-global obstructions for the existence of zero-cycles of degree $1$, that give back the classical Brauer-Manin obstruction as a special case. For the reader's convenience, let us quickly review the construction of the Brauer-Manin pairing (see \cite[8]{Saito89}, \cite[1.1]{Wittenberg_fibrations}).

Let $K$ be a function field in one variable over a finite field $\F_q$ of characteristic $p>0$, and let $S$  be a proper smooth model of $K$. Let $f\colon X \to S$ be a projective dominant morphism, with $X$ smooth over $k$. Write $X_K$ for the base change $X\times_S K$.  For $v\in S_{(0)}$, let $S_v$ be the henselization of $S$ at $v$, and let $K_v = k(S_v)$. Write $X_{K_v}$ for $X\times_K K_v$ and $X_{S_v}$ for $X\times_S S_v$.

Let $\varepsilon\colon \Sm_{\et}\to \Sm_{\Nis}$ be the change of site functor, and let $\Q/\Z(1)$ be the \'etale motivic complex of weight $1$ with $\Q/\Z$ coefficients. By \ref{subsec:exaRSC}\ref{subsec:exaRSC4.3}, $R^2\varepsilon_* \Q/\Z(1)$ defines a Nisnevich reciprocity sheaf.

Since $X$ is projective over $S$, any cycle $\alpha_v \in \CH_0(X_{K_v})$ defines an element of $C_{{\Spec(K_v)}}( \Spec{K}_v, X_{K_v})$, and since $F=R^2\varepsilon_* \Q/\Z(1)$ is in $\RSC_{\Nis}$, we can apply the proper correspondence action \eqref{para:cylce-action1} to define a morphism
\begin{equation}\label{eq:BM3bis} \Br(X_{K_v})/\Br(X_{S_v}) \xrightarrow{\alpha^*} \Br(K_v) = \Br(K_v)/\Br(S_v)\end{equation}
where the last equality follows from the fact that $\Br(S_v) = 0$, since the residue field of $S_v$ is finite. 
By Proposition \ref{prop:cycle-action}\ref{prop:cycle-action4.1}, $\alpha^*$ agrees with the morphism induced by the transfer structure on the cohomology presheaves $H^2_{\et}(-, \G_m)$, which in turn is given by the classical norm map (see \cite[Ex.\ 2.4]{MVW}).

Taking (Nisnevich) cohomology with support, we can define a morphism by composition
\begin{equation}\label{eq:BM4}\bigoplus_{v\in S_{(0)}}  \Br(K_v) (= H^0(K_v, F)) \to\bigoplus_{v\in S_{(0)}} H^1_{v}(S_v, F)\cong \bigoplus_{v\in S_{(0)}} H^1_{v}(S, F) \to H^1(S, F) \end{equation}
where the last map is surjective, since $H^1(K, F)=0$ for dimension reasons. If we now compose \eqref{eq:BM4} with \eqref{eq:BM3bis} for varying $\alpha_v$, and we reassemble the maps for $v\in S_{(0)}$, we get
\[\Psi\colon \prod_{v\in S_{(0)}} \CH_0(X_{K_v}) \to \Hom(\bigoplus_{v\in S_{(0)}}  \Br(X_{K_v}) /\Br(X_{S_v}), H^1(S, F)),\]
and composing with the diagonal embedding $\Br(X_K)\xrightarrow{\iota}\bigoplus_{v\in S_{(0)}}  \Br(X_{K_v})$, we finally get
\[\iota^* \Psi \colon \prod_{v\in S_{(0)}} \CH_0(X_{K_v}) \to \Hom(\Br(X_K), H^1(S, F)) \]
that we can further compose with the diagonal morphism  from $\CH_0(X_K)$, giving
\begin{equation}\label{eq:BM5}\CH_0(X_K) \xrightarrow{} \prod_{v\in S_{(0)}} \CH_0(X_{K_v})\xrightarrow{\iota^* \Psi} \Hom(\Br(X_K), H^1(S, F)). \end{equation}
This is the Brauer-Manin sequence in disguise: in fact, the Brauer-Hasse-Noether Theorem (see  e.g., \cite[XIII]{WeilBNT} or \cite[Thm. 12.1.8]{CT-S_BGgroup_book}) implies that 
\[H^1(S,F)  \simeq \Coker\big(\Br(K) \to \underset{\Sd 0}{\bigoplus}\; \frac{\Br(K_v)}{\Br(S_v)}\big) \simeq \mathbb{Q}/\mathbb{Z}.\]
Conjecturally, the complex \eqref{eq:BM5} is exact. See \cite[Conjecture 4]{CT-survey-conj}, \cite{Saito89}. 
\medskip 



We now extend the construction of the complex \eqref{eq:BM5} replacing $\Br(-)$ with an arbitrary reciprocity sheaf. We begin with the following result. Note that $S$ doesn't have to be of dimension $1$, and that the ground field is an arbitrary perfect field. 
\begin{thm}\label{thm2:dds}
Let $f\colon X\to S$ be a projective and dominant  morphism between smooth connected $k$-schemes with $d=\dim(X)-\dim(S)$.
Let $K=k(S)$ be the function field of $S$ and $X_K=X\times_S \Spec K$.
Let
\[ \deg_K: \CH^d(X) \to \CH_0(X_K) \to \Z,\]
where the second map is the degree map. 
Then, for any $F\in \RSC_\Nis$, there exist homomorphisms 
\[\phi : \CH^d(X) \to \Hom_{D(S_\Nis)}(Rf_* F_X, F_S)\]
satisfying the following conditions:
\begin{itemize}
\item[(i)]
For any $\alpha\in \CH^d(X)$ with $N=\deg_K(\alpha)$, the composite 
\[ F_S\rmapo{f^*} Rf_* F_X\rmapo{\phi(\alpha)} F_S\]
is the multiplication by $N$.
\item[(ii)]
The map $f_* F_X \to F_S$ induced by $\phi(\alpha)$ depends only on 
the image $\alpha_K\in \CH_0(X_K)$ of $\alpha$.
\end{itemize}
\begin{proof}
For (i), it is enough to consider the case $\alpha =[x]$ for a closed point $x\in X_K$. Since the closure of $x$ in $X$ is projective, surjective and generically finite over $S$ of degree $[k(x):K]$, the statement follows  directly from Theorem \ref{thm:dds}. As for (ii), it is enough to show that if $\beta$ is a cycle in $ \CH^d(X)$ supported on $f^{-1}(T)$ for some proper closed subscheme $T\subset S$, then  the morphism $f_* F_X\to F_S$ induced by $\phi(\beta)$ is zero. But by  Lemma \ref{lem:cas} we have that $\phi(\beta)$ factors through $\ul{\Gamma_T} F_S \to F_S$, and since $\ul{\Gamma_T} F_S =0$ by \cite[Thm. 3.1]{S-purity}, the claim follows.
\end{proof}
\end{thm}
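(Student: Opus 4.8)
\textbf{Proof plan for Theorem \ref{thm2:dds}.}

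The plan is to produce, for each codimension $d$ cycle $\alpha\in\CH^d(X)$, a morphism $\phi(\alpha)\colon Rf_*F_X\to F_S$ in $D(S_\Nis)$ by applying the correspondence action machinery already developed in \S\ref{sec:prop-cor-action}. Concretely, set $r=\dim X-\dim S=d$ and let $\Phi$ be the family of proper supports on $X$ for $X/S$ generated by (the support of) $\alpha$; since $f$ is projective, any closed subscheme of $X$ is proper over $S$, so $\alpha$ defines a class $\alpha\in\CH^r_\Phi(X)$. I would then define
\[
\phi(\alpha)\colon Rf_*F_X\xr{C_\alpha} Rf_*R\ul{\Gamma}_\Phi F\la r\ra_X[r]\xr{f_*} F_S,
\]
where $C_\alpha$ is the cycle-cupping morphism of \ref{para:RSCcyclecup} and $f_*$ is the proper pushforward of \ref{para:RSC-pf}, using here that $\gamma^r(F\la r\ra)=F$ by the cancellation isomorphism \eqref{para:RSCtwist6}. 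Linearity of $\alpha\mapsto\phi(\alpha)$ follows from Lemma \ref{lem:cyclecup}\ref{lem:cyclecup1} (via \ref{para:RSCcyclecup}) and additivity of $f_*$, so $\phi$ is a homomorphism as claimed.

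For part (i), by linearity it suffices to treat $\alpha=[x]$ for a closed point $x\in X_K$; let $V\subset X$ be the closure of $x$, an integral closed subscheme, proper (as $f$ is projective), surjective and generically finite over $S$ of degree $[k(x):K]=N$. This is exactly the setup of Theorem \ref{thm:dds} with a single $V_i=V$ and $n_1=N$; one checks directly that the morphism $\sigma$ produced in the proof of Theorem \ref{thm:dds} (namely $C_\alpha$ followed by $f_*$) coincides with $\phi(\alpha)$, and that proof shows $\phi(\alpha)\circ f^*$ is multiplication by $N=\deg_K(\alpha)$. The key input there is Theorem \ref{prop:bc-pf}\ref{prop:bc-pf3}, i.e.\ compatibility of pushforward with cycle cupping: applied to the identity morphism this gives $f_*\circ C_\alpha\circ f^*=C_{f_*\alpha}$, and $f_*\alpha=\deg_K(\alpha)\cdot[S]\in\CH^0_S(S)=\Z$, while $C_{N[S]}$ is multiplication by $N$ by \ref{para:RSCcyclecup} and the fact that $C_{[S]}=\id$.

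For part (ii), suppose $\beta\in\CH^d(X)$ is supported on $f^{-1}(T)$ for a proper closed subscheme $T\subsetneq S$. Then $\beta$ lies in the image of $\CH^d_{f^{-1}(T)}(X)\to\CH^d_{\Phi}(X)$; regarding $\beta$ via the graph construction as an element of $C_S(X,S)$ supported on $f^{-1}(T)\times_S S=f^{-1}(T)$, Lemma \ref{lem:cas} shows that $\phi(\beta)$ factors through the forget-supports map $R\ul{\Gamma}_T F_S\to F_S$. By purity for reciprocity sheaves, \cite[Thm.\ 3.1]{S-purity}, we have $\ul{\Gamma}_T F=0$ (sections supported on a proper closed subscheme vanish, as restriction to a dense open is injective for $F\in\RSC_\Nis$), and more precisely $R\ul{\Gamma}_T F_S$ has no cohomology in degree $0$; hence the composite through it is zero, so $\phi(\beta)=0$. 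Since $\alpha\mapsto\alpha_K$ identifies $\CH^d(X)/(\text{cycles supported over proper closed }T)$ with (a quotient surjecting onto) $\CH_0(X_K)=\varinjlim_T\CH^d(X\setminus f^{-1}(T))$, the morphism $\phi(\alpha)$ depends only on $\alpha_K$. The only mildly delicate point is to make sure $C_\alpha$ and $f_*$ are compatible with the localization/enlarge-support maps so that the factorization in Lemma \ref{lem:cas} applies verbatim in the derived-category (rather than merely cohomology-group) setting; this is handled by Lemma \ref{lem:cyclecup}\ref{lem:cyclecup1.5} together with the construction of $f_*$ via $\tr_{\sU/\sX}$ in \ref{para:RSC-pf}, so I do not expect a genuine obstacle here.
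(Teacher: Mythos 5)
Your construction of $\phi$ and your proof of (i) coincide with the paper's: $\phi(\alpha)=f_*\circ C_\alpha$, and the composite with $f^*$ is computed via Theorem \ref{prop:bc-pf}\ref{prop:bc-pf3} (equivalently by quoting Theorem \ref{thm:dds}), since $f_*\alpha=\deg_K(\alpha)\cdot[S]$ and $C_{[S]}=\id$; this part, including the treatment of components not dominating $S$, is fine.

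In (ii), however, you overshoot, and the extra step is invalid. From the factorization of $\phi(\beta)$ through the forget-supports map $R\ul{\Gamma}_T F_S\to F_S$ (Lemma \ref{lem:cas}) together with $H^0(R\ul{\Gamma}_T F_S)=\ul{\Gamma}_T F_S=0$ you conclude that $\phi(\beta)=0$ in $D(S_\Nis)$, hence that the full derived morphism $\phi(\alpha)$ depends only on $\alpha_K$. That inference does not follow: $R\ul{\Gamma}_T F_S$ is concentrated in degrees $\ge 1$ but is not zero, and $Rf_*F_X$ is not concentrated in degree $0$, so a composite $Rf_*F_X\to R\ul{\Gamma}_T F_S\to F_S$ can be nonzero in the derived category; for instance $\Hom_{D(S_\Nis)}(C[-i],F_S)=\Ext^i_{S_\Nis}(C,F_S)$ for a sheaf $C$, so contributions pairing $R^if_*F_X$ with $R^i\ul{\Gamma}_TF_S$ survive in total degree $0$. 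This is precisely why the theorem states (ii) only for the induced map $f_*F_X\to F_S$: passing to $H^0$, the composite factors through $\ul{\Gamma}_T F_S=0$ by \cite[Thm.\ 3.1]{S-purity}, which is the paper's argument and is already contained in what you wrote, so the statement as printed is proved by your factorization. But the stronger derived-category independence you assert is not established and should not be expected in general — compare Theorem \ref{thm:comp-diag}, where cycles supported over a proper closed subset $S_0\subset S$ genuinely contribute to the higher cohomology and are only controlled, not killed.
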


Let's go back to the case where $\dim(S)=1$.
For $F\in \RSC_\Nis$, we have a complex
\eq{eq1;thm2:dds}{ F(X_K) \rmapo{\iota} \underset{\Sd 0}{\bigoplus}\; \frac{F(X_{K_v})}{F(X_{S_v})} 
\rmapo{\delta} H^1(X,F),}
where the first map is the diagonal and the second is the composite
\begin{multline}\label{deltav}
 \delta_v: F(X_{K_v})=F(X_{S_v}-f^{-1}(v)) \to H^1_{f^{-1}(v)}(X_{S_v},F_X)\\
\simeq    H^1_{f^{-1}(v)}(X,F_X) \to H^1(X,F_X).\end{multline}
Here, we have used excision in the displayed isomorphism of \eqref{deltav}, and the fact that \eqref{eq1;thm2:dds} is a complex  follows at once from a diagram chase using the exact sequence of the cohomology with support. 

By Theorem \ref{thm2:dds}(ii), for $\alpha_v\in \CH_0(X_{K_v})$,
we have a map
\[\psi_v(\alpha_v) : \frac{F(X_{K_v})}{F(X_{S_v})}\rmapo{\phi(\tilde{\alpha}_v)} \frac{F(K_v)}{F(S_v)}\to H^1(S,F),\]
where $\tilde{\alpha}_v\in \CH^d(X_{S_v})$ is any lift of $\alpha_v$.
This gives homomorphisms
\[ \psi_v : \CH_0(X_{K_v})\to \Hom(\frac{F(X_{K_v})}{F(X_{S_v})}, H^1(S,F)),\]
which we can assemble for varying $v$ to get 
\[ \Psi : \underset{v\in \Sd 0}{\prod} \CH_0(X_{K_v}) \to 
\Hom(\underset{\Sd 0}{\bigoplus}\; \frac{F(X_{K_v})}{F(X_{S_v})}, H^1(S,F)).\]
Composing this with the diagonal morphism from \eqref{eq1;thm2:dds}, we get 
\[\iota^* \Psi : \underset{v\in \Sd 0}{\prod} \CH_0(X_{K_v}) \to 
\Hom(F(X_K), H^1(S,F)).\]
We have a commutative diagram
\eq{CD}{\xymatrix{
\CH^d(X) \ar[r]^-{H^1(\phi)}\ar[d] &  
\Hom(H^1(X,F_X), H^1(S,F)) \ar[d]^{\delta_v^*}\\
 \CH_0(X_{K_v}) \ar[r]^-{\psi_v} &
\Hom(\frac{F(X_{K_v})}{F(X_{S_v})}, H^1(S,F))\\}}
where $\delta_v$ comes from \eqref{deltav}.
Hence Theorem \ref{thm2:dds} implies the following.

\begin{cor}\label{cor2:dds}
Assume given $\xi=(\alpha_v)\in \underset{v\in \Sd 0}{\prod} \CH_0(X_{K_v})$.
If $\xi$ is in the diagonal image of $CH_0(X_K)$, there exists $s\in \Hom(H^1(X,F),H^1(S,F))$ such that $\Psi(\xi)=s\circ \delta$, in particular, 
we have $\iota^*\Psi(\xi)=0$.
If $\deg(\alpha_v)=1$, we can take $s$ to be a splitting of
$f^*: H^1(S,F)\to H^1(X,F)$.
\end{cor}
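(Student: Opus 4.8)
\textbf{Proof plan for Corollary \ref{cor2:dds}.}

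The plan is to extract everything from the commutative diagram \eqref{CD} together with the complex \eqref{eq1;thm2:dds} and Theorem \ref{thm2:dds}. First I would recall the setup: given $\xi = (\alpha_v) \in \prod_{v\in S_{(0)}} \CH_0(X_{K_v})$ which lies in the diagonal image of $\CH_0(X_K)$, fix an element $\gamma \in \CH^d(X)$ whose image in each $\CH_0(X_{K_v})$ is $\alpha_v$ (such a $\gamma$ exists since $\CH^d(X) \to \CH_0(X_K) \to \prod_v \CH_0(X_{K_v})$ factors the diagonal, and $\xi$ is assumed to come from $\CH_0(X_K)$; here one uses that $\CH_0(X_K) = \varinjlim \CH^d(X_U)$ over dense opens $U\subseteq S$ and shrinks if necessary, but since $X\to S$ is projective and we only need a representative we may take $\gamma$ on all of $X$). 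Set $s := H^1(\phi)(\gamma) \in \Hom(H^1(X, F_X), H^1(S,F))$, where $H^1(\phi)$ is the map in the top row of \eqref{CD} induced by $\phi$ from Theorem \ref{thm2:dds}.

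Next I would chase \eqref{CD}: for each $v$, commutativity gives $\psi_v(\alpha_v) = \delta_v^*(s) = s\circ \delta_v$, where $\delta_v$ is the map of \eqref{deltav}. Assembling over all $v\in S_{(0)}$, the collection $(\psi_v(\alpha_v))_v$ is exactly $\Psi(\xi)$ by definition of $\Psi$, while $(s\circ \delta_v)_v = s\circ \delta$ where $\delta\colon \bigoplus_v F(X_{K_v})/F(X_{S_v}) \to H^1(X, F_X)$ is the map in \eqref{eq1;thm2:dds}. Hence $\Psi(\xi) = s\circ \delta$. Since \eqref{eq1;thm2:dds} is a complex, $\delta\circ \iota = 0$, and therefore $\iota^*\Psi(\xi) = \Psi(\xi)\circ \iota = s\circ \delta\circ \iota = 0$, which is the first assertion.

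For the last sentence, suppose $\deg(\alpha_v) = 1$ for some (hence, by compatibility with the diagonal, the) $v$; equivalently $\deg_K(\gamma) = 1$. By Theorem \ref{thm2:dds}(i) the composite $F_S \xrightarrow{f^*} Rf_* F_X \xrightarrow{\phi(\gamma)} F_S$ is the identity (multiplication by $N = 1$); applying $H^1(S, -)$ and using that $H^1(S, Rf_* F_X) = H^1(X, F_X)$, we get that $H^1(\phi)(\gamma) = s$ is a retraction of $f^*\colon H^1(S,F) \to H^1(X, F_X)$, i.e.\ a splitting. This completes the proof. The only genuinely delicate point is the very first one — producing the global representative $\gamma\in \CH^d(X)$ of the compatible family $(\alpha_v)$ — but this is immediate once one knows $\xi$ lies in the diagonal image of $\CH_0(X_K)$ and uses $\CH_0(X_K) = \varinjlim_U \CH^d(f^{-1}(U))$ together with properness of $f$ to spread out; everything else is formal diagram-chasing with the already-established diagram \eqref{CD} and Theorem \ref{thm2:dds}.
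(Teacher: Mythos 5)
Your proof is correct and is essentially the argument the paper intends: the paper derives the corollary directly from Theorem \ref{thm2:dds} together with the commutative diagram \eqref{CD}, exactly as you do by lifting $\xi$ through the surjection $\CH^d(X)\twoheadrightarrow \CH_0(X_K)$ (closure of zero-cycles on the generic fiber), setting $s=H^1(\phi)(\gamma)$, and using that \eqref{eq1;thm2:dds} is a complex, with part (i) of Theorem \ref{thm2:dds} giving the splitting when $\deg_K(\gamma)=1$.
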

\begin{rmk} Note that thanks to  Theorem \ref{prop:bc-pf}(3) and the definition of $\sigma$ in \ref{thm:dds}, the splitting $s$ of Corollary \ref{cor2:dds} is functorial with respect to smooth base change $S'\to S$.
\end{rmk}


 
\begin{rmk} In Theorem \ref{thm2:dds} we have shown how it is possible to use sections of an arbitrary reciprocity sheaf to construct obstructions of Brauer-Manin type to the existence of zero cycles of degree one over non-classical global fields.
If one is interested in the (in general) finer question of finding obstructions to the existence of rational points over non-classical global fields $K$, there is a vast literature in which higher unramified cohomology groups $H^n_{nr}(K(X)/K, \Z/\ell)$ or $H^{n+1}_{nr}(K(X)/K, \Q/\Z(n))$ (indeed examples of global sections of reciprocity sheaves, see the list of examples  \ref{subsec:exaRSC}) have been used, starting from \cite{CT96}. The classical case, i.e., using the Brauer group, corresponds to $H^2_{nr}(K(X)/K, \Q/\Z(1))$. Here some examples of global fields, together with the invariant used.
\begin{enumerate}
\item Function field $K$ of a curve over the real field (or a real closed field) using $H^n_{nr}(k(X)/K, \Z/2))$. \cite{CT96}, \cite{DucrosCRAS}, \cite{DucrosCrelle}, \cite{Pal-Szabo}.
\item Function field $K$ of a curve over the complex field, using $H^1_{nr}(K(X)/K, \Z/n)$ \cite{CT-Gille} and the Appendix by O.~Wittenberg to \cite{ottem2020}.
\item Function field $K$ of a curve over a $p$-adic field, using $H^3_{nr}(K(X)/K, \Q/\Z(2))$. \cite{MR3352255}, \cite{MR3493592} and \cite{MR2984579}, \cite{MR3373241}.
\item Function field $K$ of a curve over $\mathbb{C}((t))$. \cite{MR3356812}.
\end{enumerate}
We thank J-L.~Colliot-Th\'el\`ene for providing us with a list of references on the subject. 
\end{rmk}
 \subsection{Birational invariants.}\label{sssec:BI}
 
As observed in \cite{CR11} cycle actions can be used to find birational invariants.
In the following $S$ is a finite type separated $k$-scheme.
We say that $(f: X\to S)$ and  $(g: Y\to S)\in C_S$, with $X$ and $Y$ integral, 
are 
\begin{enumerate}[label=(\arabic*)]
\item\label{sssec:BI1} 
{\em properly birational} over $S$, if there exists an integral scheme $Z$ over $S$ 
and two proper birational  $S$-morphisms $Z\to X$, $Z\to Y$;
in this case we call $Z$ a {\em proper birational correspondence between $X$ and $Y$}
(note that we don't assume that $f$, or $g$  is proper);
\item\label{sssec:BI2}
{\em stably properly birational over $S$}, if there exist locally free coherent $\sO$-modules $V$ and $W$
on $X$ and $Y$, respectively, such that the corresponding projective bundles $\P(V)$ and $\P(W)$ 
are properly birational over $S$.
\end{enumerate}
 
\begin{thm}\label{lem:BIH0}
A reciprocity sheaf $F\in \RSC_\Nis$ is a stably properly birational invariant over $S$, i.e.,
for $(f: X\to S)$, $(g: Y\to S)\in C_S$,  with $X, Y$ integral, any 
proper birational correspondence between projective bundles over $X$ and $Y$ induces an isomorphism
\[f_*F_X\cong g_*F_Y.\]
\end{thm}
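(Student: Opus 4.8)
The strategy is to reduce the statement about stably properly birational equivalences to the case of a single proper birational morphism, and then to use the blow-up formula together with the proper correspondence action to produce the isomorphism. First I would show that the global sections functor $H^0(X,F)=F(X)$ is insensitive to proper bundles, using Corollary~\ref{cor:PnInv}: for any vector bundle $V$ on $X$ with projection $\pi\colon\P(V)\to X$, the pullback $\pi^*\colon F_X\xrightarrow{\ \simeq\ }\pi_*F_{(\P(V),\emptyset)}$ is an isomorphism, hence $f_*F_X\cong(f\pi)_*F_{\P(V)}$. Consequently, it suffices to treat the case of a proper birational correspondence $Z\to X$, $Z\to Y$ between $X$ and $Y$ themselves (both integral, in $C_S$). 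By the valuative/graph argument, letting $Z_0\subset X\times Y$ denote the image of $Z$, the classes $[Z_0]\in C_S(X,Y)$ and $[Z_0^t]\in C_S(Y,X)$ are well-defined proper correspondences (since $Z_0$ is proper over both $X$ and $Y$, being dominated by the proper $S$-scheme $Z$), and the plan is to show that $[Z_0]^*\colon Rg_*F_Y\to Rf_*F_X$ and $[Z_0^t]^*$ are mutually inverse isomorphisms, or at least that their composites are the identity after passing to $H^0$.

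The key computation is the composition law $[Z_0^t]^*\circ[Z_0]^* = ([Z_0]\circ[Z_0^t])^*$ from Proposition~\ref{prop:cycle-action}\ref{prop:cycle-action1}. Here one computes the cycle-theoretic composition $[Z_0]\circ[Z_0^t]\in C_X(X,X)$: because $Z\to X$ and $Z\to Y$ are birational, the refined intersection product yielding this composition is $[\Delta_X]$ plus a cycle supported over a proper closed subset of $X$ (the indeterminacy locus). Then I would invoke Lemma~\ref{lem:cas}: any correspondence class $\alpha\in C_S(X,X)$ that lifts to $\CH^{\dim X}_{Z'\times_S X}(X\times X)$ for $Z'\subsetneq X$ a proper closed subset induces a map factoring through $Rf_*R\ul{\Gamma}_{Z'}F_X\to Rf_*F_X$, and $R\ul{\Gamma}_{Z'}F_X=0$ by \cite[Thm.~3.1]{S-purity} (global injectivity for reciprocity sheaves, as used already in the proof of Theorem~\ref{thm2:dds}). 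Hence the ``correction term'' contributes zero and $[Z_0^t]^*\circ[Z_0]^*=[\Delta_X]^*=\mathrm{id}$ by Proposition~\ref{prop:cycle-action}\ref{prop:cycle-action3} (applied to $\mathrm{id}_X$) together with \eqref{prop:cycle-action5}. The symmetric argument over $Y$ gives $[Z_0]^*\circ[Z_0^t]^*=\mathrm{id}$, so $[Z_0]^*$ is an isomorphism $Rg_*F_Y\xrightarrow{\simeq}Rf_*F_X$; taking $H^0$ (or noting $F$ is a sheaf, not a complex, so $R^if_*F_X$ and $R^ig_*F_Y$ also match, though the statement only claims $f_*F_X\cong g_*F_Y$) yields the assertion. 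One should also check that the isomorphism is independent of the choice of proper birational correspondence $Z$, which follows since any two such $Z$, $Z'$ are dominated by a third (e.g.\ the closure of the diagonal image in $Z\times_S Z'$), and the correspondence classes $[Z_0]$ agree.

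\textbf{Main obstacle.} The delicate point is the cycle-theoretic identity $[Z_0]\circ[Z_0^t]=[\Delta_X]+(\text{cycle over a proper closed subset})$ in $\CH^{\dim X}_{\Phi^{\mathrm{prop}}_{X\times_S X}}(X\times X)$, and dually over $Y$. This requires understanding that the refined intersection $p_{13*}(p_{12}^*[Z_0^t]\cdot p_{23}^*[Z_0])$ on $X\times Y\times X$, restricted to the open where everything is an isomorphism, is exactly the diagonal, and that the excess part is set-theoretically contained in $E\times_S X$ for $E\subset X$ the locus where $Z\to X$ fails to be an isomorphism — a proper closed subset since $Z\to X$ is proper birational. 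This is a standard-looking fact about birational correspondences (it is the mechanism behind \cite[Prop.~3.2.2]{CR11} and analogous statements in \cite{Fu}), but one must be careful that the supports stay within the admissible family $\Phi^{\mathrm{prop}}$ throughout, so that all the pushforwards and intersection products in $C_S$ are legitimate; properness of $Z$ over $S$ is what makes this work. I expect the remaining steps (vanishing of $R\ul{\Gamma}_{Z'}F_X$, the composition law, the bundle invariance) to be essentially formal given the results already established in the paper.
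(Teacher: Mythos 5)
Your overall route is the same as the paper's: replace $X$, $Y$ by the projective bundles using the bundle invariance (Corollary \ref{cor:PnInv}, resp.\ Theorem \ref{thm:pbf}), form $[Z_0]\in C_S(P,Q)$ and $[Z_0^t]$, use the localization sequence for Chow groups to write $[Z_0]\circ[Z_0^t]$ (and $[Z_0^t]\circ[Z_0]$) as the diagonal plus a cycle supported over a strict closed subset, kill the correction term via Lemma \ref{lem:cas}, and conclude by Proposition \ref{prop:cycle-action}. So the skeleton is right.

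There is, however, one genuinely wrong step: you claim $R\ul{\Gamma}_{Z'}F_X=0$ for a strict closed subset $Z'\subset X$, citing \cite[Thm 3.1]{S-purity}. That theorem only gives injectivity of the restriction to a dense open, hence the vanishing of the \emph{underived} sections with support, $\ul{\Gamma}_{Z'}F_X=0$; the full complex $R\ul{\Gamma}_{Z'}F_X$ is not zero in general (its top cohomology is exactly what the Gysin map of Section \ref{sec:gysin} hits). Consequently your conclusion that $[Z_0]^*\colon Rg_*F_Y\to Rf_*F_X$ is an isomorphism of derived pushforwards, and your parenthetical that the higher direct images $R^if_*F_X$ and $R^ig_*F_Y$ also match, do not follow and are false in general: take $F=\G_m$ and $X\to Y=S$ a blow-up of a point on a surface, where $R^1f_*\G_{m,X}\neq 0$ while $R^1\mathrm{id}_*\G_{m,Y}=0$. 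This failure is precisely why Theorems \ref{thm:BIu} and \ref{thm:BItop} require the extra hypotheses $\gamma^1F=0$, resp.\ $F(\xi)=0$. The fix is the one you already hint at in your hedge: the error cycle acts as zero only after passing to $H^0$, because its action on $f_*\pi_{X*}F_P$ factors through $\ul{\Gamma}_{Z'}$-sections, which vanish by \cite[Thm 3.1(2)]{S-purity}; this is exactly how the paper argues, working throughout with the underived pushforwards $f_*\pi_{X*}F_P$ and $g_*\pi_{Y*}F_Q$, which suffices since the statement only concerns $f_*F_X\cong g_*F_Y$. With that correction your argument is the paper's proof.
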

\begin{proof}
Let $Z$ be a proper birational correspondence between
$P:=\P(V)$ and $Q:= \P(W)$, where $V$ and $W$ are locally free coherent sheaves on $X$ and $Y$, respectively.
Let $Z_0\subset P\times Q$ be the image of the induced map 
$Z\to P\times Q$ and denote by $Z^t_0$ its transpose. We obtain induced elements 
$[Z_0]\in C_S(P,Q)$, $[Z_0^t]\in C_S(Q,P)$. 
By assumption on $Z$ and the localization sequence for Chow groups 
the compositions $[Z_0]\circ [Z_0^t]$ and $[Z_0^t]\circ [Z_0]$ are equal
to the respective diagonal plus a cycle $E$ which maps to at least 1-codimensional subschemes in both $P$ and $Q$.
Since by \cite[Thm 3.1(2)]{S-purity} the restriction to the generic point 
$F_P\to j_*F_\eta$ is injective (and similar for $Q$),
$E$ acts as zero in both cases, by Lemma \ref{lem:cas}. 
By Proposition \ref{prop:cycle-action}\ref{prop:cycle-action1}, \ref{prop:cycle-action3}
the actions
$[Z_0]^*: g_*\pi_{Y*} F_Q\to f_*\pi_{X*} F_P$ and 
$[Z_0^t]^*:  f_*\pi_{X*} F_P\to g_*\pi_{Y*} F_Q$ are inverse to each other,
where $\pi_X: P\to X$ and $\pi_Y: Q\to Y$ denote the projections.
The statement follows from the projective bundle formula, Theorem \ref{thm:pbf}.
\end{proof}

\begin{rmk}
In case $f$ and $g$ are projective, the above Theorem also follows directly from purity and 
the projective bundle formula, see also \cite[Thm 8.5.1, 8.6.1]{CTHK}.
\end{rmk}

\begin{thm}\label{thm:BIu}
Let $p$ be the exponential characteristic of $k$.
Let $(f: X\to S)$, $(g: Y\to S)\in C_S$,  with $X, Y$ integral,  and let 
$Z$ be a proper birational correspondence between them.
Let $Z_0\subset X\times Y$ be the image of $Z\to X\times Y$.

Then there exists a natural number $n\ge 0$ such that for all $F\in \RSC_{\Nis}$ with $\gamma^1 F=0$
the composition
\[p^n \cdot [Z_0]^*\circ [Z_0^t]^*: Rf_*F_X\to Rg_* F_Y\to Rf_* F_X\]
is equal to the multiplication by $p^n$.
If $p=1$ or if singularities can be resolved in dimension $\dim X-1$, then we have an isomorphism
\[[Z_0]^*: Rg_*F_Y\xr{\simeq} Rf_* F_X.\]
\end{thm}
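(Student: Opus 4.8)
The plan is to mimic the proof of Theorem \ref{lem:BIH0} (the birational invariance of $H^0$), replacing the use of purity for $H^0$ with the more refined vanishing input $\gamma^1 F = 0$. First I would form, as in the statement, the cycle $[Z_0] \in C_S(X,Y)$ (image of $Z \to X \times Y$, with $\dim Z_0 = \dim X$ since $Z \to X$ is proper birational), its transpose $[Z_0^t] \in C_S(Y,X)$, and compute the two compositions $[Z_0^t]\circ[Z_0] \in C_S(X,X)$ and $[Z_0]\circ[Z_0^t]\in C_S(Y,Y)$ using the refined intersection product. Since $Z \to X$ and $Z \to Y$ are birational, the standard localization argument for Chow groups gives decompositions
\[
[Z_0^t]\circ [Z_0] = [\Delta_X] + E_X, \qquad [Z_0]\circ [Z_0^t] = [\Delta_Y] + E_Y,
\]
where $E_X$ (resp. $E_Y$) is a cycle of dimension $\dim X$ supported on $W_X \times X$ (resp. $W_Y \times Y$) for a closed subscheme $W_X \subsetneq X$ (resp. $W_Y \subsetneq Y$) of codimension $\geq 1$, proper over the first factor.

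Next I would analyze the action of the error terms $E_X$, $E_Y$. By Proposition \ref{prop:cycle-action}\ref{prop:cycle-action1}, \ref{prop:cycle-action3} and the additivity of the cycle action, it suffices to show $E_X^* = 0$ and $E_Y^* = 0$ on $Rf_* F_X$, resp. $Rg_* F_Y$. Here is where $\gamma^1 F = 0$ enters: write $E_X = \sum_i m_i [V_i]$ with $V_i \subset X \times X$ integral of dimension $\dim X$, proper over the first factor, with image $V_{i,X}$ in the first factor of codimension $r_i \geq 1$. If $p = 1$ or singularities of $V_{i,X}$ (which has dimension $\leq \dim X - 1$) can be resolved, then Corollary \ref{cor:cor-van}\ref{cor:cor-van1} applies provided $(\gamma^{r_i} F)_Z = 0$ for all smooth $Z$ of dimension $\dim V_{i,X}$; and indeed $\gamma^1 F = 0$ forces $\gamma^{r_i} F = \gamma^{r_i - 1}(\gamma^1 F) = 0$ for all $r_i \geq 1$, hence $(\gamma^{r_i}F)_Z = 0$ for every smooth $Z$. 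So $[V_i]^* = 0$ and thus $E_X^* = 0$; symmetrically $E_Y^* = 0$. In the general positive-characteristic case, Corollary \ref{cor:cor-van}\ref{cor:cor-van2} (via de Jong–Gabber alterations) gives instead $p^{n_i}[V_i]^* = 0$ for some $n_i$ depending only on $V_i$; taking $n = \max_i n_i$ over the finitely many components appearing in $E_X$ and $E_Y$ yields $p^n E_X^* = 0$ and $p^n E_Y^* = 0$.

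Assembling these, the first assertion follows: since $([\Delta_X])^* = \mathrm{id}$ on $Rf_* F_X$ by Proposition \ref{prop:cycle-action}\ref{prop:cycle-action3} (or \ref{prop:cycle-action5} in its proof) and $[Z_0^t]^* \circ [Z_0]^* = ([\Delta_X] + E_X)^* = \mathrm{id} + E_X^*$, we get $p^n\bigl([Z_0^t]^* \circ [Z_0]^*\bigr) = p^n \cdot \mathrm{id}$ on $Rf_* F_X$, which is exactly the claimed equality $p^n \cdot [Z_0]^*\circ[Z_0^t]^* = p^n$ after renaming (note the statement's composition $Rf_*F_X \to Rg_*F_Y \to Rf_*F_X$ is $[Z_0^t]^* \circ$ applied after $[Z_0]^*$, so one must just be careful to match the variance: $[Z_0]^*\colon Rg_* F_Y \to Rf_* F_X$ by \eqref{para:cylce-action1}). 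For the second assertion, when $p = 1$ or resolution holds in dimension $\dim X - 1$, we get $E_X^* = E_Y^* = 0$ outright, so $[Z_0^t]^* \circ [Z_0]^* = \mathrm{id}_{Rf_* F_X}$ and $[Z_0]^* \circ [Z_0^t]^* = \mathrm{id}_{Rg_* F_Y}$, whence $[Z_0]^*\colon Rg_* F_Y \xrightarrow{\simeq} Rf_* F_X$ is an isomorphism with inverse $[Z_0^t]^*$.

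The main obstacle I anticipate is purely bookkeeping rather than conceptual: verifying carefully that $[Z_0]$ and $[Z_0^t]$ genuinely define morphisms in $C_S$ (i.e. that the relevant supports are proper over the correct factors, which uses properness of $Z \to X$ and $Z \to Y$), and that the localization-sequence decomposition of the compositions has the error terms supported as claimed with the multiplicities tracked — this is the place where one invokes \cite[16]{Fu}. A secondary point requiring care is the identity $\gamma^{r}F = \gamma^{r-1}(\gamma^1 F)$ and the fact that $\gamma$ commutes with the relevant operations so that $\gamma^1 F = 0 \Rightarrow \gamma^r F = 0$ for $r \geq 1$ (this follows from Definition \ref{para:RSCtwist3} and the exactness in Lemma \ref{lem:gamma-exact}); once this is in place the hypotheses of Corollary \ref{cor:cor-van} are met automatically, and the rest is formal.
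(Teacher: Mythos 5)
Your proposal is correct and follows essentially the same route as the paper: the paper's proof is exactly the argument of Theorem \ref{lem:BIH0} (decompose $[Z_0^t]\circ[Z_0]$ and $[Z_0]\circ[Z_0^t]$ as diagonal plus error cycles supported over codimension $\ge 1$ subsets via the localization sequence), with the error terms killed — up to a $p$-power via Gabber--de Jong — by Corollary \ref{cor:cor-van}, using that $\gamma^1 F=0$ forces $\gamma^r F=0$ for all $r\ge 1$. The only blemish is the cosmetic swap of which composed cycle ($[Z_0^t]\circ[Z_0]$ acting on $Rf_*F_X$ versus $[Z_0]\circ[Z_0^t]$ acting on $Rg_*F_Y$) gives $[\Delta_X]+E_X$, which you already flag and which does not affect the argument.
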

\begin{proof}
The proof is similar to the one of Theorem \ref{lem:BIH0},
where we use the extra assumption on $F$ and Corollary \ref{cor:cor-van}
to see that the cycle $p^n E$ acts as zero on $Rf_*F_X$ and $Rg_*F_Y$, respectively. 
\end{proof}

\begin{thm}\label{thm:BItop}
Let $(f: X\to S)$, $(g: Y\to S)\in C_S$,  with $X, Y$ integral.
Let $F\in \RSC_\Nis$ and assume that $F(\xi)=0$, for all points $\xi$ 
which are finite and separable over a  point of $X$ or $Y$ of codimension $\ge 1$.
Then any proper birational correspondence between $X$ and $Y$ induces an isomorphism
\[Rg_*F_Y\xr{\simeq} Rf_* F_X.\]
\end{thm}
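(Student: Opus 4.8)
The strategy is exactly that of Theorem \ref{lem:BIH0} and Theorem \ref{thm:BIu}, but now using the stronger pointwise vanishing hypothesis on $F$ in place of the weaker twist vanishing $\gamma^1 F = 0$, so that we can kill the error cycle \emph{integrally} rather than up to a power of $p$. First I would set up the two correspondences. Let $Z$ be a proper birational correspondence between $X$ and $Y$, let $Z_0\subset X\times Y$ be the image of $Z\to X\times Y$, and let $Z_0^t\subset Y\times X$ be its transpose. Since $Z\to X$ and $Z\to Y$ are proper birational $S$-morphisms, $Z_0$ is proper over $X$ (hence defines a class in $C_S(X,Y)$) and $Z_0^t$ is proper over $Y$ (hence defines a class in $C_S(Y,X)$); both are integral of dimension $\dim X = \dim Y$. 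By the localization sequence for Chow groups, the composition $[Z_0^t]\circ [Z_0]\in C_S(X,X)$ equals $[\Delta_X] + \alpha$ where $\alpha$ is supported on a closed subscheme of $X\times X$ whose two projections to $X$ both have image of codimension $\ge 1$; write $\alpha = \sum_i m_i[V_i]$ with $V_i\subset X\times X$ integral of dimension $\dim X$, proper over the first factor, and with $(V_i)_X$, the closure of the image of $V_i$ under the first projection, of codimension $\ge 1$ in $X$ (one also has the analogous statement over the second factor, i.e.\ the image under the second projection has codimension $\ge 1$). Symmetrically $[Z_0]\circ [Z_0^t] = [\Delta_Y] + \beta$ with $\beta$ of the same shape on $Y\times Y$.

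The heart of the matter is to show that each $[V_i]^*\colon Rf_* F_X \to Rf_* F_X$ (and symmetrically on $Y$) vanishes. This is precisely Proposition \ref{prop:cor-van-top}: there, for an integral $V\subset X\times_S Y'$ proper over the first factor with $\dim V = \dim X$ and with image-closure $V_{Y'}$ of codimension $r$, one concludes $[V]^* = 0$ provided $F(\xi)=0$ for all $\xi$ finite and separable over the generic point of $V_{Y'}$. Here, applying this with $Y'=X$ (or $Y$) and $V = V_i$: the generic point of $(V_i)_X$ is a point of $X$ of codimension $\ge 1$, and any point finite and separable over it is finite and separable over a codimension $\ge 1$ point of $X$, so the hypothesis of the present theorem gives exactly the vanishing $F(\xi)=0$ required by Proposition \ref{prop:cor-van-top}. (Note this step uses that $(V_i)_X$ has codimension $\ge 1$; this is where the birationality of the correspondence, not just its properness, is used.) One should also invoke Lemma \ref{lem:cas} to see that the ``diagonal part'' $[\Delta_X]^*$ acts as the identity on $Rf_* F_X$ — indeed by Proposition \ref{prop:cycle-action}\ref{prop:cycle-action3} the class of the diagonal morphism $\id_X$ acts as $\id^* = \id$, and the cycle $[\Delta_X]\in \CH^{\dim X}_{\Delta_X}(X\times X)$ is the image of $[\Gamma_{\id_X}]$. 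Extending to bounded below complexes is immediate via \ref{para:coracx}, but for the statement as written $F$ is a single sheaf so this is not needed.

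Combining: $[Z_0^t]^* \circ [Z_0]^* = ([Z_0^t]\circ[Z_0])^* = [\Delta_X]^* + \sum_i m_i [V_i]^* = \id_{Rf_* F_X}$ by Proposition \ref{prop:cycle-action}\ref{prop:cycle-action1} together with the vanishing and identity facts above, and symmetrically $[Z_0]^*\circ [Z_0^t]^* = \id_{Rg_* F_Y}$. Hence $[Z_0]^*\colon Rg_* F_Y \to Rf_* F_X$ is an isomorphism with inverse $[Z_0^t]^*$. I expect the main obstacle to be purely bookkeeping: making precise the decomposition of $[Z_0^t]\circ[Z_0]$ and of $[Z_0]\circ[Z_0^t]$ into diagonal-plus-small-cycles and verifying that every component $V_i$ of the ``small'' part has image of codimension $\ge 1$ under \emph{the relevant projection} (so that Proposition \ref{prop:cor-van-top} applies on the correct side) — this is where one must be careful that $Z_0$ and $Z_0^t$ are only proper over one factor each, and that $[V]^*$ in Proposition \ref{prop:cor-van-top} is set up with $V$ proper over the source. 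There is no genuinely hard analytic input beyond Proposition \ref{prop:cor-van-top}, which in turn rests on the semipurity/purity results of \cite{S-purity} and the local Gysin map; all of these are available.
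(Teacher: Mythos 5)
Your proposal is correct and follows essentially the same route as the paper: decompose $[Z_0^t]\circ[Z_0]$ and $[Z_0]\circ[Z_0^t]$ via the localization sequence as the diagonal plus an error cycle supported over codimension $\ge 1$ loci, kill the error term using Proposition \ref{prop:cor-van-top} together with the hypothesis $F(\xi)=0$, and conclude via Proposition \ref{prop:cycle-action} that $[Z_0]^*$ and $[Z_0^t]^*$ are mutually inverse. Your closing caveat about which projection the codimension condition refers to is handled exactly as you suspect, since the hypothesis covers codimension $\ge 1$ points of both $X$ and $Y$.
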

\begin{proof}
The proof is similar to the one of Theorem \ref{lem:BIH0},
where we use the extra assumption on $F$ and Proposition \ref{prop:cor-van-top}
to see that the cycle $E$ acts as zero on $Rf_*F_X$ and $Rg_*F_Y$, respectively. 
\end{proof}

\begin{rmk}
\begin{enumerate}
\item Note that taking  $g=\id_Y: Y\to Y=S$ in Theorem \ref{thm:BItop}, yields the vanishing
         $R^i f_* F_X=0$, $i\ge 1$, for any projective birational morphism $f:X\to Y$ and 
             any $F$ as in the theorem.
\item The archetype of reciprocity sheaf which satisfies the condition $F(\xi)=0$ 
        is $\Omega^{\dim X}_{/k}$. See Corollary \ref{cor:BI-exa} below for this and more examples. 
        Also the next lemma shows that there is an ample supply of 
        non-trivial reciprocity sheaves satisfying this condition.
\end{enumerate}
\end{rmk}

\begin{lemma}\label{lem:top}
Let $p$ be the characteristic of the perfect base field $k$.
Let $\ell$ be a prime  number.
If $\ell\neq p$, we additionally assume that $\dim_\ell k<\infty$,  
where $\dim_\ell$ denotes the $\ell$-cohomological dimension.  
Let $F\in \RSC_\Nis$ be  $\ell$-primary torsion. 
Let $X\in\Sm$ be integral and set 
\[d=\begin{cases} \dim X, &\text{if } p=\ell\\ \dim_\ell k(X), &\text{if } p\neq \ell.\end{cases}\]
Then  for every point $\xi$ which is finite  over a  point of $X$ of codimension $\ge 1$  we have 
 \[ F\la d\ra(\xi)=0.\]
 If furthermore $X$ is quasi-projective and has a zero-cycle of degree prime to $\ell$ and
 $F(k)\neq 0$, then $F\la d\ra_X\neq 0$.
\end{lemma}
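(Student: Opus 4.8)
\textbf{Proof plan for Lemma \ref{lem:top}.}

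The plan is to reduce the vanishing $F\langle d\rangle(\xi)=0$ to a statement about the cohomological dimension of $\xi$, using the twist computation available for torsion reciprocity sheaves. First I would recall how $\langle d\rangle$-twists interact with $\gamma$-contractions: by \eqref{para:RSCtwist6} we have $\gamma^d(F\langle d\rangle)=F$, and by Proposition \ref{prop:twistMilnor}, $\gamma^n G=\uHom_{\PST}(\sK^M_n,G)$ for any $G\in\RSC_\Nis$. So $F=\uHom_{\PST}(\sK^M_d, F\langle d\rangle)$, and by adjunction/iteration the sections of $F\langle d\rangle$ over a field $E$ are controlled by pairings with Milnor $K$-theory in degree $d$. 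The key input is that for $\ell$-primary torsion $F$, there is an identification (after the work of Geisser--Levine when $\ell=p$, and the Bloch--Kato/Beilinson--Lichtenbaum picture when $\ell\neq p$, as recorded in \ref{subsec:exaRSC}) of $F\langle d\rangle$ with a twist of an étale-type sheaf whose sections over $E$ vanish as soon as the $\ell$-cohomological dimension of $E$ drops below $d$. Concretely, for $\xi$ finite over a codimension $\geq 1$ point $x$ of $X$, the field $E=k(\xi)$ is finite over $k(x)$, and $\operatorname{trdeg}_k k(x)\leq \dim X-1$ (when $\ell=p$), so $\dim_\ell E\leq d-1$ in the $\ell=p$ case by the Geisser--Levine computation of $\Omega^{d}$-type invariants, and $\dim_\ell E\leq \dim_\ell k(X)-1=d-1$ in the $\ell\neq p$ case since $k(\xi)/k(x)$ is finite and $k(x)$ has transcendence degree one less than $k(X)$ over $k$ (using $\dim_\ell k<\infty$ and the Tsen--Lang type estimate $\dim_\ell k(X)=\dim_\ell k+\operatorname{trdeg}$). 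In either case the relevant degree-$d$ cohomology (or $d$-th Milnor $K$-group modulo $\ell$) over $E$ vanishes, hence $F\langle d\rangle(\xi)=0$.

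For the second assertion, suppose $X$ is quasi-projective with a zero-cycle $z$ of degree $N$ prime to $\ell$, and $F(k)\neq 0$; I would show $F\langle d\rangle_X\neq 0$ by producing a nonzero section at the generic point. The generic point $\eta$ of $X$ has $k(\eta)=k(X)$ with $\dim_\ell k(X)=d$ (in the $\ell\neq p$ case by definition, in the $\ell=p$ case because $X$ is of dimension $d$ over the perfect field $k$ and the relevant logarithmic de Rham--Witt / Milnor $K$-theory invariant is nonzero in top degree — e.g. via a separating transcendence basis one gets a nonzero class in $K^M_d(k(X))/\ell$ coming from $F(k)\neq 0$). More precisely, $F(k)\neq 0$ gives a nonzero element of $(F\langle d\rangle)(k)$ is not what we want; rather, pick $0\neq a\in F(k)$, view it via the cup-product/cycle pairing of \ref{para:RSCcyclecup}: the element $a$ together with a suitable symbol $\{t_1,\dots,t_d\}\in K^M_d(k(X))$ coming from a transcendence basis pairs (under the pairing dual to $\gamma^d(F\langle d\rangle)=F$) to a nonzero section of $F\langle d\rangle$ over $k(X)$, provided the symbol is nonzero mod the relevant relations; the hypothesis on the zero-cycle of degree prime to $\ell$ is what guarantees that the trace/norm argument does not kill $a$ when passing between $k$ and $k(X)$, i.e. that the composite $F(k)\to F\langle d\rangle(\eta)$ obtained this way is injective. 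Thus $F\langle d\rangle_X\neq 0$.

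The main obstacle I expect is the second part: cleanly extracting a nonzero section of $F\langle d\rangle$ at the generic point from $F(k)\neq 0$ and the degree-prime-to-$\ell$ hypothesis. This requires knowing that the twist $F\mapsto F\langle d\rangle$ does not collapse — equivalently that the canonical map $F\to\gamma^d(F\langle d\rangle)$ of \eqref{para:RSC:twist2}/\eqref{para:RSCtwist6} being an isomorphism can be "tested" by a single well-chosen symbol in $K^M_d(k(X))/\ell$, and that this symbol survives. In characteristic $p$ with $\ell=p$ this is essentially the Bloch--Kato--Gabber theorem identifying $K^M_d/p$ with $\Omega^d_{\log}$ and the fact that a separating transcendence basis $t_1,\dots,t_d$ of $k(X)/k$ gives $d\log t_1\wedge\cdots\wedge d\log t_d\neq 0$; combined with the degree-prime-to-$p$ zero-cycle to control base change, this should suffice. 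For $\ell\neq p$ the analogous statement uses that $\dim_\ell k(X)=d$ exactly (so $H^d$ is the top nonvanishing degree and is nonzero on appropriate coefficients) together with a transfer argument for the zero-cycle. The first part, by contrast, is a fairly direct cohomological-dimension bound once the identification of torsion twists with étale-type invariants is in hand, and should not present serious difficulties.
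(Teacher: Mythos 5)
Your proposal has a genuine gap in the main (vanishing) part. The decisive step — passing from the vanishing of $K^M_d/\ell^m$ over $k(\xi)$ and its finite extensions to the vanishing of $F\la d\ra(\xi)$ — is not supplied by your argument. You invoke $\gamma^d(F\la d\ra)\cong F$, i.e.\ $F\cong\uHom_{\PST}(K^M_d,F\la d\ra)$, but this adjunction expresses $F$ in terms of $F\la d\ra$ and gives no control over sections of $F\la d\ra$ itself; and the ``identification of $F\la d\ra$ with a twist of an \'etale-type sheaf'' you appeal to does not exist for an \emph{arbitrary} $\ell$-primary torsion reciprocity sheaf (Geisser--Levine and Bloch--Kato identify specific sheaves such as $W_m\Omega^q_{\log}$ or $H^d_{\et}(-,\mu_\ell^{\otimes d})$, not a general $F$). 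The paper's bridge is the opposite observation: the twist is a \emph{quotient} of a tensor product with Milnor $K$-theory. By Corollary \ref{cor:gtwist} and \eqref{para:RSC:twist2} one has a surjection $F\otimes_{\NST}K^M_d\surj F\la d\ra$, and by the presentation of $\otimes_{\PST}$ over a field (\cite[5.1.3 Prop]{IvRu}) the group $(F\otimes_{\PST}K^M_d/\ell^m)(K)$, $K=k(\xi)$, is generated by the images of $F(L)\otimes_{\Z}K^M_d(L)/\ell^m$ for $L/K$ finite. Only with this in hand does the numerical input you correctly identified finish the proof: $K^M_d(L)/p^m\cong W_m\Omega^d_{L,\log}\subset W_m\Omega^d_L=0$ by Bloch--Kato--Gabber since $\td(L/k)\le d-1$ when $\ell=p$, and $K^M_d(L)/\ell\cong H^d_{\et}(L,\mu_\ell^{\otimes d})=0$ since $\dim_\ell L\le d-1$ when $\ell\neq p$ (after first reducing to $\ell^m$-torsion by a colimit argument). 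So your vanishing inputs are the right ones, but the mechanism you propose for transporting them to $F\la d\ra(\xi)$ is not available, and without the surjection from $F\otimes K^M_d$ the argument does not go through.

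For the nonvanishing statement your plan — pairing $a\in F(k)$ with an explicit symbol in $K^M_d(k(X))$ and hoping a trace argument shows the resulting class is nonzero — is, as you acknowledge, not carried out, and it is not how the paper argues. The paper simply notes that this is a direct consequence of the \emph{proof} of Theorem \ref{thm:dds}: taking $X\to\Spec k$ and for the $V_i$ the closures of the points of the zero-cycle, the morphism $\sigma$ constructed there factors as $Rf_*F_X\xr{C_\alpha}Rf_*R\ul{\Gamma}_{\Phi}F\la \dim X\ra_X[\dim X]\xr{f_*}F_{\Spec k}$, and $\sigma\circ f^*$ is multiplication by an integer prime to $\ell$, hence injective on the $\ell$-primary group $F(k)\neq 0$; if the twisted sheaf vanished on $X$ this composite would be zero, a contradiction (and $d=\dim X$ in the case $\ell=p$). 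No symbol-by-symbol test or injectivity of $F(k)\to F\la d\ra(\eta)$ is needed, and the prime-to-$\ell$ hypothesis enters exactly through the degree computation of Theorem \ref{prop:bc-pf}\ref{prop:bc-pf3}, not through a norm argument at the generic point.
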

\begin{proof}
Note that the second statement is a direct consequence of the proof of Theorem \ref{thm:dds}.
Let $\tF=\uomega^{\CI}F\in \RSC_\Nis$.
By Corollary \ref{cor:gtwist} we have a surjection
\[\ul{a}_\Nis(\tF\otimes_{\uMPST} \uomega^*K^M_n)\surj \tF(n).\]
Since $\uomega_!$ is monoidal and exact  and  $\uomega_! \ul{a}_\Nis= a_\Nis\uomega_!$
(see \ref{para:MCor} and \ref{para:sheaf}) we obtain a surjection for  $n\ge 1$
\eq{lem:top1}{F\otimes_{\NST} K^M_n\to \uomega_!(\tF(n))\xr{\eqref{para:RSC:twist2}}  F\la n\ra.}
We have to show $F\la d\ra(\xi)=0$, for $\xi$ as in the statement. 
By a colimit argument we may assume that 
$F\la d\ra$ is $\ell^m$-torsion for some $m$.
Set $K=k(\xi)$.
By \eqref{lem:top1} it suffices to show $(F\otimes_{\PST} K^M_d/\ell^m)(K)=0$.
By \cite[5.1.3 Prop]{IvRu} we have a surjection
\[\bigoplus_{L/K} F(L)\otimes_{\Z} K^M_{d}(L)/\ell^m\surj (F\otimes_{\PST} K^M_{d}/\ell^m)(K),\]
where $L$ runs over all finite field extensions of $K$.
Therefore it suffices to show $K^M_{d}(L)/\ell^m=0$, for all fields $L$ finite over $K$.
If $\ell=p$,  we have $\td(L/k)=\td(K/k)\le d-1$ and the vanishing follows from 
the Bloch-Kato-Gabber Theorem (see \cite[Cor 2.8]{BK})
\[K^M_{d}(L)/p^m\cong W_m\Omega^{d}_{L,\log}\subset W_m\Omega^{d}_{L},\]
where $W_m\Omega^{d}_L$ is the de Rham-Witt sheaf in degree $d$, and the fact
that the latter group is zero, as follows from \cite[I, Prop 3.11]{Il}.
If $\ell\neq p$ we have  an isomorphism
\[K^M_{d}(L)/\ell \cong H^{d}_{\et}(L, \mu^{\otimes d}_\ell),\]
by the Milnor-Bloch-Kato conjecture proven by Voevodsky  (see \cite[Thm 6.16]{V11}).
In this case the vanishing follows from $\dim_{\ell} L=\dim_\ell k(\xi)\le  d-1$, which holds by assumption, and
\cite[II, \S4, Prop 11]{Serre-CG}.
\end{proof}

\subsection{Decomposition of the diagonal.}\label{para:dec-diag}
In the following we will investigate the implications of the cycle action in case
we have a decomposition of the diagonal, a method which was first employed in \cite{BlSr}.
 
\begin{thm}\label{thm:comp-diag}
Let $F^\bullet$ be a bounded below complex of reciprocity sheaves.
Let $A$ be an integral excellent $k$-algebra of dimension $\le 1$, which is a directed limit 
$A=\varinjlim_\nu A_\nu$ such that the $A_\nu$ are smooth and of finite type over $k$ and the
transition maps $A_\nu\to A_{\nu'}$, for $\nu\le \nu'$, are flat.
Let $f :X\to S=\Spec A$ be a smooth projective  morphism of relative dimension $d$.
Let $\eta\in S$ be the generic point and $X_\eta$ the generic fiber of $f$.
Assume there exists an integer $N$, a zero cycle $\xi\in \CH_0(X_\eta)$ of degree $N$  and 
a cycle $\beta\in \CH_d(Z \times_\eta X_\eta)$, where
$i: Z\inj X_\eta$ is a closed immersion of codimension $\ge 1$, such that 
\eq{thm:comp-diag1}{N\cdot [\Delta_{X_\eta}]= p_2^*\xi + (i\times \id)_*\beta 
\qquad \text{in } \CH^d(X_\eta\times_\eta X_\eta),}
where $\Delta_{X_\eta}$ denotes the diagonal.

Then there exists a strict closed subset $S_0\subset S$ such that, for all $i\ge 1$ the cokernel
\[\Coker (H^i(S_\Nis, F^\bullet)\oplus H^i_{\ol{Z}\cup X_{S_0}}(X_\Nis, F^\bullet)
\xr{f^*+ {\rm nat.}} H^i(X_\Nis, F^\bullet))\]
is $N$-torsion, where $X_{S_0}=X\times_S S_0$ and $\ol{Z}\subset X$ is the closure of $Z$.
Furthermore, if $N=1$ and $F^\bullet$ sits in degrees $\ge 0$, then 
\[H^0(S, F^\bullet)= H^0(X, F^\bullet).\]
\end{thm}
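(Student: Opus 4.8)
\textbf{Proof plan for Theorem \ref{thm:comp-diag}.}

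The strategy is the standard ``decomposition of the diagonal'' argument, now carried out at the level of the derived-category action of Chow correspondences constructed in Section \ref{sec:prop-cor-action}. First I would spread out the data from the generic fiber to an open subscheme. Since $\CH_0(X_\eta)$ and $\CH_d(X_\eta \times_\eta X_\eta)$ are limits of the corresponding Chow groups over the open subschemes $S' \subset S$ containing $\eta$ (and, further, over the $S_\nu = \Spec A_\nu$), the cycles $\xi$, $\beta$ and the relation \eqref{thm:comp-diag1} come from cycles $\tilde\xi \in \CH_d(X_{S'})$, $\tilde\beta \in \CH_d(\ol Z \times_{S'} X_{S'})$ on some $X_{S'}$, with $\ol Z \subset X_{S'}$ a closed subscheme all of whose components have codimension $\ge 1$, and such that the relation
\[ N\cdot [\Delta_X] = p_2^*\tilde\xi + (\iota\times\id)_*\tilde\beta \]
holds in $\CH^d(X_{S'}\times_{S'} X_{S'})$ after possibly shrinking $S'$ further; set $S_0 := S\setminus S'$, a strict closed subset. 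Replacing $S$ by $S'$, I work over $S' = S$ and with the closed subsets $\ol Z \cup X_{S_0}$; at the very end one passes to the limit over the $S_\nu$, which is harmless since $F^\bullet$ is bounded below and cohomology commutes with the relevant filtered limits (using \eqref{para:MCor} on writing $A$ as a limit of smooth algebras).

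The heart of the argument is to view all three cycles as elements of $C_S(X,X)$ (using that $f$ is projective, so that closed subschemes of $X\times_S X$ proper over $X$ are all of $\Phi^{\rm prop}_{X\times_S X}$) and to apply $[-]^*$ to $Rf_*F^\bullet_X$. By Proposition \ref{prop:cycle-action}\ref{prop:cycle-action3} and \eqref{prop:cycle-action5} the diagonal acts as the identity, so
\[ N\cdot \id_{Rf_*F^\bullet_X} = [p_2^*\tilde\xi]^* + [(\iota\times\id)_*\tilde\beta]^* \quad\text{in } \End_{D(S_\Nis)}(Rf_*F^\bullet_X).\]
Next I analyze the two summands. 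The cycle $p_2^*\tilde\xi$ is pulled back from $X$ via the second projection, i.e. it factors through $C_S(X, S)\times C_S(S,X)$: concretely $[p_2^*\tilde\xi]^* = f^* \circ (\text{deg-}N\text{ map})$ where the degree-$N$ map $Rf_*F^\bullet_X \to F^\bullet_S$ is the one furnished by Theorem \ref{thm:dds} / Theorem \ref{thm2:dds} applied to the zero-cycle $\tilde\xi$ of generic degree $N$ (here one uses Proposition \ref{prop:cycle-action}\ref{prop:cycle-action1} and the description \eqref{para:cylce-action1} of the action, together with Theorem \ref{prop:bc-pf}\ref{prop:bc-pf3} to identify the composite with $f^*$ precomposed with pushforward). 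Thus this summand factors through $f^*\colon H^i(S,F^\bullet) \to H^i(X, F^\bullet)$. For the second summand, $(\iota\times\id)_*\tilde\beta$ is supported on $\ol Z\times_S X$, which is contained in $(\ol Z \cup X_{S_0})\times_S X$ as a closed subset proper over $X$ (proper since $X\to S$ is projective, hence so is $\ol Z\times_S X \to X$); so by Lemma \ref{lem:cas} the morphism $[(\iota\times\id)_*\tilde\beta]^* \colon Rf_*F^\bullet_X \to Rf_*F^\bullet_X$ factors through $Rf_*R\ul\Gamma_{\ol Z\cup X_{S_0}}F^\bullet_X \to Rf_*F^\bullet_X$, hence on cohomology through $H^i_{\ol Z\cup X_{S_0}}(X,F^\bullet)$. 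Adding the two contributions shows that multiplication by $N$ on $H^i(X,F^\bullet)$ lands in the image of $f^* + (\text{natural map from cohomology with support})$, which is exactly the first assertion. For the last statement, when $N=1$ the identity of $Rf_*F^\bullet_X$ is the sum of the two maps above; taking $H^0$ and using that $\ul\Gamma_{\ol Z\cup X_{S_0}}F^\bullet = 0$ in degree $0$ because each component of $\ol Z\cup X_{S_0}$ has codimension $\ge 1$ in $X$ — more precisely $\ul\Gamma_T F = 0$ for any $F\in\RSC_\Nis$ and $T$ nowhere dense, by \cite[Thm 3.1]{S-purity}, applied to the sheaves $F^j$ and propagated to the bounded-below complex — one gets that $H^0(X,F^\bullet)$ equals the image of $f^*$; combined with the fact that $f^*$ is split injective in degree $0$ (again by Theorem \ref{thm:dds} with $N=1$, or directly: $f$ has a section after the decomposition gives a retraction), this yields $H^0(S,F^\bullet) \xrightarrow{\sim} H^0(X,F^\bullet)$.

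The main obstacle I anticipate is purely bookkeeping rather than conceptual: making the ``spreading out'' and the passage to the limit $A = \varinjlim A_\nu$ compatible with \emph{all} of the structures simultaneously — the Chow-group relation \eqref{thm:comp-diag1}, the family-of-supports formalism of Section \ref{subsec:pf-RSC}, the correspondence action of Section \ref{sec:prop-cor-action}, and the fact that $F^\bullet$ is merely a bounded-below complex so that one must work with the resolution-functor gymnastics of \ref{para:coracx}. In particular one must check that $\ol Z\cup X_{S_0}$ is the right closed set, that all intervening correspondences genuinely lie in the stated $C_S$-groups with the stated support conditions, and that $R\ul\Gamma_{\ol Z\cup X_{S_0}}$ of a bounded-below complex of reciprocity sheaves still vanishes in degree $0$ — the latter following termwise from \cite[Thm 3.1]{S-purity} since $R\ul\Gamma_T(-)$ is left exact and $\ol Z\cup X_{S_0}$ is nowhere dense in the integral scheme $X$. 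None of these steps is deep, but assembling them cleanly (and choosing $S_0$ once and for all so that it works uniformly for every $F^\bullet$) is where the care is needed.
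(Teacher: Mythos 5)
Your overall architecture is the same as the paper's (spread out the decomposition, let the three cycles act on $Rf_*F^\bullet_X$ via Section \ref{sec:prop-cor-action}, absorb the $\beta$-term into cohomology with supports by Lemma \ref{lem:cas}, factor the $p_2^*\xi$-term through the base, and read off the statements on $H^i$), but two steps as you have written them do not work and are precisely where the paper has to do real work.

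First, the reduction ``replacing $S$ by $S'$, I work over $S'=S$ and with the closed subsets $\ol Z\cup X_{S_0}$'' is incoherent: $X_{S_0}$ is disjoint from $X_{S'}$, and running the argument over $S'$ only bounds the cokernel of $H^i(S',F^\bullet)\oplus H^i_{\ol Z\cap X_{S'}}(X_{S'},F^\bullet)\to H^i(X_{S'},F^\bullet)$, from which the asserted statement over $S$ does not follow class by class (an element of $H^i(S',F^\bullet)$ need not extend to $S$, nor does $f^*$ of it extend modulo supports). The paper does not shrink the base: it uses the localization sequence for relative Chow groups to lift the generic decomposition to all of $S$ at the cost of an extra cycle $\alpha\in\CH_d(X_{S_0}\times_{S_0}X_{S_0}/S)$, so that $N\cdot[\Delta_X]=p_2^*\bar\xi+(i\times\id)_*\bar\beta+i_{0*}\alpha$, and it is exactly the pair $(\bar\beta,\alpha)$ that is absorbed by $H^i_{\ol Z\cup X_{S_0}}(X,F^\bullet)$ via Lemma \ref{lem:cas}. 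This lift is the missing idea in your reduction, and it is also why $X_{S_0}$ appears in the support term at all.

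Second, your identification $[p_2^*\tilde\xi]^*=f^*\circ(\text{degree-}N\text{ map})$ is not justified by Theorem \ref{thm:dds} or Theorem \ref{prop:bc-pf}\ref{prop:bc-pf3}: those give a retraction $\sigma$ with $\sigma\circ f^*=N$, not a factorization of the correspondence $p_2^*\bar\xi\in C_S(X,X)$ through $F^\bullet_S$. The paper gets this factorization from Lemma \ref{lem:azc}, applied after writing $\bar\xi=\sum_i m_i[T_i]$ with $T_i$ finite surjective over $S$ (projectivity of $f$ and $\dim S\le 1$), and this lemma requires the normalizations $\tilde T_i$ to be \emph{smooth} over $k$ so that the pushforward along $\tilde T_i\to S$ is available. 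This is exactly where the excellence of $A$ (finiteness of normalization, regularity) and the spreading out over the smooth finite-type $S_\nu$ enter — note also that the correspondence action of Section \ref{sec:prop-cor-action} is only defined for smooth quasi-projective $k$-schemes over a finite-type base, so the whole computation must be performed at a finite level $\nu$ (where one also arranges $\tilde T_{i,\nu}$ smooth) and only then passed to the limit; your plan never uses excellence and treats all of this as harmless bookkeeping, but without it the key factorization of the $\xi$-term, and hence both conclusions of the theorem, are not established. Your treatment of the $H^0$ statement is fine once these two points are repaired.
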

\begin{proof}
We assume $\dim A=1$. The proof for $\dim A=0$ is similar (and easier).
For $T$ a regular $k$-scheme and $Y$ a quasi-projective  $T$-scheme we
denote by $\CH_n(Y/T)$ the subgroup of $\CH_*(Y)$ formed by those cycles of relative dimension 
$n$ over $T$, see \cite[20.1]{Fu}. 
We find a dense open subset $U\subset S$, such that  the decomposition \eqref{thm:comp-diag1} extends 
to a decomposition in $\CH_d(X_U\times_U X_U/U)$, where $X_U=X\times_S U$.
Using the localization sequence for Chow groups  we find cycles
$\bar{\xi}\in \CH_0(X/S)$ and $\bar{\beta}\in \CH_{d}(\ol{Z}\times_S X/S)$ which lift $\xi$ and $\beta$,
respectively, and a cycle $\alpha\in \CH_d(X_{S_0}\times_{S_0} X_{S_0}/S)$, $S_0=S\setminus U$, 
such that the following equality holds in $\CH_d(X\times_S X/S)$
\eq{thm:comp-diag2}{N\cdot[\Delta_X]= p_2^*\bar{\xi}+ (i\times\id)_* \bar{\beta} +i_{0*}\alpha,}
where $i_{0}: X_{S_0}\times_{S_0} X_{S_0}\inj X\times_S X$ denotes the closed immersion.
Furthermore, since $X\to S$ is projective and $\dim S=1$, we can write
$\bar{\xi}=\sum_i m_i [T_i]$, where the $T_i$ are integral closed subschemes of $X$ which are finite over $S$.
(The $T_i$ are quasi-finite over $S$ by the dimension formula  \cite[(5.6.5.1)]{EGAIV2}.)

By assumption we find a projective system of smooth projective maps $(f_\nu: X_\nu\to S_\nu)$ between
smooth $k$-schemes, such that for $\nu'\ge \nu$ the transition maps $S_{\nu'} \to S_\nu$ are affine and flat
and we have $X_{\nu'}=X_{\nu}\times_{S_{\nu}} S_{\nu'}$,  and 
such that $f =\varprojlim_\nu f_\nu$. 
Hence
\[\CH_d(X\times_S X/S)=\varinjlim_\nu \CH_d(X_\nu\times_{S_\nu} X_\nu/S_\nu),\]
where the transition maps on the right are induced by flat pullback.
It follows that the decomposition \eqref{thm:comp-diag2} extends for $\nu$ large enough
to the following decomposition with the obvious notation
\[N\cdot [\Delta_{X_{\nu}}]= p_2^*\xi_\nu +(i_\nu\times \id)_*\beta_\nu +i_{0\nu*}\alpha_\nu
\quad \text{in } \CH_d(X_{\nu}\times_{S_\nu} X_{\nu}/S_\nu).\]
Additionally we can assume that $\xi_\nu= \sum_i m_i [T_{i,\nu}]$, 
where $T_{i,\nu}\subset X_{\nu}$ are finite and surjective over $S_\nu$ and such that
$T_{i,\nu}\times_{S_\nu} S=T_i$. 
Since $S$ is excellent and of dimension 1, so is $T_i$ and thus the normalization $\tilde{T}_i\to T_i$
is finite and $\tilde{T}_i$ is regular. Therefore, for $\nu$ large enough we can assume 
that the normalization $\tilde{T}_{i,\nu}$ of $T_{i,\nu}$ is smooth over $k$.
In the limit the action of $N\cdot [\Delta_{X_\nu}]$ is equal to $N\cdot \id$ on $R\Gamma(X, F^\bullet)$,
and the action of $(i_\nu\times \id)_*\beta_\nu +i_{0\nu*}\alpha_\nu$ on $R\Gamma(X, F^\bullet)$ 
factors by Lemma \ref{lem:cas} via $R\Gamma_{\ol{Z}\cup X_{S_0}}(X, F^\bullet)$. Furthermore, by Lemma \ref{lem:azc},
the action of  $p_2^*\xi_\nu$ factors in the limit via
\ml{thm:comp-diag3}{R\Gamma(X, F^\bullet)
\xr{\bigoplus_i m_i\mu_{i,1}^*} \bigoplus_i R\Gamma(\tilde{T}_i, F^\bullet) 
\\ \xr{\sum_i (f\mu_{i,1})_*} R\Gamma(S, F^\bullet) \xr{f^*} R\Gamma(X, F^\bullet),}
where $\mu_{i}: \tilde{T}_i\to T_i$ is the normalization and $\mu_{i,1}: \tilde{T}_i\to X$ is the induced map.
This yields the first statement. If $N=1$,  $\xi$ is a zero-cycle of degree $1$, and hence
$\sum_i m_i (f\mu_{i,1})_* (f\mu_{i,1})^*=\id$, by  Prop \ref{prop:pfs0}\ref{prop:pfs02}.
Thus the precomposition of \eqref{thm:comp-diag3} with $f^*$ is equal to $f^*$. 
For the second statement we observe, that if $F^\bullet$ is concentrated in degree $\ge 0$,
then $H^0_T(X, F^\bullet)=H^0_T(X, \sH^0(F^\bullet))$, for $T\subset X$, 
where $\sH^0(F^\bullet)$ is the zeroth cohomology sheaf of $F^\bullet$.
Since $\sH^0(F^\bullet)\in \RSC_\Nis$ the restriction 
$\sH^0(F^\bullet)\to j_* \sH^0(F^\bullet)_{|X\setminus T}$ is injective, for $T\subset X$ of positive codimension,
by \cite[Thm 3.1(2)]{S-purity}. Hence    $H^0_T(X, F^\bullet)=0$, for any such $T$, 
and the second statement follows from the above.
\end{proof}

\begin{rmk}\label{rem:dec_diagonal}
We recall some classical examples when the diagonal decomposes.
Let $K$ be a function field over $k$ and $X$ a smooth projective and geometrically connected $K$-scheme.
For a field extension $E/K$ we set $X_E= X\otimes_K E$. 
Then the following implications hold:
\begin{align*}
X_{\ol{K}} \text{ is } & \text{ rationally chain connected  over } \ol{K} \\
& \Rightarrow \deg:\CH_0(X_E)_{\Q}\xr{\simeq} \Q, \text{ for all } E/K\\
& \Rightarrow \eqref{thm:comp-diag1} \text{ holds for some } N\ge 1,
\end{align*}
for the first implication see, e.g., \cite[IV, 3.13 Thm]{Kollar}, for the second see \cite[Prop 1]{BlSr};
\begin{align*}
X \text{ is retract rational over }K & \Rightarrow \deg: \CH_0(X_E)\xr{\simeq} \Z, \text{ for all } E/K\\
& \Rightarrow \eqref{thm:comp-diag1} \text{ holds for } N=1,
\end{align*}
see \cite[Prop 1.4, Lem 1.5]{CTP}. (Recall that $X$ is \emph{retract rational} if
there exists a dense open $U\subset X$ and a dense open $V\subset \P^n_K$
and a morphism $V\to U$ which admits a section). More generally, following \cite{CTP} one says that a smooth projective $K$-scheme is \emph{universally $\CH_0$-trivial} if the degree map $\CH_0(X_E)\to \Z$ is an isomorphism for every field extension $E/K$.
\end{rmk}

\begin{thm}\label{thm:comp-diag-u}
Let $f:X\to S=\Spec A$ be as in Theorem \ref{thm:comp-diag}.
We assume the diagonal of the generic fiber of $f$ decomposes as in \eqref{thm:comp-diag1}.
Let $p$ denote the exponential characteristic of $k$.

Then there exists a number $n\ge 0$, such that for all
 bounded below complexes of reciprocity sheaves $F^\bullet$ with 
$\gamma F^i=0$, for all $i$, the quotient $H^j(X, F^\bullet)/f^*H^j(S, F^\bullet)$ is $p^n N$-torsion.
If furthermore $p^n N=1$ (which for example happens if $N=1$ and we can resolve singularities 
of all strict closed subschemes of $X$), then the pulback is an isomorphism 
\[f^* : R\Gamma(S, F^\bullet)\xr{\simeq} R\Gamma(X, F^\bullet).\]
\end{thm}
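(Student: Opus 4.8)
The plan is to follow the same pattern as in the proof of Theorem \ref{thm:comp-diag}, but to replace the input of Lemma \ref{lem:cas} with the stronger vanishing results coming from the hypothesis $\gamma^1 F^i = 0$. First I would run the limit argument verbatim: using that $A = \varinjlim_\nu A_\nu$ with flat transition maps and $f = \varprojlim_\nu f_\nu$ for smooth projective $f_\nu\colon X_\nu\to S_\nu$, the decomposition \eqref{thm:comp-diag1} of the diagonal of the generic fiber spreads out, first to a decomposition over a dense open $U\subset S$, then (using the localization sequence for Chow groups) to a decomposition over all of $S$ of the form
\[N\cdot[\Delta_X] = p_2^*\bar\xi + (i\times\id)_*\bar\beta + i_{0*}\alpha \qquad \text{in } \CH_d(X\times_S X/S),\]
where $\bar\xi = \sum_i m_i[T_i]$ with $T_i\subset X$ finite and surjective over $S$, $\bar\beta\in \CH_d(\ol Z\times_S X/S)$ with $\ol Z\subset X$ a closure of a codimension $\geq 1$ subscheme of the generic fiber, and $\alpha\in \CH_d(X_{S_0}\times_{S_0}X_{S_0}/S)$ for a strict closed $S_0\subset S$. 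One may further arrange, for $\nu$ large, that the normalizations $\tilde T_{i,\nu}$ of $T_{i,\nu}$ are smooth over $k$.

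Next I would analyze the three pieces of the decomposition acting on $R\Gamma(X, F^\bullet)$ via the correspondence action of Section \ref{sec:prop-cor-action}, extended to bounded-below complexes as in \ref{para:coracx}. The term $N\cdot[\Delta_X]$ acts as multiplication by $N$ on $R\Gamma(X, F^\bullet)$ by Proposition \ref{prop:cycle-action}\ref{prop:cycle-action3} (applied to the diagonal, as in \eqref{prop:cycle-action5}). For the term $p_2^*\bar\xi$, Lemma \ref{lem:azc} shows that in the limit its action factors through
\[R\Gamma(X, F^\bullet) \xrightarrow{\oplus_i m_i\mu_{i,1}^*} \bigoplus_i R\Gamma(\tilde T_i, F^\bullet) \xrightarrow{\sum_i(f\mu_{i,1})_*} R\Gamma(S, F^\bullet) \xrightarrow{f^*} R\Gamma(X, F^\bullet),\]
where $\mu_{i,1}\colon \tilde T_i\to X$ is the map induced by the normalization. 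This is exactly the image of $f^*$. The key point is then the treatment of the remaining cycle $(i\times\id)_*\bar\beta + i_{0*}\alpha$, which is supported on $\ol Z\times_S X$ and on $X_{S_0}\times_{S_0}X_{S_0}$, i.e.\ after projection to $X\times X$ it is supported on $(\ol Z\cup X_{S_0})\times X$, with each component of positive codimension in $X$ (here I use $\gamma^1 F^i=0$ and Corollary \ref{cor:cor-van}\ref{cor:cor-van2}, which produces a number $n\geq 0$ depending only on the cycles, not on $F^\bullet$, such that $p^n$ times this correspondence annihilates $R\Gamma(X, F^\bullet)$; in the resolution case $p^n$ is replaced by $1$ via Corollary \ref{cor:cor-van}\ref{cor:cor-van1}). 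Combining, $p^n N\cdot\id_{R\Gamma(X, F^\bullet)}$ factors through $f^*\circ(\text{something})\circ f^*$-type data landing in $R\Gamma(S, F^\bullet)$, which gives that $H^j(X, F^\bullet)/f^*H^j(S, F^\bullet)$ is $p^nN$-torsion for every $j$.

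Finally, when $p^nN=1$, the above shows $f^*$ is split injective on each $H^j$, with a retraction; composing on the left with $f^*$ and using that the diagonal acts as identity shows this retraction is a two-sided inverse up to the cycle action of $p_2^*\bar\xi$, and since $\bar\xi$ has degree $1$ one has $\sum_i m_i(f\mu_{i,1})_*(f\mu_{i,1})^* = \id$ on $R\Gamma(S, F^\bullet)$ by Proposition \ref{prop:pfs0}\ref{prop:pfs02}, so $f^*\colon R\Gamma(S, F^\bullet)\to R\Gamma(X, F^\bullet)$ is an isomorphism in $D(S_\Nis)$ (equivalently, after applying $Rf_*$, in $D(X_\Nis)$). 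The main obstacle I expect is bookkeeping the supports and relative dimensions carefully through the limit so that Corollary \ref{cor:cor-van} applies with a \emph{uniform} exponent $n$ independent of $F^\bullet$ — this is where one must check that the alterations furnished by Gabber--de Jong for the (finitely many) components of $\ol Z\cup X_{S_0}$ and their fiber products can be chosen at a finite level $\nu$, and that the hypothesis $\gamma^1 F^i = 0$ indeed forces $(\gamma^r F^i)_W = 0$ for the relevant smooth $W$ of the right dimension (which follows since $\gamma^r = \gamma^{r-1}\circ\gamma^1$ on $\RSC_\Nis$ by Corollary \ref{cor:gtwist}/\eqref{para:RSCtwist3}, hence vanishes as soon as $\gamma^1$ does).
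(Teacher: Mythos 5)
Your proposal is correct and is essentially the paper's own argument: rerun the proof of Theorem \ref{thm:comp-diag}, let the diagonal act as $N$, let $p_2^*\bar\xi$ act through $f^*$ via Lemma \ref{lem:azc}, and kill $p^n$ times the remaining correspondence supported over $\ol Z\cup X_{S_0}$ by Corollary \ref{cor:cor-van} (Hironaka/Gabber--de Jong), resp.\ by Proposition \ref{prop:cor-van} in the resolution case, using that $\gamma F^i=0$ forces $\gamma^r F^i=0$ for all $r\ge 1$; the two-sided inverse when $p^nN=1$ via Proposition \ref{prop:pfs0}\ref{prop:pfs02} is likewise how the paper concludes.
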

\begin{proof}
The proof works as the one of Theorem \ref{thm:comp-diag}, only that under the extra assumption on $F^\bullet$
we additionally find by Corollary \ref{cor:cor-van} some $n\ge 0$, such that the correspondence 
$p^n \cdot ((i_\nu\times \id)_*\beta_\nu +i_{0\nu*}\alpha_\nu)$ 
acts as zero. Note that we use Hironaka and Gabber-de-Jong to find $p^n$.
For the claim in the brackets of the second statement one has to employ the finer
Proposition \ref{prop:cor-van}. 
\end{proof}

\begin{thm}\label{thm:comp-diag-top}
Let $S$ be a smooth, connected, and separated $k$-scheme with generic point $\eta$.
Let $X$ be a smooth and quasi-projective $k$-scheme and $f:X\to S$  a flat, projective $k$-morphism 
of relative dimension $d$.
We assume the diagonal of the generic fiber of $f$ decomposes as in \eqref{thm:comp-diag1}.
Let $F^\bullet$ be a bounded below complex of reciprocity sheaves.
Assume $F^i(\xi)=0$, for all $\xi$ which are finite and separable over a point of codimension $\ge 1$ of  $X$ 
and for all $i$.

Then the cohomology sheaves of the cone of 
\eq{thm:comp-diag-top0}{f_*:Rf_* F^\bullet _X\to \gamma^d(F^\bullet)_S[-d]}
are annihilated by $N^2$. If $N=1$, then \eqref{thm:comp-diag-top0} is an isomorphism.
\end{thm}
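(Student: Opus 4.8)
The strategy is to use the decomposition of the diagonal on the generic fiber to write the identity correspondence on $X$ (over $S$, after shrinking and taking a limit) as a sum of three pieces, and to analyze the action of each piece on $Rf_*F^\bullet_X$ via the correspondence formalism developed in Section \ref{sec:prop-cor-action}. Concretely, first I would spread out \eqref{thm:comp-diag1}: there is a dense open $U\subset S$ over which the decomposition extends to an equality in $\CH_d(X_U\times_U X_U/U)$, and then using the localization sequence for Chow groups and a limit argument (as in the proof of Theorem \ref{thm:comp-diag}) one obtains, on a model $X_\nu\to S_\nu$ of finite type over $k$, an equality
\[ N\cdot[\Delta_{X_\nu}] = p_2^*\xi_\nu + (i_\nu\times\id)_*\beta_\nu + i_{0\nu *}\alpha_\nu \quad\text{in } \CH_d(X_\nu\times_{S_\nu} X_\nu/S_\nu), \]
where $\xi_\nu$ is supported on subschemes finite over $S_\nu$, $\beta_\nu$ is supported on $\ol{Z}_\nu\times_{S_\nu} X_\nu$ with $\ol{Z}_\nu\to S_\nu$ of fiber codimension $\ge1$, and $\alpha_\nu$ is supported over a proper closed $S_{0,\nu}\subset S_\nu$. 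Pulling back to $S$ and passing to the limit, the action of $N\cdot[\Delta_X]$ on $Rf_*F^\bullet_X$ is multiplication by $N$.

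Next I would analyze the three terms. The term $i_{0\nu*}\alpha_\nu$ acts through $R\ul{\Gamma}_{X_{S_0}}F^\bullet_X$ by Lemma \ref{lem:cas}, hence is zero since the cohomology sheaves $\sH^j(F^\bullet)$ have no sections supported in positive codimension (using that each $\sH^j(F^\bullet)\in\RSC_\Nis$ and \cite[Thm 3.1(2)]{S-purity}); likewise $(i_\nu\times\id)_*\beta_\nu$ acts through $R\ul{\Gamma}_{\ol Z}F^\bullet_X$, and the hypothesis $F^i(\xi)=0$ for $\xi$ finite and separable over a point of codimension $\ge1$ of $X$ lets one apply Proposition \ref{prop:cor-van-top} (extended to complexes termwise as in \ref{para:coracx}) to conclude this action is $0$ as well. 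This is the step I expect to be the main obstacle: Proposition \ref{prop:cor-van-top} is stated for an integral $V$ proper over $X$ with $\dim V = \dim X$, so I must decompose the cycle $(i_\nu\times\id)_*\beta_\nu$ into such integral components (discarding components not dominant over $X$, which are handled by Lemma \ref{lem:cas} as above) and check the hypothesis on $V_Y=$ (closure of the image in the second factor $X$) translates correctly — here the second copy of $X$ plays the role of ``$Y$'' and the image lands in the codimension $\ge 1$ subscheme $\ol Z$, so $F^i$ vanishes on the relevant points by assumption. One also needs that passing to the limit and applying the termwise/complex version of these vanishing statements is legitimate, which follows from the construction in \ref{para:coracx} and the fact that the base is a filtered limit with flat affine transition maps (so Chow groups, cohomology with support, and the forget-support maps all commute with the limit).

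Finally, the surviving term $p_2^*\xi_\nu$: writing $\xi_\nu = \sum_i m_i[T_{i,\nu}]$ with $T_{i,\nu}\to S_\nu$ finite and surjective, and replacing $T_{i,\nu}$ by its normalization $\tilde T_{i,\nu}$ (smooth over $k$ for $\nu$ large, since $S$ is excellent of the relevant type — here I use the hypothesis that $S$ is a smooth $k$-scheme), Lemma \ref{lem:azc} shows that in the limit the action of $p_2^*\xi$ on $Rf_*F^\bullet_X$ factors as
\[ Rf_*F^\bullet_X \xrightarrow{\ \sum_i m_i\mu_{i,1}^*\ } \bigoplus_i R(f\mu_i)_*F^\bullet_{\tilde T_i} \xrightarrow{\ \sum_i m_i(f\mu_i)_*\ } F^\bullet_S \xrightarrow{\ ?\ } Rf_*F^\bullet_X, \]
where the last arrow should instead be the pushforward $f_*$ landing in $\gamma^d(F^\bullet)_S[-d]$, not $f^*$. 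Unwinding: the content of Lemma \ref{lem:azc} together with the definition of the correspondence action (\ref{para:cycle-action}) and the pushforward for $f$ of relative dimension $d$ (\ref{para:RSC-pf}) shows that $N\cdot\id - (\text{pieces acting as }0)$ equals $p_2^*\xi$ acting, which upon composing the identification of Section \ref{sec:pf} is $f^*\circ(\text{the composite above landing in }F^\bullet_S)$, and one checks this composite agrees with $N^{-1}$ times (or rather: produces $N$ times) the asserted map $f_*\colon Rf_*F^\bullet_X\to\gamma^d(F^\bullet)_S[-d]$ after accounting that $f_*f^* = \deg = $ the degree of $\xi$, which is $N$, by Proposition \ref{prop:pfs0}\ref{prop:pfs02} and Theorem \ref{prop:bc-pf}\ref{prop:bc-pf3}. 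Assembling these: $N\cdot\id_{Rf_*F^\bullet_X} = f^*\circ g$ and $N\cdot(f_*) = g'\circ$ something, and a diagram chase with the triangle defined by $f^*$ and $f_*$ shows the cone of \eqref{thm:comp-diag-top0} has cohomology killed by $N^2$; when $N=1$ the maps $f^*$ and $f_*$ are mutually inverse up to the cancellation isomorphism $\kappa_d$, giving the isomorphism. The careful bookkeeping of which twist $\gamma^d$ and shift $[-d]$ appear, and verifying that Lemma \ref{lem:azc}'s hypotheses (flatness of $f$, smoothness of $S$ and $\tilde S_1$) are met in the limit, is the remaining routine-but-delicate part.
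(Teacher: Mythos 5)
There is a genuine gap, and it is the central idea of the paper's argument: the decomposition of the diagonal must be \emph{transposed} before acting on $Rf_*F^\bullet_X$. You keep the term $(i\times\id)_*\beta$ supported on $\ol{Z}\times_S X$ and claim Proposition \ref{prop:cor-van-top} kills it because ``the image lands in $\ol Z$''. But in the correspondence formalism of \ref{para:cycle-action} a cycle on $X\times Y$ acts from $Rg_*F_Y$ to $Rf_*F_X$, and Proposition \ref{prop:cor-van-top} requires the closure of the image in the \emph{second} factor $Y$ to have codimension $\ge 1$ (the vanishing hypothesis is at points over the generic point of $V_Y$). For your untransposed term the small image is in the \emph{first} factor; the image in the second factor is typically dense, so Proposition \ref{prop:cor-van-top} does not apply, and Lemma \ref{lem:cas} only gives a factorization through $Rf_*R\ul{\Gamma}_{\ol Z}F^\bullet_X$, which has no reason to vanish (this is exactly why Theorem \ref{thm:comp-diag} retains the supported cohomology term). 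The paper instead applies the transposition automorphism to get $N[\Delta_{X_\eta}]=p_1^*\xi+(\id\times i)_*\beta$, after which the bad terms have image of codimension $\ge1$ in the second factor and die by Proposition \ref{prop:cor-van-top}.

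The transposition is also what makes the endgame work, and your untransposed endgame does not. With $p_2^*\xi$ the surviving action factors through $f^*$ at the end (as you noticed), which at best yields a splitting statement about $f^*$ as in Theorem \ref{thm:dds}; it does not control the cone of $f_*$, and the concluding ``diagram chase'' from $N\cdot\id=f^*\circ g$ is not a proof. The paper shows that $p_1^*\bar\xi$ equals $(\id_X\times f)^*\xi_1=\Gamma_f^t\circ\xi_1$ with $\xi_1=(\id_X,f)_*\bar\xi$, so its action is precisely $\xi_1^*\circ f_*$, giving $\xi_1^*\circ f_*=N$; and separately, from $(f\times\id_S)_*\xi_1=N[\Delta_S]$, that $f_*\circ\xi_1^*=N$ on $\gamma^d(F^\bullet)_S[-d]$. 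Only this two-sided statement yields the $N^2$-annihilation of the cone (and the isomorphism for $N=1$). Two further points: the spreading-out/limit over $S_\nu$ is unnecessary here since $S$ is already smooth of finite type (only the localization sequence is needed), and your appeal to Lemma \ref{lem:azc} via normalizations $\tilde T_i$ is unavailable in this theorem because $\dim S$ is arbitrary, so those normalizations need not be smooth; the paper's route avoids them entirely.
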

\begin{proof}
The proof is similar to the one of Theorem \ref{thm:comp-diag}.
The transposition induces an automorphism of $\CH_d(X_\eta\times_\eta X_\eta)$.
Applying it to a decomposition $\eqref{thm:comp-diag1}$ yields a decomposition of the form
\[N\cdot [\Delta_{X_\eta}]= p_1^*\xi+ (\id\times i)_* \beta,\]
with $\xi\in \CH_0(X_\eta)$, $\beta\in\CH_d(X_\eta\times_\eta Z)$, and $i:Z\inj X_\eta$ 
a closed immersion of codimension $\ge 1$. Set $e:=\dim S$.
Using the localization sequence for Chow groups
we obtain a decomposition in $\CH_{d+e}(X\times_S X)$
\[N\cdot[\Delta_X]= p_1^*\bar{\xi}+ \underbrace{(\id\times i)_* \bar{\beta} + i_{0*}\alpha}_{= (*)},\]
with $\bar{\xi}\in \CH_e(X)$, $\bar{\beta}\in \CH_{d+e}(X\times_S\bar{Z})$, 
$\alpha\in \CH_{d+e}(X_{S_0}\times_{S_0} X_{S_0})$, with 
$\bar{Z}$ (resp. $S_0$) strictly closed in $X$ (resp. $S$).
By the assumption on $F^\bullet$ and Proposition \ref{prop:cor-van-top}
the cycle $(*)$ acts as zero on $Rf_*F^\bullet_X$.
To understand the action of $p_1^*\bar{\xi}$ consider the  following cartesian diagram
\[\xymatrix{
X\times_S X\ar[d]_{p_1}\ar@{^(->}[r] & X\times X\ar[d]^{\id_X\times f}\\
X\ar@{^(->}[r]^-{(\id_X, f)} & X\times S.
}\]
It implies that the image of $p_1^*\bar{\xi}$ in $\CH^{d+e}_{X\times_S X}(X\times X)$
is equal to  $(\id_X\times f)^*\xi_1$, with $\xi_1:=(\id_X,f)_*\bar{\xi}\in \CH^{d+e}_{X\times_S S}(X\times S)$.
Let $\Gamma^t_f\in \CH^e_{S\times_S X}(S\times X)$ be induced by the transpose of the graph of $f$.
As in \cite[Prop 16.1.1]{Fu} we have 
\[(\id_X\times f)^*\xi_1=(((\id_X\times f)^*\xi_1)^t)^t
= ((f\times \id_X)^*(\xi_1^t))^t= (\xi_1^t\circ \Gamma_f)^t= \Gamma_f^t\circ \xi_1.\]
Thus by (the same argument as in) Proposition \ref{prop:cycle-action} 
the action of the cycle $p_1^*\bar{\xi}=(\id_X\times f)^*\xi_1$
on $Rf_* F^\bullet_X$ is equal to the following composition 
\eq{thm:comp-diag-top1}{Rf_*F^\bullet_X\xr{f_*} \gamma^d (F^\bullet)_S[-d]\xr{\xi_1^*} Rf_*F^\bullet_X, }
where $\xi_1^*$ is defined as in \ref{para:cycle-action} by 
\mlnl{\gamma^d (F^\bullet)_S[-d]\xr{p_2^*} Rp_{2*}\gamma^d (F^\bullet)_{X\times S}[-d]
\xr{\kappa_e} Rp_{2*}\gamma^{d+e} (\tF^\bullet(e))_{(X\times S,\emptyset)}[-d]\\
\xr{c_{\xi_1}} Rf_*R p_{1*} R\ul{\Gamma}_{X\times_S S} \tF^{\bullet}(e)_{(X\times S, \emptyset)}[e]\\
\xr{\eqref{para:RSC:twist2}}Rf_*R p_{1*} R\ul{\Gamma}_{X\times_S S} F^{\bullet}\la e \ra_{X\times S}[e]
\xr{p_{1*}} Rf_* F^\bullet_X.
}
Since $[\Delta_X]^*$ acts as the identity on $Rf_* F^\bullet_X$ the above yields altogether  
that \eqref{thm:comp-diag-top1} is equal to multiplication with $N$. 

Furthermore,  we claim 
\eq{thm:comp-diag-top1.1}{N\cdot= f_*\circ \xi_1^*: \gamma^d(F^\bullet)_S[-d]\to \gamma^d(F^\bullet)_S[-d].}
Similar as above this comes down to show
$\xi_1\circ \Gamma^t_f=N\cdot  [\Delta_S]$, which by \cite[Prop 16.1.1]{Fu}
is equivalent to 
\eq{thm:comp-diag-top2}{(f\times\id_S)_* \xi_1=N\cdot [\Delta_S].}
By definition of $\xi_1$ we have 
\[(f\times\id_S)_* \xi_1=(f\times f)_* \bar{\xi}= \delta_{S*} f_*\bar{\xi},\]
where $\delta_S : S\inj S\times S$ is the diagonal. Since $\bar{\xi}$ is a lift of the degree $N$  zero-cycle $\xi$
over $\eta$ we find $f_*\bar{\xi}=N\cdot [S]\in \CH^0(S)$, which yields \eqref{thm:comp-diag-top2}.

Altogether the compositions  $\xi_1^*\circ f_*$ and $f_*\circ \xi_1^*$ are multiplication by $N$. A simple diagram chase shows  that if $\sH^i(C)$ are the cohomology sheaves of the cone $C$ of $f_*$, then each section of $\sH^i(C)$ is annihilated by $N^2$.
\end{proof}

\section{Examples}\label{sec:examples}
For $F\in \RSC_\Nis$ we set $\tilde{F}=\uomega^{\CI}F\in \CItspNis$ (see \eqref{para:tut2}).
We spell out some of the results for specific examples.
We will use without further mentioning the fact that the category of reciprocity sheaves $\RSC_\Nis$ 
is an abelian category, see \cite[Thm 0.1]{S-purity}.

\subsection{Examples of Reciprocity sheaves}\label{subsec:exaRSC}
Here we list some basic examples of reciprocity sheaves and morphisms between them. 
Note that a morphism of reciprocity sheaves $F\to G$ is the same as natural transformation of the underlying 
functors $\Cor^{\rm op}\to \Ab$; since $\RSC_\Nis$ is an abelian category we obtain many more interesting examples by 
taking kernels and quotients. More examples can be fabricated by using the \emph{lax} symmetric monoidal structure $(-,-)_{\RSC_\Nis}$ from \cite[4]{RSY} (denoted $\otimes$ in \emph{loc.cit.}, see also \cite[1]{MS} for the notation).

\begin{enumerate}[label = (\arabic*)]
\item\label{subsec:exaRSC1} The category of homotopy invariant Nisnevich sheaves with transfers is an 
              abelian subcategory of  $\RSC_\Nis$, see,  \cite[Cor 2.3.4]{KSY2} and \cite[Thm 0.1]{S-purity}.
              For $H\in\HI_\Nis$ we have $\widetilde{H}=\uomega^*H$.
\item\label{subsec:exaRSC2} Every  smooth commutative $k$-group scheme  is a reciprocity sheaf;
                                   every $k$-morphism between such group schemes is a morphism of reciprocity sheaves,
                                   see \cite[Cor 3.2.5(1)]{KSY2}.                                   
In case ${\rm char}(k)=0$ we have for $(X,D)\in \MCorls$
\[\widetilde{\G_a}(X,D)= H^0(X, \sO_X(D-|D|)),\]
see \cite[Cor 6.8]{RS};
if ${\rm char}(k)=p>0$, then we have for $(X,D)\in \MCorls$ with $U=X\setminus |D|$
\[\widetilde{W_n}(X,D)=\left\{a\in W_n(U) \,\middle|\, 
\begin{minipage}{0.5\textwidth}
  $\rho^*a\in {\rm fil}^F_{v_L(D)}W_n(L), \forall \rho\in U(L)$,
  for all henselian discrete valuation rings $L$ of geometric type over $k$
\end{minipage}\right\},\]                                   
where  $v_L(D)$ denotes the multiplicity of the pullback of $D$ to $\Spec \sO_L$ and
${\rm fil}^F_jW_n(L)=\sum_{s\ge 0} F^s({\rm fil}_jW_n(L))$, where for $j\ge 1$
\[{\rm fil}_j W_n(L)= {\rm fil}^{\log}_{j-1}W_n(L)+ V^{n-r}({\rm fil}^{\log}_j W_r),\]
with $r:= \min\{n, {\rm ord}_p(j)\}$, and 
\[{\rm fil}^{\log}_j W_n(L)=\{(a_0,\ldots, a_{n-1})\in W_n(L)\mid p^{n-1-i} v_L(a_i)\ge -j, \text{all i}\},\]
see \cite[Thm 7.20]{RS}.
\item\label{subsec:exaRSC3}
Assume $k$ has characteristic zero. Then the absolute K{\"a}hler differentials $\Omega^n=\Omega^n_{/\Z}$ and the 
relative  ones $\Omega^n_{/k}$  form  reciprocity sheaves.  
This follows from \cite[Cor 3.2.2]{KSY2} and \cite[Thm A 6.2]{KSY1}. Note that the proof
in {\em loc. cit.} relies on duality theory. However since we assume ${\rm char}(k)=0$,
the action of finite correspondences can be constructed in an ad hoc manner, see \cite[Thm 1.1]{LW}, and  to show
that the differentials (absolute or relative) have reciprocity can be shown by using residues on curves 
and the trace for finite field extensions (i.e. classical duality theory for smooth curves over a field of characteristic 0).
We note that 
\[d: \Omega^n\to \Omega^{n+1},\quad \dlog: K^M_n\to \Omega^n\]
are morphisms in $\RSC_\Nis$, as follows  from \cite[Lem 1.1]{MS}.
\item\label{subsec:exaRSC4} Assume char$(k)=p>0$. The ($p$-typical) de Rham-Witt sheaves $W_n\Omega^q$,
$q\ge 0$, $n\ge 1$, of Bloch-Deligne-Illusie are reciprocity sheaves, see \cite[Cor 3.2.5(3)]{KSY2}. 
As observed in \cite{CR12} it follows from Grothendieck-Ekedahl duality theory that
the structure maps of the de Rham-Witt complex $F$ (Frobenius), $V$ (Verschiebung), $R$ (restriction),
and $d$ (differential) are morphisms of reciprocity sheaves, see  also \cite[Lem 7.7]{RS}.
Since $W\Omega^q=\varprojlim W_n\Omega^q$ has no $p$-torsion  and 
\eq{subsec:exaRSC4.1}{\dlog: K^M_q\to W_n\Omega^q}
factors via $W\Omega^q\to W_n\Omega^q$, it follows from \cite[Lem 1.1]{MS}, that \eqref{subsec:exaRSC4.1}
is a morphism in $ \RSC_\Nis$. (Note however that $W\Omega^q\not\in\RSC_\Nis$.)
\item\label{subsec:exaRSC4.2}
Assume char$(k)=p>0$. Denote by $W_n\Omega^r_{\log}$ the subsheaf of $W_n\Omega^r$ \'etale locally generated by 
log forms (by e.g. \cite{Il}) and by $\Z/p^n(r)$ the motivic complex 
of weight $r$ with $\Z/p^n$-coefficients, viewed as a complex of \'etale sheaves.  
By \cite{GL} we have $W_n\Omega^r_{\log}[-r]\cong \Z/p^n(r)$ on $\Sm_\et$.
There is  an exact sequence on $\Sm_\et$ (see  \cite{CTSS})
\[0\to W_n\Omega^r_{\log}\to W_n\Omega^r \xr{F-1} W_n\Omega^r/dV^{n-1}\Omega^{r-1}\to 0.\]
Since the two sheaves on the right of this complex admit a structure of coherent modules on $W_n(X)$, they are
$\epsilon_*$-acyclic, where $\epsilon: \Sm_{\et}\to \Sm_{\Nis}$ denotes the morphism of sites.
Hence on $\Sm_{\Nis}$
\eq{subsec:exaRSC4.2.1}{
R\epsilon_*\Z/p^n(r) \cong\left( W_n\Omega^r \xr{F-1} (W_n\Omega^r/dV^{n-1}\Omega^{r-1})\right)[-r],}
which is obviously a complex of reciprocity sheaves.
\item\label{subsec:exaRSC4.3}
Denote by $\Q/\Z(n)$ the \'etale motivic complex of weight $n$ with $\Q/\Z$-coefficients.
Then 
\eq{subsec:exaRSC4.3.1}{R^i \epsilon_* \Q/\Z(n)\in \RSC_\Nis, \quad \text{all } i,}
where $\epsilon: \Sm_{\et}\to \Sm_{\Nis}$ denotes the morphism of sites.
Indeed, let $p$ be the characteristic exponent of $k$.
We can decompose $R^i \epsilon_* \Q/\Z(n)$ into the prime-to-$p$ torsion part, which is $\A^1$-invariant by 
\cite[Cor 5.29]{VoPST}, and the $p$-primary torsion part which is a reciprocity sheaf by \ref{subsec:exaRSC4.2} above.
In particular, taking $n=1$ and $i=2$ we see that the Brauer group defines a reciprocity sheaf,  ${\rm Br}\in\RSC_{\Nis}$.
\item\label{subsec:exaRSC5} Assume char$(k)=p>0$. Let $G$ be a finite commutative $k$-group scheme.
Denote by $H^1(G)$ the presheaf on $\Sm$ given by $X\mapsto H^1(G)(X):= H^1(X_{\rm fppf}, G)$.
Then $H^1(G)\in \RSC_{\Nis}$, see \cite[Thm 9.12]{RS}.
\end{enumerate}

\subsection{Results with modulus}

\begin{thm}\label{lem:dR0mod}
Let $k$ be a field of characteristic zero. Then there is a canonical isomorphism in $\CItspNis$
\[\widetilde{\G}_a(n)\xr{\simeq} \widetilde{\Omega^n},\]
where $(n)$ denotes the twist from Definition \ref{def:gtwist}.
Furthermore, if $(X,D)\in \uMCorls$ we have 
\eq{lem:dR0mod1}{
\widetilde{\G}_a(n)_{(X,D)}\cong \widetilde{\Omega^n}_{(X,D)}= \Omega^n_{X/\Z}(\log D)(D- |D|).
}
Furthermore,
\eq{lem:dR0mod1.1}{
(\widetilde{\Omega^n_{/k}})_{(X,D)}= \Omega^n_{X/k}(\log D)(D- |D|).
}
\end{thm}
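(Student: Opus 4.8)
\textbf{Proof plan for Theorem \ref{lem:dR0mod}.}

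The plan is to first establish the identification $\widetilde{\G}_a(n) \cong \widetilde{\Omega^n}$ in $\CItspNis$, and then read off the explicit formulas \eqref{lem:dR0mod1} and \eqref{lem:dR0mod1.1} by evaluating on log-smooth modulus pairs. For the first part, I would use the definition $\widetilde{\G}_a(n) = h^{\bcube,\sp}_{0,\Nis}(\widetilde{\G}_a \otimes_{\uMPST} \uomega^* K^M_n)$ from Corollary \ref{cor:gtwist}, together with the adjunction \eqref{para:CI1}. Concretely, by \eqref{para:tut2} it suffices to produce a natural isomorphism after applying $\uomega_!$ and check that both sides are semipure (so that the map can be detected on generic points); alternatively, and perhaps more cleanly, one can invoke the weak cancellation result and the fact that, by \ref{subsec:exaRSC}\ref{subsec:exaRSC3}, the $\dlog$ and $d$ maps realise $\Omega^n$ as a quotient of $\Omega^{n-1}\otimes \G_m$ at the level of $\RSC_\Nis$. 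The key input here is the classical computation that $\Omega^n_{/\Z} = \uomega_!\widetilde{\Omega^n}$ and $\G_a = \uomega_!\widetilde{\G}_a$, combined with the known description of the cyclic twist of $\Omega^{n-1}$ in terms of contraction against $\G_m$; this reduces to the $n=1$ case, i.e. the isomorphism $\widetilde{\G}_a(1) \cong \widetilde{\Omega^1}$, which should follow from Remark \ref{rmk:gamma}: $\gamma^1 F(\sX) = F((\A^1,0)\otimes\sX)/F(\A^1\otimes\sX)$, applied to $F = \widetilde{\Omega^1}$ or rather unwound via $\widetilde{\G}_a(1)(\sX) = h^{\bcube,\sp}_{0,\Nis}(\widetilde{\G}_a\otimes\uomega^*\G_m)(\sX)$ and the Leibniz rule $f\,dg/g \leftrightarrow$ sections of $\widetilde{\G}_a$ twisted by a unit.

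For the explicit formula \eqref{lem:dR0mod1}, I would evaluate $\widetilde{\Omega^n}_{(X,D)}$ directly on $(X,D)\in\uMCorls$ using the description of $\widetilde{\G}_a$ from \ref{subsec:exaRSC}\ref{subsec:exaRSC2}, namely $\widetilde{\G}_a(X,D) = H^0(X, \sO_X(D-|D|))$ (citing \cite[Cor 6.8]{RS}), and then applying the twist. Since $(-)(n)$ is computed by $h^{\bcube,\sp}_{0,\Nis}(-\otimes_{\uMPST}\uomega^*K^M_n)$ and the Nisnevich sheafification preserves the local structure, one expects $\widetilde{\G}_a(n)_{(X,D)}$ to be the sheaf of $n$-forms with log poles along $D$ and an extra twist by $\sO_X(D-|D|)$, i.e. $\Omega^n_{X/\Z}(\log D)\otimes\sO_X(D-|D|)$. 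The cleanest way to see this is probably to reduce to the case $D = \sum m_i D_i$ with $m_i \geq 1$ and $X$ affine with local coordinates, where $\Omega^n_{X/\Z}(\log D)$ is free with basis $\frac{dt_{i_1}}{t_{i_1}}\wedge\cdots$ and one computes the corresponding sections of $\widetilde{\G}_a \otimes \uomega^* K^M_n$ termwise; the admissibility condition for the modulus pair translates precisely into the vanishing order prescribed by $D - |D|$. Finally, \eqref{lem:dR0mod1.1} follows by the same argument with $\Omega^n_{/\Z}$ replaced by $\Omega^n_{/k}$ throughout, using that $\Omega^n_{/k} \in \RSC_\Nis$ as well (\ref{subsec:exaRSC}\ref{subsec:exaRSC3}) and that the $k$-linear quotient $\Omega^n_{/\Z}\twoheadrightarrow\Omega^n_{/k}$ is a morphism of reciprocity sheaves; the formula is obtained by base-changing the local coordinate computation from $\Z$ to $k$.

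The main obstacle I anticipate is making the identification $\widetilde{\G}_a(n)\cong\widetilde{\Omega^n}$ genuinely canonical and compatible with the transfer structure, rather than just an isomorphism of Nisnevich sheaves on $\Sm$. The transfers on $\widetilde{\Omega^n}$ come from duality theory (as noted in \ref{subsec:exaRSC}\ref{subsec:exaRSC3}), while the transfers on $\widetilde{\G}_a(n)$ come from the internal Hom construction and the monoidal structure on $\uMPST$; matching them requires either a direct check on generic points using the projection formula for residues, or an appeal to the fact that both sheaves are semipure and $\Z$-torsion-free, so that by \cite[Lem 1.1]{MS} a compatible isomorphism of the underlying Nisnevich sheaves automatically respects transfers. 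I would pursue the latter route, since it sidesteps the duality-theoretic bookkeeping; the point to verify carefully is then just that the constructed map of sheaves is an isomorphism, which is the local coordinate computation sketched above, and that semipurity holds, which is immediate from $\widetilde{\Omega^n}$ being a subsheaf of $\uomega^*\Omega^n$ on the interior.
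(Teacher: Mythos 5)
Your skeleton (build a canonical map, compare after applying $\uomega_!$, read off the formulas on log-smooth pairs) starts at the same place as the paper, but two of your key reductions fail, and they are exactly where the content of the theorem lies. The claim that it suffices to produce the isomorphism after $\uomega_!$ and then use semipurity/detection at generic points is not valid: semipurity gives only the \emph{injectivity} of the adjoint map $\widetilde{\G_a}(n)\to \uomega^{\CI}\uomega_!\widetilde{\G_a}(n)\cong\widetilde{\Omega^n}$ (the identification $\uomega_!\widetilde{\G_a}(n)\cong\Omega^n$ being [RSY, Thm 5.20]); it says nothing about surjectivity on sections over a pair $(X,D)$ with $D\neq\emptyset$, which is the whole point: one must show that every form in $\Omega^n_X(\log D)(D-|D|)$ lies, \emph{with the same modulus bound}, in the image of $h^{\bcube,\sp}_{0,\Nis}(\widetilde{\G_a}\otimes_{\uMPST}\uomega^*K^M_n)$. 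Your local-coordinate argument hits a concrete obstruction along the multiplicity-one components of $D$: to use the product map \eqref{para:Tmap1} you must split $D=D_1+D_2$ between the $\G_a$-factor and the $K^M_n$-factor, and since the $K^M_n$-factor needs at least $|D|$ (to allow log poles) while $\widetilde{\G_a}_{(X,D-|D|)}=\sO_X\bigl(D-|D|-|D-|D||\bigr)$, you lose an extra copy of the reduced divisor and cannot produce the terms $a\,\dlog t_{i_1}\wedge\cdots$ with $a$ a unit along a reduced component of $D$. The paper's proof exists precisely to get around this: reduce via Lemma \ref{lem:crit-surj} and resolution of singularities to $\uMCorls$, then use a Bloch--Gieseker covering (Lemma \ref{lem:rootMpair}) to make all multiplicities even, form the product upstairs, push forward along the finite covering via $(\Gamma^t_\pi)^*$, and divide by the degree (here characteristic zero is used). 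Nothing in your sketch replaces this step.

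Second, you treat $\widetilde{\Omega^n}_{(X,D)}=\Omega^n_{X/\Z}(\log D)(D-|D|)$ as something you can derive by ``applying the twist'' and a coordinate computation, but it is an independent, nontrivial input: it is the computation of the motivic conductor of differential forms from [RS, Cor 6.8], which the paper imports (for both $\Omega^n_{/\Z}$ and $\Omega^n_{/k}$, via a colimit over log-smooth compactifications and $M$-reciprocity), not a formal consequence of $\widetilde{\G_a}(X,D)=H^0(X,\sO_X(D-|D|))$ and the definition of $(-)(n)$; in particular your expectation that ``the admissibility condition translates precisely into the vanishing order prescribed by $D-|D|$'' is an assertion of the hard inclusion, not a proof. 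Likewise \eqref{lem:dR0mod1.1} is not obtained by base change from the absolute case but by the same [RS] computation with $R=k$. By contrast, the transfer-compatibility issue you worry about is not an obstacle at all: the canonical map is the adjoint of the transfer-compatible comparison of [RSY, Thm 5.20], so compatibility with transfers comes for free and no appeal to [MS, Lem 1.1] is needed there.
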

\begin{proof}
The equalities in \eqref{lem:dR0mod1} and \eqref{lem:dR0mod1.1} follow from \cite{RS}.
Indeed, let $U\subset X$ be an open subscheme, and write $D_U$ for $D\cap U$. Let $\sY= (Y, Y_\infty)$
be a log-smooth modulus compactification
of $(U, D_U)$, i.e., $Y$ is smooth and proper and  $Y_\infty= \bar{D}_{U}+\Sigma$, 
where $\bar{D}_U$ and $\Sigma$ are effective Cartier divisors
on $Y$, such that $|\bar{D}_{U}+\Sigma|$ is a SNCD, $U= Y\setminus| \Sigma|$, and 
the restriction of $\bar{D}_U$ to $U$ is equal to $D_U$.
Set $Y_{N,\infty}=\bar{D}_U+ N\cdot \Sigma$.
Then  for $R\in\{\Z, k\}$
\begin{align*}
H^0(U, \Omega^n_{X/R}(\log D)(D-|D|))
  &= \varinjlim_N H^0(Y, \Omega^n_{Y/R}(\log Y_{\infty})(Y_{N,\infty}- |Y_{\infty}|))\\
  &= \varinjlim_N H^0(Y, (\widetilde{\Omega^n_{/R}})_{(Y, Y_{N,\infty})})\\
   &= H^0(U, (\widetilde{\Omega^n_{/R}})_{(X,D)}),
\end{align*}
where the second equality is \cite[Cor 6.8(1)]{RS} and the third equality 
holds by  $M$-reciprocity, see \ref{para:CI}.

By \cite[Thm 5.20]{RSY} we have a canonical isomorphism
\eq{lem:dR0mod2}{\uomega_!(\widetilde{\G_a}(n))\xr{\simeq} \Omega^n,}
which is defined in such a way that the composition 
with the natural map $\G_a\otimes_\Z K^M_n\to \uomega_!(\widetilde{\G_a}(n))$ 
(see \eqref{para:Tmap1}, \eqref{cor:gtwist1.1}) is given by
\[\sO_X\otimes_\Z K^M_{n,X}\to \Omega^n_X, \quad a\otimes \beta \mapsto a\dlog \beta.\]
By adjunction (see \eqref{para:tut2})  we obtain the canonical map  from the statement;
it is injective by semi-purity. To prove surjectivity it suffices by Lemma \ref{lem:crit-surj}
and resolution of singularities to prove the surjectivity of 
$\widetilde{\G}_a(n)_{\sX}\to \widetilde{\Omega^n}_{\sX}$,
for any $\sX=(X,D)\in \uMCorls$.
We may assume $X$ is affine. By  Lemma \ref{lem:rootMpair} 
we find a finite map $\pi_1: Y_1\to X$ and an effective divisor $E_1$ on $Y_1$, such that
$(Y_1,E_1)\in \uMCor$ and $\pi_1^*|D|= 2 E_1$.
Using resolution of singularities we find an isomorphism in $(Y, E)\cong (Y_1, E_1)$ in $\uMCor$
with $(Y,E)\in \uMCorls$ which is induced by a birational projective map $f: Y\to Y_1$ and $E=f^*E_1$.
Set $\pi:=f\circ \pi_1 :Y\to X$.
Thus $\pi^* |D|=2 E$ and we find  an effective Cartier divisor $E'$ on $Y$ 
with $\pi^*D= 2  E'$ and $|E'|=|E|=:E_0$.
We have
\eq{lem:dR0mod3}{\widetilde{\G_a}_{(X,D)}= \sO_X(D-|D|)\xr{\pi^*} \pi_*\sO_Y(2 E'- 2E)
\subset \pi_* \widetilde{\G_a}_{(Y, \pi^*D- E_0)},}
where the equality and the inclusion follow from \eqref{lem:dR0mod1} with $n=0$.
Set $e=\deg \pi$ and $\sY=(Y, \pi^*D)$. Consider the following diagram
\[\xymatrix{
\pi_* \left(\widetilde{\G_a}_{(Y, \pi^*D-E_0)}\otimes_\Z \widetilde{K^M_{n}}_{(Y, E_0)}\right)
\ar[r]^-{\eqref{para:Tmap1}} &
 \pi_* \left(\widetilde{\G_a}\otimes_{\uMPST} \widetilde{K^M_n}\right)_{\sY}\ar[r]^-{(*)} &
 \pi_*(\widetilde{\Omega^n}_{\sY})\ar[d]^{(\Gamma^t_{\pi})^* }\\
 \widetilde{\G_a}_{\sX}\otimes_\Z \widetilde{K^M_{n}}_{\sX}\ar[rr]^{e(\id\otimes\dlog)}\ar[u]^{\pi^*} & &
 \widetilde{\Omega^n}_{\sX},
}\]
where the map $(*)$ is induced by \eqref{lem:dR0mod2}, the left vertical map exists by \eqref{lem:dR0mod3} and the fact that $(\widetilde{\sK^M_n})_{\sX} = (\uomega^* \sK_n^M)_{\sX} = j_*(\sK^M_n)_{X-D}$, where $j$ is the open immersion $X-D \hookrightarrow X$, and
the right vertical map exists  since $\pi$ is projective and finite over $X\setminus|D|$.
The diagram commutes by the explication of \eqref{lem:dR0mod2} above and the formula 
$(\Gamma^t_{\pi})^*\circ \pi^*=e$ on $\widetilde{\Omega^n}_{\sX}$. 
By the description of $ \widetilde{\Omega^n}_{\sX}$ above we see that the bottom horizontal map
is a surjective morphisms of sheaves. We can factor the composition 
$(\Gamma^t_{\pi})^*\circ (*)$ also as 
\[\pi_* \left(\widetilde{\G_a}\otimes_{\uMPST} \widetilde{K^M_n}\right)_{\sY}\xr{(\Gamma^t_{\pi})^*}
 (\widetilde{\G_a}\otimes_{\uMPST} \widetilde{K^M_n})_{\sX}\to \widetilde{\Omega^n}_{\sX}.\]
Hence $\widetilde{\G_a}(n)_{\sX}\to \widetilde{\Omega^n}_{\sX}$ is surjective.
\end{proof}
\begin{cor}\label{cor:gammaOmega}Let $k$ be a field of characteristic zero. Then 
 \[\gamma^j\widetilde{\Omega^n} \cong \widetilde{\Omega^{n-j}},\quad 
 \gamma^j\widetilde{\Omega^n_{/k}} \cong \widetilde{\Omega^{n-j}_{/k}}\] for every $n,j\geq0$.
\end{cor}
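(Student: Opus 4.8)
\textbf{Proof proposal for Corollary \ref{cor:gammaOmega}.}

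The plan is to reduce the statement to Theorem \ref{lem:dR0mod} together with the cancellation-type formula \eqref{para:cancel2.5} and the basic identity $\gamma^j(F(j)) \cong F$ for the twist functors, keeping track of the two cases ($\Omega^n$ absolute and $\Omega^n_{/k}$ relative) in parallel. First I would recall from Theorem \ref{lem:dR0mod} the canonical isomorphism $\widetilde{\G}_a(n) \xrightarrow{\simeq} \widetilde{\Omega^n}$ in $\CItspNis$; since the same proof applies verbatim with $\Z$ replaced by $k$ (the reference \cite[Cor 6.8]{RS} being available for both $R \in \{\Z, k\}$, and \cite[Thm 5.20]{RSY} likewise), one also gets a canonical isomorphism $\widetilde{\G}_a(n) \xrightarrow{\simeq} \widetilde{\Omega^n_{/k}}$. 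Hence it suffices to prove $\gamma^j(\widetilde{\G}_a(n)) \cong \widetilde{\G}_a(n-j)$ for $0 \le j \le n$ — but I would actually prove the cleaner statement $\gamma^j(\widetilde{\G}_a(n)) \cong \widetilde{\G}_a(n-j)$ whenever $n \ge j$, which immediately gives both corollary isomorphisms once one substitutes the target via Theorem \ref{lem:dR0mod}.

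The key step is the following: for any $F \in \CItspNis$ and any $m \ge 0$ there is a natural isomorphism $\gamma^j(F(m)) \cong (\gamma^{j} F)(m)$ for $j \le $ (no constraint, in fact), and more to the point $\gamma^j(F(j)) \cong F$. For the first of these I would note that both sides are built from $\uHom_{\uMPST}(-,-)$ and $h^{\bcube,\sp}_{0,\Nis}(- \otimes -)$ applied to $F$ and powers of $\uomega^* K^M_\bullet \cong h^{\bcube,\sp}_{0,\Nis}((\bcubee_{\red})^{\otimes \bullet})$, using Corollary \ref{cor:gtwist}; the compatibility then follows from Lemma \ref{lem:Nis}\ref{lem:Nis3} (the internal-hom/tensor adjunction in $\uMPST$ with the $h^{\bcube,\sp}_{0,\Nis}$-localization) exactly as in the manipulations of \ref{para:cancel}. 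For $\gamma^j(F(j)) \cong F$: this is the weak cancellation theorem. Precisely, the map $\kappa_j\colon F \to \gamma^j(F(j))$ of \eqref{para:cancel2} is, for $F$ of the form $\uomega^{\CI}G$ with $G \in \RSC_\Nis$, an isomorphism by \cite[Cor 3.6]{MS} (this is recorded as \eqref{para:tutRSC3}). So I would first check that $\widetilde{\G}_a(n) = \uomega^{\CI}(\uomega_!(\widetilde{\G}_a(n))) = \uomega^{\CI}(\Omega^n)$ — this uses that $\widetilde{\G}_a(n) \in \CItspNis$ together with the fact that $\uomega^{\CI}$ is fully faithful with $\uomega_! \uomega^{\CI} = \id$ (from \eqref{para:tut2} and the discussion in \ref{para:RSC}), and the computation $\uomega_!(\widetilde{\G}_a(n)) = \Omega^n$ from \eqref{lem:dR0mod2}; then writing $\widetilde{\G}_a(n) = \widetilde{\G}_a(n-j)(j)$ (which follows from the definition $F(m) = F(m-1)(1)$ and $\widetilde{\G}_a(n) = \widetilde{\G}_a(n-j+j)$, noting $\widetilde{\G}_a(n-j) = \uomega^{\CI}\Omega^{n-j}$ is again in the image of $\uomega^{\CI}$), cancellation \eqref{para:tutRSC3} gives $\gamma^j(\widetilde{\G}_a(n)) = \gamma^j(\widetilde{\G}_a(n-j)(j)) \cong \widetilde{\G}_a(n-j)$.

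The main obstacle I anticipate is purely bookkeeping: I need the identity $\widetilde{\G}_a(n) \cong \widetilde{\G}_a(n-j)(j)$ to hold on the nose (or at least canonically), i.e. the twist functor $(-)(j)$ applied to $\widetilde{\G}_a(n-j)$ must recover $\widetilde{\G}_a(n)$, and the interaction with Theorem \ref{lem:dR0mod} must be compatible so that the final isomorphisms $\gamma^j\widetilde{\Omega^n} \cong \widetilde{\Omega^{n-j}}$ are actually the expected ones (compatible with $d$ and $\dlog$, say, though the corollary only asks for an abstract isomorphism). This requires unwinding the recursive definition in \ref{def:gtwist} and checking $\widetilde{\G}_a(n-j)(j) = ((\widetilde{\G}_a(n-j))(1))(j-1) = \cdots$ equals $\widetilde{\G}_a((n-j)+j) = \widetilde{\G}_a(n)$, i.e. that $\widetilde{\G}_a(m)(1) = \widetilde{\G}_a(m+1)$, which is literally the definition $\widetilde{\G}_a(m+1) := h^{\bcube,\sp}_{0,\Nis}(\widetilde{\G}_a(m) \otimes_{\uMPST}\uomega^*\G_m) = (\widetilde{\G}_a(m))(1)$ — so in fact there is nothing to prove here beyond citing Definition \ref{def:gtwist}. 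Once that is in place the argument is immediate. For the relative case one runs the identical chain with $\Omega^n_{/k}$ in place of $\Omega^n$, using the $R = k$ version of Theorem \ref{lem:dR0mod} and noting $\uomega_!(\widetilde{\G}_a(n)) \cong \Omega^n_{/k}$ holds equally (again \cite[Thm 5.20]{RSY}, or alternatively deducing it from \eqref{lem:dR0mod1.1}), so that $\widetilde{\Omega^n_{/k}} = \uomega^{\CI}(\Omega^n_{/k}) = \widetilde{\G}_a(n)$ as well and the same cancellation applies.
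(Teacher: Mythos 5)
Your treatment of the relative differentials does not work, and this is the substantive part of the corollary. The claimed isomorphism $\widetilde{\G}_a(n)\xrightarrow{\simeq}\widetilde{\Omega^n_{/k}}$ is false unless $\Omega^1_{k/\Z}=0$: the input \cite[Thm 5.20]{RSY} (i.e.\ \eqref{lem:dR0mod2}) identifies $\uomega_!(\widetilde{\G_a}(n))$ with the \emph{absolute} sheaf $\Omega^n$, and there is no ``$R=k$ version'' of it, since already on $\Spec k$ one has $\widetilde{\G_a}(n)(\Spec k,\emptyset)=\Omega^n_{k/\Z}$, which is nonzero for, say, $k=\mathbb{C}$, $n=1$, while $\widetilde{\Omega^n_{/k}}(\Spec k,\emptyset)=\Omega^n_{k/k}=0$. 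The formula \eqref{lem:dR0mod1.1} only describes the sections of $\widetilde{\Omega^n_{/k}}$ on log-smooth modulus pairs; it does not identify $\widetilde{\Omega^n_{/k}}$ with a twist of $\widetilde{\G_a}$, so the relative half of the statement cannot be obtained from cancellation alone. The paper instead deduces it from the absolute case: one filters $\Omega^n$ by ${\rm Fil}^{i,n}=\Im(\Omega^i_{k/\Z}\otimes_k\Omega^{n-i}\to\Omega^n)$, whose graded quotients are $\Omega^i_{k/\Z}\otimes_k\Omega^{n-i}_{/k}$ (direct sums of copies of $\Omega^{n-i}_{/k}$ after choosing a $k$-basis), uses the surjectivity of \eqref{cor:gammaOmega1} and, crucially, the exactness of $\gamma$ on short exact sequences in $\CItspNis$ in characteristic zero (Lemma \ref{lem:gamma-exact}), and then runs an induction on $n$ together with a descending induction on $i$ through diagram \eqref{cor:gammaOmega3}. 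Some such comparison between absolute and relative forms is unavoidable, and your proposal is missing it.

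Two further points. First, the corollary is asserted for all $n,j\ge 0$; for $j>n$ it is the vanishing $\gamma^j\widetilde{\Omega^n}=0$, which you explicitly set aside by restricting to $j\le n$. This does not follow from cancellation: it reduces to $\gamma(\widetilde{\G_a})=0$ (see \eqref{cor:gammaOmega0}), which the paper proves from Lemma \ref{lem:P1bf} (namely $(\gamma\G_a)_X=R^1\pi_*\sO_{\P^1_X}=0$) together with semipurity and \eqref{para:RSCtwist4}. Second, your absolute case $j\le n$ is indeed the paper's argument (Theorem \ref{lem:dR0mod} plus weak cancellation \eqref{para:tutRSC3}), but the justification that $\widetilde{\G}_a(n)=\uomega^{\CI}\uomega_!\widetilde{\G}_a(n)$ ``because $\widetilde{\G}_a(n)\in\CItspNis$ and $\uomega_!\uomega^{\CI}=\id$'' is not right as stated: full faithfulness of $\uomega^{\CI}$ says the counit is an isomorphism, not that every object of $\CItspNis$ lies in its essential image. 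What actually puts $\widetilde{\G}_a(n-j)$ in the image of $\uomega^{\CI}$, so that \eqref{para:tutRSC3} applies, is Theorem \ref{lem:dR0mod} itself, via $\widetilde{\G}_a(n-j)\cong\widetilde{\Omega^{n-j}}=\uomega^{\CI}(\Omega^{n-j})$.
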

\begin{proof} For the absolute differentials and $j\le n$, this follows immediately from Theorem \ref{lem:dR0mod} 
and the weak cancellation theorem \cite[Cor 3.6]{MS}, see \eqref{para:tutRSC3}.
For $j>n$ this is a vanishing statement, which reduces to show
\eq{cor:gammaOmega0}{\gamma(\widetilde{\G_a})=0.}
By Lemma \ref{lem:P1bf} we have $(\gamma\G_a)_X= R^1\pi_* \sO_{\P^1_X}=0$, where $\pi: \P^1_X\to X$
is the projection. Then semipurity and \eqref{para:RSCtwist4} together imply \eqref{cor:gammaOmega0}.

Now the relative case. Note that  $\Omega^i_{k/\Z}\otimes_k \Omega^{n-i}$ is a reciprocity sheaf,
since the choice of a basis of $\Omega^i_{k/\Z}$ yields an identification with a direct sum
(indexed by the basis) $\oplus \Omega^{n-i}$; similar with the relative differentials.
It follows from this and  \cite[Thm 6.4 and Thm 4.15(4)]{RS} that the natural map
\eq{cor:gammaOmega1}{\widetilde{\Omega^i_{k/\Z}\otimes_k \Omega^{n-i}}\to 
\widetilde{\Omega^i_{k/\Z}\otimes_k \Omega^{n-i}_{/k}}}
is surjective in $\CItspNis$. Set 
\[{\rm Fil}^{i,n}:= \Im(\Omega^i_{k/\Z}\otimes_k \Omega^{n-i}\to \Omega^n),\quad i\in [0,n],\]
and ${\rm Fil}^{i,n}:=0$, for $i>n$. 
As is well-known we have  an isomorphism 
${\rm Fil}^{i,n}/{\rm Fil}^{i+1,n}\cong \Omega^i_{k/\Z}\otimes_k \Omega^{n-i}_{/k}$.
Consider the following diagram 
\eq{cor:gammaOmega3}{
\xymatrix{
0\ar[r] & \widetilde{{\rm Fil}^{i+1,n-1}}\ar[r]\ar[d] &
\widetilde{{\rm Fil}^{i,n-1}}\ar[r]\ar[d] &
\widetilde{\Omega^i_{k/\Z}\otimes_k \Omega^{n-1-i}_{/k}}\ar[r]\ar[d] & 0\\
0\ar[r] & \gamma(\widetilde{{\rm Fil}^{i+1,n}})\ar[r] &
\gamma(\widetilde{{\rm Fil}^{i,n}})\ar[r] &
\gamma(\widetilde{\Omega^i_{k/\Z}\otimes_k \Omega^{n-i}_{/k}})\ar[r] & 0.
}}
The top row is exact for all $n, i$ by the left exactness of $\uomega^{\CI}$ and the surjectivity of
\eqref{cor:gammaOmega1}. Since we are in characteristic zero the exactness of the bottom
sequence follows from this and  Lemma \ref{lem:gamma-exact}. The two vertical maps on the left are induced by 
the statement of the corollary for the absolute differentials, and the vertical map on the right is the
induced morphism between the cokernels, in particular the diagram is commutative. 
Note that by definition and \eqref{cor:gammaOmega0} we have
\eq{cor:gammaOmega4}{\gamma (\widetilde{{\rm Fil}^{i,n}}) =0,\quad \text{for }i\ge n.}
Therefore for $n=1$, the  case of the relative differentials follows from the one for the absolute differentials and the
diagram \eqref{cor:gammaOmega3} with $i=0$.
Now assume we know $\gamma(\widetilde{\Omega^m_{/k}})\cong \widetilde{\Omega^{m-1}_{/k}}$ for all $m<n$.
Then by descending induction over $i\ge 1$, \eqref{cor:gammaOmega4}, and  
diagram \eqref{cor:gammaOmega3}  we have 
an isomorphism $\gamma(\widetilde{{\rm Fil}^{i,n}})\cong \widetilde{{\rm Fil}^{i,n-1}}$, for all $i\ge 1$, and 
by the absolute case also for $i=0$. Thus taking $i=0$ in diagram \eqref{cor:gammaOmega3}
also implies the statement in the relative case.
\end{proof}
\begin{cor}\label{cor:exa-mod-bu}
Assume char$(k)=0$. Let $(X,D)\in \uMCorls$ and let $i:Z\inj X$ be a smooth closed subscheme of codimension
$c$ intersecting $D$ transversally (see Definition \ref{defn:ti}).
Let  $\rho: \tilde{X}\to X$ be the blow-up of $X$ in $Z$ and set  $\tilde{\sX}:=(X, \rho^*D)$. 
There is a canonical isomorphism in $D(X_\Nis)$ 
\mlnl{
R\rho_* \Omega^q_{\tilde{X}}(\log \rho^*D)(\rho^*D- |\rho^*D|)\\
\cong
\Omega^q_{X}(\log D)(D-|D|)\oplus \bigoplus_{r=1}^{c-1} i_*\Omega^{q-r}_Z(\log i^*D)(i^*D-|i^*D|)[-r]. }
Same with $\Omega^q$ replaced by $\Omega^q_{/k}$.
\end{cor}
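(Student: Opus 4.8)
The statement is a direct corollary of the smooth blow-up formula, Theorem~\ref{thm:bus}, specialized to $F=\widetilde{\Omega^q}$ (resp. $F=\widetilde{\Omega^q_{/k}}$), combined with the explicit computations of the twists established in this section. The plan is as follows. First I would apply Theorem~\ref{thm:bus} with $\sX=(X,D)$, $Z\inj X$ as in the statement, and $F=\widetilde{\Omega^q}\in\CItspNis$ (this is a cube-invariant, semipure sheaf with $M$-reciprocity by \ref{subsec:exaRSC}\ref{subsec:exaRSC3} and the general theory). This gives a distinguished triangle in $D^b(X_\Nis)$
\[F_{\sX}\xr{\rho^*\oplus(-i^*)} R\rho_* F_{\tilde\sX}\oplus i_* F_{\sZ}\xr{i_E^*+\rho_E^*} i_* R\rho_{E*} F_{\sE}\to F_{\sX}[1],\]
where $\tilde\sX=(\tilde X,\rho^*D)$, $\sZ=(Z,D_{|Z})$, $\sE=(E,D_{|E})$ and $E=\rho^{-1}(Z)$, $\rho_E\colon E\to Z$ the projective bundle (of relative dimension $c-1$). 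Since $E=\P(\sN_{Z/X}^\vee)\to Z$ is a projective bundle, the projective bundle formula (Theorem~\ref{thm:pbf}) gives $R\rho_{E*}F_{\sE}\cong\bigoplus_{r=0}^{c-1}(\gamma^r F)_{\sZ}[-r]$, and the map $\rho_E^*\colon F_{\sZ}\to R\rho_{E*}F_{\sE}$ is the inclusion of the degree-zero summand. Hence the cone of $(i_E^*+\rho_E^*)$ computed against $i_* F_{\sZ}$ splits off that summand, and Corollary~\ref{cor:bud} (or directly the argument in \ref{para:splitting}) yields the canonical decomposition
\[R\rho_* F_{\tilde\sX}\cong F_{\sX}\oplus\bigoplus_{r=1}^{c-1} i_*(\gamma^r F)_{\sZ}[-r].\]

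Next I would identify the terms explicitly. By Theorem~\ref{lem:dR0mod}, for any $(Y,D_Y)\in\uMCorls$ we have $\widetilde{\Omega^q}_{(Y,D_Y)}=\Omega^q_{Y/\Z}(\log D_Y)(D_Y-|D_Y|)$; applying this to $(\tilde X,\rho^*D)$, $(X,D)$, and $(Z,i^*D)$ identifies $R\rho_* F_{\tilde\sX}$ with $R\rho_*\Omega^q_{\tilde X}(\log\rho^*D)(\rho^*D-|\rho^*D|)$, $F_{\sX}$ with $\Omega^q_X(\log D)(D-|D|)$, and $F_{\sZ}$ with $\Omega^q_Z(\log i^*D)(i^*D-|i^*D|)$ (note $Z$ and $D$ intersect transversally so $i^*D$ has SNC support and $\sZ\in\uMCorls$). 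Finally, Corollary~\ref{cor:gammaOmega} gives $\gamma^r\widetilde{\Omega^q}\cong\widetilde{\Omega^{q-r}}$, so that $i_*(\gamma^r F)_{\sZ}[-r]=i_*\Omega^{q-r}_Z(\log i^*D)(i^*D-|i^*D|)[-r]$. Substituting into the displayed decomposition yields exactly the asserted isomorphism in $D(X_\Nis)$.

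The case of $\Omega^q_{/k}$ is verbatim the same, using the $\Omega^q_{/k}$ parts of Theorem~\ref{lem:dR0mod} (formula~\eqref{lem:dR0mod1.1}) and Corollary~\ref{cor:gammaOmega}. I do not expect a genuine obstacle here: all the heavy lifting---the blow-up formula, the projective bundle formula, and the computation of the twists of the de Rham sheaves---has already been carried out. The only points requiring a line of care are (i) checking that $\sZ=(Z,D_{|Z})$ and $\sE=(E,D_{|E})$ genuinely lie in $\uMCorls$, which follows from the transversality of $Z$ with $D$ (Definition~\ref{defn:ti}), and (ii) recording that the splitting of $R\rho_* F_{\tilde\sX}$ in Corollary~\ref{cor:bud} is canonical so that the resulting isomorphism is well-defined in $D(X_\Nis)$; both are immediate from the references. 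I would present the proof in three short paragraphs mirroring the three steps above, with the bulk being the bookkeeping of which sheaf each summand corresponds to.
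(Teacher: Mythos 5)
Your proposal is correct and follows essentially the same route as the paper: the proof there is exactly the combination of Corollary \ref{cor:bud} (the canonical splitting of $R\rho_*F_{\tilde\sX}$, itself a consequence of Theorem \ref{thm:bus} and Theorem \ref{thm:pbf}), Corollary \ref{cor:gammaOmega} for $\gamma^r\widetilde{\Omega^q}\cong\widetilde{\Omega^{q-r}}$, and the identifications \eqref{lem:dR0mod1}, resp.\ \eqref{lem:dR0mod1.1}, from Theorem \ref{lem:dR0mod}. The bookkeeping you describe is all that remains, so nothing is missing.
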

\begin{proof}
This follows from Corollary \ref{cor:bud}, Corollary \ref{cor:gammaOmega}, and \eqref{lem:dR0mod1}, resp.
\eqref{lem:dR0mod1.1}. 
\end{proof}

\begin{cor}\label{cor:exa-mod-g}
Let the assumption be as in Corollary \ref{cor:exa-mod-bu}.
There is a distinguished triangle in $D(X_\Nis)$
\mlnl{i_*\Omega^{q-c}_Z(\log i^*D)(i^*D-|i^*D|)[-c]\xr{g_{\sZ/\sX}} \Omega^q_X(\log D)(D-|D|)\\
\xr{\rho^*} R\rho_* \Omega^q(\log \rho^*D+E)(\rho^*D- |\rho^*D|)\xr{\partial},}
where $E=\rho^{-1}(Z)$. Same with $\Omega^q$ replaced by $\Omega^q_{/k}$.
\end{cor}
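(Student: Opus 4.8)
The plan is to deduce Corollary \ref{cor:exa-mod-g} directly from the abstract Gysin triangle of Theorem \ref{thm:gysin-tri} by specializing to the reciprocity sheaf $F=\Omega^q$ (respectively $F=\Omega^q_{/k}$) viewed in $\CItspNis$ via $\widetilde{\Omega^q}=\uomega^{\CI}\Omega^q$. The only work is to translate the three terms appearing in \eqref{thm:gysin-tri0} into coherent-sheaf language using the computations of \S\ref{sec:twists}.

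First I would recall that $F=\Omega^q\in\RSC_\Nis$ by \ref{subsec:exaRSC}\ref{subsec:exaRSC3}, so $\widetilde{F}=\widetilde{\Omega^q}\in\CItspNis$, and apply Theorem \ref{thm:gysin-tri} to $\sX=(X,D)\in\uMCorls$ and the smooth closed subscheme $i\colon Z\inj X$ of codimension $c$ intersecting $D$ transversally, with $\sZ=(Z,D_{|Z})$. This gives a canonical distinguished triangle in $D(X_\Nis)$
\[
i_*\gamma^c\widetilde{\Omega^q}_{\sZ}[-c]\xr{g_{\sZ/\sX}}\widetilde{\Omega^q}_{\sX}\xr{\rho^*}R\rho_*\widetilde{\Omega^q}_{(\tilde X,D_{|\tilde X}+E)}\xr{\partial}i_*\gamma^c\widetilde{\Omega^q}_{\sZ}[-c+1],
\]
with $\rho\colon\tilde X\to X$ the blow-up along $Z$ and $E=\rho^{-1}(Z)$. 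Then I would rewrite each term. For the middle term, \eqref{lem:dR0mod1} of Theorem \ref{lem:dR0mod} gives $\widetilde{\Omega^q}_{(X,D)}=\Omega^q_X(\log D)(D-|D|)$; applied to $(\tilde X,D_{|\tilde X}+E)$ it gives the third term as $\Omega^q_{\tilde X}(\log\rho^*D+E)(\rho^*D-|\rho^*D|)$, using that $(\rho^*D+E)-|\rho^*D+E|=\rho^*D-|\rho^*D|$ since $E$ has multiplicity one in $D_{|\tilde X}+E$ (as $|\rho^*D+E|$ is SNC and $E$ reduced there). For the first term, Corollary \ref{cor:gammaOmega} gives $\gamma^c\widetilde{\Omega^q}\cong\widetilde{\Omega^{q-c}}$, and then \eqref{lem:dR0mod1} applied to $\sZ=(Z,D_{|Z})$ identifies $\gamma^c\widetilde{\Omega^q}_{\sZ}=\widetilde{\Omega^{q-c}}_{(Z,D_{|Z})}=\Omega^{q-c}_Z(\log i^*D)(i^*D-|i^*D|)$. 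Substituting these three identifications into the triangle above yields exactly the asserted distinguished triangle. The case of $\Omega^q_{/k}$ is identical, using \eqref{lem:dR0mod1.1} and the second half of Corollary \ref{cor:gammaOmega}.

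I do not anticipate a serious obstacle here: the statement is essentially a dictionary translation of Theorem \ref{thm:gysin-tri} once the twist computations of \S\ref{sec:twists} are in hand. The one point that needs a line of care is the bookkeeping of the logarithmic poles and the ``fractional'' part $D-|D|$ on the blow-up, i.e.\ checking that $D_{|\tilde X}+E$ is SNC with $E$ of multiplicity one so that the formula $\widetilde{\Omega^q}_{(\tilde X,D_{|\tilde X}+E)}=\Omega^q_{\tilde X}(\log(\rho^*D+E))(\rho^*D-|\rho^*D|)$ produces precisely the sheaf named in the statement; this follows from the transversality of $Z$ with $D$ (Definition \ref{defn:ti}), exactly as in the proof of Corollary \ref{cor:exa-mod-bu}. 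Finally I would remark that the map $g_{\sZ/\sX}$ in the triangle is the Gysin map of \ref{para:gysin}, which gives the corollary its naturality, and that no resolution of singularities is needed beyond what is already built into working in $\uMCorls$.
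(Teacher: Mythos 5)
Your proposal is correct and follows exactly the paper's own route: the paper proves this corollary by citing Theorem \ref{thm:gysin-tri}, Corollary \ref{cor:gammaOmega}, and \eqref{lem:dR0mod1} (resp. \eqref{lem:dR0mod1.1}), which is precisely the dictionary translation you carry out. Your extra check that $(\rho^*D+E)-|\rho^*D+E|=\rho^*D-|\rho^*D|$ (valid since transversality forces $Z\not\subset|D|$, so $E$ is not a component of $|\rho^*D|$) is a reasonable bit of bookkeeping the paper leaves implicit.
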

\begin{proof}
This follows from Theorem \ref{thm:gysin-tri}, Corollary \ref{cor:gammaOmega}, and \eqref{lem:dR0mod1}, resp.
\eqref{lem:dR0mod1.1}. 
\end{proof}

\begin{rmk}
\begin{enumerate}
\item We can extend the statements from the Corollaries \ref{cor:exa-mod-bu} and \ref{cor:exa-mod-g}
to complexes as in \ref{para:coracx} to obtain similar formulas
for $\Omega^\bullet$, $\Omega^{\ge n}$, 
$\tau_{\le n}\Omega^{\bullet}$; same with  $\Omega^\bullet_{/k}$.
\item One can check that in case $c=1$ the distinguished triangle in Corollary \ref{cor:exa-mod-g}
is up to shift and sign induced by the exact sequence 
\mlnl{0\to\Omega^q_X(\log D)(D-|D|)\to \Omega^q_X(\log D+Z)(D-|D|)\\
\xr{{\rm Res}_Z}\Omega^{q-1}_Z(\log i^*D)(i^*D-|i^*D|)\to 0.}
\end{enumerate}
\end{rmk}

\begin{cor}\label{cor:CL1}
Let $k$ be a perfect field, $(X,D)\in \MCorls$ and let $i: Z\inj X$ be a smooth closed subscheme of codimension $c$ intersecting
$D$ transversally. Denote by $\rho: \tilde{X}\to X$ the blow-up of $X$ in $Z$.
\begin{enumerate}[label =(\arabic*)]
\item\label{cor:CL1.1}
Assume that char$(k)=0$. Denote by ${\rm Conn}^1$ the reciprocity sheaf whose
sections over $X$ are rank 1 connections on $X$. Recall from \cite[Thm 6.11]{RS}
that the group $\widetilde{{\rm Conn}^1}(X,D)$ consists of the rank 1 connections on $X\setminus|D|$ 
whose  non-log-irregularity is bounded by $D$.
If $c=1$, then there is an exact sequence
\mlnl{0\to \widetilde{{\rm Conn}^1}(X,D)\to \widetilde{{\rm Conn}^1}(X,D+Z)\to 
H^0(Z, \sO_{Z}(i^*D-|i^*D|))/\Z\\
\to H^1\left(X, \tfrac{\Omega^1_{X/k}(\log D)(D-|D|)}{\dlog j_*\sO_{X\setminus |D|}^\times}\right)
\to H^1\left(X, \tfrac{\Omega^1_{X/k}(\log D+Z)(D-|D|)}{\dlog j_*\sO_{X\setminus |D+Z|}^\times}\right).}
If $c\ge 2$, then 
\[\widetilde{{\rm Conn}^1}(X,D)\cong \widetilde{{\rm Conn}^1}(\tilde{X},\rho^*D+E).\]
\item\label{cor:CL1.2} Assume that char$(k)=p>0$ and fix  a prime $\ell\neq p$. 
Denote by ${\rm Lisse}^1$ the presheaf whose sections over $X$ are
the lisse $\bar{\Q}_\ell$ sheaves of rank 1. By \cite[Cor 8.10, Thm 8.8]{RS}
we have ${\rm Lisse}^1\in \RSC_\Nis$  and $\widetilde{{\rm Lisse}^1}(X,D)$
is the group of lisse $\bar{\Q}_\ell$-sheaves of rank 1 on $X\setminus |D|$
whose Artin conductor is bounded by $D$. If $c\ge 2$, then
\[\widetilde{{\rm Lisse}^1}(X,D)\cong \widetilde{{\rm Lisse}^1}(\tilde{X},\rho^*D+E).\]
\end{enumerate}
\end{cor}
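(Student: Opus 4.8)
\textbf{Proof plan for Corollary \ref{cor:CL1}.}
The strategy is to extract both statements from the Gysin triangle of Theorem \ref{thm:gysin-tri} (and the blow-up formula, Corollary \ref{cor:bud}) applied to a carefully chosen reciprocity sheaf $F$, after computing the relevant twist $\gamma^1 \widetilde F$. For part \ref{cor:CL1.1}, the starting point is the description of $\widetilde{\mathrm{Conn}^1}$ from \cite[Thm 6.11]{RS}, together with the exact sequence in $\RSC_\Nis$
\[
0\to \Omega^1_{/k}/\dlog \mathbf{G}_m \to \mathrm{Conn}^1 \to \mathbf{G}_a \to 0
\]
(the connection $\nabla$ has ``$1$-form part'' well-defined modulo $\dlog$ of units, and curvature-type data landing in $\mathbf{G}_a$; over a point this is essentially the additive structure of rank-$1$ connections). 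Applying the exact functor $\uomega^{\CI}$ and then $\gamma^1 = \uHom_{\uMPST}(\uomega^*\mathbf{G}_m,-)$: since $\gamma^1$ is exact on $\RSC_\Nis$ in characteristic zero (Lemma \ref{lem:gamma-exact}) and, by Corollary \ref{cor:gammaOmega}, $\gamma^1\widetilde{\Omega^1_{/k}}\cong\widetilde{\Omega^0_{/k}}=\mathbf{G}_a$ while $\gamma^1\widetilde{\mathbf{G}_a}=0$ (see \eqref{cor:gammaOmega0}) and $\gamma^1\widetilde{\mathbf{G}_m}=\widetilde{\mathbf{Z}\oplus \dots}$ — more precisely I expect $\gamma^1(\widetilde{\Omega^1_{/k}/\dlog\mathbf{G}_m})\cong \mathbf{G}_a/\mathbf{Z}$ — one computes $\gamma^1\widetilde{\mathrm{Conn}^1}\cong \widetilde{\mathbf{G}_a}/\widetilde{\mathbf{Z}}$ up to the subtleties of $\dlog$. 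Evaluating on $(X,D)$ using \eqref{lem:dR0mod1.1} and the known sections of the sheaves involved gives $\gamma^1\widetilde{\mathrm{Conn}^1}_{(Z,i^*D)}(Z)=H^0(Z,\sO_Z(i^*D-|i^*D|))/\mathbf{Z}$, which is the third term of the desired exact sequence.

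Next, for $c=1$ one has $\tilde X = X$, $E=Z$, and the Gysin triangle \eqref{thm:gysin-tri0} for $F=\mathrm{Conn}^1$ reads
\[
i_*\gamma^1\widetilde{\mathrm{Conn}^1}_{(Z,i^*D)}[-1]\xr{g} \widetilde{\mathrm{Conn}^1}_{(X,D)}\xr{\rho^*} \widetilde{\mathrm{Conn}^1}_{(X,D+Z)}\xr{\partial}.
\]
Taking $H^0$ and $H^1$ of $R\Gamma(X,-)$ produces a long exact sequence; identifying $H^0(X,\widetilde{\mathrm{Conn}^1}_{(X,D)})=\widetilde{\mathrm{Conn}^1}(X,D)$ (and likewise with $D+Z$), and $H^i(X,i_*\gamma^1\widetilde{\mathrm{Conn}^1}_{(Z,i^*D)})=H^i(Z,\gamma^1\widetilde{\mathrm{Conn}^1}_{(Z,i^*D)})$, one obtains exactly the five-term sequence in the statement, provided one identifies $H^1(X,\widetilde{\mathrm{Conn}^1}_{(X,D)})$ with $H^1$ of the quotient complex $\Omega^1_{X/k}(\log D)(D-|D|)/\dlog j_*\sO^\times_{X\setminus|D|}$; this last identification comes from the exact sequence above sheafified on $(X,D)$, using \eqref{lem:dR0mod1.1} and the vanishing $H^1(X,\widetilde{\mathbf{G}_a}_{(X,D)})$ is not available in general, so in fact one must use that $\widetilde{\mathrm{Conn}^1}$ itself is the sheaf $\Omega^1_{X/k}(\log D)(D-|D|)/\dlog j_*\sO^\times_{X\setminus|D|}$ in degree $0$, i.e. the Nisnevich sheaf computation of \cite[Thm 6.11]{RS}. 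For $c\ge 2$, one instead invokes Corollary \ref{cor:bud}: the summands $i_*\gamma^r\widetilde{\mathrm{Conn}^1}_{\sZ}[-r]$ for $1\le r\le c-1$ all vanish because $\gamma^r\widetilde{\mathrm{Conn}^1}=0$ for $r\ge 2$ (as $\gamma^2\widetilde{\Omega^1_{/k}}\cong\widetilde{\Omega^{-1}_{/k}}=0$ and $\gamma^r\widetilde{\mathbf{G}_a}=0$) — wait, one also needs $\gamma^1\widetilde{\mathrm{Conn}^1}$ to contribute only in $r=1$, but the blow-up formula for $c\ge2$ only involves $r\ge1$; the point is $\gamma^r\widetilde{\mathrm{Conn}^1}=0$ for $r\ge 2$, so all terms in $\bigoplus_{r=1}^{c-1}$ with $c\ge 2$ actually \emph{do} include $r=1$. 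So more care is needed: for $c\ge 2$ one should rather use Theorem \ref{thm:blow-upAn}-type vanishing combined with the fact that $\gamma^1\widetilde{\mathrm{Conn}^1}$ sits in a situation where the relevant higher direct images vanish; concretely, apply $R\rho_*$ and use that $R^j\rho_*\widetilde{\mathrm{Conn}^1}_{(\tilde X,\rho^*D+E)}=0$ for $j\ge1$ when $c\ge2$, which follows from Corollary \ref{cor:bud} once one checks $\gamma^r\widetilde{\mathrm{Conn}^1}$ contributes nothing in positive cohomological degree — this is the step I expect to be the main technical obstacle, since it requires knowing $H^i(E_x,\gamma^1\widetilde{\mathrm{Conn}^1})$ vanishes for the projective-space fibers, which should follow from the projective bundle formula Theorem \ref{thm:pbf} applied to $\gamma^1\widetilde{\mathrm{Conn}^1}$ together with $\gamma^r(\gamma^1\widetilde{\mathrm{Conn}^1})=\gamma^{r+1}\widetilde{\mathrm{Conn}^1}=0$ for $r\ge 1$.

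For part \ref{cor:CL1.2}, the argument is parallel but now in characteristic $p>0$ and $\ell$-adically: by \cite[Cor 8.10, Thm 8.8]{RS}, $\mathrm{Lisse}^1\in\RSC_\Nis$ fits (after forgetting the constant-sheaf part) into a short exact sequence relating it to the prime-to-$p$ part of $\mathbf{G}_m$-type data and to the wild part, and one knows $\gamma^r\widetilde{\mathrm{Lisse}^1}$ is built from $\gamma^r$ of $\A^1$-invariant sheaves (which vanish for $r\ge1$ on the relevant twists when they are ``rank one'') plus $\gamma^r$ of $\mathbf{G}_a$-like wild pieces (which vanish for $r\ge1$ as above). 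Hence $\gamma^r\widetilde{\mathrm{Lisse}^1}=0$ for $r\ge 2$ and, crucially, $\gamma^1\widetilde{\mathrm{Lisse}^1}$ has vanishing higher cohomology along projective bundles; feeding this into Corollary \ref{cor:bud} for the blow-up $\rho$ in codimension $c\ge2$ gives $R\rho_*\widetilde{\mathrm{Lisse}^1}_{(\tilde X,\rho^*D+E)}=\widetilde{\mathrm{Lisse}^1}_{(X,D)}$, which on global sections is the claimed isomorphism. In both parts the only genuinely new input beyond the general machinery of this paper is the explicit exact sequence in $\RSC_\Nis$ presenting $\mathrm{Conn}^1$ (resp. $\mathrm{Lisse}^1$) as an extension of $\mathbf{G}_a$ (resp. of an étale rank-one object) by $\Omega^1_{/k}/\dlog$ (resp. by a wild piece), and the computation of its twists via Corollary \ref{cor:gammaOmega} and the exactness of $\gamma^1$; everything else is a formal consequence of Theorem \ref{thm:gysin-tri}, Corollary \ref{cor:bud}, and the section formulas \eqref{lem:dR0mod1.1}.
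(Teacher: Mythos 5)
There are two genuine problems with your plan. First, for the $c\ge 2$ statements (in both parts) you route the argument through the blow-up formula, Corollary \ref{cor:bud}, and then worry about killing the summands $i_*\gamma^r F_{\sZ}[-r]$ and about higher direct images — but Corollary \ref{cor:bud} computes $R\rho_*F_{(\tilde X,\rho^*D)}$, i.e.\ the modulus \emph{without} the exceptional divisor, whereas the statement concerns $\widetilde F(\tilde X,\rho^*D+E)$. The correct tool is exactly the Gysin triangle \eqref{thm:gysin-tri0}, whose third term is $R\rho_*F_{(\tilde X, D_{|\tilde X}+E)}$, and then no vanishing of twists is needed at all: the first term $i_*\gamma^cF_{\sZ}[-c]$ is concentrated in cohomological degrees $\ge c\ge 2$, so taking $H^0$ (and $H^1$) of the triangle on $X_\Nis$ gives $\widetilde F(X,D)\cong \widetilde F(\tilde X,\rho^*D+E)$ immediately, for \emph{any} $F\in\CItspNis$ — in particular for $\widetilde{{\rm Conn}^1}$ and $\widetilde{{\rm Lisse}^1}$, with no structural input about ${\rm Lisse}^1$ whatsoever (your proposed dévissage of ${\rm Lisse}^1$ into a tame and a wild piece is both unsubstantiated and unnecessary). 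This is how the paper disposes of all the $c\ge2$ claims in one line; the "main technical obstacle" you anticipate is an artifact of using the wrong formula.

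Second, for the $c=1$ case of part \ref{cor:CL1.1}, your starting short exact sequence $0\to\Omega^1_{/k}/\dlog\G_m\to{\rm Conn}^1\to\G_a\to 0$ is not correct: there is no $\G_a$-quotient, rather ${\rm Conn}^1\cong(\Omega^1_{/k}/\dlog\G_m)_\Nis$ (Nisnevich-locally a rank-$1$ connection is $d+\omega$ on the trivial bundle, well-defined modulo $\dlog$ of units). More importantly, the step you treat as "known sections of the sheaves involved" is the actual content: one must show that $\widetilde{\Omega^1_{/k}}\to\widetilde{{\rm Conn}^1}$ is surjective in $\uMNST$, equivalently that $\widetilde{{\rm Conn}^1}_{(X,D)}$ is the quotient $\Omega^1_{X/k}(\log D)(D-|D|)/\dlog j_*\sO^\times_{X\setminus|D|}$; this is proved in the paper by a local argument (factoriality of $\sO^h_{X,x}[1/f]$, so every rank-$1$ connection lifts to a $1$-form, plus the comparison from \cite[Thm 6.11]{RS} that the irregularity bound of the connection matches the pole bound of the lifted form), and it yields the exact sequence $0\to\uomega^*(\G_m/(k^{\rm alg})^\times)\to\widetilde{\Omega^1_{/k}}\to\widetilde{{\rm Conn}^1}\to 0$. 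Only with this presentation can one compute $\gamma\widetilde{{\rm Conn}^1}=\widetilde{\G_a}/\Z$ (via Corollary \ref{cor:gammaOmega}, $\gamma\uomega^*\G_m=\Z$ by weak cancellation, $\gamma\uomega^*(k^{\rm alg})^\times=0$, and exactness of $\gamma$ in characteristic $0$) and identify the $H^1$-terms in the five-term sequence; your twist computation "up to the subtleties of $\dlog$" is exactly the point that needs proof. Once these inputs are in place, your idea of taking the long exact hypercohomology sequence of the Gysin triangle for $c=1$ does coincide with the paper's argument.
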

\begin{proof}
For $c\ge 2$ both \ref{cor:CL1.1} and \ref{cor:CL1.2} follow directly from the Gysin sequence, 
Theorem \ref{thm:gysin-tri}.
We consider the case $c=1$ in \ref{cor:CL1.1}. 
We have an isomorphism of reciprocity sheaves
 $(\Omega^1_{/k}/\dlog\G_m)_\Nis\cong {\rm Conn}^1$ (cf. \cite[6.10]{RS}), whence an 
isomorphism in $\CItspNis$
\eq{cor:CL2}{\uomega^{\CI}(\Omega^1_{/k}/\dlog\G_m)_{\Nis}\xr{\simeq} \widetilde{{\rm Conn}^1}.}
We claim that the induced composite map  
\eq{cor:CL3}{\widetilde{\Omega^1_{/k}}\to \uomega^{\CI}(\Omega^1_{/k}/\dlog\G_m)_{\Nis} \xr{\simeq}\widetilde{{\rm Conn}^1}}
is surjective. Indeed, by Lemma \ref{lem:crit-surj} and resolution of singularities it 
suffices to show that its restriction to any $(X,D)\in \uMCor_{ls}$ is surjective. 
The latter is a local question. Let $A=\sO_{X,x}^h$ be the local ring at some $x\in X$ and $f\in A$ 
and equation for $D$ at $x$. Since $X\in \Sm$, the local ring $A$ is regular and hence the localization $A_f$ is factorial.
By the exact sequence
\[\Omega^1_{A_f/k}\to {\rm Conn}^1(A_f)\to \Pic(A_f)=0,\]
(induced by taking cohomology of the complex $[\mathbb{G}_m \xrightarrow{\dlog}\Omega^1_{/k}]$) we can lift any rank 1 connection  $E$ to a differential $\omega_E\in \Omega^1_{A_f}$.
It is direct to check from \cite[Thm 6.11]{RS} that $E\in \widetilde{{\rm Conn}^1}(A,f)$ if and only if
$\omega_E\in \widetilde{\Omega^1}(A,f)$ (with the obvious abuse of notation), proving the claim.

By  \eqref{cor:CL2}, the surjectivity of \eqref{cor:CL3} and the left exactness of $\uomega^{\CI}$
we have an exact sequence
\[0\to \uomega^*(\G_m/(k^{\rm alg})^{\times})\xr{\dlog} \widetilde{\Omega^1_{/k}}\to 
\widetilde{{\rm Conn}^1}\to 0,\]
where $(k^{\rm alg})^\times=\Ker (\dlog: \G_m\to \Omega^1)$.
Since $\gamma(\uomega^*(k^{\rm alg})^{\times})=0$ (e.g. by the projective bundle formula)
and $\gamma(\uomega^*\G_m)=\gamma(\Z(1))=\Z$ by the weak cancellation theorem,
Corollary \ref{cor:gammaOmega} and Lemma \ref{lem:gamma-exact} yield
\[ \gamma (\widetilde{{\rm Conn}^1})= \widetilde{\G_a}/\Z.\]
The statement follows from this and  the Gysin sequence.
\end{proof}



\subsection{Results without modulus}

\begin{para}\label{para:gdRW}
Assume char$(k)=p>0$.
For $F\in \RSC_\Nis$ denote by $h^0_{\A^1}(F)$ the maximal $ \A^1$-invariant subsheaf of $F$.
In particular $\dlog: K^M_r\to W_n\Omega^r$ factors via the inclusion $h^0_{\A^1}(W_n\Omega^r)\inj W_n\Omega^r$.
We obtain an induced map in $\CItspNis$
\[\dlog: \uomega^*K^M_r\to \uomega^*h^0_{\A^1}(W_n\Omega^r)= 
\uomega^{\CI}h^0_{\A^1}(W_n\Omega^r)\inj \widetilde{W_n\Omega^r}.\]
Thus, for $q\ge  r\ge 0$, $n\ge 1$, $X\in \Sm$ and $\sY\in \uMCor$  we can define
\[\phi^r_{X,\sY}: W_n\Omega^{q-r}(X)\to \Hom(\uomega^*K^M_r(\sY), \widetilde{W_n\Omega^q}(X\otimes \sY))\]
 by
\[\phi^r_{X,\sY}(\alpha)(\beta):= p_X^*\alpha \cdot \dlog p_{\sY}^*\beta,\] 
where $\alpha\in W_n\Omega^{q-r}(X)$, $\beta\in \uomega^*K^M_r(\sY)$ and $p_X: X\otimes \sY\to X$ 
and $p_{\sY}: X\otimes \sY\to Y$ denote the projections.

\end{para}
The following result is essentially a corollary of the projective bundle formula for reciprocity sheaves
and  the computation of the cohomology of the K{\"a}hler differentials of the projective line.
The case ${\rm char}(k)=0$ was proved  in \cite[Thm 6.1]{MS} by a slightly different  method.
Recall from Proposition \ref{prop:twistMilnor} that we have 
$\uHom_{\PST}(K^M_n, F)=\gamma^n F$,  $F\in \RSC_\Nis$. 
\begin{thm}\label{thm:gdRW}
\begin{enumerate}[label =(\arabic*)]
\item\label{thm:gdRW01} Assume char$(k)=p>0$. For $q, r\ge 0$, $n\ge 1$ the collection 
$\{\phi^r_{X,Y}\}_{X, \sY}$ from \ref{para:gdRW} induces an isomorphism
\[\phi^r: W_n\Omega^{q-r}\xr{\simeq} \gamma^r W_n\Omega^q \quad \text{in }\RSC_\Nis,\]
where we set $W_n\Omega^{q-r}:=0$, if $q<r$. 
We have 
\eq{thm:gdRW1}{\phi^{r+s}= \gamma^s(\phi^r)\circ \phi^s, \quad \text{for } r,s\ge 0.}
Furthermore  $\phi^r$ commutes with $R$, $F$, $V$, $d$ (see \ref{subsec:exaRSC}\ref{subsec:exaRSC4}
for notation), i.e.,
\eq{thm:gdRW2}{\phi^r\circ f=\gamma^r(f)\circ \phi^r, \quad \text{for }f\in \{R, F, V, d\}.}
\item\label{thm:gdRW02}
Assume char$(k)=0$. Then similarly as in \ref{thm:gdRW01}, we have an isomorphism
$\phi^r: \Omega^{q-r}\xr{\simeq} \gamma^r\Omega^q$, $q,r\ge 0$, 
 which satisfies \eqref{thm:gdRW1}; also for relative differentials $\Omega ^\bullet_{/k}$.
\end{enumerate}
\end{thm}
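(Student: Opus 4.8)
\textbf{Proof plan for Theorem \ref{thm:gdRW}.}

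The plan is to deduce this from the projective bundle formula (Theorem \ref{thm:pbf}, in its reciprocity-sheaf incarnation via Proposition \ref{prop:twistMilnor} and the identity $\gamma^r W_n\Omega^q = \uHom_{\PST}(K^M_r, W_n\Omega^q)$) together with a direct computation of the cohomology of the de Rham--Witt sheaves on $\P^1$. First I would treat the case $r=1$, which is the heart of the matter. By Lemma \ref{lem:P1bf} (or rather its $F = \widetilde{W_n\Omega^q}$ incarnation, combined with \eqref{para:RSCtwist4} which gives $\uomega_!\gamma^1\tF = \gamma^1 F$), one has $(\gamma^1 W_n\Omega^q)_X \cong R^1\pi_*(W_n\Omega^q)_{\P^1_X}$ for $\pi\colon \P^1_X \to X$. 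So the task reduces to the classical computation $R^1\pi_*(W_n\Omega^q_{\P^1_X}) \cong W_n\Omega^{q-1}_X$, together with the identification of the induced splitting map with $\phi^1$. The computation of $R^\bullet\pi_* W_n\Omega^q_{\P^1_X}$ is standard (e.g.\ it follows from Illusie's work, or can be done by hand using the decomposition of $W_n\Omega^\bullet_{\P^1}$ into pieces and the projective bundle formula for crystalline/de Rham--Witt cohomology); the class of $\dlog t$, where $t$ is the coordinate on $\P^1\setminus\{0,\infty\}$, generates the relative $R^1$ as a $W_n\Omega^{\bullet}_X$-module in the appropriate sense, and this is precisely what makes $\phi^1(\alpha) = p_X^*\alpha\cdot\dlog p_{\P^1}^* t$ an isomorphism onto $R^1\pi_*$. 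I would spell out that $\phi^1$ as defined in \ref{para:gdRW} lands in $\gamma^1$ via the cup product construction \eqref{para:Tmap1}, and that under the isomorphism $\gamma^1 W_n\Omega^q \cong R^1\pi_*(W_n\Omega^q_{\P^1_X})$ it corresponds to cupping with the first Chern class $\xi = c_1(\sO(1))$ followed by evaluation --- i.e.\ it is exactly the component $\lambda^1$ of the projective bundle decomposition, hence an isomorphism.

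Next I would bootstrap to arbitrary $r$ by iteration: $\gamma^r W_n\Omega^q = \gamma^1(\gamma^{r-1}W_n\Omega^q) \cong \gamma^1(W_n\Omega^{q-r+1})$ by induction, and applying the $r=1$ case to the sheaf $W_n\Omega^{q-r+1}$ gives $\cong W_n\Omega^{q-r}$; the compatibility \eqref{thm:gdRW1}, i.e.\ $\phi^{r+s} = \gamma^s(\phi^r)\circ\phi^s$, then falls out of the explicit cup-product formula for $\phi^\bullet$ together with the associativity of the external product on Milnor $K$-theory (the cocycle identity in \eqref{para:cancel2.5}) --- this is a formal diagram chase once the $r=1$ case is in hand. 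The vanishing convention $W_n\Omega^{q-r} = 0$ for $q<r$ is automatic since $\gamma$ kills $\G_a$-type sheaves: concretely, $\gamma(W_n\sO) = \gamma(\widetilde{W_n}\text{-part})$, and $R^1\pi_*\sO_{\P^1_X} = 0$ forces $\gamma^{q+1}W_n\Omega^q = 0$ by the same projective bundle argument, extended by induction in $n$ using the short exact sequences $0 \to \Omega^q_X \to W_{n}\Omega^q_X \to W_{n-1}\Omega^q_X \to 0$-type devissage (more precisely the restriction/Verschiebung sequences), where one needs Lemma \ref{lem:gamma-exact} to know $\gamma$ is exact.

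For the commutation \eqref{thm:gdRW2} with $R, F, V, d$: each of these is a morphism in $\RSC_\Nis$ (see \ref{subsec:exaRSC}\ref{subsec:exaRSC4}), and $\gamma^r$ is a functor, so $\gamma^r(f)$ makes sense; the identity $\phi^r\circ f = \gamma^r(f)\circ\phi^r$ is then checked on sections using the explicit formula $\phi^r(\alpha)(\beta) = p_X^*\alpha\cdot\dlog p_\sY^*\beta$. The only point requiring a small argument is that $F$, $V$, $R$, $d$ interact correctly with the $\dlog$-symbol: one uses $F\dlog[u] = \dlog[u]$, $d\dlog[u] = 0$, $R\dlog[u] = \dlog[u]$ for Teichmüller-type $\dlog$ classes, and the Leibniz/projection-formula identities $d(x\cdot\dlog y) = (dx)\cdot\dlog y$, $V(x)\cdot\dlog y = V(x\cdot F\dlog y) = V(x\cdot\dlog y)$, $F(x\cdot\dlog y) = F(x)\cdot\dlog y$ --- all standard de Rham--Witt identities. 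Part \ref{thm:gdRW02}, the characteristic-zero case for $\Omega^\bullet$ and $\Omega^\bullet_{/k}$, follows by the same argument but is in fact already contained in \cite[Thm 6.1]{MS}; alternatively it is an immediate consequence of Corollary \ref{cor:gammaOmega} together with the identification of $\phi^r$ as the canonical cancellation isomorphism. \textbf{The main obstacle} I anticipate is not any single step but rather being careful about the precise normalization: making sure that the abstractly-defined splitting coming from the projective bundle formula (which involves $\lambda^1_V = c_\xi\circ\pi^*$ and the cupping maps $c_\alpha$ of \ref{para:cyclecup}) is literally equal to the hands-on map $\phi^1$ built from $\dlog$ and the cup product \eqref{para:Tmap1}; this is where one must unwind the definitions of $c_{[Z]}$, the Gysin map, and the identification $\gamma^1 W_n\Omega^q = R^1\pi_* W_n\Omega^q_{\P^1_X}$ simultaneously, and it is the same kind of bookkeeping that appears in the proof of Lemma \ref{lem:P1bf} --- manageable, but it is the step that needs genuine care rather than formal manipulation.
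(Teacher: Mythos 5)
Your proposal is correct in outline and shares the paper's overall skeleton (the cup-product map $\phi^r$, the identification $(\gamma^1 F)_X\cong R^1\pi_*F_{\P^1_X}$ from Lemma \ref{lem:P1bf} together with \eqref{para:RSCtwist4}, iteration via \eqref{thm:gdRW1}, exactness of $\gamma$ for the vanishing range $q<r$, and the standard de Rham--Witt identities for \eqref{thm:gdRW2}), but it diverges from the paper at the key computational input. You propose to establish $R^1\pi_*\bigl(W_n\Omega^q_{\P^1_X}\bigr)\cong W_n\Omega^{q-1}_X$ directly, citing Illusie or the crystalline/Hodge--Witt projective bundle formula; that formula is Gros's theorem, which the paper deliberately does \emph{not} use --- indeed it is re-proved as a corollary of this very theorem in \ref{para:gros}, so importing it makes your argument logically fine but sacrifices the point of giving an independent, ``motivic'' proof (and redoing it ``by hand'' amounts to redoing Gros via duality). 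The paper instead only needs the classical $n=1$ computation (cup with $c_1(\sO_{\P^1}(1))$ is an isomorphism $\Omega^{q-1}_X\to R^1\pi_*\Omega^q_{\P^1_X}$) and then bootstraps to $W_n\Omega^q$ purely sheaf-theoretically: exactness of $\gamma^r$ (Lemma \ref{lem:gamma-exact}) and compatibility of $\phi^r$ with $d$, $F$, $R$ give the isomorphisms for $Z^q$, $B^q$, then via the inverse Cartier operator for Illusie's $B^q_m$, $Z^q_m$, and finally induction on $n$ using the exact sequence $0\to\Omega^q/B^q_n\xr{V^n}{\rm gr}^{q,n}\to\Omega^{q-1}/Z^{q-1}_n\to 0$ for the graded pieces of the restriction filtration. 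This devissage is what your plan replaces by the external $\P^1$-bundle computation; it also hands the paper, for free, the twist computations for the subquotients $B^q_n$, $Z^q_n$ used later (e.g.\ Corollary \ref{cor:refined-trace}).

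Two smaller points you should not skip. First, the statement that $\{\phi^r_{X,\sY}\}$ ``induces a morphism in $\RSC_\Nis$'' requires checking compatibility with transfers both in $\sY$ and in $X$; the paper does this by lifting to $W\Omega^{q-r}$, which is $p$-torsion free, and invoking \cite[Lem 1.1]{MS} --- your plan never addresses this. Second, for part \ref{thm:gdRW02} note that Corollary \ref{cor:gammaOmega} only gives an abstract isomorphism $\gamma^r\widetilde{\Omega^q}\cong\widetilde{\Omega^{q-r}}$, whereas the theorem asserts that the \emph{specific} map $\phi^r$ is an isomorphism, so you still need the normalization check (your ``same argument'' alternative, which is what the paper does, handles this).
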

\begin{proof}
We will consider the situation in \ref{thm:gdRW01} and make a remark on \ref{thm:gdRW02} later. 
First assume $q\ge r$. 
Fix $\alpha\in W_n\Omega^{q-r}(X)$. We claim that $\phi^r_X(\alpha):=\{\phi^r_{X,\sY}(\alpha)\}$ with varying $\sY$
defines a morphism $\uomega^*K^M_r\to \widetilde{W_n\Omega^q}(X\otimes -)$ in $\uMPST$, i.e.,
we have to show for $\Gamma\in \uMCor(\sY', \sY)$ 
\[(\id_X\otimes\Gamma)^*\phi^r_{X, \sY}(\alpha)(\beta)= \phi^r_{X,\sY'}(\alpha)(\Gamma^*\beta) \quad \text{in }
\widetilde{W_n\Omega^q}(X\otimes\sY').\]
Since  restriction to open subsets is injective on $W_n\Omega^q$ it suffices to check this for 
$X$ affine and $\sY=(Y,\emptyset)$, $\sY'=(Y',\emptyset)$.
In this case we can lift $\alpha$ to $\tilde{\alpha}\in W\Omega^{q-r}(X)$
and we can use this to lift $\phi^r_X(\alpha)$ to 
$\phi^r_X(\tilde{\alpha}): K^M_r\to W\Omega^q(X\times -)$
via $\beta\mapsto p_X^*\tilde{\alpha}\cdot  \dlog p_{\sY}^*\beta$. Since $\phi^r_X(\tilde{\alpha})$ is obviously a map
of sheaves (without transfers) and since $W\Omega^q(X\times- )$ is $p$-torsion free (see \cite[I, Cor 3.5]{Il})
it follows from \cite[Lem 1.1]{MS} that $\phi_X(\tilde{\alpha})$ is compatible with transfers and hence so is
$\phi^r_X(\alpha)$. Thus 
\begin{align*}
\phi^r_X(\alpha) \in& \Hom_{\uMPST}(\uomega^*K^M_r, \uHom_{\uMPST}(\Ztr(X), \widetilde{W_n\Omega^{q}}))\\
                        &=(\uomega_!\gamma^r\widetilde{W_n\Omega^q})(X)= (\gamma^r W_n\Omega^q)(X),
\end{align*}
where the last equality holds by \eqref{para:RSCtwist4}.
Next we claim, that
\[\phi^r: W_n\Omega^{q-r}\to \gamma^r W_n\Omega^q, \quad \alpha \text{ on }X\mapsto \phi^r_X(\alpha)\]
is a morphism in $\PST$ (hence also in $\RSC_\Nis$). 
Indeed, this can be checked similarly as above.
We show \eqref{thm:gdRW1}.
Let $\alpha\in W_n\Omega^{q-r-s}(X)$. 
Then $\phi^s(\alpha)$ is determined by $\phi^s_{X,\sY}(\alpha)(\beta_s)$ for $\beta_s\in \uomega^*K^M_s(\sY)$
and $\gamma^s(\phi^r)(\phi^s(\alpha))$ is determined by
\[\phi^r_{X\otimes\sY, \sZ}(\phi^s_{X,\sY}(\alpha)(\beta_s))(\beta_r)
= p_X^*\alpha\cdot\dlog (p_{\sY}^*\beta_s\cdot p_{\sZ}^*\beta_r)\]
for $\beta_r \in \uomega^* K^M_r(\sZ)$, $\sZ\in \uMCor$. Here $p_X, p_{\sY}$ and $p_{\sZ}$ denote the obvious projections. 

For $\alpha\in W_n\Omega^{q-r-s}(X)$ the map
 $(\beta_s,\beta_r)\mapsto p_X^*\alpha\cdot \dlog (p_{\sY}^*\beta_s\cdot p_{\sZ}^*\beta_r)$
also induces an element in
\[\uHom_{\uMPST}(\uomega^*K^M_s\otimes_{\uMPST} \uomega^*K^M_r, \widetilde{W_n\Omega^q})(X)
\cong (\gamma^{s+r}W_n\Omega^q)(X),\]
where the isomorphism follows from \eqref{cor:gtwist1} and \eqref{para:RSCtwist4}.
This yields \eqref{thm:gdRW1}. 
Furthermore,  \eqref{thm:gdRW2} follows from the following general formulas in $W_\bullet\Omega^*$
\[R(\alpha\cdot\dlog\beta)=R(\alpha)\cdot\dlog\beta, \quad d(\alpha\cdot\dlog\beta)= d(\alpha)\cdot\dlog\beta,\]
\[ F(\alpha\cdot\dlog\beta)= F(\alpha)\cdot \dlog \beta, \quad V(\alpha\cdot\dlog\beta)=V(\alpha)\cdot\dlog\beta.\]
Next we prove that
\eq{thm:gdRW3}{\phi^1: \Omega^{q-1}\to \gamma^1\Omega^q}
is an isomorphism.
Indeed by Lemma \ref{lem:P1bf} the map 
\eq{thm:gdRW4}{H^1(\lambda^1_{\sO^{\oplus 2}}): (\gamma^1 \Omega^{q})_X
\xr{\simeq} R^1\pi_*(\Omega^q_{\P^1_X}),}
is an isomorphism, where $\pi: \P^1_X\to X$ is the projection.
On the other hand by definition of $\lambda^1_{\sO^{\oplus 2}}$ and $\phi^1$, the precomposition of
\eqref{thm:gdRW4} with $\phi^1_X$ is exactly the cup-product with 
\[-\cup c_1(\sO_{\P^1_X}(1)): \Omega^{q-1}_X\to R^1\pi_*\Omega^q_{\P^1_X},\]
which is well-known to be an isomorphism. Hence \eqref{thm:gdRW3} is an isomorphism as well.
Iterating and \eqref{thm:gdRW1} yields the isomorphism
\[\phi^r: \Omega^{q-r}\xr{\simeq} \gamma^r\Omega^q, \quad \text{for } q\ge r.\]
Note that a similar argument also works in characteristic zero for $\Omega^\bullet$ and $\Omega^\bullet_{/k}$;
hence \ref{thm:gdRW02} holds.
Since $\gamma^r$ is exact (see Lemma \ref{lem:gamma-exact}) 
and $\phi^r$ is compatible with $d$ we obtain isomorphisms (which we will  denote by $\phi^r$ again)
\[Z^{q-r}\xr{\simeq}\gamma^r Z^r,  \quad B^{q-r}\xr{\simeq} \gamma^r B^q, \quad
Z^{q-r}/B^{q-r}\xr{\simeq} \gamma^r (Z^q/B^q), \]
where $Z^q=\Ker(d: \Omega^q\to \Omega^{q+1})$ and $B^q= d\Omega^{q-1}$. 
Denote by $C^{-1}: \Omega^{q}\xr{\simeq} Z^q/B^q$ the inverse Cartier operator.
One easily checks that the following diagram commutes
\[\xymatrix{
\Omega^{q-r}\ar[r]^{\phi^r}\ar[d]^{C^{-1}} & \gamma^r \Omega^q\ar[d]^{\gamma^r(C^{-1})}\\
Z^{q-r}/B^{q-r}\ar[r]^{\phi^r} &                \gamma^r (Z^q/B^q).
}\]
(In fact  this follows from the compatibility of $\phi^r$ with $F$ and \cite[I, Prop 3.3]{Il}.)
It is direct from this to check that we also have isomorphisms 
\[B_n^{q-r}\xr{\simeq} \gamma^r B_n^q, \quad Z_n^{q-r}\xr{\simeq} \gamma^r Z^q_n,\]
where $B_n^q$ and $Z_n^q$ are defined as in \cite[0, (2.2.2)]{Il}. 
Set ${\rm gr}^{q,n}:=\Ker(W_{n+1}\Omega^q\xr{R} W_n\Omega^q)$.
By compatibility of $\phi^r$ with $R$, we see that $\phi^r$ on $W_{n+1}\Omega^{q-r}$ restricts to
\eq{thm:gdRW5}{\phi^r: {\rm gr}^{q-r,n}\to \gamma^r {\rm gr}^{q,n}.}
By \cite[I, Cor3.9 ]{Il} we have an exact sequence
\[0\to \Omega^q/B_n^q\xr{V^n} {\rm gr}^{q,n}\xr{\beta} \Omega^{q-1}/Z_n^{q-1}\to 0,\]
where $\beta$ is determined by $\beta(V^n(a)+dV^n(b))=b$.
It follows from this and the above that \eqref{thm:gdRW5} is an isomorphism.
Hence $\phi^r: W_n\Omega^{q-r}\to \gamma^r W_n\Omega^q$ is an isomorphism
by induction on $n$.

It remains to show that $\gamma^r W_n\Omega^q=0$, for $q<r$.
By the above it suffices to show $\gamma^1 W_n=0$. By the exactness of $\gamma^1$
it suffices to show $\gamma^1\G_a=0$. 
By Lemma \ref{lem:P1bf}, we have 
$(\gamma^1\G_a)_X\cong R^1\pi_* \sO_{\P^1_X}=0$. 
This completes the proof of the theorem.
\end{proof}

\begin{para}\label{para:gros}
Given the fact that the de Rham-Witt sheaves are reciprocity sheaves, we 
obtain as a corollary of the above and the abstract results for reciprocity sheaves 
new (motivic) proofs of the following results of Gros:
projective bundle formula (by Theorem \ref{thm:pbf} with empty modulus, cf. \cite[I, Thm 4.1.11]{gros}),
blow-up formula (by \eqref{cor:bud1} with empty modulus, cf. \cite[IV, Cor 1.1.11]{gros}),
and a proper pushforward and Gysin morphisms for smooth quasi-projective schemes 
(for $r\ge 0$ in \ref{para:RSC-pf} take $a=0$ and $b=r$ and precompose $f_*$ with  the map induced by
$W_n\Omega^q_Y \to \gamma^r (W_n \Omega^q\la r\ra )_Y$;
for $r<0$ take in \ref{para:RSC-pf} $a=-r$ and $b=-r$ and post compose with the isomorphism from the weak Cancellation
Theorem; cf. \cite[II]{gros}).
We also obtain the (opposed) action of properly supported Chow correspondences 
constructed in \cite{CR12} (by \ref{para:cycle-action}, \ref{prop:cycle-action}).
However let us remind the reader that at the moment the finite transfers on the 
de Rham-Witt complex are defined by restricting the action of  properly supported Chow correspondences
from \cite{CR12}(the construction of which uses all the results above) 
to the case of finite correspondences. It is therefore an interesting problem 
to have a more direct construction of the transfers structure for the de Rham-Witt complex (cf. the discussion
in the characteristic zero  case in \ref{subsec:exaRSC}\ref{subsec:exaRSC3}).
The Gysin sequence however is to our knowledge new:
\end{para}

\begin{cor}\label{cor:dRW-Gysin}
Let $k$ be a perfect field. Let $X\in \Sm$ and let $i: Z\inj X$ be a smooth closed subscheme of codimension $c$.
Denote by $\rho: \tilde{X}\to X$ the blow-up of $X$ in $Z$ and set $E:=\rho^{-1}(Z)$.
\begin{enumerate}[label =(\arabic*)]
\item\label{cor:dRW-Gysin1}
There is a distinguished triangle in $D(X_\Nis)$
\[i_*\Omega^{q-c}_Z[-c]\xr{g_{Z/X}} \Omega^q_X\to R\rho_* \widetilde{\Omega^q}_{(\tilde{X}, E)}\xr{\partial} 
i_*\Omega^{q-c}_Z[-c+1].\]
Similarly with $\Omega^\bullet_{/k}$.
\item\label{cor:dRW-Gysin2}
Assume char$(k)=p>0$. Then we have a distinguished  triangle (for all $n$)
\[i_*W_n\Omega^{q-c}_Z[-c]\xr{g_{Z/X}} W_n\Omega^q_X\to R\rho_* \widetilde{W_n\Omega^q}_{(\tilde{X}, E)}
\xr{\partial} i_*W_n\Omega^{q-c}_Z[-c+1].\]
\item\label{cor:dRW-Gysin3}
Let $(X/W_n)_{\rm crys}$ be the Nisnevich-crystalline site of $X$ relative to $\Spec W_n(k)$ and 
$u_X: (X/W_n)_{\rm crys}\to X_{\Nis}$ the map of sites.
There is distinguished triangle in $D(X_\Nis)$
\[i_* R u_{Z*}\sO_{Z/W_n}[-2c]\xr{g_{Z/X}} R u_{X*}\sO_{X/W_n}\to 
R\rho_* \widetilde{W_n\Omega^\bullet}_{(\tilde{X}, E)}\xr{\partial} \ldots \]
\end{enumerate}
\end{cor}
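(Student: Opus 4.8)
\textbf{Proof strategy for Corollary \ref{cor:dRW-Gysin}.}
The plan is to deduce all three statements from the general Gysin triangle of Theorem \ref{thm:gysin-tri}, applied with empty modulus on $X$, combined with the identification of the twists $\gamma^c$ of the de Rham--Witt sheaves (resp. K\"ahler differentials) provided by Theorem \ref{thm:gdRW}. Concretely, fix $F$ among the reciprocity sheaves $\Omega^q$, $\Omega^q_{/k}$ or $W_n\Omega^q$; set $\sX = (X,\emptyset)$, $\sZ = (Z,\emptyset)$, and take $\tilde F = \ulomega^{\CI}F \in \CItspNis$. Since $Z$ meets the empty divisor transversally, Theorem \ref{thm:gysin-tri} supplies a distinguished triangle in $D(X_\Nis)$
\[
i_*\gamma^c\tilde F_{\sZ}[-c] \xr{g_{\sZ/\sX}} \tilde F_{\sX} \xr{\rho^*} R\rho_* \tilde F_{(\tilde X, E)} \xr{\partial} i_*\gamma^c\tilde F_{\sZ}[-c+1],
\]
where $\tilde F_{\sX} = F_X$, $R\rho_*\tilde F_{(\tilde X,E)}$ is the cohomology of the modulus pair appearing in each item, and $\gamma^c\tilde F_{\sZ}$ is computed by Corollary \ref{cor:gtwist}; applying $\ulomega_!$ and using \eqref{para:RSCtwist4} identifies this last sheaf with $(\gamma^c F)_Z$. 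For item \ref{cor:dRW-Gysin1} (with $F = \Omega^q$ or $\Omega^q_{/k}$), Theorem \ref{thm:gdRW}\ref{thm:gdRW02} gives $\gamma^c\Omega^q \cong \Omega^{q-c}$ (with the convention that this is $0$ for $q<c$), and similarly for relative differentials, which yields exactly the stated triangle. For item \ref{cor:dRW-Gysin2}, Theorem \ref{thm:gdRW}\ref{thm:gdRW01} gives $\gamma^c W_n\Omega^q \cong W_n\Omega^{q-c}$, again producing the triangle verbatim.

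For item \ref{cor:dRW-Gysin3} the plan is to pass to complexes. By \ref{para:coracx} the construction of the Gysin triangle is compatible with the termwise differentials, so applying Theorem \ref{thm:gysin-tri} to the bounded below complex $W_n\Omega^\bullet_X$ (a complex of reciprocity sheaves by \ref{subsec:exaRSC}\ref{subsec:exaRSC4}, since $d$, $R$, $F$, $V$ are morphisms in $\RSC_\Nis$) gives a distinguished triangle
\[
i_*\gamma^c(W_n\Omega^\bullet)_Z[-c] \to W_n\Omega^\bullet_X \to R\rho_* \widetilde{W_n\Omega^\bullet}_{(\tilde X, E)} \xr{\partial} i_*\gamma^c(W_n\Omega^\bullet)_Z[-c+1]
\]
in $D^+(X_\Nis)$. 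The compatibility \eqref{thm:gdRW1}--\eqref{thm:gdRW2} of the isomorphisms $\phi^c$ with the differential $d$ shows that $\gamma^c$ applied to the de Rham--Witt complex is, as a complex, isomorphic to $W_n\Omega^{\bullet-c}_Z$, i.e. to the de Rham--Witt complex of $Z$ shifted by $c$ in the internal grading; thus $\gamma^c(W_n\Omega^\bullet)_Z[-c]$ is (up to the internal degree shift) the complex $W_n\Omega^\bullet_Z$. Finally one invokes the comparison between the de Rham--Witt complex and crystalline cohomology: $Ru_{X*}\sO_{X/W_n} \cong W_n\Omega^\bullet_X$ in $D(X_\Nis)$, and likewise for $Z$ with a shift $[-2c]$ coming from the codimension-$c$ cycle class. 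Substituting these identifications into the triangle gives the asserted crystalline Gysin triangle.

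The main obstacle I expect is in item \ref{cor:dRW-Gysin3}: one must check carefully that the termwise Gysin maps $g_{Z/X}$ on $W_n\Omega^q$ assemble into a morphism of complexes $W_n\Omega^\bullet_Z[-c] \to W_n\Omega^\bullet_X$, i.e. that they commute with $d$ up to the identifications $\phi^c$. This is precisely the content of the compatibility of the cycle-cup map $c_Z$ (and hence, via Theorem \ref{prop:pf-gysin}, of $g_{Z/X}$) with the differential, which is not entirely formal; it follows by combining Proposition \ref{prop:gysin-cup}, the functoriality of $\phi^c$ with respect to $d$ from Theorem \ref{thm:gdRW}, and the fact that the construction of \ref{para:coracx} respects the differentials by design. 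Once this is granted, identifying the shift $[-2c]$ on the crystalline side amounts to matching the cycle class of $Z$ with the Gysin map, which is standard. A minor additional point is to record that in item \ref{cor:dRW-Gysin1} the connecting map $\partial$ agrees, up to sign and shift, with the residue map $\mathrm{Res}_Z$ when $c=1$; this can be checked directly from the codimension-one case of the proof of Theorem \ref{thm:gysin-tri} exactly as indicated in the remark following Corollary \ref{cor:exa-mod-g}, but is not needed for the statement as written.
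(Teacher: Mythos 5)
Your proposal is correct and follows essentially the same route as the paper: items \ref{cor:dRW-Gysin1} and \ref{cor:dRW-Gysin2} are exactly Theorem \ref{thm:gysin-tri} with empty modulus combined with the twist computations of Theorem \ref{thm:gdRW}, and item \ref{cor:dRW-Gysin3} is obtained by passing to the complex $W_n\Omega^\bullet$ via the method of \ref{para:coracx} (using the compatibility of $\phi^c$ with $d$ from \eqref{thm:gdRW2}) and invoking Illusie's isomorphism $Ru_{X*}\sO_{X/W_n}\cong W_n\Omega^\bullet_X$, with the shift $[-2c]$ arising as $[-c]$ from the Gysin triangle plus $[-c]$ from the internal degree shift, just as you say. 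The paper's proof is a two-line citation of precisely these ingredients, so nothing further is needed.
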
 
\begin{proof}
\ref{cor:dRW-Gysin1} and \ref{cor:dRW-Gysin2} follow directly from Theorem \ref{thm:gysin-tri}
and Theorem \ref{thm:gdRW}.
\ref{cor:dRW-Gysin3} follows from Illusie's isomorphism $R u_{X*}\sO_{X/W_n}\cong W_n\Omega^\bullet_X$
and the above  (cf. \ref{para:coracx}).
\end{proof}

\begin{rmk}\label{rmk:dRW-Gysin}
\begin{enumerate}
\item Note that in characteristic zero we have $\widetilde{\Omega^q}_{(\tilde{X}, E)}=\Omega^q_{\tilde{X}}(\log E)$.
In positive characteristic this is expected to hold but  not yet known (also for the de Rham-Witt sheaves).
\item If char$(k)=0$, then 
$R\rho_* \Omega^\bullet_{\tilde{X}/k}(\log E)\cong Rj_* \Omega^\bullet_{U/k}$,
with $j: U:=X\setminus Z\inj X$ the open immersion,
and the Gysin sequence becomes the classical one for de Rham cohomology.
\end{enumerate}
\end{rmk}

\begin{cor}\label{cor:refined-trace}
Assume char$(k)=p> 0$. Let $f\colon Y\to X$ be a morphism of relative dimension $r\ge 0$
between smooth projective $k$-schemes. Assume that $X$ is ordinary in the sense of \cite[Def (7.2)]{BK}.
Then the Ekedahl-Grothendieck pushfoward (see \cite[II, 1.]{gros})  factors via
\eq{cor:refined-trace1}{R\Gamma(Y, W_n\Omega^q_Y)[r]\to R\Gamma(Y,  W_n\Omega^q_Y/B_{n,\infty}^q)[r]
\xr{f_*} R\Gamma(X, W_n\Omega^{q-r}_X),}
where $B_{n,\infty}^q= \bigcup_s F^{s-1}d W_{n+s-1}\Omega^{q-1}$ (see \cite[IV, (4.11.2)]{IR}) and  
$f_*$ is induced by the pushforward from \ref{para:RSC-pf}.
\end{cor}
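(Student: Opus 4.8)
\textbf{Proof plan for Corollary \ref{cor:refined-trace}.}

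The plan is to deduce the factorization from the general pushforward formalism of \ref{para:RSC-pf} applied to the reciprocity sheaf $W_n\Omega^q$ together with the computation of its twists in Theorem \ref{thm:gdRW}. First I would recall that $f$ is a morphism of smooth projective $k$-schemes, so it is in particular projective, hence quasi-projective, and of relative dimension $r\ge 0$; thus the pushforward from \ref{para:RSC-pf} gives a morphism
\[ f_* : Rf_* (\gamma^r W_n\Omega^q)_Y[r] \to (\gamma^0 W_n\Omega^q)_X = (W_n\Omega^q)_X \]
(taking $a=0$, $b=r$ in the notation of \ref{para:RSC-pf}, using $\gamma^r(W_n\Omega^q\la r\ra) = \gamma^0 W_n\Omega^q$ by the remark following \eqref{para:RSCtwist6}, or more simply taking $a=r=b$ and post-composing with the cancellation isomorphism \eqref{para:RSCtwist6}). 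By Theorem \ref{thm:gdRW}\ref{thm:gdRW01}, the twist isomorphism reads $\phi^r\colon W_n\Omega^{q-r}\xrightarrow{\simeq}\gamma^r W_n\Omega^q$ in $\RSC_\Nis$, so $\gamma^r W_n\Omega^q \cong W_n\Omega^{q-r}$ and applying global sections gives a map $Rf_* R\Gamma(Y, W_n\Omega^q_Y)[r]\to R\Gamma(X,W_n\Omega^{q-r}_X)$, which one checks agrees with the Ekedahl–Grothendieck trace of \cite[II]{gros} (this comparison is discussed in \ref{para:gros}; it is really a normalization statement).

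The substantive point is the factorization through $W_n\Omega^q_Y/B^q_{n,\infty}$. Here I would use the hypothesis that $X$ is ordinary. The key is that the pushforward $f_*$ from \ref{para:RSC-pf} is, by construction, assembled from the Gysin maps and the trace for projective bundles, and that $X$ being ordinary forces a vanishing on the target side: when $X$ is ordinary the Frobenius $F$ acts bijectively on $H^\bullet(X, W\Omega^\bullet_X)$, equivalently on $H^\bullet(X,W_n\Omega^r_X)$ for all $r$, which constrains the image of $f_*$. More precisely, the target $R\Gamma(X, W_n\Omega^{q-r}_X)$ carries operators $F, V, d$ compatible (via \eqref{thm:gdRW2}) with those on the source, and the subsheaf $B^q_{n,\infty} = \bigcup_s F^{s-1}dW_{n+s-1}\Omega^{q-1}$ is precisely the part that is killed by the maps into an ordinary target: $f_*$ is $F$-equivariant and on an ordinary $X$ the subcomplex $B^{q-r}_{X,\infty}$ has vanishing cohomology (this is part of the Bloch–Kato / Illusie–Raynaud characterization of ordinariness, cf. \cite{IR}, \cite{BK}), so $f_*$ sends $F^{s-1}d W_{n+s-1}\Omega^{q-1}_Y$ into something cohomologically trivial on $X$. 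Hence $f_*$ vanishes on $R\Gamma(Y, B^q_{n,\infty})[r]$ and therefore factors through the quotient $R\Gamma(Y, W_n\Omega^q_Y/B^q_{n,\infty})[r]$, giving \eqref{cor:refined-trace1}.

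I expect the main obstacle to be making the last step precise: one must carefully identify how the condition ``$X$ ordinary'' interacts with the concrete construction of $f_*$. The cleanest route is probably to reduce to the case where $f$ is a closed immersion followed by a projective bundle projection (the factorization \eqref{defn:pfs2}), handle the projective bundle part by Theorem \ref{thm:pbf} (where the relevant $\gamma$-twisted pieces are again de Rham–Witt sheaves, and $B^q_{n,\infty}$ behaves compatibly), and for the Gysin part invoke the explicit description of the local Gysin map via the cycle class (Theorem \ref{prop:pf-gysin}) together with the fact that on an ordinary base the relevant higher cohomology of $B^\bullet_{n,\infty}$ vanishes, so that the contributions from $B^q_{n,\infty}$ drop out. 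The remark after the corollary statement already notes that even without ordinariness one gets a factorization in top degree ($q=r$) landing in $R\Gamma(X,W_n\sO_X)$, which should serve as a sanity check and as the engine for the general case after dévissage along the slope/Frobenius filtration; I would then note that the very last paragraph of the excerpt (``In fact, even when $X$ is not ordinary\dots'') indicates that the top-degree statement is extracted directly from the proof, so the argument there can be bootstrapped to the ordinary case by induction on $q-r$ using the exact sequences relating $W_n\Omega^q/B^q_{n,\infty}$ in consecutive degrees.
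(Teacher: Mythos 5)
You have the right two ingredients, and in spirit this is the paper's argument: ordinariness of $X$ kills $R\Gamma(X,B^{q-r}_{n,\infty})$ (Illusie--Raynaud, IV, Thm 4.13), and the pushforward of \ref{para:RSC-pf} is compatible with the de Rham--Witt structure via Theorem \ref{thm:gdRW}. But the crucial step is left hanging: your ``hence $f_*$ vanishes on $R\Gamma(Y,B^q_{n,\infty})[r]$'' does not follow from the observation that $f_*$ is $F$- and $d$-equivariant on the ambient sheaves. The sheaf $B^q_{n,\infty}=\bigcup_s F^{s-1}dW_{n+s-1}\Omega^{q-1}$ is a union of images at the sheaf level, and its hypercohomology is not the union of the images of $R\Gamma(Y,W_{n+s-1}\Omega^{q-1})$ under $F^{s-1}d$; so arguing classwise in cohomology does not give the needed vanishing, and your fallback strategy (unwinding $f_*$ into a closed immersion plus a projective bundle, then an induction ``on $q-r$'' via unspecified exact sequences) is both unnecessary and not clearly workable as stated.

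The clean mechanism, which is the paper's proof, is to work with the sheaf $B^q_{n,\infty}$ itself: it is a reciprocity subsheaf of $W_n\Omega^q$, the exactness of $\gamma$ (Lemma \ref{lem:gamma-exact}) together with the compatibility \eqref{thm:gdRW2} of the twist isomorphism with $F$ and $d$ gives $\gamma^r B^q_{n,\infty}=B^{q-r}_{n,\infty}$, and then functoriality of the pushforward in the reciprocity sheaf, applied to $0\to B^q_{n,\infty}\to W_n\Omega^q\to W_n\Omega^q/B^q_{n,\infty}\to 0$, yields a morphism of distinguished triangles whose left vertical map lands in $R\Gamma(X,B^{q-r}_{n,\infty})=0$; the factorization \eqref{cor:refined-trace1} drops out immediately, with no dévissage into Gysin and projective-bundle pieces and no induction. (Also note a small slip at the start: with $a=0$, $b=r$ the target of $f_*$ is $\gamma^r(W_n\Omega^q)_X\cong W_n\Omega^{q-r}_X$, not $(W_n\Omega^q)_X$; you correct this implicitly one sentence later, but the identification of this map with the Ekedahl--Grothendieck trace is exactly the normalization statement recorded in \ref{para:gros} and at the end of the paper's proof.)
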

\begin{proof}
The exactness of $\gamma$ and Theorem \ref{thm:gdRW} imply 
$\gamma^r B_{n,\infty}^q= B_{n, \infty}^{q-r}$. Thus by functoriality the pushforward 
induce a morphism of triangles
\begin{equation}\label{eq:cor:refined-trace}\xymatrix@R-5pt@C-5pt{
R\Gamma(Y, B^q_{n,\infty})[r]\ar[r]\ar[d]^{f_*} &
R\Gamma(Y, W_n\Omega^q)[r]\ar[r]\ar[d]^{f_*} &
R\Gamma(Y,  W_n\Omega^q/B_{n,\infty}^q)[r]\ar[d]^{f_*}\ar[r] &\\
R\Gamma(X, B^{q-r}_{n,\infty})\ar[r] &
R\Gamma(X, W_n\Omega^{q-r})\ar[r] &
R\Gamma(X,  W_n\Omega^{q-r}/B_{n,\infty}^{q-r})\ar[r] &.\\
}\end{equation}
Since $X$ is ordinary we have $R\Gamma(X, B^{q-r}_{n,\infty})=0$, by \cite[IV, Thm 4.13]{IR}.
This yields the factorization. That the pushforward  coincides (up to sign) with the Ekedahl-Grothendieck
pushforward, follows from the construction of the pushforward in \ref{para:RSC-pf}
and the explicit description of the projective trace (see Definition \ref{defn:traceP}) 
and the Gysin map (see Theorem \ref{prop:pf-gysin}) as well as the corresponding description
for the Ekedahl-Grothendieck pushforward (see \cite[II, 2.6, 3.3]{gros}, also \cite[Prop 2.4.1, Cor 2.4.3]{CR12}).
\end{proof}

\begin{rmk} Let $f\colon Y\to X$ be as in Corollary \ref{cor:refined-trace}, but without assuming that $X$ is ordinary. Then the above proof shows that there is a factorization
    \[R\Gamma(Y, W_n\Omega^r_Y)[r]\to R\Gamma(Y,  W_n\Omega^r_Y/B_{n,\infty}^r)[r]
\xr{f_*} R\Gamma(X, W_n)
    \]
which simply follows from the diagram \eqref{eq:cor:refined-trace} and the fact that $B^0_{n, \infty}=0$. 
\end{rmk}

\begin{cor}\label{cor:ordinary}  Assume char$(k)=p> 0$.
Let $f\colon X\to S$ be a surjective morphism between smooth projective connected $k$-schemes.
Assume that the generic fiber has index prime to $p$.
Then
\[ X \text{ is ordinary } \Longrightarrow S \text{ is ordinary.}\]
\end{cor}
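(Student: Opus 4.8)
\textbf{Proof plan for Corollary \ref{cor:ordinary}.}
The plan is to reduce the statement to a direct summand argument via the pushforward/pullback formalism developed for reciprocity sheaves, applied to the reciprocity sheaf $B^r:=B^r_X = \mathrm{Im}(d\colon \Omega^{r-1}\to \Omega^r)$. First I would note that $B^r_X$ is indeed a reciprocity sheaf: it is the image of the morphism $d\colon \Omega^{r-1}\to \Omega^r$ in $\RSC_{\Nis}$ (using \ref{subsec:exaRSC}\ref{subsec:exaRSC4} and the fact that $\RSC_{\Nis}$ is abelian, so images exist). Ordinarity of $X$ is, by definition, the vanishing $H^m(X,B^r_X)=0$ for all $m,r$, so we must show $H^m(S,B^r_S)=0$ for all $m,r$ given the analogous vanishing on $X$. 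Since $\dim S \le \dim X$ this is a finite list of groups. Fix $r\ge 1$ (the case $r=0$ is trivial since $B^0=0$).

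The key step is to apply Theorem \ref{thm:dds} (equivalently Corollary \ref{cor:dds}) with $F^\bullet = B^r_{(-)}$ concentrated in degree $0$. The hypothesis ``the generic fiber of $f$ has index prime to $p$'' means: there exist integral closed subschemes $V_i \subset X$ that are proper, surjective and generically finite over $S$ of degrees $n_i$ with $\gcd(n_1,\dots,n_s) = N$ coprime to $p$ (we may need $f$ to be dominant, which holds since it is surjective between connected schemes; and quasi-projective, which holds as $X,S$ are projective). Theorem \ref{thm:dds} then produces a morphism $\sigma\colon Rf_* B^r_X \to B^r_S$ in $D(S_{\Nis})$ with $\sigma\circ f^* = N\cdot\mathrm{id}$. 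Taking cohomology $H^m(S,-)$, the composite $H^m(S,B^r_S)\xrightarrow{f^*} H^m(X,B^r_X)\xrightarrow{\sigma} H^m(S,B^r_S)$ is multiplication by $N$. Since $H^m(X,B^r_X)=0$ by the assumption that $X$ is ordinary, multiplication by $N$ on $H^m(S,B^r_S)$ factors through $0$, hence is the zero map; therefore $H^m(S,B^r_S)$ is $N$-torsion. On the other hand, $B^r_S$ is an $\F_p$-vector space sheaf (it is a subsheaf of $\Omega^r_S$, which is killed by $p$), so $H^m(S,B^r_S)$ is $p$-torsion. As $\gcd(N,p)=1$, a group that is simultaneously $N$-torsion and $p$-torsion vanishes. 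This gives $H^m(S,B^r_S)=0$ for all $m$, and running over all $r$ shows $S$ is ordinary.

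The main obstacle I anticipate is purely bookkeeping: verifying that $B^r$ genuinely lands in $\RSC_{\Nis}$ and that the hypotheses of Theorem \ref{thm:dds} are met in the stated generality (in particular producing the subschemes $V_i$ of the correct degrees from the index hypothesis, and checking $f$ is dominant and quasi-projective). There is also a small compatibility point: Theorem \ref{thm:dds} is stated for a bounded below complex $F^\bullet$, so one simply takes $F^\bullet$ to be $B^r$ in degree $0$, and the splitting statement specializes correctly. An alternative, slicker route avoiding any torsion argument: since ordinarity is equivalent to $F$ being bijective on $H^q(X,W\Omega^r_X)$, one could instead invoke Corollary \ref{cor:refined-trace} and its proof to build a trace for $W_n\Omega^r$; but the $B^r$ route via Theorem \ref{thm:dds} is the cleanest and I would present that one. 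Finally, I would remark (as the paper does in Remark \ref{rmk:HWcrystorsionfree}) that the same argument with $F^\bullet$ replaced by the appropriate reciprocity sheaf governing the Hodge--Witt or crystalline-torsion-free property yields the analogous implications.
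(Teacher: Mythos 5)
Your proposal is correct and follows essentially the same route as the paper: the paper also takes $B^r=d\Omega^{r-1}\in\RSC_{\Nis}$, characterizes ordinarity by the vanishing of $H^i(X,B^r)$, and concludes via Corollary \ref{cor:dds} (the splitting form of Theorem \ref{thm:dds}), with the $N$-torsion versus $p$-torsion cancellation left implicit. You simply spell out the details (image in the abelian category $\RSC_{\Nis}$, the subschemes $V_i$, and the coprimality argument) that the paper compresses into one line.
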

\begin{proof}
Let $B^q=d\Omega^{q-1}$. This is a reciprocity sheaf. By \cite[Def (7.2)]{BK}
$X$ is ordinary if and only if $H^i(X, B^q)=0$, for $i,q\ge 0$.
Thus the statement follows from Corollary \ref{cor:dds}. 
\end{proof}
\begin{rmk}\label{rmk:HWcrystorsionfree} Let $f\colon X\to S$ be as in Corollary \ref{cor:ordinary}, and assume moreover that the generic fiber has a zero cycle of degree prime to $p$.
Then it is possible, with a similar argument, to prove the implication
\[ X \text{ is Hodge-Witt } \Longrightarrow S \text{ is Hodge-Witt.}\]
See \cite[IV, 4.12]{IR}. Similarly, if the crystalline cohomology $H^*(X/W)$ of $X$ is torsion free, the existence of the splitting in  Corollary \ref{cor:dds} implies that the crystalline cohomology of $S$ is torsion free as well. 
\end{rmk}

\begin{cor}\label{cor:BI-exa}
Let $S$ be a separated $k$-scheme of finite type, let $X$ and $Y$ be integral smooth quasi-projective $k$-schemes
both of dimension $N$,
and let $f:X\to S$, $g: Y\to S$  be morphism of $k$-schemes.  Assume that $X$ and $Y$ are properly birational over $S$
(see \ref{sssec:BI}). Then any proper birational correspondence over $S$ between $X$ and $Y$ induces an isomorphism
\eq{cor:BI-exa0}{Rf_* F_X\xr{\simeq} Rg_*F_Y,}
where $F$ is one of the following sheaves (resp. complexes)
\begin{enumerate}[label=(\arabic*)]
\item\label{cor:BI-exa1} any complex of reciprocity sheaves whose terms are subquotients of $\Omega^N_{/k}$, e.g.,
\[\Omega^N_{/k}, \quad \Omega^N_{/k}/\dlog K^M_N, \quad \Omega^N_{/k}/h^0_{\A^1}(\Omega^N_{/k}),\]
 where $h^0_{\A^1}(\Omega^N_{/k})$ is the maximal $\A^1$-invariant subsheaf of $\Omega^N_{/k}$;
\end{enumerate}
and in case  char$(k)=p>0:$
\begin{enumerate}[resume]
\item\label{cor:BI-exa2} 
any complex of reciprocity sheaves whose terms are  subquotient of  $W_n\Omega^N$ ($n\ge 1$),
e.g., 
\[W_n\Omega^N_{\log},\quad W_n\Omega^N/F^{r-1}dW_{n+r-1}\Omega^{N-1}, 
\quad F^{r-1}dW_{n+r-1}\Omega^{N-1}, \quad (r\ge 1),\]
\[R\epsilon_*(\Z/p^n(N)),\quad  R^i\epsilon_*(\Z/p^n(N)), \text{ all }i 
\quad \text{(see \ref{subsec:exaRSC} \ref{subsec:exaRSC4.2})};\]
\item\label{cor:BI-exa4} 
\[G\la N\ra, \]
where $G$ is a smooth commutative  unipotent $k$-group;
 \item\label{cor:BI-exa5}
 \[H^1(G)\la N\ra,\]
where $G$ is a finite commutative $p$-group scheme over $k$ and $H^1(G)(X)=H^1(X_{\rm fppf}, G)$.
\end{enumerate}
\end{cor}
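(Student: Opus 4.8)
The plan is to deduce the statement entirely from Theorem \ref{thm:BItop}: once we know that $F$ (or, when $F=F^\bullet$ is a complex, each of its terms) satisfies $F(\xi)=0$ for every point $\xi$ finite and separable over a codimension $\geq 1$ point of $X$ or of $Y$, the isomorphism \eqref{cor:BI-exa0} is precisely the conclusion of Theorem \ref{thm:BItop} (for the complex cases, applied to a bounded below complex, as explained below). Since $X$ and $Y$ are integral of finite type over $k$ of dimension $N$, such a point $\xi$ has $k(\xi)=L$ with $\td(L/k)\leq N-1$, and $F(\xi)=F(L)$ by the colimit presentation of sections over a regular $k$-algebra. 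So the whole proof reduces to checking that $F(L)=0$ for every finitely generated field extension $L/k$ with $\td(L/k)\leq N-1$.

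First I would dispose of the ``subquotient'' reductions in (1) and (2). If $0\to B\to A\to F\to 0$ is exact in $\RSC_\Nis$ and $A$ is a subobject of a sheaf $W$ with $W(L)=0$, then evaluating at $\Spec L$, which has no higher Nisnevich cohomology, gives $A(L)\subseteq W(L)=0$, hence $F(L)=0$; so any subquotient of a sheaf vanishing at $L$ vanishes at $L$. Thus (1) reduces to $\Omega^N_{L/k}=0$ and (2) to $W_n\Omega^N_L=0$ for $\td(L/k)\leq N-1$. The first holds because $k$ is perfect, so $L/k$ is separably generated and $\dim_L\Omega^1_{L/k}=\td(L/k)<N$, whence $\Omega^N_{L/k}=\bigwedge^N_L\Omega^1_{L/k}=0$; this covers $\Omega^N_{/k}$, $\Omega^N_{/k}/\dlog K^M_N$ and $\Omega^N_{/k}/h^0_{\A^1}(\Omega^N_{/k})$, which are all quotients of $\Omega^N_{/k}$. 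The second, $W_n\Omega^N_L=0$ for $\td(L/k)\leq N-1$, is \cite[I, Prop 3.11]{Il} (already used in Lemma \ref{lem:top}); this covers $W_n\Omega^N_{\log}$ and $F^{r-1}dW_{n+r-1}\Omega^{N-1}$ (subobjects of $W_n\Omega^N$), $W_n\Omega^N/F^{r-1}dW_{n+r-1}\Omega^{N-1}$ (a quotient), and, via the identification \eqref{subsec:exaRSC4.2.1}, the complex $R\epsilon_*(\Z/p^n(N))\cong[\,W_n\Omega^N\xrightarrow{F-1}W_n\Omega^N/dV^{n-1}\Omega^{N-1}\,][-N]$ together with its cohomology sheaves $R^i\epsilon_*(\Z/p^n(N))$, each a subquotient of $W_n\Omega^N$ (the kernel of $F-1$ in degree $N$, a subquotient in degree $N+1$, and $0$ otherwise).

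For (4) and (5) I would simply invoke Lemma \ref{lem:top} with $\ell=p$. A smooth commutative unipotent $k$-group $G$ is $p$-primary torsion (an iterated extension of $\G_a$'s), and $H^1(G)$ for $G$ a finite commutative $p$-group scheme is $p$-primary torsion as well; both lie in $\RSC_\Nis$ by \ref{subsec:exaRSC}. Hence Lemma \ref{lem:top}, applied with $d=\dim X=N$ (resp.\ $d=\dim Y=N$), gives $G\langle N\rangle(\xi)=0$ and $H^1(G)\langle N\rangle(\xi)=0$ for every $\xi$ finite over a codimension $\geq 1$ point of $X$ (resp.\ $Y$), which is more than the hypothesis of Theorem \ref{thm:BItop}. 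Together with the previous paragraph this settles all the single-sheaf cases ($\Omega^N_{/k}$, the $R^i\epsilon_*(\Z/p^n(N))$, (4) and (5)), which follow from Theorem \ref{thm:BItop} directly.

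The one point that needs a bit of care, and the part I would write out most carefully, is the passage to \emph{complexes} in (1) and (2) (needed e.g.\ for $R\epsilon_*(\Z/p^n(N))$, or for an arbitrary bounded complex of subquotients). Here I would rerun the proof of Theorem \ref{thm:BItop} for a bounded below complex $F^\bullet$ of reciprocity sheaves whose terms satisfy the vanishing above, using: the extension of the proper correspondence action to complexes from \ref{para:coracx}; the factorization of Lemma \ref{lem:cas}, which is already stated for complexes; and the fact that the ``error cycle'' $E$ appearing in the proof of Theorem \ref{lem:BIH0} — supported, in both $X$ and $Y$, on closed subsets of codimension $\geq 1$ — acts as $0$ on $Rf_*F^\bullet_X$ and $Rg_*F^\bullet_Y$, which follows by applying Proposition \ref{prop:cor-van-top} termwise and assembling through the resolution-functor construction of \ref{para:coracx} (the same mechanism already used for Theorem \ref{thm:dds}). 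Then $[Z_0]^*$ and $[Z_0^t]^*$ are inverse to each other up to the vanishing $E$-action, hence genuine mutually inverse isomorphisms, which yields \eqref{cor:BI-exa0}. I do not anticipate any genuinely new difficulty beyond this bookkeeping.
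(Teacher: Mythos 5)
Your proposal is correct and takes essentially the same route as the paper: the printed proof simply observes that all listed examples are (complexes of) reciprocity sheaves and cites Theorem \ref{thm:BItop} together with Lemma \ref{lem:top}. Your explicit verifications — the subquotient reduction, the vanishing $\Omega^N_{L/k}=0$ and $W_n\Omega^N_L=0$ in transcendence degree $\le N-1$, the use of Lemma \ref{lem:top} with $\ell=p$ for the twisted examples, and the termwise extension to complexes via the construction in \ref{para:coracx} — are exactly the details the paper leaves implicit.
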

\begin{proof}
First note that all the listed examples are (resp. complexes of) reciprocity sheaves.
This follows directly from the examples discussed in \ref{subsec:exaRSC}.
Therefore the assertions follow directly from Theorem  \ref{thm:BItop} and  Lemma \ref{lem:top}.
\end{proof}

\begin{rmk}
\begin{enumerate}
\item The birational invariance of the cohomology of $\Omega^N_{/k}$ in characteristic zero  is classical
 (using resolution of singularities), 
in positive characteristic this was proven in \cite{CR11} by a similar method as here.
However the statement in {\em loc. cit.} was only for the cohomology sheaves not the complexes in the derived category.
The whole statement can in this case be also deduced from \cite{Kovacs}.
To our knowledge the other statements in Corollary \ref{cor:BI-exa} are new.
\item We get an isomorphism \eqref{cor:BI-exa0} up to bounded torsion
for all $F\in \RSC_\Nis$ which are successive extensions of subquotients  of $\G_a$. 
This follows from Theorem \ref{thm:BIu}, the vanishing $\gamma(\G_a)=0$ (see Theorem \ref{thm:gdRW}),
and the fact that $\gamma$ is exact. In this case the statement can however be also deduced
without the 'up-to-bounded-torsion' assumption from \cite[Thm 1.4]{Kovacs}, the existence of Macaulayfication,
and the fact  that smooth schemes have pseudo-rational singularities (Theorem of Lipman-Teissier).
\end{enumerate}
\end{rmk}

\begin{para}\label{para:cohdim}
Let $K$ be a field of characteristic $p$. 
For $\ell \neq p$, the cohomological $\ell$-dimesnion $\dim_{\ell}(K)$ is defined e.g. in \cite{Serre-CG}.
For $\ell=p$ the cohomological $p$-dimension of $K$ is defined in \cite[Def 1]{KK86} by
\[\dim_p(K):={\rm inf}\{i\in\N\mid \Omega^{i+1}_K= 0 \text{ and } 
H^1_{\et}(K', \Omega^i_{\log})=0, \text{ for all } K'/K \text{ finite} \},\]
where $K'/K$ ranges over all finite field extensions. 
\end{para}

The corollary below generalizes results of Pirutka \cite{Pirutka} and Colliot-Th\'{e}l\`{e}ne-Voisin \cite{CV}, see 
Remark \ref{rmk:BI-exa2}. We thank Colliot-Th\'{e}l\`{e}ne for pointing to these results.
\begin{cor}\label{cor:BI-exa20}
Let $S$ be a separated $k$-scheme of finite type, let $X$ and $Y$ be integral smooth quasi-projective $k$-schemes
both of dimension $d$, and let $f:X\to S$, $g: Y\to S$  be morphism of $k$-schemes.  
Assume that $X$ and $Y$ are properly birational over $S$
(see \ref{sssec:BI}). Let $\ell$ be a prime  and assume $e:=\dim_\ell(k)<\infty$.
(Note $e=0$, if $\ell=p$.)
Then any proper birational correspondence over $S$ between $X$ and $Y$ induces an isomorphism 
\eq{cor:BI-exa020}{Rf_* (R^{d+e}\epsilon_*\Z/\ell^r(j))_X\xr{\simeq} Rg_* (R^{d+e}\epsilon_*\Z/\ell^r(j))_Y,\quad
\text{for all } j\ge 0, r\ge 1.}
\end{cor}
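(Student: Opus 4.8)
The statement is an instance of Theorem~\ref{thm:BItop}, applied to the reciprocity sheaf $F=R^{d+e}\epsilon_*\Z/\ell^r(j)$; the only thing to check is that $F$ satisfies the vanishing hypothesis of that theorem, namely that $F(\xi)=0$ for every point $\xi$ which is finite and separable over a point of $X$ or $Y$ of codimension $\ge 1$. So the plan is: first verify that $F\in\RSC_{\Nis}$, then verify the pointwise vanishing, then invoke Theorem~\ref{thm:BItop}.

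\textbf{Step 1: $F$ is a reciprocity sheaf.} As recalled in \ref{subsec:exaRSC}\ref{subsec:exaRSC4.3}, one decomposes $\Z/\ell^r(j)$ according to whether $\ell\neq p$ or $\ell=p$. If $\ell\neq p$, then $R^i\epsilon_*\Z/\ell^r(j)$ is $\A^1$-invariant by \cite[Cor 5.29]{VoPST}, hence lies in $\HI_{\Nis}\subset\RSC_{\Nis}$ by \ref{subsec:exaRSC}\ref{subsec:exaRSC1}. If $\ell=p$, then by \ref{subsec:exaRSC}\ref{subsec:exaRSC4.2} the complex $R\epsilon_*\Z/p^r(j)$ is computed by \eqref{subsec:exaRSC4.2.1} as a two-term complex of coherent $W_r$-modules built from $W_r\Omega^j$ and $W_r\Omega^j/dV^{r-1}\Omega^{j-1}$, so each $R^i\epsilon_*\Z/p^r(j)$ is a subquotient of a de Rham--Witt sheaf, hence a reciprocity sheaf. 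Either way $F\in\RSC_{\Nis}$.

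\textbf{Step 2: pointwise vanishing.} Let $\xi$ be a point finite and separable over a point of codimension $\ge 1$ of $X$ (or $Y$), and set $L=k(\xi)$. Then $\td(L/k)\le d-1$, so $\dim_\ell(L)\le d-1+e$ by the behaviour of cohomological dimension under finitely generated field extensions; here one uses \cite[II,\S4,Prop 11]{Serre-CG} if $\ell\neq p$ and \cite[Cor to Thm 3]{KK86} (the analogous statement for $\dim_p$, see \ref{para:cohdim}) if $\ell=p$. Now $F(L)=R^{d+e}\epsilon_*\Z/\ell^r(j)(\Spec L)=H^{d+e}_{\et}(L,\Z/\ell^r(j))$ (the higher Nisnevich pushforward of an \'etale sheaf over a field is just the \'etale cohomology of the field, and over a field the Nisnevich and \'etale small sites agree for the structure sheaf computations — more precisely $R\epsilon_*$ evaluated at $\Spec L$ computes $R\Gamma_{\et}(L,-)$). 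For $\ell\neq p$, using the Beilinson--Lichtenbaum/Bloch--Kato identification one has $\Z/\ell^r(j)\cong \mu_{\ell^r}^{\otimes j}$ in the relevant range, and $H^{d+e}_{\et}(L,\mu_{\ell^r}^{\otimes j})=0$ because $d+e>\dim_\ell(L)$. For $\ell=p$, one uses the Geisser--Levine identification $\Z/p^r(j)\cong W_r\Omega^j_{\log}[-j]$ on $\Sm_{\et}$ (cf. \ref{subsec:exaRSC}\ref{subsec:exaRSC4.2}), so $H^{d+e}_{\et}(L,\Z/p^r(j))=H^{d+e-j}_{\et}(L,W_r\Omega^j_{\log})$; this vanishes for $d+e-j\ge 2$ since over a field $H^i_{\et}(L,W_r\Omega^j_{\log})=0$ for $i\ge 2$ (Gersten/Bloch--Kato--Gabber type arguments, as in \cite{CTSS}), and for $d+e-j\le 1$ it reduces to $\dim_p(L)\le d-1+e<d+e$ bounding $W_r\Omega^j_L$ and $H^1_{\et}(L,W_r\Omega^j_{\log})$, both of which are $0$ once $j>\dim_p(L)$, while for $j\le \dim_p(L)$ the relevant cohomological index $d+e-j$ is still $\ge 1$ and we are again in the $H^1$ case handled by the definition of $\dim_p$. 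In every case $F(\xi)=0$. The main obstacle is precisely bookkeeping the two regimes $\ell\neq p$ and $\ell=p$ and matching the cohomological degree $d+e$ against the correct cohomological dimension bound; once that is organised the vanishing is immediate.

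\textbf{Step 3: conclusion.} Having verified that $F=R^{d+e}\epsilon_*\Z/\ell^r(j)$ lies in $\RSC_{\Nis}$ and satisfies $F(\xi)=0$ for all $\xi$ finite and separable over a codimension $\ge 1$ point of $X$ or $Y$, Theorem~\ref{thm:BItop} applies verbatim: any proper birational correspondence over $S$ between $X$ and $Y$ induces an isomorphism $Rg_*F_Y\xrightarrow{\simeq}Rf_*F_X$, which is \eqref{cor:BI-exa020}. I expect Step~1 and Step~3 to be entirely formal; the only real content is the cohomological-dimension estimate in Step~2.
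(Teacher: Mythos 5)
Your proposal is correct and follows essentially the same route as the paper: reduce via Theorem \ref{thm:BItop} to the vanishing of $F=R^{d+e}\epsilon_*\Z/\ell^r(j)$ on fields of transcendence degree $<d$, then treat $\ell\neq p$ with the \'etale identification $\Z/\ell^r(j)\simeq\mu_{\ell^r}^{\otimes j}$ and Serre's bound on $\dim_\ell$, and $\ell=p$ with the Geisser--Levine description and Kato's $\dim_p$ bound (your degree-by-degree bookkeeping of $H^{d-j}_{\et}(L,W_r\Omega^j_{\log})$ is just an unfolding of the paper's observation that the pushforward complex is concentrated in degrees $j,j+1$).
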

\begin{proof}
Set $F:=R^{d+e}\epsilon_*\Z/\ell^r(j)$. We know $F\in\RSC_\Nis$.
By Theorem \ref{thm:BItop} it suffices to show $F(K)=0$, for any finitely generated field 
$K/k$ with $\td(K/k)<d$. Assume $\ell\neq p$.
By \cite[Thm 1.5]{GL01} we have 
$F(K)= H^{d+e}_{\et}(K, \mu_{\ell^r}^{\otimes j})$, which vanishes 
since $\dim_{\ell}(K)\le d-1+e$, by \cite[II, \S4, Prop 11]{Serre-CG}.
Now assume $\ell=p$ (hence $e=0$). By \cite{GL} (see \ref{subsec:exaRSC}\ref{subsec:exaRSC4.2}) we have 
\[F(K)=\begin{cases} W_n\Omega^d_K, & \text{if }j=d\\ 
                      H^1_{\et}(K,W_n\Omega^{d-1}_{\log}), &\text{if }j=d-1\\
                      0, &\text{else}. \end{cases}\]
Thus in this case $F(K)=0$, since $\dim_{p}(K)\le d-1$ by \cite[3., Cor 2]{KK86}.
\end{proof}

\begin{rmk}\label{rmk:BI-exa2}
If we assume $\ell\neq p$, $f$ and $g$ projective and generically smooth, and we take in 
\eqref{cor:BI-exa020} the stalks in the generic point of $S$ and cohomology we get back
the first statement of \cite[Thm 3.3]{Pirutka} (which generalizes \cite[Prop 3.4]{CV}),
at least in the case where the base field is a finitely generated field over a perfect field.

To our knowledge  the  global statement for $S$ of arbitrary dimension is new even for $\ell\neq p$.
The case $\ell=p$ and $j=d-1$ seems to be completely new. 
\end{rmk}

The following corollary holds for all reciprocity sheaves. We just spell it out for one special example.
\begin{cor}\label{cor:dd0}
Let $S$ be the henselization of a smooth $k$-scheme  in a 1-codimensional point or
a regular connected affine scheme of dimension $\leq 1$ and of finite type over a function field $K$ over $k$.
Let $f:X \to S$ be a smooth projective morphism of relative dimension $d$ and let $\eta\in S$ be the generic point.
Assume the diagonal of the generic fiber $X_\eta$ decomposes as
\eq{cor:dd01}{[\Delta_{X_\eta}]= p_2^*\xi + (i\times\id)_*\beta \quad \text{in } \CH_0(X_\eta\times_\eta X_\eta),}
where $\xi\in \CH_0(X_\eta)$ and $\beta\in \CH_d(Z\times_\eta X_\eta)$ with $i: Z\inj X_\eta$ a 
closed immersion of codimension $\ge 1$.

Then for all $n, i\ge 0$
\[H^0(X,R^i \epsilon_* \Q/\Z(n))= H^0(S, R^i \epsilon_* \Q/\Z(n))\]
see \ref{subsec:exaRSC}, \ref{subsec:exaRSC4.3} for notation.
In particular, taking $n=1$ and $i=2$ we obtain ${\rm Br}(X)={\rm Br}(S)$.
\end{cor}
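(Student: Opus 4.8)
The plan is to apply Theorem \ref{thm:comp-diag} to the bounded below complex of reciprocity sheaves $F^\bullet = R\epsilon_* \Q/\Z(n)$, which is indeed a complex of reciprocity sheaves by \ref{subsec:exaRSC}\ref{subsec:exaRSC4.3}. First I would observe that the hypotheses of Theorem \ref{thm:comp-diag} are met with $N=1$: the base $S$ is an integral excellent $k$-algebra of dimension $\le 1$, and in both cases (henselization of a smooth $k$-scheme in a codimension $1$ point, or a regular connected affine scheme of dimension $\le 1$ of finite type over a function field $K/k$) it is a filtered limit $A = \varinjlim_\nu A_\nu$ of smooth finite type $k$-algebras with flat transition maps — for the henselization this is standard, and for the affine scheme over $K$ one writes $K = \varinjlim k[U_i]$ for affine opens in a model and spreads out. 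The decomposition \eqref{cor:dd01} is precisely the integral decomposition \eqref{thm:comp-diag1} with $N = \deg(\xi) = 1$ (a zero cycle of degree one, since $[\Delta_{X_\eta}]$ itself has degree one on each fiber).

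Next, since $N=1$, Theorem \ref{thm:comp-diag} gives that for the truncations $\tau_{\ge 0} F^\bullet$ (shifted appropriately) the pullback $H^0(S, F^\bullet) \to H^0(X, F^\bullet)$ is an isomorphism. However, I want a statement about $H^0(X, R^i\epsilon_* \Q/\Z(n))$ for each fixed $i$, not about the hypercohomology of the complex. The point is that $R\epsilon_* \Q/\Z(n)$ is quasi-isomorphic to a complex whose cohomology sheaves are the reciprocity sheaves $R^i\epsilon_* \Q/\Z(n)$, and each individual cohomology sheaf $R^i\epsilon_* \Q/\Z(n) = \sH^i(F^\bullet)$ is itself a reciprocity sheaf sitting in degree $0$. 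So I would simply apply the last sentence of Theorem \ref{thm:comp-diag} (with $N=1$) to the single reciprocity sheaf $F = R^i\epsilon_*\Q/\Z(n)$ viewed as a complex concentrated in degree $0$: this directly yields $H^0(S, R^i\epsilon_*\Q/\Z(n)) = H^0(X, R^i\epsilon_*\Q/\Z(n))$, using that $\CH_0(X_\eta)$ controls the decomposition and the pushforward/pullback argument in the proof of that theorem reduces the computation to $\sum_i m_i (f\mu_{i,1})_*(f\mu_{i,1})^* = \mathrm{id}$, which holds by Proposition \ref{prop:pfs0}\ref{prop:pfs02} for the degree one cycle.

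Finally, for the special case $n=1$, $i=2$, I would recall from \ref{subsec:exaRSC}\ref{subsec:exaRSC4.3} and the identification \eqref{subsec:exaRSC4.2.1} (together with the classical comparison $R^2\epsilon_* \G_m = \mathrm{Br}$ in the Nisnevich–étale setting, via $\G_m = \Z(1)[1]$ and Hilbert 90 giving $R^1\epsilon_*\G_m = 0$, so that $H^0(-, R^2\epsilon_*\Q/\Z(1))$ agrees with the Brauer group $\mathrm{Br}(-)$ for smooth schemes — note that over such $X$, $\mathrm{Br}(X)$ is torsion, e.g. by Gabber, so the $\Q/\Z(1)$ coefficients suffice) that $H^0(X, R^2\epsilon_*\Q/\Z(1)) = \mathrm{Br}(X)$ and similarly for $S$. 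Thus the general statement specializes to $\mathrm{Br}(X) = \mathrm{Br}(S)$.

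I expect the main obstacle to be bookkeeping rather than conceptual: verifying cleanly that the base $S$ in the second case (affine of dimension $\le 1$ over a function field) is genuinely of the filtered-limit form required by Theorem \ref{thm:comp-diag}, i.e. that one can spread out both $S$ and the morphism $f$ and the cycle decomposition over a smooth finite type $k$-model with flat transition maps, and tracking that the generic fiber and its diagonal decomposition are preserved in the limit. The identification of $H^0$ of the complex with $H^0$ of the individual cohomology sheaf is immediate since $\sH^i$ of a complex concentrated in nonnegative degrees computes $H^0$ of that degree-$i$ piece after the obvious shift, so there is no real difficulty there; and the Brauer group identification is standard.
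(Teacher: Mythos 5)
Your proposal is correct and follows essentially the same route as the paper: the paper's proof simply invokes Theorem \ref{thm:comp-diag} with $N=1$ applied to the reciprocity sheaf $R^i\epsilon_*\Q/\Z(n)$ (a reciprocity sheaf by \ref{subsec:exaRSC}\ref{subsec:exaRSC4.3}), exactly as you do. The only difference is bookkeeping: where you sketch a spreading-out argument to verify that $S$ admits the required presentation as a filtered limit of smooth finite-type $k$-algebras with flat transition maps, the paper handles the affine-curve-over-$K$ case by citing \cite[Lem 2.4]{RS}.
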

\begin{proof}
This follows from Theorem \ref{thm:comp-diag}. (Note, in case $S$ is a regular affine curve over $K$ it follows, e.g.,
from \cite[Lem 2.4]{RS} that $S$ satisfies the conditions of that theorem.)
\end{proof}

\begin{cor}\label{cor:ddtop}
Let $f: X\to S$ be a flat projective morphism of relative dimension $d$ 
between smooth integral and quasi-projective $k$-schemes. Let $\dim X=N$.
Assume the diagonal of the generic fiber of $f$ decomposes
as in \eqref{cor:dd01}. 
For $F=F^N$ as in Corollary  \ref{cor:BI-exa}\ref{cor:BI-exa1}-(\ref{cor:BI-exa5})
(in \ref{cor:BI-exa1} and (\ref{cor:BI-exa2}) we only consider
the explicitly listed examples with the exception of $F =\Omega^N_{/k}/h^0_{\A^1}(\Omega^N_{/k})$)
 the pushforward 
\[Rf_* F^N_X\xr{f_*}  F^{N-d}_S[-d]\]
is an isomorphism.
\end{cor}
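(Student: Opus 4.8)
The plan is to deduce the statement from Theorem~\ref{thm:comp-diag-top}, applied to the bounded below complex of reciprocity sheaves $F^\bullet=F^N$ (a single sheaf in all cases except the complexes $R\epsilon_*\Z/p^n(N)$; the latter is fed into Theorem~\ref{thm:comp-diag-top} via the formalism of \ref{para:coracx}). The hypotheses of that theorem hold: $S$ is smooth, integral (hence connected) and quasi-projective, $X$ is smooth and quasi-projective, $f$ is flat and projective of relative dimension $d$, and the diagonal of the generic fibre decomposes as in \eqref{cor:dd01}, which is exactly hypothesis \eqref{thm:comp-diag1} of Theorem~\ref{thm:comp-diag-top} with $N=1$. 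Hence, once the remaining (pointwise) hypothesis is checked, Theorem~\ref{thm:comp-diag-top} gives an isomorphism $Rf_*F^N_X\xr{\simeq}\gamma^d(F^N)_S[-d]$ in $D(S_\Nis)$, and the proof is completed by identifying $\gamma^d(F^N)$ with $F^{N-d}$ (up to the shift already built into the notation $F^{N-d}[-d]$ in the complex examples).

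First I would check the pointwise vanishing hypothesis: $F^i(\xi)=0$ for every point $\xi$ which is finite and separable over a point of $X$ of codimension $\ge 1$. Since $\dim X=N$ and $k$ is perfect, such a residue field $k(\xi)$ is separably generated of transcendence degree $\le N-1$ over $k$, so that $\Omega^N_{k(\xi)/k}=0$ and, in characteristic $p$, $W_n\Omega^N_{k(\xi)}=0$ by \cite[I, Prop 3.11]{Il}; the latter also forces the vanishing at $\xi$ of every term of the two-term model \eqref{subsec:exaRSC4.2.1} for $R\epsilon_*\Z/p^n(N)$, and of $R^i\epsilon_*\Z/p^n(N)$, because on the spectrum of a field the sheaf quotient agrees with the presheaf quotient. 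This disposes of all subquotients of $\Omega^N_{/k}$ and of $W_n\Omega^N$ listed in \ref{cor:BI-exa}\ref{cor:BI-exa1}, \ref{cor:BI-exa2}. For the twisted sheaves $G\la N\ra$ and $H^1(G)\la N\ra$ of \ref{cor:BI-exa}\ref{cor:BI-exa4}, \ref{cor:BI-exa5} the vanishing is Lemma~\ref{lem:top} (with $\ell=p={\rm char}(k)$). This is essentially the same verification as in the proof of Corollary~\ref{cor:BI-exa}.

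The second step is the identification $\gamma^d(F^N)\cong F^{N-d}$. For $\Omega^N_{/k}$ this is Corollary~\ref{cor:gammaOmega} (together with $\gamma^d\uomega_!=\uomega_!\gamma^d$, see \eqref{para:RSCtwist4}, and $\uomega_!\uomega^{\CI}=\id$, to pass between $\CItspNis$ and $\RSC_\Nis$); for $W_n\Omega^q$ it is Theorem~\ref{thm:gdRW}. For the subquotients $\Omega^N_{/k}/\dlog K^M_N$, $W_n\Omega^N_{\log}$, $W_n\Omega^N/F^{r-1}dW_{n+r-1}\Omega^{N-1}$, $F^{r-1}dW_{n+r-1}\Omega^{N-1}$, and for the complexes $R\epsilon_*\Z/p^n(N)$, $R^i\epsilon_*\Z/p^n(N)$, one uses that $\gamma^d$ is exact on $\RSC_\Nis$ (Lemma~\ref{lem:gamma-exact}) and commutes with the structure maps $R$, $F$, $V$, $d$ (Theorem~\ref{thm:gdRW}) and with $\dlog$ (for the latter one also invokes $\gamma^d K^M_q\cong K^M_{q-d}$, a consequence of Voevodsky's cancellation theorem and $K^M_q\cong\G_m^{\otimes_{\HI_\Nis}q}$, cf.\ \eqref{cor:gtwist2}); thus $\gamma^d$ carries each of these objects to the corresponding one of index $N-d$, term by term in the case of complexes. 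This is the same kind of computation as in the proof of Corollary~\ref{cor:refined-trace}, where $\gamma^r B^q_{n,\infty}=B^{q-r}_{n,\infty}$. Finally, for $G\la N\ra$ and $H^1(G)\la N\ra$ the cancellation isomorphism \eqref{para:RSCtwist6} gives $\gamma^d(G\la N\ra)=G\la N-d\ra$, and likewise for $H^1(G)$; this is meaningful since $\dim S=N-d\ge 0$.

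The main difficulty is not any single deep ingredient — Theorem~\ref{thm:comp-diag-top}, Corollary~\ref{cor:gammaOmega}, Theorem~\ref{thm:gdRW}, Lemma~\ref{lem:top} and Lemma~\ref{lem:gamma-exact} are all available — but rather the uniform, case-by-case bookkeeping over the heterogeneous list of examples: for each one must verify both the pointwise vanishing and the identification $\gamma^d F^N\cong F^{N-d}$, and for the complexes do so term by term, minding the degree shift hidden in $F^{N-d}[-d]$ for $R\epsilon_*\Z/p^n(N)$ and the repeated use of the exactness of $\gamma^d$ when passing to sub- and quotient sheaves. This also explains the two exclusions in the statement: $\Omega^N_{/k}/h^0_{\A^1}(\Omega^N_{/k})$ is omitted because $\gamma^d h^0_{\A^1}(\Omega^N_{/k})$ need not equal $h^0_{\A^1}(\Omega^{N-d}_{/k})$, so its $\gamma^d$-quotient need not be of the claimed form, and for the same reason one restricts to the explicitly listed subquotients of $\Omega^N_{/k}$ and $W_n\Omega^N$ rather than arbitrary ones.
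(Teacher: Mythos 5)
Your proposal is correct and follows essentially the same route as the paper: apply Theorem \ref{thm:comp-diag-top} (with the pointwise vanishing checked exactly as in the proof of Corollary \ref{cor:BI-exa}, via Lemma \ref{lem:top} and the vanishing of top-degree forms in lower transcendence degree), and then identify $\gamma^d F^N\cong F^{N-d}$ using the exactness of $\gamma$ (Lemma \ref{lem:gamma-exact}), Theorem \ref{thm:gdRW} (resp.\ Corollary \ref{cor:gammaOmega}), and the weak cancellation isomorphism \eqref{para:RSCtwist6}. Your extra case-by-case bookkeeping and the explanation of the exclusions are consistent with, and slightly more explicit than, the paper's argument.
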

\begin{proof}
The proof is similar to the proof of Corollary \ref{cor:BI-exa} except that we have to replace the reference to
Theorem \ref{thm:BItop} by a reference to Theorem \ref{thm:comp-diag-top}.
Furthermore we have to observe that in the cases considered we have $\gamma^d F^N\cong F^{N-d}$,
as follows directly from the exactness of $\gamma$ (see Lemma \ref{lem:gamma-exact}),
Theorem \ref{thm:gdRW}, and the weak Cancellation Theorem from \cite{MS}, see \eqref{para:RSCtwist6}.
\end{proof}

\begin{rmk}
\begin{enumerate}
\item If the diagonal only decomposes rationally, then we obtain a similar statement as in Corollary \ref{cor:ddtop}, up to 
bounded torsion.
\item If $F$ is a successive extensions of subquotients  of $\G_a$, then 
$\gamma F=0$ (as follows from Theorem \ref{thm:gdRW} and the exactness of $\gamma$) and 
we can similarly apply Theorem \ref{thm:comp-diag-u}.
\end{enumerate}
\end{rmk}

The following statement seems to be new if $S$ is not an algebraically closed field.

\begin{cor}\label{cor-pictor}
Let $f:X\to S$ and $g: Y\to S$ be two flat, geometrically integral, and projective morphisms 
between smooth connected $k$-schemes.
We furthermore assume that  the generic fiber of $f$ and the generic fiber of $g$ have index 1  
over the function field $k(S)$. Denote by $\Pic_{X/S}$ the relative Picard functor
(which is representable, e.g., \cite[8.2, Thm 1]{BLR}) and by $\Pic_{X/S}[n]$ its $n$-th torsion subfunctor.

If $X$ and $Y$ are stably properly birational over $S$ (see \ref{sssec:BI}\ref{sssec:BI2}),
then any  proper birational correspondence between projective bundles over $X$ and $Y$  
induces an isomorphism of sheaves
on $S_\Nis$ 
\[\Pic_{X/S}[n]\cong \Pic_{Y/S}[n], \quad \text{for all }n.\]
\end{cor}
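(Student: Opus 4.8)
The plan is to reduce the statement about the torsion of the relative Picard functor to the correspondence action on a suitable reciprocity sheaf, for which the birational invariance was already established. The key observation is that, for a flat, geometrically integral, projective morphism $f\colon X\to S$ whose generic fiber has index $1$, the $n$-torsion of the relative Picard functor can be identified with a subsheaf of $R^1f_*(\mu_n)$ or, more precisely, expressed through the cohomology of the \'etale sheaf $\Z/n(1)$ on $X$; concretely, by the Kummer sequence and the projection formula, one expects $\Pic_{X/S}[n]$ to sit in an exact sequence relating it to $R^1\varepsilon_*(\Z/n(1))_X$ and its pushforward to $S$, where the index $1$ assumption guarantees that the degree map $f_*f^*$ acts as the identity (using Theorem \ref{thm:dds} / Corollary \ref{cor:dds} with $N=1$). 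Thus the first step is to set up a natural short exact sequence (on $S_\Nis$) of the form
\[0\to \Pic_{S/S}[n]\otimes(\text{section group})\to \Pic_{X/S}[n]\to \text{(subsheaf of }R^1f_*(\Z/n(1))_X)\to 0\]
or, more cleanly, to show that $\Pic_{X/S}[n]$ is a direct summand of $R^1f_*((\Z/n(1))_X)$ complementary to the part pulled back from $S$, exactly because $f$ has a multisection of degree $1$.

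Second, I would invoke \ref{subsec:exaRSC}\ref{subsec:exaRSC4.3}, which says $R^i\varepsilon_*\Q/\Z(1)\in\RSC_\Nis$, together with its $\Z/n$-coefficient analogue $R^i\varepsilon_*\Z/n(1)$ (obtained by decomposing into the prime-to-$p$ part, which is $\A^1$-invariant, and the $p$-part covered by \ref{subsec:exaRSC4.2}); in particular $R^1\varepsilon_*\Z/n(1)$ is a reciprocity sheaf. By Theorem \ref{lem:BIH0}, any proper birational correspondence between projective bundles over $X$ and $Y$ induces an isomorphism $f_*(R^1\varepsilon_*\Z/n(1))_X\cong g_*(R^1\varepsilon_*\Z/n(1))_Y$ (here one should be a little careful: Theorem \ref{lem:BIH0} gives the isomorphism for $f_*F_X$ with $f,g\in C_S$, so I would check that the morphisms in question are indeed quasi-projective and fit the framework, which they do since $f$ and $g$ are projective). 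Since the correspondence action is compatible with the diagonal and with pushforward, and since the index-$1$ hypothesis splits off the ``pulled back from $S$'' summand functorially, the isomorphism $f_*(R^1\varepsilon_*\Z/n(1))_X\cong g_*(R^1\varepsilon_*\Z/n(1))_Y$ restricts to an isomorphism on the complementary summands, which are exactly $\Pic_{X/S}[n]$ and $\Pic_{Y/S}[n]$.

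Third, I would verify that the identification of $\Pic_{X/S}[n]$ with the relevant summand of $f_*(R^1\varepsilon_*\Z/n(1))_X$ is natural for the correspondence action, i.e., that transposes of graphs of birational maps act on $\Pic_{X/S}[n]$ in the way dictated by the cycle action on $R^1\varepsilon_*\Z/n(1)$. This uses that a proper birational correspondence $Z\subset X\times_S Y$ induces a pushforward/pullback on relative Picard groups agreeing (under Kummer theory) with the correspondence action of \ref{para:cycle-action}, which in turn follows from the compatibility of the cycle action with the transfer structure on \'etale cohomology presheaves (Proposition \ref{prop:cycle-action}\ref{prop:cycle-action4.1}, \ref{prop:cycle-action4}) and the classical behavior of divisor classes under proper birational maps. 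Finally one concludes by combining Theorem \ref{lem:BIH0} with this naturality.

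The main obstacle I anticipate is the first and third steps together: precisely formulating and proving the natural splitting $R^1f_*(\Z/n(1))_X\cong (\text{pullback part})\oplus \Pic_{X/S}[n]$ as sheaves on $S_\Nis$, in a way that is manifestly compatible with proper correspondences (not just with morphisms). The subtlety is that $\Pic_{X/S}$ is a sheafification of the naive relative Picard group, so one must control the failure of $f_*f^*=\mathrm{id}$ on the presheaf level, and here the index-$1$ assumption combined with Corollary \ref{cor:dds} (and its functoriality in smooth base change, noted in the remark after Corollary \ref{cor2:dds}) is exactly what makes the argument go through. Once this splitting is in place, the birational invariance is a direct consequence of Theorem \ref{lem:BIH0}, so the bulk of the work is purely about relating relative Picard functors to the reciprocity sheaf $R^1\varepsilon_*\Z/n(1)$, rather than about reciprocity sheaves themselves.
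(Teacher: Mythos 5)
Your overall strategy is the same as the paper's: the relevant reciprocity sheaf is exactly $R^1\epsilon_*\Z/n(1)$, which (writing $n=mp^r$) decomposes as $R^1\epsilon_*\mu_m\oplus\epsilon_*W_r\Omega^1_{\log}$, the index-$1$ hypothesis enters through Corollary \ref{cor:dds}, and the conclusion is drawn from Theorem \ref{lem:BIH0}. The gap is that the step you yourself flag as the ``main obstacle'' --- identifying $\Pic_{X/S}[n]$ on $S_\Nis$ with the part of $f_*(R^1\epsilon_*\Z/n(1))_X$ complementary to the pullback from $S$ --- is the entire technical content of the proof, and your sketch of it via ``the Kummer sequence and the projection formula'' would not go through as stated. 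First, when $p\mid n$ the Kummer sequence has to be taken on the fppf site; passing to the \'etale site uses $Rv_*\mu_{p^r,{\rm fppf}}\cong W_r\Omega^1_{\log}[-1]$, so the $p$-primary part of $\Pic_{X/S}[n]$ is governed by the degree-zero pushforward $f_*\epsilon_*W_r\Omega^1_{X,\log}$ and not by an ``$R^1f_*$ of $\mu_n$'' as your sketch suggests; this degree shift is why the final presentation mixes $f_*\epsilon_*W_r\Omega^1_{X,\log}$ with $\epsilon_*R^1f_*\mu_{m,X_{\et}}$. Second, to descend from $S_{\et}$ to $S_\Nis$ one must compare the Leray spectral sequences for $R\epsilon_*Rf_*$ and $Rf_*R\epsilon_*$ and prove two injectivity statements of edge maps, deduced from the injectivity of $f^*\colon R^1\epsilon_*W_r\Omega^1_{S,\log}\to f_*R^1\epsilon_*W_r\Omega^1_{X,\log}$ and of $f^*\colon R^2\epsilon_*\mu_{m,S}\to f_*R^2\epsilon_*\mu_{m,X}$; these are exactly, and the only, places where index $1$ and Corollary \ref{cor:dds} are used, whereas your sketch invokes index $1$ only as a global ``degree one splits off the pullback'' principle. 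One also needs geometric integrality (to get $f_*\sO^\times_{X}=\sO^\times_S$, hence $f_*\mu_n=\mu_n$), which your outline never uses.

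A second, smaller point concerns the endgame. The paper does not produce a direct-sum decomposition but the canonical presentation $\epsilon_*\Pic_{X/S}[n]\cong\Coker\bigl(\epsilon_*W_r\Omega^1_{S,\log}\oplus R^1\epsilon_*\mu_{m,S}\xr{f^*} f_*(\epsilon_*W_r\Omega^1_{X,\log}\oplus R^1\epsilon_*\mu_{m,X})\bigr)$. The splitting furnished by Corollary \ref{cor:dds} depends on the chosen zero-cycle and is not canonical, so your ``complementary summand'' formulation forces exactly the compatibility checks you worry about in your third step. With the cokernel description that step evaporates: no action of correspondences on $\Pic_{X/S}[n]$ has to be constructed or compared with divisor-theoretic pushforwards; one only needs that the isomorphism of Theorem \ref{lem:BIH0} is compatible with $f^*$ and $g^*$, which follows from the composition rules of Proposition \ref{prop:cycle-action}, since the birational correspondence composed with the graph of one structure map to $S$ is the graph of the other. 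So: right sheaf, right source of the index-$1$ input, right concluding theorem, but the central identification --- the fppf/\'etale/Nisnevich bookkeeping and the two edge-map injectivities --- is missing, and that is where essentially all the work of the paper's proof lies.
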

\begin{proof}
By definition, we have that for any morphism of schemes $T\to S$, $\Pic_{X/S}(T) = H^0(T, R^1 f_{*} \sO_{X_{\rm fppf}}^\times)$. The assumptions on $f$ ensure that for any such $T$ we have 
$f_{T*}\sO_{X_T}=\sO_T$, where $f_T :X_T\to T$ denotes the base change of $f$, (and similarly with $g$). 
Hence $f_{*} \sO_{X_{\rm fppf}}^\times= \sO_{S_{\rm fppf}}^\times$ and  therefore also
\eq{cor-pictor1}{f_*\mu_{n, X_{\rm fppf}}= \mu_{n, S_{\rm fppf}},}
where $\mu_{n, X_{\rm fppf}}$ denotes the sheaf of $n$-th roots of unity in the fppf-topology on $X$.
This yields an exact sequence on $S_{\rm fppf}$
\[0\to R^1f_*\mu_{n, X_{fppf}} \to R^1 f_* \sO_{X_{\rm fppf}}^\times \xr{n\cdot} 
R^1 f_* \sO_{X_{\rm fppf}}^\times.\]
Thus on $S_\et$
\eq{cor-pictor2}{\Pic_{X/S}[n]= v_*(R^1f_*\mu_{n, X_{\rm fppf}}),}
where $v: S_{\rm fppf}\to S_{\rm \et}$ denotes the morphisms of sites (we denote the corresponding morphism 
on $X$ by the same letter.) The spectral sequence for the composition
$R v_* \circ Rf_*$ yields an exact sequence
\ml{cor-pictor3}{
0\to R^1v_*( f_*\mu_{n, X_{\rm fppf}})\to R^1(v\circ f)_*\mu_{n, X_{\rm fppf}}\\
\to v_* R^1f_*\mu_{n, X_{\rm fppf}} \to R^2v_* (f_*\mu_{n, X_{\rm fppf}}).}
Write $n=m p^r$, where $p$ is the exponential characteristic of $k$, $(m,p)=1$ and $r\ge 0$.
By \cite[Thm 11.7]{G-BrauerIII} we have 
\eq{cor-pictor4}{Rv_*\mu_{m, X_{\rm fppf}}\cong \mu_{m, X_{\et}}, \quad Rv_* \G_{m, X_{\rm fppf}} \cong \G_{m, X_{\et}}}
in particular, the second isomorphism of \eqref{cor-pictor4} gives
\eq{cor-pictor5}{Rv_*\mu_{p^r, X_{\rm fppf}}\cong \sO^\times_{X_{\et}}/(\sO^\times_{X_{\et}})^{p^r}[-1]
\cong W_r\Omega^1_{X_{\et}, \log}[-1]}
where  the second isomorphism holds by \cite[I, Prop 3.23.2]{Il} and we use that $X$ is smooth 
(also for the first isomorphism).
Putting \eqref{cor-pictor1} - \eqref{cor-pictor5} together and using $R(v\circ f)_*= Rf_* Rv_*$ and $R^2v_* ( f_*\mu_{n, X_{\rm fppf}})= R^2v_*( \mu_{n, S_{\rm fppf}})= 0$ (note that $S$ is smooth too), 
we obtain an exact sequence on $S_\et$
\eq{cor-pictor6}{0\to W_r\Omega^1_{S_{\et}, \log}\to 
R^1f_* \mu_{m, X_{\et}} \oplus f_*W_r\Omega^1_{X_{\et},\log}
\to \Pic_{X/S}[n] \to 0.}
Let $\epsilon: \Sm_\et\to \Sm_\Nis$ be the morphism  of sites.
We saw in \eqref{subsec:exaRSC4.2.1} that $F^i:=R^i\epsilon_*W_r\Omega^1_{ \log}$
is a reciprocity sheaf, for all $i$. The assumption on the index of the general fiber
of $f$ is equivalent to the existence of a zero-cycle of degree $1$ on the generic fiber $X_K\to K=k(S)$, 
so that we can apply Corollary \ref{cor:dds} to find that 
\[f^*: F^1_S\to f_* F^1_X\]
is split injective. 
We can factor this map as the following composition
\eq{cor-pictor6.5}{
F^1_S\xr{R^1{\epsilon_*}(f^*)} R^1\epsilon_* (f_* W_r\Omega^1_{X_{\et},\log})\xr{d^1_{\epsilon, f}}
R^1(\epsilon\circ f)_*W_r\Omega^1_{X_{\et},\log}
\xr{e^1_{f, \epsilon}}  f_*F^1_X,}
where $d^1_{\epsilon, f}$ (resp. $e^1_{f, \epsilon}$ ) is an edge map of the spectral sequence 
associated to $R\epsilon_* Rf_*$ (resp.  to $Rf_* R\epsilon_*$).
Since by the above the composition \eqref{cor-pictor6.5} is injective, so is the first map in that composition;
it follows that applying $\epsilon_*$ to \eqref{cor-pictor6} yields an exact sequence on
$S_\Nis$
\eq{cor-pictor7}{0\to F^0_S\to 
\epsilon_*R^1f_{*}\mu_{m, X_{\et}}\oplus f_*F^0_X\to \epsilon_*\Pic_{X/S}[n]\to 0.
}
Furthermore, the spectral sequence for $R\epsilon_*\circ R f_*$ and the restriction 
of \eqref{cor-pictor1} to $S_{\et}$ yield an exact sequence
\ml{cor-pictor7.5}{0\to R^1\epsilon_* \mu_{m, S_\et}\to R^1(\epsilon \circ f)_*\mu_{m, X_{\et}}
\to  \epsilon_*R^1f_{*}\mu_{m, X_{\et}}\\
\to R^2\epsilon_* \mu_{m, S_{\et}} \xr{d^2_{\epsilon,f}} R^2(\epsilon\circ f)_*\mu_{m, X_{\et}}.}
We claim that $d^2_{\epsilon,f}$ is injective.
Indeed, set $M^j:= R^j\epsilon_*\mu_{m}\in \HI_\Nis$. 
We can factor $f^*: M^2_S\to f_*M^2_X$ as 
\[f^*: M^2_S\xr{d^2_{\epsilon,f}} R^2(\epsilon\circ f)_*\mu_{m, X_{\et}}\xr{e^2_{f,\epsilon}} f_*M^2_X\]
where the maps are the edge morphisms of the sprectral sequence to  $R\epsilon_* Rf_*$ 
(resp. $Rf_*R\epsilon_*$). By the assumption on the index of the general fiber
of $f$ and Corollary \ref{cor:dds} we find that 
$f^*:  M^2_S\to f_* M^2_X$ is injective; hence so is $d^2_{\epsilon, f}$.
Furthermore since the Nisnevich cohomology of a constant sheaf is trivial,
we have $R^jf_* (\epsilon_*\mu_{m,X_{\et}})=0$, for all $j\ge 1$.
This yields 
\[R^1(f\circ\epsilon)_*\mu_{m,X_{\et}}\cong f_*R^1\epsilon_*\mu_{m,X_{\et}}=f_*M^1_X.\]
Thus \eqref{cor-pictor7.5} yields an exact sequence
\[0\to M^1_S\to f_*M^1_X\to \epsilon_*R^1f_{*}\mu_{m,X_{\et}}\to 0.\]
Together with  \eqref{cor-pictor7} we obtain
\[ \epsilon_*\Pic_{X/S}[n]= \Coker(F^0_S\oplus M^1_S\xr{f^*} f_*(F^0_X\oplus M^1_X))\]
Since we get a similar description for $g: Y\to S$ the statement follows from Theorem \ref{lem:BIH0}.
\end{proof}

\bibliographystyle{amsalpha}
\bibliography{MCorActCoh}

\end{document}